\definecolor{olive}{rgb}{0.4, 0.6, .1}
\newtheorem{theorem}{Theorem}[section]
\newtheorem{proposition}[theorem]{Proposition}
\newtheorem{corollary}[theorem]{Corollary}
\newtheorem{lemma}[theorem]{Lemma}
\theoremstyle{definition}
\newtheorem{definition}[theorem]{Definition}
\theoremstyle{remark}
\newtheorem{remark}[theorem]{Remark}
\newtheorem{example}[theorem]{Example}
\numberwithin{equation}{section}
\renewcommand{\P}{\mathbf{P}}
\newcommand{\Prob}[1]{\mathbf{P}\left\{#1\right\}}
\def\one{{\bf 1}}
\def\zero{{\bf 0}}
\newcommand{\E}{\mathbf{E}}
\newcommand{\modulus}{\tau}
\newcommand{\sA}{\mathcal{A}}
\newcommand{\sB}{\mathcal{B}}
\newcommand{\sBA}{\mathcal{B}a}
\newcommand{\sC}{\mathcal{C}}
\newcommand{\sD}{\mathcal{D}}
\newcommand{\sG}{\mathcal{G}}
\newcommand{\sJ}{\mathcal{J}}
\newcommand{\sF}{\mathcal{F}}
\newcommand{\sK}{\mathcal{K}}
\newcommand{\sM}{\mathcal{M}}
\newcommand{\sN}{\mathcal{N}}
\newcommand{\sR}{\mathcal{R}}
\newcommand{\sS}{\mathcal{X}}
\newcommand{\sU}{\mathcal{U}}
\newcommand{\sV}{\mathcal{V}}
\newcommand{\sX}{\mathcal{X}}
\newcommand{\sY}{\mathcal{Y}}
\newcommand{\sZ}{\mathcal{Z}}
\newcommand{\NN}{\mathbb{N}}
\newcommand{\DD}{\mathbb{D}}
\newcommand{\EE}{\mathbb{E}}
\newcommand{\II}{\mathbb{I}}
\newcommand{\JJ}{\mathbb{J}}
\renewcommand{\SS}{\mathbb{S}}
\newcommand{\LL}{\mathbb{L}}
\newcommand{\QQ}{\mathbb{Q}}
\newcommand{\R}{\mathbb{R}}
\newcommand{\UU}{\mathsf{U}}
\newcommand{\VV}{\mathsf{V}}
\newcommand{\XX}{\mathbb{X}}
\newcommand{\YY}{\mathbb{Y}}
\newcommand{\ZZ}{\mathbb{Z}}
\newcommand{\XXT}{\tilde{\XX}}
\newcommand{\sfD}{\mathsf{D}}
\newcommand{\fF}{\mathfrak{F}}
\newcommand{\fK}{\mathfrak{K}}
\newcommand{\Cont}{\mathbb{C}}
\newcommand{\Dfun}{\mathbb{D}}
\newcommand{\fC}{\Cont}
\newcommand{\Cp}{\Cont_{\mathrm{p}}}
\newcommand{\USC}{\mathrm{USC}}
\newcommand{\cone}{\mathsf{C}}
\newcommand{\Measures}{\mathsf{M}}
\newcommand{\Msigma}[1][\sS]{\Measures_\sigma(#1)}
\newcommand{\Mb}[1][\sS]{\Measures(#1)}
\newcommand{\Mp}[1][\sS]{\Measures_{\mathrm{p}}(#1)}
\newcommand{\Mpk}[1][k]{\Measures_{\mathrm{p},#1}(\sX)}
\newcommand{\Mpx}[1][k]{\sM_{\mathrm{p}}^{(#1)}(\sX)}
\newcommand{\Mpi}[1][\sX]{\sM_{\mathrm{p}}^{(1)}\big(#1)}
\newcommand{\Mpik}[1][\sX]{\sM_{\mathrm{p}}^{(k)}\big(#1\big)}
\newcommand{\Mpiko}[1][\sX]{\sM_{\mathrm{p}}^{(k+1)}\big(#1\big)}
\newcommand{\Mpikm}[1][\sX]{\sM_{\mathrm{p}}^{(k-1)}\big(#1\big)}
\newcommand{\Mone}[1][\XX]{\Measures_1(#1)}
\newcommand{\Lp}[1][p]{\mathbb{L}^{#1}}
\newcommand{\Rpp}{(0,\infty)}
\newcommand{\base}{U}
\newcommand{\dmet}{\mathsf{d}}
\newcommand{\shift}{\phi}
\newcommand{\me}{{\boldsymbol{m}}}
\newcommand{\Me}{{\boldsymbol{M}}}
\newcommand{\Be}{{\boldsymbol{B}}}
\newcommand{\De}{{\boldsymbol{D}}}
\newcommand{\Npp}{{\boldsymbol{N}}}
\newcommand{\etap}{{\boldsymbol{\eta}}}
\newcommand{\zetap}{{\boldsymbol{\zeta}}}
\newcommand{\xip}{{\boldsymbol{\xi}}}
\newcommand{\colons}{\colon}
\newcommand{\diff}{\,\mathrm{d}}
\newcommand{\bdiff}{\mathrm{d}}
\newcommand{\mydot}{{\raisebox{.3ex}{$\scriptscriptstyle{\,\bullet\,}$}}}
\newcommand{\eps}{\varepsilon}
\renewcommand{\emptyset}{\varnothing}
\newcommand{\ttau}{\hat\modulus}
\newcommand{\suptau}{\bar\modulus}
\newcommand{\steiner}{\mathsf{s}}
\newcommand{\wto}{\overset{\scriptscriptstyle{w}}{\longrightarrow}}
\newcommand{\vto}[1][\sS]{\xrightarrow{\scriptscriptstyle{#1}}}
\newcommand{\dto}{\overset{\scriptscriptstyle{d}}{\longrightarrow}}
\DeclareMathOperator{\diam}{diam}
\DeclareMathOperator{\pr}{pr}
\DeclareMathOperator{\cl}{cl\,}
\DeclareMathOperator{\Int}{Int}
\DeclareMathOperator{\RV}{RV}
\DeclareMathOperator{\supp}{supp}
\DeclareMathOperator{\card}{card}
\DeclareMathOperator{\conv}{conv}
\newlength{\querylen}
\renewcommand{\paragraph}{\subsubsection}
\begin{document}

\title{Foundations of regular variation on topological spaces}

\author{Bojan Basrak, Nikolina Milin\v{c}evi\'c, Ilya Molchanov\\
University of Zagreb, University of Bern}

\date{\today}
\maketitle

\begin{abstract}
  Since its introduction by J. Karamata, regular variation has evolved
  from a purely mathematical concept into a cornerstone of theoretical
  probability and statistical data analysis. It is extensively studied
  and applied 
  in different areas. Its significance lies in characterising large
  deviations, determining the limits of partial sums, and predicting
  the long-term behaviour of extreme values in stochastic
  processes. Motivated by various applications, the framework of
  regular variation has expanded over time to incorporate random
  observations in more general spaces, notably Banach
  spaces and Polish spaces.
	
  In this monograph, we identify three fundamental components of
  regular variation: scaling, boundedness, and the topology of the
  underlying space. We explore the role of each component in detail
  and extend a number of previous results to general
  topological spaces. Our abstract approach unifies various
  concepts appearing in the literature, streamlines existing proofs,
  and paves the way for novel contributions. These include a generalised
  theory of (hidden) regular variation for random measures and sets
  and an innovative treatment of regularly varying random functions and
  elements scaled by independent random quantities, among other
  advancements.  Throughout the text, key results and definitions are
  illustrated by instructive examples, including extensions of several
  established models. By bridging abstraction with
  practice, this work aims to deepen both the theoretical
  understanding and the methodological applicability of regular variation.
\end{abstract}

\newpage

\tableofcontents

\newpage

\chapter{Preface}
\label{sec:introduction}

\textit{Regular variation in probability theory.}
Regular variation is recognised as a crucial analytical tool in both
theoretical and applied probability. It often serves as the primary model
for the tail behaviour of probability measures.  In the classical setting
of random variables on the real line, this property essentially means that
the probability measure of $[t,\infty)$ (that is, the probability that
a random variable is at least $t$) for large $t$ behaves like a
negative power of $t$ multiplied by a slowly varying terms (e.g.,
of logarithmic order). The foundations of this theory were laid by
J.~Karamata in the 1930s and later refined by L.~de Haan
\cite{haa70} and E.~Seneta \cite{seneta72}. N.~Bingham, Ch.~Goldie and
J.~Teugels \cite{bin:gol:teu} provide a comprehensive account of
regular variation in mathematics that ranges well beyond its
role in probability theory and includes applications in number
theory, complex analysis, and differential equations. Further
generalisations can be found in \cite{bul:kle}.

To date, the concept of regularly varying random variables is well
understood, along with its multivariate extension for random vectors;
see, for example,
\cite{Haan:Ferreira2010,res87,resnick:2007}. Starting with regular
variation in the space of continuous functions on $[0,1]$ studied by
\cite{haan-lin2001}, these classical constructions have been
extended to random elements in complete separable
metric spaces in \cite{hul:lin06},
\cite{lin:res:roy14} and \cite{seg:zhao:mein17}.
A recent book \cite{MR4926839}
provides an overview of extensions of regular
variation for functions, proposing a new definition of generalised
regular variation. Using the algebraic framework of Popa groups, the
authors effectively connect this new theory with the classical forms
of regular variation used in mathematical analysis. Their work,
however, does not address the regular variation of
probability distributions or measures, which is 
central to this monograph. 

The regular variation property
plays a key role in the theory of extreme values;
see \cite{haa70,lea:lin:roo86}.
It is well known that the regular variation property characterises
probability distributions belonging to the domain of attraction of
non-Gaussian stable laws in Hilbert spaces (see \cite{MR0345155}) and
in Banach spaces (see \cite{MR0521695}).  It is also applicable to
random elements in general convex cones, which are semigroups equipped 
with a scaling operation; see \cite{davydov08}. Unlike the aforementioned
work, we do not equip our carrier space with any algebraic
structure, thereby permitting an investigation of
regular variation \emph{per se}.

\smallskip

\textit{Vague convergence.} 
It is well understood that the concept of regular variation is
naturally related to the idea of \emph{vague convergence}, which
adapts the classical concept of weak convergence to infinite
measures by restricting them to a 
family of sets that are bounded in some specific sense.

Various approaches to extending regular variation and vague
convergence to general metric spaces have appeared in the literature.  One
is based on the concept of a modulus and was developed with
so-called star-shaped metric spaces in mind; see
\cite{seg:zhao:mein17} and also \cite{bladt:hash22}, who pushed this
machinery beyond the star-shaped spaces framework. Another approach is
based on an abstract concept of boundedness (or bornology); see
\cite{bas:plan19} and \cite{kul:sol20}, who developed
an approach to 
vague convergence closely related to the ideas in \cite{kalle17}.  As one of our
auxiliary, but potentially useful results, we show that these
two approaches are equivalent under mild topological assumptions.

Observe also that there exist earlier generalisations of regular
variation made by mimicking the classical constructions in Euclidean
spaces --- presented, e.g., in
\cite{mikosch24:_extrem_value_theor_time_series} and
\cite{resnick24:_art_findin_hidden_risks} --- by excluding the zero
point or a closed cone from the underlying separable metric
space; see \cite{haan-lin2001}, \cite{hul:lin06} and
\cite{lin:res:roy14}. However, it is not often recognised that the
resulting theory depends not only on the excluded cone, but also on
the collection of bounded sets in the residual space
remaining after this exclusion. In other words, along with what is
excluded, it is important to know how the exclusion is carried out. We
illustrate this phenomenon  with several simple examples; see, for instance,
Example~\ref{ex:moduli-2}, which deals with the very classical
situation of the joint regular variation of two independent Pareto
random variables.

Traditionally, generalisations of regular variation
have been mostly derived by replicating definitions available in
Euclidean spaces; for an overview, see
\cite{kul:sol20}. Therefore, it has typically been
assumed that the underlying space is metric and separable, or 
Polish. In the sequel, we show that metrisability of the carrier
space is not necessary for the study of the tail behaviour of measures;
in this way, we disentangle the roles of the metric
(which is only necessary to determine the topology) and scaling in the 
study of regular variation. Examples of non-metrisable spaces are
abundant, for instance, the space of continuous functions with pointwise
convergence (the $\Cp$-space, see \cite{tkach11}), 
infinite-dimensional Banach spaces with weak topologies, or duals to
Banach spaces with a weak-star
topology. Further important examples of non-Polish spaces arise as
continuous images of Polish spaces, which are (not necessarily
metrisable) Souslin spaces. Even in the context of metric spaces,
separability is violated in many natural examples, such as the space
of bounded real-valued sequences with the uniform distance or the
space of functions of bounded variation.

There are many motivations for generalising regular variation and the
related notion of vague convergence: (i) eliminating irrelevant assumptions
is natural and appealing from the mathematical point of view; (ii)
relying on the general concepts of scaling, ideal (boundedness) and topology
simplifies applications of the theory in various settings; and (iii)
many intriguing examples do not fit the standard theory and some insights
are only revealed after such a generalisation.

\smallskip

\textit{Role of topology.} In this work we build the general theory of
regular variation on topological spaces, starting from first
principles and relying on the idea of a measurable group action on a
topological space.  For a treatise on the weak convergence of bounded
measures on topological spaces we refer to
\cite{bogachev07,bogachev18}.  In the following we extend these ideas
to unbounded measures and define the corresponding concept of
\emph{vague convergence}. This substantially generalises the setting
of \cite{MR2271177}. A central role is played by a general
definition of vague convergence with respect to an ideal of sets,
which extends the recently introduced concepts of
boundedness (or bornology) on the space; see \cite{bas:plan19} and
\cite{kul:sol20}. To minimise topological assumptions, we work
with measures on the Baire $\sigma$-algebra on the carrier space. If
this space is perfectly normal (which is always the case if the space is
metrisable), the Baire and Borel $\sigma$-algebras coincide and all
the results below directly apply to the Borel setting.

Since we do not
require that the underlying space is sequential, in order to ensure
broader applicability of the defined vague convergence, we work with
convergence of nets, even though the regular variation
property is defined by taking limits of measures parametrised by a
real number. In view of this, some results require continuity of
measures with respect to increasing nets; such measures are usually
called $\tau$-additive. This property makes it possible to work with
measures on non-separable spaces. As a replacement for separability,
we rely on the hereditary Lindel\"of property, which means that every
open covering of any open set admits a countable subcovering.

\smallskip

\textit{Scaling.} The essential ingredients in the study of regularly
varying measures are scaling and functions that are homogeneous under
that scaling --- referred to as moduli or gauge functions. Such functions
provide information on how ``extreme'' or ``unusual'' an element of the
space is. Homogeneous measures (also called tail measures) arise as
limiting objects characterising the tail behaviour of regularly varying
measures. It is known that homogeneous measures naturally arise as
L\'evy measures of stable laws on semigroups; see
\cite{davydov08,evan:mol18}.  Generic representation results for
homogeneous measures on measurable spaces (without topological
assumptions) were obtained in \cite{evan:mol18} and subsequently
rediscovered in \cite{dombry18:_tail}.  In contrast to
\cite{evan:mol18}, we consider measures that approach
a homogeneous (tail) measure only in the limit.  The rate of decay of
the corresponding tail probabilities is called
the tail index and is denoted by $\alpha$.  We
systematically work with the case of a positive index
$\alpha$. The case of a negative $\alpha$ is easily obtained by
redefining the scaling or by a continuous mapping argument.

While our standing assumption is the Baire measurability of the
scaling operation, additional properties -- such as the continuity of the
scaling and the existence of a continuous modulus -- make it possible to
provide equivalent characterisations of the regular variation
property, which then become easier to check. Such restrictions are common in
other works, notably in \cite{bladt:hash22}, where continuity
is tacitly added starting in Section~3 as a standing assumption to
handle vague convergence.

\smallskip

\textit{Ideals.}
As explained in \cite{bas:plan19}, the definition of vague
convergence refers to a family of sets designated as
bounded sets. These sets form a bornology (or boundedness) on the
carrier space. A very similar idea, under the name of localisation, is
used in \cite{kalle17} in the context of Polish spaces. This concept
is a special case of a collection of sets called
an ideal (the dual of a filter), which
becomes a bornology if it covers the entire space.  The systematic use
of ideals simplifies and generalises results concerning vague
convergence, leads to a useful formulation of a continuous mapping
theorem, and makes it possible to view the ideal as a variable
parameter in the definition of regular variation. It also makes it
possible to keep the background topological space fixed and, instead
of excluding some 
cones from it as is common in \cite{lin:res:roy14}, work with different
ideals on the same space. As our examples show, it is important not
only what is excluded from the space, but also how bounded sets
approach the excluded region. We show that this effect appears even
in the plane. In contrast to previous work, we
also consider ideals that might contain some
scaling-invariant elements. This is particularly essential when working
with regular variation on the space of sets, since scaling-invariant 
families of closed sets are abundant.

A constructive definition of an ideal employs
a homogeneous function (modulus) on the carrier space. 
The simplest case arises
when the space does not contain scaling-invariant elements and admits
a strictly positive continuous modulus. We refer to such a modulus as
\emph{proper}. Its upper level sets  
naturally generate a bornology (or ideal); see
\cite{seg:zhao:mein17}.  However, there exist natural examples (e.g.,
$\R^\infty$ with the topology of pointwise convergence)
where a proper modulus does not exist. In the context of
c\`adl\`ag functions (right-continuous functions with left limits),
this was already 
observed in \cite{bladt:hash22}, which utilised a countable family of
moduli that are homogeneous functions, possibly vanishing on some
non-trivial elements of the carrier space. We advance and generalise this approach
by identifying the main features that explain this situation and clarifying
the relations between ideals and homogeneous functions. In particular,
we  establish the equivalence between the existence of a
continuous modulus and the topological properties of the ideal.

Furthermore, we present a construction for the product of ideals and
apply it to describe the phenomenon of hidden regular variation
without relying on a metric on the underlying space.
In the case of a metric space, we show that this approach recovers the
conventional 
construction based on excluding a neighbourhood of a given cone.

The equivalence of Baire and Borel $\sigma$-algebras in metric spaces
allows for the significant simplification of many proofs, highlighting the
roles of the separability and completeness assumptions.  In this context,
we recover known results on regular variation in Polish spaces.
Even here, our abstract approach
simplifies many existing results --- most
notably, the equivalence between the regular variation property and
the existence of a spectral measure arising from the polar
decomposition.

\medskip

\textit{Organisation and main new results.}
This monograph is organised as follows. Chapter~\ref{sec:scaling-bornology}
introduces the main topological concepts and the definitions of a modulus
and an ideal. We identify properties of an ideal which ensure that it
is generated by a continuous modulus. Furthermore, we
introduce the concept of products of ideals, which is essential for
formalising results on hidden regular variation. It also
provides a general representation of homogeneous
measures. While our main setting is that of Baire measures, the results
become applicable to Borel measures on perfectly normal (most
importantly, metric) carrier spaces, where Borel and Baire
$\sigma$-algebras coincide. 

Chapter~\ref{sec:vague-convergence-rv} 
begins by formalising vague convergence in general topological spaces
and proceeds to develop the theory of regularly varying measures. The
key r\^{o}le is played by functionally open and functionally closed
sets, which replace open and closed sets appearing in the classical
Portmanteau theorem. 
We discuss continuous mappings of regularly varying measures, the hidden regular 
variation phenomenon, and extensions to product spaces. In relation to
the latter point, our principal contribution in
Section~\ref{sec:scal-prod-spac}
is a significant generalisation of Breiman’s
lemma dealing with the product of a regularly varying random
element and a functional thereof. Furthermore, in
Section~\ref{sec:quotient-spaces-under} we present a 
comprehensive characterisation of regular 
variation for equivalence classes in quotient spaces. It should be
noted that passing to quotient spaces may lead to the loss of many
important topological features, and here our general setting of
regular variation in topological spaces is particularly useful.

In the setting of metric spaces adopted in
Section~\ref{sec:regul-vari-polish}, we develop an approximation
framework tailored to the analysis of regularly varying distributions. This
approach also yields a deeper understanding of regular variation in Banach and
in Hilbert spaces, extending recent works  in the
Hilbert space setting \cite{MR4745556} and  in Banach
spaces \cite{kim:kok24,janssen23}.

A substantial part of our work is contained in
Section~\ref{sec:examples}, which is devoted
to the application of these general tools in specific spaces. 
\begin{itemize}
\item We explain how various kinds of regular variation on
  the line are related to each other and to the continuous mapping argument.
\item The role of different ideals on Euclidean spaces in relation to
  regular variation is highlighted.
\item We characterise regular variation on the space of sequences
  with various topologies. 
\item We provide a formal proof of the regular variation of continuous
  random functions in relation to their finite-dimensional
  distributions and discuss the role of various ideals on the space of
  functions. 
\item We address the case of non-metrisable functional spaces, namely,
  the space of continuous functions with the topology of pointwise
  convergence. 
\item We discuss regular variation in spaces of functions that are not
  necessarily continuous, namely, c\`ad\`ag functions, upper
  semicontinuous functions and Sobolev spaces. 
\item We systematically explore regular variation of point processes
  (random counting measures) and obtain new characterisation results
  for the regular variation property.
  Here we cover the full
  spectrum of hidden regular variation for very general
  scaling schemes. This also leads to  substantially simplified
  proofs of several known results from \cite{dtw22}
  for marked point processes with
  scaling applied to the marks.  
\item We systematically study regularly varying random closed sets and
  connect them to the properties of functions and point
  processes. We obtain new results on the regular variation of random
  compact sets, including a substantial range of hidden regular
  variation properties. Further results relate the hidden regular
  variation of point processes to the hidden regular variation of random closed
  sets obtained as their supports.  
\end{itemize}

\chapter{Scaling, ideals, and homogeneous measures}
\label{sec:scaling-bornology}

\section{Group action}
\label{sec:group-acti-invar}

\paragraph{Topological spaces}
Let $\XX$ be a \emph{topological space}, that is, $\XX$ is a set with a
specified family $\sG$ of open subsets (called the topology)
such that $\XX$ and $\emptyset$
are open, and the family $\sG$ is closed under arbitrary unions and finite
intersections. The complements of the sets in $\sG$ constitute the family
$\sF$ of closed sets. For any set $B$, the intersection of all closed
sets containing $B$ is called the \emph{closure} of $B$ and the
\index{closure} \index{interior}
union of all open subsets of $B$ is the \emph{interior} of $B$. We
denote
the closure and the interior of a set $B$ in $\XX$ by
$\cl B$ and $\Int B$, respectively.

The space $\XX$ is \emph{Hausdorff}
\index{space!Hausdorff}
if any two distinct points have disjoint open neighbourhoods.
A stronger property is that any two distinct points can
be separated by a continuous function; this defines a \emph{completely
  Hausdorff} space.
\index{space!completely Hausdorff}
A space is said to be \emph{completely regular} 
\index{space!completely regular}
if every closed set and any point not contained in it
can be separated by a continuous function.  

A \emph{functionally closed}
\index{functionally closed set}
set is the inverse image of
a closed subset of $\R$, equivalently of
$\{0\}$, under
a continuous real-valued function, and a \emph{functionally open}
\index{functionally open set}
set is its complement. The space $\XX$ is said to be \emph{perfectly
  normal}
\index{space!perfectly normal}
if any two disjoint closed sets are precisely separated by a
continuous function, that is, there exists a function
$f:\XX\to[0,1]$ that takes the value $0$ exactly on the first set and the
value $1$ exactly on the second.
In such a space, every open (closed) set is functionally open
(closed).

Recall that $\XX$ is called a \emph{Polish space}
\index{space!Polish} \index{metric}
if it is metrisable by a
metric $\dmet$ under which it becomes a complete separable metric space.  Another
frequently imposed assumption is that $\XX$ is a \emph{Souslin space},
\index{space!Souslin}
which is a continuous image of a Polish space under a mapping to a
Hausdorff space. Equivalently, it is a space homeomorphic to an analytic
subset of a compact metric space; see, e.g.,
\cite[Definition~6.6.1]{bogachev07}.
Souslin spaces arise naturally in descriptive set theory and analysis,
where they serve as a generalisation of Polish spaces in contexts
involving Borel and analytic sets. 

The space $\XX$ is said to be \emph{first countable}
\index{space!first countable}
if each point has
a countable base of its neighbourhoods. This property implies that
$\XX$ is \emph{sequential},
\index{space!sequential}
meaning that its topological properties can be
expressed in terms of sequences. It is important to note that the
reverse implication, i.e., that every sequential space is
first countable, does not hold in general. First countability is a
stronger condition.

In order to deal with a non-sequential topological space $\XX$,
  one can use the concept of a \emph{net}.
\index{net} \index{limit} A
non-empty set $\Gamma$ is called \emph{directed} \index{directed set} if it
is partially ordered and for all $s,t\in \Gamma$ there exists an
$r\in \Gamma$ such that $s\leq r$ and $t\leq r$. For example, the
family of all open neighbourhoods of a point forms a directed set
ordered by the reverse inclusion. A map $\gamma\mapsto x_\gamma$ from
$\Gamma$ to $\XX$ is called a \emph{net}; see
\cite[Section~1.6]{eng89}. A net $x_\gamma$ is said to converge to
$x\in\XX$ (notation $x_\gamma\to x$) if, for each open set $U$
containing $x$, there exists a $\gamma_0\in\Gamma$ such that
$x_\gamma\in U$ whenever $\gamma_0\leq \gamma$. In a Hausdorff space,
each convergent net has at most one limit, denoted by $\lim_\gamma x_\gamma$. It
is well known that a point $x$ belongs to the closure of $A\subset\XX$
if and only if $x$ is a limit of a net consisting of elements of $A$;
see, e.g., \cite[Proposition~1.6.3]{eng89}.

A topological space $\XX$ is \emph{compact} \index{space!compact} if,
for any family of open sets that covers $\XX$, there is a finite
subfamily that also covers $\XX$. If $\XX$ is Hausdorff, this is
equivalent to the finite intersection property, which states that if
$\{F_s\}_{s\in S}$ is any family of closed sets such that each finite
subfamily has a non-empty intersection, then the entire family has a 
non-empty intersection; see \cite[Theorem~3.1.1]{eng89}. In 
Hausdorff spaces this is also equivalent to the fact that every net in
$\XX$ has a cluster point; see \cite[Theorem~3.1.23]{eng89}. A set $A\subset
\XX$ is compact if it is a compact space under the topology induced
from $\XX$. In Hausdorff spaces, compact sets are necessarily
closed. Furthermore, $A$ is \emph{relatively compact} if its closure
in $\XX$ is compact. A space $\XX$ is locally compact if each point has a
relatively compact neighbourhood.
\index{relatively compact set}
\index{space!locally compact}

Denote by $\sB(\XX)$ the \emph{Borel} $\sigma$-algebra on $\XX$ and by
$\sBA(\XX)$ the \emph{Baire} $\sigma$-algebra, which is the smallest
\index{Borel sigma-algebra@Borel $\sigma$-algebra}
\index{Baire sigma-algebra@Baire $\sigma$-algebra}
$\sigma$-algebra under which all continuous real-valued functions
$f:\XX\to\R$ are measurable. Since every continuous function is Borel
measurable, $\sBA(\XX)\subset\sB(\XX)$. In a perfectly normal (e.g.,
metric) space each open set is functionally open; consequently, the Borel
and Baire $\sigma$-algebras coincide. In particular, this is the case
for all Polish spaces. 

\paragraph{Scaling}
We always assume that a topological space $\XX$ is equipped with a
\emph{Baire measurable action} of the multiplicative group
$\Rpp$, that is, with a
$\sBA(\XX)\otimes\sB(\Rpp)/\sBA(\XX)$-measurable map
$(x, t)\mapsto T_t x$ from $\XX\times \Rpp$ to $\XX$ that
satisfies the following two conditions:
\index{scaling}
\begin{enumerate}[({A}1)]
\item for all $x\in \XX$ and $t, s>0$, $T_t(T_sx)= T_{ts} x$;
\item for all $x\in \XX$, $T_1 x = x$.
\end{enumerate}
The action of $T_t$ is called \emph{scaling} by $t$.  Henceforth, we
fix a generic scaling $T_t$ on the space $\XX$. For any $A\subset\XX$,
we denote the set $\{T_tx\colons x\in A\}$ by $T_tA$.

The assumptions on the scaling imply that each map $T_t$ is a
bijection. The scaling is
said to be \emph{continuous} if the map $(x,t)\mapsto T_tx$
is continuous on $\XX\times\Rpp$
with the product topology.
\index{scaling!continuous}
The continuity assumption immediately
implies the Baire measurability property, since the Baire
$\sigma$-algebra makes all continuous functions measurable. We use the term
\emph{space-continuous} for scalings that are continuous in the variable $x$
for every fixed $t\in\Rpp$.
\index{scaling!space-continuous}

A Borel measurable scaling is defined similarly by replacing the Baire
$\sigma$-algebra with the Borel one.  The space $\XX$ equipped with a Borel
measurable scaling is called a \emph{measurable cone};
\index{measurable cone} 
see \cite{bladt:hash22}.  On a Polish space $\XX$, the Baire measurability
and Borel measurability properties coincide. Furthermore, the scaling is
continuous if it is separately continuous in its two arguments, that
is, the map $x\mapsto T_tx$ is continuous on $\XX$ for all
$t\in\Rpp$ and the map $t\mapsto T_tx$ is continuous on $\Rpp$
for all $x\in\XX$; see \cite[Theorem~9.14]{MR1321597}.

The main example of scaling is the \emph{linear scaling}. In this case, 
$\XX$ is a topological vector space over the real numbers and the
scaling is defined as the multiplication by a positive scalar.
In this case, we write $tx$ instead
of $T_tx$ and denote this scaling using the usual product notation
$\mydot$.
\index{linear scaling}

\begin{lemma}
  \label{lemma:scaling}
  Fix an arbitrary $t>0$. If the scaling on a topological space
  is space-continuous, then the map
  $x \mapsto T_t x$ is both open and closed, and 
  $\cl(T_t B)=T_t\cl B$, $\Int T_tB=T_t\Int B$ for all $B\subset\XX$.
  % Furthermore, for all closed $A\subset\XX'$ and any $F_\sigma$ set
  % $I\subset\Rpp$, we have $T_IA\in\sB(\XX')$. The set $T_IA$ is
  % closed if $I$ is compact. For all $I\subset\Rpp$ and open $G$
  % the
  % set $T_IG$ is open.
  Furthermore, $T_tG$ is functionally open whenever $G$ is functionally open.
\end{lemma}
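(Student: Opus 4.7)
The plan is to observe first that axioms (A1) and (A2) together imply that for every $t>0$ the map $x\mapsto T_tx$ is a bijection of $\XX$ onto itself with inverse $x\mapsto T_{1/t}x$: indeed, $T_{1/t}(T_tx)=T_{(1/t)t}x=T_1x=x$ and symmetrically $T_t(T_{1/t}x)=x$ for every $x\in\XX$. By the assumed space continuity of the scaling, both $T_t$ and $T_{1/t}$ are continuous self-maps of $\XX$. Consequently $T_t$ is a homeomorphism of $\XX$. All remaining assertions in the statement are then standard properties of homeomorphisms; the only genuine content of the lemma is this identification, and there is no serious obstacle once (A1) and (A2) are combined with space continuity.

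With $T_t$ recognised as a homeomorphism, the open and closed map properties are immediate: if $U$ is open, then $T_tU=(T_{1/t})^{-1}(U)$ is the preimage of an open set under the continuous map $T_{1/t}$, hence open, and the identical argument works for closed sets. For the closure identity, continuity of $T_t$ gives $T_t(\cl B)\subset \cl(T_tB)$, while the closed-map property shows that $T_t(\cl B)$ is a closed set containing $T_tB$, whence $\cl(T_tB)\subset T_t(\cl B)$. The interior identity follows either by the analogous argument using the open-map property, or by applying the closure identity to the complement together with the bijectivity relation $T_t(\XX\setminus B)=\XX\setminus T_tB$.

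Finally, suppose $G$ is functionally open, so that $G=\XX\setminus f^{-1}(\{c\})$ for some continuous $f:\XX\to\R$ and some $c\in\R$. Set $g:=f\circ T_{1/t}$, which is continuous as a composition of continuous maps. Using bijectivity of $T_t$ together with $T_{1/t}=T_t^{-1}$, we get
\[
T_tG=\{T_tx : f(x)\neq c\}=\{y\in\XX : g(y)\neq c\}=\XX\setminus g^{-1}(\{c\}),
\]
which exhibits $T_tG$ as the complement of a functionally closed set, hence as a functionally open set. This completes the plan; the only place where space continuity is essential is in ensuring continuity of the inverse $T_{1/t}$, which in turn underlies every one of the four claimed properties.
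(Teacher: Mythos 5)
Your proposal is correct and follows essentially the same route as the paper: both rest on the observation that (A1)--(A2) make $T_t$ a bijection with $T_tA=T_{t^{-1}}^{-1}A$, so that space continuity of $T_{t^{-1}}$ yields the open/closed map property, and the functionally open claim is handled by precomposing with $T_{t^{-1}}$. The only difference is presentational: you package the closure and interior identities as standard consequences of $T_t$ being a homeomorphism, whereas the paper verifies them directly by chasing neighbourhoods, but the underlying argument is the same.
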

\begin{proof}
  If $A$ is open in $\XX$, then $T_tA=T_{t^{-1}}^{-1} A$ is also
  open, being the inverse image of $A$ under the continuous map
  $T_{t^{-1}}$. The same argument shows that $T_tA$ is closed whenever $A$
  is closed.
	
  The set $T_t\Int B$ is an open subset of $T_tB$, so
  $T_t\Int B\subset\Int T_tB$. Conversely, if
  $x\in \Int T_tB$, then an open neighbourhood $U$ of $x$ is a subset
  of $T_t B$. Then $T_{t^{-1}}U\subset B$ is an open neighbourhood of
  $T_{t^{-1}}x\in B$, implying that $T_{t^{-1}}x\in \Int B$, or equivalently,
  $x\in T_t\Int B$.
	
  Regarding the closure, $T_t\cl B$ is a closed set containing $T_tB$,
  hence, $T_t\cl B\supset\cl(T_tB)$. To show the reverse inclusion,
  assume that $x\in 
  T_t\cl B$ and consider an open neighbourhood $U$ of $x$. Since
  $T_{t^{-1}}U$ is an open neighbourhood of $T_{t^{-1}}x\in\cl B$, it
  must intersect $B$. Consequently, $U$
  intersects $T_t B$, which implies that $x\in\cl(T_tB)$.
  % By continuity of the scaling, the set $\{(x,t):T_{t^{-1}}x\in A\}$
  % is closed. Assume first that $I$ is compact. Then its intersection
  % with $\XX'\times I$ is also closed in $\XX'$ and the projection of
  % this intersection on $\XX$ is closed. The case of an $F_\sigma$ set
  % $I$ follows by representing $I$ as a countable union of compact
  % sets.
  % % Consider an open set $G$ and assume that $x\in T_IG$. Then $x=T_ty$
  % % for $t\in I$ and $y\in G$. Then $x\in T_tG$, and the set $T_tG$ is
  % % open by continuity and bijectivity of the scaling. Finally, notice
  % % that $T_tG$ is a subset of $T_IG$.
  % If $G$ is an open set, then $T_IG$ is open as the union of $T_tD$
  % for all $t\in G$. 

  If $G=\{x\colons f(x)>0\}$ for a continuous real-valued function $f$, then
  $T_tG=\{x\colons f(T_{t^{-1}}x)>0\}$ is also functionally open. 
\end{proof}

\paragraph{Cones, semicones, and scaling-invariant elements}
For $x\in\XX$, $A\subset\XX$, and $I\subset \Rpp$, we denote
$T_I x = \{T_t x\colons t\in I\}$, $T_t A = \{T_t x: x\in A\}$ and
$T_I A = \{T_t x\colons t\in I, x\in A\}$.  The set
$T_{(0,\infty)}x=\{T_tx\colons t\in(0,\infty)\}$
is called the \emph{orbit} of $x\in\XX$, representing
the set of scaled values of $x$ under the scaling action for 
all $t>0$.
\index{orbit}
If $\XX$ is Polish, every
orbit is Borel; see \cite[Section~15.14]{MR1321597}.
Due to the measurability of the scaling, $T_tB$ is a Baire set for every Baire
set $B$ and all $t>0$. This follows from the fact that $T_tB=T^{-1}_{t^{-1}}B$. 

\begin{definition}
  \label{def:cone}
  \index{cone} \index{semicone}
  A \emph{cone} in $\XX$ is a set $\cone\subset\XX$ such that
  $T_t\cone=\cone$ for all $t>0$.  A \emph{semicone} $\VV\subseteq\XX$ is a
  set such that $T_t \VV \subseteq \VV$ for all $t\geq 1$, equivalently,
  $T_{[1,\infty)}\VV=\VV$.
\end{definition}

Note that the entire space 
$\XX$ is trivially a cone, as scaling the entire space by any positive
factor will result in the space itself. 
If $T_tx\neq x$ for all $x\in\XX$ and $t\neq1$, then the action of the
Polish group $(\Rpp,\times)$ on $\XX$ is said to be \emph{free},
meaning that no element remains fixed under any non-trivial scaling; 
see \cite[Sec.~15.D]{MR1321597}.
\index{free action}
If this is not the case and $\XX$
possesses elements which are $T_t$-invariant for certain $t\neq 1$, then
these elements are said to be \emph{scaling-invariant} and are called
\emph{zeros}.
\index{zeros}
\index{scaling-invariant elements}
While in many cases all zeros are invariant under $T_t$
for all $t>0$, Remark~\ref{rem:sets-scaling} describes the situation
when there are elements invariant under $T_t$ only for some $t\neq1$
and then for all powers of this $t$. We denote by $\zero$ the set of zero
elements; in particular, $\zero$ is a singleton if the zero element is
unique or is empty if there are no zero elements. If $\XX$ is a
topological vector space with linear scaling, then $\zero=\{0\}$
is the singleton, being the zero vector in $\XX$.

\begin{lemma}
  \label{lemma:invariant}
  If the scaling on a topological space is continuous, then $\zero$ is
  an $F_\sigma$ set (that is, a countable union of closed sets).  If,
  additionally, $T_tx=x$ for at least one $t\neq1$ implies that
  $T_tx=x$ for all $t>0$, then $\zero$ is a closed set.
\end{lemma}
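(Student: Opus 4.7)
The plan is to write $\zero$ as a countable union of closed sets indexed by compact pieces of $\Rpp\setminus\{1\}$. The key observation is that for each fixed $t\in\Rpp$ the fixed-point set $F_t=\{x\in\XX:T_tx=x\}$ is closed: the map $x\mapsto(T_tx,x)$ is continuous by space continuity of the scaling, and $F_t$ is the preimage under this map of the diagonal $\Delta\subset\XX\times\XX$, which is closed under the tacit Hausdorff assumption on $\XX$.

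Next, I exhaust $\Rpp\setminus\{1\}$ by the compact sets $K_n=[n^{-1},1-n^{-1}]\cup[1+n^{-1},n]$ for $n\geq 2$, and set $G_n=\{x\in\XX:T_tx=x\text{ for some }t\in K_n\}$. To show $G_n$ is closed, consider $E=\{(x,t)\in\XX\times\Rpp:T_tx=x\}$, which is closed by continuity of the scaling and Hausdorffness of $\XX$. Then $G_n$ is the image of the closed set $E\cap(\XX\times K_n)$ under the projection $\pi:\XX\times K_n\to\XX$; since $K_n$ is compact, the closed-map lemma guarantees that $\pi$ is a closed map, so $G_n$ is closed in $\XX$. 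Because $\bigcup_{n\geq 2}K_n=\Rpp\setminus\{1\}$, one has $\zero=\bigcup_{n\geq 2}G_n$, which is $F_\sigma$.

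For the second assertion, under the added hypothesis that any single nontrivial fixer of $x$ forces $x$ to be fixed by every $T_s$, pick any $t\neq 1$: then $F_t\subset\zero$ by definition, while $\zero\subset F_t$ follows by applying the hypothesis to whichever $s\neq 1$ witnesses $x\in\zero$. Hence $\zero=F_t$ is closed by the first step.

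The main obstacle I anticipate is the closedness of $G_n$: the raw union $\bigcup_{t\neq 1}F_t$ is uncountable, so some compactification trick is essential. The closed-map lemma supplies it, relying crucially on the fact that each $K_n$ is compact and bounded away from $1$. One could equivalently argue with nets: from $x_\alpha\to x$ with $T_{t_\alpha}x_\alpha=x_\alpha$ and $t_\alpha\in K_n$, compactness of $K_n$ yields a subnet with $t_\beta\to t\in K_n$, and continuity of the scaling together with Hausdorff uniqueness of limits gives $T_tx=x$, so $x\in G_n$.
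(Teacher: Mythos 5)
Your proof is correct and follows essentially the same route as the paper: decompose $\zero$ over a compact exhaustion of $\Rpp\setminus\{1\}$ and use compactness of the parameter set together with joint continuity of the scaling (your closed-map lemma is just a packaged form of the paper's subnet argument), while for the second claim the paper runs a direct net argument and you instead observe $\zero=F_t$ for any fixed $t\neq1$ — both work. The only cosmetic difference is that the paper uses only intervals $[\eps,1-\eps]$ below $1$, implicitly relying on $T_tx=x\Leftrightarrow T_{t^{-1}}x=x$, whereas you cover both sides of $1$ explicitly.
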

\begin{proof}
  For $\eps\in(0,1/2)$, let $A_\eps$ be the set of $x\in\zero$ such
  that $T_tx=x$ for some $t\in[\eps,1-\eps]$. Assume that $(x_\gamma)$
  is a net in  $A_\eps$ such that $x_\gamma\to x$. Then there exist
  $t_\gamma\in[\eps,1-\eps]$ such that
  $T_{t_\gamma}x_\gamma=x_\gamma$. By
  passing to a subnet, we may assume that $t_\gamma\to 
  t\in[\eps,1-\eps]$. The continuity of the scaling implies that
  $T_{t_\gamma}x_\gamma\to T_tx$; since we also have
  $T_{t_\gamma}x_\gamma=x_\gamma\to x$, it follows that 
  $T_tx=x$, and thus $x\in A_\eps$. Therefore,
  $A_\eps$ is closed.  Finally, $\zero$ is an $F_\sigma$ set because it
  can be expressed as the union of the sets 
  $A_{1/m}$ for $m\ge3$.

  For the second statement, let $(x_\gamma)$ be a
  net of zero elements that converges to some $x\in \XX$. By the
  additional assumption, for each $\gamma$ and every $t>0$, we have
  $x_\gamma=T_tx_\gamma$. Taking the limit in $\gamma$, we obtain $T_t
  x=x$, which implies $x\in\zero$.
\end{proof}

The standing assumption in all previous work (see, e.g.,
\cite{bladt:hash22,dombry18:_tail,kul:sol20}) is that $\zero$ is
either a singleton or empty.
If $\XX$ is a Polish space with metric $\dmet$
and a continuous scaling extended to $t\in[0,\infty)$ such that
$\zero=\{0\}$ is a singleton and $T_0x=0$ for all $x\in\XX$, and if
$\dmet(0, x)<\dmet(0,T_tx)$ for all $t>1$ and $x\neq0$, then $\XX$
with this scaling is called a \emph{star-shaped} metric space;
see \cite{seg:zhao:mein17}.
\index{star-shaped metric space}
\index{space!metric star-shaped}

\section{Modulus}
\label{sec:modulus}

\paragraph{Homogeneous functions and proper moduli}

A function $f:\XX\to[-\infty,\infty]$ is said to be \emph{homogeneous}
if
\index{homogeneous function}
\begin{displaymath}
  f(T_tx)=t f(x),\quad t>0,\; x\in\XX.
\end{displaymath}
If $\XX$ contains scaling-invariant elements, then any homogeneous
function must either vanish or be infinite on them.

\begin{definition}
  \label{def:modulus}
  A \emph{modulus} (also called gauge or pseudonorm) on $\XX$ is a
  homogeneous Baire measurable function $\modulus:\XX\to [0,\infty]$,
  where $[0,\infty]$ is endowed with its Borel $\sigma$-algebra. A modulus
  is said to be \emph{proper} if $\modulus(x)\in(0,\infty)$ whenever
  $x$ is not scaling invariant. For $A\subset\XX$, we define
  \index{modulus} \index{proper modulus}
  \begin{equation}
    \label{eq:12}
    \ttau(A)=\inf\{\modulus(x)\colons x\in A\},
  \end{equation}
  and we let $\sS_\modulus$ be the family of all sets $A\subset\XX$ such that
  $\ttau(A)>0$. By convention, we set $\ttau(\emptyset)=\infty$.
\end{definition}

Due to the homogeneity property,
the concept of a modulus is intrinsically tied to the
scaling operation on the carrier space.
Note that Baire measurability of the modulus implies its Borel
measurability. If a modulus is continuous (with respect to the standard
topology on $[0,\infty]$), then it is obviously Baire measurable. If
$\XX$ is a subset of a normed linear space, then the norm is clearly a
continuous modulus.  In the following, we usually write $\{\modulus>t\}$
instead of $\{x\colons \modulus(x)>t\}$, with analogous interpretations of
$\{\modulus\in(0,\infty)\}$, $\{\modulus\geq t\}$ and
$\{\modulus=t\}$.  Unlike \cite{seg:zhao:mein17}, we allow a
modulus to vanish.  In certain cases, it is useful to allow the
modulus to take infinite
values; see \cite{bladt:hash22} in the context of 
Polish spaces.

\begin{lemma}
  \label{lemma:continuity-homogeneous}
  Assume that the scaling on a topological space
  is space-continuous. Then 
  a modulus $\modulus$ is continuous if and only if $\{\modulus<t\}$
  is open and $\{\modulus\leq t\}$ is closed for some
  $t\in(0,\infty)$, in which case these properties hold for all $t\in(0,\infty)$. 
\end{lemma}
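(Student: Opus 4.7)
The ``only if'' direction is immediate: if $\modulus$ is continuous, then for any $t\in(0,\infty)$ the set $\{\modulus<t\}=\modulus^{-1}([0,t))$ is open and $\{\modulus\leq t\}=\modulus^{-1}([0,t])$ is closed, since $[0,t)$ is open and $[0,t]$ is closed in $[0,\infty]$. So the real work is in the ``if'' direction, and the natural strategy is to (i) propagate the hypothesis from a single $t_0$ to all $t$ by scaling, and then (ii) conclude continuity of $\modulus$ by checking preimages of a sub-base of the topology on $[0,\infty]$.

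For step (i), the key identity is that homogeneity together with bijectivity of $T_s$ gives
\begin{displaymath}
  \{\modulus<t\}=T_{t/t_0}\bigl(\{\modulus<t_0\}\bigr),\qquad
  \{\modulus\leq t\}=T_{t/t_0}\bigl(\{\modulus\leq t_0\}\bigr),
\end{displaymath}
for every $t>0$, which one verifies directly from $\modulus(T_s x)=s\modulus(x)$. Since the scaling is assumed space continuous, Lemma~\ref{lemma:scaling} guarantees that $T_{t/t_0}$ is both an open and a closed map; consequently, openness of $\{\modulus<t_0\}$ transfers to $\{\modulus<t\}$ for every $t\in(0,\infty)$, and closedness of $\{\modulus\leq t_0\}$ transfers to $\{\modulus\leq t\}$.

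For step (ii), the topology on $[0,\infty]$ is generated by the sub-basic sets $[0,a)$ and $(a,\infty]$ for $a\in(0,\infty)$. Their preimages under $\modulus$ are $\{\modulus<a\}$, which is open by step (i), and $\{\modulus>a\}=\XX\setminus\{\modulus\leq a\}$, which is open as the complement of a closed set. Therefore $\modulus$ is continuous. The only subtlety is making sure that the propagation identity above is formulated correctly (one must use $T_{t/t_0}$ rather than its inverse), and recognising that continuity into $[0,\infty]$ only requires checking the two one-sided sub-bases; beyond that the argument is routine once Lemma~\ref{lemma:scaling} is invoked.
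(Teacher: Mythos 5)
Your proof is correct and follows essentially the same route as the paper: the paper's (one-line) sufficiency argument is precisely to apply $T_s$ to the sets $\{\modulus<t\}$ and $\{\modulus\leq t\}$ and invoke Lemma~\ref{lemma:scaling} (that $T_s$ is an open and closed map under space continuity), which is exactly your propagation step, followed by the routine sub-base check for continuity into $[0,\infty]$. You have merely written out the details the paper leaves implicit.
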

\begin{proof}
  \textsl{Necessity} is immediate. For \textsl{sufficiency}, suppose
  the condition holds for some $t$. Since $\modulus$ is homogeneous,
  we have $\{\modulus<st\}=T_s\{\modulus<t\}$ and $\{\modulus\leq
  st\}=T_s\{\modulus\leq t\}$ for any $s>0$. By 
  Lemma~\ref{lemma:scaling}, the space continuity of the scaling
  implies that $T_s$ is an open and closed map. Thus,
  $\{\modulus<st\}$ is open and $\{\modulus\leq
  st\}$ is closed for all $s>0$, which establishes the continuity of
  $\modulus$. 
\end{proof}

\paragraph{Transversal and polar decomposition}

Given a modulus $\modulus$, we define
\begin{equation}
  \label{eq:sphere}
  \SS_\modulus=\{x\in\XX\colons \modulus(x)=1\}.
\end{equation}
\index{transversal}
Note that every $x\in\XX$ with $\modulus(x)\in(0,\infty)$ can be
uniquely represented as
$x=T_tu$ for a unique $u\in\SS_\modulus$ and $t\in(0,\infty)$.
A Borel set $\SS$ in $\XX$ is said to be a
\emph{transversal} if every point $x\in\XX$ can be uniquely represented as
$x=T_tu$ for some $t\in\Rpp$ and $u\in\SS$. Consequently, $\SS_\modulus$ is
a transversal on $\{\modulus\in(0,\infty)\}$.

\begin{definition}
  \label{def:polar-decomposition}
  Assume that % the scaling acts free on $\XX$ and
  $\modulus$ is a modulus on $\XX$.  The \emph{polar decomposition} is
  the map \index{polar decomposition}
  $\rho:\{\modulus\in (0,\infty)\}\to\SS_\modulus\times\Rpp$
  given by
  \begin{displaymath}
    \rho(x) = \big(T_{\modulus(x)^{-1}}x, \modulus(x)\big).
  \end{displaymath}
\end{definition}

The polar decomposition is unique once the modulus, and consequently the
corresponding transversal $\SS_\modulus$, is fixed. 
If both the modulus and the scaling are continuous, then the
polar decomposition is a homeomorphism between
$\{\modulus \in(0,\infty)\}$ and $\SS_\modulus\times\Rpp$ with the
product of the topology induced on $\SS_\modulus$ and the Euclidean
topology on $\Rpp$.  In the
absence of continuity, one has to impose some stronger topological
properties on the spaces. For instance, if $\XX$ is Polish and the
modulus is not necessarily continuous, then the polar decomposition is
a measurable bijection between $\{\modulus\in(0,\infty)\}$ and
$\SS_\modulus\times\Rpp$ endowed with their respective Borel
$\sigma$-algebras.  For the sake of completeness, we provide the proof
of the following result from \cite[Lemma~3.3(iii)]{evan:mol18} for the case
of a Souslin space $\XX$. 

\begin{lemma}
  Assume that the scaling acts freely on a completely regular Souslin
  space $\XX$. Then a transversal exists if and only if there exists a
  proper modulus $\modulus$ on $\XX$ such that \eqref{eq:sphere} holds.  The
  map $x\mapsto(T_{\modulus(x)^{-1}}x,\modulus(x))$ is
  $\sB(\XX)/\sB(\SS)\otimes\sB(\Rpp)$-measurable, where $\sB(\SS)$ is
  the Borel $\sigma$-algebra induced on $\SS$.
\end{lemma}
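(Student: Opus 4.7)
I will prove the two implications in turn; the real content is measurably inverting the scaling restricted to $\SS \times \Rpp$, and the Souslin hypothesis is what makes this possible.

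\emph{Modulus implies transversal.} Given a modulus $\modulus$ with $\SS_\modulus = \{\modulus = 1\}$, Baire measurability places $\SS_\modulus$ in $\sBA(\XX) \subseteq \sB(\XX)$. For each $x$ with $\modulus(x) \in \Rpp$, homogeneity gives $T_{\modulus(x)^{-1}} x \in \SS_\modulus$, yielding $x = T_{\modulus(x)}\bigl(T_{\modulus(x)^{-1}} x\bigr)$. In the free-action setting, orbits on which $\modulus$ vanishes or is infinite would remain unrepresented, so $\modulus$ is implicitly proper here; granted that, $\SS_\modulus$ meets each orbit in exactly one point (by homogeneity and free action) and is a transversal.

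\emph{Transversal implies modulus.} Let $\SS \subset \XX$ be a Borel transversal intersecting each orbit in a single point, the standard notion under free action. The scaling restricts to a bijection
\begin{equation*}
  \psi : \SS \times \Rpp \longrightarrow \XX, \qquad (u, t) \longmapsto T_t u ,
\end{equation*}
which is Borel measurable because the scaling $\XX \times \Rpp \to \XX$ is Baire measurable and hence Borel measurable. The crux is measurably inverting $\psi$. The domain $\SS \times \Rpp$ is Souslin, being the product of a Borel subset of the Souslin space $\XX$ with the Polish space $\Rpp$, and the codomain $\XX$ is Souslin and Hausdorff. The Lusin--Souslin theorem (see, e.g., \cite[Theorem~6.8.6]{bogachev07}) states that a Borel injection between Souslin spaces into a Hausdorff space has a Borel measurable inverse on its image, so $\psi^{-1} : \XX \to \SS \times \Rpp$ is $\sB(\XX) / \sB(\SS) \otimes \sB(\Rpp)$-measurable. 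Set $\modulus(x) = \pr_2(\psi^{-1}(x))$. Homogeneity $\modulus(T_s x) = s\,\modulus(x)$ follows because $\psi^{-1}(T_s x) = (u, s t)$ whenever $\psi^{-1}(x) = (u, t)$, and $\{\modulus = 1\} = \psi(\SS \times \{1\}) = \SS$ by construction. The polar decomposition $x \mapsto (T_{\modulus(x)^{-1}} x, \modulus(x))$ coincides with $\psi^{-1}$, giving the asserted measurability.

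\emph{Main obstacle.} The decisive step is the Borel measurability of $\psi^{-1}$: a Borel bijection between Hausdorff spaces need not admit a Borel inverse without a descriptive-set-theoretic hypothesis, which is exactly where Souslin-ness earns its keep via Lusin--Souslin. Complete regularity plays only a supporting role, ensuring enough continuous real-valued functions for the Baire framework to be meaningful. A cosmetic subtlety is that Definition~\ref{def:modulus} demands Baire measurability of $\modulus$, whereas the argument above yields only Borel measurability; this is consistent with the Borel formulation in the lemma's conclusion, and the two $\sigma$-algebras coincide in the perfectly normal (in particular, metrisable) settings emphasised later in the paper.
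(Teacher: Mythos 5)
Your proof is correct and follows essentially the same route as the paper's: both directions hinge on the Lusin--Souslin theorem that an injective Borel map between Souslin spaces has a Borel measurable inverse, applied to $(u,t)\mapsto T_tu$. The one step worth tightening is your claim that the scaling is ``Baire measurable and hence Borel measurable'' --- this is not automatic in general, but it holds here because a completely regular Souslin space is perfectly normal, so the Baire and Borel $\sigma$-algebras coincide; this is exactly how the paper opens its proof, and it also disposes of the ``cosmetic subtlety'' about Baire versus Borel measurability of $\modulus$ that you flag at the end.
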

\begin{proof}
  % If $\XX$ is Polish, the measurability of the map is shown in
  % \cite[Lemma~3.3(iii)]{evan:mol18}. It relies on the joint
  % measurability of the map $(t,x)\mapsto T_tx$ and the Kuratowski
  % theorem, saying that a measurable bijection between two Borel spaces
  % has a measurable inverse, see \cite[Sec.~1.3]{par67} and
  % \cite[Th.~14.12]{MR1321597}.
  The topological assumptions on $\XX$ ensure that $\XX$ is perfectly
  normal, see \cite[Theorem~6.7.7]{bogachev07}, so that the Baire and
  Borel $\sigma$-algebras coincide.
  
  Assume that a transversal $\SS$ exists. If $x=T_tu$ for $x\in\XX$
  and $u\in\SS$, we set $\modulus(x)=t$. Since $\SS$ is Borel, the space
  $\SS\times\Rpp$ is Souslin. The map $(x,t)\mapsto T_tx$ is an
  injective Borel map between two Souslin spaces, hence, its inverse is
  Borel; see \cite[Theorem~6.7.3]{bogachev07}.

  By the definition of the transversal and the bijectivity of the
  scaling, for any $x\in\XX$, we have
  \begin{align*}
    \{\modulus(T_sx)\}
    &=\{t\colons T_sx=T_tu\; \text{for some}\; u\in\SS\}\\
    &=\{t\colons x=T_{s^{-1}}T_tu\; \text{for some}\; u\in\SS\}\\
    &=\{st'\colons x=T_{t'}u\; \text{for some}\; u\in\SS\}\\
    &=\{s\modulus(x)\}, \quad s\in\Rpp.
  \end{align*}
  Since no $x\in\SS$ is scaling invariant, 
  \begin{displaymath}
    \{\modulus(x)\}=\{t\colons x=T_tu\; \text{for some}\; u\in\SS\}=\{1\}. 
  \end{displaymath}
  Thus, $\modulus$ is indeed a modulus, and the polar representation map is
  measurable. 

  Now assume the existence of a modulus $\modulus$. Since $\modulus$ is Borel,
  $\SS$ is Borel. For each $x\in\XX$, letting $u=T_{\modulus(x)^{-1}}x$
  shows that $x=T_{\modulus(x)}u$ with $u\in\SS$. If $x=T_tu=T_sv$ for
  $u,v\in\SS$ and $t,s\in\Rpp$, then $\modulus(T_tu)=\modulus(T_sv)$,
  which implies $t=s$. It then follows that $u=T_{t^{-1}}x=v$.
\end{proof}

The following result collects several properties of measurable
scaling on Souslin spaces. Recall that all results concerning Souslin spaces
apply to Polish spaces as well. 

\begin{lemma}
  Assume that the scaling acts freely on a Souslin space $\XX$ and
  that $\modulus$ is a modulus on $\XX$ with transversal $\SS$.
  \begin{enumerate}[i)]
  \item For each Borel set $A$ in $\XX$, its image $T_tA$ is Borel
    for all $t\in\Rpp$.
  \item For all $I\in\sB(\Rpp)$ and $A\in\sB(\SS)$, we have
    $T_IA\in\sB(\XX)$.
  \item For each $B\in\sB(\XX)$ and $I\in\sB(\Rpp)$, the set $T_IB$ is Souslin.
  \end{enumerate}
\end{lemma}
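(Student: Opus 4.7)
The plan is to reduce all three parts to standard image-type theorems for Borel and Souslin sets (Bogachev \cite[Theorems~6.7.2 and~6.7.3]{bogachev07}) by viewing $T_tA$ and $T_IA$ as images or preimages of Borel sets under appropriate restrictions of the scaling. The unifying observation is that the scaling provides a Borel-measurable bijection in the settings of (i) and (ii), and a Borel-measurable (non-injective) surjection in the setting of (iii), all between Souslin spaces.

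\textbf{Part (i).} Fix $t\in\Rpp$. Axioms (A1)--(A2) imply that $T_t$ has two-sided inverse $T_{1/t}$, and Baire measurability of the scaling gives Borel measurability of $T_t$ as a map from the Souslin space $\XX$ to itself. One can finish in two equivalent ways: either write $T_tA=T_{1/t}^{-1}(A)$ and appeal to Borel measurability of $T_{1/t}$ directly, or invoke \cite[Theorem~6.7.3]{bogachev07} for injective Borel mappings between Souslin spaces to conclude that $T_tA$ is Borel whenever $A$ is Borel.

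\textbf{Part (ii).} Consider the parametrisation $\phi:\Rpp\times\SS\to\XX$ defined by $\phi(t,u)=T_tu$. Since $\SS$ is a Borel subset of the Souslin space $\XX$, the set $\SS$ is Souslin in its induced topology, and therefore the product $\Rpp\times\SS$ is Souslin as well. The free action together with the transversal property makes $\phi$ injective: if $T_{t_1}u_1=T_{t_2}u_2$ with $u_1,u_2\in\SS$, applying the modulus gives $t_1=t_2$, and bijectivity of $T_{t_1}$ then forces $u_1=u_2$. Being the restriction of the scaling, $\phi$ is Borel measurable. Using the identity $\sB(\Rpp\times\SS)=\sB(\Rpp)\otimes\sB(\SS)$ (valid for Souslin factors), the rectangle $I\times A$ is Borel in $\Rpp\times\SS$, and a second application of \cite[Theorem~6.7.3]{bogachev07} yields $T_IA=\phi(I\times A)\in\sB(\XX)$.

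\textbf{Part (iii).} Drop injectivity and use the full evaluation $\psi:\Rpp\times\XX\to\XX$, $\psi(t,x)=T_tx$, which is Borel measurable on the Souslin space $\Rpp\times\XX$. The rectangle $I\times B$ is Borel there, and the image of a Borel (in fact any Souslin) set under a Borel-measurable map into a Hausdorff space is Souslin, see \cite[Theorem~6.7.2]{bogachev07}. Hence $T_IB=\psi(I\times B)$ is Souslin.

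The main technical obstacle in this plan is the upgrade from Baire measurability of the scaling (our standing hypothesis) to Borel measurability of $T_t$ and of $\phi$, which is what allows Bogachev's image theorems to be invoked. This is handled by the Souslin hypothesis, under which the hereditary Lindel\"of property forces the relevant Baire and Borel $\sigma$-algebras to behave compatibly, exactly as exploited in the preceding lemma. Once this upgrade is established, all three claims follow as essentially immediate consequences of classical descriptive set theory.
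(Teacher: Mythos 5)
Your overall strategy---reducing all three parts to Bogachev's image/preimage theorems for Souslin spaces---is the same as the paper's, but the individual reductions differ. For (i) the paper does not use the preimage identity $T_tA=T_{1/t}^{-1}(A)$; it shows that both $T_tA$ and its complement $T_t(A^c)$ are Souslin and invokes the criterion that a set which is Souslin together with its complement is Borel (Corollary~6.6.10 in \cite{bogachev07}). For (ii) the paper writes $T_IA$ as the preimage of $A\times I$ under the polar decomposition map $x\mapsto(T_{\modulus(x)^{-1}}x,\modulus(x))$, whose measurability was established in the preceding lemma, rather than re-proving injectivity of the parametrisation $\phi(t,u)=T_tu$ and invoking the injective-image theorem a second time; these are of course inverse formulations of the same fact. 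For (iii) the paper realises $T_IB$ as the projection onto $\XX$ of the Borel set $\{(y,t):T_ty\in B\}\cap(\XX\times I^{-1})$ and cites the theorem on projections of Borel sets in products of Souslin spaces, which amounts to the same thing as your forward-image argument.

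The one place where your write-up is genuinely shakier than the paper's is the Baire-to-Borel upgrade. You claim it follows from the hereditary Lindel\"of property of Souslin spaces; that is not the mechanism. In the preceding lemma the coincidence of the Baire and Borel $\sigma$-algebras is obtained from \emph{complete regularity} together with the Souslin property (which yields perfect normality via Theorem~6.7.7 of \cite{bogachev07}); hereditary Lindel\"ofness alone does not make open sets functionally open, and the present lemma does not assume complete regularity. For a bare Souslin space the Baire $\sigma$-algebra can be strictly smaller than the Borel one, so your route $T_tA=T_{1/t}^{-1}(A)$ needs $T_{1/t}$ to be Borel (not merely Baire) measurable, and your appeal to the injective-image theorem needs the same for $\phi$. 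The paper's complement argument in (i) is chosen precisely because the bi-Souslin criterion is the robust way to extract Borelness here. This does not sink your proof---the paper itself implicitly uses the same measurability of the scaling---but the justification you offer for the upgrade is incorrect as stated and should either be replaced by an explicit complete-regularity hypothesis or by the bi-Souslin argument.
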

\begin{proof}
  i) By \cite[Theorem~6.7.3]{bogachev07}, $T_tA$ is Souslin. Let 
  $A^c=\XX\setminus A$ be the complement of $A$, which is also a Borel
  set. Since $T_t$ is a
  bijection on $\XX$, the set $T_tA^c$ is Souslin and constitutes the complement
  of $T_tA$. Since both $T_tA$ and its complement are Souslin, it
  follows that $T_tA$ is Borel; see \cite[Corollary~6.6.10]{bogachev07}.

  ii) It suffices to observe that
  $T_IA=\{x\colons (T_{\modulus(x)^{-1}}x,\modulus(x))\in A\times I\}$ and
  invoke the measurability of the polar decomposition.

  iii) Following the approach in
  \cite[Lemma~2]{evan:mol18}, let $I^{-1}=\{t^{-1}\colons t\in I\}$. Then,
  \begin{displaymath}
    T_IB=\{y\in\XX\colons T_ty\in B\;\text{for some}\; t\in I^{-1}\}
  \end{displaymath}
  is the projection on $\XX$ of the set $\{(y,t)\colons T_ty\in
  B\}\cap (\XX\times I^{-1})$. 
  Since the scaling is jointly measurable, the set of $(y,t)$ such
  that $T_ty\in B$ is Baire and therefore Borel. It suffices to note that
  $I^{-1}$ is Borel, so that $T_IB$ is a projection of a
  Borel %(hence, Souslin)
  set in $\XX\times\Rpp$. By \cite{rog:wil96} and
  \cite[Theorem~6.7.2]{bogachev07}, such a projection is necessarily Souslin.
\end{proof}

For a point $x\in\XX$ with $\modulus(x)\in(0,\infty)$, we call 
$T_{\modulus(x)^{-1}}x$ the directional component  and
$\modulus(x)$ the radial component of $x$. 
If $A\subset\SS_\modulus$, then $T_{(t,s)}A$ is the set of
all points in $\XX$ whose directional component lies in $A$ and radial
component belongs to the interval $(t,s)$. 
The following result implies that every open set in $\sS_\modulus$
(as defined in Definition~\ref{def:modulus}) 
can be represented as a union of sets of the form
$T_{(t,s)}(U\cap \SS_\modulus)$ for
functionally open sets $U$. Throughout the remainder of this section,
the scaling is assumed to be continuous. 

\begin{lemma}
  \label{lemma:sector}
  Assume that $\XX$ is a completely Hausdorff topological space 
  equipped with a continuous scaling and that $\modulus$ is a
  continuous modulus. Then, for each open set $G$ and each $x\in G$
  with $\modulus(x)\in(0,\infty)$, there exist a functionally open set
  $U$ and $0<t<s$ such that
  \begin{displaymath}
    x\in
    \Big\{y\colons \modulus(y)\in(t,s), T_{\modulus(y)^{-1}}y\in U\Big\}\subset G.
  \end{displaymath}
\end{lemma}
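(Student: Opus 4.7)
My plan is to pull $x$ back through the polar decomposition, use joint continuity of the scaling to cut out a product-form neighbourhood of $(u, r_0) := \rho(x)$ inside the open set $\phi^{-1}(G) \subset \XX \times \Rpp$ (where $\phi(y,r) := T_r y$), and then refine the $\XX$-factor to a functionally open set using the completely Hausdorff hypothesis.

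Writing $r_0 := \modulus(x) \in (0,\infty)$ and $u := T_{r_0^{-1}} x$, continuity of $\phi$ makes $\phi^{-1}(G)$ open, and $\phi(u, r_0) = T_{r_0}(T_{r_0^{-1}} x) = x \in G$ puts $(u, r_0)$ in it. By definition of the product topology I would then pick an open $V \ni u$ in $\XX$ and $0 < t < s$ with $r_0 \in (t, s)$ such that $V \times (t, s) \subset \phi^{-1}(G)$, equivalently, $T_r y \in G$ whenever $y \in V$ and $r \in (t, s)$. Appealing to the completely Hausdorff property of $\XX$, I would choose a continuous $f : \XX \to [0,1]$ with $f(u) = 1$ and $f \equiv 0$ outside $V$, and take the functionally open set $U := \{f > 1/2\}$, which satisfies $u \in U \subset V$.

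Both inclusions then follow directly. For $x$ itself, $\modulus(x) = r_0 \in (t, s)$ and $T_{\modulus(x)^{-1}} x = u \in U$, so $x$ lies in the displayed sector. Conversely, if $y$ satisfies $\modulus(y) \in (t, s)$ and $y' := T_{\modulus(y)^{-1}} y \in U \subset V$, then the identity $y = T_{\modulus(y)} y' = \phi(y', \modulus(y))$ combined with $V \times (t, s) \subset \phi^{-1}(G)$ forces $y \in G$.

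The main obstacle is the refinement to a functionally open $U$ contained in $V$: the completely Hausdorff axiom only separates pairs of points, so shrinking an arbitrary open neighbourhood of $u$ to a functionally open one inside it is not automatic. The natural workaround is to use continuity of $\modulus$, composing with a tent function on $[0, \infty]$ to see that the radial strip $\{\modulus \in (t, s)\}$ is already functionally open, and then apply the separation hypothesis only along the angular direction in $\SS_\modulus$, which is what the continuity assumptions on $\modulus$ and scaling were arranged to permit.
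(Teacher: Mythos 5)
Your first two steps are sound and essentially match the paper's use of continuity: pulling $G$ back through $\phi(y,r)=T_ry$ and extracting a basic product neighbourhood $V\times(t,s)$ of $(u,r_0)$ correctly yields ``$T_ry\in G$ for all $y\in V$, $r\in(t,s)$'', and your verification of the two displayed inclusions from that point on is fine. The gap is exactly where you suspect it is, and the workaround you sketch does not close it. The completely Hausdorff axiom separates \emph{pairs of points} by continuous functions; it does not separate a point from a closed set, so it gives you no functionally open neighbourhood of $u$ contained in the arbitrary open set $V$ (there are completely Hausdorff spaces that are not completely regular, and in such spaces the functionally open sets need not form a base of the topology). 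Making the radial strip $\{\modulus\in(t,s)\}$ functionally open via continuity of $\modulus$ is true but irrelevant: the obstruction lives entirely in the angular factor, where ``the separation hypothesis along the angular direction'' is still only pointwise separation and cannot manufacture a functionally open $U$ with $u\in U\subset V$.

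The paper closes this by refusing to shrink inside $V$ at all. It takes $\Gamma$ to be the directed set of \emph{all} functionally open neighbourhoods $U\ni x$, ordered by reverse inclusion --- complete Hausdorffness is used only to guarantee that $\bigcap_{U\in\Gamma}U=\{x\}$ --- fixes $\eps$ with $T_sx\in G$ for all $s\in[1-\eps,1+\eps]$ (your tube step, but applied only to the single point $x$), and then argues by contradiction: if no $U\in\Gamma$ satisfied $T_{[1-\eps,1+\eps]}U\subset G$, one could choose $y_U\in U$ and $s_U\in[1-\eps,1+\eps]$ with $T_{s_U}y_U\notin G$, pass to a subnet along which $s_U\to s$, use $y_U\to x$ together with joint continuity of the scaling to conclude $T_sx\notin G$, contradicting the choice of $\eps$. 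The functionally open $U$ obtained this way need not be contained in your $V$ --- that is precisely the point. To repair your write-up, either strengthen the hypothesis to complete regularity (under which your shrink of $V$ is legitimate) or replace your third step by this net argument.
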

\begin{proof}
  Let $\Gamma$ be the directed set of all functionally open
  sets $U$ containing $x$ partially ordered by the reverse inclusion.  The
  complete Hausdorff property implies that $x$ is the intersection
  of all $U\in\Gamma$. Without loss of generality, assume that
  $\modulus(x)=1$.  Since the scaling is continuous, the set
  $\{(y,t)\colons T_ty\in G\}$ is open in the product space
  $\XX\times\Rpp$ and contains $(x,1)$. Consequently, it also contains
  a product set of the form
  $\{x\}\times [1-\eps,1+\eps]$ for some $\eps\in(0,1)$, implying that
  $T_s x\in G$ for all $s\in[1-\eps,1+\eps]$. Fix this $\eps$.

  Arguing by contradiction, assume that for all functionally open
  sets $U\in\Gamma$, we have $T_{[1-\eps,1+\eps]}U\not\subset
  G$. Then there exists a net $(y_U)_{U\in\Gamma}$ such that
  $y_U\in U$ and $T_{s_U}y_U\notin G$ for some $s_U\in[1-\eps,1+\eps]$. By
  passing to a subnet, we may assume that $s_U\to s\in[1-\eps,1+\eps]$. Since
  $x$ is the intersection of its neighbourhoods, it follows that
  $y_U\to x$. The continuity of the scaling implies that
  $T_{s_U}y_U\to T_sx$. Since $G^c$ is closed, we must have $T_sx\notin G$, which is a
  contradiction.  Thus, $T_{[1-\eps,1+\eps]}U\subset G$ for some
  functionally open set $U$. Note that $x\in U$ by the construction of
  $\Gamma$.
  % By Lemma~\ref{lemma:neighb}, for each $x\in G_i$ there exists a
  % functionally open $U_i$ such that $x\in U_i$ and
  % $T_{(1-\eps,1+\eps)}U_i\subset G_i$ for some $\eps>0$. 
  Therefore, 
  \begin{displaymath}
    x\in U=T_{(1-\eps,1+\eps)}\big(U\cap\SS_\modulus\big)\subset G. \qedhere
  \end{displaymath}
\end{proof}

If the scaling is continuous, the saturation $T_{(0,\infty)}$ of any
open set $G$ is open.  \index{saturation} Additionally, if the orbit
$T_{(0,\infty)}x$ of every $x$ is closed and if the scaling acts freely
on a Polish space $\XX$, then a (Borel) transversal exists; see
\cite[Theorem~12.16]{MR1321597}.

\paragraph{Modulus on a star-shaped metric space}

Let $\XX$ be a star-shaped metric space with $\zero=\{0\}$.  Recall
that, in this case, the scaling extended to $[0,\infty)$ is assumed to
be continuous.  The following result provides an explicit
construction of a transversal from a metric ball centred at $0$.

\begin{lemma}
  \label{lemma:tau-from-ball}
  Let $\XX$ be a star-shaped Polish space with metric $\dmet$.  Assume
  that there exists an $\eps>0$ such that
  \begin{displaymath}
    \sup\{\dmet(0,T_tx)\colons t\in\Rpp\}\geq \eps
  \end{displaymath}
  for all $x\neq 0$. Then a transversal exists. If $0$ has a
  relatively compact neighbourhood, then the corresponding modulus is
  continuous.
\end{lemma}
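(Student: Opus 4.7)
The strategy is to build a modulus by measuring how much each element has to be scaled to reach a prescribed metric distance from $0$. After replacing $\eps$ by $\eps/2$ if necessary, I may assume that $\sup_{t\in\Rpp}\dmet(0,T_tx)>\eps$ for every $x\neq 0$. For each such $x$, continuity of the scaling on $[0,\infty)$ makes $t\mapsto \dmet(0,T_tx)$ continuous, with value $0$ at $t=0$; the star-shaped inequality forces it to be strictly increasing on $\Rpp$: for $0<a<b$, applying the inequality with $r=b/a>1$ and $y=T_ax\neq 0$ gives $\dmet(0,T_ax)=\dmet(0,y)<\dmet(0,T_ry)=\dmet(0,T_bx)$. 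The intermediate value theorem then supplies a unique $s(x)\in\Rpp$ with $\dmet(0,T_{s(x)}x)=\eps$, and I define $\modulus(x)=1/s(x)$ for $x\neq 0$ and $\modulus(0)=0$.

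The required properties of $\modulus$ follow from this uniqueness. For homogeneity, since $T_{r\,s(T_rx)}x=T_{s(T_rx)}T_rx$ sits at distance $\eps$ from $0$, uniqueness yields $s(T_rx)=s(x)/r$, hence $\modulus(T_rx)=r\modulus(x)$. Baire measurability, which equals Borel measurability since $\XX$ is Polish, follows from the identity
\[
\{x\neq 0:s(x)\leq t\}=\{x\neq 0:\dmet(0,T_tx)\geq\eps\},
\]
a consequence of strict monotonicity; the right-hand side is closed in $\XX\setminus\{0\}$ by continuity of $x\mapsto\dmet(0,T_tx)$. Finally, $\SS_\modulus=\{\dmet(0,\cdot)=\eps\}$ is closed and every $x\neq 0$ decomposes as $x=T_{\modulus(x)}(T_{s(x)}x)$ with $T_{s(x)}x\in\SS_\modulus$, so $\SS_\modulus$ is a transversal.

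For continuity of $\modulus$ under the additional assumption that $0$ has a relatively compact neighbourhood, I shrink $\eps$ once more so that the closed ball $\{y:\dmet(0,y)\leq\eps\}$ is relatively compact. Since $\XX$ is first countable, it suffices to check sequential continuity; let $x_n\to x$. If $x\neq 0$ and $\modulus(x_n)\not\to\modulus(x)$, then along a subsequence $s(x_n)$ tends to some $s^*\in[0,\infty]\setminus\{s(x)\}$. The case $s^*\in\Rpp$ is excluded by joint continuity of $(t,y)\mapsto\dmet(0,T_ty)$ at $(s^*,x)$ and uniqueness of $s(x)$. The case $s^*=0$ is impossible because $T_{s(x_n)}x_n\to T_0 x=0$ contradicts $\dmet(0,T_{s(x_n)}x_n)=\eps>0$. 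The case $s^*=\infty$ is ruled out by extracting a further subsequence with $T_{s(x_n)}x_n\to y^*$ inside the relatively compact $\eps$-sphere, whence $y^*\neq 0$, yet $x_n=T_{1/s(x_n)}T_{s(x_n)}x_n\to T_0 y^*=0\neq x$. If instead $x=0$, then $s(x_n)\to\infty$: otherwise a bounded subsequence gives $s(x_n)\to s^*\in[0,\infty)$ and $T_{s(x_n)}x_n\to T_{s^*}0=0$, again contradicting $\dmet(0,T_{s(x_n)}x_n)=\eps$.

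The main obstacle is ruling out $s^*=\infty$ in the nontrivial case $x\neq 0$: it is precisely here that relative compactness of a neighbourhood of $0$ is essential, since it guarantees that the sequence $T_{s(x_n)}x_n$ on the $\eps$-sphere admits a convergent subsequence, which powers the decisive contradiction. Without this assumption, continuity of $\modulus$ at nonzero points may genuinely fail.
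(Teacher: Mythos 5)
Your proof is correct and follows essentially the same route as the paper's: the transversal is a metric sphere of small radius about $0$, the star-shaped inequality gives strict monotonicity of $t\mapsto\dmet(0,T_tx)$ and hence uniqueness of the crossing, the hypothesis gives existence, and relative compactness of the sphere rules out the subsequential limits $0$ and $\infty$ of the scaling parameter in the continuity argument. Your write-up is in fact slightly more complete, since you also verify Borel measurability of $\modulus$ and its continuity at the zero element, which the paper leaves implicit.
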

\begin{proof}
  Let $\SS$ be the boundary of the ball $B_r(0)$ centred at $0$ with
  radius $r\in(0,\eps)$. If the orbit $\{T_tx\colons t\in\Rpp\}$ of some
  $x\neq 0$ were to intersect $\SS$ at two distinct points, say $T_tx$ and
  $T_sx$ with $t<s$, then $\dmet(0,T_tx)<\dmet(0,T_sx)$ by the
  definition of a star-shaped space, which is a contradiction.

  It remains to show that the orbit of any $x\neq0$ intersects
  $\SS$. If this were not the case, the continuity of the function
  $\dmet(0,T_tx)$ in $t$ and the fact that $\dmet(0,T_tx)\to0$ as
  $t\downarrow0$ would imply that
  \begin{displaymath}
    \sup\{\dmet(0,T_tx)\colons t\in\Rpp\}<r,
  \end{displaymath}
  contrary to the choice of $\eps$.  Note that $\SS$ is closed and, consequently,
  is compact for a sufficiently small $r>0$ if $0$ has a relatively 
  compact neighbourhood.

  Assume that $x_n=T_{t_n}u_n\neq 0$ and $x_n\to x\neq0$ as
  $n\to\infty$. Since $\SS$ is compact, we may assume, by passing to a subsequence,
  that $u_n\to u\in\SS$ as $n\to\infty$. If $t_{n_k}\to 0$ as
  $k\to\infty$, then
  \begin{displaymath}
    T_{t_{n_k}}u_{n_k}\to 0\quad \text{as}\; k\to\infty
  \end{displaymath}
  by the continuity of the scaling; thus, no subsequence of $\{t_n\}$
  converges to zero. If $t_{n_k}\to\infty$ as $k\to\infty$, then
  $u=T_{t_{n_k}^{-1}}x_{n_k}\to 0$, which is also a
  contradiction. Finally, if $t_{n_k}\to t$, then
  \begin{displaymath}
    T_{t_{n_k}}u_{n_k}\to T_t u=x,
  \end{displaymath}
  implying $t=\modulus(x)$. It follows that
  $t_n\to\modulus(x)$, so that the modulus is continuous.

  It remains to consider the case $x_n\to0$. Write $x_n=T_{t_n}u_n$
  with $u_n\in\SS$. If $t_n$ did not converge to zero, then, along a
  subsequence, either $t_n\to t\in(0,\infty)$ or $t_n\to\infty$. In
  the first case, compactness of $\SS$ and continuity of the scaling
  would give a non-zero limit. In the second case, the star-shaped
  property gives $\dmet(0,T_{t_n}u_n)\geq r$ for all sufficiently
  large $n$, again a contradiction. Hence $t_n\to0$, which proves
  continuity at zero.
\end{proof}

\section{Ideals and bornologies}
\label{sec:bornology}

\paragraph{Definition and basic properties of ideals}

An \emph{ideal} $\sS$ is a family of subsets of $\XX$ such that
\index{ideal}
\begin{enumerate}[1)]
\item $\sS$ is closed under finite unions; and 
\item $\sS$ is hereditary under inclusion, that is, for each
  $B\in\sS$, the family $\sS$ contains all subsets of $B$;
\end{enumerate}
see \cite{gier:hof:keim:law:mis:scot80}. Note that an ideal might, and
usually does, contain non-Borel sets. The family of complements of
the sets in $\sS$ is a \emph{filter}.
\index{filter}
If $\sS$ contains a countable
subfamily $\{\base_n,n\in\NN\}$ that is cofinal in $\sS$ (meaning that each
element of $\sS$ is a subset of a certain member of this subfamily),
then $\sS$ is said to have a \emph{countable base}.
\index{countable base}
It is said to have a \emph{countable open base} if all sets $\base_n$ are functionally
open.  Without loss of generality, we may assume that
$\base_n\subset \base_{n+1}$ for all $n$. A collection of sets
generates the ideal that is the intersection of all ideals which
contain this collection. 

An ideal $\sS$ is called \emph{scaling consistent} if, for every $B\in\sS$,
\index{ideal!scaling consistent}
we have $T_tB\in\sS$ for all $t>0$, where $T_t$ denotes the
scaling map on $\XX$.
If $\sS$ is scaling consistent, the union $\UU=\cup_{B\in\sS} B$ of
all sets in $\sS$ is a cone in $\XX$.  An ideal is
\emph{topologically consistent} if, for every $B\in\sS$, the ideal $\sS$
\index{ideal!topologically consistent}
also contains a functionally closed set $F\supset B$ and $\sS$
contains a functionally open neighbourhood of each $x\in \UU$.

\begin{lemma}
  \label{lemma:cont-base}
  Assume that $\sS$ is a topologically consistent ideal with a
  countable open base $\{\base_n,n\in\NN\}$. Then there exists a
  family $\{G_t,t\geq 1\}$ of functionally open sets such that, for all
  $t<s$, there exists a functionally closed set $F$ such that 
  $G_t\subset F\subset G_s$ and $G_n=\base_n$ for all $n\in\NN$. 
\end{lemma}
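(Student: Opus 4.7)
The plan is to first refine the base using topological consistency so that consecutive members admit a functionally closed sandwich, and then carry out a Urysohn-type interpolation on each integer interval. Following the preceding discussion, I would assume $\base_n \subset \base_{n+1}$. Topological consistency applied to $\base_n \in \sS$ yields a functionally closed $F_n' \in \sS$ with $\base_n \subset F_n'$, and cofinality of the base places $F_n' \subset \base_{m_n}$ for some $m_n$. Extracting a subsequence $n_1 < n_2 < \cdots$ with $n_{k+1} > m_{n_k}$ and relabelling, one arrives at a (still cofinal) countable open base for which there exist functionally closed sets $F_n$ with $\base_n \subset F_n \subset \base_{n+1}$ for every $n$.

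Next, I would apply the standard Urysohn-type separation of disjoint zero sets. Writing $F_n = \{f_n = 0\}$ and $\base_{n+1}^c = \{g_n = 0\}$ for continuous $f_n, g_n : \XX \to [0,\infty)$ (pass to squares if necessary), the inclusion $F_n \subset \base_{n+1}$ forces $f_n + g_n > 0$ on $\XX$, so $\phi_n = f_n/(f_n + g_n)$ is continuous from $\XX$ into $[0,1]$ with $\{\phi_n = 0\} = F_n$ and $\{\phi_n = 1\} = \base_{n+1}^c$; in particular $\{\phi_n < 1\} = \base_{n+1}$.

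Finally, I would set $G_n = \base_n$ for integers $n \geq 1$ and $G_t = \{\phi_n < t - n\}$ for $t \in (n, n+1)$. Each $G_t$ is functionally open and, for $t \in (n, n+1)$, satisfies $\base_n \subset F_n \subset G_t \subset \base_{n+1}$. The sandwich property for $t < s$ would be checked by cases: if both lie in $(n, n+1)$, the functionally closed set $\{\phi_n \leq t - n\}$ is between $G_t$ and $G_s$; if $t = n$ and $s \in (n, n+1]$, take $F = F_n$; if $t \in (n, n+1)$ and $s = n+1$, take $F = \{\phi_n \leq t - n\} \subset \{\phi_n < 1\} = \base_{n+1} = G_s$; and if $t$ and $s$ lie in different intervals with $t \in [n, n+1]$ and $s \geq n+1$, then $G_t \subset \base_{n+1} \subset F_{n+1} \subset G_s$, so $F_{n+1}$ supplies the sandwich. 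The main obstacle is the initial refinement step, which must simultaneously guarantee the existence of the $F_n$ between consecutive base elements and preserve that $\{\base_n\}$ remains a base of $\sS$; the remainder is a mechanical application of the fact that disjoint zero sets are separated by a continuous $[0,1]$-valued function.
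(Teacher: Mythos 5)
Your proposal is correct and follows essentially the same route as the paper: refine the base so that each $\base_n$ is sandwiched between functionally closed sets and $\base_{n+1}$, precisely separate $F_n$ from $\base_{n+1}^c$ by a continuous $[0,1]$-valued function, and define $G_t$ as its sublevel sets. The only differences are cosmetic — you construct the separating function $\phi_n=f_n/(f_n+g_n)$ explicitly where the paper cites the standard separation theorem for disjoint functionally closed sets, and you spell out the case analysis that the paper leaves implicit.
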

\begin{proof}
  Since $\sS$ is topologically consistent, for each $n\in\NN$, the set
  $\base_n$ is a subset of a functionally closed set in $\sS$, which
  in turn is a subset of $\base_m$ for some $m>n$. Thus, it is possible
  to remove some sets from the base to ensure that
  $\base_n\subset F_n\subset\base_{n+1}$ with a functionally closed
  $F_n$ for all $n\in\NN$.  For each $n\in\NN$, let $G_n=\base_n$,
  $G_{n+1}=\base_{n+1}$. We construct $G_t$ for $t\in(n,n+1)$ as
  follows. The functionally closed sets $F_n$ and $\base_{n+1}^c$ are
  disjoint and so can be separated by a continuous function
  $f:\XX\to[0,1]$, which takes value $0$ exactly on $F_n$ and the
  value $1$ exactly on $\base_{n+1}^c$; see \cite[Theorem~1.5.13]{eng89}. If
  $t=n+\eps$ for $\eps\in(0,1)$, let $G_t=\{f<\eps\}$.
  The condition then holds with $F=\{f\leq \eps\}$.
\end{proof}

\begin{remark}
  \label{rem:top-consistent}
  If an ideal $\sS$ covers $\XX$, it is called a \emph{bornology} on
  \index{bornology} \index{boundedness}
  $\XX$; see \cite{hog77}. Bornologies are predominantly used in
  the context of topological vector spaces. 
  The sets in $\sS$ are said to be
  \emph{bounded}, and so a bornology is sometimes called a
  \emph{boundedness}; see \cite{bas:plan19}, \cite[Section~V.5]{hu66}
  and \cite{kul:sol20}. Every ideal $\sS$ is a bornology on its union
  $\UU=\cup_{B\in\sS} B$.  A \emph{countable base} of a bornology on $\XX$
  is called a \emph{localising sequence}; see, e.g.,
  \index{localising sequence}
  \cite[Section~1.1]{kalle17}.  If a topologically consistent
  bornology admits a countable open base $(\base_n)_{n\in\NN}$, then,
  without loss of generality, we can assume that the closure of
  $\base_n$ is a subset of $\base_{n+1}$. In this case, the authors of
  \cite{bas:plan19} say that $\sS$ admits a \emph{properly localising
    sequence} and the space $\XX$ is \emph{localised}.
  \index{space!localised}
  Note that the
  functionally open/closed sets do not appear in the cited works,
  which adopt the setting of Polish spaces. It is easy to see that the
  localisation condition (the existence of a properly localising
  sequence) implies that the bornology is topologically consistent if
  the underlying space is perfectly normal.
\end{remark}

The names of the following two bornologies stem from
\cite[Definition~2.5]{bor:fit95}. Further examples of bornologies can
be found in \cite{hog77}. 

\begin{example}[Fr\'{e}chet bornology]
  The family of all metrically bounded sets in a metric space
  $(\XX,\dmet)$ is called the \emph{Fr\'{e}chet bornology}.
  \index{Frechet bornology@Fr\'{e}chet bornology}
  A bornology on a Polish space can be realised as the Fr\'echet
  bornology via a metric that preserves the topology on the
  underlying space if and only if $\sS$ has a countable open base and
  is topologically consistent; see \cite[Corollary~5.12]{hu66} and
  \cite[Theorem~2.1]{bas:plan19}. 
  % What happens to the scaling consistency???
  While this fact is used in \cite{bas:plan19} and
  \cite[App.~B]{kul:sol20}, we do not rely on it, since our carrier
  space is not necessarily metrisable. Note that there are uncountably
  many ways to realise $\sS$ as the Fr\'echet bornology; see
  \cite{beer99}. Furthermore, the Fr\'echet bornology is not
  necessarily scaling consistent. For this, we need to ensure that
  any scaling of a bounded set remains bounded. For instance, this holds
  if the scaling $x\mapsto T_tx$ is Lipschitz in $x$ for all $t$, that is,
  \begin{displaymath}
    \dmet(T_tx,T_ty)\leq c_t\dmet(x,y), \quad x,y\in\XX,
  \end{displaymath}
  with a constant $c_t$ that may depend on $t$. 
\end{example}

\begin{example}[Hadamard bornology]
  \label{rem:Hadamard}
  It is easy to see that each bornology with a countable open base
  contains all compact subsets of the underlying space.
  \index{Hadamard bornology} The \emph{Hadamard bornology} on $\XX$ is
  the smallest ideal containing all relatively compact sets. It is
  also known as the compact bornology. If the scaling is
  space-continuous and $\XX$ is a locally compact separable space, then the
  Hadamard bornology is topologically and scaling consistent and has a
  countable open base. For instance, in $\R^d\setminus\{0\}$ with
  the Euclidean topology the Hadamard bornology consists of all
  bounded sets whose closure does not intersect a neighbourhood of the
  origin.
\end{example}

\begin{example}[Bornologies on topological vector spaces]
  Assume that $\XX$ is a topological vector space.
  \index{von Neumann bornology}
  A set $A$ belongs
  to the \emph{von Neumann bornology} if each neighbourhood $U$ of $0$
  absorbs $A$, that is, there exists a $c>0$ such that $T_tU\supset A$
  for all $t\geq c$. A set $A$ belongs to the \emph{precompact
    bornology} if for each neighbourhood $V$ of $0$ there exist
  finitely many points $x_1,\dots,x_k\in\XX$ such that $A$ is a subset
  of the union of $(x_i+V)$, $i=1,\dots,k$; see \cite[Section~1.3]{hog77}.
\end{example}

The bornologies defined above follow the intuitive idea of including
in the bornology sets that are bounded in one of the classical senses,
which may be also expressed as the fact that sets are bounded away
from infinity. The following important example defines the idea of sets
that are bounded away from a given point or set in $\XX$.

\begin{example}[Exclusion ideals]
  \label{example:exclusion}
  Let $\cone$ be a closed cone in $\XX$.  All subsets of $\XX$ that
  do not intersect a functionally open neighbourhood of $\cone$ form 
  an ideal called a \emph{(topological) exclusion ideal}.
  \index{exclusion ideal!topological} The union of its sets is
  $\XX\setminus \cone$.  The exclusion ideal is topologically
  consistent if $\XX$ is a completely regular space (so that a point
  $x\notin\cone$ and $\cone$ can be separated); it is scaling
  consistent if the scaling is space-continuous (see
  Lemma~\ref{lemma:scaling}). The exclusion ideal has a countable open
  base if $\cone$ equals the intersection of a decreasing countable family of
  functionally open sets. It is often assumed that the excluded cone
  contains the set $\zero$ of scaling-invariant elements.
\end{example}

\paragraph{Ideals and moduli}
We often impose the following condition on an ideal $\sS$. For
metrisable spaces, this condition is equivalent to Item~B4 in
\cite{bladt:hash22}.

\index{Condition (B)}
\begin{itemize} 
\item[(B)] \label{condB} The ideal $\sS$ has a countable base
  $(T_{n^{-1}}\VV)_{n\in\NN}$, where $\VV$ is a functionally open
  semicone $\VV\subset\XX$ such that $\cap_{t>1} T_t \VV = \emptyset$,
  and there exists a functionally closed set $F$ such
  that, for every $s\in(0,1)$,
  \begin{equation}
    \label{eq:10}
    \VV\subset F \subset T_s\VV.
  \end{equation}
\end{itemize}

An important ideal is $\sS_\modulus$ from
Definition~\ref{def:modulus}, which is said to be generated by the
modulus $\modulus$. The union of all sets in $\sS_\modulus$  is
$\{\modulus>0\}$. The following result shows that this generic
construction corresponds to ideals satisfying \hyperref[condB]{(B)}.

\begin{lemma}
  \label{lemma:stau}
  An ideal $\sS$ on a topological space $\XX$ satisfies Condition~\hyperref[condB]{(B)}
  if $\sS=\sS_\modulus$ for 
  a finite-valued continuous modulus $\modulus$. The reverse implication
  holds if the scaling is space-continuous.
  % such that the union of all sets in $\sS$ is $\{\modulus>0\}$.
  % Furthermore, if $\sS$ is a bornology on
  % $\XX$ and the scaling is Borel measurable, then \hyperref[condB]{(B)} is equivalent to
  % (B2), which, in turn, implies (B1) and (B3).
\end{lemma}
\begin{proof}
  By changing the space $\XX$ to the union of all sets in $\sS$, it
  is always possible to assume that $\sS$ is a bornology on $\XX$,
  which we do for the remainder of this proof.  \smallskip

  \noindent \textsl{Sufficiency.}  It is easy to see that
  $\sS_\modulus$ is closed under finite unions and contains all
  subsets of any of its member sets. Let $\VV=\{\modulus>1\}$, which is
  a functionally open %in $\UU=\{\modulus>0\}$
  semicone.  Then
  \begin{displaymath}
    F=\{\modulus\geq 1\}\subset\{\modulus>s\}=T_s\VV
  \end{displaymath}
  for $s<1$, and $F$ is functionally closed.
  % Indeed, $\{x:\modulus(x)\geq t\}$ is a closed set which contains
  % $T_tC$, and if $\modulus(x)\geq t$, then $T_{t_n}x\to x$ as
  % $t_n\downarrow 1$ and $T_{t_n}x\in T_t C$, so that $x$ belongs to
  % the closure of $T_tC$.  
  Finally, $T_t\VV=\{\modulus>t\}$. Therefore, the intersection of $T_t\VV$
  over all $t>1$ is empty (since $\modulus$ is finite), and the sequence
  $(T_{n^{-1}}\VV)_{n\in\NN}$ is a base of $\sS_\modulus$, so that
  Condition~\hyperref[condB]{(B)} holds.
  \smallskip
		
  \noindent \textsl{Necessity.}  Assume that Condition~\hyperref[condB]{(B)} holds. By applying
  $T_t$ to all sides of \eqref{eq:10}, we see that for each $t>0$
  there exists a functionally closed set $F_t=T_tF$ such that
  \begin{equation}
    \label{eq:10bis}
    T_t\VV\subset F_t \subset T_s\VV, \quad 0<s<t.
  \end{equation}
  Define 
  \begin{equation}\label{modulus}
    \modulus(x)= \sup \{t>0\colons x\in T_t \VV\},\quad x\in \XX.
  \end{equation}
  Clearly, $\modulus$ is homogeneous. It is strictly
  positive and finite, since $T_{n^{-1}}\VV$, $n\in\NN$, is a base of
  $\sS$ and the intersection of $T_n\VV$ over $n\in\NN$ is empty.  If
  $\modulus(x)>t$ for $t>0$, then $x\in T_t \VV$. Since $T_t\VV$ is open
  (owing to the continuity of the
  scaling; see Lemma~\ref{lemma:scaling}), a neighbourhood of
  $x$ is contained in $T_t\VV$, implying that the set
  $\{\modulus>t\}$ is open.
  Next, we observe that 
  \begin{displaymath}
    \{\modulus<t\} = \cup_{s<t} \{\modulus>s\}^c
    = \left(\cap_{s<t} T_s\VV\right)^c
    = \left(\cap_{s<t} F_s\right)^c.
  \end{displaymath}
  Indeed, if $x\in\cap_{s<t}F_s$, then, by \eqref{eq:10bis},
  $x\in T_r\VV$ for all $r<t$, and hence $\modulus(x)\geq t$.
  Conversely, if $x\notin F_s$ for some $s<t$, then $x\notin T_s\VV$,
  and therefore $x\notin T_r\VV$ for all $r\geq s$; hence
  $\modulus(x)\leq s<t$.
  % The final equality follows from \eqref{eq:10bis}, since
  % $T_s\VV \subset F_s\subset T_r\VV$ for $s<r<t$.
  Therefore,
  $\{\modulus<t\}$ is open as the complement of a closed set.
  % , so that $\modulus$ is upper semicontinuous.
  As every open set $G\subset\Rpp$ is the countable union of open intervals
  $(a,b)$, and since
  \begin{displaymath}
    \{\modulus\in(a,b)\}=\{\modulus>a\}\cap \{\modulus<b\}
  \end{displaymath}
  is open, it follows that
  $\modulus^{-1}(G)$ is open. Thus, $\modulus$ is continuous on $\XX$.
  % ALTERNATIVE ARGUMENT: 
  % Assume that $x_n\to
  % x\in\zero$. If $\modulus(x_{n_k})\geq\eps>0$ for all $k$,
  % then $x_{n_k}\in \cl T_t C\subset T_s C$  for some
  % $0<s<t$. By closedness, we have also that $x\in T_s
  % C$, which contradicts the fact that $x\in\zero$. Thus,
  % $\modulus$ is continuous on $\XX$. 
		
  % Finally, since $\VV=\{\modulus>1\}$, the ideal $\sS$ contains
  % $\sS_\modulus$. Because the sequence $(T_{n^{-1}}\VV)$ is a
  % base of $\sS$, each $B\in\sS$ is a subset of $\{\modulus>t\}$ for
  % some $t>0$. Thus, $\sS=\sS_\modulus$.
  Finally, the inclusions
  \[
    \{\modulus>1\}\subset\VV\subset\{\modulus\geq1\}
  \]
  and their scaled versions imply that the bases
  $(T_{n^{-1}}\VV)_{n\in\NN}$ and
  $(\{\modulus>1/n\})_{n\in\NN}$ generate the same ideal.
  Thus, $\sS=\sS_\modulus$.
\end{proof}
  % 
  % \noindent
  % \textsl{Equivalence of \hyperref[condB]{(B)} and (B1)--(B3).}
  % % By Lemma~\ref{lemma:closure}, it is possible to let $C$ be an open
  % % cone. 
  % Conditions \hyperref[condB]{(B)} and (B2) are identical. The fact that
  % $\cup_{t\leq 1} T_t C = \XX$ follows from the requirement that the
  % sequence $T_{n^{-1}}C$, $n\geq1$, is a base of $\sS$.  Condition
  % (B1) follows from (B2), since each $B\in\sS$ is a subset of
  % $T_{n^{-1}}C$ for some $n$.

  % Consider the polar decomposition map.  Given that the family of sets
  % $A\times(t,\infty)$ for all Baire $A\subset\SS$ and $t>0$ is measure
  % determining in $\SS\times(0,\infty)$, we confirm that the family of
  % bounded semicones
  % $T_{[t, \infty)} A=\{x\in X: T_{\modulus(x)^{-1}} x\in A, \modulus(x)\geq
  % t\} $ for all Borel $A\subset\SS$ and $t>0$ is a measure determining
  % class in $\XX$. Note that $T_{[t, \infty)}A$ is Borel due to
  % measurability of the polar decomposition. Thus, (B3) also follows
  % from (B2).

\begin{remark}
  Note that the authors of
  \cite[Appendix~B]{kul:sol20} impose three conditions on the
  bornology on a Polish space $\XX$ (where the scaling acts freely);
  they call them (B1)--(B3). It turns out that, if $\sS$ is a bornology on
  $\XX$ and the scaling is Borel measurable, their condition (B2) is equivalent to
  our Condition~\hyperref[condB]{(B)} which, in turn, implies (B1) and (B3). 
\end{remark}

\begin{remark}
  Note that the modulus $\modulus$ from Lemma~\ref{lemma:stau} is constructed so
  that $\{\modulus>0\}$ is the union of all sets in $\sS$. Thus,
  this modulus is proper if this union equals $\XX\setminus\zero$.
\end{remark}

\begin{example}[Domination of moduli]
  We say that a modulus $\modulus_1$ dominates a modulus $\modulus_2$,
  if there exists a $c>0$ such that
  $\modulus_2(x) \leq c\, \modulus_1(x)$ for all $x\in\XX$. In this case, the
  corresponding ideals satisfy
  $\sS_{\modulus_2} \subset \sS_{\modulus_1}$. When
  $\modulus_1, \modulus_2$ dominate each other, they are considered
  \emph{equivalent}
  \index{equivalent moduli}
  and, of course, the corresponding ideals
  coincide. For example, if $\XX=\R_+^2$, then
  $\modulus_1(x)=\max(x_1,x_2)$ and $\modulus_2(x)=\max(x_1,2x_2)$ are
  equivalent, as are the moduli $\modulus_1(x)=\min(x_1,x_2)$ and
  $\modulus_2(x)=(1/x_1+1/x_2)^{-1}$.
\end{example}

In many cases Condition~\hyperref[condB]{(B)} does not hold and is replaced by the following
weaker condition.
\begin{itemize}
\item[(B$_0$)] \label{condB0} $\sS$ is generated by a
  family of ideals, each of which satisfies Condition~\hyperref[condB]{(B)}.
  % an at most countable
  % family of ideals, each of them satisfying \hyperref[condB]{(B)}.
\end{itemize}
\index{Condition (B0)@Condition (B$_0$)}
Condition~\hyperref[condB0]{(B$_0$)} means that $\sS$ is the smallest ideal 
containing a family of ideals, each satisfying Condition~\hyperref[condB]{(B)}.  By
Lemma~\ref{lemma:stau}, $\sS$ satisfies \hyperref[condB0]{(B$_0$)} if and only if it is
generated by a (possibly uncountable) family $\{\modulus_i\colons i\in \JJ\}$
of continuous moduli, that is, $A\in\sS$ if and only if there exists a
finite set $I\subset \JJ$ such that
\begin{displaymath}
  \inf_{x\in A} \,\max_{i\in I}\, \modulus_i(x)>0.
\end{displaymath}
Equivalently, each $A\in\sS$ is a subset of a finite union of
semicones, each of which satisfies Condition~\hyperref[condB]{(B)}. 

\begin{lemma}
  \label{lemma:free}
  If an ideal $\sS$ satisfies Condition~\hyperref[condB0]{(B$_0$)}, then the scaling acts freely on
  the union of sets in $\sS$.
\end{lemma}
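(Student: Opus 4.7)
The plan is to translate condition (B$_0$) into a statement about moduli via Lemma~\ref{lemma:stau}, and then reduce freeness to a one-line algebraic argument using homogeneity. By (B$_0$) and Lemma~\ref{lemma:stau}, $\sS$ is generated by a family $\{\sS_{\modulus_i} : i\in\JJ\}$ of ideals, where each $\modulus_i$ is a finite continuous modulus on $\XX$.

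First I would describe the union $\UU=\bigcup_{B\in\sS}B$ in terms of this generating family. Since $\sS$ is generated by the $\sS_{\modulus_i}$, every $B\in\sS$ is contained in a finite union of sets of the form $\{\modulus_i>c\}$ with $c>0$, so each $x\in\UU$ satisfies $\modulus_i(x)>0$ for at least one $i\in\JJ$. Conversely, whenever $\modulus_i(x)>0$, the singleton $\{x\}$ lies in $\{\modulus_i>\modulus_i(x)/2\}\in\sS_{\modulus_i}\subset\sS$, so $x\in\UU$. Hence
\begin{displaymath}
  \UU=\{x\in\XX : \modulus_i(x)>0 \text{ for some } i\in\JJ\}.
\end{displaymath}

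Now fix $x\in\UU$ and suppose $T_tx=x$ for some $t>0$. Choose $i\in\JJ$ with $\modulus_i(x)>0$. By homogeneity,
\begin{displaymath}
  \modulus_i(x)=\modulus_i(T_tx)=t\,\modulus_i(x),
\end{displaymath}
and since $\modulus_i(x)\in(0,\infty)$ (finiteness of the modulus coming from Lemma~\ref{lemma:stau}), this forces $t=1$. Therefore scaling acts freely on $\UU$.

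There is no real obstacle: the only thing to be careful about is that the moduli produced by Lemma~\ref{lemma:stau} are genuinely finite and positive on the appropriate set, but this is built into the statement of that lemma. The whole proof is essentially a translation of (B$_0$) into the language of moduli followed by the observation that a strictly positive finite homogeneous function cannot be invariant under a nontrivial scaling.
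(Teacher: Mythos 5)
Your proof is correct and follows essentially the same route as the paper: locate the fixed point in one of the generating ideals satisfying (B), extract a modulus that is finite and strictly positive there, and derive $t=1$ from homogeneity. The paper's version is just terser (it only treats $s>1$, which suffices by the group property, and does not spell out the description of the union), and it relies only on positivity, finiteness and homogeneity of the modulus rather than its continuity, so no space-continuity of the scaling is implicitly invoked; otherwise the arguments coincide.
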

\begin{proof}
  Let $x_0$ be a point in the union of all sets from $\sS$, such
  that $T_sx_0=x_0$ for some $s>1$. Then $x_0$ belongs to an ideal
  that satisfies Condition~\hyperref[condB]{(B)}; thus, $\modulus(x_0)=t>0$ for some modulus
  $\modulus$. It follows that $t=\modulus(T_sx_0)=st$, which is a contradiction.
\end{proof}

Sometimes (see, e.g.,
Remark~\ref{rem:sets-scaling}), it may be convenient to work with
ideals whose union includes some scaling-invariant elements, and 
which therefore do not satisfy Condition~\hyperref[condB0]{(B$_0$)}.

\paragraph{Metric exclusion ideals}
Recall the topological exclusion ideal from
Example~\ref{example:exclusion}.  A variant of this construction is
common in metric spaces.

\begin{definition}
  \label{def:exclusion}
  Let $\cone$ be a set in a metric space $\XX$ with metric $\dmet$. The
  \emph{metric exclusion ideal} $\sS_\cone$ is the smallest ideal
  \index{exclusion ideal!metric}
  \index{metric exclusion ideal}
  containing the sets $\{x\colons \dmet(x,\cone)\geq r\}$ (consisting of points 
  at distance of at least $r$ from $\cone$) for all $r>0$.
\end{definition}

The Polish space version of Definition~\ref{def:exclusion} appears in
\cite{lin:res:roy14}.  This metric exclusion ideal was used in
\cite{seg:zhao:mein17} for star-shaped metric spaces, where an additional
imposed condition -- namely that $\ttau(B)>0$ for every set $B$ that
is the complement of an open ball centred at zero -- ensures that the
metric exclusion ideal obtained by excluding $\cone=\zero$ coincides
with $\sS_\modulus$. Lemma~\ref{lemma:stau} together with the continuity of the
modulus implies that Condition~(B) holds in this case.

Note that the metric exclusion ideal does not change when passing
from $\cone$ to its closure. Thus, $A\in\sS_\cone$ means that
\begin{displaymath}
  \dmet(A,\cone)=\inf\{\dmet(x,y)\colons x\in A,y\in\cone\}>0.
\end{displaymath}
We write $\dmet(x,\cone)$ as a shorthand for $\dmet(\{x\},\cone)$.  It is
typical to assume that the excluded set $\cone$ is a cone in $\XX$.

If the metric exclusion ideal $\sS_\cone$ is scaling consistent, and
there exists $x\in\cone$ such that $T_tx\notin\cone$ for some $t>0$, then
$\{T_tx\}\in\sS_\cone$, so that $x=T_{t^{-1}}\{T_tx\}\in\sS_\cone$,
which is a contradiction. Thus, $\cone$ is necessarily a cone.
Conversely, the metric exclusion ideal $\sS_\cone$ obtained by
excluding a cone $\cone$ is scaling consistent if
\begin{displaymath}
  \dmet(T_tx,\cone)\geq a_t\, \dmet(x,\cone), \quad x\in\XX,
\end{displaymath}
with $a_t>0$ for all $t>0$.

\begin{example}[Metric exclusion ideal in $\R^d$]
  \label{ex:ideals-Rd}
  \index{metric exclusion ideal!in Euclidean space}
  The simplest metric exclusion ideal in the Euclidean space $\XX=\R^d$ is
  obtained by excluding the origin, that is, with $\cone=\{\zero\}$.  This ideal is
  generated by the Euclidean norm (taken as a modulus) or, equivalently,
  by any norm on $\R^d$.  We denote the corresponding metric exclusion
  ideal by $\sR^d_0$ and write $\sR_0$ for $\sR^1_0$. Furthermore, the
  same notation is used for the ideals on $\R_+^d$ or on
  $(0,\infty)^d$ generated by the norm, which are thus induced by the ideal
  $\sR_0^d$ restricted to these subsets.
\end{example}

\begin{example}[Exclusion of coordinate axes]
  Let $\XX=\R_+^2$ with the Euclidean metric, and let $\cone$ be the union
  of the two coordinate axes. The open metric $r$-neighbourhood of $\cone$ is
  the union of the two strips $[0,r)\times\R_+$ and $\R_+\times[0,r)$. It
  should be noted that the complements of open neighbourhoods of $\cone$
  form a richer family; consequently, the topological exclusion ideal is
  strictly larger than the metric exclusion ideal. Indeed, the set
  $\{(x_1,x_2)\in\R_+^2\colons x_1x_2>1\}$ intersects every metric
  neighbourhood of $\cone$, yet it does not intersect the open
  neighbourhood of $\cone$ given by
  $\{(x_1,x_2)\in\R_+^2\colons x_1x_2<1/2\}$.  It is easy to see that the
  metric exclusion ideal coincides with the topological exclusion
  ideal whenever the excluded cone $\cone$ is compact.
\end{example}

\begin{lemma}
  \label{lemma:metric-exclusion-Rd}
  Let $(\XX,\dmet)$ be a metric space equipped with a 
  continuous scaling, and let $\cone$ be a cone in $\XX$. Then the
  metric exclusion ideal $\sS_\cone$ satisfies Condition~\hyperref[condB]{(B)} if one of
  the following two conditions holds.
  \begin{enumerate}[(a)]
  \item The metric $\dmet$ is homogeneous, that is,
    $\dmet(T_tx,T_ty)=t\dmet(x,y)$ for all $t>0$.
  \item For all $x\notin\cone$, we have 
    \begin{equation}
      \label{eq:mei2}
      \dmet(T_sx,\cone)>\dmet(T_tx,\cone)>0,\quad 0<t<s<\infty,
    \end{equation}
    and the functions
    \begin{equation}
      \label{eq:mei}
      a_t=\inf_{x\in\XX\setminus\cone} \frac{\dmet(T_tx,\cone)}{\dmet(x,\cone)},
      \quad b_t=\sup_{x\in\XX\setminus\cone}
      \frac{\dmet(T_tx,\cone)}{\dmet(x,\cone)},
      \quad t>0,
    \end{equation}
    satisfy $\inf_{t>0} b_t =0$ and $\sup_{t>0} a_t = \infty$.
  \end{enumerate}  
\end{lemma}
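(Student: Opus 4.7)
In both cases the plan is to exhibit a finite continuous modulus $\modulus$ on $\XX$ with $\sS_\cone=\sS_\modulus$, after which Lemma~\ref{lemma:stau} delivers condition~(B) immediately. Since $\sS_\cone$ is unchanged when $\cone$ is replaced by $\cl\cone$ and, by Lemma~\ref{lemma:scaling}, $\cl\cone$ is itself a cone, we may assume $\cone$ is closed, so that $x\mapsto\dmet(x,\cone)$ is a continuous function vanishing exactly on $\cone$.

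\textbf{Case (a).} Set $\modulus(x):=\dmet(x,\cone)$. Continuity is automatic, and homogeneity follows from $T_t\cone=\cone$ together with the assumed metric homogeneity:
\begin{displaymath}
  \dmet(T_tx,\cone)=\inf_{y\in\cone}\dmet(T_tx,T_ty)=t\,\dmet(x,\cone).
\end{displaymath}
Because $A\in\sS_\cone$ is equivalent to $\dmet(A,\cone)>0$, which is exactly $\ttau(A)>0$, the equality $\sS_\cone=\sS_\modulus$ is immediate.

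\textbf{Case (b): construction.} For $x\notin\cone$, set $g_x(s):=\dmet(T_sx,\cone)$; it is continuous in $s$ (by joint continuity of scaling and metric) and strictly increasing by~\eqref{eq:mei2}. The pointwise inequality $g_x(t)/g_x(1)>g_x(s)/g_x(1)$ for $t>s$ is preserved under infima and suprema over $x$, so both $a_t$ and $b_t$ from~\eqref{eq:mei} are nondecreasing in $t$; coupled with $\sup_t a_t=\infty$ and $\inf_t b_t=0$ this upgrades to the limits $a_t\to\infty$ as $t\to\infty$ and $b_t\to 0$ as $t\downarrow 0$. The sandwich $a_s\dmet(x,\cone)\leq g_x(s)\leq b_s\dmet(x,\cone)$ then forces $g_x$ to be a continuous strictly increasing bijection of $\Rpp$ onto $\Rpp$, so there is a unique $s^*(x)>0$ with $g_x(s^*(x))=1$. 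Define $\modulus(x):=1/s^*(x)$ for $x\notin\cone$ and $\modulus(x):=0$ for $x\in\cone$. Homogeneity is trivial on the scaling-invariant set $\cone$, and on its complement follows from uniqueness applied to $\dmet(T_{s^*(x)/t}\,T_tx,\cone)=\dmet(T_{s^*(x)}x,\cone)=1$, which yields $s^*(T_tx)=s^*(x)/t$ and hence $\modulus(T_tx)=t\modulus(x)$.

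\textbf{Verification and main obstacle.} To check $\sS_\cone=\sS_\modulus$, let $A$ satisfy $\dmet(A,\cone)\geq r>0$ and pick $t_0$ with $a_{t_0}\geq 1/r$; then $g_x(t_0)\geq a_{t_0}\dmet(x,\cone)\geq 1$ on $A$, forcing $s^*\leq t_0$ and $\ttau(A)\geq 1/t_0$. Conversely, $\modulus\geq c$ on $A$ implies $s^*(x)\leq 1/c$, so $1=g_x(s^*(x))\leq g_x(1/c)\leq b_{1/c}\dmet(x,\cone)$ gives $\dmet(A,\cone)\geq 1/b_{1/c}>0$. The delicate step is continuity of $\modulus$. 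For $x_n\to x\notin\cone$ the same sandwich bounds exclude $s^*(x_n)\to 0$ or $\infty$ along any subsequence, and joint continuity of the scaling combined with uniqueness of the level $\{g_x=1\}$ pins $s^*(x_n)\to s^*(x)$; for $x_n\to x\in\cone$, if $s^*(x_n)\leq M$ on a subsequence then $1=g_{x_n}(s^*(x_n))\leq b_M\dmet(x_n,\cone)\to 0$, a contradiction, so $s^*(x_n)\to\infty$ and $\modulus(x_n)\to 0$. With $\sS_\cone=\sS_\modulus$ and $\modulus$ a finite continuous modulus, Lemma~\ref{lemma:stau} concludes the proof.
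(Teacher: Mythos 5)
Your proof is correct and follows essentially the same route as the paper: in case (a) you take $\modulus(x)=\dmet(x,\cone)$, and in case (b) your $\modulus(x)=1/s^*(x)$ is exactly the paper's $\big(\inf\{t>0:\dmet(T_tx,\cone)>1\}\big)^{-1}$, after which both arguments reduce to Lemma~\ref{lemma:stau}. The only stylistic difference is that you establish continuity of $\modulus$ by a sequential subsequence argument, whereas the paper observes directly that $\{\modulus>s\}=\{x:\dmet(T_{s^{-1}}x,\cone)>1\}$ and $\{\modulus<s\}=\{x:\dmet(T_{s^{-1}}x,\cone)<1\}$ are open; both versions rest on the same tacit reading of \eqref{eq:mei} that $b_t$ is finite.
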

\begin{proof}
  (a) Let $\cone$ be the excluded cone. Then
  $\modulus(x)=\dmet(x,\cone)$ is a modulus, since $\cone=T_t\cone$
  and 
  \begin{displaymath}
    \modulus(tx)=\dmet(T_tx,\cone)=\dmet(T_tx,T_t\cone)=t\dmet(x,\cone). 
  \end{displaymath}
  This function is evidently continuous and $\sS_\modulus$ is exactly
  the metric exclusion ideal $\sS_\cone$. 

  (b) Define $\VV=\{x\colons \dmet(x,\cone)>1\}$. By \eqref{eq:mei2}, $\VV$ is a
  semicone. Define
  \begin{displaymath}
    \modulus(x)=\sup\{t>0\colons T_{t^{-1}}x\in \VV\}
    = \Big(\inf\big\{t>0\colons \dmet(T_tx,\cone)>1\big\}\Big)^{-1}, \quad x\notin\cone,
  \end{displaymath}
  and let $\modulus(x)=0$ for all $x\in\cone$, where \eqref{eq:mei2}
  is used to obtain the second equality. We first show that the
  set under the infimum is not empty, that is, for all $x\notin\cone$
  there exists a $t>0$ such that $\dmet(T_t x,\cone)>1$. If this
  were not the case, then
  \begin{displaymath}
    a_t\dmet(x,\cone)\leq \dmet(T_t x,\cone)\leq 1
  \end{displaymath}
  for all $t>0$, which contradicts the imposed condition that the
  supremum of $a_t$ is infinite.  Similarly, if $\modulus(x)=\infty$, then
  \begin{displaymath}
    b_t\dmet(x,\cone)\geq \dmet(T_t x,\cone)>1
  \end{displaymath}
  for all sufficiently small $t>0$, which is impossible by the
  imposed condition on $b_t$. The function $\modulus$ is evidently homogeneous.
  Next, we verify that $\modulus$ is continuous. From \eqref{eq:mei2}
  it follows that
  \begin{displaymath}
    \{\modulus >s\}=\{x\colons \dmet(T_{s^{-1}}x,\cone)>1\},\quad 
    \{\modulus <s\}=\{x\colons \dmet(T_{s^{-1}}x,\cone)<1\}.
  \end{displaymath}
  Both sets are open due to continuity of the distance
  function and the scaling. Furthermore, these equations and
  \eqref{eq:mei} imply that
  \begin{displaymath}
    \{x\colons a_{s^{-1}}\dmet(x,\cone)>1\}\subset
    \{\modulus >s\}\subset \{x\colons b_{s^{-1}}\dmet(x,\cone)>1\},
  \end{displaymath}
  meaning that the ideal $\sS_\modulus$ generated by $\modulus$ coincides
  with $\sS_\cone$.
\end{proof}

A modulus $\modulus$ on $\XX$ is said to be \emph{inf-compact} if the
set $\{\modulus\leq t\}$ is compact for all $t\in\R_+$. If the scaling
is space-continuous, it suffices to impose this for a single $t>0$,
since homogeneity yields that $\{\modulus\leq t\}$ is compact for all
$t\in\R_+$. 
\index{modulus!inf-compact}
In particular, the cone $\{\modulus=0\}$ is a compact set. 

\begin{lemma}
  \label{lemma:inf-compact-excluded}
  If $\modulus$ is an inf-compact continuous modulus on a metric space
  $(\XX,\dmet)$, then the ideal $\sS_\modulus$ is the metric exclusion ideal 
  obtained by excluding the cone $\{\modulus=0\}$.
\end{lemma}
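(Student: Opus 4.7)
The plan is to establish both inclusions $\sS_\modulus \subset \sS_\cone$ and $\sS_\cone \subset \sS_\modulus$ for $\cone = \{\modulus=0\}$ by first reducing membership in each ideal to a single numerical inequality. For $\sS_\cone$, the generating family $\{x:\dmet(x,\cone)\geq r\}$ is nested (increasing as $r\downarrow 0$), so $A\in \sS_\cone$ iff $\dmet(A,\cone)>0$; for $\sS_\modulus$ one has $A\in \sS_\modulus$ iff $\ttau(A)>0$ by Definition~\ref{def:modulus}. I would also note that $\cone$ is a cone by homogeneity of $\modulus$, closed by continuity, and in fact compact, since $\cone = \{\modulus\le 0\}$ and inf-compactness applies at $t=0$.

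For the inclusion $\sS_\modulus \subset \sS_\cone$, the natural idea is to sandwich $A$ between two level sets of $\modulus$. If $\ttau(A) = c > 0$, then $A$ lies in the closed super-level set $F=\{\modulus\ge c\}$, which is disjoint from the compact sub-level set $K=\{\modulus\le c/2\}\supset \cone$ given by inf-compactness. The classical fact that disjoint closed and compact sets in a metric space lie at positive distance then yields $\dmet(A,\cone)\ge \dmet(F,K)>0$, so $A\in \sS_\cone$.

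For the reverse inclusion $\sS_\cone \subset \sS_\modulus$, I would argue by contradiction: assume $\dmet(A,\cone) > 0$ yet $\ttau(A)=0$, and pick $x_n\in A$ with $\modulus(x_n)\to 0$. Eventually $(x_n)$ lies in the compact set $\{\modulus \le 1\}$, so a subnet converges to some $x\in\XX$; continuity of $\modulus$ forces $\modulus(x)=0$, i.e.\ $x\in \cone$, and then $\dmet(x_{n_k},\cone) \le \dmet(x_{n_k},x) \to 0$ contradicts $\dmet(A,\cone)>0$.

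I do not expect a real obstacle. The whole proof is essentially a packaging of two classical facts (continuous preimages of closed sets are closed, and compact-disjoint-from-closed sets are at positive distance in a metric space); the only point to watch is that inf-compactness is invoked in both directions, once to make $K$ compact so that the compact/closed separation theorem applies, and once to extract a convergent subsequence forcing the limit into $\cone$.
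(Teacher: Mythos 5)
Your proof is correct and follows essentially the same route as the paper's: both inclusions rest on inf-compactness of the sub-level sets of $\modulus$, used once to separate the closed set $\{\modulus\geq c\}\supset A$ from the compact set $\cone$ at positive distance, and once to force points of $A$ with small modulus to accumulate on $\cone$, contradicting $\dmet(A,\cone)>0$. The only cosmetic difference is in the second inclusion, where the paper works with an explicit quantity $R_t$ (the excess of $\{\modulus\leq t\}$ over $\cone$) and asserts that it tends to zero as $t\downarrow0$ --- a fact whose verification is precisely your subsequence argument.
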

\begin{proof}
  By the inf-compactness assumption, $\cone=\{\modulus=0\}$ is a
  compact set. Fix $t>0$.  Consider the function
  $x\mapsto \dmet(x,\{\modulus\geq t\})$. This function is continuous on
  the compact set $\cone$ and thus attains a minimum $r$. Since
  $\cone$ and $\{\modulus\geq t\}$ are disjoint, $r$ must be strictly
  positive. It follows that
  \begin{displaymath}
    \{y\colons \dmet(y,\cone)\geq r\}\supset \{\modulus\geq t\}.
  \end{displaymath}
  Since $t>0$ was arbitrary, every set in $\sS_\modulus$ is a subset
  of a set in $\sS_\cone$.

  Conversely, let
  \begin{displaymath}
    R_t=\sup_{y\colons \modulus(y)\leq t}\, \dmet(y,\cone).
  \end{displaymath}
  This is finite by the compactness of $\cone$ and
  $\{\modulus\leq t\}$.  If $\dmet(y,\cone)> R_t$, then , by the
  definition of $R_t$, it is impossible for $\modulus(y)\leq t$ to
  hold. Hence,
  \begin{displaymath}
    \{y\colons \dmet(y,\cone)> R_t\}\subset \{\modulus> t\}.
  \end{displaymath}
  Since $R_t\to0$ as $t\to0$ due to the nesting of compact sets
  $\{\modulus\le t\}$,
  every set in the ideal $\sS_\cone$ is a
  subset of a set in $\sS_\modulus$. 
\end{proof}

\paragraph{Products of ideals}
It is possible to introduce products of ideals as follows. Let
$\XX$ be a space with an ideal $\sS$, and let $\JJ$ be an index set. Denote
by $\XX^\JJ$ the set of functions from $\JJ$ to $\XX$, which
corresponds to the
Cartesian power of $\XX$ if $\JJ$ is at most countable. For
$I\subset \JJ$, let $\pr_I A$ denote the projection of $A\subset\XX^\JJ$
onto the coordinates in $I$. 

\begin{definition}
  \label{def:product}
  For a finite $k$, which is at most the cardinality of $\JJ$, the $k$-th
  product ideal $\sS^\JJ(k)$ of $\sS$ is the family of sets
  $A\subset \XX^\JJ$ such that
  \index{ideal!product of} \index{product of ideals}
  \begin{equation}
    \label{eq:41}
    A \subset \bigcup_{r=1}^l \Big\{y\in \XX^\JJ\colons \pr_{I_r} y\in
    \underbrace{B\times \cdots \times B}_{k}\Big\}
   \end{equation}
   for some $B\in\sS$ and a finite collection of sets
   $I_1,\dots,I_l\subset \JJ$, each of cardinality $k$.
\end{definition}

Note that $\sS^\JJ(k)$ decreases as $k$ increases.  If
$\JJ=\{1,\dots,m\}$, then the product ideal is denoted by $\sS^m(k)$,
and we write $\sS^\infty(k)$ if $\JJ=\NN$.

\begin{example}[Products of three ideals]
  Let $\JJ=\{1,2,3\}$ and $A\subset \XX^3=\XX\times\XX\times\XX$. If
  $k=1$, we may take $I_1=\{1\}$, $I_2=\{2\}$, and $I_3=\{3\}$ to
  obtain the largest right-hand side in \eqref{eq:41}; thus,
  $A\in\sS^3(1)$ if and only if $A$ is a subset of the union of sets
  whose projection onto one of the three components is bounded. If $k=2$, then we
  may take $I_1=\{1,2\}$, $I_2=\{1,3\}$, and $I_3=\{2,3\}$, showing
  that $A\in\sS^3(2)$ if $A$ is a subset of the union of sets whose
  projections on some pair of components is a subset of $B\times B$
  for $B\in\sS$. Finally, if $k=3$, then $I_1=\{1,2,3\}$ and
  $A\in\sS^3(3)$ if $A\subset B\times B\times B$ for $B\in\sS$.
\end{example}

The product of non-identical ideals $\sS_j$, $j\in \JJ$, is defined by
replacing the Cartesian product of $r$ copies of $B$ on the right-hand
side of \eqref{eq:41} 
with the product $\times_{j\in I_r} B_j$
of sets $B_j\in\sS_j$, $j\in I_r$. In particular, for
two ideals $\sX$ and $\sY$ on spaces $\XX$ and $\YY$, we define
\begin{equation}
  \label{eq:product-two-ideals}
  \sX\times\sY=\{A\subset \XX\times\YY\colons  
  A\subset B_1\times B_2 \;\text{for some}\; B_1\in\sX,\;B_2\in\sY\},
\end{equation}
and
\begin{equation}
  \label{eq:product-two-ideals-1}
  \sX\otimes\sY=\{A\subset \XX\times\YY\colons  
  A\subset (B_1\times \YY)\cup(\XX\times B_2)
  \;\text{for some}\; B_1\in\sX,B_2\in\sY\}. 
\end{equation}
It is obvious that $\sX\otimes\sY$ is richer than $\sX\times\sY$. For
identical ideals, we have $\sX\times\sX=\sX^2(2)$ and
$\sX\otimes\sX=\sX^2(1)$. The product of ideals given by
$\sX_1\times\cdots\times\sX_m$ appears in \cite[Section~2.2]{hog77}.

In the following we often encounter products involving trivial
ideals. Let $\sX$ be an ideal on $\XX$. If $\sY$ is a trivial ideal on
$\YY$, meaning that $\YY\in\sY$ and so $\sY$ consists of all subsets
of $\YY$, then we write $\sX\times\YY$ for the ideal consisting of
all subsets of $A\times\YY$ for all $A\in\sX$. 

If $\sS$ is generated by a modulus $\modulus$, then $A\in\sS^\JJ(k)$
means that there exists a finite collection of sets
$I_1,\dots,I_l\subset \JJ$, each of cardinality $k$, such that 
\begin{displaymath}
  \inf_{x=(x_j)_{j\in\JJ}\in A}\;
  \max_{r\in\{1,\dots,l\}} \; \min_{i\in I_r}\; \modulus(x_i)>0.
\end{displaymath}
In this case, the product ideal satisfies
Condition~\hyperref[condB0]{(B$_0$)}, namely, it is 
generated by the family of moduli $x\mapsto \min_{i\in I}\modulus(x_i)$
where $I\subset\JJ$ ranges over all sets 
of cardinality $k$. 

\begin{example}[Products of ideals on the line]
  \label{example:product}
  Let $\XX=\R_+$ with the ideal $\sR_0$ generated by $\modulus(x)=x$,
  $x\in\R_+$. Then $A\in\sR_0^d(1)$ means that there exists $\eps>0$
  such that $A$ is contained in the union of sets $\{x\in\R^d\colons 
  x_i\geq \eps\}$, $i=1\dots,d$. 
  This ideal coincides with the metric
  exclusion ideal $\sR^d_0$ on $\R_+^d$ obtained by excluding the
  origin (see Example~\ref{ex:ideals-Rd}). Furthermore,
  $A\in\sR^d_0(d)$ means that there exists $\eps>0$ such that
  $\min(x_1,\dots,x_d)\geq \eps$ for all $x\in A$. The latter ideal coincides with
  the metric exclusion ideal on $\R_+^d$ obtained by excluding the
  union of all coordinate hyperplanes in $\R_+^d$. More generally,
  $\sR_0^d(k)$ with $k\in\{2,\dots,d-1\}$ is obtained by excluding
  convex hulls of unions of at most $k-1$ coordinate semiaxes. 
\end{example}

\paragraph{Graph product of ideals}

The following construction is motivated by recent developments in
graphical models for extremes; see, e.g., \cite{MR4136498} and
\cite{MR1419991}.  Consider an undirected graph with node set $\JJ$ and
values $x_j$ at the nodes $j\in\JJ$. We
write $i\sim j$ if the nodes $i$ and $j$ are connected by an edge, and we
call a set $I\subset\JJ$ a clique (or fully connected) if $i\sim j$
for all $i,j\in I$. The \emph{graph product}
\index{graph product of ideals}
of the ideals $\sX_i$ associated to each node $i\in\JJ$ is defined as the
family of subsets $A$ of $\XX^{\JJ}$ such that there exists an
$i\in\JJ$ for which the projection of $A\subset \XX^{\JJ}$ onto the
components in $I=\{j\colons j\sim i\}$ is a subset of $\times_{l\in I} B_l$
for $B_l\in \sX_l$, $l\in I$. If the graph is complete and all $\sX_i$
are the same $\sX$, then
the graph product coincides with $\sX^{\JJ}(k)$, where $k$ is the
cardinality of $\JJ$.  In general, the graph product is larger than 
$\sX^{\JJ}(k)$ and smaller than $\sX^{\JJ}(1)$.

If the ideals $\sX_i$ are generated by moduli $\modulus_i$, $i\in\JJ$,
then the graph product of the ideals is generated by the modulus
\begin{displaymath}
  \modulus(x)=\sup_{i\in\JJ}\min_{j\sim i} \modulus_j(x_j),
  \quad x\in\XX^{\JJ} 
\end{displaymath}
with the convention that $i\sim i$, so that each minimum is taken over
a non-empty set. 

An ideal $\sX$ on $\XX^{\JJ}$ generates the corresponding graph. In
this construction,
the nodes $i$ and $j$ are connected by an edge if every
$A\in\sX$ has the property that its projection onto the
$(i,j)$ components is contained in $B_i\times B_j$ for some
$B_i\in\sX_i$ and $B_j\in\sX_j$.

\begin{example}[Graph product]
  Consider the first graph in Figure~\ref{fig:1}(a), where each node
  carries the ideal $\sR_0$. Then sets in the graph product ideal
  are exactly those that are subsets of the union of the sets
  $[s,\infty)\times[s,\infty)\times\R_+\times\R_+$ or
  $\R_+\times [s,\infty)\times[s,\infty)\times\R_+$ or
  $\R_+\times\R_+\times\R_+\times[s,\infty)$ for some $s>0$. 
  
  \begin{figure}
    \centering
    \input{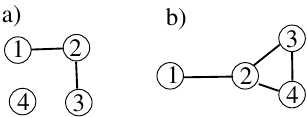_t}
    \caption{Two graphs \label{fig:1}}
  \end{figure}
 
  For the second graph in Figure~\ref{fig:1}(b), the graph product
  consists of sets that are subsets of the union of the sets
  $[s,\infty)\times[s,\infty)\times\R_+\times\R_+$ or
  $\R_+\times [s,\infty)\times[s,\infty)\times[s,\infty)$.
\end{example}

\paragraph{Bornologically consistent maps}
In the literature on bornological spaces, a map between two spaces
equipped with bornologies is said to be bounded if the image of every
bounded set is bounded; see \cite[Section~1.2]{hog77}. The following
definition relies on the boundedness of inverse images and is formulated
for ideals. 

\begin{definition}
  \label{def:morphism}
  Let $\XX$ and $\YY$ be two spaces with ideals $\sX$
  and $\sY$, respectively.  A function $\psi:\XX\to\YY$ is said to
  be \emph{bornologically consistent} (also called
  $(\sX,\sY)$-bornologically consistent) if $\psi^{-1}(B)\in\sX$ for
  all $B\in\sY$.
  \index{bornologically consistent map}
\end{definition}

\begin{lemma}
  \label{lemma:map-zero}
  Let $\psi:\XX\to\YY$ be a continuous map between two metric spaces
  equipped with metric exclusion ideals obtained by excluding the singletons
  $\{x_0\}$ and $\{y_0\}$, respectively, such that
  $\psi^{-1}(y_0)=x_0$ (see
  Definition~\ref{def:exclusion}). Then $\psi$ is bornologically
  consistent. 
\end{lemma}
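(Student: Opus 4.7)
The plan is to reduce the claim to the continuity of $\psi$ at the single point $x_0$, which is enough to control the preimages of the generators of the metric exclusion ideal $\sY$.

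First I would unpack the definition. By Definition~\ref{def:exclusion}, a set $B$ lies in $\sY$ iff there exists $r>0$ such that $B\subset\{y\in\YY:\dmet_\YY(y,y_0)\geq r\}$, and similarly for $\sX$ with $x_0$ in place of $y_0$. Since $\psi^{-1}(y_0)=\{x_0\}$, we have $\psi(x_0)=y_0$. So I would take an arbitrary $B\in\sY$, fix such an $r>0$, and aim to find $\delta>0$ with
\begin{displaymath}
  \psi^{-1}(B)\subset\{x\in\XX:\dmet_\YY(\psi(x),y_0)\geq r\}
  \subset\{x\in\XX:\dmet_\XX(x,x_0)\geq \delta\}.
\end{displaymath}
The right-hand set lies in $\sX$ by definition, and then hereditary property of $\sX$ yields $\psi^{-1}(B)\in\sX$.

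The existence of $\delta$ is precisely continuity of $\psi$ at $x_0$: since $\{y:\dmet_\YY(y,y_0)<r\}$ is an open neighbourhood of $y_0=\psi(x_0)$, its preimage under $\psi$ contains an open $\dmet_\XX$-ball $\{x:\dmet_\XX(x,x_0)<\delta\}$ around $x_0$. Taking the contrapositive gives the desired inclusion. I would finish by noting that $B\in\sY$ need not itself be a functionally closed super-level set; but since $\sY$ is hereditary, any $B\in\sY$ is a subset of some such generator, and $\psi^{-1}$ preserves subset inclusions, so the argument above applies.

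There is essentially no hard step: the claim is a transcription of pointwise continuity through the exclusion-ideal definition. The only place where care is needed is recognising that the full hypothesis $\psi^{-1}(y_0)=\{x_0\}$ is used only to identify $\psi(x_0)=y_0$; no global topological property of $\psi$ beyond continuity at $x_0$ is required.
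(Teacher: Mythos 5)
Your proof is correct and follows essentially the same route as the paper: both arguments reduce the claim to the observation that $\psi^{-1}(\{y:\dmet_\YY(y,y_0)<r\})$ is an open neighbourhood of $x_0$ (since $\psi(x_0)=y_0$ and $\psi$ is continuous) and hence contains a ball, whose complement then absorbs $\psi^{-1}(B)$. Your closing remark that only $\psi(x_0)=y_0$, rather than the full condition $\psi^{-1}(y_0)=\{x_0\}$, is needed is a correct and mildly sharper reading of the hypothesis.
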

\begin{proof}
  Each element of the ideal on $\YY$ is a subset of the complement of
  some open ball $B$ centred at $y_0$. By the properties of inverse
  images, 
  $\psi^{-1}(B^c)=\psi^{-1}(B)^c$.  Since $\psi$ is continuous and
  $y_0\in B$, the set $\psi^{-1}(B)$ is an open
  neighbourhood of $x_0$. Consequently, it contains an open ball of positive radius
  centred at $x_0$. It follows that its complement $\psi^{-1}(B^c)$
  belongs to the metric 
  exclusion ideal on $\XX$.
\end{proof}

\section{Homogeneous measures}
\label{sec:homogeneous-measures-1}

\paragraph{Baire, Borel and Radon measures}

A \emph{Baire measure} on $\XX$ is a countably additive function
$\mu: \sBA(\XX) \to[0,\infty]$ defined on the
Baire $\sigma$-algebra $\sBA(\XX)$.
\index{Baire measure} \index{Borel measure} \index{Radon measure}
\index{measure!Borel}\index{measure!Baire}\index{measure!Radon}
A \emph{Borel measure} is a countably additive function
$\mu: \sB(\XX) \to[0,\infty]$ on the Borel $\sigma$-algebra
$\sB(\XX)$. A Borel measure $\mu$ is said to be \emph{Radon} if it is
inner regular: for
each measurable $A$, the value  $\mu(A)$
equals the supremum of $\mu(K)$ over all
compact sets $K\subset A$; see \cite[Definition~7.1.1]{bogachev07}.

\index{measure!boundedly finite}
A Baire measure $\mu$ is said to be \emph{boundedly finite}
with respect to an
ideal $\sS$ (for short,
finite on $\sS$) if $\mu(B)<\infty$ for all $B\in \sS\cap \sBA(\XX)$ and
$\mu$ is supported by the union of all sets in $\sS$, that is, $\mu$
vanishes on the complement of $\cup_{B\in\sS} B$. Note that the
union of all sets in $\sS$ is a Baire set if $\sS$ has a countable
open base. The family of measures that are boundedly finite on $\sS$
is denoted by $\Msigma$, while
$\Mb$ denotes the family of Borel measures that are finite on $\sS$
and supported by the union of all sets in $\sS$.  Recall that
the Baire and Borel $\sigma$-algebras coincide, and thus  
$\Msigma=\Mb$ if $\XX$ is perfectly normal, most importantly, if $\XX$ is
Polish. If $\sX$ contains $\XX$, then all boundedly finite measures
are finite; we denote the family of all finite (respectively, probability)
Borel measures on $\XX$ by $\Mb[\XX]$ (respectively, $\Mone$), using the
subscript $\sigma$ in the case of Baire measures. 

\begin{lemma}
  If $\sS$ is a topologically consistent ideal on a Polish space
  $\XX$, then every measure in $\Mb$ is Radon.
\end{lemma}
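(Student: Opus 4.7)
The plan is to exploit topological consistency to reduce the problem to $\sigma$-finiteness on an explicit countable cover, and then to apply Ulam's theorem to each restricted finite measure.

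First, let $U=\cup_{B\in\sS} B$, which is the support of $\mu$. Topological consistency guarantees that every $x\in U$ has a functionally open neighbourhood in $\sS$, so $U$ is open. Since $\XX$ is Polish, it is Lindel\"of (and so is hereditarily Lindel\"of), hence the open cover of $U$ by functionally open sets from $\sS$ admits a countable subcover $(V_n)_{n\in\NN}\subset\sS$. Applying topological consistency once more, I pick a functionally closed $F_n\in\sS$ with $V_n\subset F_n$ for each $n$, and set $G_n=F_1\cup\cdots\cup F_n\in\sS$. Since $\XX$ is perfectly normal, each $F_n$ is Baire, hence $\mu(G_n)<\infty$. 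The sets $G_n$ increase to a Borel superset of $U$, so $\mu(A)=\lim_n\mu(A\cap G_n)$ for every Borel $A$.

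Next, for each fixed $n$, the measure $\nu_n(\cdot)=\mu(\cdot\cap G_n)$ is a finite Borel measure on the Polish space $\XX$. By Ulam's theorem (every finite Borel measure on a Polish space is Radon, see e.g.\ \cite[Theorem~7.1.7]{bogachev07}), $\nu_n$ is Radon: for each Borel $A$ and each $\eps>0$ there exists a compact $K\subset A\cap G_n$ with $\nu_n(K)=\mu(K)>\nu_n(A)-\eps=\mu(A\cap G_n)-\eps$.

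It remains to combine these restrictions to obtain inner regularity for $\mu$ on every Borel set $A$. If $\mu(A)<\infty$, pick $N$ with $\mu(A\cap G_N)>\mu(A)-\eps/2$ and then, by the finite case applied to $\nu_N$, a compact $K\subset A\cap G_N\subset A$ with $\mu(K)>\mu(A\cap G_N)-\eps/2>\mu(A)-\eps$. If $\mu(A)=\infty$, then $\mu(A\cap G_n)\uparrow\infty$, so for any $M>0$ we can find $N$ with $\mu(A\cap G_N)>M+1$ and then a compact $K\subset A\cap G_N$ with $\mu(K)>M$, so the supremum of $\mu(K)$ over compact $K\subset A$ is $\infty=\mu(A)$.

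The only genuine step is producing the countable cover; everything after that is routine. The main conceptual point is that topological consistency, combined with the Lindel\"of property of Polish spaces, forces $\mu$ to be $\sigma$-finite by \emph{closed} sets in $\sS$, so the standard Polish space tightness theorem applies componentwise.
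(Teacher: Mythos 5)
Your proof is correct. It rests on the same key fact as the paper's argument (a finite Borel measure on a Polish space, or on a closed subset thereof, is Radon), but the reduction to that fact is genuinely different and in fact more thorough. The paper argues set-by-set: for $A\in\sS\cap\sB(\XX)$ it picks a single closed $F\in\sS$ with $A\subset F$, notes $\mu(F)<\infty$, and invokes inner regularity of the finite restriction $\mu|_F$ to get $\mu(A)=\sup\{\mu(K):K\subset A\ \text{compact}\}$; this verifies the Radon property only for Borel sets belonging to the ideal. You instead use the hereditary Lindel\"of property of Polish spaces to extract a countable cover of $\UU=\cup_{B\in\sS}B$ by functionally open sets from $\sS$, enlarge these to functionally closed sets in $\sS$, and thereby exhibit $\mu$ as an increasing limit of finite Borel measures $\mu(\cdot\cap G_n)$; Ulam's theorem applied to each $\mu(\cdot\cap G_n)$ then yields inner regularity of $\mu$ on \emph{every} Borel set, including the case $\mu(A)=\infty$. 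Since the paper's Definition of a Radon measure quantifies over all measurable sets, your global exhaustion argument is the one that literally delivers the stated conclusion, at the modest cost of the extra Lindel\"of step. Two cosmetic points: calling $\UU$ "the support of $\mu$" is a slight abuse ($\mu$ vanishes off $\UU$, but $\UU$ need not be the topological support), and the inference "Lindel\"of, hence hereditarily Lindel\"of" should really be "second countable, hence hereditarily Lindel\"of"; neither affects the argument.
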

\begin{proof}
  Let $\mu\in\Mb$.  By the assumed topological consistency,
  if $A\in\sS\cap\sB(\XX)$, then $\sS$ contains a
  closed set $F\supset A$. Since $\mu(F)$ is finite, the
  restriction of $\mu$ to $F$ is a Radon measure; see
  \cite[Theorem~4.1.11(iii)]{bogachev18}. Thus, $\mu(A)$ equals the
  supremum of $\mu(K)$ over all compact sets $K\subset A\subset F$.
\end{proof}

Note the following easy fact.

\begin{lemma}
  \label{lemma:enclosed}
  Let $\sS$ be an ideal with a countable open base
  $(\base_m)_{m\in\NN}$ on a topological space $\XX$. Assume that
  $\mu_m$ is a finite Baire measure on $\base_m$ for each $m\in\NN$ such that
  the restriction of $\mu_n$ to $\base_m$ equals $\mu_m$ for all
  $m\leq n$. Then there exists a unique boundedly finite Baire measure
  $\mu\in\Msigma$ such that the
  restriction of $\mu$ onto $\base_m$ coincides with $\mu_m$ for all
  $m\in\NN$.
\end{lemma}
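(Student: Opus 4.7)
The plan is to build $\mu$ as the pointwise monotone limit of the $\mu_m$ along the filtration given by the base. Since $\{\base_m\}_{m\in\NN}$ is cofinal in $\sS$, I will first pass to a subsequence to assume without loss of generality that $\base_m\subset\base_{m+1}$ for every $m$. The set $\UU=\bigcup_m \base_m$ then coincides with the union of all members of $\sS$ and lies in $\sBA(\XX)$ as a countable union of functionally open sets. A small preliminary observation is that, because the restriction of any continuous function $\XX\to\R$ to $\base_m$ remains continuous, the trace $\sigma$-algebra $\{A\cap \base_m:A\in\sBA(\XX)\}$ is contained in $\sBA(\base_m)$, so $\mu_m(A\cap \base_m)$ is well defined for every $A\in\sBA(\XX)$.

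With this in place I will set
\begin{equation*}
  \mu(A):=\sup_{m\in\NN}\mu_m(A\cap \base_m)=\lim_{m\to\infty}\mu_m(A\cap \base_m),
  \qquad A\in\sBA(\XX),
\end{equation*}
where monotonicity of the sequence follows from the consistency hypothesis applied as
$\mu_m(A\cap\base_m)=\mu_{m+1}(A\cap \base_m)\leq \mu_{m+1}(A\cap \base_{m+1})$.
Countable additivity will then follow from the identity
\begin{equation*}
  \mu\Big(\bigsqcup_k A_k\Big)=\sup_m\sum_k \mu_m(A_k\cap \base_m)
  =\sum_k \sup_m\mu_m(A_k\cap\base_m)=\sum_k\mu(A_k),
\end{equation*}
where the exchange of supremum and sum is legitimate because every summand is non-negative and non-decreasing in $m$. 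To verify $\mu\in\Msigma$, observe that $A\cap\UU=\emptyset$ immediately gives $\mu(A)=0$, and for any $B\in\sS\cap\sBA(\XX)$ there exists $m_0$ with $B\subset \base_{m_0}$, so $\mu(B)=\mu_{m_0}(B)<\infty$ since the supremum stabilises for $m\geq m_0$ by consistency.

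Uniqueness will follow from continuity of measures from below: if $\mu'\in\Msigma$ satisfies $\mu'|_{\base_m}=\mu_m$ for every $m$, then $\mu'$ is supported on $\UU$, hence
\begin{equation*}
\mu'(A)=\lim_m \mu'(A\cap \base_m)=\lim_m\mu_m(A\cap \base_m)=\mu(A)
\end{equation*}
for every $A\in\sBA(\XX)$. I do not anticipate a substantial obstacle in this argument; the only minor technicality is the measurability of the traces $A\cap \base_m$ in $\sBA(\base_m)$, settled by the preliminary observation above. Everything else is a routine monotone-limit construction.
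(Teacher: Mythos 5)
Your proof is correct and follows essentially the same route as the paper's (very terse) argument: define $\mu$ on each $\base_m$ via $\mu_m$, using the consistency hypothesis to make this well defined, and glue. Your explicit formula $\mu(A)=\sup_m\mu_m(A\cap\base_m)$ on all of $\sBA(\XX)$, together with the monotone exchange of sup and sum and the trace-measurability remark, simply supplies the details of countable additivity and uniqueness that the paper leaves implicit.
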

\begin{proof}
  Without loss of generality assume that $(\base_m)$ is increasing.
  For each $A\in\sS$ we have $A\subset\base_m$ for some $m$ and we set
  $\mu(A)=\mu_m(A)$. 
  By assumption, this definition is consistent,
  and the required property of restrictions holds. The function $\mu$
  is countably additive on the algebra of sets obtained as the union of
  $\sigma$-algebras $\sBA(\base_i)$
  of Baire subsets of $\base_i$, $i\in\NN$. Indeed, if $A=\cup A_i$,
  then $A\subset \base_m$ for some $m$ and so we may invoke the
  $\sigma$-additivity of $\mu_m$. By the standard measure
  extension theorem, $\mu$ extends to a $\sigma$-additive measure
  on $\sBA(\XX)$; this also confirms uniqueness. 
\end{proof}

\paragraph{Homogeneity property of Baire measures}
A Baire measurable scaling on $\XX$ is lifted to act on Baire
measures as follows \index{scaling!of measures}
\begin{equation}
  \label{eq:23}
  (T_t\mu)(B)=\mu(T_{t^{-1}}B),\quad B\in\sBA(\XX),\; t>0.
\end{equation}
Assume that $\sS$ is a scaling-consistent ideal.  Then
$T_t\mu\in\Msigma$ whenever $\mu\in\Msigma$.

\begin{definition}
  A measure $\mu\in\Msigma$ is said to be \emph{$\alpha$-homogeneous}
  if \index{measure!homogeneous} \index{homogeneous measure}
  \begin{displaymath}
    T_t \mu= t^{\alpha} \mu, \quad t>0,
  \end{displaymath}
  for some $\alpha\in\R$.
\end{definition}

If $\mu$ is $\alpha$-homogeneous with $\alpha\neq0$ and $\sS$ contains
a set $B$ such that $B\subset \zero$, then $\mu(B)=0$. A measure that
is homogeneous with $\alpha=0$ is said to be scaling invariant. 

\begin{lemma}
  \label{lemma:alpha}
  Let $\mu\in\Msigma$ be an $\alpha$-homogeneous Baire measure, and let
  $\modulus$ be a continuous modulus.  If $B=\{\modulus> 1\}\in\sS$
  and $\mu(B)>0$, then $\alpha\geq 0$. If $B=\{\modulus\leq 1\}\in\sS$ and
  $\mu(B)>0$, then $\alpha\leq 0$. Furthermore, if $\mu$ is finite on a
  semicone $\VV$, then
  \begin{equation}
    \label{eq:24}
    \mu\big(\{x\in \VV\colons \modulus(x)=t\}\big)=0
  \end{equation}
  for all $t>0$.
\end{lemma}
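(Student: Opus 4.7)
For parts (i) and (ii) the plan is to translate the identity $T_t\mu = t^\alpha\mu$ into a scaling relation for tail and sublevel sets of $\modulus$ and then exploit monotonicity in the threshold. Using \eqref{eq:23} and the homogeneity of $\modulus$, one computes $(T_t\mu)(\{\modulus>1\}) = \mu(\{y:\modulus(T_ty)>1\}) = \mu(\{\modulus>1/t\})$, so $\alpha$-homogeneity of $\mu$ yields $\mu(\{\modulus>1/t\}) = t^\alpha\mu(\{\modulus>1\})$. For $t\geq 1$ the set on the left contains $\{\modulus>1\}$, which (using $\mu(\{\modulus>1\})\in(0,\infty)$) forces $t^\alpha\geq 1$ for every $t\geq 1$ and hence $\alpha\geq 0$. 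The same manipulation applied to $\{\modulus\leq 1\}$ in place of $\{\modulus>1\}$ yields $\mu(\{\modulus\leq 1/t\}) = t^\alpha\mu(\{\modulus\leq 1\})$; now for $0<t\leq 1$ the left-hand set contains $\{\modulus\leq 1\}$, so $t^\alpha\geq 1$ on $(0,1]$, giving $\alpha\leq 0$.

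For part (iii) my plan is to produce, from a hypothetical level set of positive measure, a countable disjoint family of subsets of $\VV$ whose measures sum to $+\infty$. Set $A_{t_0}=\{x\in\VV:\modulus(x)=t_0\}$ and suppose, aiming for a contradiction, that $\mu(A_{t_0})=N>0$. Rewriting homogeneity as $\mu(T_rC)=r^{-\alpha}\mu(C)$ for any Baire $C$ and $r>0$ yields $\mu(T_sA_{t_0})=s^{-\alpha}N$. For $s\geq 1$ the semicone inclusion $T_s\VV\subseteq\VV$ gives $T_sA_{t_0}\subseteq\VV$, while $T_sA_{t_0}\subseteq\{\modulus=st_0\}$ shows that $\{T_sA_{t_0}:s\geq 1\}$ is pairwise disjoint. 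Taking $s_n=1+1/n$, the measures $s_n^{-\alpha}N$ converge to $N>0$, hence $\sum_n\mu(T_{s_n}A_{t_0})=+\infty$, contradicting $\mu(\VV)<\infty$; thus $N=0$.

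The only delicate point is part (iii): the work lies in matching the one-sided semicone property (which only supplies $T_s\VV\subseteq\VV$ for $s\geq 1$) with the scaling action so that the disjoint scaled copies actually stay inside $\VV$; once this asymmetry is respected, the contradiction is immediate from $\mu(\VV)<\infty$. Measurability is routine: $\{\modulus=t_0\}$ is Baire by Baire measurability of $\modulus$, $\VV$ is functionally open, and Baire measurability of the scaling transfers to each $T_sA_{t_0}$. Note that continuity of $\modulus$ is not actually invoked in the argument, only Baire measurability.
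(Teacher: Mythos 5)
Your proof is correct and follows essentially the same route as the paper: translating $\alpha$-homogeneity into the identity $\mu(\{\modulus>1/t\})=t^\alpha\mu(\{\modulus>1\})$ (and its analogue for sublevel sets) and exploiting monotonicity in the threshold, then for \eqref{eq:24} producing disjoint scaled copies $T_sA_{t_0}\subseteq\VV$ with measures $s^{-\alpha}\mu(A_{t_0})$ that cannot sum to a finite value. Your explicit choice $s_n=1+1/n$ keeping the measures bounded away from zero, and your remark that only Baire measurability (not continuity) of $\modulus$ is used, are minor tightenings of the paper's terser wording, not a different argument.
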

\begin{proof}
  If $B=\{\modulus> 1\}$, then
  \begin{displaymath}
    \mu(T_{t^{-1}}B)=\mu(\{\modulus>t^{-1}\})=t^\alpha\mu(B).
  \end{displaymath}
  Since the left-hand side is nondecreasing in $t$ and converges to
  zero as $t\to0$, we have $\alpha\geq0$. The case where
  $B=\{\modulus\leq 1\}$ and $\alpha\leq0$ is handled by a similar
  argument.

  If \eqref{eq:24} does not hold for some $t>0$, then
  \begin{displaymath}
    \mu(\{x\in \VV\colons \modulus(x)=st\})
    \geq s^{-\alpha}\mu(\{x\in \VV\colons \modulus(x)=t\})>0
  \end{displaymath}
  for all $s\in(1,\infty)$. This is impossible if
  $\mu(\{x\in \VV\colons \modulus(x)=t\})>0$,
  since, by choosing countably many distinct values of $s$ in a compact
  interval of $(1,\infty)$, one obtains countably many disjoint subsets
  of $\VV$, each with measure bounded below by a positive constant,
  contradicting the finiteness of $\mu(\VV)$.
\end{proof}

Each $\alpha$-homogeneous Borel measure on $\Rpp$ with $\alpha>0$
is proportional to the measure
\begin{equation}
  \label{eq:29}
  \theta_\alpha(\bdiff t)=\alpha t^{-\alpha-1}dt.
\end{equation}
The following result shows that the measures $\theta_\alpha$
arise from disintegration of homogeneous measures. It
specialises the representation of general homogeneous measures from
\cite{evan:mol18}, where it is proved for measures on a generic
$\sigma$-algebra (which includes the case of Baire measures).
	
\begin{theorem}\label{spectral-measure}
  Assume that a scaling acts freely on a topological space
  $\XX$, and $\sS$ is a bornology
  on $\XX$ that admits a countable base $(\base_n)_{n\in\NN}$ such
  that each $\base_n$ is a Baire measurable semicone.  If
  $\mu\in\Msigma$ is an $\alpha$-homogeneous Baire measure with
  $\alpha>0$, then $\mu$ is the pushforward of
  $\sigma\otimes \theta_\alpha$ by the map $(u,t)\mapsto T_tu$, where
  $\sigma$ is a finite Baire measure on $\XX$ such that
  \begin{displaymath}
    \int_0^\infty \sigma(T_t\base_n) t^{\alpha-1}\diff t<\infty,\quad n\in\NN.
  \end{displaymath}
  Equivalently,
  \begin{equation}
    \label{eq:4}
    \mu(B) = \alpha \int_0^\infty \sigma(T_t B) t^{\alpha-1}\diff t,
    \quad B\in \sS.
  \end{equation}
\end{theorem}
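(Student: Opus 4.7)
The plan is to follow the disintegration argument of \cite{evan:mol18} along the orbits of the free $\Rpp$-action, adapted to the Baire framework. Without loss of generality replace $(\base_n)$ by the increasing sequence $\bigcup_{k\le n}\base_k$ (still a Baire semicone base). For each $n$, introduce the \emph{orbit core}
\begin{displaymath}
E_n := \bigcap_{k\ge 0}T_{2^k}\base_n = \bigcap_{r\ge 1}T_r\base_n,
\end{displaymath}
which is orbit-invariant: bijectivity of $T_s$ gives $T_sE_n = \bigcap_{r\ge 1}T_{sr}\base_n = E_n$ for every $s>0$, so $\alpha$-homogeneity yields $\mu(E_n) = s^\alpha\mu(E_n)$; combined with $\alpha>0$ and $\mu(E_n)\le\mu(\base_n)<\infty$, this forces $\mu(E_n) = 0$. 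Setting $\Sigma_n := \base_n\setminus T_2\base_n$, the telescoping identity $T_{2^k}\base_n\setminus T_{2^{k+1}}\base_n = T_{2^k}\Sigma_n$ and the previous claim produce the disjoint decomposition $\base_n\setminus E_n = \bigsqcup_{k\ge 0}T_{2^k}\Sigma_n$, and $\alpha$-homogeneity $\mu(T_{2^k}A) = 2^{-k\alpha}\mu(A)$ yields $\mu(\Sigma_n) = (1-2^{-\alpha})\mu(\base_n) < \infty$.

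Next, define the Baire polar map $\rho_n(x) = (T_{r_n(x)^{-1}}x,\,r_n(x))$ on $\base_n\setminus E_n$, where $r_n(x) = 2^k$ for the unique $k\ge 0$ with $x\in T_{2^k}\Sigma_n$; each level set $\{r_n = 2^k\}$ is Baire and the angular component is Baire on each such set by the Baire measurability of scaling. The pushforward of $\mu|_{\base_n\setminus E_n}$ under $\rho_n$ is a finite Baire measure on $\Sigma_n\times\{2^k:k\ge 0\}$ which factorises as $(\mu|_{\Sigma_n})\otimes\nu$ with $\nu(\{2^k\}) = 2^{-k\alpha}$, again by $\alpha$-homogeneity applied to each angular slice. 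Refining the dyadic scale $2$ to $2^{1/N}$ and passing to $N\to\infty$ smears the discrete $\nu$ into the continuous $\theta_\alpha$ and produces a finite Baire measure $\sigma_n$ on $\Sigma_n$ (obtained as the appropriate limit of the suitably rescaled $\mu|_{\Sigma_n^{(N)}}$ with $\Sigma_n^{(N)}:=\base_n\setminus T_{2^{1/N}}\base_n$) such that
\begin{displaymath}
\mu(B) = \alpha\int_0^\infty\sigma_n(T_tB)\,t^{\alpha-1}\,dt\qquad\text{for every Baire }B\subseteq\base_n.
\end{displaymath}

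Uniqueness of $\sigma_n$ subject to the displayed identity on $\sBA(\base_n)$ (a monotone-class argument using the $\pi$-system $\{T_tB:B\in\sBA(\base_m),\,t>0\}$) yields the consistency $\sigma_n|_{\sBA(\base_m)} = \sigma_m$ for $m\le n$, and Lemma~\ref{lemma:enclosed} then patches $(\sigma_n)$ into a single $\sigma\in\Msigma$. The integrability $\int_0^\infty\sigma(T_t\base_n)\,t^{\alpha-1}\,dt = \mu(\base_n)/\alpha<\infty$ is the displayed identity with $B = \base_n$, and the equivalence with the pushforward statement $\mu = \pi_*(\sigma\otimes\theta_\alpha)$ is a Fubini computation after the substitution $s = t^{-1}$. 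The main obstacle is the dyadic-to-continuous refinement: the factorisation of the pushforward against $\theta_\alpha$, rather than against the discrete $\nu$, must be established using only $\alpha$-homogeneity and Baire measurability of the scaling, without any continuous modulus, metric, or topological transversal. The remainder of the proof is a sequence of telescoping identities and monotone-class and Fubini manipulations.
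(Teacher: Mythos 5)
The paper itself gives no proof of this theorem---it is quoted from \cite{evan:mol18}---so your proposal must stand on its own, and it has two genuine gaps. The first is precisely the step you flag as ``the main obstacle'': the dyadic-to-continuous refinement. The shells $\Sigma_n^{(N)}=\base_n\setminus T_{2^{1/N}}\base_n$ carry mass $(1-2^{-\alpha/N})\mu(\base_n)\to 0$ and shrink to $\base_n\setminus\bigcup_{s>1}T_s\base_n$, which may be empty or $\mu$-null; the rescaled restrictions $c_N\,\mu|_{\Sigma_n^{(N)}}$ need not converge setwise (already for $\mu=\theta_\alpha$ on $(0,\infty)$ with $\base_1=(1,\infty)$ one can pick a Baire set accumulating at $1$ for which $c_N\mu(A\cap(1,2^{1/N}])$ oscillates), and no weak-convergence mechanism is available for Baire measures on a bare topological space. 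The argument of \cite{evan:mol18} avoids any limiting procedure: set $\tau_n(x)=\sup\{t>0:x\in T_t\base_n\}$, which is homogeneous and Baire measurable because $\{\tau_n>t\}=\bigcup_{s>t,\,s\in\QQ}T_s\base_n$ by the semicone property; push $\mu$ forward by the measurable polar map onto $\{\tau_n=1\}\times\Rpp$ (injective by freeness of the action); then for each angular Baire set $A$ the radial tail $F_A(t)$ satisfies the \emph{exact} identity $F_A(st)=s^{-\alpha}F_A(t)$, which together with monotonicity forces $F_A(t)=F_A(1)t^{-\alpha}$ and hence the product form $\sigma_n\otimes\theta_\alpha$ outright. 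Your dyadic computation proves the same identity only along the lattice $s\in\{2^{k}\}$, and the passage to all $s$ is exactly what the measurable transversal delivers and what your smearing does not.

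The second gap is the consistency of the family $(\sigma_n)$. You derive it from uniqueness of $\sigma_n$ subject to \eqref{eq:4} on $\sBA(\base_n)$, but that uniqueness is false: if $\sigma_n$ satisfies \eqref{eq:4}, so does $s^{-\alpha}T_s\sigma_n$ for every $s>0$ (substitute $u=t/s$ in the integral), and when $\sigma_n$ is carried by a transversal these are genuinely different measures. The paper itself warns, right after the theorem, that the spectral measure is not unique unless its support is pinned to a specified transversal. Consequently Lemma~\ref{lemma:enclosed} cannot be invoked as you propose; the $\sigma_n$ must be built compatibly by construction, for instance all read off from a single globally defined radial function, rather than reconciled after the fact.
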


The measure $\sigma$ is called the \emph{spectral measure} of $\mu$.
\index{spectral measure}
Theorem~\ref{spectral-measure} applies
if $\sS$ satisfies Condition~\hyperref[condB]{(B)}, and then $\sigma$ can be chosen to be a finite
measure on $\SS_\modulus$ as given in \eqref{eq:sphere}. In this form,
\eqref{eq:4} on the space $\XX'=\XX\setminus\zero$ also appears in
\cite{kul:sol20}, under the additional assumption that $\zero$ is a
singleton. Note that the spectral measure is not necessarily unique
unless its support is a specified transversal in $\XX$, e.g., the
transversal $\SS_\modulus$ generated by a continuous modulus $\modulus$
(which ensures that $\SS_\modulus$ is a Baire set).

\begin{corollary}
  Let $\mu\in\Msigma$ be an $\alpha$-homogeneous Baire
  measure with $\alpha>0$, where the ideal $\sS$ is generated by an at
  most countable family of continuous moduli. Then \eqref{eq:4} holds
  on the union of all sets in $\sS$.
\end{corollary}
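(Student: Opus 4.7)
The plan is to reduce the corollary directly to Theorem~\ref{spectral-measure} by passing to $\XX'=\cup_{B\in\sS}B$ and constructing an explicit countable base of $\sS$ consisting of Baire measurable semicones. Let $(\modulus_i)_{i\in\JJ}$, with $\JJ\subset\NN$, be the at most countable family of continuous moduli that generates $\sS$. By Lemma~\ref{lemma:stau}, each $\sS_{\modulus_i}$ satisfies (B) and admits the countable open base $\{\modulus_i>1/n\}_{n\in\NN}$ of functionally open semicones. Hence $\sS$ satisfies (B$_0$), and Lemma~\ref{lemma:free} ensures that the scaling acts freely on $\XX'$.

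Next I would enumerate a countable base of $\sS$ by taking all finite unions
\begin{displaymath}
  \base_{F} = \bigcup_{(i,n)\in F}\{\modulus_i>1/n\},
\end{displaymath}
where $F$ runs through the countable collection of finite subsets of $\JJ\times\NN$. Since a finite union of functionally open semicones is again a functionally open semicone, and each $\{\modulus_i>1/n\}$ is Baire (indeed functionally open by continuity of $\modulus_i$), every $\base_F$ is a Baire measurable semicone. Any $B\in\sS$ is, by definition of the generated ideal, contained in a finite union of sets from the $\sS_{\modulus_i}$, each of which lies in $\{\modulus_i>1/n\}$ for suitable $n$; hence $B\subset \base_F$ for some $F$, confirming that $(\base_F)_F$ is a base of $\sS$.

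With this base in place, $\sS$ is a bornology on $\XX'$, the scaling acts freely on $\XX'$, and all base elements are Baire measurable semicones. Regarding $\mu\in\Msigma$ as an $\alpha$-homogeneous Baire measure on $\XX'$ (which is justified because $\mu$ is supported on the union of sets from $\sS$), Theorem~\ref{spectral-measure} applies and yields a finite Baire measure $\sigma$ such that \eqref{eq:4} holds for every $B\in\sS$. This is precisely the desired conclusion.

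The only point requiring care is the verification that the base $(\base_F)_F$ consists of semicones and that $\sS$ satisfies (B$_0$), but both follow immediately from Lemma~\ref{lemma:stau} and the stability of functional openness and the semicone property under finite unions; once these are in hand, the corollary reduces to an invocation of the already established Theorem~\ref{spectral-measure}.
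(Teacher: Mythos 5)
Your proof is correct and follows essentially the same route as the paper: invoke Lemma~\ref{lemma:free} to get that the scaling acts free on the union of the ideal's sets, build a countable base of Baire (functionally open) semicones out of the sets $\{\modulus_i>1/n\}$, and apply Theorem~\ref{spectral-measure}. The only cosmetic difference is your enumeration of the base by finite subsets of $\JJ\times\NN$, where the paper uses the diagonal choice $\base_n=\cup_{i=1,\dots,n}\{\modulus_i>1/n\}$.
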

\begin{proof}
  By Lemma~\ref{lemma:free} the scaling acts freely. Consequently,
  Theorem~\ref{spectral-measure} applies with
  \begin{displaymath}
    \base_n=\cup_{i=1,\dots,n}\{\modulus_i>1/n\},\quad n\in\NN. \qedhere
  \end{displaymath}
  % It
  % is possible to turn $\sigma$ into a probability measure supported by
  % $\{x\in\XX:\modulus(x)=u\}$ for the modulus $\modulus$ obtained as the
  % concatenation of the moduli $(\modulus_i)_{i\in\NN}$ and a suitable
  % $u>0$.
\end{proof}

If the spectral measure $\sigma=\sigma_\modulus$ in \eqref{eq:4} is
supported by $\SS_\modulus$ generated by a continuous modulus
$\modulus$, then \eqref{eq:4} can be expressed as
\begin{equation}
  \label{eq:mu-sector}
  \mu\big(\{x\colons
  \rho(x)\in A\times[s,t]\}\big)=\sigma(A)(t^\alpha-s^\alpha),
\end{equation}
where $A$ is a Baire subset of $\SS_\modulus$ and $\rho$ is the polar
decomposition map. 

\begin{lemma}
  \label{lemma:change-modulus}
  Let $\modulus$ and $\ell$ be two continuous
  moduli on a topological space
  $\XX$ such that the ideal $\sS_{\ell}$ generated by $\ell$
  is a subset of the ideal generated by $\modulus$. If
  $\sigma_\modulus$ is a finite measure on $\SS_\modulus$, then
  \begin{displaymath}
    \sigma_{\ell}(A)=\int_{\SS_\modulus}
      \ell(x)^\alpha \one_{\ell(x)>0} \one_{T_{\ell(x)^{-1}}x\in A}
      \sigma_\modulus(\bdiff x)
  \end{displaymath}
  is a finite measure on $\SS_{\ell}=\{x\colons \ell(x)=1\}$ and the corresponding
  homogeneous measure $\mu_\ell$ on $\XX$ defined in \eqref{eq:mu-sector}
  is the restriction of $\mu$ given in \eqref{eq:4} onto $\sS_{\ell}$. 
\end{lemma}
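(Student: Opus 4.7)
The plan is to verify in sequence that (i) $\sigma_\ell$ is a well-defined finite Baire measure on $\SS_\ell$ and (ii) the homogeneous measure $\mu_\ell$ that it generates through \eqref{eq:mu-sector} coincides with $\mu$ on the union of sets in $\sS_\ell$. The overall strategy is to evaluate $\mu$ on ``$\ell$-sectors'' using the $\modulus$-representation provided by Theorem~\ref{spectral-measure} and to read off $\sigma_\ell$ as the induced spectral measure.

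First I would show that $\sS_\ell\subset\sS_\modulus$ forces a pointwise domination. The set $\{\ell\geq 1\}\in\sS_\ell$ lies in $\sS_\modulus$, so $\inf\{\modulus(x):\ell(x)\geq 1\}=c>0$; applying this to $T_{\ell(x)^{-1}}x$ for any $x$ with $\ell(x)>0$ and using homogeneity of $\modulus$ gives $\modulus(x)\geq c\,\ell(x)$. In particular $\ell(u)^\alpha\leq c^{-\alpha}$ for $u\in\SS_\modulus$, so the integral defining $\sigma_\ell(A)$ is bounded by $c^{-\alpha}\sigma_\modulus(\SS_\modulus)<\infty$. Measurability of the integrand follows from Baire measurability of $\ell$ and of the scaling, and homogeneity yields $\ell(T_{\ell(u)^{-1}}u)=1$ whenever $\ell(u)>0$, so $\sigma_\ell$ is concentrated on $\SS_\ell$ (with the convention that the contribution from $\{\ell=0\}$ vanishes since $\alpha>0$).

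Next I would identify $\mu$ on $\ell$-sectors. By Theorem~\ref{spectral-measure}, $\mu$ is the pushforward of $\sigma_\modulus\otimes\theta_\alpha$ under $(u,r)\mapsto T_r u$. Fix $B=\{x:\ell(x)\in[s,t],\;T_{\ell(x)^{-1}}x\in A\}$ with $A\in\sBA(\SS_\ell)$ and $0<s<t$. Homogeneity of $\ell$ gives $\ell(T_r u)=r\ell(u)$ and $T_{\ell(T_r u)^{-1}}(T_r u)=T_{\ell(u)^{-1}}u$ for $u\in\SS_\modulus$, so $T_r u\in B$ if and only if $\ell(u)>0$, $r\in[s/\ell(u),t/\ell(u)]$ and $T_{\ell(u)^{-1}}u\in A$. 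The inner integration
\[
\int_{s/\ell(u)}^{t/\ell(u)} \alpha\, r^{-\alpha-1}\,dr \;=\; \ell(u)^\alpha\bigl(s^{-\alpha}-t^{-\alpha}\bigr)
\]
followed by integration over $\SS_\modulus$ yields $\mu(B)=\sigma_\ell(A)\bigl(s^{-\alpha}-t^{-\alpha}\bigr)$, which matches the value of $\mu_\ell(B)$ predicted by \eqref{eq:mu-sector} applied with modulus $\ell$ and spectral measure $\sigma_\ell$.

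Finally I would upgrade this equality on $\ell$-sectors to equality on all Baire subsets of the union of sets in $\sS_\ell$, which is $\{\ell>0\}$. Continuity of $\ell$ together with continuity of the scaling make the $\ell$-polar map $x\mapsto(T_{\ell(x)^{-1}}x,\ell(x))$ a Baire bijection of $\{\ell>0\}$ onto $\SS_\ell\times\Rpp$, so the $\ell$-sectors form a $\pi$-system generating the trace Baire $\sigma$-algebra. Since both $\mu_\ell$ and the restriction of $\mu$ are finite on each member of $\sS_\ell$, a standard monotone class argument concludes. The main obstacle I anticipate is the bookkeeping around the set $\{\ell=0\}$: the domination argument must be applied carefully to extract $\ell\leq c^{-1}\modulus$ without implicit division by zero, and one has to verify that both $\mu$ and $\mu_\ell$ assign no mass to $\{\ell=0\}\cap\{x:B\ni x\}$ within the $\sS_\ell$ framework, so that restriction to $\{\ell>0\}$ is harmless.
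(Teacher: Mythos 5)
Your proposal is correct and follows essentially the same route as the paper: both evaluate $\mu$ on $\ell$-sectors via the $\modulus$-based spectral representation from Theorem~\ref{spectral-measure}, with the inner $\theta_\alpha$-integral over $r\geq s/\ell(u)$ producing the factor $\ell(u)^\alpha$ that defines $\sigma_\ell$. You additionally supply the finiteness argument (via the pointwise domination $\modulus\geq c\,\ell$ extracted from $\sS_\ell\subset\sS_\modulus$) and the monotone-class extension, both of which the paper leaves implicit.
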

\begin{proof}
  Denote by $\rho_\ell$ the polar decomposition map corresponding to
  $\ell$.  For a Baire subset $A$ of $\SS_{\ell}$ we have
  \begin{align*}
    \sigma_{\ell}(A)=\mu\big((\rho_\ell)^{-1}&(A\times[1,\infty))\big)
    =\int_0^\infty \int_{\SS_\modulus}
      \one_{\rho_\ell(T_tv)\in A\times[1,\infty)}\one_{\ell(v)>0}
      \sigma_\modulus(\bdiff v) \theta_\alpha(\bdiff t)\\
    &=\int_0^\infty \int_{\SS_\modulus}
      \one_{T_{\ell(v)^{-1}}v\in A}\one_{\ell(T_tv)\geq 1}\one_{\ell(v)>0}
      \sigma_\modulus(\bdiff v) \theta_\alpha(\bdiff t)\\
    &=\int_{\SS_\modulus}
      \one_{T_{\ell(v)^{-1}}v\in A} \one_{\ell(v)>0}\ell(v)^\alpha 
      \sigma_\modulus(\bdiff v). \qedhere
  \end{align*}
\end{proof}

The continuity assumption on $\modulus$ and $\ell$ in
Lemma~\ref{lemma:change-modulus} ensures the Baire measurability of
$\SS_\modulus$ and $\SS_\ell$. A variant of this lemma holds for Borel measures
and Borel measurable functions $\modulus$ and $\ell$.

\paragraph{Extensions to product spaces}

Let $\XX$ be a perfectly normal topological space, so that the Borel
and Baire $\sigma$-algebras coincide. For each $n\in\NN$, let $\XX^n$
be the Cartesian power of $\XX$ equipped with the product topology and the
corresponding Borel $\sigma$-algebra $\sB(\XX^n)$.
\index{product space}
Since perfect normality is not preserved under Cartesian products, we
also assume that the Borel and Baire $\sigma$-algebras on $\XX^n$
coincide for all $n$.

Let $\pi_{k,n}\colons \XX^n\to\XX^k$ for $k\leq n$ be the natural projection maps,
defined by $\pi_{k,n}(x_1,\dots,x_n)=(x_1,\dots,x_k)$.
Define the projection maps
$\pi_k=\pi_{k,\infty}\colons \XX^\infty\to\XX^k$ by mapping a sequence
$(x_1,x_2,\dots)$ to $(x_1,\dots,x_k)$.  Endow the space $\XX^\infty$
with the smallest $\sigma$-algebra that makes the projection maps
$\pi_k$ measurable for all $k$. This
$\sigma$-algebra is generated by the algebra $\bigcup_k
\pi_k^{-1}(\sB(\XX^k))$. 

Let $\sS_n$ be ideals on $\XX^n$ for $n\in\NN$, such that the
projection maps $\pi_{k,n}$ are bornologically consistent for all 
$k\geq 1$ and any $n\geq k$ (see Definition~\ref{def:morphism}), that
is, 
\begin{equation}
  \label{eq:proj-S}
  \pi_{k,n}^{-1}(A)\in\sS_n \quad\text{for all}\;\; A\in\sS_k.
\end{equation}
Furthermore, let $\sS_\infty$ be the smallest ideal on $\XX^\infty$ such that
the maps $\pi_k$ are bornologically consistent for all $k\geq 1$. The
following result is a version of \cite[Theorem~B.1.33]{kul:sol20},
specialised to infinite products and extended to the non-Polish
setting. The scheme of the proof is similar to that in \cite{MR0404570}. The Radon
assumption makes it possible to approximate a measure with its values
on compact sets, which is the key step in the usual proof of
Kolmogorov's extension theorem for finite-dimensional distributions. 

\begin{theorem}
  \label{thr:fidi-extension}
  Let $\XX$ be a perfectly normal topological space. 
  For %some $k_0\in\NN$ and
  each $n\geq 1$, let $\mu_n\in\Mb[\sS_n]$
  be a Radon (and thus Borel) measure on $\sB(\XX^n)$. Assume that
  $\sS_n$ has a countable open base for all $n\in\NN$ and that
  $\pi_{k,n}\mu_n=\mu_k$ for all $1\leq k\leq n$. Then there exists
  a unique measure $\mu$ on this cylinder $\sigma$-algebra, boundedly
  finite with respect to $\sS_\infty$, such that $\pi_k\mu=\mu_k$ for all
  $k\geq 1$.
  \index{extension of measures to product spaces}
\end{theorem}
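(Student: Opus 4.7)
The overall plan is to follow Kolmogorov's projective-limit construction: build $\mu$ as a finitely additive premeasure on the cylinder algebra, verify $\sigma$-additivity by a Radon-compactness argument, and extend by Carath\'eodory. The deviation from the Polish case is that $\XX^\infty$ need not be sequential, so the diagonal extraction must be performed with nets on a Tychonoff product, and we work with boundedly finite rather than finite measures; the latter is handled by localising everything inside one cylinder of finite $\mu_k$-mass using the ideal $\sS_\infty$.

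\textbf{Step 1: the cylinder premeasure.} Introduce the cylinder algebra $\sA=\bigcup_{k\in\NN}\pi_k^{-1}(\sB(\XX^k))$, which generates the working $\sigma$-algebra on $\XX^\infty$, and set $\mu(\pi_k^{-1}(A))=\mu_k(A)$. The hypothesis $\pi_{k,n}\mu_n=\mu_k$ makes this well-defined on $\sA$: if $\pi_k^{-1}(A)=\pi_n^{-1}(A')$ with $k\leq n$, then $A'=\pi_{k,n}^{-1}(A)$ and $\mu_n(A')=\mu_k(A)$. Finite additivity on $\sA$ is inherited from each $\mu_k$. Using the definition of $\sS_\infty$ as the smallest ideal making every $\pi_k$ bornologically consistent, together with \eqref{eq:proj-S}, any finite union of cylinders over $\sS_k$-sets can be absorbed into a single cylinder (take $k=\max k_i$), so every $V\in\sS_\infty$ lies in some $\pi_k^{-1}(A)$ with $A\in\sS_k$ and $\mu_k(A)<\infty$.

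\textbf{Step 2: countable additivity on $\sA$.} It suffices to establish continuity at $\emptyset$ for decreasing sequences $(B_n)\subset\sA$ with $\mu(B_1)<\infty$. Write $B_n=\pi_{k_n}^{-1}(A_n)$; lifting to a common refinement one may assume the $k_n$ are strictly increasing, the bounded-$k_n$ case reducing at once to $\sigma$-additivity of a single $\mu_K$. Suppose for contradiction $\mu(B_n)\geq 3\delta>0$ for all $n$. By the Radon property of $\mu_{k_n}$ choose compact $K_n\subset A_n$ with $\mu_{k_n}(A_n\setminus K_n)<\delta/2^{n+1}$, and set
\[
  C_n=B_n\cap\bigcap_{m=1}^{n}\pi_{k_m}^{-1}(K_m).
\]
Since $B_n\subset B_m=\pi_{k_m}^{-1}(A_m)$ for $m\leq n$, telescoping yields $\mu(B_n\setminus C_n)\leq\sum_{m=1}^n\mu_{k_m}(A_m\setminus K_m)<\delta/2$, hence $C_n\neq\emptyset$. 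Pick $x^{(n)}\in C_n$ and define $z^{(n)}\in\prod_m K_m$ by $z^{(n)}_m=\pi_{k_m}(x^{(n)})$ for $m\leq n$ and by a fixed reference point for $m>n$. By Tychonoff, $\prod_m K_m$ is compact Hausdorff, so $(z^{(n)})$ admits a cluster point $z$. The identity $\pi_{k_m,k_n}(z^{(l)}_n)=z^{(l)}_m$, valid for all $l\geq n\geq m$, together with continuity of $\pi_{k_m,k_n}$, passes to the cluster point and gives the compatibility $\pi_{k_m,k_n}(z_n)=z_m$. Since $k_n\to\infty$, these compatible finite-dimensional marginals determine a unique $y\in\XX^\infty$ with $\pi_{k_n}(y)=z_n\in K_n\subset A_n$, so $y\in\bigcap_n B_n$, contradicting $B_n\downarrow\emptyset$. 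The main obstacle is precisely this diagonal-compactness argument in the non-sequential setting, which the Tychonoff and cluster-point approach resolves.

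\textbf{Step 3: extension, uniqueness and support.} Carath\'eodory's theorem extends $\mu$ uniquely from $\sA$ to the $\sigma$-algebra it generates. Any other member of $\Mb[\sS_\infty]$ satisfying $\pi_k\mu'=\mu_k$ agrees with $\mu$ on the $\pi$-system $\sA$, hence everywhere. Since $\mu_k$ is supported on $\bigcup\sS_k$, we have $\mu\bigl(\pi_k^{-1}(\XX^k\setminus\bigcup\sS_k)\bigr)=\mu_k(\XX^k\setminus\bigcup\sS_k)=0$ for every $k$, so $\mu$ is concentrated on $\bigcap_k\pi_k^{-1}(\bigcup\sS_k)\subset\bigcup\sS_\infty$. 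Bounded finiteness on $\sS_\infty$ is the cylinder bound from Step~1.
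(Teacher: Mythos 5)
Your proof is correct and rests on the same mathematical core as the paper's — inner regularity by compact sets is what makes the projective limit work — but the write-up takes a genuinely different route. The paper localises to the finite-measure cylinders $\pi_{k,n}^{-1}(\base_{m,k})$, invokes the known extension theorem for projective systems of Radon measures (\cite[Theorem~7.7.1]{bogachev07}) as a black box on each such piece, and then glues twice with Lemma~\ref{lemma:enclosed}: over $m$ to obtain $\mu^{(k)}\in\Mb[\sV_k]$ and over $k$ to obtain $\mu\in\Mb[\sS_\infty]$. You instead unfold that black box: you build the cylinder premeasure globally, prove countable additivity by the classical compactness argument (Radon approximation, Tychonoff, cluster points of nets), and finish with Carath\'eodory plus a $\pi$-system uniqueness argument in place of Lemma~\ref{lemma:enclosed}. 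What your version buys is self-containedness and an explicit treatment of the non-sequential setting via nets; what the paper's version buys is brevity and a cleaner separation between the finite-measure extension step and the bookkeeping with ideals. Two small points you should make explicit: (i) the Carath\'eodory extension and the ``agrees on the $\pi$-system, hence everywhere'' step require $\sigma$-finiteness relative to $\sA$, which holds only on the support $\bigcup_{k,m}\pi_k^{-1}(\base_{m,k})$ — off this set both measures must be declared zero by the support requirement in $\Mb[\sS_\infty]$, exactly the localisation you promise in your preamble but do not carry out in Step~3; and (ii) passing the compatibility relations to the cluster point uses that the equaliser of two continuous maps into $\XX^{k_m}$ is closed, i.e.\ that $\XX$ is Hausdorff, which should be recorded alongside the perfect normality assumption.
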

\begin{proof}
  % Without loss of generality, assume that $k_0=1$.
  Define a sequence of ideals on $\XX^\infty$ by letting $\sV_k$ be
  the ideal generated by the sets $\pi_k^{-1}(A)$, $A\in\sS_k$.
  Equivalently, $\sV_k$ consists of all subsets of such cylinders.
  % Note that
  % \begin{displaymath}
  %   \{\pi_{k,n}^{-1}(A): A\in\sS_k\}\subset \sS_n,\quad k_0\leq k< m. 
  % \end{displaymath}
  By \eqref{eq:proj-S}, $\sV_n$ is generated by
  $\big\{\pi_n^{-1}(A)\colons A\in\sS_n\big\}$, which includes the
  family $\big\{\pi_n^{-1}(\pi_{k,n}^{-1}(A))\colons A\in\sS_k\big\}$
  generating $\sV_k$ for $k\leq n$. Thus, $\sV_n\supset\sV_k$. 

  Let $(\base_{m,k})_{m\in\NN}$ be a countable open base of
  $\sS_k$.
  % For each $k,m$, define a finite premeasure on the cylinder
  % $\sigma$-algebra restricted to $\pi_k^{-1}(\base_{m,k})$ by
  % \[
  %   \mu_{k,m}\big(\pi_n^{-1}(C)\cap\pi_k^{-1}(\base_{m,k})\big)
  %   =
  %   \mu_n\big(C\cap\pi_{k,n}^{-1}(\base_{m,k})\big),
  %   \quad n\geq k,\ C\in\sB(\XX^n).
  % \]
  % The consistency assumption ensures that this definition is independent
  % of the chosen level $n$.
  By applying the extension theorem for Radon measures
  \cite[Theorem~7.7.1]{bogachev07} to the measures $\mu_n$, restricted to
  subsets of $\pi_{k,n}^{-1}(\base_{m,k})$, we obtain a family of
  measures on $\{A\cap \base_{m,k}\colons A\in\sV_k\}$, which extend to a
  unique Radon measure $\mu^{(k)}\in\Mb[\sV_k]$, see
  Lemma~\ref{lemma:enclosed}. Since $\pi_n\mu^{(k)}=\mu_n$ for all
  $n\geq k$ and $\sV_k\subset\sV_n$, Lemma~\ref{lemma:enclosed} yields
  that there exists a unique measure $\mu$ that is finite on every set
  in the
  union of all ideals $\sV_k$, which is exactly the ideal $\sS_\infty$.
\end{proof}

\newpage

\chapter{Vague convergence and regular variation}
\label{sec:vague-convergence-rv}

\section{Vague convergence}
\label{sec:vague-convergence-1}

\paragraph{Definition of vague convergence}
Let $\sS$ be an ideal on $\XX$ with the union set
$\UU=\cup_{B\in\sS} B$. Denote by $\fC(\XX,\sS)$ the family of bounded
continuous functions $f:\UU\to\R$ such that the support of $f$ (that
is, the closure in $\XX$ of the set $\{x\in\UU\colons f(x)\neq0\}$)
belongs to $\sS$.  
\index{bounded continuous functions}

\begin{example}[Poor family of continuous functions]
  In a pathological situation $\fC(\XX,\sS)$ can be empty, for
  instance, when the sets in $\sS$ cover only a lower-dimensional part
  of the space (e.g., one of the axes in $\R^2$). In such cases, there
  are no bounded continuous functions $f:\XX\to\R$ supported only on
  the $x$-axis.  However, this would be impossible if $\XX$ is
  completely Hausdorff and the ideal contains a functionally open
  neighbourhood $G$ of any point $x$ such that $\{x\}\in\sS$. Under
  this condition, the complement of $G$ and the point $x\in G$ are
  separated by a continuous function and so $\{x\}$ is the
  intersection of the sets $\{f>0\}$ for functions $f$ in
  $\fC(\XX,\sS)$. A stronger assumption --- the existence of a
  countable open base --- also ensures the non-triviality of
  $\fC(\XX,\sS)$.
\end{example}

This example shows that some assumptions on $\sS$ are needed to
ensure that $\fC(\XX,\sS)$ is non-trivial. In the following, our
standing assumption is the topological consistency of the ideal, which
guarantees that, for each point $\{x\}$ in the union of all sets from the
ideal, the ideal contains
a functionally open neighbourhood of $x$. We will also often assume
that the ideal has a countable open base. 

\begin{definition}
  \label{def:vague}
  Let $\sS$ be a topologically consistent ideal on a topological space
  $\XX$. A net 
  $(\mu_\gamma)_{\gamma\in\Gamma}\subset \Msigma$ of Baire measures is
  said to \emph{vaguely converge} to $\mu\in \Msigma$ on $\sS$
  (denoted by $\mu_\gamma\vto\mu$) if
  \index{vague convergence}
  \begin{equation}
     \label{eq:vague-f} 
     \lim_\gamma\int f \diff \mu_\gamma= \int f \diff \mu
    \end{equation}
  for all $f\in \fC(\XX,\sS)$. 
\end{definition}

The definition of vague convergence crucially depends on the
availability of sufficiently many continuous real-valued functions on
$\XX$. To ensure that there are non-trivial continuous
functions, it is sensible to assume that $\XX$ is completely
Hausdorff. The stronger property of perfect normality makes it
possible to work with Borel measures instead of Baire ones.

It should be noted that all measures $\mu_\gamma$ and the vague limit
$\mu$ are elements of $\Msigma$,
which means that they are supported on the union of all sets in
$\sS$.
If $\sS$ is trivial (that is, $\sS$ contains $\XX$ and so all subsets
of $\XX$), then all involved measures are finite and
Definition~\ref{def:vague} reduces to the definition of \emph{weak
  convergence} denoted by $\mu_\gamma\wto\mu$ (and $\xi_\gamma\dto\xi$
for the corresponding random elements).
\index{weak convergence}
\index{convergence in distribution}
Letting $f$ be identically one, we see that in this case
$\mu_\gamma(\XX)\to\mu(\XX)$. 

Note that the standard definition deals with vague convergence on
the ideal taken as the Fr\'echet bornology (see
\cite{daley:verejones:2008, kalle17}) or on a metric exclusion ideal
(see \cite{hul:lin06,lin:res:roy14,seg:zhao:mein17}).
Definition~\ref{def:vague} is an adaptation of the definition of vague
convergence on Polish spaces for sequences of measures; see
\cite{kalle17} and \cite{bas:plan19}, where it is formulated under the
assumption that $\XX$ is localised. Special cases of vague convergence are
given various names in the literature: e.g., weak$^\#$-convergence in
\cite{daley:verejones:2008}, $M$-convergence in
\cite{hul:lin06,lin:res:roy14}, vague$^\#$-convergence in
\cite[Definition~B.1.16]{kul:sol20}, or $\sB$-vague convergence in
\cite[Definition~4.1]{bladt:hash22}.

\paragraph{Mapping theorem}

If a Baire measurable map $\psi:\XX\to\YY$ maps the union set of
$\sX$ into the union set of $\sY$ and is bornologically
consistent (see Definition~\ref{def:morphism}) and
$\mu\in\Msigma[\sX]$, then the \emph{pushforward} measure $\psi\mu$
\index{pushforward}
(defined by $\psi\mu(A)=\mu(\psi^{-1}(A))$ for Baire measurable sets
$A$ in the ideal $\sY$ on $\YY$) belongs to $\Msigma[\sY]$. The same
conclusion holds for Borel measures if $\psi$ is Borel measurable. It should be
noted that it is possible for $\psi\mu$ to be trivial, that is, it
may vanish on all sets in $\sY$.  The following result merges the
mapping theorems provided in \cite[Theorem~2.5]{hul:lin06},
\cite[Theorem~2.3]{lin:res:roy14} and \cite[Theorem~B.1.21]{kul:sol20}
for (bounded on a bornology) Borel measures on Polish spaces with the
continuous mapping theorem for finite Baire measures available in
\cite[Theorem~4.3.12]{bogachev18}.

\begin{lemma}
  \label{lemma:mapping}
  Let $\psi:\XX\to\YY$ be a bornologically consistent continuous map
  between topological spaces $\XX$ and $\YY$, equipped with 
  topologically consistent ideals $\sX$ and $\sY$.
  If $\mu_\gamma \vto[\sX]\mu$ for $\mu_\gamma,\mu\in\Msigma$,
  then $\psi\mu_\gamma\vto[\sY]\psi\mu$.
\end{lemma}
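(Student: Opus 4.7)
The plan is to verify the convergence of integrals against test functions by pulling back: given an arbitrary $g\in\fC(\YY,\sY)$, I would show that $g\circ\psi$ is an admissible test function on $\XX$ belonging to $\fC(\XX,\sX)$, then apply the hypothesis of vague convergence on $\sX$ to this pulled back function, and finally invoke the change-of-variables identity $\int g\,d(\psi\nu)=\int (g\circ\psi)\,d\nu$ (valid for any Baire measure $\nu$ and Baire measurable $\psi$) on both sides to get the claim.

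The first step is the verification that $g\circ\psi\in\fC(\XX,\sX)$. Continuity of $\psi$ on $\XX$ and of $g$ on $\UU_\sY=\cup_{B\in\sY}B$ gives continuity of $g\circ\psi$ on $\psi^{-1}(\UU_\sY)$, which contains $\UU_\sX=\cup_{A\in\sX}A$ since bornological consistency of $\psi$ ensures $A\subset\psi^{-1}(\UU_\sY)$ for every $A\in\sX$. Boundedness is immediate from boundedness of $g$. For the support, note that
\begin{displaymath}
  \{x\in\UU_\sX:(g\circ\psi)(x)\neq 0\}=\psi^{-1}\bigl(\{y\in\UU_\sY:g(y)\neq 0\}\bigr)\cap\UU_\sX,
\end{displaymath}
and since $\{g\neq 0\}\in\sY$, bornological consistency forces its preimage to lie in $\sX$; as $\sX$ is hereditary under inclusion, the support of $g\circ\psi$ also lies in $\sX$.

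Once $g\circ\psi\in\fC(\XX,\sX)$ is established, Definition~\ref{def:vague} applied to the net $(\mu_\gamma)$ yields $\lim_\gamma\int(g\circ\psi)\,d\mu_\gamma=\int(g\circ\psi)\,d\mu$. The change-of-variables identity (which only uses Baire measurability of $\psi$, a consequence of its continuity, and the defining relation $(\psi\nu)(B)=\nu(\psi^{-1}(B))$ extended to integrals via the standard monotone-class argument) then transforms both sides into $\int g\,d(\psi\mu_\gamma)$ and $\int g\,d(\psi\mu)$, giving \eqref{eq:vague-f} for the pushforward measures and hence $\psi\mu_\gamma\vto[\sY]\psi\mu$.

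The only subtle point — and the one I would treat most carefully — is the support condition, specifically that the pullback of the support of $g$ really lies in $\sX$ and hence has finite mass under each $\mu_\gamma$ and under $\mu$. This is where the bornological consistency hypothesis does the essential work; continuity alone would not suffice. The topological consistency of $\sY$ is used only to know that $\fC(\YY,\sY)$ is a rich enough class for Definition~\ref{def:vague} to make sense on the target side; otherwise the proof is purely formal.
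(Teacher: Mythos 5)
Your proposal is correct and follows essentially the same route as the paper's proof: pull back the test function $g$ to $g\circ\psi$, verify via continuity and bornological consistency (plus hereditarity of the ideal) that $g\circ\psi\in\fC(\XX,\sX)$, and conclude by the change-of-variables identity for pushforwards. Your treatment of the support condition is slightly more explicit than the paper's, but the argument is the same.
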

\begin{proof}
  Consider $f\in \fC(\YY,\sY)$. Assume that $f$ is supported on
  $A\in\sY$. Since $\psi$ is continuous, $f\circ \psi$ is continuous
  with support contained in the set $\psi^{-1}(A)\in\sX$, so that
  $f\circ \psi\in\fC(\XX,\sX)$. Since $\psi\mu\in\Msigma[\sY]$, we have
  \begin{displaymath}
    \lim_\gamma\int f\diff (\psi\mu_\gamma)
    =\lim_\gamma\int (f\circ \psi)\diff \mu_\gamma
    = \int f\diff (\psi\mu). \qedhere
  \end{displaymath}
\end{proof}

\begin{remark}
  \label{rem:cont-maps}
  The statement of Lemma~\ref{lemma:mapping} holds also if $\psi$ is
  continuous on any set $A\in\sX$ or, equivalently, on the union of all
  sets in $\sX$ if the ideal $\sX$ has a countable base.  On Polish
  spaces, the statement of Lemma~\ref{lemma:mapping} can be extended
  to maps $\psi$ that are discontinuous on a set of vanishing
  $\mu$-measure. % This can be done with the help of
  % Lemma~\ref{lemma:indicator}.
  In the non-Polish setting, one may assume that the space $\XX$
  is completely regular, the map $\psi$ is Borel measurable, the
  measures $\mu_\gamma$ are Borel, and the limiting measure $\mu$ is
  tau-additive; see \cite[Theorem~4.3.12]{bogachev18} in the
  setting of weak convergence. The definition of a tau-additive
  measure can be found on page~\pageref{def:tau-additive}. 
\end{remark}

\paragraph{The vague topology}
The \emph{vague topology} associated with an ideal $\sS$
has a base consisting of the sets
\index{vague topology}
\begin{equation}
  \label{eq:22}
  \sU=\Big\{\nu\in \Msigma\colons \Big|\int f_i\diff \nu-\int
  f_i\diff \mu\Big|<\eps, \; i=1,\dots,n\Big\}
\end{equation}
for all $f_1,\dots,f_n\in \fC(\XX,\sS)$, $n\geq1$, $\eps>0$ and
$\mu\in\Msigma$.  By construction, this topology corresponds to the
definition of vague convergence. It is evident that the vague
topology becomes finer (stronger) if the ideal $\sS$ is replaced by a richer
one.

\begin{lemma}
  \label{lemma:topology}
  Let $\sS$ be a topologically consistent ideal on a topological space
  $\XX$.  The vague
  topology on $\Msigma$ has an open base consisting of
  intersections of the sets
  \begin{align*}
    \big\{\nu\in \Msigma &\colons \nu(F_i)<\mu(F_i)+\eps,i=1,\dots,n \big\},\\
    \big\{\nu\in \Msigma &\colons \nu(G_i)>\mu(G_i)-\eps,i=1,\dots,n \big\},
  \end{align*}
  for all functionally closed sets $F_1,\dots,F_n\in\sS$, 
  functionally open sets $G_1,\dots,G_n\in\sS$, all $n\geq1$,
  $\eps>0$, and all $\mu\in \Msigma$.
\end{lemma}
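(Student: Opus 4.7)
The approach is to prove a Portmanteau-type equivalence for vague convergence and then deduce the claimed open base. Concretely, I aim to establish that $\mu_\gamma \vto \mu$ is equivalent to $\limsup_\gamma \mu_\gamma(F) \leq \mu(F)$ for every functionally closed $F \in \sS$, and equivalent to $\liminf_\gamma \mu_\gamma(G) \geq \mu(G)$ for every functionally open $G \in \sS$. Once this is in hand, each of the two displayed families consists of vaguely open sets, so finite intersections form a basis coarser than the vague topology; the reverse inclusion will follow by approximating the test functions $f \in \fC(\XX, \sS)$ by simple functions whose level sets are functionally closed or open and lie in $\sS$.

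For the forward Portmanteau direction, fix a functionally closed $F \in \sS$. Using topological consistency of $\sS$, I would pick a functionally closed $F' \in \sS$ with $F \subset F'$, and write $F = h^{-1}(0)$, $F' = h'^{-1}(0)$ for continuous $h, h' : \XX \to [0,1]$. Combining $h$ and $h'$ produces continuous functions such as $f_k(x) = (1 - k h(x))^+(1 - h'(x))^+$ that decrease pointwise on $\UU$ to $\one_F$ and whose non-zero set is captured inside a functionally closed subset of $F'$, hence lies in $\fC(\XX, \sS)$. Applying vague convergence to each $f_k$ and passing to the limit by monotone convergence (using $\mu(F') < \infty$) yields $\limsup_\gamma \mu_\gamma(F) \leq \mu(F)$. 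The functionally open case is dual, using approximations $g_k \leq \one_G$ built from the same ingredients.

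To derive the open base from this characterization, I show that $\{\nu : \nu(F) < \mu(F) + \eps\}$ is vaguely open by fixing a $\nu_0$ in it, choosing $f \in \fC(\XX, \sS)$ with $f \geq \one_F$ and $\int f\, d\nu_0 \leq \nu_0(F) + \delta$, and checking that any $\nu$ with $|\int f\, d\nu - \int f\, d\nu_0| < \delta$ satisfies $\nu(F) \leq \int f\, d\nu < \mu(F) + \eps$; the case of open $G$ is analogous with $g \leq \one_G$. Conversely, given $f_1, \ldots, f_n \in \fC(\XX, \sS)$ and $\eps > 0$, I would partition the range of each $f_i$ at values $t_{i,j}$ with $\mu(\{f_i = t_{i,j}\}) = 0$ (possible since only countably many values can carry $\mu$-mass within the $\mu$-finite set $\{f_i \neq 0\} \in \sS$) and invoke the layer-cake formula $\int f_i\, d\nu = \int_0^\infty \nu(\{f_i > t\})\, dt$. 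Closeness of $\nu$ and $\mu$ on the functionally open sets $\{f_i > t_{i,j}\} \in \sS$ and functionally closed sets $\{f_i \geq t_{i,j}\} \in \sS$ then sandwiches $\int f_i\, d\nu$ within $\eps$ of $\int f_i\, d\mu$, yielding the required inclusion.

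The main obstacle is the Urysohn-type construction in the Portmanteau step under the sole hypothesis of topological consistency, which supplies a functionally closed envelope $F' \in \sS$ but not automatically a functionally open sandwich $F \subset G \subset F'$ in $\sS$. Producing cutoffs $f_k \in \fC(\XX, \sS)$ with $f_k \downarrow \one_F$ thus requires reconciling the monotone approximation property with the support requirement by carefully combining the two defining continuous functions $h$ and $h'$; once this is secured, the remaining monotone-convergence and Riemann-sum estimates are routine.
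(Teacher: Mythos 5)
Your two-part strategy (prove a net Portmanteau characterisation, then identify the two topologies via their convergent nets) is workable in principle, and your second half --- showing each set $\{\nu:\nu(F)<\mu(F)+\eps\}$ is a vague neighbourhood of each of its points via a dominating $f\geq\one_F$, and conversely sandwiching $\int f_i\,d\nu$ by a Riemann/layer-cake sum over level sets $\{f_i>t_{i,j}\}$, $\{f_i\geq t_{i,j}\}$ chosen with $\mu(\{f_i=t_{i,j}\})=0$ --- is essentially the paper's own two-inclusion argument (the paper additionally carries the condition $|\nu(B)-\mu(B)|<\mu(B)$ for a functionally closed envelope $B\in\sS$ of the supports, which you will also need to control the bottom layer of your Riemann sum, and it splits $f$ into positive and negative parts). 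Two remarks on the framing, though: the paper derives its Portmanteau theorem (Theorem~\ref{lemma:vVSw}) \emph{from} this lemma, so your order is reversed; and your opening claim that vague convergence is equivalent to the closed-set inequality alone, and separately equivalent to the open-set inequality alone, is false --- take $\mu_\gamma\equiv 0$ and $\mu=\delta_1$ on $\R\setminus\{0\}$ with $\sR_0$ for the first, and $\mu_\gamma=\gamma\delta_1$ for the second. Only the conjunction of the two inequalities characterises vague convergence, so the equivalence of net convergence in the two topologies must be argued with both families simultaneously.

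The decisive problem is the one you yourself name as ``the main obstacle'' and then do not close. Every step of your argument that needs a function $f\in\fC(\XX,\sS)$ with $f\geq\one_F$ for a functionally closed $F\in\sS$ --- both the upper Portmanteau bound for closed sets and the openness of $\{\nu:\nu(F)<\mu(F)+\eps\}$ --- requires that $\{f>0\}$, which is a functionally open set containing $F$, belong to $\sS$. Topological consistency supplies a functionally \emph{closed} superset of $F$ in $\sS$ and functionally open neighbourhoods of individual points of $\UU$, but no functionally open superset of $F$; since $\sS$ is only closed under \emph{finite} unions, the pointwise neighbourhoods cannot be assembled into one unless $F$ is compact. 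Your proposed cutoffs $f_k(x)=(1-kh(x))^+(1-h'(x))^+$ do not repair this: their support is $\{h<1/k\}\cap\{h'<1\}$, and $\{h'<1\}$ has no reason to lie in $\sS$ (it is typically a large open neighbourhood of $F'$, not a subset of it), so $f_k\notin\fC(\XX,\sS)$ in general. This is not a matter of ``carefully combining $h$ and $h'$'': the obstruction is that the required open set may simply fail to exist under the stated hypotheses, so the missing ingredient must be imported from elsewhere (a countable open base for $\sS$, compactness of the closed sets involved, or a strengthened consistency assumption guaranteeing an open $G\in\sS$ with $F\subset G$). As it stands, the proof is incomplete at its central point.
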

\begin{proof}
  Fix a $\mu\in \Msigma$. Consider $\sU$ from
  \eqref{eq:22}.  By the definition of an ideal and topological
  consistency, there exists a functionally closed set $B\in\sS$, which
  contains the supports of all functions $f_1,\dots,f_n$. Note that
  $\mu(B)>0$. By
  decomposing functions into the differences of their positive and
  negative parts, it is possible to assume that $f_i(x)\in[0,c)$ for
  all $x$ and $i$. Consider a partition of $[0,c]$ by points
  $0=c_0<c_1<\cdots<c_m=c$ such that $c_{j+1}-c_j<\eps/(6\mu(B))$ and
  $\mu(f^{-1}_i(c_j))=0$ for all $i=1,\dots,n$ and
  $j=0,\dots,m$. This partition exists, since $\mu(B)$ is finite and
  the inverse images $f_i^{-1}(x)$ are disjoint functionally closed (and
  so Baire measurable sets) for different $x$. 

  Denote $A_{ij}=f^{-1}_i((c_{j-1},c_j])$,
  $j=1,\dots,m$. Choose $\delta=\eps/(2m)$, and let
  \begin{multline*}
    V=\bigcap_{j=1}^m\bigcap_{i=1}^n \Big\{\nu\in \Msigma\colons 
    \nu(F_{ij})<\mu(F_{ij})+\delta, \nu(G_{ij})>\mu(G_{ij})-\delta\Big\}\\
    \cap \Big\{\nu\in \Msigma\colons |\nu(B)-\mu(B)|<\mu(B)\Big\},
  \end{multline*}
  where $F_{ij}=f_i^{-1}([c_{j-1},c_j])$ and
  $G_{ij}=f_i^{-1}((c_{j-1},c_j))$. The only exception is $F_{i1}$
  which is a functionally closed set from $\sS$ which contains
  $A_{i1}$.  If $\nu\in V$, then
  \begin{displaymath}
    \nu(A_{ij})\geq \nu(G_{ij})>\mu(G_{ij})-\delta=\mu(A_{ij})-\delta
  \end{displaymath}
  and, similarly, $\nu(A_{ij})<\mu(A_{ij})+\delta$, so that
  \begin{displaymath}
    |\nu(A_{ij})-\mu(A_{ij})|< \delta.
  \end{displaymath}
  Since $|f_i-\sum_j c_j\one_{A_{ij}}|<\eps/(6\mu(B))$ and $\nu(B)$ is
  finite, we have
  \begin{align*}
    \Big|\int f_i\diff \mu-\int f_i\diff \nu\Big|
    \leq \frac{\eps}{6\mu(B)}\left(\mu(B)+\nu(B)\right)+m\delta<\eps,
  \end{align*}
  meaning that $V\subset \sU$.

  Consider the converse. Let
  \begin{displaymath}
    V=\Big\{\nu\in \Msigma\colons 
    \nu(F)<\mu(F)+\eps, \nu(G)>\mu(G)-\eps\Big\}.
  \end{displaymath}
  Since $F$ is a functionally closed set, $F=\{x\colons g(x)=0\}$ for a
  continuous non-negative function $g$. The functionally closed sets
  $F$ and $F_1=\{x\colons g(x)\geq\eps\}$ are disjoint and so can be
  precisely separated by a continuous function $f_1:\XX\to[0,1]$ that takes
  the value $1$ on $F$ and the value $0$ on $F_1$; see
  \cite[Theorem~1.5.13]{eng89}. By choosing an appropriate $\eps>0$,
  this yields a function $f_1$ such that $f_1(x)=1$ for all $x\in F$
  and $\mu(f_1^{-1}((0,1)))\leq \eps/2$.

  Since $G$ is functionally open, write $G=\{h>0\}$ for a continuous
  function $h$. Choose $\eps>0$ such that
  $\mu(G\setminus\{h\geq\eps\})$ is small, and choose a continuous
  function $f_2:\XX\to[0,1]$ with $0\leq f_2\leq\one_G$ and $f_2=1$ on
  $\{h\geq\eps\}$. In particular, $f_2(x)=1$ for all $x\notin G$ and
  it is possible to find $\eps$ so that
  $\mu(f_2^{-1}((0,1)))\leq \eps/2$. If
  \begin{displaymath}
    \nu\in\sU=\Big\{\nu \in \Msigma\colons \Big|\int f_i\diff \nu-\int
    f_i\diff \mu\Big|<\delta, i=1,2\Big\}
  \end{displaymath}
  for $\delta=\eps/2$, then
  \begin{displaymath}
    \int f_1\diff \nu<\int f_1\diff \mu+\delta\leq \mu(F)+\eps
  \end{displaymath}
  and
  \begin{displaymath}
    \int f_2\diff \nu>\int f_2\diff \mu -\delta\geq \mu(G)-\eps,
  \end{displaymath}
  so that $\sU\subset V$. 
\end{proof}

\begin{definition}
  \label{def:mu-cont}
  Let $\mu\in\Msigma$. 
  A Baire set $B$ is said to be a \emph{$\mu$-continuity set} if there
  exist a functionally open set $G$ and a functionally closed set $F$ in
  $\sX$ such
  that $G\subset B\subset F$ and $\mu(F\setminus G)=0$.
  \index{continuity set} \index{mu-continuity set@$\mu$-continuity set}
\end{definition}

In perfectly normal spaces, the $\mu$-continuity property of $B$ is
equivalent to the boundary of $B$ having vanishing measure.

\begin{lemma}
  \label{lemma:indicator}
  Let $B$ be a $\mu$-continuity set for $\mu\in\Msigma$, with the ideal
  $\sS$ having a countable open base. Then, for
  each $\eps>0$, there exist $f_1,f_2\in\fC(\XX,\sS)$ such that
  $f_1\leq \one_B\leq f_2$ and $\int (f_2-f_1)\diff \mu\leq \eps$.
\end{lemma}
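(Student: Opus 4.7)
\emph{Proof proposal.} The plan is to sandwich $\one_B$ between two nonnegative continuous functions whose supports are contained in a single element of the countable open base of $\sS$, and then use $\mu(F\setminus G)=0$ to make the gap in mass at most $\eps$.

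Since $B$ is a $\mu$-continuity set, I fix a functionally open $G\in\sS$ and a functionally closed $F\in\sS$ with $G\subset B\subset F$ and $\mu(F\setminus G)=0$. Let $(\base_n)_{n\in\NN}$ be a countable open base of $\sS$, and choose $n_0$ with $F\subset\base_{n_0}$; note that $\mu(\base_{n_0})<\infty$ because $\mu\in\Msigma$ and $\base_{n_0}\in\sS$. Write $F=\{g=0\}$ for some continuous $g:\XX\to[0,1]$ and $\base_{n_0}=\{h>0\}$ for some continuous $h:\XX\to[0,1]$; such representations exist by the definitions of functionally closed and functionally open sets (after rescaling). Because $F\subset\base_{n_0}$, the functions $g$ and $h$ never vanish simultaneously, so
\[
\tilde g(x)=\frac{g(x)}{g(x)+h(x)}
\]
is a well-defined continuous function $\XX\to[0,1]$ equal to $0$ on $F$ and to $1$ on $\XX\setminus\base_{n_0}$.

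For $\delta\in(0,1)$ I set $f_2^\delta(x)=\max\bigl(0,\,1-\tilde g(x)/\delta\bigr)$. This function is continuous, $[0,1]$-valued, equals $1$ on $F$, and is supported on $\{\tilde g<\delta\}\subset\base_{n_0}$, so $f_2^\delta\in\fC(\XX,\sS)$ and $f_2^\delta\ge\one_F\ge\one_B$. As $\delta\downarrow 0$, the functions $f_2^\delta$ decrease pointwise to $\one_F$ and are dominated by $\one_{\base_{n_0}}$, which is $\mu$-integrable; dominated convergence gives $\int f_2^\delta\,d\mu\downarrow\mu(F)$. Symmetrically, writing $G=\{k>0\}$ for a continuous $k:\XX\to[0,1]$, I define $f_1^\delta(x)=\min\bigl(1,\,k(x)/\delta\bigr)$, which is continuous, $[0,1]$-valued, supported precisely on $G\in\sS$, and satisfies $f_1^\delta\le\one_G\le\one_B$. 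As $\delta\downarrow 0$, $f_1^\delta\uparrow\one_G$, and monotone convergence (together with $\mu(G)<\infty$) yields $\int f_1^\delta\,d\mu\uparrow\mu(G)$.

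Choosing $\delta>0$ small enough that simultaneously $\int f_2^\delta\,d\mu<\mu(F)+\eps/2$ and $\int f_1^\delta\,d\mu>\mu(G)-\eps/2$, and setting $f_2=f_2^\delta$, $f_1=f_1^\delta$, I obtain
\[
\int(f_2-f_1)\,d\mu<\mu(F)-\mu(G)+\eps=\mu(F\setminus G)+\eps=\eps,
\]
as required. The only real subtlety is keeping the support of the over-approximation $f_2$ inside a fixed base element of $\sS$; this is arranged by the truncation $\max(0,\,\cdot)$, which confines the support to $\{\tilde g<\delta\}\subset\base_{n_0}$. The rest is a standard Urysohn-type separation combined with dominated and monotone convergence.
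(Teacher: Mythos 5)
Your proof is correct and follows essentially the same route as the paper's: sandwich $\one_B$ between Urysohn-type functions built from the functionally open/closed sets $G\subset B\subset F$, confine the support of the upper function to a base element containing $F$, and shrink a parameter $\delta$ so that the integrals converge to $\mu(F)$ and $\mu(G)$ respectively. The only (cosmetic) difference is that you write the separating functions explicitly as $\max(0,1-\tilde g/\delta)$ and $\min(1,k/\delta)$ instead of invoking the abstract separation of two disjoint functionally closed sets.
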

\begin{proof}
  Assume that $G$ and $F$ are the sets from
  Definition~\ref{def:mu-cont} that enclose $B$.
  There exists a set $\base_n$ from the
  base of $\sS$ such that $F\subset\base_n$. The functionally closed
  sets $F$ and $\base_n^c$ can be separated by a continuous function
  $h_1$ that takes the value $1$ on $F$ and the value $0$ on $\base_n^c$;
  see \cite[Theorem~1.5.13]{eng89}.
  Choose a sufficiently small $\delta'>0$ such that
  $\mu(\{h_1>1-\delta'\}\setminus F)<\eps/2$.  Let $f_2$ be
  the function that separates $F$ and $\{h_1\leq 1-\delta'\}$. Then
  $\int f_2\diff \mu-\mu(F)\leq \eps/2$. Since $f_2$ takes the value $1$ on
  $F$, we have $f_2\geq \one_B$. 

  Since $G$ is functionally open, we have $G=\{h_2>0\}$ for a
  continuous function $h_2$. Choose $\delta''$ such that $\mu(\{h_2\geq
  \delta''\}\setminus G)<\eps/2$. The two functionally closed sets
  $\{h_2\geq \delta''\}$ and $G^c$ can be separated by a continuous
  function $f_1$, which then satisfies $\mu(G)-\int f_1\diff \mu\leq
  \eps/2$. It remains to note that $f_1\leq \one_B$. 
\end{proof}

The following result states that it is possible to amend the base of an
ideal to ensure that it consists of $\mu$-continuity sets.

\begin{lemma}
  \label{lemma:continuity-base}
  Let $(\base_m)_{m\in\NN}$ be an open base of a topologically
  consistent ideal $\sX$ on a topological space $\XX$. For each
  $\mu\in\Msigma[\sX]$, there exists another open base
  $(\base'_m)_{m\in\NN}$ for $\sX$ consisting of $\mu$-continuity
  sets.
\end{lemma}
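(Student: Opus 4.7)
The plan is to turn each base set into a functionally open $\mu$-continuity set by cutting it with a continuous ``level'' function and exploiting finiteness of $\mu$ on base elements.

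First, I would set up a sandwich structure: after passing to a cofinal subsequence (which I relabel again as $(\base_m)$), topological consistency of $\sX$ lets me insert functionally closed sets $F_m \in \sX$ with $\base_m \subset F_m \subset \base_{m+1}$ for every $m$; this is the same bookkeeping already carried out in Lemma~\ref{lemma:cont-base}. Then for each $m$, the disjoint functionally closed sets $F_m$ and $\base_{m+1}^c$ can be precisely separated by a continuous function $h_m:\XX\to[0,1]$ with $h_m = 0$ exactly on $F_m$ and $h_m = 1$ exactly on $\base_{m+1}^c$ (see \cite[Theorem~1.5.13]{eng89}). For any $t\in(0,1)$ the set $\{h_m<t\}$ is functionally open, $\{h_m\leq t\}$ is functionally closed, and
\begin{equation*}
  \base_m \subset F_m \subset \{h_m<t\} \subset \{h_m\leq t\} \subset \base_{m+1}.
\end{equation*}

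The heart of the argument is the choice of $t$. Since $\mu\in\Msigma$ is finite on $\base_{m+1}\in\sX$ and the level sets $\{h_m=t\}$, $t\in(0,1)$, are pairwise disjoint subsets of $\base_{m+1}$, only countably many of them can have positive $\mu$-mass. I would pick any $t_m\in(0,1)$ with $\mu(\{h_m=t_m\})=0$ and define $\base'_m = \{h_m<t_m\}$. Then $G:=\base'_m$ is functionally open and $F:=\{h_m\leq t_m\}$ is functionally closed; both sit inside $\base_{m+1}$, hence belong to $\sX$; and $\mu(F\setminus G)=\mu(\{h_m=t_m\})=0$, so $\base'_m$ satisfies Definition~\ref{def:mu-cont}.

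Finally, $(\base'_m)_{m\in\NN}$ still forms an open base of $\sX$: any $A\in\sX$ is contained in some $\base_m$ and therefore in $\base'_m\supset F_m\supset\base_m$. I do not anticipate a real obstacle; the mildly delicate point is the preliminary reindexing that produces the sandwich chain, which is purely a consequence of topological consistency. Everything else (countability of nonnull level sets, separation by continuous functions) is standard once that chain is in place.
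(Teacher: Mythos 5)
Your proposal is correct and follows essentially the same route as the paper: insert a functionally closed set $F_m$ with $\base_m\subset F_m\subset\base_{m+1}$ via topological consistency, separate $F_m$ from $\base_{m+1}^c$ by a continuous function, and use finiteness of $\mu(\base_{m+1})$ to pick a level $t_m$ whose level set is $\mu$-null, setting $\base'_m=\{h_m<t_m\}$. The only difference is that you spell out the verification that the new sets are $\mu$-continuity sets and still form a base, which the paper leaves implicit.
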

\begin{proof}
  Fix an $m\in\NN$ and argue as in Lemma~\ref{lemma:cont-base}. By the 
  topological consistency of $\sX$, there exists a functionally closed
  set $F\in\sX$ such that $\base_m\subset F$. Since
  $(\base_m)_{m\in\NN}$ is a base, we have likewise $F\subset \base_n$
  with $n>m$. By eliminating some sets from the base, we can ensure
  that $n=m+1$, yielding $\base_m\subset F\subset\base_{m+1}$. The two
  functionally closed sets $F$ and $\base_{m+1}^c$ are separated by a
  continuous function $f_m:\XX\to[0,1]$. Since $\mu(\base_{m+1})<\infty$, we
  have $\mu(\{f_m=t\})=0$ for all but at most a countable number of
  $t\in(0,1)$. Fix a $t_m$ such that $\mu(\{f_m=t_m\})=0$ and let
  $\base'_m=\{f_m<t_m\}$.
\end{proof}

The following result is an analogue of the Portmanteau theorem for 
vague convergence. Similar results for ideals on metric spaces
obtained by excluding a point can be found in \cite{MR2271177} and
\cite{hul:lin06}, and results for ideals with a
countable base on Polish spaces are provided in \cite[Lemma~4.1]{kalle17}.

\begin{theorem}[Portmanteau theorem for vague convergence]
  \label{lemma:vVSw}
  Let $\sS$ be a topologically consistent ideal on
  a topological space $\XX$. For a net
  $(\mu_\gamma)\subset \Msigma$ and a measure
  $\mu\in \Msigma$, the following statements are
  equivalent.
  \index{vague convergence!Portmanteau theorem}
  \index{Portmanteau theorem}
  \begin{enumerate}[(i)]
  \item $\mu_\gamma \vto \mu$.
  \item We have $\lim_\gamma \mu_\gamma(B)=\mu(B)$ for all
    $\mu$-continuity sets $B\in\sS\cap\sBA(\XX)$.
  % \item For all Baire $B\in\sS$,
  %   % such that $\mu(\partial B)=0$.
  %   \begin{displaymath}
  %     \mu(\Int(B))\leq \liminf_\gamma \mu_\gamma(B)
  %     \leq \limsup_\gamma \mu_\gamma(B)\leq \mu(\cl(B)).
  %   \end{displaymath}
  \item For all functionally open sets $G\in\sS$ and for all functionally
    closed sets $F\in\sS$,
    \begin{displaymath}
      \limsup_\gamma \mu_\gamma(F)\leq \mu(F),\quad
      \liminf_\gamma \mu_\gamma(G)\geq \mu(G). 
    \end{displaymath}
  \end{enumerate}
  If $\sS$ has a countable open base, then the above conditions are
  equivalent to the following condition. 
  \begin{enumerate}
  \item[(iv)] There exists a countable open base $(D_m)_{m\in\NN}$ of $\sS$
    such that $\mu_\gamma|_{D_m} \wto \mu|_{D_m}$ for all $m\in\NN$,
    where $\mu|_{D_m}$ is the restriction of $\mu$ to $D_m$.
  \end{enumerate}
  % Assume that $\XX$ is hereditary Lindel\"of and completely Hausdorff,
  % the scaling is jointly continuous, and that $\sS$ is an ideal which
  % satisfies \hyperref[condB]{(B)} and is generated by a semicone $C=\{x:\modulus(x)>1\}$ for
  % a modulus $\modulus$. Then $\mu_\gamma\vto\mu$ is equivalent to the
  % following condition.
  % \begin{enumerate}
  % \item[(v)] There exists a sequence $s_m\downarrow 0$ such that
  %   $\mu_\gamma(T_{s_m}C)\to\mu(T_{s_m}C)$ for all $m\in\NN$ and
  %   $\mu_\gamma(A)\to \mu(A)$ for all $A\in\sA_{\modulus,\mu}$.
  % \end{enumerate}
\end{theorem}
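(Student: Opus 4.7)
The plan is to establish the three equivalences (i)$\Leftrightarrow$(ii)$\Leftrightarrow$(iii) for any topologically consistent ideal, and then to incorporate (iv) under the extra countable-base assumption. The backbone of the argument is Lemma~\ref{lemma:topology}: it already identifies an open base of the vague topology at $\mu$ consisting exactly of intersections of half-spaces of the form $\{\nu\in\Msigma:\nu(F)<\mu(F)+\eps\}$ for functionally closed $F\in\sS$ and $\{\nu\in\Msigma:\nu(G)>\mu(G)-\eps\}$ for functionally open $G\in\sS$. Thus $\mu_\gamma\vto\mu$ is synonymous with eventually lying in every such base set, which, since $\eps>0$ is arbitrary, is precisely condition (iii).

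The implication (iii)$\Rightarrow$(ii) is the standard sandwich. For a $\mu$-continuity set $B\in\sS\cap\sBA(\XX)$ with witnesses $G\subset B\subset F$ satisfying $\mu(F\setminus G)=0$, one has $\mu(G)=\mu(B)=\mu(F)$, so
\begin{displaymath}
\liminf_\gamma\mu_\gamma(B)\geq\liminf_\gamma\mu_\gamma(G)\geq\mu(G)=\mu(B),
\end{displaymath}
and symmetrically $\limsup_\gamma\mu_\gamma(B)\leq\mu(B)$.

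For the converse (ii)$\Rightarrow$(iii), which is the delicate step, I plan the following approximation scheme. Fix a functionally closed $F\in\sS$, write $F=\{g=0\}$ for continuous $g:\XX\to[0,1]$, and use topological consistency to pick a functionally closed $F'\in\sS$ with $F\subset F'$. I then replace $g$ by a continuous modification $\tilde g\geq 0$ with $\{\tilde g=0\}=F$ whose small sublevel sets $\{\tilde g\leq t\}$ are contained in $F'$, built by combining $g$ with a Urysohn-type cutoff separating $F$ from $\XX\setminus F'$. For all but countably many $t>0$ one has $\mu(\{\tilde g=t\})=0$, so $\{\tilde g\leq t\}$ is a $\mu$-continuity set lying in $\sS$ (with inner witness $\{\tilde g<t\}$). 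Condition (ii) then gives $\mu_\gamma(\{\tilde g\leq t\})\to\mu(\{\tilde g\leq t\})$, and since $F\subset\{\tilde g\leq t\}$,
\begin{displaymath}
\limsup_\gamma\mu_\gamma(F)\leq\mu(\{\tilde g\leq t\})\xrightarrow[t\downarrow 0]{}\mu(F),
\end{displaymath}
the final convergence being downward continuity of the finite measure $\mu|_{F'}$. The functionally open case is symmetric, using inner approximations $\{\tilde g\geq t\}$ of $G$.

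For the countable-base addendum, (i)$\Rightarrow$(iv) follows by invoking Lemma~\ref{lemma:continuity-base} to replace the given base by one consisting of $\mu$-continuity sets $(D_m)$. The already established (ii) yields $\mu_\gamma(D_m)\to\mu(D_m)<\infty$, so the restrictions are uniformly bounded in total mass, and the (iii)-type inequalities applied to functionally closed and open subsets of $D_m$, together with the classical Portmanteau theorem for finite Baire measures, yield $\mu_\gamma|_{D_m}\wto\mu|_{D_m}$. Conversely, any $f\in\fC(\XX,\sS)$ has support contained in some $D_m$ (by cofinality of the base), and then $\int f\,d\mu_\gamma=\int f\,d(\mu_\gamma|_{D_m})\to\int f\,d(\mu|_{D_m})=\int f\,d\mu$, giving (i). The main obstacle will be the construction of $\tilde g$ in step (ii)$\Rightarrow$(iii): its sublevel sets must stay inside $F'$, which hinges on a Urysohn-type separation of $F$ from $\XX\setminus F'$. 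If the naive separation fails, I will apply topological consistency once more to enlarge $F'$ to a functionally closed $F''\supset F'$ and build the cutoff by precise separation of the two disjoint zero sets $F$ and $\{h'\geq\delta\}$, where $F''=\{h'=0\}$ and $\delta>0$ is chosen small enough.
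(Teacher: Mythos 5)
Your overall skeleton matches the paper's: (i)$\Leftrightarrow$(iii) is read off from Lemma~\ref{lemma:topology}, (iii)$\Rightarrow$(ii) is the sandwich, and the countable-base addendum uses Lemma~\ref{lemma:continuity-base} for (i)$\Rightarrow$(iv) and cofinality of the base for (iv)$\Rightarrow$(i). The paper simply declares (ii)$\Leftrightarrow$(iii) ``standard''; you try to supply the argument, and that is where your proof does not close.

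The gap is in (ii)$\Rightarrow$(iii) for a functionally closed $F\in\sS$. To invoke hypothesis (ii) you need the outer approximants $\{\tilde g\le t\}$ to be $\mu$-continuity sets \emph{belonging to $\sS$}; since $\sS$ is hereditary, this amounts to placing them inside some member of $\sS$. Your construction requires a continuous $\tilde g$ vanishing exactly on $F$ and bounded away from $0$ off $F'$, i.e.\ a complete separation of $F$ from the functionally \emph{open} set $\XX\setminus F'$. Urysohn-type separation only applies to two disjoint functionally closed sets, and it fails here whenever $F'$ is not a neighbourhood of $F$ (e.g.\ $F=F'=\{0\}\cup\{1/n:n\ge1\}$ in $\R$). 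Your fallback does not repair this: separating $F$ from $\{h'\ge\delta\}$ (with $F''=\{h'=0\}$) only forces $\{\tilde g\le t\}\subset\{h'<\delta\}$, and $\{h'<\delta\}$ is a functionally open \emph{neighbourhood} of $F''$ which topological consistency gives you no right to place in $\sS$ --- the definition only supplies functionally closed supersets of members of $\sS$ and functionally open neighbourhoods of individual points of $\UU$, not functionally open supersets of a closed member of $\sS$. What would make your argument work is precisely the nesting $F\subset F'\subset \base_m$ with $\base_m$ functionally open in $\sS$ that Lemma~\ref{lemma:cont-base} extracts from a countable open base; without that assumption the step is not justified as written. (Note the functionally open half of (iii) is fine, since there all approximants sit inside $G\in\sS$.)

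A secondary, more repairable point: in (i)$\Rightarrow$(iv) your Portmanteau-on-$D_m$ route feeds in the inequalities of (iii) for sets functionally open or closed in $\XX$, whereas weak convergence on the subspace $D_m$ (which is what the paper proves, testing against bounded continuous functions on $D_m$) requires control of sets of the form $\{f>t\}$ for $f$ continuous only on $D_m$; these are open in $\XX$ but need not be functionally open there, so (iii) does not apply to them. The paper avoids this by sandwiching $\one_{D_m}$ between globally continuous functions $f_1\le\one_{D_m}\le f_2$ from Lemma~\ref{lemma:indicator} and multiplying $f$ by them, which keeps all test functions in $\fC(\XX,\sS)$. Your version of (iv) is weaker but still suffices for the stated equivalence, so this is a blemish rather than a fatal flaw; the (ii)$\Rightarrow$(iii) issue above is the genuine gap.
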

\begin{proof}
  Vague convergence is equivalent to (iii) by
  Lemma~\ref{lemma:topology}, which shows that the vague topology has
  a base defined in terms of functionally open/closed sets.
  The equivalence of (ii) and (iii) is
  standard.

  \smallskip
  \noindent (i)$\Rightarrow$(iv) Let $(\base_m)$ be an open base for
  $\sS$. By Lemma~\ref{lemma:continuity-base}, we can construct a new
  countable open base $(D_m)_{m\in\NN}$ for $\sS$ 
  % By topological consistency, $\base_m$ is a subset of a
  % functionally closed set $F\in\sS$. Since the functionally closed
  % sets $F$ and $\base_{m+1}^c$ are disjoint, they can be separated by
  % a continuous function $g_m$ which takes value $0$ on $F$ and $1$ on
  % $\base_{m+1}^c$.  Since
  % $\mu(\base_{m+1})<\infty$, we have that $\mu(\{x:g_m(x)=t\})=0$, for
  % some (actually, almost all) $t\in(0,1)$. The sets
  % $D_m=\{x:g_m(x)<t\}$, $m\in\NN$, build a countable open base
  consisting of $\mu$-continuity sets.  Let $f$ be a continuous
  non-negative bounded function on $\XX$, bounded by
  $c>0$. Fix $m\in\NN$ and $\eps>0$. By Lemma~\ref{lemma:indicator},
  there exist $f_1,f_2\in\fC(\XX,\sS)$ such that
  $f_1\leq \one_{D_m}\leq f_2$ and
  \begin{displaymath}
    \int (f_2-f_1)\diff \mu<\eps/c. 
  \end{displaymath}
  Then
  $f_1f\leq f\one_{D_m}\leq f_2f$ and $\int (ff_2-ff_1)\diff \mu<\eps$.  By
  (i),
  \begin{displaymath}
    \int f_if\diff \mu_\gamma\to\int f_1f\diff \mu, \quad i=1,2.
  \end{displaymath}
  The arbitrary choice of $\eps>0$ ensures that
  \begin{displaymath}
    \int f\one_{D_m}\diff \mu_\gamma\to \int f\one_{D_m}\diff \mu.
  \end{displaymath}
  Thus, $\mu_\gamma|_{D_m}\wto\mu|_{D_m}$ as finite measures on $\XX$.
  
  \smallskip
  \noindent
  (iv)$\Rightarrow$(i) If $f\in\fC(\XX,\sS)$, then the support of
  $f$ is a subset of $D_m$ for some $m$. Hence,
  \begin{displaymath}
    \int f \diff \mu_\gamma = \int f\one_{D_m}\diff \mu_\gamma 
    = \int f \diff \mu_\gamma|_{D_m} \to \int f \diff \mu|_{D_m} = \int f
    \diff \mu. \qedhere
  \end{displaymath}
\end{proof}

The following result addresses relative compactness in the vague
topology. Similar results in the context of Polish spaces can be found
in \cite[Theorem~4.2]{kalle17} and
\cite[Theorem~2.6]{hul:lin06}. Since we work with general topological
spaces, the following result does not involve the tightness condition.

\begin{lemma}
  \label{lemma:rsc}
  A subset $M$ of $\Msigma$ is relatively sequentially compact in the
  vague topology on $\sS$ if there exists a countable open base
  $(\base_m)_{m\in\NN}$ for $\sS$, such that the sets
  $M_i=\{\mu|_{\base_i}\colons \mu\in M\}$ built of restrictions of $\mu\in
  M$ to $\base_i$ are relatively compact in the
  weak topology for all $i\in\NN$.
  \index{relative compactness of measures}
\end{lemma}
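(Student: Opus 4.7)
The plan is to start from an arbitrary sequence $(\mu_n)\subset M$, apply a diagonal extraction using the relative weak compactness of each $M_i$ to obtain a subsequence whose restrictions converge weakly on every base set, refine the base so that the weak limits become consistent, patch them via Lemma~\ref{lemma:enclosed} into a single measure $\mu\in\Msigma$, and conclude vague convergence through Theorem~\ref{lemma:vVSw}(iv). Without loss of generality I would assume $\base_m\subset\base_{m+1}$ for all $m$.

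First I would extract, by the standard diagonal procedure, a subsequence $(\mu_{n_k})$ of $(\mu_n)$ such that for every $i\in\NN$ the restrictions $\mu_{n_k}|_{\base_i}$ converge weakly on $\base_i$ to some finite Baire measure $\nu_i$; this uses the (sequential) relative compactness of $M_i$ in the weak topology on finite measures on $\base_i$.

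The main obstacle appears next: the limits $\nu_i$ produced by the diagonal extraction need not be consistent, i.e.\ in general $\nu_j|_{\base_i}\neq\nu_i$ for $i<j$, so Lemma~\ref{lemma:enclosed} is not directly applicable. To repair this I would refine the base by revisiting the construction of Lemma~\ref{lemma:continuity-base}. There each refined element takes the form $\base'_m=\{f_m<t_m\}$ for a continuous $f_m:\XX\to[0,1]$ separating a functionally closed set $F\supset\base_m$ from $\base_{m+1}^c$, with $t_m\in(0,1)$ chosen to avoid the countable set of values $t$ for which $\mu(\{f_m=t\})>0$. Since a countable union of countable sets is countable, the same $t_m$ can be selected to avoid the atomic values of every $\nu_j$ simultaneously. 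This yields an open base $(\base'_m)_{m\in\NN}$ of $\sS$ with $\base_m\subset\base'_m\subset\base_{m+1}$ for which each $\base'_m$ is a $\nu_j$-continuity set for every $j\in\NN$.

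With the refined base in hand the consistency is automatic: for $i<j$ the set $\base'_i$ is open in $\base_j$ and is a $\nu_j$-continuity set, so the classical Portmanteau theorem for weak convergence of finite measures gives $\mu_{n_k}|_{\base'_i}\wto \nu_j|_{\base'_i}$. Uniqueness of the weak limit forces $\nu'_i:=\nu_j|_{\base'_i}$ to be independent of $j>i$, and $(\nu'_i)_{i\in\NN}$ is then a consistent family on the growing sequence $(\base'_m)$. Lemma~\ref{lemma:enclosed} produces a unique $\mu\in\Msigma$ with $\mu|_{\base'_i}=\nu'_i$ for every $i$, and since $\mu_{n_k}|_{\base'_i}\wto\mu|_{\base'_i}$ for every $i$, Theorem~\ref{lemma:vVSw}(iv) applied to the base $(\base'_m)$ yields $\mu_{n_k}\vto\mu$. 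The only delicate point is the simultaneous refinement step: the natural diagonal subsequence only produces weak limits on a preassigned base, and the compatibility needed for patching has to be engineered afterwards by a single countable choice that handles all the limits at once.
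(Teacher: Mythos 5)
Your proof is correct, but it handles the consistency of the limits $\nu_i$ by a genuinely different (and heavier) route than the paper, and the ``main obstacle'' you identify is in fact not there. The paper performs the same diagonal extraction to get $\mu_{n_k(k)}|_{\base_i}\wto\nu_i$ for every $i$, and then simply notes that for any $f\in\fC(\XX,\sS)$ whose support lies in $\base_i$ and any $j\geq i$ one has $\int f\,d\mu_{n_k(k)}|_{\base_j}=\int f\,d\mu_{n_k(k)}|_{\base_i}$; passing to the limit gives $\int f\,d\nu_j=\int f\,d\nu_i$. Since $\base_i$ is functionally open, say $\base_i=\{g>0\}$, such functions separate finite measures on $\base_i$ (test against $h\cdot\min(ng,1)$ for bounded continuous $h$ and let $n\to\infty$), so $\nu_j|_{\base_i}=\nu_i$ holds automatically, Lemma~\ref{lemma:enclosed} applies to the \emph{original} base, and vague convergence of $\mu_{n_k(k)}$ to the patched measure follows directly from the definition, with no need for Theorem~\ref{lemma:vVSw}(iv). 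Your alternative — refining the base so that each $\base'_m$ is simultaneously a continuity set for all the $\nu_j$, and then transferring weak convergence from $\base_j$ down to $\base'_i$ via a Portmanteau-type restriction argument — is sound: the single countable choice of the levels $t_m$ works because each $\nu_j$ is finite on $\base_{m+1}$, and restriction of weak convergence to an open continuity set is legitimate (though you assert rather than prove this restriction lemma). What your route buys is that it sidesteps the question of whether functions supported in $\base_i$ determine measures on $\base_i$; what it costs is the extra base refinement and an auxiliary weak-convergence-of-restrictions step that the direct argument makes unnecessary.
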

\begin{proof}
  We present a proof that very closely follows the proof of
  Theorem~2.6 in \cite{hul:lin06}. Let $(\mu_n)_{n\in\NN}$ be a
  sequence in $M$. By assumption, there is a weakly convergent
  subsequence $(\mu_{n_1(k)})_{k\in\NN}$ such that
  $\mu_{n_1(k)}|_{\base_1}\wto\nu_1$ for $\nu_1\in\Msigma$. Taking
  further subsequences and applying the diagonal argument we obtain a
  subsequence $(\mu_{n_k(k)})_{k\in\NN}$ such that
  $\mu_{n_k(k)}|_{\base_i}\wto\nu_i$ as $k\to\infty$ for all
  $i\in\NN$.

  Let $f\in\fC(\XX,\sS)$. The support of $f$ is a subset of some
  $\base_i$. Because of this, for all $j\geq i$, we have
  \begin{displaymath}
    \int f \diff \nu_j=\lim_k \int f \diff \mu_{n_k(k)}|_{\base_j}
    =\lim_k \int f \diff \mu_{n_k(k)}|_{\base_i}=\int f\diff \nu_i.
  \end{displaymath}
  Therefore, $\nu_j|_{\base_i}=\nu_i$.  By Lemma~\ref{lemma:enclosed},
  there exists a measure $\nu\in\Msigma$ such that
  $\nu|_{\base_i}=\nu_i$ for all $i\in\NN$. The above argument shows that
  $\mu_{n_k(k)}$ vaguely converges to $\nu\in\Msigma$ on the ideal $\sS$. 
\end{proof}

Recall that the uniform boundedness and the tightness of a family of
Radon measures on a completely regular space imply relative
compactness in the weak topology, and also the weak sequential compactness
if all compact sets in $\XX$ are metrisable; see
\cite[Theorem~4.5.3]{bogachev18}. The inverse implication (and thus the
equivalence of tightness and relative compactness) is the essence of
the Prokhorov theorem, which holds in Polish spaces.

\paragraph{Characterisation of vague convergence using the polar decomposition} 
A Hausdorff topological space is \emph{Lindel\"of} if each of its open
\index{space!Lindeloef@Lindel{\"o}f}
covers has a countable subcover (in metric spaces this is equivalent to
separability); see \cite[Definition~6.1.4]{bogachev07} and
\cite[Section~3.8]{eng89}, where the regularity property is
additionally imposed. The space is \emph{hereditary Lindel\"of} if the
Lindel\"of property holds for any subspace or, equivalently, if all open
subsets have this property; see \cite[Exercise~3.8.A]{eng89}.  Every
perfectly normal Lindel\"of space is hereditary Lindel\"of; see
\cite[Exercise~3.8.A]{eng89}.  Furthermore, every Souslin space is
hereditary Lindel\"of; see \cite[Lemma~6.6.4]{bogachev07}.
\index{space!hereditary Lindeloef@hereditary Lindel{\"o}f}

A non-negative Baire measure is said to be \emph{tau-additive} (usually
written as $\tau$-additive -- note
that this has nothing to do with the letter $\tau$ used for a modulus)
if \label{def:tau-additive}
\index{measure!tau-additive}
\begin{displaymath}
  \mu\Big(\bigcup_{\gamma\in\Gamma} G_\gamma\Big)
  =\lim_\gamma \mu(G_\gamma)
\end{displaymath}
for each increasing net $(G_\gamma)_{\gamma\in\Gamma}$ of functionally
open sets such that their union is also functionally open. This
definition is traditionally applied to Borel measures (see
\cite[Definition~7.2.1]{bogachev07}), and its Baire variant is
mentioned in \cite[Corollary~7.3.3]{bogachev07}, which states that every
tau-additive measure on a completely regular space admits a unique
extension to a tau-additive Borel measure.  Every locally finite
Borel measure
on a second-countable topological space is tau-additive; see
\cite[Proposition~3.1(a)]{MR1435288}.  A Borel measure on a metric
space is tau-additive if and only if its topological support is
separable; see \cite[page~49]{bogachev18}. Furthermore, every
Baire measure on a hereditary Lindel\"of space is tau-additive and
every Radon measure is tau-additive; see
\cite[Proposition~7.2.2]{bogachev07} for the Borel version. 

The following result aims to simplify the process of checking vague
convergence.  If Condition~(B) from Section~\ref{sec:bornology}
(see page~\pageref{eq:10}) holds and $\sS_\modulus$ is generated by
a continuous modulus $\modulus$, let $\sA_\mu$ denote the family of
sets of the form
\begin{equation}
  \label{eq:34}
  A=\Big\{x\colons \modulus(x)\in(a,b), T_{\modulus(x)^{-1}}x\in V\Big\}
  =\rho^{-1}\big(V\times(a,b)\big)
\end{equation}
for all $0<a<b\leq\infty$, all functionally open sets $V$, and such that
$A$ is a $\mu$-continuity set. Recall that $\rho$ is the polar
decomposition map from Definition~\ref{def:polar-decomposition} and
note that $\sA_\mu$ is a subfamily of $\sS_\modulus$.

\begin{theorem}
  \label{thr:basis}
  Assume that a completely Hausdorff space $\XX$ is equip\-ped with
  a continuous scaling and $\sS$ is an ideal that satisfies
  Condition~\hyperref[condB]{(B)}. Let $\mu$ be a tau-additive Baire measure.
  % Furthermore, assume that
  % \begin{enumerate}[(a)]
  % \item $\XX$ is hereditary Lindel\"of or 
  % \item $\XX$ is perfectly normal and
  %   $\mu$ is a $\modulus$-additive measure or
  % \item $\XX$ is perfectly normal and Lindel\"of.
  % \end{enumerate}
  Then $\mu_\gamma\vto\mu$ is equivalent to the following condition.
  \begin{enumerate}
  \item[(v)] There exists a countable open base $(D_m)_{m\in\NN}$ for
    $\sS$, consisting of $\mu$-continuity sets, such that
    $\mu_\gamma(D_m)\to\mu(D_m)$ for all $m\in\NN$ and 
    $\mu_\gamma(A)\to \mu(A)$ for all $A\in\sA_\mu$.
  \end{enumerate}
  % \begin{enumerate}
  % \item[(v)] There exists a sequence $s_m\uparrow\infty$ such that
  %   $\mu_\gamma(T_{s_m}C)\to\mu(T_{s_m}C)$ for all $m\in\NN$ and
  %   $\mu_\gamma(A)\to \mu(A)$ for all $A\in\sA_\mu$.
  % \end{enumerate}
\end{theorem}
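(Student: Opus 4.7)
The plan is to handle the two implications separately. For $\mu_\gamma\vto\mu\Rightarrow$~(v), apply Lemma~\ref{lemma:continuity-base} to produce a countable open base $(D_m)_{m\in\NN}$ of $\sS$ consisting of $\mu$-continuity sets. Since each $A\in\sA_\mu$ is a $\mu$-continuity set by definition, the convergences $\mu_\gamma(D_m)\to\mu(D_m)$ and $\mu_\gamma(A)\to\mu(A)$ follow from the implication (i)$\Rightarrow$(ii) of Theorem~\ref{lemma:vVSw}. This direction is immediate.

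For the converse, the strategy is to verify condition (iii) of Theorem~\ref{lemma:vVSw}: $\liminf_\gamma\mu_\gamma(G)\geq\mu(G)$ for every functionally open $G\in\sS$ and $\limsup_\gamma\mu_\gamma(F)\leq\mu(F)$ for every functionally closed $F\in\sS$. Let $\modulus$ be the continuous modulus supplied by Lemma~\ref{lemma:stau}, so $\sS=\sS_\modulus$ and the polar decomposition $\rho$ is continuous on $\{\modulus\in(0,\infty)\}$. For $G$ functionally open in $\sS$ and $x\in G$ (necessarily with $\modulus(x)\in(0,\infty)$ since $\modulus$ is finite and $G\in\sS_\modulus$), Lemma~\ref{lemma:sector} yields a functionally open $V=\{f>0\}$ and $0<t<s$ with $x\in\rho^{-1}(V\times(t,s))\subset G$. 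Replace $V$ by $V_c=\{f>c\}$ and $(t,s)$ by $(t',s')\subset(t,s)$ with $\modulus(x)\in(t',s')$ and $f(T_{\modulus(x)^{-1}}x)>c$. The sets $\rho^{-1}(\{f=c\}\times[t',s'])$, $\rho^{-1}(V_c\times\{t'\})$ and $\rho^{-1}(V_c\times\{s'\})$ are pairwise disjoint as $c,t',s'$ vary within their respective open ranges and are contained in a single base element of finite $\mu$-measure, so all but countably many choices are $\mu$-null. Choosing the parameters outside these exceptional sets yields $A_x=\rho^{-1}(V_c\times(t',s'))$, which is functionally open, sits inside the functionally closed set $\rho^{-1}(\{f\geq c\}\times[t',s'])$ with zero-measure difference, and hence belongs to $\sA_\mu$; moreover $x\in A_x\subset G$.

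The finite unions $W=A_{x_1}\cup\cdots\cup A_{x_k}$ over finite subsets of $G$ form an increasing directed family of functionally open sets with union $G$, so by $\tau$-additivity of $\mu$, for any $\eps>0$ some such $W$ satisfies $\mu(W)>\mu(G)-\eps$. Every finite intersection $\bigcap_{i\in I}A_{x_i}=\rho^{-1}\bigl(\bigcap_i V_{c_i}\times\bigcap_i(t_i',s_i')\bigr)$ is again of the form appearing in \eqref{eq:34} (the $V_{c_i}$ are functionally open and their intersection is defined by $\min_i(f_i-c_i)>0$), and inherits $\mu$-continuity from the perturbation, so it lies in $\sA_\mu$. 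Inclusion–exclusion together with (v) then gives $\mu_\gamma(W)\to\mu(W)$, whence $\liminf_\gamma\mu_\gamma(G)\geq\mu(W)>\mu(G)-\eps$. Letting $\eps\downarrow0$ completes the open half.

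For a functionally closed $F\in\sS$, pick $m$ with $F\subset D_m$; then $D_m\setminus F=D_m\cap F^c$ is functionally open and lies in $\sS$, so the open case gives $\liminf_\gamma\mu_\gamma(D_m\setminus F)\geq\mu(D_m\setminus F)$, which combined with $\mu_\gamma(D_m)\to\mu(D_m)$ from (v) yields $\limsup_\gamma\mu_\gamma(F)\leq\mu(F)$ by subtraction. The main technical obstacle is the sector perturbation step: one must simultaneously calibrate the angular threshold $c$ and the radial endpoints $t',s'$ to dodge the countable sets of $\mu$-singular values while keeping $x$ inside the sector and the sector inside $G$. Everything else is routine application of Theorem~\ref{lemma:vVSw}, Lemma~\ref{lemma:sector}, and $\tau$-additivity.
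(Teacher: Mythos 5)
Your proposal is correct and follows essentially the same route as the paper: necessity via the Portmanteau theorem applied to $\mu$-continuity sets, and sufficiency by using Lemma~\ref{lemma:sector} to fill each functionally open $G\in\sS$ with perturbed sectors from $\sA_\mu$, invoking $\tau$-additivity to approximate $\mu(G)$ by finite unions, inclusion--exclusion with closedness of $\sA_\mu$ under intersections, and complementation inside a base set for the closed case. Your explicit accounting of the three boundary pieces to be dodged when calibrating $c,t',s'$ just spells out what the paper compresses into ``by varying $t$ and $s$ and $\delta$''.
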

\begin{proof}
  \textsl{Necessity} follows from
  Theorem~\ref{lemma:vVSw}(ii), since $A\in\sA_\mu$ is a subset of
  $D_m$ for sufficiently large $m$.

  \smallskip
  \noindent
  \textsl{Sufficiency.} Let $G$ be a functionally open set in
  $\sS$. For each $x\in G$ we have $\modulus(x)\in(0,\infty)$.  By
  Lemma~\ref{lemma:sector},
  \begin{displaymath}
    x\in A_x=\Big\{y\colons \modulus(y)\in[t,s], T_{\modulus(y)^{-1}}y\in V\Big\}\subset G
  \end{displaymath}
  for some $0<t<s\leq\infty$ and a functionally open set
  $V=\{z\colons f(z)>0\}$,
  where $f$ is a continuous function. Note that
  $\eps=f(T_{\modulus(x)^{-1}}x)>0$ and
  \begin{multline*}
    x\in U=\Big\{y\colons \modulus(y)\in(t,s),
    f(T_{\modulus(y)^{-1}}y)>\delta\Big\}\\
    \subset 
    F=\Big\{y\colons \modulus(y)\in[t,s], f(T_{\modulus(y)^{-1}}y)\geq
    \delta\Big\}\subset G.
  \end{multline*}
  Since $\mu(G)$ is finite, by varying $t$ and $s$, and
  $\delta\in(0,\eps)$, we can ensure that $U$ is a functionally open
  $\mu$-continuity set.
  % \begin{displaymath}
  %   x\in \{y:\modulus_i(y)>t, f(T_{\modulus_i(y)^{-1}}y)>\delta\}
  % \end{displaymath}
  % and
  % \begin{displaymath}
  %   x\in \{y:\modulus_i(y)<s, f(T_{\modulus_i(y)^{-1}}y)>\delta\},
  % \end{displaymath}
  % such that
  % \eqref{eq:26} and \eqref{eq:27} hold.
  Thus, each $x\in G$ belongs to some functionally open $U\in\sA_\mu$
  with $U\subset G$, implying that each functionally open set $G$ is the
  union of open sets in $\sA_\mu$.

  % We first show that each functionally open set
  % $G\in\sS$ is the union of at most a countable collection of sets
  % from $\sA_\mu$. Then $G\subset \cup G_i$,
  % where $G_i\subset\{x:\modulus_i(x)>t_i\}$ for $i$ from a finite subset
  % of $\NN$. 

  % By Lemma~\ref{lemma:neighb}, for each $x\in G_i$ there exists a
  % functionally open $U_i$ such that $x\in U_i$ and
  % $T_{(1-\eps,1+\eps)}U_i\subset G_i$ for some $\eps>0$. By rescaling,
  % we have
  % \begin{displaymath}
  %   x\in U_{i,x}=T_{(t-\eps,t+\eps)}
  %   (T_{t^{-1}}U_i\cap\{x:\modulus_i(x)=1\})\subset G_i.
  % \end{displaymath}
  % Since $\mu$ is locally finite, by varying $\eps$, we can ensure that
  % $U_{i,x}$ is a $\mu$-continuity set.
  
  Convergence $\mu_\gamma(A)\to \mu(A)$ on all $A\in\sA_\mu$ and
  the closedness of the family $\sA_\mu$ under finite intersections imply that
  $\mu_\gamma$ converges to $\mu$ on the family $\sU$ of all finite
  unions of sets in $\sA_\mu$. For every functionally open $G$ and
  functionally open $U\subset G$ such that $U\in\sU$, we have
  \begin{displaymath}
    \mu(U)=\lim_\gamma\mu_\gamma(U)\leq \liminf_\gamma \mu_\gamma(G). 
  \end{displaymath}
  Since $G$ is the union of all sets in $\sA_\mu$ that are
  contained in $G$, the tau-additivity property yields that
  \begin{displaymath}
    \mu(G)=\sup\big\{\mu(U)\colons U\subset G,U\in\sU\big\}
    \leq \liminf_\gamma \mu_\gamma(G).
  \end{displaymath}
  Let $F$ be a functionally closed set. Then $F\subset D_m$ for some
  $m$ and $G=D_m\setminus F$ is functionally open.
  Since $\mu_\gamma(D_m)\to\mu(D_m)$, and applying the above to
  $G$, we have that
  \begin{displaymath}
    \limsup_\gamma \mu_\gamma(F)\leq \mu(F).
  \end{displaymath}
  The statement follows from Theorem~\ref{lemma:vVSw}(iii).
\end{proof}

\paragraph{Vague convergence on product spaces}
If $\XX$ is a topological space with the ideal $\sX$, then
a family $\sfD$ of bounded Baire measurable functions $f:\XX\to\R$
is said to determine vague convergence (on the
ideal $\sX$) if for all nets
\index{convergence determining class}
$(\mu_\gamma)_{\gamma\in\Gamma}\subset \Msigma$ of Baire measures and
a measure $\mu\in \Msigma$, the convergence
\begin{equation}
  \label{eq:intf-gamma}
  \lim_\gamma \int f\diff \mu_\gamma=\int f \diff \mu, \quad f\in\sfD,
\end{equation}
implies that $\mu_\gamma\vto\mu$. For a recent discussion of
convergence determining classes in the setting of weak convergence;
see \cite{MR2673979}. By definition, the family of functions
from $\fC(\XX,\sX)$
determines vague convergence. By Theorem~\ref{lemma:vVSw}, the
family of indicators of all $\mu$-continuity Baire sets in $\sX$
also determines vague convergence (with the limit being $\mu$) if
$\sX$ is topologically consistent. Note that we do not require 
the functions $f\in\sfD$ to be integrable with respect to $\mu$ or
$\mu_\gamma$; for example,
it is possible for $\sfD$ to include the function identically equal to
one, in which case both sides of \eqref{eq:intf-gamma} may be
infinite, but the equality is understood in the extended sense.

The following result concerns the conditions for vague convergence on
product spaces.

\begin{lemma}
  \label{lemma:conv-det-product}
  Let $\XX$ and $\YY$ be two perfectly normal topological
  spaces. Assume that $\sX$ is equipped with a topologically
  consistent ideal $\sX$ having a countable open base. Let $\sV$ be
  the ideal on $\XX\times\YY$ generated by sets $A\times\YY$ for all
  $A\in\sX$. Assume that $\sfD_\sX$ and $\sfD_\sY$ are two families of
  functions that determine vague convergence on $\sX$ and 
  weak convergence on $\YY$, respectively, and are such that the family
  $\sfD_\sY$ contains the function that is identically one.  If
  $\nu\in\Mb[\sV]$ is a tau-additive Borel measure and
  $(\nu_\gamma)_{\gamma\in\Gamma}\subset\Mb[\sV]$ is a net of Borel
  measures on $\XX\times\YY$ such that
  \index{product space!vague convergence on}
  \begin{equation}
    \label{eq:product-two-elements}
    \lim_\gamma
    \int f(x)h(y) \nu_\gamma(\bdiff (x,y))=\int f(x)h(y) \nu(\bdiff (x,y)),
    \quad f\in\sfD_\sX, h\in\sfD_\sY,
  \end{equation}
  then $\nu_\gamma\vto[\sV]\nu$. 
\end{lemma}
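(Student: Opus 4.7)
The plan is to apply Theorem~\ref{lemma:vVSw} after bootstrapping the hypothesised tensor-product convergence $\int f(x)h(y)\,d\nu_\gamma\to\int f(x)h(y)\,d\nu$ from $\sfD_\sX\times\sfD_\sY$ to all $f\in\fC(\XX,\sX)$ and all bounded continuous $h:\YY\to\R$, and then using $\tau$-additivity of $\nu$ to approximate $\liminf_\gamma\nu_\gamma(W)$ on functionally open $W\in\sV$ by polynomial combinations of tensor products. Taking $h\equiv 1\in\sfD_\sY$ first shows that the $\XX$-marginals $\mu_\gamma$ of $\nu_\gamma$ converge vaguely on $\sX$ to the $\XX$-marginal $\mu$ of $\nu$. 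Using Lemma~\ref{lemma:continuity-base}, I pick a countable open base $(\base_m)_{m\in\NN}$ of $\sX$ consisting of $\mu$-continuity sets; then $D_m=\base_m\times\YY$ forms a countable open base of $\sV$ consisting of $\nu$-continuity sets, and $\nu_\gamma(D_m)=\mu_\gamma(\base_m)\to\mu(\base_m)=\nu(D_m)$ by Theorem~\ref{lemma:vVSw}(ii).

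For the bootstrap, fix $h\in\sfD_\sY$ and $M\geq\|h\|_\infty$; the nonnegative Baire measure $\lambda_\gamma^{h+M}(A)=\int_{A\times\YY}(h(y)+M)\,\nu_\gamma(d(x,y))$ lies in $\Msigma[\sX]$, and combining the hypothesis with the marginal convergence gives $\int f\,d\lambda_\gamma^{h+M}\to\int f\,d\lambda^{h+M}$ for all $f\in\sfD_\sX$. Since $\sfD_\sX$ determines vague convergence, this upgrades to convergence for all $f\in\fC(\XX,\sX)$, and subtracting $M$ times the marginal integral then yields the convergence of $\int f(x)h(y)\,d\nu_\gamma$ for every $f\in\fC(\XX,\sX)$ and every $h$ in the linear span of $\sfD_\sY$. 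Fixing next $f\in\fC(\XX,\sX)$ with $f\geq 0$, the finite Borel measure $\rho_\gamma^f(B)=\int f(x)\one_B(y)\,\nu_\gamma(d(x,y))$ has total mass $\int f\,d\mu_\gamma\to\int f\,d\mu$ and its integrals against $h\in\sfD_\sY$ converge to the corresponding integrals against $\rho^f$; the convergence-determining property of $\sfD_\sY$ for weak convergence on $\YY$ therefore gives $\rho_\gamma^f\wto\rho^f$, extending the convergence to all bounded continuous $h:\YY\to\R$. Splitting $f=f^+-f^-$ with $f^\pm\in\fC(\XX,\sX)$ removes the sign restriction on $f$.

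By Theorem~\ref{lemma:vVSw}(iii) it remains to check $\liminf_\gamma\nu_\gamma(W)\geq\nu(W)$ for every functionally open $W\in\sV$, the matching upper bound on functionally closed $F\in\sV$ following by complementation inside a $D_m\supset F$ using $\nu_\gamma(D_m)\to\nu(D_m)$. Such $W$ lies in some $D_m$, and by $\tau$-additivity of $\nu$,
\begin{displaymath}
  \nu(W)=\sup\Big\{\nu(K): K=\bigcup_{i=1}^n U_i\times V_i\subset W,\;
  U_i\subset\base_m,\; U_i,V_i\ \text{functionally open}\Big\}.
\end{displaymath}
For each such finite union $K$ I construct Urysohn-type approximations $f_i^{(k)}\uparrow\one_{U_i}$ in $\fC(\XX,\sX)$ and $h_i^{(k)}\uparrow\one_{V_i}$ bounded continuous on $\YY$, and form
\begin{displaymath}
  P_k(x,y)=1-\prod_{i=1}^n\bigl(1-f_i^{(k)}(x)h_i^{(k)}(y)\bigr),
\end{displaymath}
which satisfies $0\leq P_k\leq\one_K$, increases pointwise to $\one_K$, and expands via the binomial formula into a finite signed linear combination of tensor products $F_I(x)H_I(y)$ with $F_I\in\fC(\XX,\sX)$ and $H_I$ bounded continuous on $\YY$. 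Hence $\int P_k\,d\nu_\gamma\to\int P_k\,d\nu$ by the extended convergence, so $\liminf_\gamma\nu_\gamma(K)\geq\int P_k\,d\nu$; monotone convergence in $k$ and the supremum over $K$ finally yield $\liminf_\gamma\nu_\gamma(W)\geq\nu(W)$. The main obstacle is this inclusion–exclusion step: keeping $P_k$ pointwise below $\one_K$ while ensuring that each monomial of its binomial expansion lies in a class for which the extended convergence holds is exactly what forces the earlier bootstrap to cover the full product class of $f\in\fC(\XX,\sX)$ against all bounded continuous $h:\YY\to\R$, and this in turn makes essential use of the presence of the constant function $1$ in $\sfD_\sY$.
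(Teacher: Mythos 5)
Your proof is correct, and it follows the same overall architecture as the paper's: take $h\equiv1$ to get vague convergence of the $\XX$-marginals, upgrade the tensor-product convergence using the two convergence-determining hypotheses, establish convergence on finite unions of open rectangles, and finish with $\tau$-additivity and the Portmanteau theorem. The two middle steps are implemented differently, though. Where you bootstrap at the level of functions — first freezing $h\in\sfD_\sY$ (shifted by a constant to stay nonnegative) to extend over all $f\in\fC(\XX,\sX)$, then freezing $f$ to extend over all bounded continuous $h$ — the paper works at the level of sets, proving $\nu_\gamma(B_1\times B_2)\to\nu(B_1\times B_2)$ for continuity rectangles via the normalised conditional measures $\mu^{(2)}_\gamma(\cdot)=\nu_\gamma(B_1\times\cdot)/\nu^{(1)}_\gamma(B_1)$, which forces a separate treatment of the case $\nu^{(1)}(B_1)=0$; your function-level route sidesteps that case split and the division by a possibly vanishing mass. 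Conversely, for passing from rectangles to open sets the paper exploits that continuity rectangles form a $\pi$-system and applies inclusion–exclusion to finite unions, whereas you minorise $\one_{\bigcup U_i\times V_i}$ by the explicit product $1-\prod_i(1-f_i^{(k)}h_i^{(k)})$ and expand it into tensor monomials — which is precisely why you need the full product class $\fC(\XX,\sX)\times\Cont_b(\YY)$ from the bootstrap, while the paper can get by with convergence on continuity rectangles alone. Both variants are sound; yours is marginally more self-contained at the cost of the two-stage extension argument.
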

\begin{proof}
  For a Borel measure $\nu$ on the product space $\XX\times\YY$,
  denote by $\nu^{(1)}$ its projection onto $\XX$.  Note that
  $\nu\in\Mb[\sV]$ ensures that $\nu$ is finite on sets $A\times\YY$
  for $A\in\sX$, and thus $\nu^{(1)}\in\Mb[\sX]$. Let
  $B_1\in\sX$ be a Borel $\nu^{(1)}$-continuity set. By
  \eqref{eq:product-two-elements}, setting $h$ to be identically one and
  applying Theorem~\ref{lemma:vVSw}, we have
  \begin{displaymath}
    \lim_\gamma\nu^{(1)}_\gamma(B_1)= \nu^{(1)}(B_1). 
  \end{displaymath} 
  Assume $\nu^{(1)}(B_1)>0$ (and is necessarily finite), and,
  without loss of generality, that $\nu^{(1)}_\gamma(B_1)>0$ for all
  $\gamma$.  Fix any $h\in\sfD_\sY$ and denote by $\mu^{(1)}_\gamma$
  the measure given by
  \begin{displaymath}
    \mu^{(1)}_\gamma(A)=\int \one_{x\in A}h(y)\nu_\gamma(\bdiff (x,y)),
    \quad A\in\sB(\XX),
  \end{displaymath}
  and, similarly, derive $\mu^{(1)}$ from $\nu$. For each
  $f\in\sfD_\sX$, \eqref{eq:product-two-elements} implies that
  \begin{displaymath}
    \int f\diff \mu^{(1)}_\gamma = \int f(x)h(y) \nu_\gamma(\bdiff (x,y))
    \to \int f(x)h(y) \nu(\bdiff (x,y)) =\int f\diff \mu^{(1)}.
  \end{displaymath}
  Hence, $\mu^{(1)}_\gamma\vto[\sX]\mu^{(1)}$. Since $B_1\in\sX$ is a
  $\nu^{(1)}$-continuity set,
  \begin{displaymath}
    \mu^{(1)}(\partial B_1)
    =\int \one_{x\in \partial B_1}h(y)\nu(\bdiff (x,y))
    \leq (\sup h) \nu^{(1)}(\partial B_1)=0,
  \end{displaymath}
  and thus $B_1$ is also a $\mu^{(1)}$-continuity set. By the
  Portmanteau theorem, 
  \begin{displaymath}
    \lim_\gamma \int \one_{x\in B_1} h(y) \nu_\gamma(\bdiff (x,y))
    = \int \one_{x\in B_1} h(y) \nu(\bdiff (x,y)).
  \end{displaymath}

  Furthermore, define the normalised measure 
  \begin{displaymath}
    \mu^{(2)}_\gamma(A)
    =\frac{\nu_\gamma(B_1\times A)}{\nu^{(1)}_\gamma(B_1)}, \quad
    A\in\sB(\YY),
  \end{displaymath}
  and $\mu^{(2)}$ likewise.  For each $h\in\sfD_\sY$, the ratio of
  limits yields  
  \begin{multline*}
    \int h\diff  \mu^{(2)}_\gamma
    =\frac{1}{\nu^{(1)}_\gamma(B_1)}
    \int\one_{x\in B_1} h(y)\nu_\gamma(\bdiff (x,y))\\
    \to \frac{1}{\nu^{(1)}(B_1)}\int\one_{x\in B_1} h(y)\nu(\bdiff (x,y))
    =\int h\diff  \mu^{(2)},
  \end{multline*}
  implying $\mu^{(2)}_\gamma\wto\mu^{(2)}$. If $B_2$ is a
  $\mu^{(2)}$-continuity set, then
  \begin{displaymath}
    \nu_\gamma(B_1\times B_2)=\mu^{(2)}_\gamma(B_2)\nu^{(1)}_\gamma(B_1)
    \to \mu^{(2)}(B_2)\nu^{(1)}(B_1)
    =\nu(B_1\times B_2).
  \end{displaymath}
  Letting $h$ be identically one, we obtain that
  $\nu_\gamma(B_1\times\YY)\to \nu(B_1\times\YY)$.

  If $\nu^{(1)}(B_1)=0$, then
  \begin{displaymath}
    \nu_\gamma(B_1\times B_2)
    \leq \nu^{(1)}_\gamma(B_1)\to 0.
  \end{displaymath}
  Consequently, the limit $\nu_\gamma(B_1\times B_2)\to 0$ holds.
  Thus, for all $\nu^{(1)}$-continuity
  sets $B_1$ and $\mu^{(2)}$-continuity sets $B_2$, we establish that
  \begin{equation}
    \label{eq:B1xB2}
    \nu_\gamma(B_1\times B_2)\to \nu(B_1\times B_2).
  \end{equation}

  In view of Lemma~\ref{lemma:continuity-base}, assume that
  $(\base_m)_{m\in\NN}$ is a countable open base for $\sX$ 
  consisting of $\nu^{(1)}$-continuity sets. Choosing
  $D_m=\base_m\times\YY$ yields $\nu_\gamma(D_m)\to\nu(D_m)$ for
  all $m$. Note that $(D_m)_{m\in\NN}$ forms an open base for $\sV$.

  % By \cite[Theorem~7.6.5]{bogachev07}, the distribution $\nu$ of
  % has a unique extension to a tau-additive measure and
  % so may be assumed to be tau-additive. 

  Fix $m\in\NN$. Let $G$ be an open subset of
  $D_m=\base_m\times\YY$, and let $(x,y)\in G$.
  Since the sets $G_1\times G_2$ for all open sets $G_1$ in $\XX$ and
  $G_2$ in $\YY$ constitute a base for the product topology, we have
  $G\supset G_1\times G_2\ni(x,y)$ for some $G_1$ and $G_2$. By
  topological consistency of $\sX$, it is possible to assume that
  $G_1\in\sX$. The set $G_1^c$ and the point $x$ are disjoint and thus can
  be separated by a continuous function. Examining its level sets, we
  identify an open set $B_1$ such that $x\in B_1\subset G_1$ and $B_1$
  is a $\nu^{(1)}$-continuity set. Applying the same construction to $y$ and
  $G_2$ and the measure $\mu^{(2)}$ yields an open set $B_2\subset\YY$
  such that
  \begin{displaymath}
    (x,y)\in B_1\times B_2\subset G 
  \end{displaymath}
  and $B_2$ is a $\mu^{(2)}$-continuity set. Since sets of the type
  $B_1\times B_2$ form a family closed under finite intersections,
  \eqref{eq:B1xB2} holds for all sets $U$ in the family
  $\mathcal{U}$ of finite unions of $B_1\times B_2$ chosen as
  above. Since $G$ is the union of these sets $B_1\times B_2$, we have
  \begin{displaymath}
    \nu(U)=\lim_\gamma \nu_\gamma(U)
    \leq \liminf_\gamma \nu_\gamma(G), \quad U\in\mathcal{U}.
  \end{displaymath}
  The tau-additivity of $\nu$ implies 
  \begin{displaymath}
    \nu(G)=\sup_{U\in\mathcal{U}} \nu(U)
    \leq \liminf_\gamma \nu_\gamma(G).
  \end{displaymath}
  Since this holds for every open $G\subset D_m$, the Portmanteau
  theorem yields
  \[
    \nu_\gamma|_{D_m}\wto \nu|_{D_m}
  \]
  as finite measures on $\XX\times\YY$, for each $m$.  The proof
  concludes by applying Theorem~\ref{lemma:vVSw}(iv).
\end{proof}

\begin{remark}
  Recall the products of two ideals $\sX\times\sY$ and $\sX\otimes\sY$
  defined by \eqref{eq:product-two-ideals} and
  \eqref{eq:product-two-ideals-1}. The setting of
  Lemma~\ref{lemma:conv-det-product} corresponds to the case where
  $\sV=\sX\times\sY$ with $\sY$ containing $\YY$ and consequently all its
  subsets. A variant of Lemma~\ref{lemma:conv-det-product} holds for
  $\sV=\sX\otimes\sY$, assuming that $\sfD_\sX$ also contains a
  function identically equal to one.

  Note also that it is possible to stipulate 
  \eqref{eq:product-two-elements} only for non-negative functions $f$
  and $g$.
\end{remark}

\section{Regularly varying measures on topological spaces}
\label{sec:regul-vary-meas}

\paragraph{Definition of regular variation}
Let $\RV_\alpha$ denote the family of functions which are regularly
\index{regularly varying function}
\index{function!regularly varying}
varying at infinity with index $\alpha>0$, that is, the family of all
positive measurable functions $g:(a,\infty)\to\Rpp$ for some
$a\in \R$ with the property
\begin{displaymath}
  \lim_{t\to \infty} \frac{g(x t)}{g(t)} = x^{\alpha}
\end{displaymath}
for all $x>0$.

A measure $\mu\in\Msigma$ is said to be \emph{non-trivial} on an ideal
$\sS$ if $\mu(B)>0$ for some $B\in\sS$.
\index{measure!non-trivial}

\begin{definition}
  \label{def:RV}
  Let $\sS$ be a topologically and scaling consistent ideal on a
  topological space $\XX$ equipped with a scaling operation.  A Baire
  measure $\nu$ on $\sBA(\XX)$ is said to be \emph{regularly varying}
  on $\sS$ if there 
  exists a measurable function $g:\R_+\to\R_+$ with $g(t)\to\infty$ as
  $t\to\infty$ and such that \index{measure!regularly varying}
  \index{regularly varying measure} \index{regular variation}
  \begin{equation}
    \label{eq:24g}
    g(t)T_{t^{-1}}\nu\vto\mu\quad \text{as}\; t\to\infty
  \end{equation}
  where $\mu \in\Msigma$ is non-trivial on $\sS$ (in particular, $\mu$
  is finite on each set in $\sS$). The measure $\mu$
  is called the \emph{tail measure} of $\nu$ and
  \index{tail measure} 
  the function $g$ is called the \emph{normalising function}.
  \index{normalising function}
  We then write $\nu\in \RV(\XX,T,\sS,g,\mu)$ (or
  $\nu\in\RV(\sS)$ if the other components are clear
  from the context).
\end{definition}

Recall that linear scaling on $\XX$ is denoted by $\mydot$ and
then the regular variation of $\nu$ is denoted as
$\nu\in \RV(\XX,\mydot,\sS,g,\mu)$.
Note that the measure $\nu$ in Definition~\ref{def:RV} (unlike the
tail measure $\mu$) is not necessarily supported on the union of all
sets in $\sS$. While this definition accommodates possibly infinite
measures $\nu$, it is usually applied to probability measures.

The regular variation property is often formulated by assuming the
existence of an increasing sequence $(a_n)_{n\in\NN}$ of positive
constants such that $n(T_{a_n^{-1}}\nu)$ converges vaguely in $\sS$ to
a measure $\mu$ as $n\to\infty$. A variant of this definition is given in
\cite{hul:lin06},
where the sequence $(a_n)$ is assumed to be regularly varying. Another
variant appears in \cite[Definition~B.2.1]{kul:sol20}, where the
sequence $(a_n)$ is assumed to be monotone and converging to
infinity. Below we show that this definition is equivalent to ours
under Condition~(B), see Theorem~\ref{thr:ks}(iii). 
Furthermore, it follows from the proof of Theorem~\ref{thr:ks} that,
if $\sS$ contains the set $\{\modulus>1\}$ for a continuous
modulus $\modulus$, one may omit the assumption that
$g(t)\to\infty$ in Definition~\ref{def:RV}.

\index{random element!Baire measurable}
A Baire measurable random element $\xi$ is a measurable map from a
probability space to $\XX$ equipped with its Baire $\sigma$-algebra. If its
distribution $\nu$ (which is a Baire probability measure on $\XX$)
satisfies the conditions of Definition~\ref{def:RV}, that is,
\begin{equation}
  \label{eq:xi-RV}
  g(t)\Prob{T_{t^{-1}}\xi\in \cdot} \vto\mu(\cdot)\quad \text{as}\;
  t\to\infty
\end{equation}
with $g(t)\to\infty$ as $t\to\infty$, then $\xi$ is said to be
regularly varying on $\sS$, and we write
$\xi\in \RV(\XX,T, \sS,g,\mu)$.

\begin{remark}
  \label{rem:same-ideal}
  The definition of regular variation relies on the choice of the
  underlying ideal $\sS$ on $\XX$. In this context, it is irrelevant
  whether the ideal is generated by a single modulus or
  by several different moduli; see also
  Section~\ref{sec:polar-decomp-regul}. 
\end{remark}

\begin{remark}
  \label{rem:wide} 
  If the assumption $g(t)\to\infty$ as $t\to\infty$ in
  Definition~\ref{def:RV} is omitted, pathological situations may
  arise due to the flexibility in the  choice of the ideal
  $\sS$. For instance, if the scaling is  the identity map,
  that is, $T_tx=x$ for all $x$ and $t$, then every measure
  $\nu\in\Msigma$ is regularly varying with constant normalising
  function $g\equiv1$.
  % If
  % $T_{t^{-1}}\nu$ has a weak limit which is non-trivial on the ideal
  % $\sS$ (for instance, if the ideal contains the whole space), then
  % \eqref{eq:24g} also holds with $g\equiv1$ and the tail measure
  % $\mu$
  % is finite, see Examples~\ref{ex:g=1} and \ref{ex:scaling-min}.
\end{remark}

The following result shows that in most interesting cases, namely, 
when $\sS$ satisfies Condition~\hyperref[condB0]{(B$_0$)}, the validity of \eqref{eq:24g} with any
measurable function $g$ implies that $g\in\RV_\alpha$.
The value of $\alpha$ is called the \emph{tail index} of $\nu$.
\index{tail index}
The following proposition appears as
\cite[Lemma~4.5(iii)]{bladt:hash22} in the Polish
space setting with a continuous scaling.  However, the proof there
tacitly presumes the existence of a semicone $\VV$ with
$\mu(\VV)\in(0,\infty)$, a condition which is not included in the
assumptions of 
the cited result. We impose this explicitly and do not require
the continuity of the scaling.

\begin{proposition}
  \label{prop:RV-g}
  Assume that $\XX$ is a topological space with a scaling and
  topologically and scaling
  consistent ideal $\sS$, such that $\sS$ contains the
  set $\{\modulus>1\}$ with $\mu(\{\modulus>1\})>0$ for a finite
  continuous modulus $\modulus$.  If $\nu\in \RV(\XX,T,\sS,g,\mu)$, then
  $g\in\RV_\alpha$ for some $\alpha>0$, and $\mu$ is
  $\alpha$-homogeneous. In particular, this holds if $\sS$ satisfies
  Condition~\hyperref[condB0]{(B$_0$)}. 
\end{proposition}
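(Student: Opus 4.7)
The plan is to use the test sets $B_r = \{\modulus > r\}$, $r > 0$, produced by the continuous modulus. Each $B_r$ is functionally open and lies in $\sS$ by scaling consistency (since $B_r = T_r B_1$). The map $r \mapsto \mu(B_r)$ is finite, non-increasing and right-continuous, hence has at most countably many discontinuities; at each continuity point $r$ the boundary of $B_r$ sits inside $\{\modulus = r\}$ and is $\mu$-null, making $B_r$ a $\mu$-continuity set. Since $\mu(B_1) > 0$, the set $R \subset (0,\infty)$ of continuity points $r$ with $\mu(B_r) \in (0,\infty)$ is dense and co-countable.

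Applying Theorem~\ref{lemma:vVSw}(ii) to $g(t) T_{t^{-1}}\nu \vto \mu$ gives, for every $r \in R$,
\begin{displaymath}
g(t)\, \nu(B_{rt}) \;=\; g(t)\,(T_{t^{-1}}\nu)(B_r) \;\longrightarrow\; \mu(B_r),
\end{displaymath}
where the identity uses only $T_t B_r = B_{rt}$ (homogeneity of $\modulus$) and not continuity of the scaling. For $s > 0$ with $rs \in R$, ratioing at scales $t$ and $ts$ yields
\begin{displaymath}
\frac{g(ts)}{g(t)} \;=\; \frac{g(ts)\, \nu(B_{rst})}{g(t)\, \nu(B_{rst})} \;\longrightarrow\; \frac{\mu(B_r)}{\mu(B_{rs})} \;=:\; c(s) \in (0,\infty).
\end{displaymath}
Since $\{s : rs \notin R\}$ is countable, the limit holds on a set of full Lebesgue measure, so the Karamata characterisation theorem (\cite[Theorem~1.4.1]{bin:gol:teu}) yields $g \in \RV_\alpha$ with $c(s) = s^\alpha$. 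Monotonicity gives $c(s) \geq 1$ for $s \geq 1$, so $\alpha \geq 0$; the case $\alpha = 0$ would force $r \mapsto \mu(B_r)$ to be constant on $R$ and by right-continuity on all of $(0,\infty)$, contradicting $\mu(B_r) \to 0$ as $r \to \infty$ (which follows from $\mu(B_r) < \infty$ and $\cap_{r > 0} B_r = \emptyset$, the latter from finiteness of $\modulus$). Thus $\alpha > 0$.

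For homogeneity, the same comparison is run on any $\mu$-continuity set $A \in \sS$ whose image $T_s A$ is also a $\mu$-continuity set: from $T_{t^{-1}}\nu(T_s A) = \nu(T_{ts} A) = T_{(ts)^{-1}}\nu(A)$ and the regular variation of $g$,
\begin{displaymath}
\mu(T_s A) \;=\; \lim_t g(t)\, \nu(T_{ts} A) \;=\; \lim_t \frac{g(t)}{g(ts)} \cdot g(ts)\,(T_{(ts)^{-1}}\nu)(A) \;=\; s^{-\alpha}\mu(A).
\end{displaymath}
For each fixed $s$ the class of admissible $A$ is a $\pi$-system generating $\sBA(\XX)$ on the support of $\mu$, built from the sets $B_r$ with $r \in R$ together with angular $\mu$-continuity slices produced from continuous real-valued functions by the same jump-point argument; consequently $T_s\mu = s^\alpha \mu$ on $\sBA(\XX)$. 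For the (B$_0$) clause, $\sS$ is generated by continuous moduli $(\modulus_i)$, so nontriviality of $\mu$ on $\sS$ forces $\mu(\{\modulus_i > \eps\}) > 0$ for some $i$ and $\eps > 0$, and the rescaled modulus $\modulus_i/\eps$ satisfies the hypothesis. The main obstacle is this last homogeneity step: guaranteeing that $\mu$-continuity sets whose $T_s$-images are also $\mu$-continuity sets form a determining class for $\mu$; this hinges on extracting enough such sets via the polar-decomposition style base that the continuous modulus $\modulus$ provides.
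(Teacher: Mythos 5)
Your proof follows essentially the same route as the paper's: test on the level sets $\{\modulus>r\}$ of the continuous modulus (all but countably many of which are $\mu$-continuity sets), form the ratio $g(ts)/g(t)\to\mu(B_r)/\mu(B_{rs})$, invoke a Karamata--Seneta characterisation theorem to get $g\in\RV_\alpha$, rule out $\alpha\leq 0$ via monotonicity and $\mu(B_r)\to0$ as $r\to\infty$, and obtain homogeneity by the same ratio trick on continuity sets. The determining-class issue you flag at the end is treated no more carefully in the paper itself (which simply calls homogeneity ``easy to derive''), and your restriction to $s$ with $rs\in R$ already avoids the only real pitfall (possible vanishing of $\mu(B_{rs})$ for $s>1$, which the paper handles by taking $s\in(0,1)$), so the argument is sound.
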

\begin{proof}
  Let $B=\{\modulus>1\}$, so that $B\in\sS$ is a semicone and
  $\mu(B)>0$. Then $T_tB$ fails to be a $\mu$-continuity set for at
  most countably many $t>0$. Fix a $t$ such that $T_tB$ is a
  $\mu$-continuity set. By replacing $\modulus$ with $t^{-1}\modulus$,
  it is possible to assume that $t=1$. Hence, for $s\in(0,1)$ such that $T_sB$
  is a $\mu$-continuity set, we have
  \begin{displaymath}
    \lim_{t\to\infty}\frac{g(ts)}{g(t)}
    =\lim_{t\to\infty}\frac{g(ts)\nu(T_{ts}B)}{g(t)\nu(T_t(T_sB))}
    =\frac{\mu(B)}{\mu(T_sB)}\in(0,\infty),
  \end{displaymath}
  where we have used the fact that $T_sB\supset B$ for $s\in(0,1)$,
  and so $\mu(T_sB)>0$.  By
  \cite[Theorem~1.3]{seneta72}, $g$ is regularly varying and
  $\mu(T_sB)=cs^{-\alpha}$ is a power function of $s$. By
  monotonicity, $\mu(T_sB)=\mu(\{\modulus>s\})$ is non-increasing in $s$ and
  converges to zero as $s\to\infty$ (since $\mu(\{\modulus>s\})$ is
  finite),
  so that $\mu(T_sB)<\mu(B)$ for
  some $s>1$, implying $g\in\RV_\alpha$ with $\alpha>0$. The homogeneity
  property of $\mu$ follows directly.
  % For all $s>0$,
  % \begin{displaymath}
  %   g(t)T_{t^{-1}} T_s\nu=g(t)T_{s/t}\nu \vto s^\alpha \mu
  % \end{displaymath}
  % in view of the regular variation property of $g$.  Thus,
  % \begin{displaymath}
  %   \int f dT_s\mu = s^\alpha \int fd\mu 
  % \end{displaymath}
  % for all $f\in\fC_\sS$, so that $T_s\mu=s^{\alpha}\mu$. Thus, $\mu$ is
  % $\alpha$-homogeneous.
  If $\sS$ satisfies Condition~\hyperref[condB0]{(B$_0$)} and $\mu(A)\in(0,\infty)$ for some
  $A\in\sS$, then $A$ is a subset of $\{\modulus_I>t\}$ for
  $\modulus_I=\max_{i\in I}\modulus_i$ and some $t>0$, whence the above argument
  applies. 
\end{proof}

It is common practice to derive the normalising function $g$ from
$\nu$ itself.  It follows from the proof of
Proposition~\ref{prop:RV-g} that \eqref{eq:24g} holds with
$g(t)=1/\nu(T_tB)$ if $\nu(T_tB)>0$ for all $t$, and then the tail measure
becomes $\mu(\cdot)/\mu(B)$.

\begin{example}[Regular variation of random variables]
  \index{random variable!regularly varying}
  Consider $\XX=\R_+$ equipped with linear scaling and the ideal $\sR_0$
  obtained by excluding the origin. A random variable $\xi$ is in
  $\RV(\R_+,\cdot,\sR_0,g,\mu)$ if and only if the function
  $t\mapsto 1/\Prob{\xi>t}$ is in $\RV_{\alpha}$.
  In this case, $\mu$ is proportional to
  $\theta_\alpha$ defined in \eqref{eq:29}. If $g(t)=1/\Prob{\xi>t}$,
  then the tail measure is exactly $\theta_\alpha$. 
\end{example}

\begin{remark}
  A family $(\nu_t)_{t>0}$ of Baire probability measures on
  $\XX$ is said to be regularly varying in the triangular array (or the
  scheme of series) if 
  \eqref{eq:24g} is replaced by
  \index{regular variation!scheme of series}
  \begin{displaymath}
    g(t)T_{t^{-1}}\nu_t\vto\mu\quad \text{as}\; t\to\infty.
  \end{displaymath}
  A particularly important setting arises when $\nu_t$ results from
  applying a transformation to $\nu$ that differs from the basic
  scaling, for example, translations on the real line with the multiplicative
  scaling. We do not pursue this direction in the present work. 
\end{remark}

\paragraph{Characterisation of regular variation}

\begin{theorem}
  \label{thr:ks}
  Let $\XX$ be a topological space with a scaling and an ideal $\sS$
  satisfying 
  Condition~\hyperref[condB0]{(B$_0$)}. For a Baire measure $\nu$ on
  $\XX$, the following 
  statements are equivalent.
  \index{regular variation!characterisation of}
  \begin{enumerate}[(i)]
  \item The measure $\nu$ is regularly varying, that is,
    $\nu\in \RV(\XX,T,\sS,g,\mu)$.
  \item  We have
    \begin{equation}	
      \label{eq:vague}
      \frac{1}{\nu(T_t B)} T_{t^{-1}}\nu \vto \mu_b \quad
      \text{as}\; t\to\infty
    \end{equation}
    for some $B\in\sS$ and a non-trivial  $\mu_b\in\Msigma$. 
  % \item There exists a $B\in\sS$ and a random element $\eta$ in $\XX$
  %   such that
  %   \begin{equation}
  %     \label{eq:42}
  %     \Prob{T_{t^{-1}}\xi\in \cdot\mid\xi\in T_tB}
  %     \wto \Prob{\eta\in\cdot} \quad \text{as}\; t\to\infty,
  %   \end{equation}
  %   where $\xi$ has distribution $\nu$. 
  \end{enumerate}
  If either condition holds, then \eqref{eq:vague} holds with
  $B=\{\modulus>1\}$ for a continuous modulus $\modulus$, where $\mu$ is
  non-trivial on $B$, and it is possible to let $g(t)=1/\nu(T_tB)$ in
  (i). Moreover, the measures $\mu$ and $\mu_b$ agree up to a
  multiplicative constant, and one may choose $g$ or $B$
  such that $\mu = \mu_b$.

  Each of conditions (i) and (ii) implies the following.
  \begin{enumerate}
  \item[(iii)] There exists a nondecreasing sequence $(a_n)_{n\in\NN}$
    of positive constants such that
    \begin{displaymath}
      nT_{a_n^{-1}}\nu\vto\mu_a\quad \text{as}\; n\to\infty
  \end{displaymath}
  for a non-trivial measure $\mu_a \in\Msigma$.
  \end{enumerate}
  Then  $(a_n)_{n\in\NN}$ is regularly varying (as a sequence) and it is
  possible to choose this sequence such that $\mu = \mu_a$. If $\XX$
  is a completely Hausdorff space with a continuous scaling operation 
  and an ideal satisfying Condition~\hyperref[condB]{(B)}, and $\mu_a$ is
  tau-additive, then (iii) implies (i) and (ii).
\end{theorem}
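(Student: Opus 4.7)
The plan is to establish (i)$\Leftrightarrow$(ii), then (ii)$\Rightarrow$(iii), and finally (iii)$\Rightarrow$(i),(ii) under the added hypotheses, using Proposition~\ref{prop:RV-g} and Theorem~\ref{thr:basis} as the main levers.

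For (i)$\Rightarrow$(ii), condition (B$_0$) supplies a family of continuous moduli generating $\sS$. Picking one such $\modulus$ and rescaling it by a constant so that $B=\{\modulus>1\}$ is a $\mu$-continuity semicone with $\mu(B)\in(0,\infty)$ (possible since $\mu$ can charge only countably many level sets $\{\modulus=s\}$ by Lemma~\ref{lemma:alpha}), vague convergence on $B$ gives $g(t)\nu(T_tB)\to\mu(B)$, so that $g_b(t):=1/\nu(T_tB)$ satisfies $g_b(t)T_{t^{-1}}\nu\vto\mu/\mu(B)=:\mu_b$, which is \eqref{eq:vague}. Conversely, for (ii)$\Rightarrow$(i) I simply take $g(t)=1/\nu(T_tB)$; to verify $g(t)\to\infty$ I apply Proposition~\ref{prop:RV-g} to the pair $(g,\mu_b)$, noting that the cited proof never actually invokes the hypothesis $g(t)\to\infty$ but only the semicone monotonicity $\mu_b(T_sB)\downarrow 0$ inherent in $\mu_b\in\Msigma$. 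The proposition then yields $g\in\RV_\alpha$ with $\alpha>0$ and $\mu_b$ is $\alpha$-homogeneous, so (i) holds.

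For (ii)$\Rightarrow$(iii), regular variation of $g$ of positive index furnishes a nondecreasing asymptotic inverse $(a_n)_{n\in\NN}$ with $g(a_n)/n\to 1$, itself regularly varying of index $1/\alpha$. Substituting $t=a_n$ in (ii) and using $g(a_n)\sim n$ yields $nT_{a_n^{-1}}\nu\vto\mu_a:=\mu_b$. Rescaling $g$ or $B$ by a constant then lets one identify $\mu=\mu_b=\mu_a$.

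The substantive direction is (iii)$\Rightarrow$(i),(ii). Under (B), Lemma~\ref{lemma:stau} supplies a continuous modulus $\modulus$ with $\sS=\sS_\modulus$. For large real $t$, let $n(t)=\max\{k:a_k\leq t\}$, so that $a_{n(t)}\leq t<a_{n(t)+1}$. I claim $n(t)T_{t^{-1}}\nu\vto\mu_a$, and verify this via Theorem~\ref{thr:basis}: it suffices to check the convergence on a countable open base of $\sS$ and on sector sets $A=\rho^{-1}(V\times(a,b))\in\sA_{\mu_a}$. For the auxiliary semicone $\VV_c=\rho^{-1}(V\times(c,\infty))$, the inclusion $T_u\VV_c\supset T_v\VV_c$ for $u\leq v$ produces the sandwich
\begin{displaymath}
  n(t)\,\nu(T_{a_{n(t)+1}}\VV_c)\leq n(t)\,\nu(T_t\VV_c)\leq n(t)\,\nu(T_{a_{n(t)}}\VV_c),
\end{displaymath}
whose outer terms tend to $\mu_a(\VV_c)$ by (iii) combined with $n(t)/(n(t)+1)\to 1$, provided $\VV_c$ is a $\mu_a$-continuity set (arranged by perturbing $c$ and $V$). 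Writing a sector as $A=\VV_a\setminus\VV_b$ (with continuity choices of $a,b,V$) gives $n(t)\nu(T_tA)\to\mu_a(A)$ by subtraction; an analogous argument handles the base sets. The $\tau$-additivity of $\mu_a$ is what activates Theorem~\ref{thr:basis} to upgrade this to full vague convergence, concluding the proof.

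The main obstacle is this last step: one must bridge the countable-index hypothesis (iii) to a continuous-parameter limit while manufacturing enough $\mu_a$-continuity sets. The monotonicity $T_u\VV_c\supset T_v\VV_c$ drives the sandwich, and the $\tau$-additivity of $\mu_a$ is precisely what drives the Portmanteau-style supremum over unions of continuity sectors inside the proof of Theorem~\ref{thr:basis}; without $\tau$-additivity one could still obtain convergence on each sector but not promote it to the functionally open sets demanded by vague convergence.
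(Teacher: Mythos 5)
Your proposal tracks the paper's own proof almost step for step: the equivalence (i)$\Leftrightarrow$(ii) rests on Proposition~\ref{prop:RV-g} together with the fact that the level sets $\{\modulus>s\}$ are $\mu$-continuity sets for all but countably many $s$ (Lemma~\ref{lemma:alpha}); (ii)$\Rightarrow$(iii) uses the generalised inverse of the monotone regularly varying $g$; and (iii)$\Rightarrow$(i),(ii) uses the monotone-semicone sandwich fed into Theorem~\ref{thr:basis}, with $\tau$-additivity entering exactly where you say it does. Your observation that the proof of Proposition~\ref{prop:RV-g} never actually invokes $g(t)\to\infty$ is precisely the remark the paper makes before the theorem, so the passage from (ii) to (i) with $g(t)=1/\nu(T_tB)$ is legitimate.

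There is one concrete omission in the last step. Before you may set $n(t)=\max\{k:a_k\leq t\}$ and write $a_{n(t)}\leq t<a_{n(t)+1}$, you must show that $a_n\to\infty$: if $\sup_n a_n=s<\infty$, then $n(t)$ is infinite for $t\geq s$ and the sandwich is vacuous. This is not automatic from the mere statement of (iii) and the paper devotes a paragraph to it. The argument runs as follows: since $\mu_a$ is nontrivial and supported by the union of the base sets $\{\modulus>1/m\}$, the measure $\nu$ must satisfy $\nu(\{\modulus>v\})>0$ for some $v>0$ (otherwise every $n\nu(T_{a_n}\{\modulus>u\})$ would vanish and $\mu_a$ would be trivial). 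Choosing $t\le v/s$ so that $B=\{\modulus>t\}$ is a $\mu_a$-continuity set, one gets
\begin{displaymath}
  n\,\nu(T_{a_n}B)\;\geq\; n\,\nu(T_sB)\;=\;n\,\nu(\{\modulus>st\})\longrightarrow\infty,
\end{displaymath}
contradicting $n\nu(T_{a_n}B)\to\mu_a(B)<\infty$. With this lemma inserted, your proof is complete and coincides in substance with the paper's.
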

\begin{proof}
  (i)$\Rightarrow$(ii) Condition~\hyperref[condB0]{(B$_0$)} implies that each $A\in\sS$ satisfies
  \begin{displaymath}
    A\subset \bigcup_{j=1,\dots,m} \{\modulus_{i_j}>t_i\}
  \end{displaymath}
  for continuous moduli $\modulus_{i_1},\dots,\modulus_{i_m}$ 
  and $t_1,\dots,t_m>0$. Hence, a non-trivial measure
  $\mu\in\Msigma$ always satisfies $\mu(\{\modulus>1\})\in(0,\infty)$
  for some continuous modulus $\modulus$, which we fix in the sequel.

  Since $\nu\in \RV(\XX,T,\sS,g,\mu)$, Proposition~\ref{prop:RV-g}
  applies, so that $\mu$ is homogeneous. By Lemma~\ref{lemma:alpha}
  and the continuity of $\modulus$,
  $B=\{\modulus>1\}$ is a $\mu$-continuity set. By \eqref{eq:24g},
  $g(t)\nu(T_tB)\to \mu(B)\in(0,\infty)$ as $t\to\infty$. Thus,
  \begin{displaymath}
    \frac{T_{t^{-1}}\nu}{\nu(T_t
      B)}\vto\frac{\mu}{\mu(B)}\quad\text{as}\; t\to\infty,
  \end{displaymath}
  so that \eqref{eq:vague} holds and $\mu_b=\mu/\mu(B)$. It
  is possible to incorporate the multiplicative factor in $g$ or
  let $B=\{\modulus>s\}$ for a suitable positive $s$ to
  ensure that $\mu=\mu_b$.
  
  % For the converse, take an arbitrary $\eps>0$, let $A$ be a closed
  % subset of $\SS=\{x:\modulus(x)=1\}$ and take $u>0$
  % such that $\mu(\partial T_{[u,\infty)}A) = 0$, that is,
  % $T_{[u,\infty)}A$ belongs to the family $\sA_{\mu}$ introduced in
  % the proof of Lemma~\ref{lemma:vVSw}. Then
  % $$ \frac{g(t)}{g(t(1+\eps))}\cdot \frac{g(t(1+\eps)) \,
  % \nu(T_{t(1+\eps)} T_{[u,\infty)} A)}{ g(t) \, \nu(T_t C)} \leq
  % \frac{\nu(T_t T_{[u,\infty)} A)}{\nu(T_t C)} \leq
  % \frac{g(t(1+\eps))}{g(t)} \cdot \frac{g(t)\, \nu(T_{t}
  % T_{[u,\infty)} A)}{g(t(1+\eps)) \nu(T_{t(1+\eps)} C)},$$
  % and note that
  % $$ \lim_{t\to\infty} \frac{g(t)}{g(t(1+\eps))}
  % = (1+\eps)^{-\alpha}, \quad \lim_{t\to\infty} \frac{g(t(1+\eps))}
  % {g(t)} = (1+\eps)^{\alpha},$$
  % $$ \lim_{t\to\infty} \frac{g(t(1+\eps)) \, \nu(T_{t(1+\eps)}
  % T_{[u,\infty)} A)}{ g(t) \, \nu(T_t C)} = \lim_{t\to\infty }
  % \frac{g(t)\, \nu(T_{t}T_{[u,\infty)} A)}{g(t(1+\eps))
  % \nu(T_{t(1+\eps)} C)} =
  % \frac{\mu_g(T_{[u,\infty)}A)}{\mu_g(C)}. $$
  % Now let $\eps\to 0$ and see that for
  % $\mu(T_{[u,\infty)} A) = \mu_g(T_{[u,\infty)}A) / \mu_g(C)$, the
  % statement from (i) holds.
  
  (ii)$\Rightarrow$(i) As above, fix a continuous modulus $\modulus$
  such that $\{\modulus>1\}\in\sS$ and $\mu_b(\{\modulus>1\})>0$. 
  Since the sets $\{\modulus=s\}$ are disjoint for all $s>1$ and are
  subsets of $\{\modulus>1\}$, sets $\{\modulus>s\}$ are
  $\mu_b$-continuity sets for all but at most countably many
  $s>0$. So assume that $D=\{\modulus>s\}$ is a $\mu_b$-continuity set
  and choose $s$ near $1$ to ensure that $\mu_b(D)>0$. Then
  \eqref{eq:vague} yields that
  \begin{displaymath}
    \frac{\nu(T_tD)}{\nu(T_tB)}
    % =\frac{\nu(\{x:\modulus(x)>ts\})}{\nu(T_tB)}
    \to \mu_b(D)\quad \text{as}\; t\to\infty. 
  \end{displaymath}
  Denote $g(t)=1/\nu(T_tD)$. Thus, 
  \begin{displaymath}
    g(t)T_{t^{-1}}\nu 
    =\frac{\nu(T_tB)}{\nu(T_tD)} \frac{1}{\nu(T_tB)}T_{t^{-1}}\nu 
    \vto \frac{1}{\mu_b(D)} \mu_b \quad \text{as}\; t\to\infty,
  \end{displaymath}
  so that (i) holds, $\mu_b$ is homogeneous and $g\in\RV_\alpha$ by
  Proposition~\ref{prop:RV-g}.
  % If $\sS$ satisfies \hyperref[condB]{(B)}, then it is obviously
  % possible to choose $C$ instead of $B$ in \eqref{eq:vague}.

  % Since $T_sD=T_{[s,\infty)}D\in\sS$ and
  % $D\subset T_sD$ for $s\leq 1$, we have that
  % $\mu_b(T_sD)\in(0,\infty)$ for all $s\in(0,1]$. Then
  % \begin{displaymath}
  %   \{\modulus>s\}=T_s D\subset\{\modulus\geq s\},
  % \end{displaymath}
  % where the set on the left is functionally open and on the right is
  % functionally closed. For all but at most a countable set of
  % $s\in(0,1)$, we have $g(ts)/g(t)\to \mu_b(T_sD)\in(0,\infty)$. By
  % \cite[Theorem~1.3]{seneta72}, we have that $g$ is regularly varying
  % and $\mu_b(T_sD)$ is a power function of $s$. By
  % Lemma~\ref{lemma:alpha}, $\alpha>0$. 

  % (ii)$\Rightarrow$(iii) For each $A\subset B$, which is a
  % $\mu_b$-continuity set, \eqref{eq:vague} implies that
  % \begin{displaymath}
  %   \Prob{T_{t^{-1}}\xi\in A\mid \xi\in T_tB}
  %   = \frac{1}{\nu(T_t B)} T_{t^{-1}}\nu(T_tA\cap T_tB)
  %   = \frac{1}{\nu(T_t B)} T_{t^{-1}}\nu(A)
  %   \to \mu_b(A) \quad \text{as}\; t\to\infty.
  % \end{displaymath}
  % Then we let $\Prob{\eta\in A}=\mu_b(A\cap B)$.

  % (iii)$\Rightarrow$(iii) For all $s>1$ and $A\subset B$,
  % \eqref{eq:42} implies that $\Prob{\eta\in T_sA}=s^{-\alpha}
  % \Prob{\eta\in A}$. Indeed, then $T_sA\subset B$, and
  % \begin{align*}
  %   \Prob{\eta\in T_sA}
  %   &=\lim_{t\to\infty}
  %     \frac{\Prob{T_{t^{-1}}\xi\in T_sA, \xi\in T_tB}{\Prob{T_tB}}\\
  %   &=\lim_{t\to\infty}
  %     \frac{\Prob{T_{(ts)^{-1}}\xi\in A, T_{t^{-1}}\xi\in B}
  %     {\Prob{T_tB}}\\
  % \end{align*}
  
  The equivalence of (i) and (iii) in Polish spaces is largely the
  content of Theorem~B.2.2 in \cite{kul:sol20}. The following proof
  provides additional detail omitted therein and does not require the Polish
  assumption, which was used in \cite{kul:sol20} to invoke the
  Prokhorov theorem.
  
  (i),(ii)$\Rightarrow$(iii)
  Referring to (ii), choose the equivalent normalising function
  $g(t)=1/\nu(T_tB)$, where $B$ is a semicone of the form
  $\{\modulus>1\}$ and $\mu(B)\in(0,\infty)$. Then $g$ is non-decreasing
  and regularly varying with a positive index $\alpha$.
  Let $a_n=\inf\{t\colons g(t) \geq n\}$. Since 
  $g$ is monotone and grows to infinity,
  $a_n$ is a well-defined non-decreasing sequence. By
  \eqref{eq:24g}, $g(a_n)T_{a_n^{-1}}\nu\vto \mu$ as $n\to\infty$. The
  values $a_n$ are obtained by applying the generalised inverse
  function of $g$, which is also regularly varying; see
  \cite[Section~1.5(5$^o$)]{seneta72}. Thus, $(a_n)_{n\in\NN}$ is
  regularly varying and $g(a_n) \sim n$ as $n \to \infty$, which
  yields (iii) with $\mu_a= \mu$.
  
  (iii)$\Rightarrow$(ii) Let us first confirm that
  $\lim_n a_n = \infty$. Assume that $\sup_{n} a_{n} =
  s<\infty$. Since the additional Condition~(B) is imposed in this
  part of the theorem, the ideal $\sS$ is generated by a
  continuous modulus $\modulus$, and the union of the sets
  $\{\modulus>1/m\}$, $m\in\NN$, coincides with the union of all sets
  in $\sS$. Thus, there exists some $t>0$ such that
  $\nu(\{\modulus>st\})>0$. Since $\{\modulus>t\}$ is functionally
  open and $\{\modulus\geq t\}$ is functionally closed,
  $B=\{\modulus>t\}$ is a $\mu_a$-continuity set for all but at most 
  countably many $t>0$.
  Then 
  \begin{displaymath}
    n(T_{a_{n}^{-1}}\nu)(B)=n\nu(T_{a_{n}}B)
    \to \mu_a(B) \in (0,\infty)\quad \text{as}\; n \to \infty.
  \end{displaymath}
  However, since $a_n\leq s$, we have 
  $n\nu(T_{a_{n}}B)\geq n\nu(T_{s}B)\to \infty$ as
  $n\to\infty$, since we assumed $\nu(T_{s}B)>0$. This yields a
  contradiction.

  % Since $a_{n}$ is monotonically increasing to $\infty$, each $t>0$
  % belongs to a unique interval $a_{n(t)} \leq t < a_{n(t)+1}$.
  % Letting $B=\{x:\modulus(x)>1\}$ yields that
  % \begin{displaymath}
  %   n(t) \nu(T_t B)\leq \frac{ n\nu(\{x:\modulus(x)>ta_n\})\to \mu_a(T_tB)
  % \end{displaymath}
  % for all $t>0$ such that $\mu_a(\{x:\modulus(x)=t\})=0$. 

  Let
  \begin{equation}
    \label{eq:25}
    A= \{x\colons \modulus(x)>t, T_{\modulus(x)^{-1}}x\in V\}
  \end{equation}
  for $t\in(0,\infty)$ and a functionally open set $V$. Arguing as in the
  proof of Theorem~\ref{thr:basis}, it is possible to ensure that $A$
  is a $\mu_a$-continuity set.
  % , namely,
  % \begin{displaymath}
  %   \mu_a(\{x:\modulus(x)\geq t, T_{\modulus(x)^{-1}}x\in \bar V\}
  %   \setminus \{x:\modulus(x)>t, T_{\modulus(x)^{-1}}x\in V\})=0,
  %   \quad i\in I,
  % \end{displaymath}
  % for functionally closed $\bar V$.  
  Since $a_{n}$ is monotonically increasing to $\infty$, each $t>0$
  belongs to a unique interval $a_{n(t)} \leq t < a_{n(t)+1}$. Taking
  into account that $T_tA\subset T_sA$ whenever $t\geq s$, we have
  \begin{displaymath}
    \frac{\nu(T_t A) }{\nu(T_t B)} \leq
    \frac{n(t)+1}{n(t)}  \frac{n(t)\nu(T_{a_{n(t)}}
      A) }{(n(t)+1) \nu(T_{a_{n(t)+1}} B)}.
  \end{displaymath}
  where $B=\{\modulus>1\}$ is such that $\mu_a(B)\in(0,\infty)$.  The
  right-hand side converges to $\mu_a (A) / \mu_a(B)$ as
  $t\to \infty$. With a similar bound from below, it follows that the
  left-hand side possesses the same limit as $t\to\infty$. This
  establishes the convergence in \eqref{eq:vague}  for all
  $\mu_a$-continuity sets $A$, namely,
  \begin{displaymath}
     \frac{1}{\nu(T_t B)} \nu(T_tA) \to \mu_a(A) \quad
      \text{as}\; t\to\infty,
  \end{displaymath}
  and consequently for sets in $\sA_{\mu_a}$, which
  are obtained as differences of two sets of the form given in \eqref{eq:25}.

  Choose a countable open base for $\sS$ as $\base_{m}=\{\modulus>s_m\}$,
  $m\in\NN$, with $s_m\downarrow 0$ as $m\to\infty$.  By choosing
  $s_m>0$, it is possible to ensure that each $\base_{m}$ is a
  $\mu_a$-continuity set. By the above argument,
  $\nu(T_t \base_{m})/\nu(T_t B)\to \mu_a (\base_{m}) / \mu_a(B)$ as
  $t\to \infty$. The statement follows from Theorem~\ref{thr:basis}.
\end{proof}

\begin{proposition}
  \label{prop:base}
  Assume that $\sS_\modulus$ is an ideal on a topological space $\XX$
  generated by a continuous modulus $\modulus$. Let $\sA$ be a family
  of functionally open sets in $\sS_\modulus$ such that $\sA$ is
  closed under finite intersections and each functionally open set
  $B\in\sS_\modulus$ is the union of at most countably many
  sets in $\sA$. If there exists a $\mu\in\Msigma[\sS_\modulus]$
  such that $g(t)(T_{t^{-1}}\nu)(A)\to \mu(A)\in[0,\infty)$ for all
  $A\in\sA$ and the pushforward $\modulus\nu$ of $\nu$ under
  $\modulus$ is regularly varying, 
  namely, 
  $\modulus\nu\in\RV(\R_+,\mydot,\sR_0,g,c\theta_\alpha)$, then
  $\nu\in\RV(\XX,T,\sS_\modulus,g,\mu)$. Furthermore, the pushforward
  $\modulus\mu$ of $\mu$ under the map $\modulus$ is equal to 
  $c\theta_\alpha$.
\end{proposition}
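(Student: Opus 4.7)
The strategy is to verify the vague convergence $\mu_t := g(t) T_{t^{-1}}\nu \vto \mu$ on $\sS_\modulus$, where $\nu$ is the distribution of $\xi$, by checking the Portmanteau characterisation in Theorem~\ref{lemma:vVSw}(iii): it suffices to show $\liminf_t \mu_t(G) \geq \mu(G)$ for every functionally open $G \in \sS_\modulus$ and $\limsup_t \mu_t(F) \leq \mu(F)$ for every functionally closed $F \in \sS_\modulus$.

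For the $\liminf$ direction I would exploit the $\pi$-system property of $\sA$: inclusion--exclusion transfers the pointwise convergence $\mu_t(A) \to \mu(A)$ from $\sA$ to every finite union $U_k := A_1 \cup \cdots \cup A_k$ with $A_i \in \sA$. Given a functionally open $G \in \sS_\modulus$, by hypothesis $G = \bigcup_n A_n$ with $A_n \in \sA$, and countable additivity of the Baire measure $\mu$ gives $\mu(U_k) \uparrow \mu(G)$, hence
\[
\liminf_t \mu_t(G) \geq \lim_t \mu_t(U_k) = \mu(U_k),
\]
and letting $k\to\infty$ gives $\liminf_t \mu_t(G) \geq \mu(G)$. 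Applied with $G = \{\modulus > s\}$ (functionally open and in $\sS_\modulus$ by continuity of $\modulus$) and combined with $\mu_t(\{\modulus > s\}) = g(t) \Prob{\modulus(\xi) > ts} \to c s^{-\alpha}$ from the regular variation of $\modulus(\xi)$, this immediately yields $\mu(\{\modulus > s\}) \leq c s^{-\alpha}$ for all $s>0$.

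The principal obstacle is the matching lower bound $\mu(\{\modulus > s\}) \geq c s^{-\alpha}$, equivalent to the asserted identity $\modulus\mu = c\theta_\alpha$. My plan is a scaling-covariance argument: from $g \in \RV_\alpha$ (a consequence of the regular variation of $\modulus(\xi)$ via Proposition~\ref{prop:RV-g}) and
\[
\mu_t(T_u A) = g(t) \nu(T_{tu} A) = \frac{g(t)}{g(tu)} \cdot g(tu) \nu(T_{tu} A) \longrightarrow u^{-\alpha} \mu(A),
\]
the convergence hypothesis propagates along the scaling orbit of each $A \in \sA$. Combined with the tight mass control placed by the regular variation of $\modulus(\xi)$ on annular shells $\{s < \modulus \leq s'\}$ and with the inside-approximation of $\{\modulus > s\}$ by finite unions of scaled $\sA$-sets, this will force $\mu(\{\modulus > s\}) = c s^{-\alpha}$. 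An alternative exit, once vague convergence is established by other means, is to invoke the mapping theorem (Lemma~\ref{lemma:mapping}) with the continuous bornologically consistent map $\modulus \colon \XX \to \R_+$ and appeal to uniqueness of vague limits on $\sR_0$ to identify $\modulus\mu = c\theta_\alpha$ as a by-product.

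Granted the identity, the $\limsup$ bound is immediate. For a functionally closed $F \in \sS_\modulus$, choose $s > 0$ with $F \subset \{\modulus > s\}$; the set $G := \{\modulus > s\} \setminus F$ is functionally open and belongs to $\sS_\modulus$, and the $\liminf$ bound applied to $G$ together with additivity $\mu(F) + \mu(G) = \mu(\{\modulus > s\}) = c s^{-\alpha}$ yields
\[
\limsup_t \mu_t(F) \leq \lim_t \mu_t(\{\modulus > s\}) - \liminf_t \mu_t(G) \leq c s^{-\alpha} - \mu(G) = \mu(F).
\]
This completes the Portmanteau verification, and since $g(t) \to \infty$ is automatic from $\Prob{\modulus(\xi) > t} \to 0$ and the normalisation by $c\theta_\alpha$, we conclude $\xi \in \RV(\XX, T, \sS_\modulus, g, \mu)$ with $\modulus\mu = c\theta_\alpha$.
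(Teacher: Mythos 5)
Your skeleton is essentially the paper's: the paper proves this by invoking Theorem~\ref{lemma:vVSw}(iv) and checking that the restrictions of $g(t)T_{t^{-1}}\nu$ to the base sets $\base_m=\{\modulus>1/m\}$ converge weakly to the restrictions of $\mu$, via exactly the inclusion--exclusion/countable-union argument you give. Your $\liminf$ half (convergence on the $\pi$-system $\sA$ transfers to finite unions, then $\mu(A_1\cup\cdots\cup A_k)\uparrow\mu(G)$ gives $\liminf_t g(t)(T_{t^{-1}}\nu)(G)\geq\mu(G)$ for functionally open $G\in\sS_\modulus$) is correct as written, and your reduction of the $\limsup$ bound on closed sets to the single identity $\mu(\{\modulus>s\})=cs^{-\alpha}$ is also correct.

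The gap is precisely the step you flag as the principal obstacle: your proposal does not prove $\mu(\{\modulus>s\})\geq cs^{-\alpha}$ (the $\liminf$ bound only yields $\leq$). The scaling covariance $g(t)(T_{t^{-1}}\nu)(T_uA)\to u^{-\alpha}\mu(A)$ and the mass control on shells $\{s<\modulus\leq s'\}$ cannot supply the reverse inequality: inside approximation of $\{\modulus>s\}$ by finite unions of (scaled) elements of $\sA$ only ever produces lower bounds for the $\liminf$, i.e., reproduces the inequality you already have, and nothing in the stated hypotheses prevents the directional part of the mass of $g(t)T_{t^{-1}}\nu$ from drifting out of every fixed $A\in\sA$ while remaining inside $\{\modulus>s\}$. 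For instance, on $\XX=(0,\infty)\times\NN$ with $T_t(x,n)=(tx,n)$, $\modulus(x,n)=x$ and $\xi=(\xi_1,\lceil\xi_1\rceil)$ with $\xi_1$ Pareto, one may take $\sA$ to consist of the sets $(a,b)\times\{n\}$ and their finite intersections; every such set eventually receives no mass, so the limits on $\sA$ define $\mu\equiv0$, while $\modulus(\xi)$ is regularly varying, and the asserted identity $\modulus\mu=c\theta_\alpha$ fails. The mapping-theorem exit is circular, as you half-concede, since it presupposes the vague convergence being proved. So the reverse inequality is a genuine additional input (it is automatic if, say, $\sA$ contains the sets $\{\modulus>s\}$, or if one assumes $\mu(\{\modulus>1\})=c$), and it is needed equally in your Portmanteau(iii) route and in the paper's Portmanteau(iv) route, where weak convergence of the finite restrictions to $\base_m$ requires the total masses to converge to $\mu(\base_m)$. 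You have located the crux correctly; the writeup does not close it.
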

\begin{proof}
  The statement follows from Theorem~\ref{lemma:vVSw}(iv), since the
  conditions guarantee that the restrictions of $g(t)T_{t^{-1}}\nu$
  to the sets of the countable open base $\base_m=\{\modulus> 1/m\}$
  of $\sS_\modulus$ weakly converge to the restrictions of $\mu$ onto
  these sets.
\end{proof}

The following is a well-known (within the Polish space setting) fact which
relates the regular variation property to the convergence of point
processes constructed from i.i.d.\ samples.

\begin{proposition}
  \label{prop:pp}
  Let $\XX$ be a topological space with a topologically and scaling 
  consistent ideal $\sS$.  Then $\xi\in\RV(\XX,T,\sS,g,\mu)$ for a
  random element $\xi$ implies that, for all $\mu$-continuity sets
  $A\in\sS$,
  \begin{equation}
    \label{eq:35}
    \Prob{T_{a_n^{-1}}\{\xi_1,\dots,\xi_n\} \cap A=\emptyset}
    \to e^{-\mu(A)}\quad \text{as}\; n\to\infty,
  \end{equation}
  where $(a_n)$ is a sequence of normalising constants and
  $\xi_1,\dots,\xi_n$ are independent copies of $\xi$.  The converse
  implication holds if $\XX$ is completely Hausdorff, the scaling is
  continuous, and the measure $\mu$ on the right-hand side of \eqref{eq:35}
  is tau-additive.
\end{proposition}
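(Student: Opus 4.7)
The plan is to read the left-hand side of \eqref{eq:35} as a product via independence, and then convert between $n\nu(T_{a_n}A) \to \mu(A)$ and $(1-\nu(T_{a_n}A))^n \to e^{-\mu(A)}$ by a Poisson-style elementary calculation, using Theorem~\ref{thr:ks} to move between the continuous version of regular variation (Definition~\ref{def:RV}) and its sequence version, and Theorem~\ref{lemma:vVSw}(ii) to move between vague convergence and convergence on $\mu$-continuity Baire sets.

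For the forward direction, start with $\xi\in\RV(\XX,T,\sS,g,\mu)$ and apply Theorem~\ref{thr:ks}(i)$\Rightarrow$(iii) to produce a sequence $(a_n)$ with $nT_{a_n^{-1}}\nu\vto\mu$. Fix a $\mu$-continuity set $A\in\sS$; since $A\in\sBA(\XX)$ by Definition~\ref{def:mu-cont}, Theorem~\ref{lemma:vVSw}(ii) yields
\begin{displaymath}
  n\nu(T_{a_n}A) = n(T_{a_n^{-1}}\nu)(A)\to \mu(A),
\end{displaymath}
where the limit is finite because $A$ is sandwiched between sets in $\sS\cap\sBA(\XX)$ on which $\mu$ is finite. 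Independence of $\xi_1,\dots,\xi_n$ gives
\begin{displaymath}
  \Prob{T_{a_n^{-1}}\{\xi_1,\dots,\xi_n\}\cap A=\emptyset}
  =\Prob{\xi_1\notin T_{a_n}A}^n=(1-p_n)^n,\qquad p_n=\nu(T_{a_n}A).
\end{displaymath}
Since $np_n\to\mu(A)<\infty$, we have $p_n\to0$, and then the standard identity $n\log(1-p_n)=-np_n(1+o(1))$ forces $(1-p_n)^n\to e^{-\mu(A)}$, proving \eqref{eq:35}.

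For the reverse direction, assume \eqref{eq:35} and recover $np_n\to\mu(A)$ from $(1-p_n)^n\to e^{-\mu(A)}$. When $\mu(A)>0$, any subsequence along which $p_n\not\to0$ would satisfy $(1-p_{n_k})^{n_k}\leq(1-\varepsilon)^{n_k}\to0$, contradicting the positive limit; when $\mu(A)=0$, the inequality $(1-p)^n\leq e^{-np}$ together with $(1-p_n)^n\to1$ forces $np_n\to0$. In either case the Taylor expansion then yields $np_n\to\mu(A)$, giving convergence $(nT_{a_n^{-1}}\nu)(A)\to\mu(A)$ on every Baire $\mu$-continuity set $A\in\sS$. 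Theorem~\ref{lemma:vVSw}(ii)$\Rightarrow$(i) upgrades this to vague convergence $nT_{a_n^{-1}}\nu\vto\mu$, which is condition (iii) of Theorem~\ref{thr:ks}. Under the stated hypotheses (completely Hausdorff, continuous scaling, $\tau$-additive $\mu$, together with the condition (B) implicit in the framework), Theorem~\ref{thr:ks}(iii)$\Rightarrow$(i) finally delivers $\xi\in\RV(\XX,T,\sS,g,\mu)$ with a normalising function $g$ interpolating the sequence $(a_n)$. The main (but modest) obstacle is the $\mu(A)=0$ case of the elementary conversion and making sure that the sandwich structure of $\mu$-continuity sets indeed guarantees $\mu(A)<\infty$ so that the Poisson limit is interpretable; once these are in hand, the proof reduces to invocations of the already-established Portmanteau and representation theorems.
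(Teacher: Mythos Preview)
Your proposal is correct and follows essentially the same route as the paper: rewrite the probability as $(1-\nu(T_{a_n}A))^n$ by independence, convert between this and $n\nu(T_{a_n}A)\to\mu(A)$ via the elementary logarithm/Poisson computation, and invoke Theorem~\ref{thr:ks}(iii) together with Theorem~\ref{lemma:vVSw}(ii) in both directions. The paper's proof compresses all of this into a single sentence; your write-up simply unpacks the details (including the $\mu(A)=0$ case and the implicit reliance on condition~(B) for the converse via Theorem~\ref{thr:ks}(iii)$\Rightarrow$(i)), but the argument is the same.
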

\begin{proof}
  The left-hand side of \eqref{eq:35} can be expressed as 
  \begin{displaymath}
    \Big(\Prob{\xi\notin T_{a_n}A}\Big)^n
    =\Big(1-\Prob{\xi\in T_{a_n}A}\Big)^n.
  \end{displaymath}
  The statement follows by taking the logarithm and using
  Theorem~\ref{thr:ks}(iii). 

  Conversely, if \eqref{eq:35} holds for all $\mu$-continuity sets
  $A\in\sS$, then
  \begin{displaymath}
    n\,\Prob{\xi\in T_{a_n}A} \to \mu(A)\quad \text{as}\; n\to\infty,
  \end{displaymath}
  since otherwise the limit of $((1-p_n)^n)$ with $p_n=\Prob{\xi\in
    T_{a_n}A}$ would differ from $e^{-\mu(A)}$. Thus, $n
  T_{a_n^{-1}}\nu \vto \mu$, and Theorem~\ref{thr:ks} yields regular
  variation.
\end{proof}

If $\XX$ is Polish, \eqref{eq:35} can be strengthened to show that the
scaled binomial point process $\{\xi_1,\dots,\xi_n\}$ converges in
distribution to the
Poisson point process with intensity measure $\mu$; see
\cite[Section~3.5]{res87}.
\index{Poisson process}

\begin{theorem}
  \label{Thm:PPconv}
  Let $\xi$ be a random element in a Polish space $\XX$ equipped with a 
  continuous scaling, and let $\sS$ be a topologically and scaling
  consistent ideal on $\XX$. Then $\xi\in\RV(\XX,T,\sS,g,\mu)$ if and only if
  the sequence of point processes
  \begin{displaymath}
    N_n = \sum_{i=1}^n \delta_{T_{a_n^{-1} }\xi_i},
  \end{displaymath}
  constructed from independent copies $\xi_1,\dots,\xi_n$ of $\xi$,
  converges in distribution in the space of counting
  measures to a Poisson point process $N$ with 
  intensity measure $\mu\in\Mb$ for a sequence $(a_n)$ of normalising
  constants as defined in Theorem~\ref{thr:ks}(iii).
\end{theorem}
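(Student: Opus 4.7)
The plan is to derive both implications from the equivalence of (i) and (iii) in Theorem~\ref{thr:ks}, combined with a Kallenberg-type characterisation of Poisson point process convergence by void probabilities and intensity convergence, which is classical in the Polish setting.

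For the reverse implication (point process convergence implies regular variation), I would start from $N_n\dto N$. For any $\mu$-continuity set $A\in\sS$ of finite $\mu$-measure, the functional $\eta\mapsto\one_{\eta(A)=0}$ is continuous at configurations $\eta$ with $\eta(\partial A)=0$, and this holds $N$-almost surely since $\mu(\partial A)=0$. The continuous mapping theorem then yields
\begin{displaymath}
\Prob{N_n(A)=0}\to\Prob{N(A)=0}=e^{-\mu(A)}.
\end{displaymath}
Combined with $\Prob{N_n(A)=0}=(1-\Prob{\xi\in T_{a_n}A})^n$ and a logarithmic expansion, this gives $n\Prob{\xi\in T_{a_n}A}\to\mu(A)$, which is Theorem~\ref{thr:ks}(iii). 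Since $\XX$ is Polish, the limit $\mu$ is automatically $\tau$-additive on its domain of finiteness, so the equivalence (iii)$\Rightarrow$(i) established in Theorem~\ref{thr:ks} furnishes $\xi\in\RV(\XX,T,\sS,g,\mu)$.

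For the forward implication (regular variation implies point process convergence), I would invoke Kallenberg's theorem for convergence to a Poisson point process. In the Polish setting with a topologically consistent ideal admitting a countable open base, that theorem reduces $N_n\dto N$ (with $N$ Poisson of intensity $\mu$) to the verification, on a convergence-determining class of $\mu$-continuity sets $A\in\sS$, of the two conditions
\begin{displaymath}
\E N_n(A)=n\Prob{\xi\in T_{a_n}A}\to\mu(A),\qquad
\Prob{N_n(A)=0}\to e^{-\mu(A)}.
\end{displaymath}
The first is exactly Theorem~\ref{thr:ks}(iii), which follows from the standing hypothesis (i) by the equivalence already proved there; the second is the content of Proposition~\ref{prop:pp}. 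Applying the Kallenberg characterisation then delivers the required point process convergence.

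The main obstacle I expect is reconciling Kallenberg's classical framework, formulated with localising sequences in the sense of \cite{kalle17}, with the ideal-based framework adopted here. Specifically, one has to verify that a countable open base of $\sS$ may be chosen so as to form a properly localising sequence (invoking topological consistency and Remark~\ref{rem:top-consistent}, combined with Lemma~\ref{lemma:continuity-base} to replace base sets by $\mu$-continuity sets), and that vague convergence on $\sS$ in the present sense coincides with the local weak convergence of measures underlying the point process topology, which follows from Theorem~\ref{lemma:vVSw}(iv). With these identifications in place, the theorem reduces to the classical Poisson convergence result of, for example, \cite[Section~3.5]{res87} or \cite{kalle17}.
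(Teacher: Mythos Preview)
Your proposal is correct but takes a genuinely different route from the paper. The paper argues directly via Laplace functionals: for any nonnegative $f\in\fC(\XX,\sS)$ one has
\[
\E\big[e^{-N_nf}\big]=\big(1-\E h(T_{a_n^{-1}}\xi)\big)^n,\qquad h=1-e^{-f}\in\fC(\XX,\sS),
\]
so convergence of the left-hand side is equivalent to convergence of $n\E h(T_{a_n^{-1}}\xi)$, which is exactly vague convergence $nT_{a_n^{-1}}\nu\vto\mu$ tested against $h$ (and every bounded continuous $h$ supported in $\sS$ arises this way after an affine rescaling). This single computation, together with the Laplace functional characterisation of point process convergence (Proposition~4.1 in \cite{bas:plan19}), handles both implications simultaneously and avoids any appeal to Kallenberg's void-probability criterion.

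Your approach splits the two directions and trades the Laplace functional identity for two separate external inputs: Proposition~\ref{prop:pp} plus Theorem~\ref{thr:ks} for the forward direction, and the continuous mapping theorem on $\eta\mapsto\one_{\eta(A)=0}$ for the reverse. This is sound, but note one point you do not make explicit: Kallenberg's criterion in the form you invoke requires the limiting Poisson process to be simple, i.e.\ $\mu$ diffuse. In the present setting this holds because an $\alpha$-homogeneous $\mu$ with an atom would place positive mass on uncountably many points along an orbit (the scaling acts freely on $\cup\sS$ by Lemma~\ref{lemma:free}), contradicting local finiteness. The paper's Laplace functional route sidesteps this verification entirely, which is the main economy it buys.
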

\begin{proof}
  The proof is a straightforward adaptation of the proof of
  Proposition~3.21 in \cite{res87}.  According to Proposition~4.1 in
  \cite{bas:plan19}, it suffices to consider the convergence of the Laplace
  functionals of the point processes $N_n$. Consider an arbitrary
  bounded, non-negative, and continuous function $f$ with support in
  some set $B\in\sS$.  Then,
  \begin{displaymath}
    \EE \left[ e ^ {- \int f \diff N_n} \right]
    =\EE \left[ e ^ {-  \sum_{i=1}^n f(T_{a_n^{-1} }\xi_i)} \right]
    = \left( 1- \E h(T_{a_n^{-1}}\xi)\right)^n,
  \end{displaymath}
  where $h(x)=1-e^{-f(x)}\in\Cont(\XX,\sS)$.
  Hence, the left-hand side converges as $n\to\infty$ if and only if
  $n\E h(T_{a_n^{-1}}\xi)$ converges. The latter holds for all such
  $f$ if and
  only if $\xi\in\RV(\XX,T,\sS,g,\mu)$, since each bounded continuous
  function with support in $\sS$ can be written as $1-e^{-f(x)}$ after
  adding a constant and scaling.  Since
  $n\E h(T_{a_n^{-1}}\xi)\to\int h\diff \mu$, we obtain
  \begin{displaymath}
    \EE \Big[ e ^ {- \int f\diff N_n} \Big]
    \to \exp \Big( - \int h \diff \mu \Big)
    = \EE \big[ e ^ {- \int f \diff N} \big], \quad \text{as}\;
    n\to\infty, 
  \end{displaymath}
  meaning that the Laplace functional of $N_n$ converges to the
  Laplace functional of the Poisson process $N$ with intensity
  measure $\mu$.
\end{proof}

\section{Continuous mappings}
\label{sec:continuous-mappings}

\paragraph{General continuous mapping theorem}
Consider topological spa\-ces $\XX$ and $\YY$ equipped with Baire measurable
scalings $T^\XX$ and $T^\YY$.

\begin{definition}
  A Baire measurable map $\psi:\XX\to\YY$ is said to be a
  \emph{morphism} (also called an equivariant or homogeneous map)
  \index{morphism} \index{equivariant map}
  % \Red{\url{https://en.wikipedia.org/wiki/Group_action}}
  if
  \begin{equation}
    \label{eq:2}
    \psi(T^\XX_t x) = T^\YY_{t} \psi(x) 
  \end{equation}
  for all $x\in \XX$ and $t>0$.
\end{definition}

If $\psi:\XX\to\YY$ is a Baire measurable map, then the pushforward
\index{pushforward}
$\psi\mu$ of a Baire measure $\mu$ on $\XX$ is defined by
$\psi\mu(A)=\mu(\psi^{-1}(A))$ for $A\in\sBA(\YY)$. 
The following result establishes that the regular variation property
is preserved under continuous bornologically consistent morphisms, see
Definition~\ref{def:morphism}. This and various further results for
continuous (possibly random) maps of random vectors in $\R^d$ can be
found in \cite[Section~2.1.4]{kul:sol20}. 

\begin{theorem}\label{mapping-theorem}
  \index{continuous mapping theorem}
  Let $\XX$ and $\YY$ be topological spaces equipped with Baire
  measurable scalings and 
  topologically and scaling consistent
  ideals $\sX$ and $\sY$. Assume that $\psi: \XX \to \YY$ is a
  continuous bornologically consistent morphism.
  Let $\nu$ be a Baire measure on $\XX$ such that
  $\nu\in \RV(\XX,T^\XX,\sX,g,\mu)$. If $\psi\mu$ is non-trivial
  on $\sY$ in the sense that there exists $B\in\sY$ such that 
  $\psi\mu(B)\in(0,\infty)$, then
  $\psi\nu\in\RV(\YY,T^\YY,\sY,g,\psi\mu)$. 
\end{theorem}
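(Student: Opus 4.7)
The plan is to reduce the regular variation of $\psi\nu$ to an application of the vague-convergence mapping theorem (Lemma~\ref{lemma:mapping}), once the morphism property is used to intertwine scaling and pushforward. First I would verify the key algebraic identity
\[
\psi(T^\XX_{t^{-1}}\nu) = T^\YY_{t^{-1}}(\psi\nu), \qquad t>0.
\]
This follows from a direct computation on Baire sets $A\in\sBA(\YY)$: using $\psi(T^\XX_s x)=T^\YY_s\psi(x)$, one has
\[
\psi^{-1}(T^\YY_t A) = \{x:\psi(x)\in T^\YY_t A\} = \{x:T^\XX_{t^{-1}}x\in\psi^{-1}(A)\}=T^\XX_t\psi^{-1}(A),
\]
so that $T^\YY_{t^{-1}}(\psi\nu)(A)=\psi\nu(T^\YY_t A)=\nu(T^\XX_t\psi^{-1}(A))=\psi(T^\XX_{t^{-1}}\nu)(A)$.

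Next I would multiply both sides by $g(t)$ and invoke the hypothesis $g(t)T^\XX_{t^{-1}}\nu\vto[\sX]\mu$. Since $\psi$ is assumed continuous and bornologically consistent between topologically consistent ideals, Lemma~\ref{lemma:mapping} applies to the net $(g(t)T^\XX_{t^{-1}}\nu)_{t>0}$ and yields
\[
g(t)T^\YY_{t^{-1}}(\psi\nu)=\psi\bigl(g(t)T^\XX_{t^{-1}}\nu\bigr)\vto[\sY]\psi\mu
\quad\text{as}\; t\to\infty.
\]

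Finally, I would verify the side conditions in Definition~\ref{def:RV}: the normalising function $g$ is inherited from the original regular variation, so in particular $g(t)\to\infty$; and the tail measure $\psi\mu$ is nontrivial on $\sY$ by the standing hypothesis of the theorem. These together give $\psi\nu\in\RV(\YY,T^\YY,\sY,g,\psi\mu)$, as required.

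There is essentially no hard step here: the morphism condition~\eqref{eq:2} is precisely what is needed to commute $\psi$ past the scaling operation, and everything else is covered by Lemma~\ref{lemma:mapping}. The only subtlety worth flagging explicitly is the necessity of the nontriviality hypothesis on $\psi\mu$; without it one would still obtain the vague convergence, but $\psi\nu$ would not qualify as regularly varying in the sense of Definition~\ref{def:RV}. This is why the assumption is stated separately rather than derived.
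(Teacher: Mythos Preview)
Your proof is correct and follows essentially the same route as the paper: establish the intertwining identity $T^\YY_{t^{-1}}(\psi\nu)=\psi(T^\XX_{t^{-1}}\nu)$ from the morphism property, then apply Lemma~\ref{lemma:mapping} to push the vague convergence through $\psi$. The paper's proof is just a terser version of yours, omitting the explicit computation of the intertwining and the check of the side conditions in Definition~\ref{def:RV}, which you helpfully spell out.
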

\begin{proof}
  For any $A \in \sBA(\YY)$, we have
  \[
    (T_{t^{-1}}^\YY \psi\nu)(A) 
    = \psi\nu(T^\YY_t A)
    = \nu(\psi^{-1}(T^\YY_t A))
    = \nu(T^\XX_t \psi^{-1}(A))
    = (\psi T^\XX_{t^{-1}}\nu)(A),
  \]
  where the third equality follows from the morphism property \eqref{eq:2}.
  Therefore, $T_{t^{-1}}^\YY \psi\nu = \psi T^\XX_{t^{-1}}\nu$ as Baire measures.

  By the regular variation of $\nu$, $g(t) T^\XX_{t^{-1}}\nu \vto \mu$
  as $t\to\infty$.  Since $\psi$ is continuous and bornologically
  consistent, Lemma~\ref{lemma:mapping} implies
  \[
    g(t) T_{t^{-1}}^\YY \psi\nu
    = g(t) \psi T^\XX_{t^{-1}}\nu = \psi(g(t) T^\XX_{t^{-1}}\nu) \vto
    \psi\mu
    \quad \text{as}\; t\to\infty.
  \]
  Hence, $\psi\nu \in \RV(\YY,T^\YY,\sY,g,\psi\mu)$.
\end{proof}

\begin{remark}[Implied ideal]
  \label{ex:implied_born}
  \index{ideal!implied}
  A bijective map between two spaces can be used to transfer the
  scaling from the original space to the target space, so that this
  map becomes a morphism. Let $\XX$ and $\YY$ be two topological
  spaces, where $\XX$ is equipped with the scaling $T^\XX$, and let
  $\psi:\XX\to\YY$ be a homeomorphism, that is, a continuous bijective
  map with a continuous inverse. Define a scaling on $\YY$ by
  \begin{displaymath}
    T^\psi_{t}(y) = \psi \left( T^\XX_{t}(\psi^{-1}(y)) \right), \quad y\in\YY.
  \end{displaymath}
  Let $\sX$ be a topologically and scaling consistent ideal on $\XX$,
  and let $\sY$ be the induced ideal 
  on $\YY$ consisting of all sets $V\subset \YY$ such that
  $\psi^{-1}(V)\in\sX$. Note that $\sY$ inherits the topological and
  scaling consistency from $\sX$. By construction, $\psi$ is bornologically
  consistent. Furthermore, since 
  $\psi^{-1}(\psi(A))=A$ for all $A\in\sX$, it follows
  that $\psi^{-1}$ is also bornologically
  consistent. The measure $\mu$ is non-trivial on $\sX$ if and
  only if the pushforward $\psi\mu$ is non-trivial on $\sY$. Thus,
  $\xi\in\RV(\XX,T^\XX, \sX, g,\mu)$ if and only if
  $\psi(\xi)\in \RV(\YY,T^\psi, \sY,g, \psi \mu)$. This construction
  will be illustrated by examples in
  Section~\ref{sec:random-variables}. 
\end{remark}

\paragraph{Invariant measures and extension of ideals}
Assume that $\mathsf{G}$ is a group of bicontinuous bijective morphisms
$\mathsf{g}:\XX\to\XX$, where the group operation is the composition
and the inverse is given by the inverse mapping.

\begin{proposition}
  \label{prop:extension-invariant}
  \index{measure!invariant under a group}
  Assume that $\nu\in\RV(\XX,T,\sX,g,\mu)$, where $\XX$ is a
  topological space with an ideal possessing a countable open base and
  satisfying Condition~\hyperref[condB0]{(B$_0$)}. If $\nu$ is
  invariant under all transformations 
  in the group $\mathsf{G}$, then $\nu$ is also regularly varying
  on the ideal generated by the collection 
  $\mathsf{g}\sX=\{B\subset\XX\colons \mathsf{g}^{-1}B\in\sX\}$ for
  $\mathsf{g}\in\mathsf{G}$.
\end{proposition}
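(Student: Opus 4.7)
The plan is to combine the continuous mapping theorem with the $\mathsf{G}$-invariance of $\nu$ to obtain convergence on each ideal $\mathsf{g}\sX$ separately, and then to assemble these into convergence on the generated ideal $\sX'$.

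First, I would fix an arbitrary $\mathsf{g}\in\mathsf{G}$ and regard it as a map $\mathsf{g}:(\XX,\sX)\to(\XX,\mathsf{g}\sX)$. By the very definition $\mathsf{g}\sX=\{B:\mathsf{g}^{-1}B\in\sX\}$, the map $\mathsf{g}$ is tautologically bornologically consistent. It is a continuous morphism by assumption, and $\mathsf{g}\mu$ is nontrivial on $\mathsf{g}\sX$ because $\mu(A)>0$ for some $A\in\sX$ forces $\mathsf{g}A\in\mathsf{g}\sX$ with $\mathsf{g}\mu(\mathsf{g}A)=\mu(A)>0$. Theorem~\ref{mapping-theorem} then yields $\mathsf{g}\nu\in\RV(\XX,T,\mathsf{g}\sX,g,\mathsf{g}\mu)$, and invariance $\mathsf{g}\nu=\nu$ gives
\[
\nu\in\RV(\XX,T,\mathsf{g}\sX,g,\mathsf{g}\mu)\qquad\text{for every } \mathsf{g}\in\mathsf{G},
\]
with the same normalising function $g$ throughout.

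Second, I would assemble a candidate tail measure on the ideal $\sX'$ generated by $\bigcup_{\mathsf{g}}\mathsf{g}\sX$ by setting $\mu^{*}(B)=\mathsf{g}\mu(B)$ whenever $B$ is a Baire subset of some $\mathsf{g}\sX$. Consistency on overlaps uses $\mathsf{G}$-invariance of $\nu$: if $B\in\mathsf{g}\sX\cap\mathsf{g}'\sX$ is a suitable continuity set and $(a_n)$ is the sequence provided by Theorem~\ref{thr:ks}(iii), then the morphism property of $\mathsf{g}^{-1}$ gives
\[
\mathsf{g}\mu(B)=\mu(\mathsf{g}^{-1}B)=\lim_n n\nu(T_{a_n}\mathsf{g}^{-1}B)=\lim_n n\nu(\mathsf{g}^{-1}T_{a_n}B)=\lim_n n\nu(T_{a_n}B),
\]
and the symmetric computation gives $\mathsf{g}'\mu(B)=\lim_n n\nu(T_{a_n}B)$ as well.

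Third, I would record the structural properties of $\sX'$: it inherits scaling and topological consistency from the pieces, and since each $\mathsf{g}\sX$ is generated by the continuous moduli $\modulus_i\circ\mathsf{g}^{-1}$ (for any family $\modulus_i$ generating $\sX$), the ideal $\sX'$ is generated by the union of all such moduli and therefore satisfies (B$_0$). To conclude $g(t)T_{t^{-1}}\nu\vto\mu^{*}$ on $\sX'$, I would take $f\in\fC(\XX,\sX')$ whose support lies in a finite union $B_1\cup\cdots\cup B_k$ with $B_i\in\mathsf{g}_i\sX$, decompose $f=\sum_i f\phi_i$ via a partition of unity $(\phi_i)$ subordinate to $\{B_i\}$, and observe that each piece $f\phi_i\in\fC(\XX,\mathsf{g}_i\sX)$ converges by the first step, yielding the required limit for $f$.

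The main obstacle will be the third step: constructing the partition of unity demands adequate normality of $\XX$, and one must verify that $\mu^{*}$ extends to a $\tau$-additive Baire measure on $\sX'$. Since topological consistency already provides functionally open separations of points from closed sets, I expect Urysohn-type arguments to suffice, though these routine-but-delicate checks need to be spelled out carefully.
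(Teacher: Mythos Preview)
Your approach is correct but takes a different route from the paper. The paper argues directly with $\mu$-continuity sets and inclusion--exclusion: for a continuity set $B\subset\mathsf{g}_1A_1\cup\cdots\cup\mathsf{g}_kA_k$ with $A_i\in\sX$, one writes $g(t)\nu(T_tB)$ as an alternating sum over intersections, uses $\nu$-invariance and the morphism property to pull each term back into $\sX$ (e.g., $\nu(T_t(B\cap\mathsf{g}_1A_1))=\nu(T_t(\mathsf{g}_1^{-1}B\cap A_1))$ with $\mathsf{g}_1^{-1}B\cap A_1\subset A_1\in\sX$), and then passes to the limit term by term. Your route---continuous mapping to each $\mathsf{g}\sX$, then glue via a partition of unity---is more functorial but ends at the same place.

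Two remarks on your self-identified ``main obstacle.'' First, no normality is needed: if you enlarge each $B_i$ to a functionally open set $\{h_i>0\}$ in $\mathsf{g}_i\sX$ (available from the countable open base), then $\phi_i=h_i/\sum_j h_j$ is a perfectly good partition of unity on $\bigcup_i B_i$, and $f\phi_i$ extends continuously by zero because $|f\phi_i|\le|f|$ and $f$ vanishes off $\bigcup_i B_i$. Second, the extension problem for $\mu^*$ is illusory. The paper's opening observation---that $\mu$ itself is $\mathsf{G}$-invariant---means $\mathsf{g}\mu=\mu$ for every $\mathsf{g}$, so your $\mu^*$ is simply $\mu$, already a Baire measure on $\XX$; finiteness on $\sX'$ follows since any $B\in\mathsf{g}\sX$ satisfies $\mu(B)=\mu(\mathsf{g}^{-1}B)<\infty$. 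Recognising this invariance up front would streamline your Step~2 considerably.
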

\begin{proof}
  Note that the tail measure $\mu$ is invariant under all
  $\mathsf{g}\in\mathsf{G}$.  Assume that $B$ is a $\mu$-continuity
  set, so that there exist a functionally open set $U$ and a
  functionally closed $F$ such that $U\subset B\subset F$ and
  $\mu(F\setminus U)=0$. Then $\mathsf{g}U$ is functionally open,
  since
  $\mathsf{g}U=\{\mathsf{g}x\colons f(x)>0\}=\{x\colons
  f(\mathsf{g}^{-1}x)>0\}$ for 
  a continuous real-valued function $f$. Similarly, $\mathsf{g}F$ is
  functionally closed. Since
  $\mu(\mathsf{g}F\setminus \mathsf{g}U)=\mu(F\setminus U)=0$, the set
  $\mathsf{g}B$ is also a $\mu$-continuity set for each
  $\mathsf{g}\in\mathsf{G}$. It should be noted as well that finite
  intersections of $\mu$-continuity sets are also $\mu$-continuity
  sets.
  
  Let $B$ be a $\mu$-continuity set in the ideal generated by
  $\mathsf{g}_1\sX\cup \mathsf{g}_2\sX$ for
  $\mathsf{g}_1,\mathsf{g}_2\in\mathsf{G}$. Then
  $B\subset \mathsf{g}_1A_1\cup \mathsf{g}_2A_2$ for some
  $A_1,A_2\in\sX$. Since $\sX$ has a countable open base, we may
  assume that $A_1$ and $A_2$ are $\mu$-continuity
  sets. By the inclusion-exclusion principle and the invariance of $\nu$,
  \begin{align*}
    g(t)&\nu(T_tB)
    =g(t)\nu\big(T_tB\cap (T_t\mathsf{g}_1A_1\cup T_t\mathsf{g}_2A_2)\big)\\
    &=g(t)\nu\big(T_t(B\cap \mathsf{g}_1A_1)\big)
      +g(t)\nu\big(T_t(B\cap \mathsf{g}_2A_2)\big)
      -g(t)\nu\big(T_t(B\cap \mathsf{g}_1A_1\cap \mathsf{g}_2A_2)\big)\\
    &=g(t)\nu\big(T_t(\mathsf{g}_1^{-1}B\cap A_1)\big)
      +g(t)\nu\big(T_t(\mathsf{g}_2^{-1}B\cap A_2)\big)\\
    &\qquad\qquad\qquad\qquad
      -g(t)\nu\big(T_t(\mathsf{g}_1^{-1}B\cap A_1\cap \mathsf{g}_1^{-1}\mathsf{g}_2A_2)\big).
  \end{align*}
  Note that $\mathsf{g}_1^{-1}B\cap A_1$, $\mathsf{g}_2^{-1}B\cap A_2$
  and
  $\mathsf{g}_1^{-1}B\cap A_1\cap \mathsf{g}_1^{-1}\mathsf{g}_2A_2$
  are $\mu$-continuity sets.  These sets belong to $\sX$, since they
  are subsets of $A_1$ or $A_2$, and are all $\mu$-continuity sets. Thus,
  \begin{align*}
    \lim_{t\to\infty} g(t)\nu(T_tB)
    &=\mu\big(\mathsf{g}_1^{-1}B\cap A_1\big)
      +\mu\big(\mathsf{g}_2^{-1}B\cap A_2\big)
      -\mu\big(\mathsf{g}_1^{-1}B\cap A_1\cap
      \mathsf{g}_1^{-1}\mathsf{g}_2A_2\big)\\
    &=\mu\big(B\cap (\mathsf{g}_1A_1\cup \mathsf{g}_2A_2)\big)=\mu(B). 
  \end{align*}
  The case where $B$ is a subset of a union of more than two
  transformed sets in $\sX$ follows analogously. The argument further
  extends to countable unions by monotone convergence of measures for
  increasing sequences of $\mu$-continuity sets. 
\end{proof}

\section{Polar decomposition and regular variation}
\label{sec:polar-decomp-regul}

\paragraph{Modulus of a regularly varying random element}
Assume that Condition~(B) (see page~\pageref{eq:10})
holds, meaning that the ideal $\sS=\sS_\modulus$ is
generated by a continuous modulus $\modulus$.  The modulus $\modulus$
is a bornologically consistent morphism between $\XX$ (equipped with the ideal
$\sS_\modulus$) and $\R_+$ (equipped with linear scaling and the
ideal $\sR_0$). Recall that $\theta_\alpha$ is a measure on $\Rpp$
with density $\theta_\alpha(\bdiff t)=\alpha
t^{-\alpha-1}dt$. This measure is homogeneous and so is regularly
varying on $(0,\infty)$ with linear scaling. 

Theorem~\ref{mapping-theorem} implies the following
result.

\begin{corollary}
  \label{cor:tau-map}
  Assume that $\sS_\modulus$ is an ideal on a topological space $\XX$
  generated by a continuous modulus $\modulus$.  If
  $\xi\in \RV(\XX,T, \sS,g, \mu)$, then its modulus satisfies
  \begin{displaymath}
    \modulus(\xi)\in \RV(\R_+,\mydot,\sR_0,g,c\theta_\alpha),
  \end{displaymath}
  where $\alpha>0$ is the tail index and
  the constant $c$ is given by $c=\mu(\{\modulus>1\})$.
\end{corollary}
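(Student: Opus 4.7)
The approach is to apply the continuous mapping theorem for regular variation (Theorem~\ref{mapping-theorem}) with the target space being $(\R_+, \cdot, \sR_0)$ and the map being $\modulus$ itself, then identify the pushforward $\modulus\mu$ as a multiple of $\theta_\alpha$ using homogeneity.

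First, I would verify the three hypotheses of Theorem~\ref{mapping-theorem}. Continuity of $\modulus$ is assumed. The morphism property $\modulus(T_t x) = t \modulus(x)$ is the defining homogeneity of a modulus, viewed against the linear scaling on $\R_+$. For bornological consistency, observe that every $A \in \sR_0$ satisfies $A \subset [r, \infty)$ for some $r > 0$ (by definition of $\sR_0$ as the metric exclusion ideal at the origin, see Example~\ref{ex:ideals-Rd}); hence $\modulus^{-1}(A) \subset \{\modulus \geq r\} \in \sS_\modulus$, which puts $\modulus^{-1}(A)$ into the ideal $\sS_\modulus$.

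Next I would analyse the pushforward $\modulus\mu$. Since $\sS_\modulus$ generated by a continuous modulus satisfies (B) and hence (B$_0$), Proposition~\ref{prop:RV-g} applies: the tail measure $\mu$ is $\alpha$-homogeneous for some $\alpha > 0$ and $\mu(\{\modulus > 1\}) \in (0,\infty)$. Combining the morphism property with $\alpha$-homogeneity of $\mu$, for any Borel $A \subset \Rpp$ and $t > 0$,
\begin{displaymath}
  (\modulus\mu)(t^{-1} A) = \mu(\modulus^{-1}(t^{-1} A)) = \mu(T_{t^{-1}}\modulus^{-1}(A)) = t^\alpha (\modulus\mu)(A),
\end{displaymath}
so $\modulus\mu$ is an $\alpha$-homogeneous Borel measure on $\Rpp$. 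Every such measure is proportional to $\theta_\alpha$ (as noted before the statement of Theorem~\ref{spectral-measure}), so $\modulus\mu = c\, \theta_\alpha$ for some $c \geq 0$. Evaluating both sides at $(1, \infty)$ and using $\theta_\alpha((1,\infty)) = \int_1^\infty \alpha t^{-\alpha-1}\, dt = 1$ yields
\begin{displaymath}
  c = c\,\theta_\alpha((1,\infty)) = (\modulus\mu)((1,\infty)) = \mu(\{\modulus > 1\}) \in (0,\infty),
\end{displaymath}
which shows $c > 0$ and identifies the constant.

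Finally, with $\modulus\mu = c\,\theta_\alpha$ nontrivial on $\sR_0$, Theorem~\ref{mapping-theorem} applied to $\nu = \Prob{\xi \in \cdot}$ yields $\modulus(\xi) \in \RV(\R_+, \cdot, \sR_0, g, c\,\theta_\alpha)$ with the same normalising function $g$. The only subtle point is ensuring $c > 0$, which is handled by invoking Proposition~\ref{prop:RV-g} to guarantee $\mu(\{\modulus>1\}) \in (0,\infty)$; everything else reduces to directly checking the hypotheses of the continuous mapping theorem.
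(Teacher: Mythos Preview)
Your proof is correct and follows the same approach as the paper, which simply states that the result follows from Theorem~\ref{mapping-theorem} after noting that $\modulus$ is a continuous bornologically consistent morphism. You supply considerably more detail than the paper does, in particular the explicit identification of $\modulus\mu$ as $c\theta_\alpha$ via homogeneity and the verification that $c=\mu(\{\modulus>1\})>0$ through Proposition~\ref{prop:RV-g}; the paper leaves all of this implicit.
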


\paragraph{Regular variation criterion using the polar decomposition}

The following result utilises the polar decomposition map $\rho$,
which is a homeomorphism between the set $\{\modulus>0\}$ and
the product space $\YY=\SS_\modulus\times\Rpp$, see
Definition~\ref{def:polar-decomposition}. A variant for a Banach
space, where the modulus is given by the norm, is provided in
\cite{meinguet10} and was later incorporated into the setting of star-shaped
metric spaces in \cite{seg:zhao:mein17}. A Hilbert space version can
be found in \cite{MR4745556}.

The scaling on $\YY$ acts only on the second component, that is,
$T^\YY_s(u,t)=(u,ts)$, and thus acts freely on $\YY$. Consider the
bornology $\sY$ on $\YY$ generated by the modulus
$\modulus_\YY(u,t)=t$.

\begin{proposition}
  \label{prop:polar-decomp}
  Assume that $\XX$ is a perfectly normal space equipped with a
  continuous scaling $T$ and an ideal $\sS$
  generated by a continuous finite modulus $\modulus$.  Let $\xi$ be a
  Borel measurable random element in $\XX$, and let $\sigma$ be a non-zero
  finite Borel
  measure on $\SS_\modulus$. Then the following statements are
  equivalent.
  \begin{enumerate}[(i)]
  \item $\xi\in \RV(\XX,T, \sS,g, \mu)$, where $\mu$ is a
    tau-additive measure such that \eqref{eq:4} holds.
  \item The random variable $\modulus(\xi)$ is regularly varying with
    normalising function $g\in\RV_\alpha$, and 
    \begin{equation}
      \label{eq:5}
      \Prob{T_{\modulus(\xi)^{-1}}\xi\in \cdot  \, |\, \modulus(\xi)>t} \wto
      \sigma(\cdot)/\sigma(\SS_\modulus)
      \quad \text{ as}\; t\to \infty,
    \end{equation}
    in the sense of the weak convergence of probability measures on
    $\SS_\modulus$, where $\sigma$ is tau-additive.
  \item The measure $\sigma$ is tau-additive, and for some $\alpha>0$,
    \begin{equation}
      \label{eq:6}
      \Prob{(T_{\modulus(\xi)^{-1}}\xi,t^{-1}\modulus(\xi))\in
          A\times(u,\infty)\big| \modulus(\xi)>t}
      \to \frac{(\sigma\otimes\theta_\alpha)(A\times(u,\infty)\,)}
      {\sigma(\SS_\modulus)}
    \end{equation}
    as $t\to \infty$ for all Borel $\sigma$-continuity sets
    $A\subset\SS_\modulus$ and all $u\geq1$. 
  \end{enumerate}
  % In that case,
  %   \begin{equation}
  %     \label{eq:7}
  %     \frac{\Prob{T_{t^{-1}}\xi\in \, \cdot \,}}{\Prob{\modulus(\xi)>t}}
  %     \vto \mu(\,\cdot\,) \quad \text{ as } t\to \infty,
  %   \end{equation}
  %   where $\mu$ is determined by
  %   $(\rho\mu) = \sigma\otimes\theta_\alpha$.
\end{proposition}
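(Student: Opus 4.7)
The plan is to transfer the problem through the polar decomposition $\rho: \{\modulus > 0\} \to \YY := \SS_\modulus \times \Rpp$, which under the assumed continuity is a homeomorphism as well as a bornologically consistent morphism (in both directions) between the ideal $\sS = \sS_\modulus$ on $\XX$ and the ideal $\sY$ on $\YY$ generated by $\modulus_\YY(u,t) = t$. By \eqref{eq:mu-sector}, the pushforward $\rho \mu$ equals $\sigma \otimes \theta_\alpha$, so Theorem~\ref{mapping-theorem} allows me to move regular variation back and forth between $\xi$ on $(\XX, \sS)$ and $\rho(\xi)$ on $(\YY, \sY)$.

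I would first handle (ii)$\Leftrightarrow$(iii) by direct manipulation of conditional probabilities. Taking $A = \SS_\modulus$ in \eqref{eq:6} gives $\Prob{\modulus(\xi) > tu \mid \modulus(\xi) > t} \to u^{-\alpha}$ for $u \geq 1$, i.e., regular variation of $\modulus(\xi)$ with index $\alpha$; setting $u = 1$ in \eqref{eq:6} gives \eqref{eq:5}. Conversely, (ii) yields
\begin{equation*}
\Prob{T_{\modulus(\xi)^{-1}}\xi \in A,\, t^{-1}\modulus(\xi) > u \,\big|\, \modulus(\xi) > t}
= \Prob{T_{\modulus(\xi)^{-1}}\xi \in A \,\big|\, \modulus(\xi) > tu}\cdot \Prob{\modulus(\xi) > tu \,\big|\, \modulus(\xi) > t},
\end{equation*}
and the two factors converge by \eqref{eq:5} applied at threshold $tu \to \infty$ and by regular variation of $\modulus(\xi)$, producing \eqref{eq:6}.

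For (i)$\Rightarrow$(iii), Theorem~\ref{mapping-theorem} applied to $\rho$ gives $\rho\xi \in \RV(\YY, T^\YY, \sY, g, \sigma \otimes \theta_\alpha)$; by Theorem~\ref{thr:ks} I can choose $g(t) = 1/\Prob{\modulus(\xi) > t}$ (absorbing a constant into $\sigma$). Dividing numerator and denominator of the left-hand side of \eqref{eq:6} by $\Prob{\modulus(\xi) > t}$ and using that $A \times (u, \infty)$ is a $(\sigma \otimes \theta_\alpha)$-continuity set when $A$ is a $\sigma$-continuity set yields the claim; $\tau$-additivity of $\sigma$ is inherited from that of $\mu$ through $\rho$. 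For (iii)$\Rightarrow$(i), having already derived (ii) and hence regular variation of $\modulus(\xi)$, I combine the asymptotic $\Prob{\modulus(\xi) > t} \sim c/g(t)$ with \eqref{eq:6} to get
\begin{equation*}
g(t)\, \Prob{\rho(\xi) \in A \times (u, \infty)} \;\to\; c\,(\sigma \otimes \theta_\alpha)(A \times (u, \infty))/\sigma(\SS_\modulus)
\end{equation*}
for all $\sigma$-continuity $A$ and $u > 0$ (extending to $u \in (0,1)$ by the scaling identity for $\theta_\alpha$). Lemma~\ref{lemma:conv-det-product}, applied with the first coordinate in $\Rpp$ carrying the ideal $\sR_0$ and the second in $\SS_\modulus$ carrying the trivial ideal, upgrades this rectangular convergence to full vague convergence of $g(t) T^\YY_{t^{-1}} \rho\nu$ to $\sigma \otimes \theta_\alpha$ on $(\YY, \sY)$. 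Pushing back via $\rho^{-1}$ and Theorem~\ref{mapping-theorem} produces (i) with the tail measure satisfying \eqref{eq:4}.

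The main obstacle is (iii)$\Rightarrow$(i): I have convergence only on product sets $A \times (u, \infty)$ and must extend to vague convergence on all of $\sY$. Invoking Lemma~\ref{lemma:conv-det-product} requires verifying that indicators of intervals $(u, \infty)$ together with the constant $1$ form a convergence-determining class on $(\Rpp, \sR_0)$, and that indicators of $\sigma$-continuity sets on $\SS_\modulus$ determine weak convergence of finite Borel measures on $\SS_\modulus$; both steps rely essentially on the $\tau$-additivity assumption on $\sigma$ and on the perfectly normal hypothesis on $\XX$ (which descends to $\SS_\modulus$).
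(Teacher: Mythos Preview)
Your overall strategy matches the paper's: reduce via the polar homeomorphism $\rho$ to $\YY = \SS_\modulus \times \Rpp$ with $\rho\mu = \sigma \otimes \theta_\alpha$, and then verify vague convergence on rectangular sets. Your direct factorisation for (ii)$\Leftrightarrow$(iii) is correct and a bit more explicit than the paper, which instead proves (i)$\Rightarrow$(ii),(iii) and then shows that either of (ii) or (iii) yields (i) without separating them. The substantive difference is the lemma you invoke for (iii)$\Rightarrow$(i). The paper uses Theorem~\ref{thr:basis}: in the product space $\YY$ the sector family $\sA_\mu$ consists precisely of sets $A \times (s_1,s_2)$ with $A$ a $\sigma$-continuity set, so (iii) together with regular variation of $\modulus(\xi)$ feeds directly into its hypotheses and no further work is needed. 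Your route via Lemma~\ref{lemma:conv-det-product} is morally the same, but the obstacle you flag is genuine: as stated, that lemma requires $\sfD_\sY$ to be \emph{universally} convergence-determining, whereas indicators of $\sigma$-continuity sets only determine convergence to the specific target $\sigma$. To make your argument airtight you would have to re-run the proof of Lemma~\ref{lemma:conv-det-product} with this weaker target-specific hypothesis (which does go through, since that proof only ever uses convergence to the actual marginals of $\nu$), or simply switch to Theorem~\ref{thr:basis}, which already packages exactly this argument for the polar-decomposition setting and is the cleaner choice here. One minor slip: the constant $1$ is required in $\sfD_\sY$ (the $\SS_\modulus$ side), not in $\sfD_\sX$; including it on the $\Rpp$ side would make the relevant integrals infinite.
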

\begin{proof}
  The perfect normality assumption implies that the Baire and Borel
  $\sigma$-algebras coincide.  Without loss of generality, assume that
  $\modulus$ does not vanish, otherwise, we pass from $\XX$ to the set
  $\{\modulus>0\}$.  By Theorem~\ref{mapping-theorem} and
  Remark~\ref{ex:implied_born}, $\xi\in \RV(\XX,T, \sS,g, \mu)$ if and
  only if $\eta=\rho(\xi)=(T_{\modulus(\xi)^{-1}}\xi,\modulus(\xi))$
  is regularly varying in $\YY$ with the bornology $\sY$ and its tail
  measure is $\rho\mu=\sigma\otimes\theta_\alpha$.  In view of
  \cite[Theorem~7.6.5]{bogachev07}, representation \eqref{eq:4}
  implies that $\mu$ is tau-additive if and only if $\sigma$ is.

  By Corollary~\ref{cor:tau-map}, $\modulus(\xi)$ is regularly varying and
  \begin{displaymath}
    g(t)\Prob{\modulus(\xi)>t}\to \mu(\{\modulus>1\})=\sigma(\SS_\modulus)
  \end{displaymath}
  where the last equality follows directly from \eqref{eq:4}.

  If $\eta$ is regularly varying on $\sY$, then (ii) and (iii)
  hold by Theorem~\ref{lemma:vVSw}(ii), since
  \begin{align*}
    \lim_{t\to\infty} \Prob{T_{\modulus(\xi)^{-1}}\xi\in A\mid
      \modulus(\xi)>t} 
    &=\frac{1}{\sigma(\SS_\modulus)}
      \lim_{t\to\infty} g(t)\Prob{T_{\modulus(\xi)^{-1}}\xi\in A,
      \modulus(\xi)>t}\\
    &=\frac{(\sigma\otimes\theta_\alpha)(A\times (1,\infty))}
    {\sigma(\SS_\modulus)}
    =\frac{\sigma(A)}{\sigma(\SS_\modulus)}
  \end{align*}
  for all $\sigma$-continuity sets $A$.
  
  We then show that the regular variation of $\eta$ follows from (ii) and
  (iii). Let $B=\{\modulus>1\}$ and note that
  $(\rho\mu)(B)=\sigma(\SS_\modulus)$ for $\rho\mu$ being the
  pushforward of $\mu$ constructed using
  \eqref{eq:4}. Let $\nu$ denote  the distribution of 
  $\eta=\rho(\xi)$. The open base of $\sY$ consists of the sets
  $\{\modulus>s_m\}$ with $s_m\downarrow 0$, implying
  \begin{displaymath}
    \frac{\nu(T_tD_m)}{\nu(T_tB)}
    =\frac{\Prob{\modulus(\xi)>ts_m}}{\Prob{\modulus(\xi)>t}}
    \to \frac{\theta_\alpha((s_m,\infty))}{\theta_\alpha((1,\infty))}
    =\frac{(\rho\mu)(D_m)}{(\rho\mu)(B)}\quad \text{as}\; t\to\infty,
  \end{displaymath}
  which follows either from the regular variation of $\modulus(\xi)$
  imposed in (ii) or from \eqref{eq:6} with $A=\SS_\modulus$.

  It is enough to consider sets of the form $A\times(s_1,s_2)$, where
  $A\subset\SS_\modulus$ is functionally open and a
  $\sigma$-continuity set, and $0<s_1<s_2$.
  Let $\sA_\mu$ be the family of such sets $A\times (s_1,s_2)$. Then
  \begin{displaymath}
    \frac{\nu(T_t(A\times(s_1,s_2)))}{\nu(T_tB)}
    \to \frac{\sigma(A)\theta_\alpha((s_1,s_2))}{\sigma(\SS_\modulus)}
    =\frac{(\rho\mu)(A\times(s_1,s_2))}{(\rho\mu)(B)}
  \end{displaymath}
  by \eqref{eq:5}, using the same convergence with the threshold
  $s_1t$ and $s_2t$, or \eqref{eq:6}. Hence, the convergence in
  \eqref{eq:24g} follows from Theorem~\ref{thr:basis}, and the
  corresponding convergence for $\xi$ follows by applying the inverse
  homeomorphism $\rho^{-1}$.
\end{proof}

In part (iii) of Proposition~\ref{prop:polar-decomp} it is possible to
restrict $A$ to a convergence determining class on $\SS_\modulus$,
which may be smaller than the family of all Borel sets in
$\SS_\modulus$. The tau-additivity assumption is redundant in
hereditary Lindel\"of spaces, for example, in separable metric spaces, where
all Borel measures are automatically tau-additive.

\paragraph{Convergence of conditional distributions} 
The following useful characterisation result holds in all topological
spaces.

\begin{theorem}
  \label{thr:polar-decomp-4}
  \index{conditional distribution!convergence of}
  Assume that $\XX$ is a topological space equipped with a continuous
  scaling $T$ and an ideal $\sS$ generated by a continuous 
  modulus $\modulus$.  Let $\xi$ be a Baire measurable random element in
  $\XX$. Then $\xi\in \RV(\XX,T, \sS,g, \mu)$ if and only if
  \begin{equation}
    \label{eq:33}
    \Prob{T_{t^{-1}}\xi\in\cdot\mid\modulus(\xi)>t}
    \wto \frac{\mu(\cdot\cap \{\modulus>1\})}{\mu(\{\modulus>1\})}
    \quad \text{ as}\; t\to \infty
  \end{equation}
  for an $\alpha$-homogeneous measure $\mu\in\Msigma$ such that
  $\mu(\{\modulus>1\})\in(0,\infty)$, for some $\alpha>0$.
\end{theorem}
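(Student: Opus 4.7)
The plan is to split into two implications, relying on the fact that $\sS=\sS_\modulus$ automatically satisfies condition (B) by Lemma~\ref{lemma:stau} (so also (B$_0$)), with countable open base $D_m=\{\modulus>1/m\}$, $m\in\NN$.

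\emph{Forward direction.} Suppose $\xi\in\RV(\XX,T,\sS,g,\mu)$. Nontriviality of $\mu$ on $\sS_\modulus$ gives $\mu(B)>0$ for some $B\in\sS_\modulus$, so $\mu(\{\modulus>\eps\})>0$ for some $\eps>0$ which may be chosen so that $\mu(\{\modulus=\eps\})=0$. Applying Proposition~\ref{prop:RV-g} to the equivalent modulus $\modulus/\eps$ (which generates the same ideal) yields that $\mu$ is $\alpha$-homogeneous with $\alpha>0$ and $g\in\RV_\alpha$; then $\mu(\{\modulus>1\})=\eps^\alpha\mu(\{\modulus>\eps\})\in(0,\infty)$. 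Lemma~\ref{lemma:alpha} gives $\mu(\{\modulus=1\})=0$, so $\{\modulus>1\}$ is a $\mu$-continuity set. Theorem~\ref{lemma:vVSw}(iv) applied to $(D_m)$ delivers $g(t)T_{t^{-1}}\Prob{\xi\in\cdot}|_{D_m}\wto\mu|_{D_m}$ for every $m$. Specialising to $m=1$, the total masses converge to $\mu(\{\modulus>1\})\in\Rpp$, and dividing by these gives~\eqref{eq:33}.

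\emph{Reverse direction.} Assume \eqref{eq:33} with $c:=\mu(\{\modulus>1\})\in(0,\infty)$. Testing \eqref{eq:33} against the level sets $\{\modulus>s\}$ (which are $\mu$-continuity sets for all but countably many $s>0$, since $\mu$ is finite on $\{\modulus\geq 1\}$) produces
\[
\lim_{t\to\infty}\frac{\Prob{\modulus(\xi)>st}}{\Prob{\modulus(\xi)>t}}=\frac{\mu(\{\modulus>s\})}{c}
\]
for all such $s$. By \cite[Theorem~1.3]{seneta72} this limit equals $s^{-\alpha}$ for some $\alpha\geq 0$ and $\Prob{\modulus(\xi)>\cdot}$ is regularly varying with index $-\alpha$; the case $\alpha=0$ is excluded, since otherwise $g(t):=c/\Prob{\modulus(\xi)>t}$ would be slowly varying and no normalising function could be chosen to diverge, contradicting Definition~\ref{def:RV}. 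To upgrade to full $\alpha$-homogeneity of $\mu$, fix a bounded continuous $f$ supported in $\{\modulus>1\}$ and $s\geq 1$: then $f\circ T_{s^{-1}}$ is continuous with support in $\{\modulus>s\}\subset\{\modulus>1\}$, and the identity
\[
\E\!\left[f\!\left(T_{s^{-1}}T_{t^{-1}}\xi\right)\mid\modulus(\xi)>t\right]
=\frac{\Prob{\modulus(\xi)>st}}{\Prob{\modulus(\xi)>t}}\,
\E\!\left[f\!\left(T_{(st)^{-1}}\xi\right)\mid\modulus(\xi)>st\right]
\]
combined with \eqref{eq:33} at scales $t$ and $st$ yields $\int(f\circ T_{s^{-1}})\,d\pi=s^{-\alpha}\int f\,d\pi$, where $\pi=\mu|_{\{\modulus>1\}}/c$. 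This amounts to $\mu(T_sB)=s^{-\alpha}\mu(B)$ for $B\subset\{\modulus>1\}$, and extends to arbitrary $s>0$ and all of $\{\modulus>0\}$ by rescaling.

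Finally, setting $g(t):=c/\Prob{\modulus(\xi)>t}$, so $g\in\RV_\alpha$ with $g(t)\to\infty$, the target vague convergence $g(t)T_{t^{-1}}\Prob{\xi\in\cdot}\vto\mu$ on $\sS_\modulus$ is checked via Theorem~\ref{lemma:vVSw}(iv) applied to $(D_m)$. Weak convergence on $D_1$ follows directly from \eqref{eq:33} after multiplying by the converging total masses. For $m\geq 2$ and a $\mu$-continuity $A\subset D_m$, use the bijection $\{T_{t^{-1}}\xi\in A\}=\{T_{(t/m)^{-1}}\xi\in T_mA\}$ with $T_mA\subset D_1$: applying \eqref{eq:33} at time $t/m$, the regular variation relation $\Prob{\modulus(\xi)>t/m}/\Prob{\modulus(\xi)>t}\to m^\alpha$, and the homogeneity $\mu(T_mA)=m^{-\alpha}\mu(A)$, the factors $m^{\pm\alpha}$ cancel and the limit is $\mu(A)$. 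The main obstacle is the middle paragraph --- extracting power-law tail behaviour of $\Prob{\modulus(\xi)>\cdot}$ and the full $\alpha$-homogeneity of $\mu$ from the conditional statement~\eqref{eq:33} alone, without invoking Theorem~\ref{thr:basis} which would require additional topological hypotheses ($\tau$-additivity, complete Hausdorffness) absent from the present theorem.
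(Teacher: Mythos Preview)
Your approach is essentially the same as the paper's in both directions: the forward implication divides the vague convergence on $\{\modulus>1\}$ by the converging normalisers, and the reverse implication first extracts regular variation of $\Prob{\modulus(\xi)>\cdot}$ from~\eqref{eq:33} on level sets via \cite[Theorem~1.3]{seneta72}, then handles a general $\mu$-continuity set $A\subset\{\modulus>u\}$ by the rescaling $\{T_{t^{-1}}\xi\in A\}=\{T_{(ut)^{-1}}\xi\in T_{u^{-1}}A\}$ with $T_{u^{-1}}A\subset\{\modulus>1\}$, applying~\eqref{eq:33} at time $ut$. Your use of Theorem~\ref{lemma:vVSw}(iv) with the discrete base $(D_m)$ and $u=1/m$ is a cosmetic variant of the paper's use of Theorem~\ref{lemma:vVSw}(ii) with arbitrary $u>0$.

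One correction: your exclusion of $\alpha=0$ is flawed. Slowly varying functions can perfectly well diverge (e.g.\ $\log t$), and you cannot contradict Definition~\ref{def:RV} before having proved $\xi\in\RV$. This step is unnecessary anyway: once $\modulus$ is finite (implicit in your invocation of Lemma~\ref{lemma:stau} for condition~(B)), $\Prob{\modulus(\xi)>t}\to 0$ and $g(t)\to\infty$ regardless of $\alpha$; the paper simply omits any discussion of the sign of $\alpha$ here. Your more explicit treatment of the homogeneity of $\mu$ via test functions is a welcome addition (the paper asserts it somewhat summarily after Seneta), though the extension from $s\geq 1$ and $B\subset\{\modulus>1\}$ to all $s>0$ and all of $\{\modulus>0\}$ deserves one more line: for $A\subset\{\modulus>u\}$ with $u<1$, write $T_{u_1^{-1}}A=T_{u_2/u_1}(T_{u_2^{-1}}A)$ with $u_1<u_2$ and apply the established relation with $s=u_2/u_1\geq 1$ to the set $T_{u_2^{-1}}A\subset\{\modulus>1\}$; this gives the consistency $u_1^{-\alpha}\mu(T_{u_1^{-1}}A)=u_2^{-\alpha}\mu(T_{u_2^{-1}}A)$ needed for your final cancellation.
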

\begin{proof}
  \textsl{Necessity.} If $\xi$ is regularly varying with 
  normalising function $g$, then $\modulus(\xi)$ is regularly varying
  by Corollary~\ref{cor:tau-map}. Consequently,
  \begin{multline*}
    \Prob{T_{t^{-1}}\xi\in B\mid\modulus(\xi)>t}
    =\frac{g(t)\Prob{\xi\in T_t(B\cap \{\modulus>1\})}}
    {g(t)\Prob{\modulus(\xi)>t}}\\
    \to \frac{\mu(B\cap \{\modulus>1\})}{\mu(\{\modulus>1\})}
    \quad \text{as}\; t\to\infty
  \end{multline*}
  for all $\mu$-continuity sets $B$.

  \smallskip
  \noindent
  \textsl{Sufficiency.} If \eqref{eq:33} holds, then, for all but at
  most countably many $s>1$,
  \begin{displaymath}
    \frac{\Prob{\modulus(\xi)>st}}{\Prob{\modulus(\xi)>t}}
    =\Prob{T_{t^{-1}}\xi\in \{\modulus>s\}\mid \modulus(\xi)>t}
    \to \frac{\mu(\{\modulus>s\})}{\mu(\{\modulus>1\})}\in (0,1]
  \end{displaymath}
  as $t\to\infty$.

  Since $\mu(\{\modulus>1\})>0$, the right-hand side is positive for
  all $s>1$ sufficiently close to $1$, except possibly for countably
  many values. Hence \cite[Theorem~1.3]{seneta72} applies to
  $g(t)=\Prob{\modulus(\xi)>t}^{-1}$ and yields $g\in\RV_\alpha$ for
  some $\alpha>0$.
  % Then there exists a countable family of sets
  % $D_m=\{\modulus>s_m\}$ with suitably chosen $s_m$, $m\geq1$, 
  % such that $s_m\to 0$ and
  % \begin{displaymath}
  %   g(t)\Prob{T_{t^{-1}}\xi\in D_m}\to \mu(D_m)\quad \text{as}\; t\to\infty
  % \end{displaymath} for all $m$. 
  Let $A$ be a $\mu$-continuity set in $\sS$. Then
  $A\subset\{\modulus>u\}$ for some $u>0$. The homogeneity of $\mu$
  and the continuity of the scaling imply that $T_{u^{-1}}A$ is also a
  $\mu$-continuity set. Now \eqref{eq:33} yields 
  \begin{multline*}
    g(t)\Prob{T_{t^{-1}}\xi\in A}=
    \Prob{T_{(ut)^{-1}}\xi \in T_{u^{-1}} A|\modulus(\xi)>ut}
    \frac{\Prob{\modulus(\xi)>ut}}{\Prob{\modulus(\xi)>t}}\\
    \to  \frac{u^{\alpha} \mu(A)}{\mu(\{\modulus>1\})}u^{-\alpha}
    =\frac{\mu(A)}{\mu(\{\modulus>1\})} 
    \quad \text{as}\; t\to\infty,
  \end{multline*}
  which establishes the vague convergence in \eqref{eq:24g} up to a
  constant factor in the tail measure, which can be absorbed into the
  normalising function if desired.
\end{proof}

\paragraph{Changing the reference modulus}
If two moduli generate the same ideal, then the regular variation
property remains unchanged and the tail measure remains the same, see
Remark~\ref{rem:same-ideal}. However, the spectral measures may differ
if one chooses polar representations generated by different moduli,
see Lemma~\ref{lemma:change-modulus}.  The following result identifies
conditions that ensure that \eqref{eq:33} holds after replacing
$\modulus$ with another homogeneous function $\ell$. This was
motivated by \cite{dom:rib15}, which interprets $\ell$ as a risk or
cost functional; see also \cite{fon:dav22,cot:di:opit24} for more
recent uses of such interpretations in the context of regular variation
on the space of continuous functions and \cite{dys:mik20} for
regularly varying random vectors.

\begin{theorem}
  \label{thr:Dombry-Ribatet}
  Assume that $\XX$ is a topological space equipped with a continuous
  scaling $T$ and the ideal $\sS_\modulus$ generated by a continuous
  proper modulus $\modulus$. Assume that $\ell:\XX\to[0,\infty)$ is
  another continuous modulus such that $\modulus(x_n)\to0$ implies
  that $\ell(x_n)\to 0$. Let $\xi$ be a Baire measurable random element in
  $\XX$. If $\xi\in \RV(\XX,T,\sS_\modulus,g,\mu)$ and $\ell$ does not vanish
  $\mu$-almost everywhere, then
  $\ell(\xi)\in \RV(\R_+,\mydot,\sR_0,g,c\theta_\alpha)$ where
  $c=\mu(\{\ell>1\})\in(0,\infty)$, and
  \begin{equation}
    \label{eq:33ell}
    \Prob{T_{t^{-1}}\xi\in\cdot\mid\ell(\xi)>t}
    \wto \frac{\mu(\cdot\cap \{\ell>1\})}{\mu(\{\ell>1\})}
    \quad \text{ as}\; t\to \infty.
  \end{equation}
\end{theorem}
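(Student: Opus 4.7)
The plan is to imitate the proof of Theorem~\ref{thr:polar-decomp-4} with $\ell$ replacing $\modulus$ in the conditioning event, the pivotal step being a topological observation that forces $\{\ell>1\}$ into the ideal $\sS_\modulus$. First I would verify that $\{\ell\geq 1\}\in\sS_\modulus$. Suppose not: then there is a sequence $(x_n)$ with $\ell(x_n)\geq 1$ and $\modulus(x_n)\to 0$. The hypothesis would force $\ell(x_n)\to 0$, contradicting $\ell(x_n)\geq 1$. Consequently $\mu(\{\ell\geq 1\})<\infty$, and since $\{\ell\geq 1\}$ is a semicone and $\ell$ is a continuous modulus, Lemma~\ref{lemma:alpha} applied with $\VV=\{\ell\geq 1\}$ yields $\mu(\{\ell=1\})=0$. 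Because $\ell:\XX\to[0,\infty)$ is continuous and real-valued, $\{\ell>1\}$ is functionally open and $\{\ell\geq 1\}$ is functionally closed, so $\{\ell>1\}$ is an open $\mu$-continuity Baire set lying in $\sS_\modulus$.

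Regular variation of $\ell(\xi)$ follows at once. By Theorem~\ref{lemma:vVSw}(ii),
\begin{displaymath}
g(t)\Prob{\ell(\xi)>t}=g(t)\Prob{T_{t^{-1}}\xi\in\{\ell>1\}}\longrightarrow \mu(\{\ell>1\})=:c.
\end{displaymath}
Proposition~\ref{prop:RV-g} gives $\alpha$-homogeneity of $\mu$; together with the assumption that $\ell$ does not vanish $\mu$-almost everywhere, writing $\{\ell>0\}=\bigcup_n \{\ell>1/n\}$ and using $T_n\{\ell>1/n\}=\{\ell>1\}$ shows $c\in(0,\infty)$. Hence $\Prob{\ell(\xi)>t}\sim c/g(t)\in\RV_{-\alpha}$, and $\ell(\xi)\in\RV(\R_+,\cdot,\sR_0,g,c\theta_\alpha)$.

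For the conditional convergence \eqref{eq:33ell}, for any bounded continuous $f:\XX\to\R$ I would write
\begin{displaymath}
\E\big[f(T_{t^{-1}}\xi)\mid\ell(\xi)>t\big]=\frac{g(t)\,\E\big[f(T_{t^{-1}}\xi)\,\one_{\{\ell>1\}}(T_{t^{-1}}\xi)\big]}{g(t)\Prob{\ell(\xi)>t}},
\end{displaymath}
the denominator tending to $c$ by the previous step. For the numerator, since $\{\ell>1\}$ is a $\mu$-continuity set in $\sS_\modulus$, Lemma~\ref{lemma:indicator} supplies, for each $\eps>0$, functions $f_1,f_2\in\fC(\XX,\sS_\modulus)$ with $f_1\leq\one_{\{\ell>1\}}\leq f_2$ and $\int(f_2-f_1)d\mu<\eps$. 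Splitting $f=f^{+}-f^{-}$ and sandwiching between $f^{\pm}f_1$ and $f^{\pm}f_2$, vague convergence $g(t)T_{t^{-1}}\nu\vto\mu$ tested against the functions $f^{\pm}f_i\in\fC(\XX,\sS_\modulus)$ yields
\begin{displaymath}
g(t)\,\E\big[f(T_{t^{-1}}\xi)\,\one_{\{\ell>1\}}(T_{t^{-1}}\xi)\big]\longrightarrow \int f\,\one_{\{\ell>1\}}\,d\mu,
\end{displaymath}
and dividing by $c$ delivers \eqref{eq:33ell}.

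The main obstacle is the initial topological fact $\{\ell\geq 1\}\in\sS_\modulus$, as it is the only nontrivial use of the comparison hypothesis $\modulus(x_n)\to 0\Rightarrow\ell(x_n)\to 0$; once secured, everything else follows mechanically from $\alpha$-homogeneity of $\mu$, the Portmanteau-type Theorem~\ref{lemma:vVSw}, and the approximation Lemma~\ref{lemma:indicator}, with no extra topological assumptions beyond those already built into the definition of a continuous modulus.
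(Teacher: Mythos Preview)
Your proof is correct and follows essentially the same route as the paper's: both hinge on the contradiction argument showing $\{\ell>s\}\in\sS_\modulus$, then use that $\{\ell>1\}$ is a $\mu$-continuity set (via homogeneity of $\mu$ and Lemma~\ref{lemma:alpha}) to pass to the limit in the ratio defining the conditional probability. The only cosmetic differences are that the paper phrases the regular variation of $\ell(\xi)$ as an instance of the continuous mapping theorem (Theorem~\ref{mapping-theorem}, $\ell$ being a bornologically consistent morphism once $\{\ell>s\}\in\sS_\modulus$ is known), whereas you compute directly via Theorem~\ref{lemma:vVSw}(ii); and the paper's final displayed line is terser while you spell out the sandwiching of $\one_{\{\ell>1\}}$ through Lemma~\ref{lemma:indicator}. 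One small ordering remark: your appeal to Lemma~\ref{lemma:alpha} tacitly uses the $\alpha$-homogeneity of $\mu$, which you only cite (via Proposition~\ref{prop:RV-g}) a paragraph later --- just move that reference up.
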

\begin{proof}
  We first show that $\{\ell>s\}\in\sS_\modulus$ for all $s>0$. If this
  does not hold, then there exists a sequence
  $(x_n)_{n\in\NN}\subset \{\ell>s\}$ such that $\modulus(x_n)\to 0$ as
  $n\to\infty$. However, this would imply $\ell(x_n)\to0$, which
  contradicts the assumption that $\ell(x_n)>s$ for all $n$.
  % Thus,
  % \begin{displaymath}
  %   \{\ell>1\}\subset\{\modulus>a\}
  % \end{displaymath}
  % for some $a>0$.
  
  The regular variation of $\ell(\xi)$ follows from the continuous
  mapping theorem, noting that $\mu$ is non-trivial on the ideal
  generated by $\ell$. Since $\{\ell>t\}$ for $t>0$ are continuity sets
  for the tail measure by the continuity of $\ell$ and
  Lemma~\ref{lemma:alpha}, the regular variation property of $\xi$
  implies
  \begin{displaymath}
    g(t)\Prob{\ell(\xi)>t}
    \to \mu(\{\ell>1\}).
  \end{displaymath}
  Thus, the tail measure of $\ell(\xi)$ is $c\theta_\alpha$ with
  $c=\mu(\{\ell>1\})$. Furthermore, for every $\mu$-continuity set $B$
  such that $B\cap\{\ell>1\}$ is also a $\mu$-continuity set,
  \begin{displaymath}
    \Prob{T_{t^{-1}}\xi\in B\mid\ell(\xi)>t}
    =\frac{g(t)\Prob{T_{t^{-1}}\xi\in B,\ell(\xi)>t}}
    {g(t)\Prob{\ell(\xi)>t}}
    \to \frac{\mu(B\cap\{\ell>1\})}{\mu(\{\ell>1\})}
  \end{displaymath}
  as $t\to\infty$. 
\end{proof}

\begin{example}
  \label{ex:DR}
  Note that the continuity of $\ell$ is essential in
  Theorem~\ref{thr:Dombry-Ribatet}. For example, let $\XX=\R_+^2$ with
  linear scaling, and let
  \begin{displaymath}
    \xi=\zeta(\eta,1)+(1-\zeta)(\sqrt{\eta},0),
  \end{displaymath}
  where $\zeta$ is the Bernoulli random variable with parameter $1/2$
  and
  \begin{displaymath}
    \eta\in\RV(\R_+,\mydot,\sR_0,g,\theta_\alpha).
  \end{displaymath}
  If $\XX$ is equipped
  with the ideal $\sR_0^2$ generated by
  $\modulus(x_1,x_2)=\max(x_1,x_2)$, then
  $\xi\in\RV(\R_+^2,\mydot,\sR_0^2,g,\mu)$ with $\mu$ being the
  pushforward of $\frac{1}{2}\theta_\alpha$ under the map $z\mapsto
  (z,0)$. If we define the discontinuous function
  $\ell(x_1,x_2)=x_1\one_{x_2=0}$, then, for $A$ being the
  complement of $[0,a]^2$,
  \begin{align*}
    \Prob{T_{t^{-1}}\xi\in A\mid \ell(\xi)>t}
    &=\Prob{T_{t^{-1}}\xi\in A\mid \zeta=0,\sqrt{\eta}>t}\\
    &=\Prob{\sqrt{\eta}>at\mid \sqrt{\eta}>t}
    \to a^{-2\alpha}.
  \end{align*}
  The limiting distribution is inconsistent with the index $\alpha$,
  and thus $\ell(\xi)$ is regularly varying with tail index $2\alpha$,
  not with the original tail index $\alpha$.  Note that the chosen
  function $\ell$ is continuous at the origin and does not vanish
  $\mu$-almost everywhere. This example shows that Theorem~3 in
  \cite{dom:rib15} requires an adjustment, namely, assuming the
  continuity of $\ell$.
\end{example}

\paragraph{Ideals generated by multiple moduli}
Now consider the case of an ideal $\sX$ satisfying Condition~\hyperref[condB0]{(B$_0$)},
and therefore 
generated by a family of continuous moduli. 

\begin{proposition}
  \label{prop:polar-many-tau}
  Assume that $\XX$ is a topological space with an ideal $\sX$
  generated by a family $\{\modulus_i,i\in I\}$ of continuous finite
  moduli, and let $\xi$ be a Baire random element in $\XX$. Then
  $\xi\in\RV(\XX,T,\sX,g,\mu)$ with $\mu\in\Msigma$ if and only if for each
  $i_1,\dots,i_k\in I$, $k\in\NN$, and $a>0$, we have
  \begin{equation}
    \label{eq:3}
    g(t)\Prob{T_{t^{-1}}\xi\in A, \max_{j=1,\dots,k}\modulus_{i_j}(\xi)> at}
    \to \mu\Big(A\cap\big(\bigcup_{j=1}^k\{\modulus_{i_j}> a\}\big)\Big)
  \end{equation}
  as $t\to\infty$ for each $\mu$-continuity set $A$.
\end{proposition}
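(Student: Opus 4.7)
The plan is to derive both directions from the Portmanteau characterisation in Theorem~\ref{lemma:vVSw}(ii), by which $g(t)T_{t^{-1}}\nu\vto\mu$ on $\sX$ is equivalent to $g(t)\Prob{T_{t^{-1}}\xi\in B}\to\mu(B)$ for every Baire $\mu$-continuity set $B\in\sX$.

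\emph{Necessity.} Fix indices $i_1,\dots,i_k\in I$, a scale $a>0$, and a $\mu$-continuity set $A\in\sX$ with $A\subset\cap_{j=1}^k\{\modulus_{i_j}>a\}$. Homogeneity of each $\modulus_{i_j}$ forces $\modulus_{i_j}(\xi)=t\modulus_{i_j}(T_{t^{-1}}\xi)>at$ whenever $T_{t^{-1}}\xi\in A$, so the event in the left-hand side of \eqref{eq:3} collapses to $\{T_{t^{-1}}\xi\in A\}$, while the right-hand side reduces to $\mu(A)$. The desired convergence is then precisely the Portmanteau limit for the set $A$.

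\emph{Sufficiency.} Let $B\in\sX$ be a Baire $\mu$-continuity set. By condition (B$_0$) and the description of $\sX$ following Lemma~\ref{lemma:stau}, there are indices $i_1,\dots,i_k\in I$ and $a_0>0$ with $B\subset \cup_{j=1}^k\{\modulus_{i_j}>a_0\}$. Since $\mu\in\Msigma$ is boundedly finite on $\sX$, for each $j$ the set of $a>0$ with $\mu(\{\modulus_{i_j}=a\})>0$ is at most countable, so one may choose $a\in(0,a_0]$ for which every $U_j=\{\modulus_{i_j}>a\}$ is a functionally open $\mu$-continuity set while retaining $B\subset\cup_{j=1}^k U_j$. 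Finite intersections preserve $\mu$-continuity (via $\mu((F_1\cap F_2)\setminus(G_1\cap G_2))\le\mu(F_1\setminus G_1)+\mu(F_2\setminus G_2)$), so for every non-empty $J\subset\{1,\dots,k\}$ the set $B_J=B\cap\cap_{j\in J}U_j$ is a $\mu$-continuity set contained in $\cap_{j\in J}\{\modulus_{i_j}>a\}$. The collapse used in the necessity argument now shows that \eqref{eq:3} applied to $B_J$ yields $g(t)\Prob{T_{t^{-1}}\xi\in B_J}\to\mu(B_J)$. Inclusion–exclusion over $\{U_j\}_{j=1}^k$ gives
\begin{displaymath}
\one_{B}=\one_{B\cap \cup_j U_j}
=\sum_{\emptyset\ne J\subset\{1,\dots,k\}}(-1)^{|J|+1}\one_{B_J},
\end{displaymath}
and summing the finitely many limits together with the analogous identity for $\mu$ produces $g(t)\Prob{T_{t^{-1}}\xi\in B}\to\mu(B)$, which by Theorem~\ref{lemma:vVSw}(ii) is the required vague convergence.

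The main obstacle is the verification that each auxiliary set $B_J$ qualifies as a $\mu$-continuity set: this rests on exploiting bounded finiteness of $\mu$ on $\sX$ to select $a$ so that the level sets $\{\modulus_{i_j}=a\}$ are $\mu$-null, and on the stability of $\mu$-continuity under finite intersections. Everything else amounts to the homogeneity identity for the moduli and routine inclusion–exclusion bookkeeping.
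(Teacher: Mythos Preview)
Your argument is correct. The necessity direction is essentially the paper's: once $A\subset\cap_{j}\{\modulus_{i_j}>a\}$, homogeneity makes the $\max$-event automatic and both sides of~\eqref{eq:3} collapse to the Portmanteau limit for $A$. (The paper additionally records, via Proposition~\ref{prop:RV-g} and Lemma~\ref{lemma:alpha}, that every level set $\{\modulus_i>a\}$ is itself a $\mu$-continuity set, but under the stated restriction on $A$ this is not needed.)

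Where you differ from the paper is in the sufficiency direction. The paper's proof is a one-liner: given a $\mu$-continuity set $A\in\sX$, it picks $i_1,\dots,i_k$ and $a>0$ with $A\subset\cup_{j}\{\modulus_{i_j}>a\}$, notes that this makes the $\max$-event automatic, and then invokes~\eqref{eq:3} directly to conclude. However, the hypothesis~\eqref{eq:3} is only assumed for sets $A$ contained in the \emph{intersection} $\cap_{j}\{\modulus_{i_j}>a\}$, not merely in the union; likewise the final identity $\mu(A\cap\cap_{j}\{\modulus_{i_j}>a\})=\mu(A)$ in the paper's display presumes $A\subset\cap_j\{\modulus_{i_j}>a\}$. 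So as the statement is written, the paper's short argument glosses over exactly the point your inclusion--exclusion handles: each piece $B_J=B\cap\bigcap_{j\in J}U_j$ is a $\mu$-continuity set genuinely contained in an intersection of level sets, so~\eqref{eq:3} applies to it legitimately, and the alternating sum reassembles the limit for $B$. Your route is slightly longer but supplies precisely the missing step.
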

\begin{proof}
  \textsl{Necessity.} Condition~\eqref{eq:3} follows from
  \eqref{eq:24g} and Theorem~\ref{lemma:vVSw} by applying it to the
  intersection of any $\mu$-continuity set with
  \[
    \big\{\max_{j=1,\dots,k}\modulus_{i_j}>a\big\}.
  \]
  and noticing that $\{\modulus_i> a\}$ is a
  $\mu$-continuity set for all $a>0$ by Proposition~\ref{prop:RV-g}
  and \eqref{eq:24} from Lemma~\ref{lemma:alpha}.

  \smallskip
  \noindent
  \textsl{Sufficiency.} Let $A$ be a $\mu$-continuity set in 
  $\sX$. Then by Condition~\hyperref[condB0]{(B$_0$)},
  $A\subset\cup_{j=1}^k\{\modulus_{i_j}> a\}$ for 
  some $i_1,\dots,i_k\in I$ and $a>0$. By
  \eqref{eq:3},
  \begin{multline*}
    g(t)\Prob{T_{t^{-1}}\xi\in A}\\
    =g(t)\Prob{T_{t^{-1}}\xi\in A,
      \max_{j=1,\dots,k}\modulus_{i_j}(\xi)>at}
    \to \mu(A\cap \cup_{j=1}^k\{\modulus_{i_j}>a\})=\mu(A), 
  \end{multline*}
  where it has been used that the maximum of the finitely many moduli
  $\modulus_{i_j}$ is
  continuous and bornologically consistent. 
\end{proof}

The necessity part of Proposition~\ref{prop:polar-many-tau} can be
formulated as a continuous mapping result. If $I=\{1,2,\ldots\}$ is
countable, and $\xi\in\RV(\XX,T,\sX,g,\mu)$ then the sequence
\begin{displaymath}
  (\modulus_1(\xi),\modulus_2(\xi),\ldots)
\end{displaymath}
is regularly varying in the space $\R_+^\infty$ with the
ideal $\sR_0^\infty(1)$, see Section~\ref{sec:infinite-sequences}. 

\paragraph{Reduction to a sub-ideal and hidden regular variation}
It is common in the theory of extreme values to consider, together with
the basic space $\XX$, its subcones with possibly different ideals,
which may result in different tail behaviours. This phenomenon in
$\R^d$ is well studied and called \emph{hidden regular variation};
see, for instance, Section~9.4 of \cite{resnick:2007}, the whole
  book \cite{resnick24:_art_findin_hidden_risks} devoted to this topic,  and
Example~\ref{ex:hidden-RV} below.
\index{hidden regular variation}
\index{regular variation!hidden}
Many examples of this phenomenon
arise in the context of various ideals on product spaces, see
Definition~\ref{def:product} and Theorem~\ref{thr:power-space}.

The following result describes the relationships between the regular
variation property on smaller and larger ideals.
For instance, the regular variation property may be lost when passing
to a smaller ideal; alternatively, it may happen that $\nu$ is regularly
varying on $\sS$ with tail index $\alpha$, while being regularly
varying on a smaller ideal $\sV\subset\sS$ with a different tail index
$\alpha_1>\alpha$.

\begin{proposition}
  \label{prop:sub-ideal}
  Let $\nu\in\RV(\XX,T,\sS,g,\mu)$, where the ideal $\sS$ satisfies
  Condition~\hyperref[condB0]{(B$_0$)} on a topological space $\XX$
  and the tail measure has tail 
  index $\alpha$, and let $\sV\subset\sS$ be
  another ideal that also satisfies Condition~\hyperref[condB0]{(B$_0$)}.
  \begin{enumerate}[(i)]
  \item If the tail measure $\mu$ is non-trivial and finite on all
    sets from $\sV$,
    then $\nu\in\RV(\XX,T,\sV,g,\mu)$.
  \item If $\nu\in\RV(\XX,T, \sV,g_1,\mu_1)$ with tail index $\alpha_1$,
    then either $\mu(\{\modulus>1\})\in(0,\infty)$ for a continuous
    modulus $\modulus$ such that $\{\modulus>1\}\in \sV$, in which
    case, after multiplying $g_1$ and $\mu_1$ by reciprocal constants
    if necessary, 
    it is possible to set $g_1=g$ and $\mu_1$ to be the restriction of
    $\mu$ onto the measurable sets in $\sV$, and then
    $\alpha_1=\alpha$; or, otherwise, $\mu(\{\modulus>1\})=0$ for any
    modulus $\modulus$ such that $\sS_\modulus\subset\sV$, and then
    $\alpha_1\geq \alpha$. 
  \end{enumerate}
\end{proposition}
\begin{proof}
  (i) This follows directly from the definition, since every test
  function for the smaller ideal $\sV$ is also a test function for
  $\sS$ or from Theorem~\ref{mapping-theorem} applied to the embedding
  map $\psi(x)=x$, $x\in\XX$.

  \noindent
  (ii) By Definition~\ref{def:RV}, $g_1(t)T_{t^{-1}}\nu$ converges to
  $\mu_1$ vaguely in $\sV$ with $g_1\in\RV_{\alpha_1}$.
  Condition~\hyperref[condB0]{(B$_0$)} ensures the existence of a set
  $B=\{\modulus> 1\}$ for some continuous 
  modulus $\modulus$ which is a continuity set for both $\mu$ and $\mu_1$,
  and such that $\mu_1(B)$ is non-trivial. Hence,
  \begin{align*}
    g_1(t)\nu(T_tB) &\to \mu_1(B)\in(0,\infty),\\
    g(t)\nu(T_tB) &\to \mu(B)\in[0,\infty)
  \end{align*}
  with $g\in\RV_\alpha$. If $\mu(B)>0$, then it is possible to let
  $g_1=g$, which necessitates
  $\alpha=\alpha_1$. If $\mu(B)=0$, then for any $\eps>0$,
  $g(t)\nu(T_tB)<\eps$ for large $t$. Since $g_1(t)\nu(T_tB)$ is
  bounded away from zero, we must have $g(t)=o(g_1(t))$ as
  $t\to\infty$, which implies $\alpha\leq\alpha_1$. 
\end{proof}

\section{Regular variation on quotient spaces}
\label{sec:quotient-spaces-under}

\paragraph{Scaling on quotient spaces}

Let $\sim$ be an \emph{equivalence relation} on a topological space
\index{equivalence relation} \index{saturation}
$\XX$. Let $[x]=\{y\in\XX\colons y\sim x\}$ denote the
\emph{equivalence class} of
$x\in\XX$.  For a set $B\subset\XX$, denote by $[B]$ the
\emph{saturation} of $B$, which is the set of all $y\in\XX$ such that
$y\sim x$ for some $x\in B$. Equivalently, $[B]$ is the union of the
equivalence classes $[x]$ for all $x\in B$.

The family of equivalence classes is called the \emph{quotient space}
\index{quotient space} \index{factor space} \index{quotient map} (or
the factor space). The quotient space is denoted by $\XXT$ and its
generic element by $\tilde{x}$. The \emph{quotient map}
$q:\XX \to \XXT$ associates each $x\in\XX$ with its equivalence class.
It is worth pointing out that $q q^{-1} \tilde{x} = \tilde{x}$ but
the composition $x\mapsto q^{-1} q x = [x]$ is a set-valued map.  The space $\XXT$ is
endowed with the finest topology under which $q$ is continuous; this
topology is called the quotient (or factor) topology; see
\cite[Section~2.4]{eng89}. Note that a general continuous surjection $q$
between two topological spaces is referred to as a quotient map if the
topology on the target space consists of all sets $G$ such that
$q^{-1}(G)$ is open in the original space $\XX$.
The Baire $\sigma$-algebra generated by the quotient topology is
denoted by $\sBA(\XXT)$.

The quotient space $\XXT$ is Hausdorff if (i) the quotient map is open
(equivalently, if $[G]$ is open for each open $G$ in $\XX$) and (ii) the
equivalence relation 
$R=\{(x,y)\colons x\sim y\}$ is closed in $\XX\times\XX$; see
\cite[Proposition~I.8.3.8]{bourbaki1}.  If $\XX$ is Polish and $\XXT$
is Hausdorff, then $\XXT$ is a Souslin space. Recall that Souslin
spaces are hereditary Lindel\"of; see
\cite[Lemma~6.6.4]{bogachev07}. However,
even in the case where $\XX$ is Polish, $\XXT$ may fail to
be metrisable or even completely regular;
see \cite{arh65} and \cite{him68}. If the space $\XX$ is sequential,
then the quotient space $\XXT$ is also sequential; see
\cite[Exercise~2.4.G]{eng89}.

Assume that the scaling and the equivalence relation on $\XX$ satisfy the
following condition.
\index{Condition (S)}
\begin{itemize}
\item[(S)] \label{condS} If $x\sim y$, then $T_tx\sim T_ty$ for all $t>0$.
\end{itemize}
If \hyperref[condS]{(S)} holds, it is possible to define a scaling on $\XXT$ by setting
\begin{equation}
  \label{scaling-on-XXT}
  \tilde{T}_t \tilde{x} = q T_t x
\end{equation}
for any representative $x\in q^{-1}\tilde{x}$. By \hyperref[condS]{(S)}, this definition is consistent:
if $z\in q^{-1}\tilde{x}$, then $x\sim z$ and $T_tx\sim T_tz$, whence
$qT_tx=qT_tz$.

\begin{lemma}
  If the quotient map $q$ is an open map
  and the scaling on a topological space $\XX$
  is continuous, then the induced scaling on $\XXT$ is continuous.
\end{lemma}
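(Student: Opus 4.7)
The plan is to use the universal property of the quotient topology applied to the product map $q\times\mathrm{id}_{\Rpp}: \XX\times\Rpp\to \XXT\times\Rpp$. First I would observe that the diagram
\begin{equation*}
\begin{array}{ccc}
\XX\times\Rpp & \xrightarrow{\;T\;} & \XX \\
\downarrow q\times\mathrm{id} & & \downarrow q \\
\XXT\times\Rpp & \xrightarrow{\;\tilde{T}\;} & \XXT
\end{array}
\end{equation*}
commutes by the very definition of $\tilde{T}$ given in \eqref{scaling-on-XXT}. Since $T$ and $q$ are continuous, the composite $q\circ T$ is continuous. Hence it would suffice to show that $q\times\mathrm{id}_{\Rpp}$ is a quotient map, because the universal property of quotient maps then forces $\tilde{T}$ to be continuous.

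The key step is therefore to verify that $q\times\mathrm{id}_{\Rpp}$ is a quotient map whenever $q$ is an open surjection. I would prove this by showing that $q\times\mathrm{id}_{\Rpp}$ is itself open, which is sufficient for a continuous surjection to be a quotient map. For an arbitrary open set $W\subset\XX\times\Rpp$ and a point $(\tilde{x},t)\in (q\times\mathrm{id})(W)$, pick $(x,t)\in W$ with $qx=\tilde{x}$. By definition of the product topology, there exist an open $U\ni x$ in $\XX$ and an open interval $I\ni t$ in $\Rpp$ with $U\times I\subset W$. Because $q$ is open, $qU$ is open in $\XXT$, and $qU\times I$ is an open neighbourhood of $(\tilde{x},t)$ contained in $(q\times\mathrm{id})(W)$. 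This establishes openness.

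Putting the pieces together, $q\times\mathrm{id}_{\Rpp}$ is a continuous open surjection, hence a quotient map. The continuity of $q\circ T = \tilde{T}\circ(q\times\mathrm{id}_{\Rpp})$ now forces $\tilde{T}$ to be continuous as a map $\XXT\times\Rpp\to\XXT$, which is exactly the claim. The only mild subtlety is that quotient maps do not in general behave well under products (the product of two quotient maps need not be a quotient map), so the argument genuinely uses openness of $q$ rather than merely the quotient property; the openness hypothesis in the lemma is precisely what is needed for this step.
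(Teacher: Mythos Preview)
Your proof is correct and rests on the same key ingredient as the paper's: the product map $q\times\mathrm{id}_{\Rpp}$ (equivalently $\mathrm{id}_{\Rpp}\times q$) is open. The difference is purely one of packaging. The paper argues directly: for an open $\tilde{G}\subset\XXT$, it writes
\[
\tilde{T}^{-1}(\tilde{G})=\big\{(t,qx):T_tx\in[G]\big\}
=(\mathrm{id}\times q)\big(\{(t,x):T_tx\in[G]\}\big),
\]
where $[G]=q^{-1}\tilde{G}$, and concludes that this is open as the image of an open set under the open map $\mathrm{id}\times q$. You instead prove that $q\times\mathrm{id}$ is open, deduce it is a quotient map, and then invoke the universal property of quotient maps on the commuting square. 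Unwinding the universal property recovers exactly the paper's computation, so the two arguments coincide at the level of sets and topology; yours is the categorical rephrasing. Your closing remark about why mere quotient-ness of $q$ would not suffice (since quotient maps need not be stable under products) is a nice observation that the paper leaves implicit.
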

\begin{proof}
  Let $\tilde{G}$ be an open set in $\XXT$. Then $\tilde{G}=qG$, where
  $G=q^{-1}\tilde{G}$ is open in $\XX$, and
  \begin{align*}
    \big\{(t,\tilde{x})\in\Rpp\times\XXT\colons 
    \tilde{T}_t\tilde{x}\in \tilde{G}\big\}
    &=\big\{(t,qx)\in\Rpp\times\XXT\colons q(T_tx)\in qG\big\} \\
    &=\big\{(t,qx)\in\Rpp\times\XXT\colons T_tx\in [G]\big\}.
  \end{align*}
  The right-hand side is open in $\Rpp\times\XXT$. Indeed,
  $[G]=q^{-1}qG$ is open in $\XX$, the set $\{(t,x)\colons T_tx\in [G]\}$ is open in
  $\Rpp\times\XX$, and the map $(t,x)\mapsto (t,qx)$ is open.
\end{proof}

\paragraph{Ideals on quotient spaces}
Let $\sS$ be an ideal on $\XX$.  Denote
\begin{displaymath}
  [\sS]=\big\{B\in\sS\colons [B]\in\sS\big\}, 
\end{displaymath}
which is also an ideal on $\XX$ and clearly $[\sS]\subset\sS$.
Define an ideal $q\sS$ on the quotient space as the family of sets of
the form $qB$ for $B\in[\sS]$.  In other words, bounded sets in $\XXT$ are
images of sets whose saturations are bounded in $\XX$. Thus,
$\tilde{B}\in q\sS$ exactly if $q^{-1}\tilde{B}\in\sS$; equivalently,
$q^{-1}\tilde{B}\in[\sS]$. Consequently, $q\sS$ is the ideal induced
on $\XXT$ through the quotient map $q$.
If Condition~\hyperref[condS]{(S)} holds and $\sS$ is scaling
consistent, then $q\sS$ is likewise  
a scaling consistent ideal on $\XXT$.
\index{ideal!on quotient spaces}

The following result aims to show that, under certain conditions, $q\sS$
is generated by a continuous modulus, hence, satisfies
Condition~\hyperref[condB]{(B)}. This enables the application of 
results based on the polar decomposition in the
context of quotient spaces.

\begin{lemma}
  \label{lemma:q-B} 
  Assume that Condition~\hyperref[condS]{(S)} holds, the quotient map
  $q$ is an open map and that an ideal $\sS$ on the 
  topological space $\XX$ is generated by a continuous modulus
  $\modulus$. Assume further that the map $x\mapsto\ttau([x])$ is
  continuous on $\XX$,
  where $\ttau$ is defined in \eqref{eq:12}. Then $[\sS]$ is generated
  by $\ttau([x])$ and the ideal $q\sS$ is generated by the continuous
  modulus
  \begin{displaymath}
    \tilde\modulus(\tilde{x})=\ttau\big(q^{-1}\tilde{x}\big), \quad
    \tilde{x}\in\XXT. 
  \end{displaymath}
  \index{modulus!on quotient spaces}
\end{lemma}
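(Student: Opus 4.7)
The plan is to unfold the definitions in two stages, first identifying $[\sS]$ as an ideal generated by a continuous modulus on $\XX$, and then transferring that modulus to $\XXT$ via the universal property of the quotient topology.

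First, I would show that $\tau_0(x) := \ttau([x])$ is itself a continuous modulus on $\XX$. Continuity is given by hypothesis, and condition (S) yields $[T_tx] = T_t[x]$ for all $t>0$ (the inclusion $T_t[x]\subset[T_tx]$ is immediate from (S); the reverse follows by applying (S) to $T_{t^{-1}}$). Hence
\begin{displaymath}
  \tau_0(T_tx)=\inf_{y\in T_t[x]}\modulus(y)
  =\inf_{z\in[x]}\modulus(T_tz)=t\,\tau_0(x),
\end{displaymath}
so $\tau_0$ is homogeneous. Next, I would verify $[\sS]=\sS_{\tau_0}$. Since $B\subset[B]$ and $\sS$ is hereditary, $[B]\in\sS$ automatically implies $B\in\sS$, so $[\sS]=\{B:[B]\in\sS\}$. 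Writing $[B]$ as the union of the classes $[x]$ over $x\in B$ gives
\begin{displaymath}
  \inf_{y\in[B]}\modulus(y)=\inf_{x\in B}\ttau([x])=\inf_{x\in B}\tau_0(x),
\end{displaymath}
so $[B]\in\sS_\modulus$ if and only if $B\in\sS_{\tau_0}$, which proves the first claim.

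Second, I would define $\tilde\modulus$ on $\XXT$ by $\tilde\modulus(\tilde x)=\tau_0(x)$ for any $x\in q^{-1}\tilde x$. Well-definedness is automatic since $\tau_0$ is constant on equivalence classes by construction, and this gives the factorisation $\tilde\modulus\circ q=\tau_0$. Continuity of $\tilde\modulus$ then follows from the universal property of the quotient topology: a map out of $\XXT$ is continuous precisely when its composition with $q$ is continuous, and $\tau_0$ is continuous by hypothesis. Homogeneity of $\tilde\modulus$ is inherited from that of $\tau_0$ via \eqref{scaling-on-XXT}:
\begin{displaymath}
  \tilde\modulus(\tilde T_t\tilde x)=\tilde\modulus(qT_tx)=\tau_0(T_tx)
  =t\,\tau_0(x)=t\,\tilde\modulus(\tilde x).
\end{displaymath}

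Finally, I would check $q\sS=\sS_{\tilde\modulus}$. By definition, $\tilde B\in q\sS$ precisely when $q^{-1}\tilde B\in[\sS]$, which by the first step is equivalent to $\inf_{x\in q^{-1}\tilde B}\tau_0(x)>0$. Since $\tau_0(x)=\tilde\modulus(qx)$ and $q(q^{-1}\tilde B)=\tilde B$, this infimum equals $\inf_{\tilde x\in\tilde B}\tilde\modulus(\tilde x)$, so $\tilde B\in q\sS$ if and only if $\tilde B\in\sS_{\tilde\modulus}$.

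The argument is essentially a careful bookkeeping exercise; there is no real obstacle once one sees that the given continuity hypothesis is exactly what is needed to factor $\tau_0$ through $q$ as a continuous map. The only subtle point is that the continuity of $\tilde\modulus$ is obtained not by a direct verification but via the universal property of the quotient topology, and that openness of $q$, though assumed, is not actually invoked in the present lemma (it plays its role elsewhere in ensuring that $\tilde T$ is continuous and that $\XXT$ has good separation properties).
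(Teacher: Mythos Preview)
Your proof is correct and the overall structure matches the paper's, but you establish continuity of $\tilde\modulus$ by a different route. The paper writes
\[
  \{\tilde x:\tilde\modulus(\tilde x)\in(a,b)\}=q\{x:\ttau([x])\in(a,b)\}
\]
and invokes the assumption that $q$ is open to conclude the left-hand side is open. You instead observe that $\tilde\modulus\circ q=\tau_0$ is continuous by hypothesis and appeal to the universal property of the quotient topology, which requires only that $q$ be a quotient map, not that it be open. Your remark that openness of $q$ is not actually needed in this lemma is therefore justified, and your argument is marginally more general on this point. You are also more explicit than the paper in checking homogeneity of $\tau_0$ and $\tilde\modulus$ (via condition~(S) and \eqref{scaling-on-XXT}), which the paper leaves implicit. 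The identification of $[\sS]$ with $\sS_{\tau_0}$ and of $q\sS$ with $\sS_{\tilde\modulus}$ proceeds identically in both proofs.
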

\begin{proof}
  For each open interval $(a,b)$, 
  \begin{displaymath}
    \big\{\tilde{x}\colons \tilde\modulus(\tilde{x})\in (a,b)\big\}
    =\big\{qx\colons \tilde\modulus(qx)\in (a,b)\big\}
    =q\big\{x\colons \ttau([x])\in (a,b)\big\}.
  \end{displaymath}
  The continuity of $\tilde\modulus$ follows from the imposed
  condition on $\ttau$ and the fact that $q$ is an open map.
  By construction, $B\in[\sS]$ if and only if $[B]\in\sS$,
  equivalently, $\ttau([B])>0$. Since
  \begin{displaymath}
    \ttau([B])=\inf\big\{\ttau([x])\colons x\in B\big\},
  \end{displaymath}
  the ideal $[\sS]$ is generated by $\ttau([x])$.  For each $t>0$,
  \begin{displaymath}
    q^{-1}\big\{\tilde{x}\colons \tilde\modulus(\tilde{x})>t\big\}
    =\big\{x\colons \tilde\modulus(qx)>t\big\}
    =\big\{x\colons \ttau([x])>t\big\} \in [\sS]. 
  \end{displaymath}
  If $\tilde{B}\in q\sS$, then $q^{-1}\tilde{B}\in \sS$, equivalently,
  $q^{-1}\tilde{B}\in [\sS]$, so that
  \begin{displaymath}
    \inf_{\tilde{x}\in\tilde{B}} \tilde\modulus(\tilde{x})
    =\inf_{x\in q^{-1}\tilde{B}} \ttau([x])>0.
  \end{displaymath}
  Thus, the ideal $q\sS$ is generated by the continuous modulus
  $\tilde\modulus$. 
\end{proof}

Assume that the ideal $\sS$ on $\XX$ satisfies Condition~\hyperref[condB0]{(B$_0$)} and consequently is
generated by a family $(\modulus_j)_{j\in \JJ}$ of continuous moduli.
For a finite subset $I\subset \JJ$, define 
\begin{displaymath}
  \modulus_I(x)=\max_{i\in I} \modulus_i(x),\quad x\in\XX.
\end{displaymath}
While the families $(\modulus_j)_{j\in \JJ}$ and
$(\modulus_I)_{I\subset \JJ}$ generate the same ideal, the latter family
is essential when passing to quotient spaces.

\begin{example}
  Let $\XX$ be the space of continuous functions on $[0,1]$. Assume
  that functions $x$ and $y$ are equivalent if $x-y$ is a
  constant function. For each rational $u_i\in[0,1]$ define the modulus
  $\modulus_i(x)=|x(u_i)|$, $x\in\XX$. Then, for each $i$,
  $\ttau_i([x])=0$ because the equivalence
  class $[x]$ contains a function that is zero at $u_i$. However,
  $\ttau_I([x])$ does not vanish, if $I\supset\{ i,j\}$ with 
  $x(u_i)\neq x(u_j)$; for $I=\{i,j\}$ it equals
  $\frac12 |x(u_i)-x(u_j)|$.
\end{example}

The following result is proved analogously to Lemma~\ref{lemma:q-B}.

\begin{lemma}
  Assume that the quotient map $q$ is an open map on the topological
  space $\XX$. If $\sS$ satisfies Condition~\hyperref[condB0]{(B$_0$)}
  with a family 
  $(\modulus_j)_{j\in \JJ}$ of continuous moduli such that, for every
  finite subset $I\subset\JJ$, the map
  \[
    x\mapsto \ttau_I([x])
  \]
  is continuous, where $\ttau_I$ is defined using
  $\modulus_I=\max_{i\in I}\modulus_i$, then the pushforward ideal
  $q\sS$ also satisfies Condition~\hyperref[condB0]{(B$_0$)} and is
  generated by the family of continuous moduli
  $\tilde{\modulus}_I(\tilde{x})=\ttau_I(q^{-1}\tilde{x})$,
  $\tilde{x}\in\XXT$ for all finite subsets $I\subset \JJ$.
\end{lemma}

\paragraph{Regularly varying random elements in quotient spaces}
If $\xi$ is a Baire measurable random element in $\XX$, then $q\xi$ is
a Baire measurable random element in $\XXT$ owing to the continuity of
$q$. With the above-defined scaling and ideal on $\XXT$, we can consider
regular variation on the quotient space $\XXT$.
	
\begin{theorem}\label{rv-XXT}
  Let $\XXT$ be a quotient space of a topological space $\XX$ with the
  quotient map $q$, and assume that Condition~\hyperref[condS]{(S)}
  holds so that the scaling $\tilde T$ is well defined on $\XXT$.
  % Assume that $\sS$ satisfies \hyperref[condB0]{(B$_0$)} with
  % continuous moduli $\modulus_i$ such that the map
  % $x\mapsto\ttau_i([x])$
  % is continuous for all $i$.
  If $\xi\in\RV(\XX,T,\sS,g,\mu)$ for a scaling-consistent ideal $\sS$,
  and the restriction of $\mu$ to $[\sS]$ is non-trivial on $[\sS]$,
  that is, $\mu(B)>0$ for some 
  $B\in[\sS]$, then
  \begin{displaymath}
    q\xi\in\RV(\XXT,\tilde{T},q\sS,g,\tilde{\mu}),
  \end{displaymath}
  where $\tilde{\mu}$ is the pushforward under $q$
  of the restriction of
  $\mu$ to the union of the sets in $[\sS]$.
  \index{regular variation!on quotient spaces}
\end{theorem}
\begin{proof}
  The quotient map $q:\XX\to \XXT$ is continuous and
  $\tilde{T}_t qx = q T_t x$ for all $x\in \XX$; thus, $q$ is a
  continuous morphism. By the construction of $q\sS$, the map $q$ is
  bornologically consistent. Then, 
  \begin{displaymath}
    (q\mu)(qB)=\mu(q^{-1}(qB))=\mu([B])
  \end{displaymath}
  and $\mu([B])>0$ for some $B\in\sS$ by the assumed non-triviality of
  $\mu$ on $[\sS]$. Thus, $qB\in q\sS$, so that
  $q\mu$ is non-trivial on $q\sS$ and the result follows from
  Theorem~\ref{mapping-theorem}.
\end{proof}

\begin{remark}
  \label{rem:qS=S}
  If $[\sS]=\sS$, then the quotient map always preserves the regular
  variation property. This occurs, for instance, if $\sS$ is the
  metric exclusion ideal obtained by excluding the saturation of a set.
  Furthermore, if the modulus $\modulus$ is
  constant on each equivalence class, then
  $\tilde\modulus(qx)=\modulus(x)$ for all $x\in\XX$ and $[\sS]=\sS$.
\end{remark}

Under additional conditions, it is possible to infer the regular variation
property of a random element $\xi$ in $\XX$ from the regular variation
property of $q\xi=\tilde{\xi}$ in $\XXT$. To achieve this, one must identify a
\emph{selection map}
\index{selection map}
$\tilde{q}$, which is a continuous morphism from
$\XXT$ to $\XX$ such that $\tilde{q}(qx)\in [x]$ for all $x\in\XX$.
If $\tilde{q}$ is bornologically consistent, then
$\tilde{q}(\tilde{\xi})$ is a regularly varying random element in
$\XX$. For instance, the selection map is bornologically consistent,
if $\tilde{q}(\tilde{x})=x$ implies
$\modulus(x)=\ttau([x])$ for all $x\in\XX$, that is, 
$\tilde{q}(qx)$ selects an element from $[x]$ with the minimal
modulus.

\begin{example}[Modulus as the quotient map]
  Let $\XX$ be a topological space with the ideal $\sS_\modulus$
  generated by a continuous modulus $\modulus$. For $x,y\in\XX$, let
  $x\sim y$ if $\modulus(x)=\modulus(y)$.
  The quotient space $\XXT$ may be identified with the image of
  $\modulus$ equipped with the quotient topology induced by the map
  $q x=\modulus(x)$. If this quotient topology coincides with the usual
  topology on $\R_+$, then this recovers the statement of
  Corollary~\ref{cor:tau-map}. Condition~\hyperref[condS]{(S)} holds
  and $[\sS]=\sS$. If $\xi$ is 
  regularly varying on $\sS_\modulus$, then $q\xi=\modulus(\xi)$ is
  regularly varying on $q\sS=\sR_0$. This recovers the statement of
  Corollary~\ref{cor:tau-map}.
  % If $\XX$ is a ball compact Polish
  % space, then $q^{-1}q\xi=[\xi]$ is a regularly varying random closed
  % set in $\XX$.
\end{example}

\begin{example}[Equivalence up to translations]
  If $\XX$ is a normed linear space, quotient spaces are usually
  defined using a linear subspace $\LL$ of $\XX$, whereby $x\sim y$
  if and only if $x-y\in\LL$. Then $[B]=B+\LL=\{x+y\colons x\in B,y\in\LL\}$. If
  $\XX$ is a Hilbert space, then the quotient space may be identified
  with the orthogonal complement $\LL^\perp$. The corresponding quotient
  map is clearly open. If the ideal $\sS_0$ is obtained by excluding
  the zero element in $\XX$, then $[\sS_0]$ is the metric exclusion
  ideal on $\XX$ obtained by excluding $\LL$.
  If $\xi$ is regularly varying on $\sS_0$ and its tail measure $\mu$
  is non-trivial outside $\LL$, then $q\xi$ is regularly varying in
  the quotient space. The tail measure is the pushforward of the
  restriction of $\mu$ to $\XX\setminus\LL$.

  It is known that each separable Banach space $\XX$ is a quotient of
  the space $\ell_1$ of absolutely summable sequences via the
  quotient map
  \begin{displaymath}
    qz=\sum_{i=1}^\infty z_ix_i,\quad z=(z_i)\in\ell_1,
  \end{displaymath}
  where $(x_i)$ is a countable dense set in the unit ball of $\XX$;
  see, for example, Theorem 5.1 in \cite{MR2766381}. The above argument
  applies with $\LL=\{z\in\ell_1\colons \sum z_ix_i=0\}$. Thus, regular
  variation in $\XX$ may be derived from regular variation
  on $\ell_1$ analysed in Section~\ref{sec:infinite-sequences}.

  Furthermore, each separable Banach space is isometric to a subspace
  of the quotient space of $\ell_\infty$ with respect to the space
  $c_0$ of sequences converging to zero; see
  \cite[Proposition~5.5]{MR2766381}. In this way, regular
  variation on $\XX$ follows from regular variation on
  $\ell_\infty$ if the tail measure is not concentrated on $c_0$.
\end{example}

\section{Product spaces}
\label{sec:scal-prod-spac}

\paragraph{Scaling along all components.}

Assume that $\XX$ is a Hausdorff topological space equipped with a
continuous scaling $T_t$. Fix $m\in\NN$. The scaling can be lifted to
the Cartesian power $\XX^m$ by applying it to every factor, that is,
\index{product space}
\begin{displaymath}
  T_t(x_1,\dots,x_m)=(T_tx_1,\dots,T_tx_m),\quad
  x_1,\dots,x_m\in\XX. 
\end{displaymath}
If $\nu$ is a Radon (and so Borel) measure on $\XX$, denote by
$\nu^{\otimes m}$ the $m$-th product measure on Borel subsets of the
Cartesian power $\XX^m$; see \cite[Theorem~7.6.2]{bogachev07}, which
relies on the Hausdorff assumption. It is known from
\cite[Theorem~4.1.11(iii)]{bogachev18} that every Borel measure on a
completely regular Souslin space is Radon.  The product measure is
also well defined and Fubini's theorem holds if all factors are
tau-additive Borel measures; see \cite[Theorem~7.6.5]{bogachev07}.

\begin{theorem}
  \label{thr:power-space}
  Let $\XX$ be a Hausdorff topological space equipped with a
  continuous scaling $T$ 
  and a topologically and scaling consistent ideal $\sX$. If
  \begin{displaymath}
    \nu\in\RV(\XX,T,\sX,g,\mu)
  \end{displaymath}
  is a Radon probability measure, and assume that
  $T_{t^{-1}}\nu\wto\nu_0$. 
  Then, for every $m\in\NN$ and
  $k\in\{1,\dots,m\}$, we have
  \begin{displaymath}
    \nu^{\otimes m}\in \RV(\XX^m,T,\sX^m(k),g^k,\mu^{(k)}),
  \end{displaymath}
  where
  \begin{displaymath}
    \mu^{(k)}(\bdiff(x_1,\dots,x_m))
    =\sum_{I\subset\{1,\dots,m\},\card(I)=k}
    \prod_{i\in I}\mu(\bdiff x_i)\prod_{j\notin I}\nu_0(\bdiff x_j).
  \end{displaymath}
  \index{regular variation!on product space}
\end{theorem}
\begin{proof}
  Recall that a set $A$ in $\XX^m$ belongs to the ideal $\sX^m(k)$ if
  and only if there exist $B\in\sX$ and sets 
  $I_1,\dots,I_l\subset \{1,\dots,m\}$, each of cardinality $k$, such
  that
  \begin{displaymath}
    A\subset \bigcup_{j=1}^l A_{1j}\times \cdots\times A_{mj},
  \end{displaymath}
  where $A_{ij}=B$ if $i\in I_j$ and $A_{ij}=\XX$ otherwise.  Fix 
  sets $B_1,\dots,B_k\in\sX$ and set
  $I=\{i_1,\dots,i_k\}\subset\{1,\dots,m\}$ of cardinality $k$.

  Denote by
  $(B_1,\dots,B_k)_I$ the set obtained as the Cartesian
  product $A_1\times\cdots\times A_m$ of the sets
  \begin{displaymath}
    A_j=
    \begin{cases}
      B_l, & \text{if}\; j=i_l\in I=\{i_1,\dots,i_k\},\\
      \XX, & \text{otherwise},
    \end{cases}
    \quad j=1,\dots,m. 
  \end{displaymath}
  The family of such sets $(B_1,\dots,B_k)_I$ for all Baire sets
  $B_1,\dots,B_k$ in $\sX$ and all sets $I$ of cardinality $k$
  generates the ideal $\sX^m(k)$. Together with rectangles in the
  remaining coordinates taken from a weak-convergence determining
  class for $\nu_0$, these sets determine the vague topology on
  $\sX^m(k)$. If all $B_1,\dots,B_k$ are
  $\mu$-continuity sets, then
  \begin{displaymath}
    g(t)^k\nu^{\otimes m}
    \big(T_t(B_1,\dots,B_k)_I\big)
    =\prod_{j=1}^k \Big(g(t)\nu(T_tB_j)\Big)
    \to \prod_{j=1}^k \mu(B_j)\quad \text{as}\; t\to\infty. 
  \end{displaymath}
  Note that the sets $(B_1,\dots,B_k)_I$ with $\card(I)\leq k-1$ do
  not belong to the ideal $\sX^m(k)$.

  To identify the
  vague limit, it is enough to consider rectangles in which the $k$
  large coordinates lie in sets from $\sX$, while the remaining
  coordinates lie in $\nu_0$-continuity sets. Thus, for
  $B_1,\dots,B_k\in\sX$ and Borel $\nu_0$-continuity sets $C_j$,
  $j\notin I$, we obtain
  \[
    g(t)^k\nu^{\otimes m}\big(T_t R\big)
    =
    \prod_{l=1}^k g(t)\nu(T_tB_l)
    \prod_{j\notin I}\nu(T_tC_j)
    \to
    \prod_{l=1}^k\mu(B_l)\prod_{j\notin I}\nu_0(C_j),
  \]
  where $R$ is the corresponding rectangle.
  % \begin{displaymath}
  %   g(t)^k\nu^{\otimes m}
  %   \big(T_t(B_1,\dots,B_l)_I\big)
  %   =\prod_{j=1}^k \Big(g(t)\nu(T_tB_j)\Big)
  %   \prod_{j=k+1}^l \nu(T_tB_j)
  %   \to 0\quad \text{as}\; t\to\infty.
  % \end{displaymath}
  If $I=\{i_1,\dots,i_l\}$ has cardinality $l\geq k+1$ and the
  coordinates in $I$ are are constrained to lie in sets
  $B_1,\dots,B_l\in\sX$, define $(B_1,\dots,B_l)_I$ analogously. Then
  \[
    g(t)^k\nu^{\otimes m}\big(T_t(B_1,\dots,B_l)_I\big)
    =
    \prod_{j=1}^k \big(g(t)\nu(T_tB_j)\big)
    \prod_{j=k+1}^l \nu(T_tB_j)
    \to0 .
  \]
  Thus no terms with more than $k$ large coordinates contribute to
  the limit.
 
  The limiting measure is exactly $\mu^{(k)}$ from the statement.
\end{proof}

Consider now the product of $m$ possibly different topological spaces
$\XX_1,\dots,\XX_m$ with ideals $\sX_1,\dots,\sX_m$. The scaling on
the space $\XX_1\times\cdots\times\XX_m$ is defined by applying the
scalings $T^{(i)}$ on each of the factors, so that
$T_t x=(T_t^{(1)}x_1,\dots,T_t^{(m)}x_m)$ for
$x=(x_1,\dots,x_m)\in \XX_1\times\cdots\times\XX_m$. Assume that
$\nu_i\in\RV(\XX_i,T^{(i)},\sX_i,g_i,\mu_i)$, $i=1,\dots,m$, are Radon
probability measures and that $T_{t^{-1}}^{(i)}\nu_i\wto\nu_{0,i}$ as
$t\to\infty$. 

The regular variation property of the measure
$\nu=\nu_1\otimes\cdots\otimes\nu_m$ depends on the construction of
the ideal of the product space. For instance, assume that
$\sX=\sX_1\times\cdots\times\sX_m$, meaning that $B\in\sX$ whenever at least
one of its projections belongs to the corresponding ideal $\sX_i$.
Assume that there exists
$i\in\{1,\dots,m\}$ such that $g_i(t)/g_j(t)\to 0$ as $t\to\infty$
for all $j\neq i$. In this case only the $i$-th component determines
the regular variation of $\nu$, namely, $\nu$ is regularly varying
with the normalising function $g_i$ and the tail measure being the
product of $\mu_i$ and $\nu_{0,j}$ in the remaining coordinates.
If several of the slowest-growing normalising functions are
asymptotically equivalent, then the tail measure on the product ideal
$\sX_1\times\cdots\times\sX_m$ is the sum of the corresponding
one-large-component contributions, with the remaining coordinates
carrying their weak limits
$\nu_{0,j}$, in particular $\delta_0$ when
$T_{t^{-1}}^{(j)}\nu_j\wto\delta_0$.

Assume that $\sX=\sX_1\otimes\cdots\otimes\sX_m$, meaning that
$B\in\sX$ if all its projections are bounded. Then $\nu$ is regularly
varying on $\sX$ with the normalising function $g_1\cdots g_m$ and the
tail measure being $\mu_1\otimes\cdots\otimes\mu_m$.

\begin{example}[Maps on product spaces]
  \label{ex:maps-products}
  Let $f$ be a continuous map from $\XX_1\times\cdots\times\XX_m$ into
  another topological space $\YY$ equipped with a continuous scaling
  and a topologically and scaling consistent ideal $\sY$. If $f$ is a
  bornologically consistent morphism and
  $\nu=\nu_1\otimes\cdots\otimes\nu_m$ (being the product of Radon
  measures) is regularly varying on the product space and the
  pushforward of the tail measure of $\nu$ under $f$ is non-trivial on
  $\sY$, then the pushforward of $\nu$ under $f$ is regularly varying
  on $\YY$, and its tail measure is the pushforward of the tail
  measure of $\nu$.

  For instance, consider the
  product of identical spaces $\R_+$ with the ideal
  $\sR_0$. Let $\xi_1,\dots,\xi_m$ be independent non-negative random
  variables such that 
  \begin{displaymath}
    \xi_i\in\RV(\R_+,\mydot,\sR_0,g,\mu_i), \quad i=1,\dots,m.
  \end{displaymath}
  The function $f(\xi_1,\dots,\xi_m)=\xi_1+\cdots+\xi_m$ is a
  bornologically consistent continuous morphism from
  $(\sR_0)^m(1)=\sR_0^m$ to $\R_+$ with the ideal $\sR_0$. By
  Theorem~\ref{thr:power-space} with $k=1$, we see that
  $\xi_1+\cdots+\xi_m$ is regularly varying on $\R_+$ with the tail
  measure being $\mu_1+\cdots+\mu_m$. If, more generally, $\xi_i$ is
  regularly varying with the normalising function $g_i$ and the
  normalising functions are not asymptotically equivalent, then only
  the components with the slowest growing normalising functions,
  equivalently the heaviest tails, contribute to the sum giving the
  tail measure.
\end{example}

\paragraph{Scaling along one component}

A scaling along \emph{one component} in a product space is often
useful for applications of the continuous mapping theorem. For
example, consider a pair $(\eta,\xi)\in\YY\times\XX$ where scaling
is applied only to the 
second component.
\index{scaling!along one component}
In this case, the ideal on the product space imposes no
restriction on the first component, and so the regular variation of
the pair means that $g(t)\E h(\eta,T_{t^{-1}}\xi)$ converges to a
finite limit for all
continuous bounded functions $h$ supported on $\YY\times B$ for $B$
from an ideal on $\XX$. Using the continuous mapping theorem, we can
often establish regular variation for random elements of the form
$f(\eta,\xi)$, where $f$ is a function homogeneous in its second
argument. This approach will be used in 
the next section to derive a generalisation of Breiman's lemma
and in Section~\ref{sec:continuous-functions} to establish the regular
variation property of random functions.

\begin{theorem}
  \label{thr:pair}
  Consider the product $\YY\times \XX$ of perfectly normal topological
  spaces $\YY$ and $\XX$, where $\XX$ is equipped with a continuous
  scaling $T_t$ and an ideal $\sX$ generated by the continuous finite
  modulus $\modulus^\XX$ on $\XX$. Define a scaling on the product
  space by $T_t(y,x)=(y,T_tx)$. Assume that
  $\xi\in\RV(\XX,T,\sX,g,\mu)$, and let $(\eta_x)_{x \in \XX}$ be a
  jointly measurable stochastic process taking values in $\YY$ that is independent of
  $\xi$. Furthermore, suppose that
  \begin{equation}
    \label{eq:hxW}
    \eta_x\dto  W \quad  \text{as}\;\;\; \modulus^\XX(x) \to \infty
  \end{equation}
  for some random element $W$ in $\YY$. Assume that $\mu$ and the
  distribution $P_W$ of $W$ are tau-additive measures. 
  Then $(\eta_\xi,\xi)\in\RV(\YY\times\XX,T,\sV,g,\mu')$, where the
  ideal $\sV$ is the product ideal generated by the sets $\YY\times A$,
  $A\in\sX$, and 
  $\mu'$ denotes the tau-additive extension of $P_W\otimes\mu$.
\end{theorem}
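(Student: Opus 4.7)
The plan is to reduce vague convergence on $\sV$ to a two-function product convergence via Lemma~\ref{lemma:conv-det-product}, taking $\sfD_\XX=\fC(\XX,\sX)$ (which determines vague convergence on $\sX$) and $\sfD_\YY=\Cont_b(\YY)$ (which determines weak convergence on $\YY$ and contains the constant $1$). Since $\sX=\sS_{\modulus^\XX}$ satisfies (B), it is topologically consistent and has the countable open base $(T_{n^{-1}}\{\modulus^\XX>1\})_{n\in\NN}$; consequently $\sV$ has the countable open base $(T_{n^{-1}}\{\modulus^\XX>1\}\times\YY)_{n\in\NN}$ and inherits topological consistency, placing us exactly in the setting of the lemma. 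By hypothesis both $\mu$ and $P_W$ are $\tau$-additive, so the product $\mu\otimes P_W$ (defined unambiguously on $\sBA(\XX)\otimes\sBA(\YY)$ and finite on each $B\times\YY$ with $B\in\sX$) admits a unique $\tau$-additive Borel extension $\mu'\in\Mb[\sV]$ on $\XX\times\YY$, which is the tail-measure candidate.

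The core step is to verify \eqref{eq:product-two-elements} for arbitrary $f\in\fC(\XX,\sX)$ and $h\in\Cont_b(\YY)$. Set $\phi(x):=\E[h(\eta_x)]$. Independence of $\xi$ and $(\eta_x)_{x\in\XX}$ gives
\begin{equation*}
g(t)\,\E\big[f(T_{t^{-1}}\xi)\,h(\eta_\xi)\big]=g(t)\,\E\big[f(T_{t^{-1}}\xi)\,\phi(\xi)\big].
\end{equation*}
Since $f\in\fC(\XX,\sX)$ and $\sX=\sS_{\modulus^\XX}$, the support of $f$ lies in $\{\modulus^\XX>c\}$ for some $c>0$; hence $f(T_{t^{-1}}\xi)\ne0$ forces $\modulus^\XX(\xi)>ct$. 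The Portmanteau reformulation of assumption \eqref{eq:hxW} yields $\phi(x)\to\E[h(W)]$ as $\modulus^\XX(x)\to\infty$, so $\varepsilon_t:=\sup_{\modulus^\XX(x)>ct}|\phi(x)-\E[h(W)]|\to 0$.

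Splitting the expectation,
\begin{equation*}
g(t)\,\E\big[f(T_{t^{-1}}\xi)\phi(\xi)\big]=\E[h(W)]\cdot g(t)\E[f(T_{t^{-1}}\xi)]+g(t)\,\E\big[f(T_{t^{-1}}\xi)(\phi(\xi)-\E[h(W)])\big],
\end{equation*}
the first term converges to $\E[h(W)]\int f\,d\mu$ by $\xi\in\RV(\XX,T,\sX,g,\mu)$ (after approximating $f$ by continuous functions on a $\mu$-continuity base, if needed), while the second is bounded by $\|f\|_\infty\,\varepsilon_t\,g(t)\Prob{\modulus^\XX(\xi)>ct}$, which tends to $0$ because $g(t)\Prob{\modulus^\XX(\xi)>ct}\to c^{-\alpha}\mu(\{\modulus^\XX>1\})$ by Corollary~\ref{cor:tau-map}. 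Since $\int f(x)h(y)\,\mu'(d(x,y))=\E[h(W)]\int f\,d\mu$ by construction of $\mu'$, we obtain exactly \eqref{eq:product-two-elements}, and Lemma~\ref{lemma:conv-det-product} delivers $g(t)\Prob{(T_{t^{-1}}\xi,\eta_\xi)\in\cdot}\vto[\sV]\mu'$.

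The main obstacle is the handling of $\mu\otimes P_W$ on the possibly non-metrisable product space: one must justify that $\mu'\in\Mb[\sV]$ exists as a $\tau$-additive Borel measure whose restriction to product sets factorises (so that the identity $\int f(x)h(y)\mu'(d(x,y))=(\int f\,d\mu)(\int h\,dP_W)$ is legitimate). This is the reason $\tau$-additivity of both $\mu$ and $P_W$ is imposed; once this extension is in place, the remainder of the argument is a conditioning-and-truncation computation in which the support condition $\{\modulus^\XX>ct\}\to\emptyset$ plays the same role as ``$\xi$ becomes extreme'' and drives $\phi(\xi)$ toward $\E[h(W)]$ uniformly.
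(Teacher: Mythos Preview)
Your proposal is correct and follows essentially the same approach as the paper: both rest on Lemma~\ref{lemma:conv-det-product} and the same conditioning computation that replaces $h(\eta_\xi)$ by $\phi(\xi)=\E[h(\eta_\xi)]$ and then by $\E[h(W)]$ using \eqref{eq:hxW}. The only difference is that you apply Lemma~\ref{lemma:conv-det-product} directly to the scaled measures $g(t)\Prob{(T_{t^{-1}}\xi,\eta_\xi)\in\cdot}$, whereas the paper first passes to conditional distributions given $\modulus^\XX(\xi)>tu$, uses the lemma there, and then invokes Theorem~\ref{thr:polar-decomp-4}; your route is slightly more streamlined and avoids that extra step.
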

\begin{proof}
  Note that the ideal $\sV$ on $\YY\times\XX$ is generated by the
  modulus $\modulus(y,x)=\modulus^\XX(x)$.

  Let $f:\XX\to\R_+$ be a bounded continuous function supported on
  $\{\modulus^\XX>u\}$ for some $u>0$, and let
  $h:\YY\to\R_+$ be a bounded continuous function. By independence of
  the process $(\eta_x)$ and $\xi$,
  \begin{displaymath}
    \E\big[f(T_{t^{-1}}\xi)h(\eta_\xi)\big]
    =
    \E\big[f(T_{t^{-1}}\xi)m_h(\xi)\big],
  \end{displaymath}
  where $m_h(x)=\E h(\eta_x)$. By \eqref{eq:hxW}, for every
  $\eps>0$ there exists $R>0$ such that
  \begin{displaymath}
    |m_h(x)-\E h(W)|\leq\eps
    \quad\text{whenever}\quad \modulus^\XX(x)>R .
  \end{displaymath}
  If $f(T_{t^{-1}}\xi)\neq0$, then $\modulus^\XX(\xi)>ut$, and so,
  for all sufficiently large $t$,
  \begin{displaymath}
    |m_h(\xi)-\E h(W)|\leq\eps
  \end{displaymath}
  on the support of $f(T_{t^{-1}}\xi)$.
  Hence
  \begin{displaymath}
    \lim_{t\to\infty}
    g(t)\E\big[f(T_{t^{-1}}\xi)h(\eta_\xi)\big]
    =
    \E h(W)\int f\,\diff\mu .
  \end{displaymath}
  Equivalently,
  \begin{displaymath}
    \lim_{t\to\infty}
    g(t)\E\big[f(T_{t^{-1}}\xi)h(\eta_\xi)\big]
    =
    \int h(y)f(x)\,\mu'(\bdiff(y,x)),
  \end{displaymath}
  where $\mu'$ denotes the tau-additive extension of
  $P_W\otimes\mu$. By linearity, the same convergence holds for bounded continuous
  functions $h$ of arbitrary sign. By 
  Lemma~\ref{lemma:conv-det-product}, this convergence for product test
  functions implies
  \begin{displaymath}
    g(t)\Prob{T_{t^{-1}}(\eta_\xi,\xi)\in\cdot}
    \vto \mu'
  \end{displaymath}
  on the ideal $\sV$. Therefore
  $(\eta_\xi,\xi)\in\RV(\YY\times\XX,T,\sV,g,\mu')$.
\end{proof}

\paragraph{Generalised Breiman lemma}

The following result implies that, if
$\xi\in\RV(\R_+,\mydot,\sR_0,g,c\theta_\alpha)$ and $\eta$ is a
non-trivial  random element in a Banach space $\YY$ independent of $\xi$ 
and satisfying
$\E \|\eta\|^{\alpha+\delta}<\infty$ for some
$\delta>0$, then the random element $\xi \eta$ is regularly varying in
$\YY$ with the same tail index $\alpha$. For real-valued $\eta$ this
fact is often referred to as Breiman's lemma; however, it holds much
more generally as demonstrated by the following result.

\begin{lemma}[Generalised Breiman lemma]
  \label{lem:BreimanNew}
  Let $\YY$ be a perfectly normal Lindel\"of topological space
  equipped with a continuous scaling $T_t$ and the ideal $\sY$
  generated by a continuous finite modulus $\modulus$. Consider the
  space $\ZZ=\YY\times(0,\infty)$ with the ideal $\sZ=\YY\times\sR_0$ (that
  is, $\sZ$ is generated by $\YY\times B$ for $B\in\sR_0$) and the
  scaling $T^\ZZ$ obtained by applying linear scaling to the second
  component.
  \index{Breiman's lemma!generalisation of}

  Let $(\eta,\zeta)$ be a random element in $\YY\times(0,\infty)$. Assume
  that
  \begin{equation}
    \label{eq:eta-zeta}
    (\eta,\zeta)\in \RV(\ZZ,T^\ZZ,\sZ,g,\sigma\otimes\theta_\alpha),
  \end{equation}
  where $\sigma$ is a finite Borel measure on $\YY$ such that
  $\sigma(\{\modulus>0\})>0$.
  Furthermore, assume that, for some $\delta>0$ and $C<\infty$,
  \begin{equation}
    \label{eq:BreimanAss2}
    \E \big[\modulus(\eta)^{\alpha+\delta}\mid\zeta\big]
    \leq C \quad \text{a.s. on}\; \{\zeta>0\}.
  \end{equation}
  % Furthermore, assume that one of the following conditions applies.
  % \begin{enumerate}[(i)]
  % \item For some $\delta>0$ and $C<\infty$,
  %   \begin{equation}
  %     \label{eq:BreimanAss2}
  %     \E \big[\modulus(\eta)^{\alpha+\delta}\mid\zeta\big]
  %     \leq C \quad \text{a.s. on}\; \{\zeta>0\}.
  %   \end{equation}
  % \item The tail of $\zeta$ satisfies $\Prob{\zeta>t}\sim
  %   ct^{-\alpha}$ with $\alpha>1$ and a constant $c$ and
  %   \eqref{eq:BreimanAss2} holds with $\delta=0$.
  % \end{enumerate}
  Then
  \begin{displaymath}
    T_\zeta\eta\in \RV(\YY,T,\sY,g,\mu),
  \end{displaymath}
  where $\mu$ is the pushforward of the product measure
  $\sigma \otimes \theta_\alpha$ under the map
  $(y,s)\mapsto T_s y$ from $\YY\times \R_+$ to $\YY$.
\end{lemma}
\begin{proof}
  Recall that a perfectly normal Lindel\"of space is also hereditary
  Lindel\"of; see \cite[Exercise~3.8.A]{eng89}. By
  \cite[Proposition~7.2.2(iv)]{bogachev07}, the topological 
  assumptions on $\YY$ ensure that $\sigma$ is tau-additive and we can
  work with the product measure $\sigma\otimes\theta_\alpha$.
  
  By \eqref{eq:eta-zeta}, 
  $\zeta\in\RV(\R_+,\mydot,\sR_0,g,c\theta_\alpha)$, where
  $c=\sigma(\YY)$.  Fix an arbitrary $\eps>0$ and a continuous
  function $h:\YY\to\R$ such that $|h|\leq M<\infty$ and $h$ is
  supported on $\{y\colons \modulus(y)>a\}$ for some $a>0$. Then
  \begin{align*}
    g(t) \E  h\big(T_{t^{-1}} T_\zeta\eta\big)
    &= g(t) \E \Big[h\big(T_{t^{-1}} T_\zeta\eta\big)
    \one_{\zeta>t\eps}  \Big]
      + g(t) \E \Big[ h \big(T_{t^{-1}}T_\zeta\eta\big)
      \one_{\zeta\leq t\eps}  \Big] \\
    &= I_1(t) + I_2(t)\,.
  \end{align*}
  By the assumption on $h$,
  \begin{align*}
    |I_2(t)|
    &=g(t)\E \big[ |h(T_{t^{-1}} T_\zeta\eta)|
      \one_{\zeta\leq t \eps}  \big]\\ 
    &\leq M g(t)
      \Prob{\modulus(\eta)\zeta\one_{\zeta\leq t \eps}>ta } \\
    & \leq 
      \frac{ M g(t)}{(ta)^{\alpha+\delta}}
      \E \Big[\zeta^{\alpha+\delta} \one_{\zeta\leq t\eps}  
      \E \big[\modulus(\eta)^{\alpha+\delta} \one_{\zeta>0}\mid
      \zeta\big] \Big]\\
    & \leq 
      \frac{ C M g(t)}{(ta)^{\alpha+\delta}}
      \E \big[\zeta^{\alpha+\delta} \one_{\zeta\leq t\eps}\big]
    \leq \frac{ C M }{a^{\alpha+\delta}}
      \frac{\E \big[\zeta^{\alpha+\delta} \one_{\zeta\leq t \eps}\big]}
      {(t\eps)^{\alpha+\delta}\Prob{\zeta>t \eps  }}\eps^{\alpha+\delta}
      g(t)\Prob{\zeta >t \eps}.
  \end{align*}
  By the regular variation of $\zeta$ and Karamata's theorem (see
  \cite[Section~B.4]{bdm}), we 
  obtain that $g(t)\Prob{\zeta >t \eps}\to c\eps^{-\alpha}$ and
  \begin{displaymath}
    \frac{\E \big[\zeta^{\alpha+\delta} \one_{\zeta\leq t \eps}\big]}
    {(t\eps)^{\alpha+\delta}\Prob{\zeta>t \eps  }}
    \to \frac{\alpha}{\delta}\quad \text{as}\; t\to\infty.
  \end{displaymath}
  Therefore,
  \begin{displaymath}
    \limsup_{t\to\infty} |I_2(t)|\leq
    \frac{ C M }{a^{\alpha+\delta}} \frac{\alpha c}{\delta}
    \eps^{\delta}.
  \end{displaymath}
  By \eqref{eq:eta-zeta}, applied to bounded continuous
  approximations of the function
  $(y,s)\mapsto h(T_sy)\one_{\{s>\eps\}}$, and since
  $\sigma\otimes\theta_\alpha$ gives no mass to
  $\YY\times\{\eps\}$, we obtain
  \begin{displaymath}
    I_1(t)=g(t)\E\big[h(T_{t^{-1}}T_\zeta\eta)\one_{\zeta>t\eps}\big]
    \to \int_\eps^\infty \int_{\YY} h(T_s y)\sigma(\bdiff
    y)\theta_\alpha(\bdiff s) \quad \text{as}\; t\to\infty. 
  \end{displaymath}
  We now show that the limit of $I_1(t)$ as $\eps\to0$ is finite.  By
  \eqref{eq:33} and \eqref{eq:eta-zeta}, there exists a random vector
  $(Y,U)$ in $\R_+\times[1,\infty)$ with independent components,
  satisfying
  \begin{equation}
    \label{eq:dtoWU}
    \Prob{(\modulus(\eta)^\alpha, t^{-1} \zeta) \in\cdot\,\mid \zeta>t}
    \wto \Prob{(Y,U) \in \cdot\, }.
  \end{equation}
  Note that $\Prob{U>s}=s^{-\alpha}$ for $s\geq 1$ and $Y$ is
  distributed as the pushforward of $c^{-1}\sigma$ under the map $y\to
  \modulus(y)^\alpha$. 
  Take an arbitrary non-decreasing sequence $(t_n)$, $t_n \to \infty$,
  and consider nonnegative random variables $Y_n$ satisfying
  \begin{displaymath}
    \Prob{Y_n\in \cdot} = \Prob{\modulus(\eta)^\alpha \in\cdot\,\mid
      \zeta>t_n}.
  \end{displaymath}
  Denote $p=(\alpha+ \delta)/\alpha>1$.  For each $n$,
  \eqref{eq:BreimanAss2} implies that
  \begin{displaymath}
    \E Y_n^p  = \E \big[\modulus(\eta)^{\alpha +\delta}
    \mid \zeta>t_n \big]
    =\frac{1}{\Prob{\zeta>t_n}}
    \E \Big[\one_{\zeta>t_n}
    \big[\modulus(\eta)^{\alpha+\delta} \one_{\zeta>0} \mid\zeta\big]
    \Big] \leq C .
  \end{displaymath}
  Therefore, the sequence $(Y_n)_{n\geq1}$ is uniformly integrable.
  The continuous mapping theorem and \eqref{eq:dtoWU} yield that
  $Y_n\dto Y$. Hence, the uniform integrability of
  the random variables $(Y_n)$ implies that 
  $\E Y_n \to \E Y<\infty$, and thus
  \begin{displaymath}
    \E Y=c^{-1}\int \modulus(y)^\alpha\sigma(\bdiff y)<\infty.
  \end{displaymath}
  If $h$ is bounded by a constant $M$ and supported on
  $\{\modulus>a\}$, then
  \begin{align*}
    \lim_{t\to\infty} |I_1(t)|
    &\leq M \int_{\YY} \min\{(\modulus(y)/a)^\alpha,\eps^{-\alpha}\}
    \sigma(\bdiff y)\\
    &\leq Ma^{-\alpha} \int_{\YY} \modulus(y)^\alpha\,\sigma(\bdiff y)
    =Ma^{-\alpha} c \E Y<\infty.
  \end{align*}
  Since $\E Y<\infty$, we let $\eps \to 0$ in the
  expression for $I_1(t)$ and  
  use the bound on $I_2(t)$ to obtain
  \begin{displaymath}
    \lim_{t\to\infty} g(t) \E  h \big(T_{t^{-1}} T_\zeta\eta\big)
    =\int_0^\infty \int_{\YY} h(T_{s}y)\,\sigma(\bdiff y)\,
    \theta_\alpha(\bdiff s)\,.
  \end{displaymath}
  It remains to recognise that the limit coincides with the
  integral with respect to the pushforward $\mu$ of
  $\sigma\otimes\theta_\alpha$ under the map
  $(y,s)\mapsto T_sy$. This pushforward is non-trivial on $\sY$,
  since 
  \begin{displaymath}
    \mu(\{y:\modulus(y)>\eps\})
    =\int \one_{\{(y,s): \modulus(T_sy)>\eps\}}\sigma(\bdiff
    y)\theta_\alpha(\bdiff s)
    =\eps^{-\alpha} \int_\YY \modulus(y)^\alpha \sigma(\bdiff y)>0
  \end{displaymath}
  for all $\eps>0$ in view of the assumption
  $\sigma(\{\modulus>0\})>0$.
  The same calculation and the already established finiteness of
  $\int \modulus(y)^\alpha\sigma(\bdiff y)$ show that this
  pushforward measure is finite on all sets of the form
  $\{\modulus>\eps\}$, and hence on the ideal $\sY$.
\end{proof}

\begin{example}[Scaling with a regularly varying factor]
  \index{scaling!by a regularly varying factor}
  Let $\YY$ be a perfectly normal Lindel\"of topological space
  equipped with a continuous scaling $T_t$ and the ideal $\sY$
  generated by a continuous finite modulus $\modulus$. Let
  $\zeta\in\RV(\R_+,\mydot,\sR_0,g,c\theta_\alpha)$ be a positive random
  variable independent of a random element $\eta$ in $\YY$ with
  distribution $P_\eta$, and assume that
  $\Prob{\modulus(\eta)>0}>0$. By
  Lemma~\ref{lem:BreimanNew}, if
  $\E [\modulus(\eta)^{\alpha+\delta}]<\infty$ for some $\delta>0$, then
  $T_\zeta\eta\in\RV(\YY,T,\sY,g,\mu)$, where $\mu$ is the pushforward
  of $P_\eta\otimes c\theta_\alpha$ under the map $(y,t)\mapsto
  T_ty$.   
\end{example}

The following example gives the multivariate Breiman's lemma, which is
proved in 
\cite{bas:dav02}; see also \cite[Section~C.3]{bdm}.

\begin{example}[Breiman's lemma for random matrices]
  \index{random matrix}
  Assume that $V$ is a $m\times d$ random matrix, which is independent
  of a regularly varying $d$-dimensional random vector $\xi$. Assume that
  $\xi\in\RV(\R^d,\mydot,\sR_0^d,g,\mu')$. Denote by $\|V\|$ the
  operator norm of the matrix $V$ induced by the Euclidean norm. It
  was shown in \cite{bas:dav02} that if
  \begin{displaymath}
    \E\|V\|^{\alpha+\delta} < \infty
  \end{displaymath}
  for some $\delta>0$, then $V\xi$ is a regularly varying random
  vector. This fact can easily be derived from
  Lemma~\ref{lem:BreimanNew}. If $\|\xi\|>0$, let
  $\eta=\|\xi\|^{-1}(V\xi)$, and otherwise let $\eta=0$, so that
  $\eta$ is a random vector in $\R^m$. Note that $\zeta\eta=V\xi$,
  where $\zeta=\|\xi\|\in\RV(\R_+,\mydot,\sR_0,g,c\theta_\alpha)$.

  Consider the space $\YY=\R^m$ with the ideal $\sY=\sR_0^m$ generated
  by the modulus given by the Euclidean norm.
  Since $\|\eta\|\leq \|V\|$ and $V$ is
  independent of $\xi$, condition \eqref{eq:BreimanAss2} is
  satisfied. Indeed,
  \[
    \E[\|\eta\|^{\alpha+\delta}\mid \zeta]
    \leq \E\|V\|^{\alpha+\delta}<\infty .
  \]
  To check \eqref{eq:eta-zeta}, consider a bounded
  continuous function $h:\R^m\times\R_+\to\R$ such that $h(y,s)=0$
  for all $s\leq a$ for some $a>0$, in particular, $h(0,0)=0$.
  % Since $h(0,0)=0$, we have 
  % \begin{displaymath}
  %   g(t)\E h(\eta,T_{t^{-1}}\zeta)
  %   = g(t)\E \big[h(\|\xi\|^{-1}V\xi,T_{t^{-1}}\|\xi\|)\one_{\xi\neq 0}\big].
  % \end{displaymath}
  Denote
  \begin{displaymath}
    f(u)=\E h(\|u\|^{-1}Vu,\|u\|),\quad u\in\R^d\setminus\{0\}
  \end{displaymath}
  and set $f(0)=0$.
  Then $f$ is a continuous bounded function
  and $f(u)=0$ for all $u$ with $\|u\|\leq a$. The regular
  variation of $\xi$ implies that 
  \begin{multline*}
    g(t)\E h(\eta,T_{t^{-1}}\zeta)
    = g(t)\E\big[f(t^{-1}\xi)\one_{\xi\neq 0}\big]
    = g(t)\E f(t^{-1}\xi)\\
    \to \int_{\R^d} f(y)\mu'(\bdiff y) \quad \text{as}\; t\to\infty.
  \end{multline*}
  % where we also used the fact that $\mu(\{0\})=0$.
  Using the spectral representation
  $\mu'=\sigma'\otimes\theta_\alpha$ with $\sigma'$ being a finite
  measure on the unit Euclidean sphere $\SS^{d-1}$, we obtain 
  % By Theorem~\ref{spectral-measure},
  % \begin{displaymath}
  %   \mu(B)=\int_0^\infty \sigma(T_tB)\theta_\alpha(\bdiff t), \quad
  %   B\in\sB(\R^d). 
  % \end{displaymath}
  % Therefore, 
  \begin{align*}
    \int_{\R^d} f(y)\mu'(\bdiff y)
    &=\int_{\SS^{d-1}}\int_0^\infty
    \E h(Vu,s)\theta_\alpha(\bdiff s)\sigma'(\bdiff u)\\
    &=\int_{\R^d} \int_0^\infty
    h(w,s)\theta_\alpha(\bdiff s)\sigma(\bdiff w),
  \end{align*}
  where
  \begin{displaymath}
    \sigma(A)=\int_{\SS^{d-1}} \Prob{Vu\in A}\sigma'(\bdiff u), \quad
    A\in\sB(\R^m). 
  \end{displaymath}
  % An application of the dominated convergence theorem is justified,
  % since the integrand is bounded by $M\one_{\|u\|>ta}$, where $M$
  % is a bound on the absolute value of $h$. 
  Therefore, the pair $(\eta,\zeta)$ satisfies \eqref{eq:eta-zeta} with the
  tail measure $\sigma\otimes\theta_\alpha$.

  By Lemma~\ref{lem:BreimanNew}, if
  \begin{displaymath}
    \sigma(\R^m\setminus\{0\})=\int \Prob{Vu\neq 0}\sigma'(\bdiff u)>0,
  \end{displaymath}
  then $V\xi$ is regularly varying with the normalising function $g$
  and the tail measure
  \begin{displaymath}
    \mu^*(A)=\int \Prob{Vu\in A} \mu'(\bdiff u).
  \end{displaymath}

  For instance, assume that $\xi=(\xi_1,\dots,\xi_d)$ has i.i.d.\
  Pareto($\alpha$) components. If $g(t)=t^\alpha$,
  then $\xi$ is regularly varying with the tail measure
  \begin{displaymath}
    \mu^{(1)}(\bdiff(x_1,\dots,x_d))
    =\sum_{i=1}^d \theta_\alpha(\bdiff x_i)\prod_{j\neq i}\delta_0(\bdiff x_j).
  \end{displaymath}
  % \begin{displaymath}
  %   \mu^{(k)}(\bdiff(x_1,\dots,x_m))
  %   =\sum_{I\subset\{1,\dots,m\},\card(I)=k}
  %   \prod_{i\in I}\theta_\alpha(\bdiff x_i)\prod_{j\notin I}\delta_0(\bdiff x_j).
  % \end{displaymath}
  Then
  \begin{displaymath}
    \mu^*(A)=\int \sum_{i=1}^d \Prob{sV_i\in A} \theta_\alpha(\bdiff s),
  \end{displaymath}
  where $V_1,\dots,V_d$ are columns of $V$. This can be easily
  generalised to the hidden regular variation setting by scaling with
  $g(t)^k$, which equals $t^{k\alpha}$ in the
  Pareto$(\alpha)$ case. Then $\mu^*$ arises by transforming the measure
  $\mu^{(k)}$ defined in Theorem~\ref{thr:power-space}.  Further
  examples are easy to construct for a linear operator $V$ acting on a
  regularly varying Banach space-valued $\xi$.
\end{example}

Dependency naturally emerges in the pair $(\eta,\zeta)$ when $\zeta$
serves as the temporal index for an independent stochastic process
$(\eta_t)$. This is the setting of
Theorem~\ref{thr:pair}. Combining it with Lemma~\ref{lem:BreimanNew}
yields the following result.

\begin{corollary}
  \label{cor:Breiman-substitution}
  Let $\YY$ be a perfectly normal Lindel\"of topological space with a
  continuous scaling $T_t$ and the ideal $\sY$ generated by a
  continuous finite modulus $\modulus$.  Assume that
  $\zeta\in\RV(\R_+,\mydot,\sR_0,g,c\theta_\alpha)$ and $\zeta$ is
  strictly positive, and let $(\eta_t)_{t\geq 0}$ be a jointly
  measurable stochastic
  process taking values in $\YY$, independent of $\zeta$. Furthermore, suppose that
  \begin{equation}
    \label{eq:hxW-real}
    \eta_t\dto  W \quad  \text{as}\; t\to\infty
  \end{equation}
  for some random element $W$ in $\YY$. Assume that the distribution
  $\sigma$ of $W$ is a tau-additive probability measure satisfying
  $\sigma(\{\modulus>0\})>0$, and that
  \begin{equation}
    \label{eq:sup-eta-uniform}
    \sup_{t>0} \E [\modulus(\eta_t)^{\alpha+\delta}]<\infty
  \end{equation}
  for some $\delta>0$. Then
  $T_\zeta\eta_\zeta\in\RV(\YY,T,\sY,g,\mu)$,
  where $\mu$ is the pushforward of the tau-additive extension of the
  product $\sigma\otimes 
  c\theta_\alpha$ under the map $(y,t)\mapsto T_ty$. 
\end{corollary}
\begin{proof}
  By Theorem~\ref{thr:pair}, applied with $\XX=\R_+$ and
  $\xi=\zeta$, the pair $(\eta_\zeta,\zeta)$ is regularly varying on
  $\YY\times\R_+$ with respect to the scaling acting on the second
  component, with normalising function $g$ and tail measure
  $\sigma\otimes c\theta_\alpha$.

  It remains to check the moment condition in Lemma~\ref{lem:BreimanNew}.
  Since the process $(\eta_t)_{t\geq0}$ is independent of $\zeta$,
  \begin{displaymath}
    \E\big[\modulus(\eta_\zeta)^{\alpha+\delta}\mid \zeta\big]
    =
    \E\big[\modulus(\eta_t)^{\alpha+\delta}\big]_{t=\zeta}
    \leq \sup_{t>0}\E\modulus(\eta_t)^{\alpha+\delta}<\infty .
  \end{displaymath}
  Lemma~\ref{lem:BreimanNew} now yields the claimed regular variation
  of $T_\zeta\eta_\zeta$ and identifies the tail measure as the
  pushforward of $\sigma\otimes c\theta_\alpha$ under
  $(y,t)\mapsto T_ty$.
\end{proof}

Various existing generalisations of Breiman's lemma in the
literature are more restrictive regarding the choice of state space
and typically demand almost sure convergence of $\eta_x$ to $W$. For
instance, a recent generalisation for $\YY$ being a Banach space
is given in \cite[Theorem~3.3]{janssen23}, 
motivated by applications to preferential attachment models. An
application to branching processes can be found in \cite{MR4168330};
this also motivates the second part of Example~\ref{ex:r-walks}.

\begin{example}
  \label{ex:r-walks}
  Assume that $Y\in\RV(\R_+,\mydot,\sR_0,g,c\theta_\alpha)$, and let
  $S_n=\xi_1+\cdots+\xi_n$, $n\geq1$, be a random walk with i.i.d.\
  integrable steps $(\xi_i)_{i\geq 1}$. Assume that $Y$ and the random
  walk are independent. The aim is to establish the regular variation of
  $Y^\gamma S_{\lfloor Y \rfloor}$ for $\gamma\geq 0$. We consider
  two special cases below.

  (a) Assume that $\xi_1\geq 0$ with probability one and that $\xi_1$ is
  not a.s.\ equal to zero, so 
  $a=\E\xi_1>0$, and assume that $\E\xi_1^\beta<\infty$ for some
  $\beta>\max\{1,\alpha\}$. Let $\zeta = Y^{\gamma+1}$ and note that
  $\zeta\in\RV(\R_+,\mydot,\sR_0,g_1,c\theta_{\alpha/(\gamma+1)})$,
  where $g_1(t)=g(t^{1/(\gamma+1)})$. 

  Denote $\eta'_t = t^{-1} S_{\lfloor t \rfloor}$
  and $\eta_t = \eta'_{t^{1/(\gamma+1)}}$. By the strong law of large
  numbers, $\eta_t \to a$ with probability one as $t\to\infty$. By
  Minkowski's inequality and the fact that $\eta_t=0$ for $t\in[0,1)$, 
  \begin{displaymath}
    \sup_{t\in(0,\infty)}  (\E \eta_t^\beta)^{1/\beta}
    = \sup_{t\in(1,\infty)}
    \Big(\E  \big(t^{-1} S_{\lfloor t\rfloor}\big)^\beta\Big)^{1/\beta}
    \leq \sup_{t\in(1,\infty)}
    \frac{\lfloor t  \rfloor}{t}\,
    \big(\E \xi_1^\beta\big)^{1/\beta}< \infty.
  \end{displaymath}
  Since $\beta>\alpha/(\gamma+1)$, the moment condition
  \eqref{eq:sup-eta-uniform} 
  of Corollary~\ref{cor:Breiman-substitution} is verified. Consequently,
  $\zeta \eta_\zeta = Y^\gamma S_{\lfloor Y \rfloor}$ is regularly
  varying with the normalising function $g_1$ and the tail index
  $\alpha/(\gamma+1)$.

  (b) Assume that $\E\xi_1=0$ and $0<\E \xi_1^2<\infty$.
  Consider any $\gamma\geq 0$ such that $\alpha < 2\gamma + 1$. Let
  $ \eta'_t = t^{-1/2} S_{\lfloor t \rfloor}$ and let
  $\eta_t = \eta'_{t^{1/(\gamma+1/2)}}$. Observe that $\eta_t$
  converges in distribution as $t\to\infty$ to a centred normal
  variable $W$ with variance $\E\xi_1^2$. Then, 
  \begin{align*}
    \sup_{t\in(0,\infty)} \E \eta_t^2
    = \sup_{t\in(1,\infty)} \E  \big( t ^{-1/2}
    S_{\lfloor t  \rfloor}\big)^2    \leq \E \xi_1^2<\infty.
  \end{align*}
  Denote $\zeta = Y^{\gamma+1/2}$, so that $\zeta$ is regularly
  varying with normalising function
  $g_2(t)=g(t^{1/(\gamma+1/2)})$ and tail index
  $\alpha/(\gamma+1/2)$.
  Since $\alpha/(\gamma+1/2)< 2$, there exists $\delta>0$ such
  that $\alpha/(\gamma+1/2)+\delta=2$, so that the moment
  condition \eqref{eq:sup-eta-uniform} is satisfied. By
  Corollary~\ref{cor:Breiman-substitution}, the random variable
  $Y^\gamma S_{\lfloor Y \rfloor}$ is regularly varying with 
  normalising function $g_2$ and tail index
  $\alpha/(\gamma+\frac12)$, and tail measure given by the
  pushforward of $c\sigma\otimes \theta_{\alpha/(\gamma+ \frac 12)}$
  under the map $(y,t)\mapsto T_ty$. In particular, the tail measure
  is symmetric since the distribution $\sigma$ of $W$ is symmetric.
\end{example}

\section{Regular variation in metric spaces and Banach spaces}
\label{sec:regul-vari-polish}

\paragraph{Metric spaces}

In metric spaces, it is possible to establish further criteria 
ensuring the regular variation property. Recall that in this setting we
work with the conventional case of Borel measures.

We need the following lemma, which makes it possible to reduce
the task of checking the vague convergence in \eqref{eq:vague-f} to Lipschitz
functions in $\fC(\XX,\sS)$. 

\begin{lemma}
  \label{lemma:Lipschitz}
  Let $(\XX,\dmet)$ be a metric space with an ideal $\sS$. Assume that,
  for each set $A\in\sS$, there exists $r>0$ such that the ideal $\sS$
  contains its $r$-envelope 
  \index{envelope of a set}
  \begin{equation}
    \label{eq:neighbourhood-r}
    A^r=\{x\colons \dmet(x,A)\leq r\}.
  \end{equation}
  Then $\mu_\gamma\vto\mu$ if and only if
  \eqref{eq:vague-f} holds for all 1-Lipschitz functions
  $f\in\fC(\XX,\sS)$.
\end{lemma}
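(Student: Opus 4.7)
The necessity is immediate, since any 1-Lipschitz function lies in $\fC(\XX,\sS)$. For sufficiency, first note that convergence for 1-Lipschitz functions in $\fC(\XX,\sS)$ extends by scaling to all $c$-Lipschitz functions in $\fC(\XX,\sS)$, and then by linearity to finite linear combinations. Fix $f\in\fC(\XX,\sS)$; decomposing $f=f^+-f^-$ (both in $\fC(\XX,\sS)$ since their supports lie in $\{f\neq 0\}$), reduce to the case $f\geq 0$. Write $M=\sup f$ and $A=\{f>0\}\in\sS$, and choose $r>0$ such that $A^r\in\sS$.

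The main idea is to sandwich $f$ between Lipschitz functions from $\fC(\XX,\sS)$ that are close to $f$ in $L^1(\mu)$. Introduce the classical inf- and sup-convolutions
\begin{displaymath}
  f_n(x)=\inf_{y\in\XX}\big(f(y)+n\dmet(x,y)\big),\quad
  \tilde f_n(x)=\sup_{y\in\XX}\big(f(y)-n\dmet(x,y)\big).
\end{displaymath}
Standard arguments yield that $f_n$ and $\tilde f_n$ are $n$-Lipschitz, take values in $[0,M]$, satisfy $f_n\leq f\leq \tilde f_n$, and converge pointwise to $f$ as $n\to\infty$. For the supports, taking $y=x$ shows $f_n=0$ outside $A$, so $\operatorname{supp}f_n\subset A\in\sS$. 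For the sup-convolution, if $\dmet(x,A)\geq M/n$ then for every $y\in A$ we have $f(y)-n\dmet(x,y)\leq M-n\cdot M/n=0$ while for $y\notin A$ one has $f(y)-n\dmet(x,y)\leq 0$, so $\tilde f_n(x)=0$; hence $\operatorname{supp}\tilde f_n\subset A^{M/n}$. This is precisely where the envelope hypothesis intervenes: for $n\geq M/r$ we obtain $\operatorname{supp}\tilde f_n\subset A^r\in\sS$, so $\tilde f_n\in\fC(\XX,\sS)$.

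Since $0\leq f_n,\tilde f_n\leq M\one_{A^r}$ and $\mu(A^r)<\infty$, dominated convergence gives $\int(\tilde f_n-f_n)\,d\mu\to 0$ as $n\to\infty$. Fix $\eps>0$ and choose $n\geq M/r$ with $\int(\tilde f_n-f_n)\,d\mu<\eps$. Applying the Lipschitz hypothesis to $f_n$ and $\tilde f_n$ (after rescaling to make them 1-Lipschitz) and using monotonicity $f_n\leq f\leq \tilde f_n$,
\begin{displaymath}
  \int f_n\,d\mu=\lim_\gamma\int f_n\,d\mu_\gamma
  \leq\liminf_\gamma\int f\,d\mu_\gamma
  \leq\limsup_\gamma\int f\,d\mu_\gamma
  \leq\lim_\gamma\int\tilde f_n\,d\mu_\gamma=\int\tilde f_n\,d\mu,
\end{displaymath}
and the outer bounds differ from $\int f\,d\mu$ by at most $\eps$. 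Letting $\eps\to 0$ yields $\int f\,d\mu_\gamma\to\int f\,d\mu$, which is vague convergence by Definition~\ref{def:vague}.

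The only delicate point is ensuring that the upper approximation $\tilde f_n$ remains in $\fC(\XX,\sS)$; this is exactly the role of the envelope assumption, since otherwise the sup-convolution may leak out of $\sS$. Everything else is a standard sandwich argument combined with dominated convergence on the set $A^r$ of finite $\mu$-measure.
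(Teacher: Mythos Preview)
Your proof is correct but takes a genuinely different route from the paper. The paper does not approximate an arbitrary $f\in\fC(\XX,\sS)$ directly; instead it invokes the Portmanteau characterisation (Theorem~\ref{lemma:vVSw}(iii)) and verifies the closed-set and open-set inequalities. For a closed $F\in\sS$ it chooses $r>0$ with $F^r\in\sS$ and $\mu(F^r)-\mu(F)<\eps$, then uses the single Lipschitz function $f_r(x)=\max(1-\dmet(x,F)/r,0)$ to bound $\limsup_\gamma\mu_\gamma(F)$; a symmetric argument with $G^{-r}=\{x:\dmet(x,G^c)>r\}$ handles open sets.

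Your approach bypasses Portmanteau entirely and works straight from Definition~\ref{def:vague}, sandwiching $f$ between its inf- and sup-convolutions and using dominated convergence on $A^r$. This is more self-contained and arguably more elementary, since it does not rely on the equivalence of vague convergence with the closed/open set conditions. The paper's approach, on the other hand, reuses machinery already in place and only ever needs the very explicit functions $f_r$, avoiding the pointwise convergence argument for inf/sup-convolutions. Both methods exploit the envelope hypothesis in the same essential way: to guarantee that the Lipschitz approximant from above (your $\tilde f_n$, the paper's $f_r$) has support in $\sS$ and that the relevant set has finite $\mu$-measure.
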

\begin{proof}
  Necessity is immediate.  We now prove sufficiency
  using the characterisation of vague convergence from
  Theorem~\ref{lemma:vVSw}(iii). Fix an $\eps>0$. The condition
  imposed on $\sS$ implies that $\sS$ is topologically consistent.
  Let $F$ be a closed set in $\sS$. By assumption, $F^r\in\sS$
  for some $r>0$. Choose $r$ such that $\mu(F^r)-\mu(F)<\eps$ (it
  exists by outer regularity of $\mu$), and 
  define the Lipschitz function
  \begin{displaymath}
    f_r(x)=\max\big(1-\dmet(x,F)/r,0\big).
  \end{displaymath}
  Although $f_r$ is $1/r$-Lipschitz, the assumed convergence for
  1-Lipschitz functions applies to $r f_r$, and hence also to $f_r$ by
  rescaling.  Then
  \begin{displaymath}
    \limsup_{\gamma} \mu_\gamma(F)
    \leq \limsup_{\gamma} \int f_r(x)\diff \mu_\gamma
    =\int f_r \diff \mu\leq \mu(F^r)<\mu(F)+\eps.
  \end{displaymath}
  Let $G\in\sS$ be open. Then $G^{-r}=\{x\colons
  \dmet(x,G^c)>r\}$  increases 
  to $G$ as $r\downarrow 0$. Choose an $r$ such that
  $\mu(G)-\mu(G^{-r})<\eps$ and define the Lipschitz function
  $f_{-r}(x)=\max(1-\dmet(x,G^{-r})/r,0)$. Then
  \begin{displaymath}
    \liminf_{\gamma} \mu_\gamma(G)
    \geq \liminf_{\gamma} \int f_{-r}(x) \diff \mu_\gamma
    =\int f_{-r} \diff \mu\geq \mu(G^{-r})>\mu(G)-\eps. \qedhere
  \end{displaymath}
  % By scaling, it is possible to restrict the family of Lipschitz
  % functions to those with a Lipschitz constant of one.
  % Applying this
  % to \eqref{eq:24g} is straightforward.
\end{proof}

If the ideal $\sS$ contains an envelope
of every set belonging to it (as assumed in Lemma~\ref{lemma:Lipschitz}), then 
the ideal is topologically consistent; this condition is
satisfied for all metric exclusion ideals. By the property of ideals, if
an ideal contains the $r$-envelope $A^r$ for some $r>0$, it also
contains the envelopes $A^s$ for all $s<r$. The following result
derives the regular variation of a random element from the regular
variation property of a net of random elements that approximate it.

\begin{theorem}
  \label{thr:m-space}
  Let $(\XX,\dmet)$ be a metric space equipped with a scaling and a
  scaling-consistent ideal $\sS$ such that, for each set $A$,
  the ideal $\sS$ contains its $r$-envelope $A^r$ for some $r>0$. Let
  $(\eta_\gamma)_{\gamma\in\Gamma}$ be a net of
  random elements in $\XX$. Then $\xi\in\RV(\XX,T,\sS,g,\mu)$ provided 
  the following conditions hold.
  \index{regular variation!on metric spaces}
  \begin{enumerate}[(i)]
  \item $\eta_\gamma\in\RV(\XX,T,\sS,g,\mu_\gamma)$ for some
    $\mu_\gamma\in\Mb$ and all $\gamma\in\Gamma$.
  \item $\mu_\gamma\vto\mu$.
  \item For every closed $A\in\sS$,
    \begin{equation}
      \label{eq:3g}
      \limsup_{t\to\infty} g(t)\Prob{T_{t^{-1}}\xi\in A} <\infty.
    \end{equation}
  \item For all $\delta>0$,
    \begin{equation}
      \label{eq:38}
      \lim_\gamma \limsup_{t\to\infty} g(t)
      \Prob{\dmet(T_{t^{-1}}\xi,T_{t^{-1}}\eta_\gamma)>\delta}=0.
    \end{equation}
  \end{enumerate}
\end{theorem}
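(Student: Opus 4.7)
The plan is to verify the vague convergence $g(t)T_{t^{-1}}\mathrm{Law}(\xi)\vto\mu$ by testing against a sufficiently rich class of functions. Since the ideal $\sS$ is closed under taking envelopes, Lemma~\ref{lemma:Lipschitz} reduces the problem to showing
\begin{equation*}
  g(t)\,\E f(T_{t^{-1}}\xi)\longrightarrow\int fd\mu
\end{equation*}
for every $1$-Lipschitz $f\in\fC(\XX,\sS)$ with $\|f\|_\infty\le M$ and $\supp f\subset B$ for some $B\in\sS$. I would fix such an $f$ and insert the approximating quantities via the triangle inequality:
\begin{equation*}
  \Big|g(t)\E f(T_{t^{-1}}\xi)-\int fd\mu\Big|
  \le D_1(t,\gamma)+D_2(t,\gamma)+D_3(\gamma),
\end{equation*}
where $D_1=g(t)\E|f(T_{t^{-1}}\xi)-f(T_{t^{-1}}\eta_\gamma)|$, $D_2=|g(t)\E f(T_{t^{-1}}\eta_\gamma)-\int fd\mu_\gamma|$, and $D_3=|\int fd\mu_\gamma-\int fd\mu|$. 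Hypothesis (i) gives $D_2\to 0$ as $t\to\infty$ and hypothesis (ii) gives $D_3\to 0$ as $\gamma$ runs, so the crux is to control $D_1$ uniformly.

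For $D_1$, I would split on the event $E_{t,\gamma,\delta}=\{\dmet(T_{t^{-1}}\xi,T_{t^{-1}}\eta_\gamma)>\delta\}$. On its complement the Lipschitz bound yields $|f(T_{t^{-1}}\xi)-f(T_{t^{-1}}\eta_\gamma)|\le\delta$, and moreover this difference vanishes unless at least one of $T_{t^{-1}}\xi$, $T_{t^{-1}}\eta_\gamma$ lies in the envelope $B^\delta=\{x:\dmet(x,B)\le\delta\}$; on $E_{t,\gamma,\delta}$ the difference is bounded by $2M$. Thus
\begin{equation*}
  D_1(t,\gamma)\le\delta\,g(t)\bigl[\Prob{T_{t^{-1}}\xi\in B^\delta}+\Prob{T_{t^{-1}}\eta_\gamma\in B^\delta}\bigr]
  +2M\,g(t)\Prob{E_{t,\gamma,\delta}}.
\end{equation*}
Since $\sS$ is closed under envelopes, $B^\delta\in\sS$. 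Hypothesis (iii) bounds the first probability by a constant $C$ in the $\limsup_{t\to\infty}$, while (i)--(ii) give $\limsup_t g(t)\Prob{T_{t^{-1}}\eta_\gamma\in B^\delta}\le\mu_\gamma(\cl B^\delta)$, which is bounded uniformly in $\gamma$ by (ii) applied to any $\mu$-continuity neighbourhood of $\cl B^\delta$ in $\sS$. Hypothesis (iv) controls the last term: taking first $\limsup_{t\to\infty}$, then $\lim_\gamma$, makes its contribution $0$.

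Combining the three bounds, I would take $\limsup_{t\to\infty}$, then $\lim_\gamma$ (so $D_3\to 0$ and the $E_{t,\gamma,\delta}$ term disappears), and finally $\delta\to 0$ (so $\delta\cdot O(1)\to 0$), obtaining
\begin{equation*}
  \limsup_{t\to\infty}\Big|g(t)\E f(T_{t^{-1}}\xi)-\int fd\mu\Big|=0,
\end{equation*}
which is the required vague convergence, and the nontriviality of $\mu$ is already part of (ii). The main obstacle is the bookkeeping in bounding the near-neighbourhood term $\delta\cdot g(t)\Prob{T_{t^{-1}}\xi\in B^\delta}$: this is precisely the role of the uniform tightness condition (iii), since without it, the envelope masses could blow up in $t$ and $\delta\to 0$ would not absorb them. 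A secondary technical point is that the $\mu_\gamma$-measure of $\cl B^\delta$ should be bounded uniformly in $\gamma$; since $\cl B^\delta\subset\Int B^{2\delta}\in\sS$ is a $\mu$-continuity hull for a.e.\ $\delta$, this follows from (ii) and the Portmanteau characterisation in Theorem~\ref{lemma:vVSw}.
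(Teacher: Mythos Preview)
Your plan is correct and follows essentially the same route as the paper: reduce to $1$-Lipschitz $f\in\fC(\XX,\sS)$ via Lemma~\ref{lemma:Lipschitz}, apply the three-term triangle split, handle $D_2$ and $D_3$ by (i) and (ii), and control $D_1$ by splitting on the $\delta$-closeness event and invoking (iii) and (iv). The only cosmetic difference is that the paper takes the support of $f$ to be a \emph{closed} set $A\in\sS$ and bounds the near term by $\delta\,g(t)\bigl[\Prob{T_{t^{-1}}\xi\in A}+\Prob{T_{t^{-1}}\eta_\gamma\in A}\bigr]$ rather than passing to the envelope $B^\delta$; closedness of $A$ then gives $\limsup_t g(t)\Prob{T_{t^{-1}}\eta_\gamma\in A}\le\mu_\gamma(A)$ and $\limsup_\gamma\mu_\gamma(A)\le\mu(A)$ directly from Theorem~\ref{lemma:vVSw}(iii), sidestepping your ``secondary technical point'' about $\mu$-continuity hulls entirely.
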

\begin{proof}
  By Lemma~\ref{lemma:Lipschitz}, we need to confirm that
  \begin{equation}
    \label{eq:40}
    g(t) \E f(T_{t^{-1}}\xi)\to \int f\diff \mu\quad \text{as}\; t\to\infty
  \end{equation}
  for each non-negative 1-Lipschitz function $f\in\fC(\XX,\sS)$, since
  any $f$ is the difference of its positive and
  negative parts. Let $A=\supp f$. Then $A$ is closed, belongs to $\sS$, and
  $|f|\leq c$ for some $c<\infty$ (note that the condition on $\sS$
  implies its topological consistency). Then, for all $\gamma$,
  \begin{multline*}
    \Big|g(t)\E f(T_{t^{-1}}\xi)-\int f\diff \mu\Big|
    \leq \Big|g(t)\E f(T_{t^{-1}}\xi)-g(t)\E f(T_{t^{-1}}\eta_\gamma)\Big|\\
    +\Big|g(t)\E f(T_{t^{-1}}\eta_\gamma)-\int f\diff \mu_\gamma\Big|
    +\Big|\int f\diff \mu_\gamma -\int f\diff \mu\Big|.
  \end{multline*}
  The second term on the right-hand side converges to zero as
  $t\to\infty$ for all $\gamma$ by (i). The last term converges to
  zero, since $\mu_\gamma\vto\mu$.  Fix an arbitrary $\delta>0$ and bound the
  first term by the sum of
  \begin{align*}
    I_1(t)&=g(t)\E\bigg[\Big|f(T_{t^{-1}}\xi)-
    f(T_{t^{-1}}\eta_\gamma)\Big|
    \one_{\dmet(T_{t^{-1}}\xi,T_{t^{-1}}\eta_\gamma)>\delta}\bigg]\\
    &\leq 2c g(t)\Prob{\dmet(T_{t^{-1}}\xi,T_{t^{-1}}\eta_\gamma)>\delta}
  \end{align*}
  and
  \begin{align*}
    I_2(t)
    &=g(t)\E\bigg[\Big|f(T_{t^{-1}}\xi)-f(T_{t^{-1}}\eta_\gamma)\Big|
    \one_{\dmet(T_{t^{-1}}\xi,T_{t^{-1}}\eta_\gamma)\leq\delta}\bigg]\\
    &=g(t)\E\bigg[\Big|f(T_{t^{-1}}\xi)-f(T_{t^{-1}}\eta_\gamma)\Big|
      \one_{\dmet(T_{t^{-1}}\xi,T_{t^{-1}}\eta_\gamma)\leq\delta}
      \one_{\{T_{t^{-1}}\xi\in A\}\cup\{T_{t^{-1}}\eta_\gamma\in A\}} \bigg]\\
    &\leq \delta g(t)\Prob{T_{t^{-1}}\xi\in A}
    +\delta g(t)\Prob{T_{t^{-1}}\eta_\gamma\in A},
  \end{align*}
  where we use that $f$ is 1-Lipschitz and vanishes outside $A$. 
  By (iv),
  \begin{displaymath}
    \lim_{\gamma}\limsup_{t\to\infty} I_1(t)\leq 2c \limsup_\gamma
    \limsup_{t\to\infty}
    g(t)\Prob{\dmet(T_{t^{-1}}\xi,T_{t^{-1}}\eta_\gamma)>\delta}=0.
  \end{displaymath}
  Furthermore,
  \begin{displaymath}
    \limsup_{t\to\infty} I_2(t)\leq \delta\big(a+\mu_\gamma(A)\big),    
  \end{displaymath}
  where
  \begin{displaymath}
    a=\limsup_{t\to\infty} g(t)\Prob{T_{t^{-1}}\xi\in A}
  \end{displaymath}
  is finite by \eqref{eq:3g}.
  We use (i) together with the closedness
  of $A$ to obtain
  \begin{align*}
    \limsup_{t\to\infty}\Big|g(t)\E f(T_{t^{-1}}\xi)-\int
    f\diff \mu\Big|\leq \limsup_\gamma \delta \big(a+\mu_\gamma(A)\big)
    \leq \delta(a+\mu(A)).
  \end{align*}
  % Taking first $\limsup_{t\to\infty}$ and then letting $\gamma$ tend
  % along the directed set, the first term vanishes by (iv), the second
  % term vanishes by (i), and the third term vanishes by (ii). Moreover,
  % using the closedness of $A$ and the vague convergence
  % $\mu_\gamma\vto\mu$,
  % \begin{displaymath}
  %   \limsup_\gamma \mu_\gamma(A)\leq \mu(A).
  % \end{displaymath}
  % Hence
  % \begin{displaymath}
  %   \limsup_{t\to\infty}
  %   \Big|g(t)\E f(T_{t^{-1}}\xi)-\int f\diff\mu\Big|
  %   \leq \delta(a+\mu(A)).
  % \end{displaymath}
  Since $a$ and $\mu(A)$ are finite and depend only on $f$,
  letting $\delta\downarrow 0$ finishes the proof.
\end{proof}

\begin{remark}
  \label{rem:metric-exclusion}
  If $\sS_\cone$ is a metric exclusion ideal obtained by excluding a
  cone $\cone$, and the metric $\dmet$ on $\XX$ is homogeneous, that
  is, $\dmet(T_tx,T_ty)=t\dmet(x,y)$ for all $x,y$, then condition
  (iii) of Theorem~\ref{thr:m-space} can be replaced by
  \begin{displaymath}
    \limsup_{t\to\infty}
    g(t)\Prob{\dmet(\xi,\cone)>tu}<\infty
  \end{displaymath}
  for all $u>0$. This condition holds in particular if $\dmet(\xi,\cone)$ is
  a regularly varying 
  random variable with normalising function $g$. 
\end{remark}

The approximating random elements $\eta_\gamma$ are often constructed
by applying a net of functions $(\psi_\gamma)_{\gamma\in\Gamma}$ to
$\xi$. A net of functions $(\psi_\gamma)$ is said to be \emph{uniformly
bornologically consistent} if, for all $A\in\sS$, there exist
$\gamma_0\in\Gamma$ and $B\in\sS$ such that
$\psi_\gamma^{-1}(A)\subset B$ for all $\gamma\geq\gamma_0$. In
particular, this implies that each $\psi_\gamma$ with $\gamma\geq\gamma_0$
is bornologically consistent. 

\begin{corollary}
  \label{cor:m-space}
  Let $(\XX,\dmet)$ be a metric space equipped with a scaling and a
  scaling-consistent ideal $\sS$ such that, for each set $A$, the
  ideal $\sS$ contains its $r$-envelope $A^r$ for some $r>0$. Let
  $(\psi_\gamma)_{\gamma\in\Gamma}$ be a net of Borel measurable, uniformly
  bornologically consistent functions $\psi_\gamma:\XX\to\XX$ such
  that $\psi_\gamma(x)\to x$ for all $x$ in the union of all sets in
  $\sS$. Then $\xi\in\RV(\XX,T,\sS,g,\mu)$ if condition (iii) of
  Theorem~\ref{thr:m-space} holds and the random elements
  $\eta_\gamma=\psi_\gamma(\xi)$,
  $\gamma\in\Gamma$, satisfy conditions (i) and (iv) therein with
  $\mu_\gamma=\psi_\gamma\mu$ (the pushforward of $\mu$ under $\psi_\gamma$).
  % Furthermore, the regular variation of $\xi$ implies  (i)--(iii)
  % if all $\psi_\gamma$ are morphisms.
\end{corollary}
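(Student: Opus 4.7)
The plan is to deduce the corollary directly from Theorem~\ref{thr:m-space} by verifying its condition (ii), since (i), (iii), and (iv) are assumed (the latter two hold by hypothesis, and (i) is part of the input data). The only substantive task is thus to establish that
\begin{displaymath}
  \psi_\gamma\mu \vto \mu \quad \text{as}\; \gamma \text{ runs through } \Gamma.
\end{displaymath}

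First I would invoke Lemma~\ref{lemma:Lipschitz}: under the standing assumption on $\sS$ (it contains an envelope $A^r$ of every set $A$), to check vague convergence $\psi_\gamma\mu \vto \mu$ it suffices to verify
\begin{displaymath}
  \int f\,d(\psi_\gamma\mu) \to \int f\,d\mu
\end{displaymath}
for every bounded $1$-Lipschitz $f \in \fC(\XX,\sS)$. By the definition of the pushforward, $\int f\,d(\psi_\gamma\mu) = \int (f\circ\psi_\gamma)\,d\mu$, so everything reduces to a statement about the integrals of $f\circ\psi_\gamma$ against the fixed measure $\mu$.

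Next I would use uniform bornological consistency to produce a dominating function. Given $f$ supported in some $A\in\sS$, there exist $\gamma_0\in\Gamma$ and $B\in\sS$ with $\psi_\gamma^{-1}(A)\subset B$ for all $\gamma \geq \gamma_0$. Since $f$ vanishes outside $A$, the function $f\circ\psi_\gamma$ vanishes outside $\psi_\gamma^{-1}(A)\subset B$, so
\begin{displaymath}
  |f\circ\psi_\gamma| \leq \|f\|_\infty\, \one_B \quad \text{for all}\; \gamma\geq\gamma_0,
\end{displaymath}
and this bound is $\mu$-integrable because $\mu(B)<\infty$ (as $\mu\in\Msigma[\sS]$ and $B\in\sS$). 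The hypothesis $\psi_\gamma(x)\to x$ on the union $\UU=\cup_{C\in\sS} C$, combined with continuity of $f$, yields the pointwise convergence $f(\psi_\gamma(x))\to f(x)$ for every $x\in\UU$, and the values outside $\UU$ are irrelevant since $\mu$ is supported on $\UU$. Dominated convergence then delivers $\int (f\circ\psi_\gamma)\,d\mu \to \int f\,d\mu$, which is exactly what we needed.

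Having established (ii), the four hypotheses of Theorem~\ref{thr:m-space} are met, and the conclusion $\xi\in\RV(\XX,T,\sS,g,\mu)$ follows at once. The only delicate point in the plan is the application of dominated convergence along the net: one must be comfortable that the pointwise-on-$\UU$ convergence of $f\circ\psi_\gamma$ to $f$ together with the eventual domination by $\|f\|_\infty\one_B$ is enough; in the intended applications (typically sequential approximations or at least nets indexed by a countable cofinal subset, as arises for projection or truncation schemes in function spaces) this is immediate. I would therefore highlight this step as the key technical moment of the argument, while noting that the uniform bornological consistency is precisely what guarantees the existence of a single $B\in\sS$ serving as support for the tail of the net, thereby producing the common integrable envelope.
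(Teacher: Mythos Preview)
Your proposal is correct and follows essentially the same route as the paper: both reduce to verifying condition (ii) of Theorem~\ref{thr:m-space} by showing $\int f\,d(\psi_\gamma\mu)\to\int f\,d\mu$ via dominated convergence, using uniform bornological consistency to confine the integrand to a fixed $B\in\sS$ with $\mu(B)<\infty$. The only minor difference is that you detour through Lemma~\ref{lemma:Lipschitz} to restrict to Lipschitz $f$, whereas the paper works directly with an arbitrary $f\in\fC(\XX,\sS)$ (the Lipschitz property plays no role in your argument either), and the paper bounds the difference $|f(\psi_\gamma(x))-f(x)|$ over $A\cup\psi_\gamma^{-1}(A)$ rather than bounding $f\circ\psi_\gamma$ itself; your flag about dominated convergence along nets is a fair caveat that the paper leaves implicit.
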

\begin{proof}
  % \textsl{Necessity.} The continuous mapping theorem implies that (i),
  % while (ii) follows from the regular variation of $\xi$. For (iii),
  % \begin{displaymath}
  %   \limsup_{t\to\infty} g(t)
  %   \Prob{\dmet(T_{t^{-1}}\xi,T_{t^{-1}}\psi_\gamma(\xi))>\delta}
  %   =\limsup_{t\to\infty} g(t)\Prob{T_{t^{-1}}\xi\in B_\gamma},
  % \end{displaymath}
  % where $B_\gamma=\{x\in\XX: \dmet(x,\psi_\gamma(x))>\delta\}$.
  %Define $\eta_\gamma=\psi_\gamma(\xi)$ and 
  Without loss of generality, assume that the entire family
  $\psi_\gamma$, $\gamma\in\Gamma$, is uniformly bornologically
  consistent. Let $\mu_\gamma=\psi_\gamma\mu$ be the pushforward
  measure of
  $\mu$ under $\psi_\gamma$. Note that $\mu_\gamma\in\Mb$ since
  $\psi_\gamma$ is bornologically consistent. Then the listed
  conditions correspond to those of Theorem~\ref{thr:m-space} apart
  from (ii), where we need to check that $\mu_\gamma\vto\mu$. Let
  $f\in\fC(\XX,\sS)$ 
  be a continuous bounded function supported on $A\in\sS$. Then
  \begin{align*}
    \Big|\int f\diff (\psi_\gamma\mu) -\int f\diff \mu\Big|
    &\leq \int \big|f(\psi_\gamma(x))-f(x)\big|\mu(\bdiff x)\\
    &= \int_{A\cup \psi_\gamma^{-1}(A)}
    \big|f(\psi_\gamma(x))-f(x)\big|\mu(\bdiff x) \to 0
  \end{align*}
  by the dominated convergence theorem, since $f$ is continuous
  and bounded, $\psi_\gamma(x)\to x$,
  $A\cup \psi_\gamma^{-1}(A) \subset B\in\sS$ for all $\gamma$ and
  $\mu(B)<\infty$.
\end{proof}

The assumption of uniform bornological consistency imposed on the
approximating maps $\psi_\gamma$ in Corollary~\ref{cor:m-space} can be
replaced by the requirement that
$\psi_\gamma\mu\in\Mb$ and $\psi_\gamma\mu\vto\mu$ for the relevant
tail measure $\mu$. Indeed, this is precisely condition (ii) of
Theorem~\ref{thr:m-space}.

\paragraph{Linear normed spaces}
Assume that $\XX$ is a linear normed space.  If we require that the
approximating maps $\psi_\gamma$ are continuous morphisms, it is
possible to establish necessary and sufficient conditions in
the spirit of Corollary~\ref{cor:m-space}.

\begin{corollary}
  \label{cor:banach-zero}
  \index{regular variation!on linear normed spaces}
  Let $\XX$ be a linear normed space with linear scaling and the ideal
  $\sS_0$ obtained by excluding the zero element. Let
  $(\psi_\gamma)_{\gamma\in\Gamma}$ be a net consisting of
  continuous morphisms
  $\psi_\gamma:\XX\setminus\{0\}\to\XX\setminus\{0\}$ such that
  $\psi_\gamma(x)\to x$ and $\|\psi_\gamma(x)\|\leq b\|x\|$ for all
  $x\neq 0$, $\gamma\in\Gamma$, and a constant $b$ 
  independent of $x$ and $\gamma$. Then
  $\xi\in\RV(\XX,\mydot,\sS_0,g,\mu)$ if and only if the random
  elements 
  $\eta_\gamma=\psi_\gamma(\xi)$, $\gamma\in\Gamma$, satisfy
  Condition~(i) of Theorem~\ref{thr:m-space} with
  $\mu_\gamma=\psi_\gamma\mu$ 
  for some $\mu\in\Mb[\sS_0]$ and the following two conditions hold.
  \begin{enumerate}
  % \item $\psi_\gamma(\xi)\in\RV(\XX,\mydot,\sS,g,\psi_\gamma\mu)$ for all
  %   $\gamma\in\Gamma$.
  \item[(iii\,$'$)] $\|\xi\|\in\RV(\R_+,\mydot,\sR_0,g,c\theta_\alpha)$.
  \item[(iv\,$'$)]  For all $\delta>0$,
    \begin{equation}
      \label{eq:38a}
      \lim_\gamma \limsup_{t\to\infty} g(t)
      \P\big\{\|\xi-\psi_\gamma(\xi)\|>t\delta\big\}=0.
    \end{equation}
  \end{enumerate}
\end{corollary}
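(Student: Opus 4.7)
The proof splits into sufficiency and necessity, and both directions rest on one observation: the metric exclusion ideal $\sS_0$ is characterised by $A \in \sS_0$ iff $A \subset \{\|x\| \geq r\}$ for some $r > 0$, and the growth bound $\|\psi_\gamma(x)\| \leq b\|x\|$ implies $\psi_\gamma^{-1}\{\|x\| \geq r\} \subset \{\|x\| \geq r/b\}$, uniformly in $\gamma$. In particular, $(\psi_\gamma)$ is a uniformly bornologically consistent net of continuous morphisms.

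\textbf{Sufficiency.} Apply Corollary~\ref{cor:m-space}. Pointwise convergence $\psi_\gamma(x) \to x$ on $\XX \setminus \{0\}$ is in the hypotheses. Condition (iii) of Theorem~\ref{thr:m-space} reduces to (iii'), since $A \subset \{\|x\| \geq r\}$ implies $g(t)\Prob{T_{t^{-1}}\xi \in A} \leq g(t)\Prob{\|\xi\| \geq tr} \to cr^{-\alpha}$. Linearity of the scaling yields $\|T_{t^{-1}}(\xi - \psi_\gamma(\xi))\| = t^{-1}\|\xi - \psi_\gamma(\xi)\|$, so condition (iv) of Theorem~\ref{thr:m-space} translates verbatim into (iv').

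\textbf{Necessity.} Assume $\xi \in \RV(\XX, \cdot, \sS_0, g, \mu)$. Condition (iii') is Corollary~\ref{cor:tau-map} applied with $\modulus(x) = \|x\|$. For (i), Theorem~\ref{mapping-theorem} produces $\psi_\gamma \xi \in \RV(\XX, \cdot, \sS_0, g, \psi_\gamma \mu)$ as soon as $\psi_\gamma \mu$ is nontrivial on $\sS_0$. This holds for all sufficiently large $\gamma$: choose $r > 0$ with $\mu(\{\|x\| > r\}) > 0$; then pointwise $\liminf_\gamma \one_{\{\|\psi_\gamma(\cdot)\| > r/2\}} \geq \one_{\{\|\cdot\| > r\}}$, and Fatou's lemma gives $\liminf_\gamma (\psi_\gamma\mu)(\{\|x\| > r/2\}) \geq \mu(\{\|x\| > r\}) > 0$.

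The main step is (iv'). Since $\psi_\gamma$ is a morphism, $T_{t^{-1}}\psi_\gamma(\xi) = \psi_\gamma(T_{t^{-1}}\xi)$ and hence
\begin{equation*}
g(t)\Prob{\|\xi - \psi_\gamma(\xi)\| > t\delta}
= g(t)\Prob{T_{t^{-1}}\xi \in A_\gamma}
\leq g(t)\Prob{T_{t^{-1}}\xi \in F_\gamma},
\end{equation*}
where $A_\gamma = \{x : \|x - \psi_\gamma(x)\| > \delta\}$ and $F_\gamma = \{x : \|x - \psi_\gamma(x)\| \geq \delta\}$, the latter closed by continuity of $\psi_\gamma$. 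The growth bound yields $F_\gamma \subset \{\|x\| \geq \delta/(1+b)\} \in \sS_0$, so by the Portmanteau theorem (Theorem~\ref{lemma:vVSw}(iii))
\begin{equation*}
\limsup_{t \to \infty} g(t)\Prob{T_{t^{-1}}\xi \in F_\gamma} \leq \mu(F_\gamma).
\end{equation*}
The pointwise convergence $\psi_\gamma \to \mathrm{id}$ on $\XX \setminus \{0\}$ forces $\one_{F_\gamma}(x) \to 0$ on the support of $\mu$, dominated by the $\mu$-integrable function $\one_{\{\|x\| \geq \delta/(1+b)\}}$, so $\mu(F_\gamma) \to 0$ by the dominated convergence theorem. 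The only delicate technical point is invoking DCT along a net rather than a sequence, which is handled precisely as in the proof of Corollary~\ref{cor:m-space}.
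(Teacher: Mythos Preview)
Your proof follows essentially the same route as the paper's: sufficiency via Corollary~\ref{cor:m-space} using the growth bound for uniform bornological consistency, and necessity via the continuous mapping theorem for (i) and (iii$'$) together with a Portmanteau-plus-dominated-convergence argument for (iv$'$). You are in fact more explicit than the paper, which compresses the entire argument for (iv$'$) into the single line ``the map $x\mapsto\|x-\psi_\gamma(x)\|$ is a continuous bornologically consistent morphism\ldots This implies (iv$'$)'' without writing out the $\limsup$ bound or the passage to the $\gamma$-limit.

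One small technical remark: your Fatou step for non-triviality of $\psi_\gamma\mu$ shares the same net-versus-sequence fragility you already flag for DCT (Fatou's lemma can fail along general nets, e.g.\ take $\Gamma$ the finite subsets of $[0,1]$ and $f_F=\one_F$). A cleaner and stronger argument is available here: since each $\psi_\gamma$ is a morphism with values in $\XX\setminus\{0\}$ and $\mu$ is $\alpha$-homogeneous, the polar decomposition gives directly
\[
(\psi_\gamma\mu)\big(\{\|x\|>r\}\big)=r^{-\alpha}\int_{\SS_\modulus}\|\psi_\gamma(u)\|^\alpha\,\sigma(du)\in(0,\infty)
\]
for \emph{every} $\gamma$, not just eventually. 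The paper's own proof does not address non-triviality at all, so your treatment is already more careful on this point.
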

\begin{proof}
  Assume that $\xi$ is regularly varying. Then condition (i) of
  Theorem~\ref{thr:m-space} and condition (iii\,$'$) hold, since the
  maps $\psi_\gamma$
  and the norm are continuous, bornologically consistent morphisms, see
  Lemma~\ref{lemma:map-zero}.  The map $x\mapsto\|x-\psi_\gamma(x)\|$
  is a continuous morphism from $\XX\setminus\{0\}$ to $\R_+$,
  which is bornologically consistent from 
  $\sS_0$ to $\sR_0$, since
  % \[
  %   \{x\colon \|x-\psi_\gamma(x)\|>\eps\}
  %   \subset \{x\colon \|x\|>\eps/(1+b)\}.
  % \]
  \begin{align*}
    \{x\colons \|x-\psi_\gamma(x)\|>\eps\}
    &\subset \{x\colons \|x\|>\eps/2\}\cup
      \{x\colons\|\psi_\gamma(x)\|>\eps/2\}\\
    &\subset \{x\colons \|x\|>\min(1,b^{-1})\eps/2\}.
  \end{align*}
  The regular variation of $\|\xi\|$ now yields (iv\,$'$).
  % % LONGER PRROF
  % For fixed $\gamma$, the set
  % \[
  %   \{x\colon \|x-\psi_\gamma(x)\|\geq \delta\}
  % \]
  % is closed and is contained in
  % \[
  %   \{x\colon \|x\|\geq \delta/(1+b)\}.
  % \]
  % Hence regular variation gives
  % \[
  %   \limsup_{t\to\infty} g(t)
  %   \Prob\{\|\xi-\psi_\gamma(\xi)\|>t\delta\}
  %   \leq
  %   \mu\{x\colon \|x-\psi_\gamma(x)\|\geq\delta\}.
  % \]
  % Since $\psi_\gamma(x)\to x$ for all $x$ and the right-hand side is
  % dominated by the $\mu$-finite set
  % $\{x\colon \|x\|\geq\delta/(1+b)\}$, dominated convergence yields
  % (iv\,$'$).
  Sufficiency follows from Corollary~\ref{cor:m-space}. In order to
  confirm that the net $(\psi_\gamma)$ is uniformly bornologically
  consistent, note that $x\in\psi_\gamma^{-1}(\{y\colons \|y\|\geq\eps\})$
  implies
  \begin{displaymath}
    b\|x\|\geq\|\psi_\gamma(x)\|\geq \eps. 
  \end{displaymath}
  Hence, $\|x\|\geq \eps/b$, so
  \begin{displaymath}
    \psi_\gamma^{-1}(\{y\colons \|y\|\geq\eps\})
    \subset \{x\colons \|x\|\geq\eps/b\},
  \end{displaymath}
  uniformly in $\gamma$.
\end{proof}

Consider the family of maps $\psi_L:\XX\to L$ parametrised by
finite-dimensional linear subspaces $L$ of $\XX$ such that
$\|x-\psi_L(x)\|=\inf_{y\in L}\|x-y\|$ for all $x\in\XX$. Such
nearest-point maps exist and are 
continuous if the space $\XX$ is strictly convex, that is, if
$\|sx+(1-s)y\|<1$ for all $s\in(0,1)$ whenever $\|x\|=\|y\|=1$ and
$x\neq y$.  Since
$\|\psi_L(x)\|\leq \|x\|$, Corollary~\ref{cor:banach-zero} is
applicable. If $g$ is a power function, then these conditions
coincide with those imposed in Theorem~4.3 of \cite{MR0521695} to ensure
that a probability law lies in the domain of attraction of an
$\alpha$-stable law in a separable Banach space of Rademacher type
$p\in [1,2]$ (see \cite{MR423451}) with $\alpha<p$.

If the carrier space $\XX$ is separable, it is possible to replace the net of
approximating functions in Theorem~\ref{thr:m-space} and
Corollary~\ref{cor:banach-zero} with a sequence $(\psi_m)_{m\in\NN}$.
In particular, it is easy to construct the approximating maps if $\XX$
is a (necessarily separable) Banach space with a
\emph{Schauder basis}
\index{Schauder basis} \index{Banach space}
$(e_k)_{k\in\NN}$; see \cite[Chapter~4]{MR2766381}. Without loss of
generality, assume that the Schauder basis is normalised, that is,
$\|e_k\|=1$ for all $k\in\NN$. Let
$(e_k^*)_{k\in\NN}$ be the biorthogonal system of linear functionals
on $\XX$, so that
\begin{displaymath}
  x=\sum_{k=1}^\infty \langle e_k^*,x\rangle e_k,\quad x\in\XX.
\end{displaymath}
For $m\in\NN$, let
\begin{equation}
  \label{eq:39}
  \psi_m(x)=\sum_{k=1}^m \langle e_k^*,x\rangle e_k,\quad x\in\XX.
\end{equation}
Then $\psi_m(x)\to x$ as $m\to\infty$, and each $\psi_m$ is a continuous
bornologically consistent morphism, see
Lemma~\ref{lemma:map-zero}. However, uniform bornological
consistency is not automatically guaranteed, since 
$\|\psi_m(x)\|$ is not necessarily bounded by $b\|x\|$ for some fixed
$b>0$.

\begin{corollary}
  \label{cor:banach-schauder}
  Let $\xi$ be a random element in a separable Banach space $\XX$
  equipped with a Schauder basis, linear scaling and the ideal
  $\sS_0$ obtained by excluding the zero element. Assume that $\psi_m$
  is defined by \eqref{eq:39} for each $m\in\NN$.
  Then $\xi\in\RV(\XX,\mydot,\sS_0,g,\mu)$
  if conditions (iii\,$'$) and (iv\,$'$) above, with
  $\Gamma=\NN$, and the following conditions are satisfied.
  \begin{enumerate}
  \item[(i\,$'$)] For all $m$,
    $\psi_m(\xi)\in\RV(\XX,\mydot,\sS_0,g,\psi_m\mu)$.
  \item[(ii\,$'$)] $\psi_m\mu\vto[\sS_0]\mu$ as $m\to\infty$.
  % \item $\|\xi\|$ is regularly varying on $\R_+$ with normalising
  %   function $g$ and ideal $\sR$. 
  % \item For all $\delta>0$,
  %   \begin{displaymath}
  %     \lim_{m\to\infty} \limsup_{t\to\infty} g(t)
  %     \Prob{\|\xi-\psi_m(\xi)\|>t\delta}=0.
  %   \end{displaymath}
  \end{enumerate}
\end{corollary}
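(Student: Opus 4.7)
The plan is to apply Theorem~\ref{thr:m-space} directly to the sequence $\eta_m=\psi_m(\xi)$, $m\in\NN$, with $\mu_m=\psi_m\mu$. First I would note that $(\XX,\|\cdot-\cdot\|)$ is a metric space equipped with a homogeneous metric (the norm satisfies $\|tx-ty\|=t\|x-y\|$), that the linear scaling and the ideal $\sS_0$ are scaling consistent, and that $\sS_0$ contains the envelope $A^r$ of any of its members for all sufficiently small $r>0$, since every $A\in\sS_0$ is bounded away from the origin. The maps $\psi_m$ from \eqref{eq:39} are continuous linear morphisms from $\XX\setminus\{0\}$ into itself, with $\psi_m(x)\to x$ pointwise by definition of a Schauder basis.

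Next I would verify the four conditions of Theorem~\ref{thr:m-space}. Conditions (i) and (ii) of that theorem are handed to us directly by (i\,$'$) and (ii\,$'$). For condition (iv), homogeneity of the norm gives $\|T_{t^{-1}}\xi-T_{t^{-1}}\psi_m(\xi)\|=t^{-1}\|\xi-\psi_m(\xi)\|$, so
\[
g(t)\Prob{\|T_{t^{-1}}\xi-T_{t^{-1}}\psi_m(\xi)\|>\delta}
=g(t)\Prob{\|\xi-\psi_m(\xi)\|>t\delta},
\]
and (iv) is precisely (iv\,$'$). For condition (iii), fix $A\in\sS_0$ and choose $\eps>0$ with $A\subset\{x:\|x\|\ge\eps\}$; then
\[
g(t)\Prob{T_{t^{-1}}\xi\in A}\le g(t)\Prob{\|\xi\|\ge t\eps},
\]
which tends to $c\eps^{-\alpha}<\infty$ by the regular variation of $\|\xi\|$ assumed in (iii\,$'$), since $(\eps,\infty)$ is a $\theta_\alpha$-continuity set. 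Theorem~\ref{thr:m-space} then yields $\xi\in\RV(\XX,\cdot,\sS_0,g,\mu)$.

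The main conceptual point — and the only place one might be tempted into extra work — is whether to go through Corollary~\ref{cor:banach-zero}, which requires the uniform norm control $\|\psi_m(x)\|\le b\|x\|$ available here via the basis constant of the Schauder basis, or to invoke Theorem~\ref{thr:m-space} directly. The latter route is cleaner because the hypothesis (ii\,$'$) already encodes the vague convergence $\psi_m\mu\vto\mu$ that Corollary~\ref{cor:m-space} would otherwise deduce from uniform bornological consistency, so (ii\,$'$) bypasses the issue flagged in the paragraph preceding the corollary. With that viewpoint fixed, the remainder of the proof is simply the matching of the hypotheses (i\,$'$)--(iv\,$'$) with the hypotheses of Theorem~\ref{thr:m-space}, as carried out above.
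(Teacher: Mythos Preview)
Your proof is correct. The paper states this corollary without an explicit proof, but its placement immediately after Theorem~\ref{thr:m-space}, Corollary~\ref{cor:m-space}, and Corollary~\ref{cor:banach-zero} makes clear that the intended argument is precisely the direct application of Theorem~\ref{thr:m-space} that you carry out: conditions (i$'$), (ii$'$), (iii$'$), (iv$'$) match (i)--(iv) of that theorem, with (ii$'$) serving as a substitute for the uniform bornological consistency used in Corollary~\ref{cor:m-space}. Your closing remark correctly identifies why the route through Corollary~\ref{cor:banach-zero} is unnecessary here.
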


The conditions simplify substantially if $\XX$ is a separable
\emph{Hilbert space} \index{Hilbert space}
with an orthonormal basis $(e_k)_{k\in\NN}$. Regular variation in
Hilbert spaces has been studied in \cite{MR4745556}. The coordinate
map from $x=\sum x_ie_i\in\XX$ to the sequence $(x_1,x_2,\ldots)$ is an
isometric isomorphism between $\XX$ and the Hilbert space $\ell_2$ of
square-summable  
sequences. The continuous mapping theorem immediately
implies the following result.

\begin{proposition}
  Let $\XX$ be a separable Hilbert space with an orthonormal basis
  $(e_k)_{k\in\NN}$ and the ideal $\sS_0$ generated by the norm,
  equivalently obtained by excluding the zero element.  Then a random
  element $\xi$ is regularly varying in $\XX$ if and only if the
  sequence $(\langle e_k,\xi\rangle)_{k\in\NN}$ is regularly varying
  in $\ell_2$ with the ideal generated by the norm.
\end{proposition}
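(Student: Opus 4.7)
The plan is to invoke Theorem~\ref{mapping-theorem} (in the bijective form described in Remark~\ref{ex:implied_born}) applied to the coordinate map. Concretely, define
\begin{displaymath}
  \Phi:\XX\to\ell_2,\quad \Phi(x)=(\langle e_k,x\rangle)_{k\in\NN}.
\end{displaymath}
Since $(e_k)_{k\in\NN}$ is an orthonormal basis, Parseval's identity gives $\|\Phi(x)\|_{\ell_2}=\|x\|_\XX$ for all $x\in\XX$, so $\Phi$ is a linear isometric bijection, in particular a homeomorphism between $\XX$ and $\ell_2$. Linearity means that $\Phi(tx)=t\Phi(x)$ for all $t>0$, so $\Phi$ is a morphism with respect to the linear scalings on both spaces.

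The next step is to verify that $\Phi$ and $\Phi^{-1}$ are bornologically consistent. Both ideals are the metric exclusion ideals $\sS_0$ obtained by excluding $0$ with respect to the norms, see Example~\ref{ex:ideals-Rd}. Because $\Phi$ preserves norms, a set $A\subset\XX$ has $\inf_{x\in A}\|x\|>0$ if and only if $\inf_{y\in\Phi(A)}\|y\|_{\ell_2}>0$; equivalently, $A\in\sS_0$ on $\XX$ iff $\Phi(A)\in\sS_0$ on $\ell_2$. Thus $\Phi^{-1}(\sS_0^{\ell_2})=\sS_0^\XX$ and $\Phi(\sS_0^\XX)=\sS_0^{\ell_2}$, which is exactly bornological consistency in both directions.

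Having assembled these ingredients, one direction of Theorem~\ref{mapping-theorem} yields that if $\xi\in\RV(\XX,\cdot,\sS_0,g,\mu)$, then $\Phi(\xi)=(\langle e_k,\xi\rangle)_{k\in\NN}\in\RV(\ell_2,\cdot,\sS_0,g,\Phi\mu)$, provided $\Phi\mu$ is nontrivial; since $\Phi$ is a bijective morphism preserving the ideal, nontriviality of $\mu$ transfers directly to nontriviality of $\Phi\mu$. The converse direction follows by applying the same theorem to the continuous bornologically consistent morphism $\Phi^{-1}$. No step is really an obstacle here, since the isometry does all the work—the only small point worth checking is that both tail measures are nontrivial on the target ideal, which is immediate from bijectivity of $\Phi$ and the fact that $\Phi$ maps $\sS_0^\XX$-sets exactly onto $\sS_0^{\ell_2}$-sets.
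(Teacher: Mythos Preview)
Your proof is correct and takes essentially the same approach as the paper: the paper simply remarks that the coordinate map is a homeomorphism between $\XX$ and $\ell_2$ and invokes the continuous mapping theorem, while you spell out the details (isometry via Parseval, morphism property, bornological consistency both ways, nontriviality). One small quibble: your citation of Example~\ref{ex:ideals-Rd} is for $\R^d$, not for general normed spaces, but the description of $\sS_0$ as the ideal generated by the norm is of course still valid here.
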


A characterisation of regular variation in $\ell_p$ is given in
Proposition~\ref{thr:rv-ellp} (proved later on); specialised for
$p=2$, it yields the following corollary.

\begin{corollary}
  \label{cor:hilbert}
  Let $\xi$ be a random element in a separable
  Hilbert space $\XX$ with 
  linear scaling and the ideal $\sS_0$ generated by the norm. Then
  $\xi\in\RV(\XX,\mydot,\sS_0,g,\mu)$ if and only if the random
  variables $\eta_k=\langle \xi,e_k\rangle$, $k\in\NN$, satisfy the
  following conditions.
  \begin{enumerate}[(i)]
  \item There exist a normalising function $g$ and a family of measures
    $\mu_m\in\Mb[\sR_0^m]$, $m\in\NN$, such that
    $(\eta_1,\dots,\eta_m)\in\RV(\R^m,\mydot, \sR^m_0,g,\mu_m)$ for
    all $m$ and then necessarily $\mu_m=\psi_m\mu$, where $\psi_m$ is
    defined in \eqref{eq:39}. 
  \item $\|\xi\|\in\RV(\R_+,\mydot,\sR_0,g,c\theta_\alpha)$ and then
    $c\theta_\alpha$ is the pushforward of $\mu$ under the
    map $x\mapsto\|x\|$.
  \item For all $\delta>0$,
    \begin{displaymath}
      \lim_{m\to\infty} \limsup_{t\to\infty} g(t)
      \Prob{\sum_{i=m+1}^\infty \eta_i^2>t^2\delta}=0.
    \end{displaymath}
  \end{enumerate}
\end{corollary}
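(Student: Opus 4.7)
The plan is to apply Corollary~\ref{cor:banach-schauder} with the orthonormal basis $(e_k)_{k\in\NN}$ playing the role of a normalised Schauder basis, so that the maps $\psi_m$ defined at \eqref{eq:39} coincide with the orthogonal projections $\psi_m x = \sum_{k=1}^m \eta_k(x)e_k$. The Pythagorean identity $\|x\|^2 = \|\psi_m x\|^2 + \|x-\psi_m x\|^2$ identifies $\|x-\psi_m x\|^2 = \sum_{k>m}\eta_k(x)^2$ and gives the contraction $\|\psi_m x\|\leq\|x\|$, so that condition (iii) of the corollary matches condition (iv\,$'$) of Corollary~\ref{cor:banach-zero} up to the harmless index shift $m\leftrightarrow m+1$ and a replacement of $\delta$ by $\delta^2$.

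For the necessity direction, starting from $\xi\in\RV(\XX,\cdot,\sS_0,g,\mu)$, I invoke Theorem~\ref{mapping-theorem} for three continuous bornologically consistent morphisms: the coordinate projection $\pi_m:\XX\to\R^m$, $x\mapsto(\eta_1(x),\dots,\eta_m(x))$, bornologically consistent to $\sR_0^m$ since $\|\pi_m x\|\leq\|x\|$; the norm map $\|\cdot\|:\XX\to\R_+$ into $\sR_0$; and the residual norm $x\mapsto\|x-\psi_m x\|$ into $\sR_0$, bornologically consistent by Pythagoras. These yield (i) with $\mu_m=\pi_m\mu$ (identified with $\psi_m\mu$ via the isometric embedding $\R^m\cong\mathrm{span}(e_1,\dots,e_m)\subset\XX$), (ii) with $c\theta_\alpha$ equal to the pushforward of $\mu$ under the norm (the form $c\theta_\alpha$ being forced by Proposition~\ref{prop:RV-g} and homogeneity of $\mu$), and (iii) via the same argument used to derive (iv\,$'$) in the proof of Corollary~\ref{cor:banach-zero}.

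For the sufficiency direction, the task is to supply a tail measure $\mu\in\Mb[\sS_0]$ meeting the hypotheses of Corollary~\ref{cor:banach-schauder}. The consistency $\pi_{k,m}\mu_m=\mu_k$ for $k\leq m$ follows from Theorem~\ref{mapping-theorem} applied to the coordinate projections $\R^m\to\R^k$, with any multiplicative factor absorbed into the common normalising function $g$ via the uniqueness part of Theorem~\ref{thr:ks}. Theorem~\ref{thr:fidi-extension}, applied to the countable-base ideal on $\R^\infty$ generated by the $\pi_m$-preimages of sets in $\sR_0^m$, then produces a measure $\tilde\mu$ on $\R^\infty$ whose finite-dimensional marginals are the $\mu_m$. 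The main obstacle is to show that $\tilde\mu$ concentrates on $\ell_2\cong\XX$ and belongs to $\Mb[\sS_0]$: condition (iii) must be upgraded from a statement about $\limsup_t g(t)\P\{\sum_{k\geq m}\eta_k^2>t^2\delta\}$ to a tightness statement for $\tilde\mu$ that precludes mass escaping to sequences with divergent $\ell_2$-norm, achieved by noting the inclusion $\{\|x\|\geq R\}\subset\{\|\psi_m x\|\geq R/2\}\cup\{\|x-\psi_m x\|\geq R/2\}$ and exploiting (iii) on the second piece. Once $\mu$ is obtained, $\psi_m\mu=\mu_m$ by construction, verifying (i\,$'$); (iii\,$'$) and (iv\,$'$) are precisely our (ii) and (iii); and (ii\,$'$), namely $\psi_m\mu\vto\mu$, follows because every $\mu$-continuity set in $\sS_0$ is contained in some $\{\|x\|\geq\eps\}$ while the $\mu$-mass of $\{\|x-\psi_m x\|>\eps/2\}$ vanishes as $m\to\infty$ by (iii). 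Corollary~\ref{cor:banach-schauder} then delivers $\xi\in\RV(\XX,\cdot,\sS_0,g,\mu)$.
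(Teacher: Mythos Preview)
Your approach is correct in spirit but substantially more laborious than the paper's, and one of your choices forces you into extra work that is avoidable.

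The paper's argument is a one-line reduction: the coordinate map $x\mapsto(\langle\xi,e_k\rangle)_{k\in\NN}$ is an isometry onto $\ell_2$, so the corollary is exactly Theorem~\ref{thr:rv-ellp} specialised to $p=2$, with condition (i) there (regular variation on $\sR_0^\infty(1)$) translated into finite-dimensional regular variation via Lemma~\ref{lemma:fidi-sequence}. Both the existence of $\mu$ on $\R^\infty$ and the approximation argument are already packaged inside those two results, so nothing further is required.

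By contrast, you re-derive much of this machinery by hand. Two specific points. First, you invoke Corollary~\ref{cor:banach-schauder}, which is the \emph{weakened} Schauder-basis statement that requires the extra hypothesis (ii\,$'$), i.e.\ $\psi_m\mu\vto\mu$. In a Hilbert space the orthogonal projections satisfy $\|\psi_m x\|\leq\|x\|$, so the stronger Corollary~\ref{cor:banach-zero} applies with $b=1$; there (ii\,$'$) is a consequence, not an assumption, via the uniform bornological consistency built into Corollary~\ref{cor:m-space}. Using Corollary~\ref{cor:banach-zero} would have spared you the separate verification of (ii\,$'$). Second, your construction of $\mu$ via Theorem~\ref{thr:fidi-extension} and the subsequent passage from $\R^\infty$ to $\ell_2$ is exactly what Lemma~\ref{lemma:fidi-sequence} already provides (giving $\mu\in\Mb[\sR_0^\infty(1)]$ from the consistent family $\mu_m$); your tightness sketch to confine $\tilde\mu$ to $\ell_2$ is correct in outline---one bounds $\tilde\mu(\{\sum_{i>m}x_i^2>\delta\})$ by $\sup_K\mu_K(\{\sum_{i=m+1}^K x_i^2>\delta\})\leq\limsup_t g(t)\Prob{\sum_{i>m}\eta_i^2>t^2\delta}$---but this step deserves a line of justification rather than the inclusion $\{\|x\|\geq R\}\subset\{\|\psi_m x\|\geq R/2\}\cup\{\|x-\psi_m x\|\geq R/2\}$, which by itself does not control $\tilde\mu$-mass outside $\ell_2$.

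In short: your route works, but the paper's route is both shorter and avoids the delicate point of locating $\mu$ inside $\ell_2$, since Theorem~\ref{thr:rv-ellp} absorbs that issue into its proof via Corollary~\ref{cor:m-space}.
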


By Proposition~\ref{prop:polar-decomp}(ii), $\xi$ is regularly varying
in a separable Hilbert space if and only if $\|\xi\|$ is regularly
varying and there exists a non-degenerate finite Borel measure $\sigma$ on
the unit sphere, such that
\begin{equation}
  \label{eq:mod-HS}
  \lim_{t\to\infty} \Prob{\xi/\|\xi\|\in A\mid \|\xi\|>t}
  =\sigma(A)
\end{equation}
for all Borel subsets $A$ of the unit sphere $\SS=\{x\colons \|x\|=1\}$
that are $\sigma$-continuity sets.

The authors of \cite{MR0345155} complement \eqref{eq:mod-HS} with an
additional condition requiring that, for all $m\ge 0$ and $a>0$,
\begin{displaymath}
  \lim_{t\to\infty} \frac{\Prob{\|\xi-\psi_m(\xi)\|>at}}
  {\Prob{\|\xi\|>t}}= \frac{c_m}{c_0}a^{-\alpha},
\end{displaymath}
where $\psi_0(\xi)=0$, $c_m>0$ for all $m$,
and $c_m\to 0$ as $m\to\infty$. This
condition (with $m=0$) implies that $\|\xi\|$ is regularly varying and, combined
with \eqref{eq:mod-HS}, characterises all $\xi$ that belong to the
domain of attraction of a non-degenerate strictly
$\alpha$-stable law in the Hilbert space;
see \cite[Theorem~4.11]{MR0345155}. The condition
$c_m>0$ in the cited
result, which also appears as Theorem~3.1 in  \cite{kok:stoev:xion19},
ensures that the law of $\xi$ is genuinely infinite-dimensional (i.e.,
not supported on any finite-dimensional subspace).

\chapter{Special cases and examples}
\label{sec:examples}

\section{Random variables}
\label{sec:random-variables}

Let $\XX$ be a subset of the real line with the Euclidean
topology. Consider a scaling $T$ and a scaling-consistent ideal
$\sS$ on $\XX$. When maxima are considered below, we assume that the maps $T_t$ are
order-preserving. If the scaling is linear, then $\XX$ must be
a subcone of $\R$, that is, either the entire $\R$, or $\R_+=[0,\infty)$,
$\R_-=(-\infty,0]$, or their open versions $(0,\infty)$ and $(-\infty,0)$.

Let $\xi_1,\xi_2,\ldots$ be independent copies of a random variable
$\xi$, and define
\begin{displaymath}
  M_n=\max(\xi_1,\dots,\xi_n),\quad n\geq 1.
\end{displaymath}
Let $(a_n)$ be such that $g(a_n)\sim n$. By
Proposition~\ref{prop:pp}, $\xi\in\RV(\XX,T,\sS,g,\mu)$ implies that,
for each $t\in\R$ such that $(t,\infty)\in\sS$ and
$\mu(\{t\})=0$,
\begin{multline}
  \label{eq:36}
  \Prob{T_{a_n^{-1}} M_n\leq t}\\
  =\Prob{T_{a_n^{-1}}\{\xi_1,\dots,\xi_n\} \cap (t,\infty)=\emptyset} 
  \to e^{-\mu((t,\infty))}
  \quad \text{as}\; n\to\infty. 
\end{multline}
This means that $T_{a_n^{-1}} M_n$ converges in distribution to a 
random variable with cumulative distribution function
$e^{-\mu((t,\infty))}$, $t\in\R$.

In general, if there exist a distribution function $F$ and sequences
$(a_n)$, $(b_n)$ such that for every continuity point $t$ of $F$
\begin{equation}
  \label{eq:MDA}
  \Prob{a_n^{-1} (M_n -b_n)\leq t}\to F(t)
  \quad \text{as}\; n\to\infty,
\end{equation}
we say that $\xi$ belongs to the maximum domain of attraction (MDA) of
the distribution with the distribution function $F$.  The three
classical maximum domains of attraction 
on the real line are the Fr\'echet, Weibull and Gumbel types; see
Chapter~1 of \cite{res87}.
\index{maximum domain of attraction}

As a consequence of \eqref{eq:36}, the MDA of the Fr\'echet distribution
is closely related to the regular variation property.  
Consider $\XX=\Rpp$ with linear scaling and the ideal $\sR_0$
generated by the modulus $\modulus(x)=x$. Since $\sR_0$ is the
  metric exclusion ideal of $0$ and the union of all sets in 
$\sR_0$ does not contain the origin, we may
equivalently let $\XX=[0,\infty)$. Then
$\xi\in\RV(\R_+,\mydot,\sR_0,g,c\theta_\alpha)$ if and only if
\eqref{eq:36} holds and takes the form
\begin{displaymath}
  \Prob{ a_n^{-1} \max(\xi_1,\dots,\xi_n) \leq t }
  \to e^{- ct^{-\alpha}}\quad
  \text{as}\;n\to\infty
\end{displaymath}
for all $t>0$. In other words, the regular variation of $\xi$ implies
that its distribution belongs to the MDA of the (scaled) Fr\'echet
distribution, which appears on the right-hand side.
Conversely, it is known that membership in the MDA of the
Fr\'echet distribution is equivalent to regular variation of the
right tail of $\xi$; see Proposition~11.1 in
\cite{res87}. Therefore, in this case, one can always take $b_n=0$ in
\eqref{eq:MDA}.
\index{Frechet distribution@Fr\'echet distribution}

If \eqref{eq:MDA} holds with $a_n=1$ for all $n\geq 1$, then $\xi$ is
said to belong to the \emph{translative} MDA (tMDA) of
the distribution $F$. It is known that the tMDA of the Gumbel
distribution, as well as the MDA of the Weibull distribution, can be
directly related to the classical notion of regular variation by
transforming $\xi$ appropriately. Here, we argue that there is
no need for such a transformation; in fact, all three MDAs can be
unified using our general approach to regular variation with
an appropriate scaling and a suitable ideal. 

Let $\psi: \Rpp\to\YY$ be a homeomorphism, where $\YY\subset\R$
is endowed with the induced topology. Following Remark~\ref{ex:implied_born},
define a scaling on $\YY$ as
\begin{equation}
  \label{eq:Tpsi}
  T_t^\psi y=\psi(t\psi^{-1}(y)),\quad y\in\YY.
\end{equation}
The pushforward ideal $\psi\sR_0$ consists of all $V\subset\YY$ such
that $\psi^{-1}(V)\in\sR_0$, that is, $\inf \psi^{-1}(V)>0$. The
following examples are closely related to this construction.

\begin{example}[Weibull domain of attraction]
  Fix some real number $a$, and equip $\YY=(-\infty,a)$ with the
  scaling $T^\YY_t y=a+(y-a)/t$ and the ideal $\sY$ with the base
  $(a-n,a)$, $n\in\NN$. Note that
  $T_{[1,\infty)}(a-n,a)=(a-n,a)$, so that sets $(a-n,a)$ are indeed
  semicones. Let $\eta$ be a random variable with values in $\YY$. Suppose that
  $\eta\in\RV(\YY,T^\YY, \sY, g,\mu)$, that is, $\eta$
  is regularly varying
  on $\sY$ under the defined scaling
  with tail measure $\mu$.  By
  letting the normalising function be
  $g(t) = 1/\Prob{\eta \in (a-1/t,a)}$, it is always possible to
  ensure that the tail measure is given by 
  $\mu((a-u,a))=u^\alpha$ for $u>0$, where $\alpha>0$ is its tail
  index. In the notation of
  Proposition~\ref{prop:pp} for $y>0$, this gives
  \begin{multline}
    \label{eq:weibull}
    \Prob{ a_n(\max(\eta_1,\dots,\eta_n) -a) \leq - y }
    =  \Prob{T_{a_n^{-1}}\max(\eta_1,\dots,\eta_n)\leq a-y }\\
    =  \Prob{T_{a_n^{-1}}\{\eta_1,\dots,\eta_n\}\cap (a-y,a) = \emptyset}
    \to e^{- y^{\alpha}} \quad \text{as}\; n\to\infty
  \end{multline}
  for a sequence $a_n$ such that $g(a_n)\sim n$.  Thus, the
  distribution of $\eta$ (which is regularly varying on $\sY$) belongs
  to the \emph{Weibull} MDA. \index{Weibull distribution}
  One can show that the converse implication holds
  as well, that is, \eqref{eq:weibull} implies the regular variation
  of $\eta$; see \cite{res87}.  Alternatively, one can use
  \begin{displaymath}
    \psi(x)=-x^{-1}+a,\quad x>0,
  \end{displaymath}
  as a homeomorphism between $\Rpp$ and $\YY=(-\infty,a)$ to obtain
  the same conclusion with $T^\YY=T^\psi$, see
  \eqref{eq:Tpsi}.
  Observe that the measure $\mu$ with
  $\mu((a-u,a))=u^\alpha$ is the pushforward of
  $\theta_\alpha$ under $\psi$.
  % Fix an $a\in\R$. The function
  % \begin{displaymath}
  %   \psi(x)=-x^{-1}+a,\quad x\geq 0,
  % \end{displaymath}
  % is a homeomorphism between $\Rpp$ and
  % $\YY=(-\infty,a)$. If $\Rpp$ is equipped with the linear scaling,
  % then $\psi$ induces the scaling on $(-\infty,a)$ acting as 
  % $T^\psi_t y=a-t(a-y)$ and the pushforward ideal $\psi\sR_0$ has the base
  % $(-\infty,a-1/m)$, $m\geq1$. By the continuous mapping theorem, a
  % random variable $\eta\in (-\infty,a]$ is in
  % $\RV(\YY,T^\psi, \psi\sR_0, g,c\psi\theta_\alpha)$ and so belongs to the
  % Weibull MDA if and only if $\xi=\psi^{-1}(\eta)=(a-\eta)^{-1}$ is in
  % $\RV(\R_+,\cdot,\sR_0,g,c\theta_\alpha)$ and so belongs to the
  % Fr\'echet MDA. In particular, \eqref{eq:36} for $\eta$ means that
  % \begin{displaymath}
  %   \Prob{-t<a_n(\max(\eta_1,\dots,\eta_n)-a)<0}
  %   \to e^{- ct^{\alpha}} \quad \text{as}\; n\to\infty
  % \end{displaymath}
  % for all $t>0$.
\end{example}

\begin{example}[Gumbel domain of attraction]
  Consider $\YY = \R$ with the scaling $T^\YY_t y=y+ \log t$ and the
  ideal $\sY$ with the base $(n,\infty)$, $n\in\ZZ$.
  If a real-valued
  random variable $\zeta$ is regularly varying on on $\YY$ with  with
  this scaling and ideal, then its tail measure is defined by 
  $\nu_\alpha ((u,\infty)) = ce^{-\alpha u}$ for $u\in\R$ and a
  constant $c$.
  If $(a_n)$ satisfies $g(a_n)\sim n$, then, in the notation of
  Proposition~\ref{prop:pp}, for $u\in\R$, this gives
  \begin{multline*}
    \Prob{ \max(\zeta_1,\dots,\zeta_n) -\log a_n \leq u}
    =  \Prob{T_{a_n^{-1}}\max(\zeta_1,\dots,\zeta_n) \leq u }\\
    =  \Prob{T_{a_n^{-1}}\{\zeta_1,\dots,\zeta_n\}\cap ( u,\infty) = \emptyset}
    \to e^{- ce^{- \alpha u }} \quad \text{as}\; n\to\infty.
  \end{multline*}
  Thus, the distribution of $\zeta$, which is regularly varying in this
  space, belongs to the translative MDA of the (scaled) \emph{Gumbel
    distribution}.  \index{Gumbel distribution}
  As in the examples above, one can conclude that the
  converse implication holds as well; see \cite{res87}.
  This also follows because 
  \begin{displaymath}
    \psi(x)=\log x,\quad x>0,
  \end{displaymath}
  is a homeomorphism between $\Rpp$ and $\YY=\R$, which induces the
  scaling on $\YY$ as in \eqref{eq:Tpsi}, and
  $\nu_\alpha = c \psi \theta_\alpha$.  It should be noted that the full
  MDA of a Gumbel distribution involves a general affine
  normalisation and therefore it cannot be handled by this simple
  idea.
  % The function
  % \begin{displaymath}
  %   \psi(x)=\log x,\quad x>0,
  % \end{displaymath}
  % is a homeomorphism between $\Rpp$ and $\YY=\R$. The
  % corresponding scaling is $T^\psi_t y=y+t$ and the ideal $\psi\sR_0$
  % has the countable open base $(m,\infty)$ for all integers $m$. Then
  % $\xi\in\RV(\Rpp,\cdot,\sR_0,g,c\theta_\alpha)$ if and only if
  % $\zeta=\log\xi \in \RV(\R,T^\psi, \psi\sR_0,g,
  % c\psi\theta_\alpha)$. Since
  % $(\psi\theta_\alpha)((t,\infty))=e^{-\alpha t}$, \eqref{eq:36}
  % implies that
  % \begin{displaymath}
  %   \Prob{\max(\zeta_1,\dots,\zeta_n)-b_n<t}
  %   \to e^{- c e^{-\alpha t}} \quad \text{as}\; n\to\infty
  % \end{displaymath}
  % for all $t\in\R$, where $b_n=\log a_n$ is a normalising sequence of
  % real numbers. So $\eta$ belongs to the tMDA of a
  % Gumbel distribution. It should be noted that the full MDA of a
  % Gumbel distribution involves both additive and multiplicative
  % normalisations and so cannot be handled by our tools.
  % % By \eqref{eq:regvar_slow} this holds if and only if
  % % \begin{equation*}
  % %   \Prob{ \eta > x } = e^{-\alpha x} g(x)\,,
  % % \end{equation*}
  % % for some function $g:\R \to(0,\infty)$ of the form $g(x) = L(e^x)$,
  % % $x \in \R$, where $L$ is a slowly varying function.\Red{
  % %   Equivalently, $g$ is an arbitrary strictly positive measurable
  % %   function satisfying $g(x+y)/ g(x) \to 1$ for any $y \in \R$.}
\end{example}

\begin{example}
  The function $\psi(x)=e^x$ is a homeomorphism between $(0,\infty)$
  and $\YY=(1,\infty)$. As in Remark~\ref{ex:implied_born}, it
  transforms the linear scaling on $(0,\infty)$ into the scaling
  $T^\psi_t y = y^t$ on $\YY$ and allows one to characterise random
  variables $\eta$ on $(1,\infty)$ for which there exists a sequence
  $(a_n)$ such that
  \[
    T^\psi_{a_n^{-1}}\max(\eta_1,\dots,\eta_n)
    =\max(\eta_1,\dots,\eta_n)^{1/a_n}
  \]
  has a non-trivial weak limit as $n\to\infty$.
\end{example}

\begin{example}
  Consider the map $\psi(x)=x^\beta$ with $\beta>0$ from $\R_+$ to
  $\R_+$. Then $\psi\sR_0=\sR_0$, and $T_t^\psi y= t^\beta y$. We have
  $\xi\in\RV(\R_+,\mydot,\sR_0,g,c\theta_\alpha)$ if and only if
  $\xi^\beta\in\RV(\R_+,T^\psi,\sR_0,g,c\psi\theta_\alpha)$. In
  particular, $\psi\theta_\alpha=\theta_{\alpha/\beta}$, so that
  $\xi^\beta$ also belongs to the MDA of the Fr\'echet distribution;
  however, it possesses a different tail index.

  If $\beta<0$, for example, $\psi(x)=x^{-1}$, then $\psi$ is a
  continuous morphism between $\Rpp$ with linear scaling and
  the same space with the \emph{inverse linear scaling}, acting as
  \index{inverse linear scaling}
  \begin{equation}
    \label{eq:inv-linear}
    T^\psi_t x=t^{-1}x,\quad t>0,
  \end{equation}
  and the ideal $\psi\sR_0$ with the base
  $(0,n)$, $n\in\NN$. Thus, if
  $\xi\in\RV(\Rpp,\mydot,\sR_0,g,c\theta_\alpha)$, and so belongs to
  the MDA of the Fr\'echet distribution, then $\xi^{-1}$ belongs to
  the minimum domain of attraction of the reciprocal Fr\'echet
  distribution, namely
  \begin{displaymath}
    \Prob{a_n\min(\xi_1^{-1},\dots,\xi_n^{-1})\geq t}
    \to e^{-c t^\alpha} \quad \text{as}\; n\to\infty
  \end{displaymath}
  for all $t>0$, where $g(a_n)\sim n$. 
\end{example}

\begin{example}
  Let $\XX$ be the \emph{Sorgenfrey line}, \index{Sorgenfrey line}
  which is the real line with the topology generated by the base
  $[a,b)$ for $a<b$. This space is used in \cite{janson20} in the
  context of the Skorokhod topology, owing to the fact that continuous
  functions $f:\XX\to\R$ are exactly those that are right-continuous
  in the usual sense.  The space $\XX$ is perfectly normal and Hausdorff;
  it is first countable, separable, and hereditary
  Lindel\"of, but not metrisable. The Borel $\sigma$-algebra on $\XX$
  is the same as the Borel $\sigma$-algebra on $\R$. For each fixed
  $t>0$, the map $x\mapsto tx$ is continuous on the Sorgenfrey line,
  but the joint map $(t,x)\mapsto tx$ is not continuous; as a function
  of $t$, it is only right-continuous in the Sorgenfrey topology.
  Nevertheless, it
  is easy to see that the scaling is Borel measurable and, therefore, Baire
  measurable due to the perfect normality property of the space. Since
  the topology on $\XX$ is finer than the Euclidean one, all random
  variables that are regularly varying on $\XX$ are also regularly
  varying on $\R$ with the corresponding ideals.

  Let $\sX$ be the ideal on $\XX$ with the base $(n^{-1},\infty)$,
  $n\in\NN$. If a Borel measure 
  $\nu\in\RV(\R_+,\mydot,\sR_0,g,c\theta_\alpha)$, then 
  $g(t)\nu(t[a,b))\to c\theta_\alpha([a,b))$ for all $0<a<b$. Note that
  the measure $\theta_\alpha$ is tau-additive on the Sorgenfrey line,
  since the Sorgenfrey line is hereditarily Lindel\"of and its Baire and
  Borel $\sigma$-algebras coincide.
  Thus, 
  \eqref{eq:24g} holds on the intervals $[a,b)$, $0<a<b$, which form a
  convergence-determining family for the vague topology associated with
  the ideal $\sX$, which
  implies that
  $\nu\in\RV(\XX,\mydot,\sX,g,c\theta_\alpha)$; see
  Theorem~\ref{thr:basis} and \cite[Theorem~4.3.10]{bogachev18}.
  % Then
  % \begin{displaymath}
  %   \frac{1}{\nu((t,\infty))}\nu(T_t A)
  %   = \int_A x^{-2}dx \quad \text{if}\; t>\max(1,1/\inf A).
  % \end{displaymath}
  % Thus, we have the setwise convergence in \eqref{eq:vague}, which
  % implies the vague convergence on the Sorgenfrey line.  
\end{example}

\section{Random vectors}
\label{sec:random-vectors}

\paragraph{Linear scaling}
Let $\XX\subset\R^d$ be a cone in the Euclidean space with linear
scaling $T_tx=tx=(tx_1,\dots,tx_d)$ for
$x=(x_1,\dots,x_d)\in\R^d$. For $k\in\{1,\dots,d\}$, the product ideal
$\sR^d_0(k)$ is the family of all sets $B\subset\R^d$ such that
\begin{displaymath}
  \inf_{x=(x_1,\dots,x_d)\in B} \max_{I\subset \{1,\dots,d\}, \card I=k}
  \min_{i\in I} |x_i|>0,
\end{displaymath}
see Definition~\ref{def:product}, where $\card I$ stands for the
number of elements in $I$.  The metric exclusion ideal
$\sR^d_0$ (obtained by excluding $0$ from $\R^d$ and then restricting
the sets to $\XX$) consists of all subsets of $\XX$ whose closures do not
contain the origin. This coincides with $\sR^d_0(1)$, which is briefly denoted by
$\sR_0^d$, see Example~\ref{example:product}. It is topologically and scaling consistent, and may be generated by various continuous moduli; for instance, one may let
$\modulus(x)=\|x\|_p$ to be any $p$-norm on $\R^d$ with $p\in(0,\infty]$.

\begin{example}[The first quadrant with various moduli]
  \label{ex:moduli-2}
  \label{ex:hidden-RV}
  Let $\XX=\R_+^2$ with linear scaling.  Note that the modulus
  $\modulus_{\max}(x)=\max(x_1,x_2)$ and any other modulus induced by a
  norm on $\R_+^2$ generate the same ideal $\sR_0^2$, whose union set is
  $\R_+^2\setminus\{0\}$. The modulus $\modulus_{\max}$ is a
  special case (for $\beta=0$) of
  \begin{displaymath}
    \modulus^*_\beta(x_1,x_2)=\max\big(x_1^\beta
    x_2^{1-\beta},x_1^{1-\beta}x_2^\beta\big),
    \quad \beta\in[0,1/2].
  \end{displaymath}
  Note that $\{\modulus^*_\beta>0\}=(0,\infty)^2$ for $\beta\neq 0$. The
  function
  \begin{displaymath}
    \modulus_\beta(x_1,x_2)
    =\min\big(x_1^\beta x_2^{1-\beta}, x_1^{1-\beta} x_2^{\beta}\big),
    \quad \beta\in[0,1/2],
  \end{displaymath}
  and its special case $\modulus_{\min}(x_1,x_2)=\min(x_1,x_2)$ for
  $\beta=0$ are also moduli with $\{\modulus_\beta>0\}=(0,\infty)^2$.
  While the union sets for the ideals $\sR_{\modulus_\beta}$ and
  $\sR_{\modulus_\beta^*}$ generated by these moduli are identical, the
  ideals themselves are different; specifically,
  % Further moduli on $\XX_1$ can be chosen to be
  % $\modulus(x)=x_1$ or $\modulus(x)=x_2$.
  \begin{displaymath}
    \sR^2_0(2)=\sR_{\modulus_{\min}}\subset \sR_{\modulus_\beta}
    \subset \sR_{\modulus_\beta^*}\subset \sR_{\modulus_{\max}}
    =\sR^2_0(1)=\sR^2_0,
  \end{displaymath}
  and the inclusions are strict.  Below we show that the tail
  behaviour of a random vector may vary depending on the
  choice of modulus and the parameter $\beta$: for certain $\beta$ we observe
  regular variation on $\R_+^2$, for others, hidden
  regular variation, and for some $\beta$, the distribution is not
  regularly varying at all.

  Let $\xi=(\xi_1,\xi_2)$ be a random vector in $\XX$, where
  $\xi_1,\xi_2$ are i.i.d.~Pareto(1)-distributed, that is,
  \index{Pareto distribution}
  \begin{displaymath}
    \Prob{\xi_1>t}=t^{-1},\quad  t\geq1.
  \end{displaymath}
  It is well known that $\xi$
  is regularly varying of tail index 1 (with the normalising function
  $g(t)=t$) on $\R_+^2$ with respect to the ideal
  $\sR_{\modulus_{\max}}=\sR_0^2$. The spectral measure is supported
  by the axes, hence $\xi$ is not regularly varying with this
  normalising function on any ideal whose union set is a subset of
  $(0,\infty)^2$, in accordance with Proposition~\ref{prop:sub-ideal}.

  For other moduli and all sufficiently large $t$, we observe the following:
  \begin{align*}
    \Prob{\modulus_{\min}(\xi)>t}&=t^{-2},\\
    \Prob{\modulus_\beta(\xi)>t}&=\frac{1}{1-2\beta}t^{-2}
    +\frac{2\beta}{1-2\beta}t^{-1/\beta},\quad \beta\in(0,1/2),\\
    \Prob{\modulus_{1/2}(\xi)>t}&=t^{-2}(1+2\log t),\\
    \Prob{\modulus^*_\beta(\xi)>t}&=\frac{2-2\beta}{1-2\beta}t^{-1/(1-\beta)}
                                -\frac{1}{1-2\beta}t^{-2},\quad \beta\in(0,1/2).
  \end{align*}
  Note that the main asymptotic term for $\modulus_{1/2}$ has also been
  derived by \cite[Lemma~4.1(4)]{jessen06:_regul}.  In order to
  establish the regular variation of $\xi$ in $(0,\infty)^2$,
  we use the characterisation from
  Theorem~\ref{thr:polar-decomp-4}. Let $\modulus$ be a modulus
  that is continuous and proper on $(0,\infty)^2$. Let $A$ be a
  compact subset of $\{\modulus=1\}\cap (0,\infty)^2$. One can
  represent the set
  $\{x\in(0,\infty)^2\colons
  \modulus(x)\geq t,\modulus(x)^{-1}x\in A\}$ in polar
  coordinates as $\{tsu\colons s\geq 1/\modulus(u),u\in A'\}$, where $A'$ is
  a subset of the unit circle in $(0,\infty)^2$. Since the density of
  $(\xi_1,\xi_2)$ is $x_1^{-2}x_2^{-2}$ on $[1,\infty)^2$, 
  \begin{align*}
    \Prob{\modulus(\xi)^{-1}\xi\in A,\modulus(\xi)>t}
    &=\int_{A'}\diff u \int_{t/\modulus(u)}^\infty (su_1)^{-2}(su_2)^{-2}s \diff s\\
    &=t^{-2} \int_{A'} \modulus(u)^2 u_1^{-2} u_2^{-2} \diff u
  \end{align*}
  for all sufficiently large $t$, since then
  $tu/\modulus(u)\in [1,\infty)^2$, which is the support of $\xi$.
  Since $A'$ is a compact subset of the unit Euclidean circle
  intersected with $(0,\infty)^2$ and the modulus is continuous, we
  have that $0<c_1\leq \modulus(u)\leq c_2<\infty$ for all $u\in A'$,
  so that the last integral is finite and strictly positive.
  Thus, any non-trivial local tail measure on compact spectral subsets
  of $(0,\infty)^2$ must appear on the scale $t^{-2}$.
  This is not the case for $\modulus^*_\beta(\xi)$ with
  $\beta\in[0,1/2)$, whose tail is of order $t^{-1/(1-\beta)}$, nor
  for $\modulus_{1/2}$: in the latter case the normalising scale is
  affected by the logarithmic factor.
  However, $\modulus_\beta$ with
  $\beta\in[0,1/2)$ has a tail of order $2$, implying that $\xi$ is
  regularly varying with index $2$ on 
  $\sR_{\modulus_\beta}$.
  \index{hidden regular variation}
  This is well known for $\modulus_{\min}$
  (that is, for $\beta=0$), in which case $\xi$ is said to exhibit 
  \emph{hidden regular variation}; see \cite{res02} and the book
  \cite{resnick24:_art_findin_hidden_risks} entirely devoted to   
  the hidden regular variation phenomenon.

  Consider the map $\psi(x)=x_1+x_2$ from $(0,\infty)^2$ to
  $\YY=(0,\infty)$ equipped with the ideal $\sR_0$. The map $\psi$ is
  bornologically consistent if the ideal on $(0,\infty)^2$ contains
  $\{(x_1,x_2)\in\XX\colons x_1+x_2>t\}$ for all $t>0$. This is the case if
  the ideal is generated by the modulus
  $\modulus_{\max}(x_1,x_2)=\max(x_1,x_2)$ or any other modulus induced
  by a norm. If we equip $(0,\infty)^2$ with the modulus
  $\modulus_{\min}(x_1,x_2)$, then $\psi$ is not bornologically
  consistent. Indeed, the set
  $\psi^{-1}((y,\infty))=\{(x_1,x_2)\colons x_1+x_2>y\}$,
  $y\in\YY$, neither belongs to $\sR_{\modulus_{\min}}$ nor to
  $\sR_{\modulus_\beta}$ with $\beta\in(0,1/2)$, that is, it does not
  belong to ideals that are compatible with the hidden regular variation property.
\end{example}

\begin{example}[Hidden regular variation in $\R^d$]
  \label{ex:vectors-HRV}
  Let $\xi_1,\dots,\xi_m$ be i.i.d.\ regularly varying random vectors,
  namely, $\xi_1\in\RV(\R^d,\mydot,\sR_0^d,g,\mu)$. Consider
  $(\xi_1,\dots,\xi_m)$ as a random vector in the space $(\R^d)^m$ and
  equip this space with the ideal $(\sR_0^d)^m(k)$ for
  $k\in\{1,\dots,m\}$. Since $T_{t^{-1}}\xi_1\to0$ in probability, the assumptions of
  Theorem~\ref{thr:power-space} are satisfied, and consequently
  % Namely, 
  % $A\in(\sR_0^d)^m(k)$ if and only if there are $k$ indices
  % $i_1,\dots,i_k$ from $\{1,\dots,m\}$ such that the projection of
  % $A\subset \R^d\times\cdots\times\R^d$
  % onto the space spanned by these indices is a subset of
  % $B_1\times\cdots\times B_k$ for
  % $B_1,\dots,B_k\in\sR_0^d$, equivalently, a subset of $B^k$ for some
  % $B\in\sR_0^d$. Fix any sets
  % $B_1,\dots,B_k\in\sR_0^d$ and distinct indices
  % $i_1,\dots,i_k\in\{1,\dots,m\}$. Denote by
  % $(B_1,\dots,B_k)_{i_1,\dots,i_k}$ the set obtained
  % as the Cartesian product of the sets
  % \begin{displaymath}
  %   A_j=
  %   \begin{cases}
  %     B_l, & \text{if}\; j=i_l \; \text{for some}\; l\in\{1,\dots,k\},\\
  %     \R^d, & \text{otherwise},
  %   \end{cases}
  %   \quad j=1,\dots,m. 
  % \end{displaymath}
  % These sets determine vague convergence on $(\sR_0^d)^m(k)$. Then
  % \begin{multline*}
  %   g(t)^k\Prob{T_{t^{-1}}(\xi_1,\dots,\xi_m)
  %     \in (B_1,\dots,B_k)_{i_1,\dots,i_k}}\\
  %   =\prod_{j=1}^k \Big(g(t)\Prob{\xi_{i_j}\in T_tB_j}\Big)
  %   \to \prod_{j=1}^k \mu(B_j)\quad \text{as}\; t\to\infty. 
  % \end{multline*}
  % Furthermore, if $l\ge k+1$, 
  % \begin{displaymath}
  %   g(t)^k\Prob{T_{t^{-1}}(\xi_1,\dots,\xi_m)
  %     \in (B_1,\dots,B_l)_{i_1,\dots,i_l}}\to 0\quad \text{as}\; t\to\infty,
  % \end{displaymath}
  % and sets $(B_1,\dots,B_l)_{i_1,\dots,i_l}$ with $l\leq k-1$ do not
  % belong to the ideal $(\sR_0^d)^m(k)$.
  % Therefore,
  \begin{displaymath}
    (\xi_1,\dots,\xi_m)
    \in \RV((\R^d)^m,\mydot,(\sR_0^d)^m(k),g^k,\mu_k),
  \end{displaymath}
  where
  \begin{equation}
    \label{eq:mu-vector-hidden}
    \mu_k\big(\bdiff (x_1,\dots,x_m)\big)
    =\sum_{I\subset\{1,\dots,m\},\card(I)=k}
    \prod_{i\in I}\mu(\bdiff x_i)\prod_{j\notin I}\delta_0(\bdiff x_j).
  \end{equation}
\end{example}

\begin{example} 
  \label{ex:bornology-skew}
  Let $\XX=(0,\infty)^2$ with linear scaling. Define an ideal
  (actually, a bornology) $\sX$ by specifying its open base as
  \begin{displaymath}
    \base_n=\big\{(x_1,x_2)\in(0,\infty)^2\colons 
    x_2> n^{-1}(1+x_1)\big\}, \quad n\geq1.
  \end{displaymath}
  This ideal is generated by a countable family of moduli
  $\modulus_n(x_1,x_2)=x_1+nx_2$, $n\geq1$; it does not satisfy
  Condition~\hyperref[condB]{(B)}. Indeed, if $\VV$ were a bounded semicone from
  \hyperref[condB]{(B)}, then there would exist some
  $n\geq1$ such that $\VV\subset \base_n$. This is impossible,
  since $\VV$ contains points $(a_m,b_m)$, $m\geq1$,
  with $b_m\downarrow 0$ as $m\to\infty$, while
  $b_m< n^{-1}(1+a_m)$ for any fixed $n$ and sufficiently large $m$
  and so $(a_m,b_m)\notin\base_n$. However, if the scaling on $\XX$ is
  instead defined by $T_tx=(x_1,tx_2)$, then
  $\modulus(x_1,x_2)=x_2/(1+x_1)$ is homogeneous, and then $\sX$
  satisfies Condition~\hyperref[condB]{(B)}.
\end{example}

\begin{example}[Linear transforms]
  Let $\psi$ be a linear map from $\XX=\R^d$ to $\YY=\R^m$.  Assume
  that $\XX$ and $\YY$ are equipped with linear scaling and metric
  exclusion ideals $\sR_0^d$ and $\sR_0^m$; consequently, $\psi$ is a
  morphism, which is bornologically consistent by
  Lemma~\ref{lemma:map-zero}. By Theorem~\ref{mapping-theorem}, if
  $\xi\in\RV(\R^d,\mydot,\sR_0^d,g,\mu)$, then
  $\psi(\xi)\in\RV(\R^m,\mydot,\sR_0^m,g,\psi\mu)$ provided the
  pushforward tail measure $\psi\mu$ is non-trivial
  on the ideal $\sR_0^m$, equivalently, not concentrated at the
  origin; see also \cite{bas:dav02} and 
  \cite[Proposition~2.1.12]{kul:sol20}. The case of sums of components
  was addressed in Example~\ref{ex:maps-products}.
\end{example}

\paragraph{Moduli given by the marginals}
Consider moduli $\modulus_i(x)=|x_i|$, $i=1,\dots,d$, defined
by the absolute values of the components of $x\in\R^d$. Each modulus
$\modulus_i$ generates an ideal $\sR_i$, such that $\sR_i\subset \sR_0^d$
and $\sR_0^d$ is the product of the ideals $\sR_1,\dots,\sR_d$.
The following result characterises regular variation on $\sR_0^d$ in
terms of tail measures on the ideals $\sR_i$.

\begin{proposition}
  \label{prop:cumulative-moduli}
  We have $\xi\in\RV(\R^d,\mydot,\sR_0^d,g,\mu)$ if and only if 
  there exist measures $\mu_i\in\Mb[\sR_i]$, $i=1,\dots,d$, not all
  trivial, such that
  \begin{displaymath}
    g(t)\Prob{t^{-1}\xi\in\cdot}\vto[\sR_i]\mu_i,
    \quad i=1,\dots,d,
  \end{displaymath}
  that is, $\xi\in\RV(\R^d,\mydot,\sR_i,g,\mu_i)$ for
  all $i$ such that $\mu_i$ is non-trivial. In this case,
  \begin{equation}
    \label{eq:48}
    \mu\big(A\cap\{\modulus >a\}\big)=\sum_{i=1}^d 
    \mu_i\big(A\cap\{\modulus_i(x)>a\}\cap
    \{\overline\modulus_{i-1}(x)\leq a\}\big), \quad a>0, \; A\in\sR_0^d,
  \end{equation}
  where
  \begin{displaymath}
    \overline\modulus_i(x)=\max_{j=1,\dots,i}\modulus_j(x),\quad i=1,\dots,d,
  \end{displaymath}
  and $\overline\modulus_0(x)=0$.
\end{proposition}
\begin{proof}
  \textsl{Necessity.} The ideal $\sR_i$ is a sub-ideal of
  $\sR_0^d$. However, if
  $A\in\sR_0^d$, then there exists $a>0$ such that
  \[
    A\subset \bigcup_{i=1}^d \{|x_i|>a\},
  \]
  and each set $\{|x_i|>a\}$ belongs to $\sR_i$. Thus, the restriction
  of $\mu$ to at least one $\sR_i$ is non-trivial, and the
  restrictions of $\mu$ to the ideals $\sR_i$ give the measures
  $\mu_i$. The remainder of the proof follows from
  Proposition~\ref{prop:sub-ideal}. The explicit representation of
  $\mu$ is derived as part of the sufficiency argument.

  \smallskip
  \noindent
  \textsl{Sufficiency.}  The proof is inspired by Lemma~3.5 of 
  \cite{dombry18:_tail}. Observe that 
  $\overline\modulus_d(x)=\max(|x_1|,\dots,|x_d|)=\modulus(x)$
  generates the ideal $\sR_0^d$. For each
  Borel $A\in\sR_0^d$ and $a>0$, we partition the
  event according to the first index exceeding the threshold:
  \begin{displaymath}
    \Prob{t^{-1}\xi\in (A\cap\{\modulus>a\})}
    =\sum_{i=1}^d \Prob{t^{-1}\xi\in A,\modulus_i(\xi)>at,
      \overline\modulus_{i-1}(\xi)\leq at}.
    % =\sum_{i=1}^d \Prob{\modulus_i(\xi)>at}
    %   \Prob{t^{-1}\xi\in A,
    %   \overline\modulus_{i-1}(\xi)\leq at\,\mid\,\modulus_i(\xi)>at}. 
  \end{displaymath}
  We first consider sets $A$ from a convergence-determining class
  (e.g., finite unions of balls) chosen so that all sets appearing
  below are continuity sets for the relevant measures $\mu_i$. For
  such $A$, the assumed vague convergence on each ideal $\sR_i$
  implies that
  \begin{align*}
    \lim_{t\to\infty} g(t)\Prob{t^{-1}\xi\in
    (A\cap\{\modulus>a\})}
    % &=\sum_{i=1}^d \lim_{t\to\infty} g(t)\Prob{\modulus_i(\xi)>at}
    % \lim_{t\to\infty} \Prob{t^{-1}\xi\in A,
    %   \overline\modulus_{i-1}(\xi)\leq at\,\mid\,\modulus_i(\xi)>at}\\
    % &=\sum_{i=1}^d  \mu_i(\{\modulus_i>a\})
    % \frac{\mu_i\big(A\cap\{\modulus_i>a\}\cap\{\overline\modulus_{i-1}(x)\leq
    %   a\}\big)}{\mu_i\big(\{\modulus_i>a\}\big)}.
        =\sum_{i=1}^d  
        \mu_i\big(A\cap\{\modulus_i>a\}\cap
        \{\overline\modulus_{i-1}(x)\leq a\}\big).
  \end{align*}
  This defines the candidate limiting measure $\mu$ through
  \eqref{eq:48} and establishes the convergence on the chosen
  convergence-determining class, so it completes the proof.
\end{proof}

Note that \eqref{eq:48} can be equivalently written for any permutation
of the coordinates.

\begin{example}[Exchangeable random vectors]
  Let $\xi$ be an exchangeable random vector in $\R^d$, that is, its
  distribution is invariant under all permutations of its components. Then $\xi$ is
  regularly varying on the ideal $\sR_0^d$ if and only if $\xi$ is
  regularly varying on the ideal $\sR_1$, that is,
  \begin{displaymath}
    \Prob{t^{-1}\xi\in \cdot \mid |\xi_1|>t}
    \wto \frac{\mu_1(\cdot\cap\{\modulus_1>1\})}
    {\mu_1(\{\modulus_1>1\})}
  \end{displaymath}
  for a measure $\mu_1\in\Mb[\sR_1]$, where $\modulus_1(x)=|x_1|$.

  Then, by exchangeability, $\xi$ is regularly varying on each ideal
  $\sR_i$. If $\pi_i$ denotes a permutation of the coordinates that
  sends the first coordinate to the $i$-th coordinate, then the
  corresponding tail measure is $\mu_i=\pi_{i\#}\mu_1$.  Therefore,
  the tail measure on $\sR_0^d$ is given by
  \[
    \mu(A\cap\{\modulus>a\})
    =
    \sum_{i=1}^d
    \mu_i\big(A\cap\{\modulus_i>a\}\cap
    \{\overline\modulus_{i-1}\leq a\}\big).
  \]
  % Then the tail measure of $\xi$
  % on $\sR_0^d$ is given by
  % \begin{align*}
  %   \mu(A\cap\{\modulus>a\})
  %   &=\sum_{i=1}^d \mu_1(A\cap\{\modulus_i(x)>a\}\cap
  %     \{\overline\modulus_{i-1}(x)\leq a\})\\
  %   &=\mu_1\big(A\cap\{\modulus>a\}\big) 
  % \end{align*}
  % and so coincides with $\mu_1$. 
\end{example}

\paragraph{Non-linear scaling on Euclidean space}

\begin{example}
  \label{ex:g=1}
  Consider $\XX=\R_+^2$ with the Euclidean metric and the scaling
  acting only on the first component, whereby $T_t(x_1,x_2)=(tx_1,x_2)$. First,
  consider the ideal $\sX_\modulus$ generated by the modulus
  $\modulus(x_1,x_2)=x_1$, which possesses the base
  $[m^{-1},\infty)\times\R_+$, $m\geq1$. Then $(\xi_1,\xi_2)$ is
  regularly varying on $\sX_\modulus$ if the distribution
  of $(t^{-1}\xi_1,\xi_2)$ conditioned on $\xi_1>ut$ converges weakly as
  $t\to\infty$ for all $u>0$, see
  Theorem~\ref{thr:polar-decomp-4}. For instance, this occurs if
  $\xi_1$ is a regularly varying random variable and $\xi_2$ is
  independent of $\xi_1$; then the tail measure of $(\xi_1,\xi_2)$
  is the product of the tail measure of $\xi_1$ and the distribution
  of $\xi_2$, see Theorem~\ref{thr:pair}.

  Now equip $\R_+^2$ with the metric exclusion ideal $\sR_0^2$
  obtained by excluding the origin (equivalently, generated by the
  Euclidean norm). While the Euclidean norm is not homogeneous under
  the introduced scaling, this ideal is nevertheless consistent under
  scaling along one component. If $\xi=(\xi_1,\xi_2)$ is a random
  vector in $\R_+^2$, then each set
  $(0,\infty)\times (b,\infty)$ with $b>0$ belongs to $\sR_0^2$ and 
  \begin{displaymath}
    g(t)\Prob{T_{t^{-1}}\xi\in [0,\infty)\times (b,\infty)}=
    g(t)\Prob{\xi_2\in (b,\infty)}\to\infty
  \end{displaymath}
  if $g(t)\to\infty$ as $t\to\infty$ and $\Prob{\xi_2\in
    (b,\infty)}>0$. Thus, $\xi$ is not regularly
  varying under the introduced scaling on the ideal
  $\sR_0^2$. However, if $g(t)\equiv1$ is a constant, then
  \begin{displaymath}
    \Prob{T_{t^{-1}}\xi\in (a,\infty)\times (b,\infty)}
    =\Prob{\xi_1>ta,\xi_2>b}
    \to
    \begin{cases}
      0, & a>0,\\
      \Prob{\xi_2>b}, & a<0,
    \end{cases}
  \end{displaymath}
  implying that $T_{t^{-1}}\xi$ converges weakly to the distribution
  of $(0,\xi_2)$, that is, to the measure being
  the product of $\delta_0$ and the distribution of $\xi_2$, see
  Remark~\ref{rem:wide}. 
\end{example}

\begin{example}[Inverse linear scaling]
  \index{scaling!inverse linear}
  Let $\XX=\R^d$ with the inverse linear scaling, that is,
  $T_tx=t^{-1}x$.  The modulus on $\XX$ may then be defined as
  $\modulus(x)=1/\|x\|$. Note that the modulus takes the 
  value infinity at the origin and is continuous with values in the
  extended half-line $[0,\infty]$. The corresponding ideal on
  $\R^d$ is generated by the balls $B_n(0)$, $n\in\NN$, centred at the
  origin. Therefore, this ideal consists of all bounded sets,
  coinciding with the ideal of bounded sets (also known as 
  the Fr\'echet, or Hadamard, ideal on $\R^d$).
\end{example}

\begin{example}[Scaling with different powers]
  Consider a scaling on $\R^d$ given by
  $T_tx=(t^{\alpha_1}x_1,\dots,t^{\alpha_d}x_d)$ with
  $\alpha_1,\dots,\alpha_d\in\R$. To ease the notation, assume that
  $d=3$ and $\alpha_1=1,\alpha_2=-1,\alpha_3=0$. Then
  $\zero=\{0\}\times\{0\}\times\R$  is the set of
  scaling-invariant elements. A modulus can be defined as
  \begin{displaymath}
    \modulus(x)=\max\big(|x_1|,|x_2|^{-1}\big),
    \quad x=(x_1,x_2,x_3)\in\R^3,
  \end{displaymath}
  noticing that it is infinite if $x_2=0$.  A random vector
  $\xi=(\xi_1,\xi_2,\xi_3)$ is regularly varying if the conditional
  distribution of $(t^{-1}\xi_1,t\xi_2,\xi_3)$ given that $|\xi_1|>tu$
  or $|\xi_2|<(tu)^{-1}$ weakly converges as $t\to\infty$ for all $u>0$, see
  Theorem~\ref{thr:polar-decomp-4}.
  For instance, in the union case this holds if the components are
  independent and the two variables $\xi_1$ and $\xi_2^{-1}$ are
  regularly varying on compatible scales; the limiting tail measure is
  then the sum of the two one-component contributions.
\end{example}

\begin{example}[Sparse regular variation]
  For random vectors in $\R_+^d$, the authors of \cite{MR4342579}
  suggest working with limits of conditional distributions of the pair
  $(t^{-1}\modulus(\xi),\pi(T_{t^{-1}}\xi))$ given
  $\modulus(\xi)=\|\xi\|>t$, where $\pi(x)$ is the metric projection
  of $x$ to the boundary of the unit simplex in $\R_+^d$. Then
  convergence of the conditional distribution of this pair defines the
  \emph{sparse regular variation} of $\xi$.
  \index{regular variation!sparse}
  It should be noted that this projection differs
  from the vector normalised by its $\ell_1$-norm and lacks
  bijectivity required for a scaling.  Nevertheless, if we write
  $x\sim y$ whenever $\big(\modulus(x),\pi(T_{\modulus(x)^{-1}}x)\big)=
  \big(\modulus(y),\pi(T_{\modulus(y)^{-1}}y)\big)$,
  one can treat this as a regular variation in an appropriately chosen
  quotient space.
\end{example}

\begin{example}
  \label{ex:scaling-min}
  Define a scaling on $\XX=\R_+^2$ by letting
  \begin{displaymath}
    T_t(x_1,x_2)=\big(x_1+(t-1)\min(x_1,x_2),x_2+(t-1)\min(x_1,x_2)\big).
  \end{displaymath}
  Then the scaling-invariant elements build the set
  \begin{displaymath}
    \zero=\big\{(x_1,x_2)\in\R_+^2\colons \min(x_1,x_2)=0\big\},
  \end{displaymath}
  which is the union 
  of two semiaxes. Consider the ideal $\sR_0^2(2)$, which is generated
  by the continuous modulus $\modulus_{\min}(x_1,x_2)=\min(x_1,x_2)$.
  Let $\eta=(\eta_1,\eta_2)$ be any random vector with values in
  $\SS_{\modulus_{\min}}=\{(x,y)\in\R_+^2\colons \min(x,y)=1\}$. If $\zeta$ is
  a regularly varying positive random variable, then
  \begin{displaymath}
    T_\zeta\eta =(\eta_1+\zeta-1,\eta_2+\zeta-1)
  \end{displaymath}
  is regularly varying in
  the introduced scaling.  The random vector $\xi=(\xi_1,\xi_2)$ built
  of two independent Pareto(1) components is not regularly varying on
  $\sR_0^2(2)$ in this scaling. This is seen by noticing that
  $\Prob{\modulus_{\min}(\xi)>t}=t^{-2}$, while
  \begin{displaymath}
    \Prob{T_{\modulus_{\min}(\xi)^{-1}}\xi\in A,\,\modulus_{\min}(\xi)>t}
    =\Prob{\xi_2\geq t ,\,\xi_1\in[\xi_2,\xi_2+a-1]}
  \end{displaymath}
  is of the order $t^{-3}$ for
  $A=[1,a]\times\{1\}\subset \SS_{\modulus_{\min}}$ with any finite
  $a>1$.
  % The ideal $\sR_0^2$ obtained by excluding the origin in the
  % Euclidean metric is not generated by a continuous modulus on our
  % space (indeed, the norm is not homogeneous under the introduced
  % scaling). If $\xi=(\xi_1,\xi_2)\in\R_+^2$, then
  % \begin{displaymath}
  %   T_{t^{-1}}\xi\dto \Big((\xi_1-\xi_2)_+,(\xi_2-\xi_1)_+\Big)
  %   \quad \text{as}\; t\to\infty.
  % \end{displaymath}
  % Thus, \eqref{eq:24g} holds with $g\equiv1$,  and $\mu$ being
  % the distribution of $\big((\xi_1-\xi_2)_+,(\xi_2-\xi_1)_+\big)$. If
  % $g(t)\to\infty$ as $t\to\infty$, then \eqref{eq:24g} is not possible
  % unless $\xi_1=\xi_2$ a.s., and then $\xi$ is regularly
  % varying on $\sR_0^2(1)$ and also on $\sR_0^2(2)$. 
\end{example}

\paragraph{Quotients of Euclidean spaces}
We now consider several examples of equivalence relations on
Euclidean spaces.

\begin{example}
  \label{lines-ex}
  Let $\XX=\R_+^2$ with the Euclidean topology, linear scaling and
  the ideal $\sR_0^2$ obtained by excluding the origin.  For
  $x=(x_1,x_2)$ and $y=(y_1,y_2)$ in $\XX$, let $x$ be equivalent to
  $y$ if $x_1=y_1$. Then Condition~\hyperref[condS]{(S)} holds and
  $[(x_1,x_2)]=\{(x_1,t)\colons t\in\R_+\}$. 
  The quotient space is identified as $\XXT=\R_+$ with the quotient
  map given by $q(x_1,x_2)=x_1$, and the scaling on $\XXT$ is linear.
  Note that $[\sR_0^2]$ consists of sets $B\in\sR_0^2$ such that $B$
  does not intersect some metric neighbourhood of the semiaxis
  $\{(0,x_2)\colons x_2\in\R_+\}$, so that $q\sR_0^2=\sR_0$. If $\modulus$ is
  the Euclidean norm on $\R_+^2$, then the function
  $\ttau\big([(x_1,x_2)]\big)=x_1$ is continuous and
  Lemma~\ref{lemma:q-B} applies.
  \index{quotient space!of Euclidean space}

  Let $\xi=(\xi_1,\xi_2)$ be regularly varying on $\sR_0^2$. By
  Theorem~\ref{rv-XXT}, $q\xi=\xi_1$ is regularly varying on
  $q\sR_0^2=\sR_0$ if the tail measure $\mu$ of $\xi$ is non-trivial on
  $[\sR_0^2]$, that is, $\mu([\eps,\infty)\times\R_+)>0$ for some
  $\eps>0$, equivalently, the tail measure is not entirely concentrated
  on $\{0\}\times\R_+$.  The latter condition is violated, for instance, if
  $\xi=(\xi_1,\xi_2)$ has two independent components, $\xi_1$ has a
  light tail (e.g., is exponential) and $\xi_2$ has a heavy tail (e.g., is
  Pareto). Then $\xi$ is regularly varying on $\sR_0^2$, with tail measure
  supported by the equivalence class of the origin,
  $[\zero]=\{0\}\times\R_+$, and so
  $q\xi=\xi_1$ is not regularly varying in $\XXT$.
  If $\xi_1$ and $\xi_2$ are independent random variables such that
  $\xi_1$ is regularly varying and $\xi_2=e^\eta$ for a Pareto(1) random
  variable $\eta$, so that $\xi_2$ has a super-heavy tail, then
  $\xi=(\xi_1,\xi_2)$ is not regularly varying on $\sR_0^2$ but $q\xi$ is
  regularly varying in the quotient space.
  % Note that the equivalence class
  % $[\xi]$ admits both regularly varying and not regularly varying
  % selections.

  If $\XX$ is endowed with the ideal $\sR_0^2(2)$ (obtained by
  excluding the semiaxes), then $q\sR_0^2(2)$ does not contain any
  non-empty set, so that the hidden regular variation property does not
  transfer to the quotient space.
\end{example}

\begin{example}
  Let $\XX=\R_+^2$ with the scaling
  $T_tx=(tx_1, x_2)$ for $x=(x_1,x_2)\in \XX$, see also
  Example~\ref{ex:g=1}.
  % Then the set $\zero=\{0\}\times\R$ of scaling
  % invariant elements is not a singleton.
  Define the modulus $\modulus(x)=x_1$ and the corresponding ideal
  $\sX_\modulus$ on $\XX$, which is also the metric exclusion ideal
  obtained by excluding $\{0\}\times\R_+$. Let
  $x=(x_1,x_2)$ be equivalent to $y=(y_1,y_2)$ if $x_1= y_1$. Then
  Condition~\hyperref[condS]{(S)} holds, the 
  quotient space is $\R_+$ with linear scaling, and
  $q\sX_\modulus=\sR_0$. Since $[\sX_\modulus]=\sX_\modulus$, the
  image $q\xi$ of every random vector $\xi$ that is regularly varying
  on $\sX_\modulus$ with the introduced scaling is regularly varying on
  $q\sX_\modulus$.
\end{example}

\begin{example}
  Assume that two points $x,y\in\R^d$ are equivalent if they are equal
  up to a permutation of their components. Since the ideals
  $\sR_0^d(k)$ are invariant under coordinate permutations, their
  saturations remain in the same ideals. Thus the quotient ideal
  $q\sR_0^d(k)$ is naturally identified with the image of
  $\sR_0^d(k)$ under the quotient map. Consequently, regular variation
  of a random vector on $\sR_0^d(k)$ transfers to the quotient space,
  provided the pushforward tail measure is non-trivial on the quotient
  ideal.
\end{example}

\section{Spaces of sequences}
\label{sec:infinite-sequences}

\paragraph{$\R_+^\infty$ with coordinatewise convergence}
Consider the space $\R^\infty$ with linear scaling applied
coordinatewise and the topology of coordinatewise convergence,
which turns $\R^\infty$ into a Polish space metrised by
\index{space!of sequences}
\begin{displaymath}
  \dmet(x,y)=\sum_{i=1}^\infty
  2^{-i}\min\big(|x_i-y_i|,1\big) ,\quad x,y\in\R^\infty;
\end{displaymath}
see, e.g., \cite{lin:res:roy14}. Note that the constructions below are
easy to adapt to the infinite power $\XX^\infty$ of any metric
space $\XX$ with a continuous scaling and an ideal which
satisfies Condition~\hyperref[condB]{(B)}.

We first show that the entire space $\R_+^\infty$ (hence, also $\R^\infty$)
does not admit a finite continuous modulus which is strictly positive
on all sequences $x\neq 0$ (that is, a proper continuous modulus),
meaning that this example is not covered by the setting of star-shaped
spaces adopted in \cite{seg:zhao:mein17} and \cite{kul:sol20}. Assume
that there is an open semicone $\VV$ that satisfies
Condition~\hyperref[condB]{(B)} and such that 
$\cup_{t>0} t\VV$ contains all non-zero sequences. 
Denote by $\delta_{jn}$ the Kronecker delta and by
$e_j=(\delta_{jn})_n=(\ldots,0,1,0,\ldots)$ the element of
$\R_+^\infty$ with the $j$-th component equal to one and all others
equal to zero. For each $j\in\NN$, let
$y_j=\inf \{t\colons te_j\in\VV\}$, and observe that the set on the
right-hand side is always nonempty. Moreover, $y_j>0$ for each
$j\in\NN$, since otherwise $\cap_{t>1} t\VV$ contains all elements
with strictly positive $j$-th component and all other
components equal to zero, and thus is a non-empty set.  Consider a
sequence $x=(x_n)$ of strictly positive numbers such that
$x_n/y_n\to 0$. If $x\in t\VV$ for some $t>0$, then $x_n\geq ty_n$ for
all $n$, which contradicts the choice of $(x_n)$; hence, the
non-trivial sequence $x$ does not belong to $\cup_{t>0} t\VV$.

For $m\in\NN$, denote by
\begin{displaymath}
  \psi_m(x)=(x_1,\dots,x_m,0,\ldots),
\end{displaymath}
the sequence obtained by retaining the first $m$ components of
\begin{displaymath}
  x=(x_1,x_2,\ldots)\in\R^\infty
\end{displaymath}
and replacing all other components with zeros. Recall that
$\pi_m x=(x_1,\dots,x_m)$ denotes the projection of $x$ onto its first
$m$ coordinates. For a measure $\mu$ on $\R^\infty$, denote by
$\psi_m\mu$ and $\pi_m\mu$ its pushforwards by $\psi_m$, respectively,
$\pi_m$.
\index{projection map}

Recall that $\sR_0$ is the ideal on $\R$ obtained by excluding zero.
The product ideal $\sR_0^\infty(k)$ on $\R^\infty$ is
generated by the moduli
\begin{displaymath}
  \modulus_I(x)=\min_{i\in I} |x_i|,
\end{displaymath}
where $I$ runs through the family of subsets of $\NN$ with cardinality
$k$, see Definition~\ref{def:product}. Thus, $A\in\sR_0^\infty(k)$ if
there exists an $m\in\NN$ such that
\begin{displaymath}
  \inf_{x\in A} \max_{I\subset \{1,\dots,m\},\card(I)=k}  \modulus_I(x)>0. 
\end{displaymath}
The ideal $\sR_0^\infty(1)$ is generated by the family of continuous
moduli $\modulus_i(x)=|x_i|$, $i\in\NN$.  Note that $\sR_0^\infty(1)$
is smaller than the ideal $\sR_{\sup}^\infty$ which consists of all
sets $A\subset\R^\infty$ such that $\inf_{x\in A}\sup_{i\in\NN} |x_i|>0$. For
instance, the set $A=\{e_k\colons k\in\NN\}$ of all unit basis vectors does
not belong to $\sR_0^\infty(1)$ but belongs to
$\sR_{\sup}^\infty$. Moreover, $A$ does not belong to any metric
exclusion ideal obtained by excluding a cone. Note also that the
modulus $\modulus_{\max}(x)=\sup_i |x_i|$ may be infinite and is not
continuous, which is seen by evaluating it on the sequence $(e_n)$ of
basis vectors.

Another example of an ideal on $\R^\infty$ is the family
$\sR_{\inf}^\infty$ of all sets $A\subset\R^\infty$ such that
\begin{displaymath}
  \inf_{x=(x_1,x_2,\ldots)\in A} \inf_{i\in\NN} |x_i|>0.
\end{displaymath}
This ideal is generated by the modulus
$\modulus_{\inf}(x)=\inf_{i\in\NN} |x_i|$. Note that the modulus
$\modulus_{\inf}$ is not continuous on
$\R^\infty$: the sequence $(1,\dots,1,1/n,1/n,\ldots)$ (where the
value $1$ occupies the first $n$ places) converges to $(1,1,\ldots)$, but its
infimum converges to zero. Thus, for all $k\in\NN$,
\begin{displaymath}
  \sR_{\inf}^\infty\subset \sR_0^\infty(k)\subset
  \sR_0^\infty(1)\subset \sR_{\sup}^\infty. 
\end{displaymath}
An ideal on $\R_+^\infty$, which is neither contained in $\sR_0^\infty(1)$
nor containing any $\sR_0^\infty(k)$, can be constructed as the family
of subsets $B\subset\R_+^\infty$ such that there exists an
$\eps\in(0,1]$ satisfying
\begin{displaymath}
  \inf_{x=(x_i)\in B}
  \limsup_{n\to\infty} \frac{1}{n}\sum_{i=1}^n \one_{|x_i|>\eps}>0.
\end{displaymath}

As in \cite{lin:res:roy14}, it is possible to endow $\R^\infty$ with a
metric exclusion ideal obtained by excluding a closed cone $\cone$ in
$\R^\infty$, see Definition~\ref{def:exclusion}. While the metric on
$\R^\infty$ is not homogeneous under scaling, it satisfies
\begin{displaymath}
  \min(t,1)\dmet(x,y)\leq \dmet(tx,ty)\leq \max(t,1)\dmet(x,y)
\end{displaymath}
for all $x,y\in\R^\infty$, which follows directly from
\begin{displaymath}
  \min(\min(t,1)x,\min(t,1))\leq 
  \min(tx,1)\leq \min(\max(t,1)x,\max(t,1)). 
\end{displaymath}
Thus, the metric exclusion ideal is scaling consistent, which is also
a corollary of the following result.
In particular, this result implies that $\sR_0^\infty(1)$ coincides with the
metric exclusion ideal $\sR_0^\infty$ obtained by excluding the zero
sequence, a fact noticed in \cite[Page~105]{kul:sol20}.

\begin{lemma}
  \label{lemma:ideal-sequences}
  \index{ideal!on the space of sequences}
  The ideal $\sR_0^\infty(k)$ coincides with the metric exclusion ideal
  obtained by excluding the convex cone $\cone_k$, which consists of sequences
  with at most $(k-1)$ non-zero elements if $k\geq 2$, and $\cone_1$ is
  the set consisting of the zero sequence.
\end{lemma}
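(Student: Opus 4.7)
\textbf{Proof plan for Lemma~\ref{lemma:ideal-sequences}.} The plan is to unfold both definitions into concrete statements about coordinates and then verify the two inclusions directly. First, recall from Definition~\ref{def:product} that $A\in\sR_0^\infty(k)$ holds if and only if there exist $m\in\NN$ and $\eps>0$ such that every $x\in A$ has at least $k$ coordinates among $x_1,\dots,x_m$ with $|x_i|\geq\eps$ (this follows by taking the union of the finite collection of index sets inside $\{1,\dots,m\}$). On the other hand, $A$ belongs to the metric exclusion ideal obtained from $\cone_k$ if and only if $\dmet(A,\cone_k)>0$. Note also that $\cone_k$ is indeed a cone under the linear scaling since the number of nonzero coordinates is preserved by $x\mapsto tx$ for $t>0$.

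For the inclusion $\sR_0^\infty(k)\subseteq\sS_{\cone_k}$, suppose $A\in\sR_0^\infty(k)$ with witnesses $m$ and $\eps$. Fix any $x\in A$ and any $y\in\cone_k$. By definition, $y$ has at most $k-1$ nonzero coordinates, while $x$ has at least $k$ coordinates in $\{1,\dots,m\}$ of absolute value $\geq\eps$; by the pigeonhole principle there exists an index $i\leq m$ with $|x_i|\geq\eps$ and $y_i=0$. Consequently
\begin{displaymath}
  \dmet(x,y)\geq 2^{-i}\min(|x_i|,1)\geq 2^{-m}\min(\eps,1)>0,
\end{displaymath}
showing $\dmet(A,\cone_k)>0$.

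For the reverse inclusion, I argue by contrapositive. Suppose $A\notin\sR_0^\infty(k)$. Then for each $n\in\NN$, taking $m=n$ and $\eps=1/n$ fails, so there exists $x^{(n)}\in A$ such that the set $J_n=\{i\leq n:|x^{(n)}_i|\geq 1/n\}$ has cardinality strictly less than $k$. Define $y^{(n)}$ by keeping the coordinates indexed by $J_n$ and zeroing out all others, so $y^{(n)}\in\cone_k$. Splitting the sum defining the metric,
\begin{displaymath}
  \dmet(x^{(n)},y^{(n)})=\sum_{i\in\{1,\dots,n\}\setminus J_n}2^{-i}\min(|x^{(n)}_i|,1)+\sum_{i>n}2^{-i}\min(|x^{(n)}_i|,1)\leq \frac{1}{n}+2^{-n},
\end{displaymath}
which tends to $0$ as $n\to\infty$. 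Hence $\dmet(A,\cone_k)=0$, i.e., $A\notin\sS_{\cone_k}$.

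The only step requiring care is the construction in the reverse direction, specifically ensuring $y^{(n)}\in\cone_k$ while simultaneously controlling the tail $\sum_{i>n}2^{-i}$; the latter is handled automatically by the geometric weights in $\dmet$. No heavy machinery is needed, and the result for $k=1$ reduces to the already noted equality $\sR_0^\infty(1)=\sR_0^\infty$ with $\cone_1=\{0\}$.
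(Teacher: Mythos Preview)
Your proof is correct. The forward inclusion $\sR_0^\infty(k)\subseteq\sS_{\cone_k}$ is identical to the paper's argument, using the pigeonhole observation that among the $k$ large coordinates of $x$ at least one must land where $y\in\cone_k$ vanishes.

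For the reverse inclusion you and the paper take genuinely different routes. The paper argues directly: starting from $\dmet(x,\cone_k)\geq\eps$ for all $x\in B$, it truncates to the first $m$ coordinates (with $m$ chosen so the tail sum is at most $\eps/2$) and then runs an iterative selection, successively choosing $J=\emptyset,\{i_1\},\{i_1,i_2\},\dots$ and using the distance bound to extract a new index $i_j\leq m$ with $|x_{i_j}|\geq\eps/m$ at each step, thus producing a set $I_x$ of $k$ large coordinates. Your contrapositive argument is shorter and more transparent: from the failure of every pair $(m,\eps)=(n,1/n)$ you directly exhibit an approximating sequence $y^{(n)}\in\cone_k$ with $\dmet(x^{(n)},y^{(n)})\leq 1/n+2^{-n}$. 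Both proofs are elementary; yours avoids the iterative pigeonhole bookkeeping, while the paper's direct argument has the minor advantage of giving an explicit uniform lower bound $\eps/m$ on the size of the $k$ coordinates in terms of the exclusion distance~$\eps$.
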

\begin{proof}
  Assume that $B\in\sR_0^\infty(k)$. Then there is an $m\in\NN$ such
  that  $x\in B$ implies $\modulus_I(x)\geq \eps>0$  for some set
  $I\subset\{1,\dots,m\}$ of cardinality $k$. Hence, $|x_i|\geq \eps$
  for all $i\in I$, so that $\dmet(x,\cone_k)\geq 2^{-m}\min(\eps,1)$
  for all $x\in B$.

  Assume that $B$ belongs to the metric exclusion ideal, that is,
  $\dmet(x,\cone_k)\geq \eps>0$ for all $x\in B$. Then
  $\dmet(\pi_m x,\pi_m \cone_k)\geq \eps$, where $m$ is chosen
  so that $2^{-m}\geq \eps/2$ and $m\geq k$. Hence,
  \begin{displaymath}
    \inf_{J\subset \{1,\dots,m\},\card(J)\leq k-1}
    \sum_{i\in\{1,\dots,m\}\setminus J} 2^{-i} \min(|x_i|,1)\geq \eps/2.
  \end{displaymath}
  Choosing $J=\emptyset$, we see that there exists an
  $i_1\in\{1,\dots,m\}$ such that
  \begin{displaymath}
    2^{-i_1} \min(|x_{i_1}|,1)\geq \eps/(2m).
  \end{displaymath}
  In case $J=\{i_1\}$, we
  obtain that there exists an $i_2\neq i_1$ such that
  $2^{-i_2} \min(|x_{i_2}|,1)\geq \eps/(2m)$. Continuing this process, we
  obtain $k$ distinct indices
  $I_x=\{i_1,\dots,i_k\}\subset\{1,\dots,m\}$ such that
  \begin{displaymath}
    |x_{i_j}|\geq \min(|x_{i_j}|,1)\geq \eps/(2m), \quad j=1,\dots,k.
  \end{displaymath}
  Thus, $\min_{i\in I_x} |x_i|\geq \eps/(2m)$ for all $x\in B$ and some
  $I_x\subset\{1,\dots,m\}$ of cardinality $k$. 
\end{proof}

The following result shows that checking the regular variation
property in $\R^\infty$ amounts to confirming it for all
finite-dimensional distributions. This fact was mentioned in
\cite[Page~298]{lin:res:roy14} for a metric exclusion ideal
$\sR^\infty_\cone$ obtained by excluding a cone $\cone$, such that
$\pi_m\cone$ is a closed strict subset of $\R^m$ and
$\psi_m(\cone)\subset \cone$ for all $m\in\NN$. Below we provide an
independent proof of this result, assuming only that the projections of
$\cone$ are non-trivial.

\begin{lemma}
  \label{lemma:fidi-sequence}
  Let $\sR^\infty_\cone$ be a metric exclusion ideal on $\R^\infty$.  Assume
  that $\cone_m=\pi_m\cone$ is a strict subset of $\R^m$ for all
  sufficiently large $m$. Then
  \begin{displaymath}
    \xi\in\RV(\R^\infty,\mydot,\sR^\infty_\cone,g,\mu)
  \end{displaymath}
  if and only if
  $\eta_m=\pi_m(\xi)\in\RV(\R^m,\mydot,\sR^m_{\cone_m},g,\mu_m)$ for all
  sufficiently large $m$, where $\sR^m_{\cone_m}$ is the metric
  exclusion ideal on $\R^m$ with the Euclidean metric obtained by
  excluding $\cone_m$, $\mu_m\in\Mb[\sR^m_{\cone_m}]$ is a Borel measure
  on $\R^m$ for each $m\in\NN$, and then $\mu_m=\pi_m\mu$ for
  $\mu\in\Mb[\sR^\infty_\cone]$.
\end{lemma}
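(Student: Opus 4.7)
The strategy is to combine the continuous mapping theorem (for necessity) with the Kolmogorov-type extension (Theorem~\ref{thr:fidi-extension}) and the metric-space approximation result (Theorem~\ref{thr:m-space}) for sufficiency. The geometric backbone is the elementary estimate: for any $x,y\in\R^\infty$,
\begin{displaymath}
  \dmet(\pi_m x,\pi_m y)_m - 2^{-m}\;\leq\; \dmet(x,y)\;\leq\; \dmet(\pi_m x,\pi_m y)_m + 2^{-m},
\end{displaymath}
comparing the $\R^\infty$-metric with the Euclidean metric $\dmet_m$ on $\R^m$ through the projection and truncation. This instantly gives that, once $2^{-m}<\eps/2$, the inclusion $B\subset\{x:\dmet(x,\cone)>\eps\}$ forces $\pi_m B\subset\{z:\dmet_m(z,\cone_m)>\eps/2\}\in\sR^m_{\cone_m}$, and conversely $\iota_m A\subset\{x:\dmet(x,\cone)\geq c(m,\eps)\}\in\sR^\infty_\cone$ for the natural embedding $\iota_m:\R^m\to\R^\infty$ and $A\in\sR^m_{\cone_m}$. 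Both $\pi_m$ and $\iota_m$ are therefore bornologically consistent continuous morphisms for all large $m$.

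For necessity, apply Theorem~\ref{mapping-theorem} to $\pi_m$: this yields $\pi_m\xi\in\RV(\R^m,\cdot,\sR^m_{\cone_m},g,\pi_m\mu)$ provided $\pi_m\mu$ is nontrivial. Since $\mu$ is nontrivial on $\sR^\infty_\cone$, there is $B\in\sR^\infty_\cone$ with $\mu(B)>0$ and, by the estimate above, $\pi_m B\in\sR^m_{\cone_m}$ for large $m$, giving $(\pi_m\mu)(\pi_m B)\geq\mu(B)>0$.

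For sufficiency, first use the continuous mapping theorem applied to $\pi_{k,m}:\R^m\to\R^k$ (bornological consistency of which follows from $\dmet_n\geq\dmet_k\circ\pi_{k,n}$ and $\pi_{k,n}\cone_n=\cone_k$) to obtain the consistency $\pi_{k,m}\mu_m=\mu_k$ for all sufficiently large $k\leq m$. Since each $\sR^m_{\cone_m}$ has the countable open base $\{z:\dmet_m(z,\cone_m)>1/j\}$, $j\in\NN$, Theorem~\ref{thr:fidi-extension} produces a unique Borel measure $\mu$ on $\R^\infty$, finite on the smallest ideal $\sS_\infty$ under which every $\pi_m$ is bornologically consistent, with $\pi_m\mu=\mu_m$. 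The metric estimate then identifies $\sS_\infty=\sR^\infty_\cone$: one inclusion is the bornological consistency of $\pi_m$, the other is that each $B\in\sR^\infty_\cone$ with $\dmet(B,\cone)>\eps$ satisfies $B\subset\pi_m^{-1}(\pi_m B)$ with $\pi_m B\in\sR^m_{\cone_m}$ for large~$m$.

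To upgrade convergence of finite-dimensional marginals to vague convergence on $\R^\infty$, apply Theorem~\ref{thr:m-space} with $\eta_m=\psi_m\xi=\iota_m\pi_m\xi$. Condition~(i) follows from Theorem~\ref{mapping-theorem} applied to $\iota_m$, giving $\psi_m\xi\in\RV(\R^\infty,\cdot,\sR^\infty_\cone,g,\psi_m\mu)$. Condition~(ii), $\psi_m\mu\vto\mu$, holds by dominated convergence: for $f\in\fC(\R^\infty,\sR^\infty_\cone)$ with support in $\{x:\dmet(x,\cone)>\eps\}$, the supports of $f\circ\psi_m$ lie in the fixed set $\{x:\dmet(x,\cone)>\eps/2\}$ for large~$m$, which has finite $\mu$-measure, while $f(\psi_m x)\to f(x)$ pointwise. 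Condition~(iii) follows from dominating $A\in\sR^\infty_\cone$ by a cylinder $\pi_m^{-1}(A')$ with $A'\in\sR^m_{\cone_m}$ and using regular variation of $\pi_m\xi$. Condition~(iv) is immediate since
\begin{displaymath}
  \dmet(T_{t^{-1}}\xi,T_{t^{-1}}\psi_m\xi)=\sum_{i>m}2^{-i}\min(|\xi_i|/t,1)\leq 2^{-m}
\end{displaymath}
deterministically, so the probability vanishes for $\delta>2^{-m}$. The hard part is not any single step but the bookkeeping: keeping track of which $m$ is "large enough" at each stage and verifying that the ideal $\sS_\infty$ produced by the abstract extension theorem actually coincides with $\sR^\infty_\cone$ rather than a smaller or larger ideal.
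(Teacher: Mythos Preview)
Your argument follows essentially the same route as the paper: necessity via the continuous mapping theorem for $\pi_m$, and sufficiency via Theorem~\ref{thr:m-space} applied with the approximants $\eta_m=\psi_m\xi$ and the same verification of conditions (i)--(iv). The one genuine addition is that you construct the limit measure $\mu$ explicitly via Theorem~\ref{thr:fidi-extension} and then identify the abstract ideal $\sS_\infty$ with $\sR^\infty_\cone$; the paper leaves this step implicit (it simply writes ``$\psi_m\mu\vto\mu$'' without saying where $\mu$ comes from), so your treatment is a welcome clarification rather than a different approach.

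Two minor presentational points. First, your displayed two-sided estimate holds when $\dmet_m$ denotes the \emph{truncated $\R^\infty$-metric} on the first $m$ coordinates, not the Euclidean metric; since the two metrics on $\R^m$ generate the same exclusion ideal for a closed cone $\cone_m$, this does not affect the argument, but the wording should be adjusted. Second, the inclusions you record (``$\pi_m B\subset\{\dmet_m(\cdot,\cone_m)>\eps/2\}$'' and ``$\iota_m A\subset\{\dmet(\cdot,\cone)\geq c(m,\eps)\}$'') show that $\pi_m$ and $\iota_m$ carry bounded sets to bounded sets, which is \emph{not} what bornological consistency means in Definition~\ref{def:morphism}; that notion requires the inverse-image statement $\psi^{-1}(B)\in\sX$ for $B\in\sY$. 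The correct inclusions (e.g.\ $\pi_m^{-1}(A)\subset\{\dmet(\cdot,\cone)\geq c'(m,\eps)\}$ for $A\in\sR^m_{\cone_m}$) do hold and follow from the same metric comparison, but read in the other direction---you should state them that way.
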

\begin{proof}
  \textsl{Necessity}. 
  Assume that $\xi$ is regularly varying in $\R^\infty$. We apply the
  continuous mapping theorem to the continuous morphism
  $x\mapsto \pi_m x$. If $A\in\sR^m_{\cone_m}$, then for each
  $y\in\pi_m^{-1}(A)$, we have 
  \begin{displaymath}
    \dmet(y,\cone)\geq \dmet\big(\pi_m(y),\cone_m\big)\geq
    2^{-m} \min(m^{-1/2}\eps,1)>0,
  \end{displaymath}
  where $\eps$ is a positive lower bound for the Euclidean distance
  between $x\in A$ and $\cone_m$. The factor $m^{-1/2}$ results from
  the fact that if the Euclidean distance between two vectors in
  $\R^m$ exceeds $\eps$, then there is at least one component
  in which these vectors differ by at least $\eps/\sqrt{m}$. Thus,
  $\pi_m$ is a bornologically 
  consistent morphism. To show that $\mu_m=\pi_m\mu$ is non-trivial on
  $\sR^m_{\cone_m}$, let $m$ be sufficiently large, so that $\cone_m$
  is a strict subset of $\R^m$. For each $\eps>0$ and $m$ such that
  $2^{-m}\leq \eps$, we have
  \begin{align*}
    \mu_m\Big(\big\{x\in\R^m\colons \dmet\big((x,0,0,\ldots),
    &\cone_m\times\{0\}^\infty\big)\geq
    \eps\big\}\Big)\\
    &= \mu\Big(\big\{y\colons \dmet\big(\psi_m(y),
      \cone_m\times\{0\}^\infty\big)\geq \eps \big\}\Big)\\
    &\geq \mu\Big(\big\{y\colons \dmet(y,\cone)\geq 2\eps \big\}\Big)>0
  \end{align*}
  for some $\eps>0$. Thus, $\mu_m$ is non-trivial. Therefore,
  $\eta_m\in\RV(\R^m,\mydot,\sR^m_{\cone_m},g,\mu_m)$ by the continuous
  mapping theorem. 

  % Assume that $\xi$ is regularly varying in $\R^\infty$. We apply the
  % continuous mapping theorem to the continuous morphism
  % $x\mapsto \pi_m x$. The pushforward of $\sS_\cone$ under the
  % projection map $\pi_m$ is the ideal $\sV_m$ on $\R^m$ which consists
  % of all sets $A\subset\R^m$ such that
  % $\{x:\pi_m x\in A\}=(A\times\R^\infty)\in\sS_\cone$. First, we show
  % that $\sV_m$ is the metric exclusion ideal on $\R^m$ obtained by
  % excluding the set $\cone_m=\pi_m\cone$. Note that the restriction of
  % the metric $\dmet$ onto $\R^m$ is equivalent to the Euclidean
  % metric. Assume that $A$ is a subset of $\R^m$ which is a subset of
  % the complement to the $\eps$-neighbourhood of $\cone_m$. Then
  % $\dmet((x,0,\ldots),\cone)\geq \dmet(x,\cone_m)>\eps$, so that, the
  % inverse of $A$ belongs to $\sS_\cone$. Assume that the inverse of
  % $A\subset\R^m$ under $\pi_m$ belongs to $\cone$. Since $0\in \cone$,
  % we have
  % \begin{displaymath}
  %   \eps<\dmet(y,\cone)\leq \dmet(y,\cone_m\times (0,\ldots))
  %   =\dmet(x,\cone_m)
  % \end{displaymath}
  % for $x=\pi_m y\in A$.
  % %Note as well that $\sV_m\subset \sV_n$ for $m\leq n$. Different
  % %spaces!!
  % Thus, $\pi_m$ is also bornologically consistent. It remains to
  % ensure that $\pi_m\mu$ is non-trivial on $\sV_m$ for all sufficiently
  % large $m$. 
  % It remains to notice that if $f_m$ is a bounded continuous function
  % with support from $\pi_m \mu$, then
  % \begin{displaymath}
  %   \int f_m d(\pi_m \mu)=\int f(\psi_m(x))\mu(\bdiff x).
  % \end{displaymath}
  % Thus, if $\mu(A)>0$ for some $A\in\sS_\cone$, then $\mu_m(\pi_mA)>0$
  % for some $m$ and so $\mu_m$ is not trivial. 

  \smallskip
  \noindent
  \textsl{Sufficiency} is derived from Theorem~\ref{thr:m-space} with
  $\eta_m=\psi_m \xi$. Condition~(i) holds by the assumption and the
  above argument. For (ii), we need to show that
  $\psi_m\mu\vto[\sR^\infty_\cone]\mu$ as $m\to\infty$. Consider a
  1-Lipschitz function $f\in\fC(\R^\infty,\sR^\infty_\cone)$, which is
  supported by a subset of $A\in\sR^\infty_\cone$. Without loss of generality, it
  is possible to assume that $\dmet(y,\cone)\geq \eps>0$ for all
  $y\in A$.  Since $\dmet(\pi_m(y),\cone_m)\leq \dmet(y,\cone)$,
  we have
  \begin{displaymath}
    \dmet(\psi_m(y),\cone)\leq
    \dmet(y,\cone)+\dmet(y,\psi_m(y))\leq 
    \dmet(\pi_m(y),\cone_m)+2^{-m}
    \leq \dmet(y,\cone)+2^{-m}.
  \end{displaymath}
  Therefore, since $A\subset\{x\colons \dmet(x,\cone)\ge\eps\}$, 
  \begin{displaymath}
    \psi_m^{-1}(A)\subset \{y\in\R^\infty\colons \dmet(\psi_m(y),\cone)\geq\eps\}
    \subset \{y\in\R^\infty\colons \dmet(y,\cone)\geq\eps/2\}\in\sR^\infty_\cone
  \end{displaymath}
  for all $m$ such that $2^{-m}\leq\eps/2$. For each $\delta>0$, we have
  $\dmet(x,\psi_m(x))\leq \delta$ for all $x\in\R^\infty$ and all
  sufficiently large $m$. Then
  \begin{align*}
    \Big|\int f(x)\mu(\bdiff x)
    &-\int f(\psi_m(x))\mu(\bdiff x)\Big|
    % \leq \int \big|f(x)-f(\psi_m(x))\big| \mu(\bdiff x)
      \leq \delta\mu(A\cup\psi_m^{-1}(A))\\
    &\leq \delta \big(\mu(A)+\mu(\psi_m^{-1}(A))\big)\\
    &\leq \delta \big(\mu((\cone^\eps)^c)+\mu((\cone^{\eps/2})^c)\big).
  \end{align*}
  Letting $\delta\downarrow 0$ confirms (ii). 
  
  Let $A\in\sR^\infty_\cone$. Then
  \begin{displaymath}
    \Prob{t^{-1}\xi\in A}\leq \Prob{t^{-1}\eta_m\in \pi_m A},
  \end{displaymath}
  so that (iii) follows from the regular variation of $\eta_m$ and the
  fact that $\pi_m A$ belongs to a non-trivial metric exclusion ideal
  $\sR^m_{\cone_m}$.  Finally, for all $\delta>0$,
  \begin{displaymath}
    \Prob{\dmet(t^{-1}\xi,t^{-1}\eta_m)>\delta}=0
  \end{displaymath}
  if $\delta>2^{-m}$, so that \eqref{eq:38} holds.
\end{proof}

If we exclude $\cone=\{0\}$, and so $\sR^\infty_\cone=\sR_0^\infty(1)$,
then Lemma~\ref{lemma:fidi-sequence} becomes Corollary~5.1.3 from
\cite{kul:sol20}. 

\begin{corollary}[see \protect{\cite{kul:sol20}}]
  We have $\xi\in
  \RV(\R^\infty,\mydot,\sR_0^\infty(1),g,\mu)$ for a random sequence
  $\xi$ if and only if all
  finite-dimensional distributions of $\xi$ are regularly varying,
  namely there exists a normalising function $g$ such that
  $(\xi_1,\dots,\xi_m)\in \RV(\R^m,\mydot,\sR_0^m,g,\mu_m)$ for all
  sufficiently large $m$.  
\end{corollary}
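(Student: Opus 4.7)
The plan is to derive this corollary as the direct specialisation of Lemma~\ref{lemma:fidi-sequence} to the excluded cone $\cone = \{0\}$. First I would verify that the hypotheses of Lemma~\ref{lemma:fidi-sequence} are satisfied: the set $\{0\}$ is a closed cone in $\R^\infty$, and its projection $\cone_m = \pi_m\{0\} = \{0\}$ is a strict (indeed, singleton) subset of $\R^m$ for every $m \geq 1$. So the lemma applies with no exceptional values of $m$.

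Next I would identify the two ideals appearing in the lemma with the ideals appearing in the corollary. For the target space $\R^\infty$, Lemma~\ref{lemma:ideal-sequences} with $k=1$ states precisely that $\sR_0^\infty(1)$ coincides with the metric exclusion ideal obtained by excluding the zero sequence, i.e., $\sR_0^\infty(1) = \sR^\infty_{\{0\}}$. For the finite-dimensional projections, $\sR^m_{\{0\}}$ is by definition (Example~\ref{ex:ideals-Rd}) the metric exclusion ideal on $\R^m$ obtained by excluding the origin, which is exactly $\sR_0^m$ as appearing in the statement of the corollary.

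With these identifications in hand, Lemma~\ref{lemma:fidi-sequence} directly yields the equivalence: $\xi \in \RV(\R^\infty,\cdot,\sR_0^\infty(1),g,\mu)$ if and only if $\pi_m\xi = (\xi_1,\dots,\xi_m) \in \RV(\R^m,\cdot,\sR_0^m,g,\pi_m\mu)$ for all sufficiently large $m$ (in fact, all $m \geq 1$ here). The converse direction uses $\pi_m\mu = \mu_m$ automatically; the statement ``$\mu_m = \pi_m\mu$ for some $\mu \in \Mb[\sR^\infty_{\{0\}}]$'' is guaranteed by the lemma. There is no genuine obstacle beyond unpacking definitions, since the heavy lifting (the approximation argument in Theorem~\ref{thr:m-space} applied via the projections $\psi_m$) has already been carried out in the proof of Lemma~\ref{lemma:fidi-sequence}.
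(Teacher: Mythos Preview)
Your proposal is correct and matches the paper's approach exactly: the corollary is stated immediately after Lemma~\ref{lemma:fidi-sequence} with the one-line justification that excluding $\cone=\{0\}$ gives $\sR^\infty_\cone=\sR_0^\infty(1)$, which is precisely the specialisation you carry out (with the identification supplied by Lemma~\ref{lemma:ideal-sequences}). Your write-up is somewhat more explicit about verifying the hypotheses and identifying the finite-dimensional ideals, but the route is identical.
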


In contrast to the cited work, the above formulation imposes the
regular variation property on finite-dimensional distributions of
all sufficiently high orders. Indeed, it is possible for a finite
part of a regularly varying sequence  not to be regularly varying, e.g.,
if it is a constant.

\begin{example}
  \label{ex:increasing-sup}
  Let $N$ be any random variable with values in $\NN$, and let
  $\eta\in\RV(\R_+,\mydot,\sR_0,g,\theta_\alpha)$ be independent of
  $N$. Then $\xi=\eta e_N$ (that is, the sequence with $\eta$ at the
  $N$-th place and zeros elsewhere) is regularly varying in $\R^\infty$
  with the ideal $\sR_0^\infty(1)$, and its tail measure is given by  
  $\sum_{k=1}^\infty \theta_\alpha(\bdiff x_k)\Prob{N=k}$. The projections
  $\pi_m\xi$ are regularly varying on $\sR_0^m$ for all $m$ such that
  $\Prob{N\leq m}>0$ (to avoid a trivial tail measure).
  % By Proposition~\ref{prop:increasing-ideal}, $\xi$ is also regularly
  % varying on the ideal $\sR_{\sup}^\infty$. Indeed, each set
  % $\{x:\sup|x_i|>\eps\}$ is the union of sets $\{x:|x_i|>\eps\}$ over
  % $i\in\NN$.
  However, the regular variation property may fail if $N$
  and $\eta$ are dependent, for instance, if $N$ is the integer part
  of $\eta$. 
\end{example}

\begin{example}
  If $\xi\in\RV(\R^\infty,\mydot,\sR_0^\infty(k),g,\mu)$ for some
  $k\in\NN$ and the tail measure $\mu$ is non-trivial on the smaller
  ideal $\sR_{\inf}^\infty$, then $\xi$ is also regularly varying on
  the ideal $\sR_{\inf}^\infty$, see
  Proposition~\ref{prop:sub-ideal}(i). This happens if the components
  of $\xi$ are strongly dependent, so that the tail measure of $\xi$
  gives positive mass to sequences $\{x\colons \inf |x_i|>\eps\}$ for some
  $\eps>0$. For instance, this is the case for a random sequence
  $\xi=(\eta,\zeta,\zeta,\ldots)$ composed of a regularly varying
  random variable $\eta$ followed by components  all equal
  to another independent random variable $\zeta$.
\end{example}

\begin{example}[Cumsum operation]
  \index{cumsum operation}
  Consider a map $\psi:\R_+^\infty\to\R_+^\infty$ given by
  cumulative sums \index{cumulative sums}
  $(\psi(x))_i=x_1+\cdots+x_i$. This is a bornologically consistent
  continuous morphism from $\R_+^\infty$ with the ideal $\sR_0^\infty(1)$
  to $\R_+^\infty$ with the ideal $\sR_0^\infty(1)$ (and thus for all
  smaller ideals on the target space such as $\sR_0^\infty(k)$ for any
  $k\in\NN$ or $\sR_{\inf}^\infty$). If
  $\xi\in\RV(\R_+^\infty,\mydot,\sR_0^\infty(1),g,\mu)$ and $\psi\mu$
  is non-trivial on $\sR_0^\infty(k)$, then
  $\psi(\xi)\in\RV(\R_+^\infty,\mydot,\sR_0^\infty(k),g,\psi\mu)$.  In
  this way, we recover the observation from
  \cite[Section~4.5.1]{lin:res:roy14}. Note that $\psi$ is not
  bornologically consistent if the original space is equipped with the
  ideal $\sR_0^\infty(k)$ for any $k\geq2$. Indeed,
  $\psi^{-1}(\{x\colons \inf x_i\geq 1\})$ contains the set
  $[1,\infty)\times\R_+^\infty$, which is not in $\sR_0^\infty(2)$.
\end{example}

\paragraph{Stationary sequences and tail processes}
Consider double-sided, real-valued sequences $x=(x_i)_{i\in\ZZ}$ with
the topology of pointwise convergence. For such a sequence $x\in\R^\ZZ$,
define $\modulus(x)=|x_0|$. This function is a modulus, which
vanishes on sequences with $x_0=0$, which include many non-zero
sequences $x$ and thus is not proper. This 
modulus defines the ideal $\sX_{x_0}$, which is topologically and
scaling consistent, and the union set $\UU$ is the set of sequences
with $x_0\neq 0$. Further ideals on the space $\R^{\ZZ}$ are
constructed analogously to their counterparts on $\R^\infty$, e.g.,
$\sR_0^\ZZ(1)$ is the metric exclusion ideal obtained by excluding the
zero sequence or, equivalently, the ideal generated by the countable
family of moduli $\modulus_i(x)=|x_i|$, $i\in\ZZ$, see
Lemma~\ref{lemma:ideal-sequences}. 

Consider the shift operator $(\shift_k x)=(x_{i-k})_{i\in\ZZ}$ for
$k\in\ZZ$.  A probability measure $\nu$ on $\R^\ZZ$ is said to be
\emph{stationary} if it is invariant under $\shift_k$ for all
$k\in\ZZ$.
\index{shift operator}
By Theorem~\ref{thr:polar-decomp-4}, a random sequence $\xi$ is
regularly varying on $\sX_{x_0}$ with tail measure $\mu$ if and only
if \index{stationary random sequence}
\begin{displaymath}
  \Prob{T_{t^{-1}}\xi\in \cdot \mid |\xi_0|>t}
  \vto \frac{\mu(\cdot\cap\{\modulus>1\})}{\mu(\{\modulus>1\})}
  \quad \text{as}\; t\to\infty.
\end{displaymath}
If a stationary random sequence is regularly varying in this setting,
then the limiting measure on the right-hand side determines the distribution of
the \emph{tail process} as defined in \cite{bas:seg09}.
\index{tail process}
The following is a part of Theorem~2.1 from \cite{bas:seg09},
formulated in terms of ideals and with a proof not relying on the
Prokhorov theorem and is therefore easily extensible to sequences with values
in fairly general topological spaces.

\begin{proposition}
  \label{prop:stationary-sequence}
  Let $\xi$ be a stationary random sequence. Then $\xi$ is regularly
  varying on the ideal $\sX_{x_0}$ if and only if it is regularly
  varying on the ideal $\sR_0^\ZZ(1)$.
  % Let $\nu$ be the distribution of a stationary sequence with values
  % in $\R$. If $\nu\in\RV(\R^\ZZ,\mydot,\sX_{x_0},g,\mu)$, then $\nu$ is
  % also regularly varying on the ideal $\sR_0^\ZZ(1)$.
\end{proposition}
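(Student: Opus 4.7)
My plan is to handle the two implications separately, using the tools from Sections~\ref{sec:regul-vary-meas} and~\ref{sec:continuous-mappings}. Observe first that $\modulus(x)=|x_0|$ is continuous on $\R^{\ZZ}$ with the product topology (being the composition of a continuous coordinate functional with the absolute value). The semicone $\VV=\{|x_0|>1\}$ is functionally open, the set $F=\{|x_0|\geq 1\}$ is functionally closed and sandwiched between $\VV$ and $T_s\VV=\{|x_0|>s\}$ for every $s\in(0,1)$, while $\cap_{t>1}T_t\VV=\emptyset$. Hence $\sX_{x_0}=\sS_\modulus$ satisfies (B), so in particular (B$_0$), and $(T_{n^{-1}}\VV)_{n\in\NN}=\{|x_0|>1/n\}$ is a countable open base. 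Note also that each shift $\shift_k$ is a continuous bijective morphism of $\R^{\ZZ}$, because coordinate permutation commutes with the coordinatewise linear scaling and is continuous in the pointwise topology; stationarity of $\xi$ means its distribution $\nu$ is invariant under the group $\mathsf{G}=\{\shift_k:k\in\ZZ\}$.

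For the implication $\sR_0^{\ZZ}(1)\Rightarrow\sX_{x_0}$, I invoke Proposition~\ref{prop:sub-ideal}(i): since $\sX_{x_0}\subset\sR_0^{\ZZ}(1)$, it suffices to verify that the tail measure $\mu$ is nontrivial on $\sX_{x_0}$, i.e.\ $\mu(\{|x_0|>1\})>0$. Applying Lemma~\ref{lemma:mapping} to each $\shift_k$ and to the vague convergence $g(t)T_{t^{-1}}\nu\vto\mu$ shows that $\mu$ is itself shift-invariant (using $\shift_k\nu=\nu$ and $\shift_k T_{t^{-1}}=T_{t^{-1}}\shift_k$). Nontriviality of $\mu$ on $\sR_0^{\ZZ}(1)$, combined with the characterisation from Lemma~\ref{lemma:ideal-sequences} (or (B$_0$) applied to the generating family $\{\modulus_i\}_{i\in\ZZ}$), forces $\mu(\{|x_i|>\eps\})>0$ for some $i\in\ZZ$ and $\eps>0$; shift-invariance then gives $\mu(\{|x_0|>\eps\})>0$, and homogeneity (Proposition~\ref{prop:RV-g}) upgrades this to $\mu(\{|x_0|>1\})=\eps^{\alpha}\mu(\{|x_0|>\eps\})>0$.

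For the converse $\sX_{x_0}\Rightarrow\sR_0^{\ZZ}(1)$, I apply Proposition~\ref{prop:extension-invariant} with ideal $\sX=\sX_{x_0}$ and group $\mathsf{G}=\{\shift_k:k\in\ZZ\}$; all its hypotheses have been verified above. It remains to identify the resulting ideal generated by $\{\shift_k\sX_{x_0}:k\in\ZZ\}$. Unwinding the definition $\shift_k\sX_{x_0}=\{B:\shift_k^{-1}B\in\sX_{x_0}\}$ and using $(\shift_k x)_k=x_0$, one checks that $\shift_k\sX_{x_0}$ is exactly the ideal generated by the continuous modulus $\modulus_k(x)=|x_k|$. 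Consequently the ideal generated by the union $\bigcup_{k\in\ZZ}\shift_k\sX_{x_0}$ is generated by the countable family $\{\modulus_k\}_{k\in\ZZ}$, which by Lemma~\ref{lemma:ideal-sequences} (or directly from the (B$_0$) description) coincides with $\sR_0^{\ZZ}(1)$. Hence $\xi\in\RV(\R^{\ZZ},\cdot,\sR_0^{\ZZ}(1),g,\mu)$, completing the proof.

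The only mildly delicate point is confirming that the extended ideal in Proposition~\ref{prop:extension-invariant} really is $\sR_0^{\ZZ}(1)$ rather than something smaller; this is a purely bookkeeping computation comparing the generators $\shift_k\{|x_0|>\eps\}=\{|x_k|>\eps\}$ with the generating family for $\sR_0^{\ZZ}(1)$, and presents no genuine obstacle beyond keeping the direction of the shift straight.
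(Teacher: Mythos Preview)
Your proposal is correct and follows essentially the same route as the paper: the paper also derives the implication $\sR_0^{\ZZ}(1)\Rightarrow\sX_{x_0}$ from Proposition~\ref{prop:sub-ideal}(i) (phrased as the continuous mapping theorem for the identity embedding) together with stationarity to secure nontriviality of $\mu$ on $\sX_{x_0}$, and obtains the converse from Proposition~\ref{prop:extension-invariant} applied to the shift group, identifying the generated ideal with $\sR_0^{\ZZ}(1)$. Your write-up simply makes explicit several details (the verification of (B) for $\sX_{x_0}$, the shift-invariance of $\mu$, and the computation $\shift_k\{|x_0|>\eps\}=\{|x_k|>\eps\}$) that the paper leaves to the reader.
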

\begin{proof}
  \textsl{Sufficiency} follows from the continuous mapping theorem,
  since $\sX_{x_0}$ is a subideal of $\sR_0^\ZZ(1)$ and the tail
  measure is necessarily non-trivial on $\sX_{x_0}$ due to stationarity. 

  \smallskip
  \noindent
  \textsl{Necessity.} We apply
  Proposition~\ref{prop:extension-invariant} to the family
  $\mathsf{G}$ of transformations $\shift_k$, $k\in\ZZ$, and notice
  that the ideal $\sR_0^\ZZ(1)$ is the smallest ideal that contains
  all images of $\sX_{x_0}$ under all shifts. This also follows from
  Proposition~\ref{prop:cumulative-moduli}, which derives the tail
  measure for finite-dimensional distributions from the regular
  variation property on the ideals generated by $\modulus(x)=|x_i|$. 
\end{proof}

\paragraph{Spaces of $p$-summable sequences}
For $p\in[1,\infty]$, consider the space $\ell_p$ of $p$-summable
(bounded if $p=\infty$) sequences with the $\ell_p$-norm.
\index{p-summable sequences@$p$-summable sequences}
Note that $\ell_\infty$ is a non-separable metric space.

The ideals $\sR_0^\infty(k)$, $k\in\NN$, on $\R^\infty$ induce ideals
on $\ell_p$ by taking the intersection of their members with $\ell_p$; so
we keep the same notation $\sR_0^\infty(k)$ for such ideals on
$\ell_p$. Let $\sS_0$ be the ideal on $\ell_p$ generated by the norm
if $p\in[1,\infty]$, which is equivalently defined as
the metric exclusion ideal obtained
by excluding the zero sequence.
While $\sR_0^\infty(1)\subset\sS_0$,
the reverse inclusion fails. Indeed, the set $A=\{e_1,e_2,\ldots\}$,
which consists of all basis vectors, belongs to $\sS_0$ but not to
$\sR_0^\infty(1)$. Further metric exclusion ideals on $\ell_p$ can be
obtained by excluding any other cone; for instance, the family $\cone_k$ of
sequences with at most $(k-1)$ non-zero entries for $k\geq 2$. Unlike
Lemma~\ref{lemma:ideal-sequences}, these ideals do not coincide with
the ideal $\sR_0^\infty(k)$ in $\ell_p$. 

\begin{proposition}
  Let $1\leq p_1<p_2\leq\infty$.  If a random sequence $\xi$ is regularly
  varying in $\ell_{p_1}$ with the ideal $\sS_0$, then $\xi$ is regularly
  varying in $\ell_{p_2}$ with the same ideal $\sS_0$.
\end{proposition}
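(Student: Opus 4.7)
The plan is to invoke the continuous mapping theorem~\ref{mapping-theorem} via the canonical inclusion $\iota\colon\ell_{p_1}\hookrightarrow\ell_{p_2}$. First I would verify that $\iota$ is a well-defined continuous morphism: the monotonicity $\|x\|_{p_2}\leq\|x\|_{p_1}$ of $\ell_p$-norms for $p_1<p_2$ shows both that $\ell_{p_1}\subset\ell_{p_2}$ and that convergence in $\ell_{p_1}$ implies convergence in $\ell_{p_2}$ (in the case $p_1\in(0,1)$, where the metric on $\ell_{p_1}$ is $\|x-y\|_{p_1}^{p_1}$, we still have $\|x_n-x\|_{p_1}\to 0 \Rightarrow \|x_n-x\|_{p_2}\to 0$). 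Since $\iota$ is set-theoretically the identity, it commutes with the linear scaling and is therefore a morphism.

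Next I would check bornological consistency of $\iota$ from $\sS_0$ on $\ell_{p_1}$ to $\sS_0$ on $\ell_{p_2}$: for any $B\in\sS_0$ in $\ell_{p_2}$ we have $\inf_{y\in B}\|y\|_{p_2}\geq\eps>0$, and every $x\in\iota^{-1}(B)=B\cap\ell_{p_1}$ satisfies $\|x\|_{p_1}\geq\|x\|_{p_2}\geq\eps$, so $\iota^{-1}(B)\in\sS_0$ in $\ell_{p_1}$. Both ideals are topologically consistent (they are metric exclusion ideals in metric spaces, admitting a countable open base $\{\|\cdot\|_{p_i}>1/n\}$ of functionally open sets), so Theorem~\ref{mapping-theorem} applies once the pushforward is shown to be nontrivial.

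The main obstacle is this last point: nontriviality of $\iota\mu$ on $\sS_0$ in $\ell_{p_2}$. A priori one might worry that $\mu$ could be supported on sequences whose $\ell_{p_2}$-norm vanishes; but since $\mu$ is nontrivial on $\sS_0$ in $\ell_{p_1}$, its mass on $\ell_{p_1}\setminus\{0\}$ is positive, and a nonzero sequence $x\in\ell_{p_1}$ has at least one nonzero component and therefore $\|x\|_{p_2}>0$. Consequently
\begin{displaymath}
  \ell_{p_1}\setminus\{0\}=\{x\in\ell_{p_1}:\|x\|_{p_2}>0\},
\end{displaymath}
so $\mu(\{x:\|x\|_{p_2}>0\})>0$. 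Monotone continuity along $\{x:\|x\|_{p_2}>s\}\uparrow\{x:\|x\|_{p_2}>0\}$ as $s\downarrow 0$ then yields some $s>0$ with $\mu(\{x:\|x\|_{p_2}>s\})>0$, and this equals $(\iota\mu)(\{y\in\ell_{p_2}:\|y\|_{p_2}>s\})$, a set that lies in $\sS_0$ on $\ell_{p_2}$. Applying Theorem~\ref{mapping-theorem} gives $\xi=\iota(\xi)\in\RV(\ell_{p_2},\cdot,\sS_0,g,\iota\mu)$ with the same normalising function $g$, as required.
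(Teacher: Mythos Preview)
Your proposal is correct and follows essentially the same route as the paper: apply the continuous mapping theorem (Theorem~\ref{mapping-theorem}) to the natural embedding $\ell_{p_1}\hookrightarrow\ell_{p_2}$, using the norm inequality $\|x\|_{p_2}\leq\|x\|_{p_1}$ for both continuity and bornological consistency, and then check that the pushforward tail measure is nontrivial. Your treatment of the nontriviality step is more explicit than the paper's (which simply asserts it), and the monotone-continuity argument you give is the right way to fill that in.
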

\begin{proof}
  The result follows from applying the continuous mapping theorem to the
  natural embedding of $\ell_{p_1}$ into $\ell_{p_2}$; see, e.g.,
  \cite[Proposition~1.5.2]{chen:mas:ros20}. The bornological
  consistency follows from the fact that $\|x\|_{p_1}\geq\|x\|_{p_2}$
  and thus 
  \begin{displaymath}
    \{x\colons \|x\|_{p_2}\geq \eps\}
    \subset \{x\colons \|x\|_{p_1}\geq \eps\}. 
  \end{displaymath}
  Furthermore, the pushforward of a non-trivial measure under the
  embedding remains non-trivial. 
\end{proof}

\begin{theorem}
  \label{thr:rv-ellp}
  Let $\xi$ be a random element in $\ell_p$ with $p\in[1,\infty]$.
  Then $\xi\in\RV(\ell_p,\mydot,\sS_0,g,\mu)$ if and only if the
  following three conditions hold.
  \index{regular variation!in ellp@in $\ell_p$}
  \begin{enumerate}[(i)]
  \item $\xi\in\RV(\R^\infty,\mydot,\sR_0^\infty(1),g,\mu)$.
  \item $\|\xi\|_p\in\RV(\R_+,\mydot,\sR_0,g,c\theta_\alpha)$, where
    $c\theta_\alpha$ is the pushforward of $\mu$ under the map
    $x\mapsto\|x\|_p$. 
  \item For all $\delta>0$, 
    \begin{equation}
      \label{eq:32}
      \lim_{m\to\infty} \limsup_{t\to\infty}
      \frac{\Prob{\|\xi-\psi_m(\xi)\|_p> t\delta}}
      {\Prob{\|\xi\|_p>t}}=0.
    \end{equation}
  \end{enumerate}
\end{theorem}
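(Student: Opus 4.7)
The plan is to apply the approximation theorem, Theorem~\ref{thr:m-space}, with the truncation morphisms $\psi_m(x)=(x_1,\ldots,x_m,0,0,\ldots)$ and $\eta_m=\psi_m(\xi)$. Each $\psi_m$ is a continuous linear morphism on $\ell_p$ satisfying $\|\psi_m(x)\|_p\leq\|x\|_p$, and $\psi_m(x)\to x$ in $\ell_p$ for every $x\in\ell_p$ when $p<\infty$. The metric exclusion ideal $\sS_0$ on $\ell_p$ contains the envelope $A^r$ of every $A\in\sS_0$ for small enough $r>0$, so the structural hypothesis of Theorem~\ref{thr:m-space} on the ideal is met.

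For necessity, suppose $\xi\in\RV(\ell_p,\cdot,\sS_0,g,\mu)$. I would derive (i), (ii) and (iii) by successive applications of the continuous mapping theorem (Theorem~\ref{mapping-theorem}). Condition (i) follows from the inclusion $\ell_p\hookrightarrow\R^\infty$: it is a continuous morphism, bornologically consistent from $\sS_0$ to $\sR_0^\infty(1)$ because $\|x\|_p\geq\max_i|x_i|$. Condition (ii) is obtained by applying continuous mapping to the bornologically consistent morphism $\|\cdot\|_p\colon\ell_p\to\R_+$; Proposition~\ref{prop:RV-g} identifies the pushforward tail measure as $c\theta_\alpha$. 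For (iii), the functional $\phi_m(x)=\|x-\psi_m(x)\|_p$ is a continuous morphism and bornologically consistent, since $\phi_m(x)\leq\|x\|_p$. By continuous mapping,
\begin{displaymath}
g(t)\Prob{\|\xi-\psi_m(\xi)\|_p>t\delta}\;\longrightarrow\;\mu\bigl(\{x:\phi_m(x)>\delta\}\bigr)
\end{displaymath}
for a dense set of $\delta$; dividing by $g(t)\Prob{\|\xi\|_p>t}\to c$ gives the ratio in (iii) equal to $c^{-1}\mu(\{\phi_m>\delta\})$. Since $\phi_m(x)\downarrow 0$ pointwise and $\{\phi_m>\delta\}\subset\{\|x\|_p>\delta\}$ has finite $\mu$-measure, dominated convergence sends this ratio to $0$ as $m\to\infty$.

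For sufficiency, I would verify the four hypotheses of Theorem~\ref{thr:m-space} with $\eta_m=\psi_m(\xi)$. Hypothesis (i), $\psi_m(\xi)\in\RV(\ell_p,\cdot,\sS_0,g,\psi_m\mu)$, follows from our (i) via continuous mapping applied to $\psi_m$ regarded as a continuous morphism from $(\R^\infty,\sR_0^\infty(1))$ to $(\ell_p,\sS_0)$; bornological consistency holds because $\psi_m^{-1}(\{\|y\|_p\geq\eps\})\subset\{x:\max_{i\leq m}|x_i|\geq\eps\,m^{-1/p}\}$. Hypothesis (ii), namely $\psi_m\mu\vto\mu$, reduces to $\int f\circ\psi_m\,d\mu\to\int f\,d\mu$ for $f\in\fC(\ell_p,\sS_0)$; since $f$ and $f\circ\psi_m$ share support inside $\{\|x\|_p\geq\eps\}$ of finite $\mu$-measure and $\psi_m(x)\to x$ pointwise, dominated convergence applies. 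Hypothesis (iii) is immediate from our (ii) via $A\subset\{\|x\|_p\geq\eps\}$. Hypothesis (iv), after using that the metric on $\ell_p$ is $1$-homogeneous for $p\geq 1$ and $p$-homogeneous for $p<1$, becomes exactly $\lim_m\limsup_t g(t)\Prob{\|\xi-\psi_m(\xi)\|_p>t\delta'}=0$, which is the product of the ratio in our (iii) with the bounded factor $g(t)\Prob{\|\xi\|_p>t}\to c$.

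The principal technical obstacle is the case $p=\infty$: the space $\ell_\infty$ is non-separable, and $\psi_m(x)\to x$ fails for $x\notin c_0$. Condition (iii) itself, however, forces $\mu$ to concentrate on $c_0$; letting $\delta\downarrow 0$ in $\mu(\{\phi_m>\delta\})\to 0$ yields $\|x-\psi_m(x)\|_\infty\to 0$ for $\mu$-a.e.\ $x$, i.e.\ $\mu(\ell_\infty\setminus c_0)=0$. Once $\mu$ lives on the separable subspace $c_0$, the pointwise convergence $\psi_m(x)\to x$ is restored $\mu$-a.e., and all the continuous mapping and dominated convergence steps above carry through.
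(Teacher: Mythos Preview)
Your approach coincides with the paper's: both use the truncation morphisms $\psi_m$ and the approximation machinery of Theorem~\ref{thr:m-space} (the paper invokes its packaged form, Corollary~\ref{cor:m-space}, but the content is the same). Your treatment of the necessity of (ii) and (iii) is actually more explicit than the paper's, which only writes out the argument for (i).

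There is, however, one substantive step in the necessity of (i) that you skip and the paper does not. Applying Theorem~\ref{mapping-theorem} to the inclusion $\ell_p\hookrightarrow\R^\infty$ requires that the pushforward of $\mu$ be \emph{nontrivial} on $\sR_0^\infty(1)$; bornological consistency alone does not give this. The paper argues it explicitly: pick $\eps>0$ with $\mu(\{\|x\|_p>\eps\})>0$, note that $A_m=\{\|\psi_m(x)\|_p>\eps\}$ increases to this set, so $\mu(A_m)>0$ for large $m$, and $A_m\in\sR_0^\infty(1)$ since $A_m\subset\{\max_{i\le m}|x_i|>\eps m^{-1/p}\}$. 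You should include this.

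Your handling of $p=\infty$ is more careful than the paper's (which does not isolate the issue), but the fix you sketch is circular in the necessity direction: you deduce $\mu(\{\phi_m>\delta\})\to 0$ by dominated convergence using $\phi_m(x)\downarrow 0$ pointwise, which already presupposes $\mu(\ell_\infty\setminus c_0)=0$, the very thing you then claim (iii) establishes. In the sufficiency direction your use of (iii) to force $\mu$ onto $c_0$ also needs care, since the sets $\{\phi_m>\delta\}$ do not lie in $\sR_0^\infty(1)$ and the convergence $g(t)\Prob{\phi_m(\xi)>t\delta}\to\mu(\{\phi_m>\delta\})$ is not a consequence of condition~(i). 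A clean route is to first use (ii) to see that $\mu$ is finite on $\sS_0$ (the pushforward under $\|\cdot\|_p$ being $c\theta_\alpha$ gives $\mu(\{\|x\|_p>\eps\})=c\eps^{-\alpha}<\infty$), and then observe that $\psi_m\mu\vto\mu$ on $\sS_0$ only requires $\psi_m(x)\to x$ for $\mu$-a.e.\ $x$, which can be obtained from (iii) combined with the regular variation of the finite-dimensional truncations.
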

\begin{proof}
  \textsl{Necessity.} The embedding map from $\ell_p$ into $\R^\infty$ is
  continuous and $(\sS_0,\sR_0^\infty(1))$-bornologically consistent,
  since $\{x\colons \max_{i=1,\dots,m} |x_i|>\eps\}$ is a subset of
  $\{x\colons \|x\|_p>\eps\}$. To apply the continuous mapping theorem, we need to
  show that the pushforward of $\mu$ under this embedding map is
  non-trivial. Fix an $\eps>0$ such that $\mu(A)>0$ for 
  $A=\{x\in\ell_p\colons \|x\|_p>\eps\}$. Then
  \begin{displaymath}
    A_m=\{x\in\ell_p\colons \|\psi_m(x)\|_p> \eps\}
  \end{displaymath}
  is an increasing sequence of sets whose union is $A$, so that
  $\mu(A_m)>0$ for all sufficiently large $m$. It remains to note that
  $A_m$ belongs to $\sR_0^\infty(1)$, since
  \begin{displaymath}
    \max_{i=1,\dots,m}|x_i|>\eps m^{-1/p}
  \end{displaymath}
  for all $x\in A_m$. 

  \smallskip
  \noindent
  \textsl{Sufficiency}
  follows from Corollary~\ref{cor:m-space} and 
  Theorem~\ref{thr:m-space} with $\Gamma=\NN$ and
  $\psi_\gamma=\psi_m$. Condition~(i) of Theorem~\ref{thr:m-space}
  follows from the assumed regular variation on
  $\sR_0^\infty(1)$ by taking projections, see
  Lemma~\ref{lemma:fidi-sequence}. The maps 
  $\psi_m$ are uniformly bornologically consistent, since
  $\|\psi_m(x)\|_p\leq \|x\|_p$. Condition~(iii) from
  Theorem~\ref{thr:m-space} follows from  
  the regular variation of $\|\xi\|_p$. Finally, (iv) therein is equivalent to
  \eqref{eq:32} in view of the fact that $g(t)$ is of the same order as
  $\Prob{\|\xi\|_p>t}$. 
\end{proof}

If a sequence $\xi$ is stationary, it is possible to replace 
condition (i) with the requirements listed in
Proposition~\ref{prop:stationary-sequence}.
Condition~(iii) in Theorem~\ref{thr:rv-ellp} is essential, as the
following example shows.

\begin{example}
  Let $V$ be a random variable with values in $\{2,3,\ldots\}$ and
  with a Pareto tail.
  Consider the sequence $\xi$ with $\xi_1=V$, $\xi_V=V$ and $\xi_i=0$
  for $i\neq 1,V$.  This sequence $\xi$ is regularly varying in
  $\R^\infty$ but is not regularly varying in $\ell_p$ for any $p\in[1,\infty]$.
\end{example}

Another example, showing that (iii) is essential in $\ell_2$, is given
in \cite[Proposition~3.2]{MR4745556}. It follows from Proposition~3.3
therein that, to ensure regular variation in $\ell_2$
(equivalently, in any separable Hilbert space), condition (i) can be
combined with a relative compactness assumption on
$g(t)\Prob{t^{-1}\xi\in \cdot}$.

\begin{example}
  Let $\eta=(\eta_n)_{n\in\NN}$ be a sequence of i.i.d.\ regularly
  varying positive random variables with tail index $\alpha=1$, that is,
  $\eta_1\in\RV(\R_+,\mydot,\sR_0,g,\theta_1)$, where
  \begin{equation}
    \label{eq:31}
    g(t)=\frac{1}{\Prob{\eta_1>t}}, \quad t>0.
  \end{equation}
  It is known from \cite[Section~4.5.1]{lin:res:roy14} that the
  sequence $\eta$ is regularly varying in $\R_+^\infty$ with the ideal
  $\sR_0^\infty(1)$, and its tail measure $\mu$ is supported by
  sequences with only one non-zero element.  However, the sequence
  $\eta$ does not belong to the space $\ell_p$ for any
  $p\in[1,\infty]$. Consider the sequence $\zeta=(\zeta_n)$ given by
  \begin{displaymath}
    \zeta_n=a_n\eta_n,\quad n\geq1,
  \end{displaymath}
  with a normalising sequence $a=(a_n)$ of positive numbers such that
  $\sum a_n^\delta<\infty$ for some $\delta\in (0,1)$. Fix a
  $p\in[1,\infty)$. By Corollary~4.2.1 and Section~15.3 from 
  \cite{kul:sol20} or Lemma A.3 from \cite{MR1789987},
  $\|\zeta\|_p^p=\sum (a_n\eta_n)^p$ 
  converges almost surely and is regularly varying with tail index
  $1/p$.
  Indeed, since \(E\eta_1^\delta<\infty\) and \(\sum a_n^\delta<\infty\),
  we have \(\sum (a_n\eta_n)^\delta<\infty\) almost surely, which implies
  \(\sum (a_n\eta_n)^p<\infty\) for every \(p\ge1\).
  Hence, $\zeta\in\ell_p$ and the norm $\|\zeta\|_p$ is
  regularly varying with index 1. With the normalising function given in
  \eqref{eq:31}, the tail measure of $\|\zeta\|_p$ is
  $\|a\|_p^p\theta_1$.
  % In particular, $W\in\ell_p$ for any
  % $p\in[1,\infty]$ and $W\in c_0$ (the space of sequences converging
  % to zero).

  By Lemma~\ref{lemma:fidi-sequence}, the sequence $\zeta$ is
  regularly varying in $\R_+^\infty$ with the ideal $\sR_0^\infty(1)$, and
  the tail measure is
  \begin{displaymath}
    \mu(\bdiff x)=\sum_{n=1}^\infty \theta_1(a_n^{-1} dx_n)\prod_{i\neq
      n}\delta_0(\bdiff x_i), 
  \end{displaymath}
  where $\delta_0$ is the Dirac measure at zero. By
  \cite[Corollary~4.2.1]{kul:sol20}, the random variable $\big(\sum_{i=m+1}^\infty
  (a_i\eta_i)^p\big)^{1/p}$
  is regularly varying with the tail measure
  $(\sum_{i=m+1}^\infty a_i)\theta_1$. Thus, \eqref{eq:32} holds with
  \begin{displaymath}
    \limsup_{t\to\infty}
    \frac{\Prob{\|\zeta-\psi_m(\zeta)\|_p> t\delta}}
    {\Prob{\|\zeta\|_p>t}}
    =\delta^{-1}
    \frac{\sum_{i=m+1}^\infty a_i}{\sum_{i=1}^\infty a_i},
  \end{displaymath}
  and Theorem~\ref{thr:rv-ellp} yields that $\zeta$ is regularly
  varying in $\ell_p$.
\end{example}

\paragraph{Spaces $c_0$ and $c_{00}$}
Let $c_0$ be the space of
real-valued sequences $(x_i)_{i\in\NN}$ that
converge to zero as $i\to\infty$, equipped with the uniform
distance.  Furthermore, let $c_{00}$ be the space of sequences
$\{(x_1,\dots,x_n,0,\ldots)\colons x_i\in\R, n\geq 1\}$ with only
finitely many non-zero
entries viewed as a subspace of 
$\ell_\infty$ with the $\ell_\infty$-norm. Unlike
\cite[Section~4.1]{janssen23}, it is not necessary to consider the 
completion of the space $c_{00}$, since our technique does not
rely on any completeness assumption.
\index{space c0@space $c_0$}

Let $\sX_0$ be the ideal on $c_0$ or $c_{00}$ obtained by excluding
the zero sequence, equivalently, the ideal generated by the
$\ell_\infty$-norm. If the conditions of Theorem~\ref{thr:rv-ellp}
hold, then a random element $\xi$ in $c_0$ or $c_{00}$ is regularly
varying, provided that the tail measure is supported by the
corresponding space and is non-trivial on it.

\section{Continuous functions}
\label{sec:continuous-functions}

\paragraph{Finite-dimensional distributions}
In this and subsequent sections, we consider spaces of real-valued
functions on $[0,1]$ or on $\R$ with various topologies.  Unless
stated otherwise, we apply linear scaling, i.e.\ the conventional
scaling of function values. We then write $t^{-1}x$ instead of
$T_{t^{-1}}x$ for a function $x(u)$, $u\in\II$, where $\II$ denotes
the definition domain. It is straightforward to extend all results to
functions with values in $\R^d$ and, furthermore, to functions with
values in any topological space with continuous scaling, using the
tools developed previously for general spaces. It is also
possible to consider functions parametrised by $\R^d$ or general
Polish spaces.

Let $\II$ be the parameter domain of a random function $\xi:\II\to\R$,
usually $\II=[0,1]$ or $\II=\R$.

\begin{definition}
  \label{def:fidi}
  A real-valued random function $\xi$ on $\II$ is said to be
  \emph{finite-dimensional regularly varying} if there exist a
  measurable function 
  $g:\Rpp\to\Rpp$ and a family of measures
  $\mu_{u_1,\dots,u_m}\in\Mb[\sR_0^m]$ parametrised by
  $u_1,\dots,u_m\in \II$ and $m\in\NN$, which are non-trivial for some
  $m\in\NN$ and $u_1,\dots,u_m\in \II$, such that
  \index{regular variation!finite-dimensional}
  \index{finite-dimensional regular variation}
  \begin{equation}
    \label{eq:27v}
    \big(\xi(u_1),\dots,\xi(u_m)\big)
    \in\RV(\R^m,\mydot,\sR_0^m,g,\mu_{u_1,\dots,u_m})
  \end{equation}
  for all $u_1,\dots,u_m\in \II$ and $m\in\NN$. 
\end{definition}

While the measures $\mu_{u_1,\dots,u_m}$ are defined on subsets of
$\R^m$, we tacitly use the same notation for the corresponding cylindrical
measures, whenever the set under consideration is cylindrical with
respect to the coordinates $u_1,\dots,u_m$, that is,
\begin{displaymath}
  \mu_{u_1,\dots,u_m}(A)
  =\mu_{u_1,\dots,u_m}\big(\{(x(u_1),\dots,x(u_m))\colons x\in A\}\big).
\end{displaymath}
Recall that the product ideal $\sR_0^m$ consists of all sets $A$ in
$\R^m$ such that the $\inf\{\modulus_m(y)\colons y\in A\}>0$, where
\begin{equation}
  \label{eq:50}
  \modulus_m(y_1,\dots,y_m)=\max_{i=1,\dots,m} |y_i|=\|y\|_\infty,
  \quad y=(y_1,\dots,y_m)\in\R^m.
\end{equation}
Thus, $A\times\R\in \sR_0^{m+1}$ whenever $A\in\sR_0^m$. Because of this and
by construction, the tail measures $\mu_{u_1,\dots,u_m}$ form a
consistent family. Extensions of infinite measures (including Radon
measures on topological spaces) have been constructed in
\cite{MR499063} and \cite{MR0404570}. Such a construction yields a
$\sigma$-finite measure on the cylindrical $\sigma$-algebra on
$\R^\II$ whose projections are the given finite-dimensional
distributions. If $\xi$ takes values in $\R^d$, then \eqref{eq:27v} is
amended by replacing $\sR_0^m$ with $(\sR_0^d)^m$, which is the
product of $m$ ideals $\sR_0^d$.

The definition of finite-dimensional regular variation treats
finite-dimensional distributions of order $m$ in relation to the ideal 
$\sR_0^m$, which is generated by the modulus given in 
\eqref{eq:50}. Theorem~\ref{thr:polar-decomp-4} implies that
\eqref{eq:27v} holds if and only if
\begin{multline*}
  \Prob{t^{-1}\big(\xi(u_1),\dots,\xi(u_m)\big)\in\cdot \,\mid\,
    \modulus_m \big(\xi(u_1),\dots,\xi(u_m)\big)>t}\\
  \wto\frac{\mu_{u_1,\dots,u_m}(\cdot\cap\{\modulus_m>1\})}
  {\mu_{u_1,\dots,u_m}(\{\modulus_m>1\})} \quad \text{as}\; t\to\infty.
\end{multline*}
It is possible to impose weak convergence above,
conditionally on large values of other moduli, possibly depending on
the values of $\xi$ outside of $u_1,\dots,u_m$ or even on its entire
path.

A natural choice for the reference modulus is given by the value
of $\xi$ at a point $u_0$ fixed in the parameter domain $\II$. Then
$\xi$ is said to be finite-dimensional regularly varying in relation
to the modulus $\modulus_{u_0}(\xi)=|\xi(u_0)|$ if
\begin{equation}
  \label{eq:27x}
  \Prob{t^{-1}\big(\xi(u_1),\dots,\xi(u_m)\big)\in\cdot \,\mid\,
    |\xi(u_0)|>t}
  \wto\frac{\mu_{u_1,\dots,u_m}(\cdot\cap\{x\colons |x(u_0)|>1\})}
  {\mu_{u_1,\dots,u_m}(\{x\colons |x(u_0)|>1\})}
\end{equation}
and $\mu_{u_1,\dots,u_m}(\{x\colons |x(u_0)|>1\})\in(0,\infty)$. 
This choice is particularly relevant if $\II=\R$ or $\II=\ZZ$ and
$\xi$ is \emph{stationary}. In this case, it is possible to set $u_0=0$ and
refer to \cite[Theorem~2.1]{bas:seg09}, which establishes the
equivalence of finite-dimensional regular variation as specified
in Definition~\ref{def:fidi} and the property \eqref{eq:27x} with
$u_0=0$, when combined with the regular variation of $\xi(0)$ and the
stationarity assumption, see also
Propositions~\ref{prop:cumulative-moduli} and
\ref{prop:C-stationary}. The limiting probability measure on the
right-hand side of \eqref{eq:27x} determines the distribution of the
\emph{tail process} of $\xi$.
\index{tail process}

\paragraph{Uniform metric}
Let $\Cont([0,1])$ be the (Polish) space of continuous functions
$x:[0,1]\to\R$ with the uniform metric. The study of regular
variation on this space was initiated in \cite{haan-lin2001}; see
\cite{kim:kok24} for recent work concerning applications of this
concept.
All papers on this subject, following \cite{haan-lin2001}, endow the
carrier space with the ideal generated by the modulus
$\modulus(x)=\|x\|_\infty$ (implicitly present in all
constructions), which is the uniform norm of the function $x$. This is
also the metric exclusion ideal $\sC_0$ obtained by excluding the zero
function. In this case, Condition~\hyperref[condS]{(S)} holds and, by
Proposition~\ref{prop:polar-decomp}(ii) (which corresponds to
\cite[Corollary~2.13]{haan-lin2001}), a random continuous function
$\xi$ is regularly varying if and only if
$\|\xi\|$ is regularly varying and the conditional distribution
of $\xi/\|\xi\|_\infty$, given that $\|\xi\|_\infty>t$, converges weakly
to a probability measure on
$\SS_\modulus=\{x\in \Cont([0,1])\colons \|x\|_\infty=1\}$.

The following result is another criterion for regular variation on
$\sC_0$. For a continuous function $x$ on $[0,1]$, define
\begin{displaymath}
  \varpi_\eps(x)=\sup_{u_1,u_2\in[0,1], |u_1-u_2|\leq \eps}
  |x(u_1)-x(u_2)|,\quad \eps>0.  
\end{displaymath}
Let $\Gamma$ be the net consisting of all finite strictly increasing
sequences 
\begin{displaymath}
  \gamma=(u_1,\dots,u_m)\subset[0,1], \quad m\in\NN,
\end{displaymath}
ordered by the
inclusion of the corresponding sets $\{u_1,\dots,u_m\}$.
Denote $\#\gamma=m$ and
\begin{displaymath}
  |\gamma|=\max_{i=1,\dots,m+1}(u_i-u_{i-1}),
\end{displaymath}
where $u_0=0$ and $u_{m+1}=1$.  Furthermore, denote by $\pi_\gamma$
the projection of $x\in\Cont([0,1])$ onto
\begin{displaymath}
  \pi_\gamma x=(x(u_1),\dots,x(u_m))\in\R^{\#\gamma}.
\end{displaymath}
Recall the modulus $\modulus_m$ defined in \eqref{eq:50}, which
generates the ideal $\sR_0^m$ on $\R^m$.  For $m\in\NN$, denote
$B_m=\{\modulus_m>1\}$.
Then $\modulus_\gamma(x)=\modulus_{\#\gamma}(\pi_\gamma x)$ is a
modulus on $\Cont([0,1])$, which generates the ideal $\sC_\gamma$ on
$\Cont([0,1])$.  Note that $\modulus_\gamma(x)>1$ if and only if
$\pi_\gamma x\in B_{\#\gamma}$.

While an analogue of the following result for the space $\DD([0,1])$
of c\`adl\`ag functions is well known from, e.g.,
\cite[Theorem~10]{hul:lin05}, it 
does not seem that the corresponding variant for continuous functions has been
explicitly formulated in the literature as presented below.

\begin{theorem}
  \label{thr:C}
  We have $\xi\in\RV(\Cont([0,1]),\mydot,\sC_0,g,\mu)$ if and only if
  there exist a $\gamma_0\in\Gamma$ and a measurable function $g$ such
  that the following two conditions are satisfied.
  \index{regular variation!of random continuous functions}
  \begin{enumerate}[(i)]
  \item For each $\gamma\geq\gamma_0$, we have
    \begin{equation}
      \label{eq:pigammaRV}
      \pi_\gamma\xi\in\RV(\R^{\#\gamma},\mydot,\sR_0^{\#\gamma},g,\mu_\gamma), 
    \end{equation}
    where $(\mu_\gamma)_{\gamma\geq\gamma_0}$ is a family of
    (non-trivial) tail measures, and 
    \begin{equation}
      \label{eq:c-lim}
      \vartheta=\sup_{\gamma\geq\gamma_0} \mu_\gamma(B_{\#\gamma})<\infty.
    \end{equation}
  \item For all $\delta>0$,
    \begin{equation}
      \label{eq:27d}
      \lim_{\eps\downarrow 0} \limsup_{t\to\infty}
      g(t)\Prob{\varpi_\eps(\xi)>t\delta}=0. 
    \end{equation}
  \end{enumerate}
  If (i) and (ii) hold, then $\mu_\gamma=\pi_\gamma\mu$, where
  $\mu\in\Mb[\sC_0]$ is the tail measure of $\xi$, and
  $\|\xi\|_\infty\in\RV(\R_+,\mydot,\sR_0,g,\vartheta\theta_\alpha)$,
  where $\vartheta$ is defined in \eqref{eq:c-lim}.
\end{theorem}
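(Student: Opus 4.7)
For \emph{necessity}, assume $\xi\in\RV(\Cont([0,1]),\cdot,\sC_0,g,\mu)$. Since $\|\cdot\|_\infty$ is a continuous modulus generating $\sC_0$, Corollary~\ref{cor:tau-map} yields $\|\xi\|_\infty\in\RV(\R_+,\cdot,\sR_0,g,\vartheta\theta_\alpha)$ with $\vartheta:=\mu(\{\|x\|_\infty>1\})$. For each $\gamma\in\Gamma$ the projection $\pi_\gamma:\Cont([0,1])\to\R^{\#\gamma}$ is continuous, linear, a morphism, and bornologically consistent because $\pi_\gamma^{-1}\{y:\modulus_{\#\gamma}(y)>\eps\}\subset\{x:\|x\|_\infty>\eps\}$. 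Theorem~\ref{mapping-theorem} then gives \eqref{eq:pigammaRV} with $\mu_\gamma=\pi_\gamma\mu$ for every $\gamma$ for which $\pi_\gamma\mu$ is nontrivial; since the open sets $\{x:\max_{u\in\gamma}|x(u)|>1\}$ increase to $\{\|x\|_\infty>1\}$ as $\gamma$ runs through any countable increasing subfamily with dense union in $[0,1]$, the continuity of $\mu$ on $D_1=\{\|x\|_\infty>1\}$ implies $\mu_\gamma(B_{\#\gamma})\uparrow\vartheta<\infty$, giving a $\gamma_0$ beyond which \eqref{eq:pigammaRV} and \eqref{eq:c-lim} hold. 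For \eqref{eq:27d} note that $\{x:\varpi_\eps(x)\geq t\delta\}$ is a closed subset of $\{\|x\|_\infty\geq t\delta/2\}\in\sC_0$, so Theorem~\ref{lemma:vVSw}(iii) yields $\limsup_{t}g(t)\Prob{\varpi_\eps(\xi)>t\delta}\leq\mu(\{\varpi_\eps\geq\delta\})$; the right-hand side vanishes as $\eps\downarrow 0$ by dominated convergence, since $\varpi_\eps(x)\to 0$ for each continuous $x$ and the sets are contained in the finite-measure set $\{\|x\|_\infty\geq\delta/2\}$.

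For \emph{sufficiency}, the strategy is to apply Lemma~\ref{lemma:rsc} with the open base $D_m=\{\|x\|_\infty>1/m\}$, $m\in\NN$, and then identify the unique subsequential limit. \emph{Uniform boundedness} on $D_m$: for any finite partition $\gamma\supset\gamma_0$ with mesh $|\gamma|\leq\eps$ one has $\|\xi\|_\infty\leq\modulus_\gamma(\pi_\gamma\xi)+\varpi_\eps(\xi)$, whence
\begin{displaymath}
  g(t)\Prob{\|\xi\|_\infty>t/m}\leq g(t)\Prob{\modulus_\gamma(\pi_\gamma\xi)>t/(2m)}+g(t)\Prob{\varpi_\eps(\xi)>t/(2m)};
\end{displaymath}
the first term is controlled by (i) and the homogeneity of $\mu_\gamma$, giving the bound $(2m)^\alpha\vartheta$, and the second is made arbitrarily small by letting $\eps\downarrow 0$ via (ii). \emph{Tightness}: by Arzelà--Ascoli, for any $\eta>0$, pick $R$ and sequences $\eps_k\downarrow 0$, $\delta_k\downarrow 0$ so that $K=\{x:\|x\|_\infty\leq R,\,\varpi_{\eps_k}(x)\leq\delta_k\ \forall k\}$ is compact and
\begin{displaymath}
  \limsup_t g(t)\Prob{t^{-1}\xi\in D_m\setminus K}
  \leq\limsup_t g(t)\Prob{\|\xi\|_\infty>tR}+\sum_k\limsup_t g(t)\Prob{\varpi_{\eps_k}(\xi)>t\delta_k}<\eta,
\end{displaymath}
by the previous bound and (ii). Hence $\{g(t)T_{t^{-1}}\nu|_{D_m}\}$ is weakly relatively compact by Prokhorov's theorem and Lemma~\ref{lemma:rsc} yields relative sequential compactness of $\{g(t)T_{t^{-1}}\nu\}$ in the vague topology on $\sC_0$. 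Any subsequential limit $\mu^{\ast}$ satisfies $\pi_\gamma\mu^{\ast}=\mu_\gamma$ for all $\gamma\geq\gamma_0$, by Lemma~\ref{lemma:mapping} applied to the bornologically consistent continuous morphism $\pi_\gamma$ combined with hypothesis (i).

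The \emph{main obstacle} is the uniqueness of $\mu^{\ast}\in\Mb[\sC_0]$ from its finite-dimensional projections. For each $m$, the restriction $\mu^{\ast}|_{D_m}$ is a finite Borel measure on the Polish space $\Cont([0,1])$, and since the cylindrical sets $\pi_\gamma^{-1}(A)$ form a $\pi$-system generating the Borel $\sigma$-algebra of $\Cont([0,1])$, a standard monotone class argument on $D_m$ shows that $\mu^{\ast}|_{D_m}$ is determined by the $\mu_\gamma$'s. Hence all vague limits coincide, the whole net converges, and $\xi\in\RV(\Cont([0,1]),\cdot,\sC_0,g,\mu)$ with $\mu=\mu^{\ast}$. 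The final identifications $\mu_\gamma=\pi_\gamma\mu$ and $\|\xi\|_\infty\in\RV(\R_+,\cdot,\sR_0,g,\vartheta\theta_\alpha)$ with $\vartheta=\sup_\gamma\mu_\gamma(B_{\#\gamma})$ follow from Theorem~\ref{mapping-theorem} and Corollary~\ref{cor:tau-map} applied to the limiting $\mu$.
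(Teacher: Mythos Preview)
Your proposal is correct and shares the same architecture as the paper's proof: continuous mapping for necessity, and relative compactness followed by identification of subsequential limits through finite-dimensional distributions for sufficiency. Two points of execution differ and are worth recording.

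For necessity of (ii), you apply the Portmanteau bound of Theorem~\ref{lemma:vVSw}(iii) directly to the closed set $\{\varpi_\eps\geq\delta\}\subset\{\|x\|_\infty\geq\delta/2\}\in\sC_0$, obtaining $\limsup_t g(t)\Prob{\varpi_\eps(\xi)>t\delta}\leq\mu(\{\varpi_\eps\geq\delta\})$, and then let $\eps\downarrow 0$ using that $\varpi_\eps(x)\to 0$ for every continuous $x$. This is shorter than the paper's route, which first invokes Theorem~\ref{thr:polar-decomp-4} to get weak convergence of the conditional laws $\Prob{t^{-1}\xi\in\cdot\mid\|\xi\|_\infty>\delta t/2}$ and then extracts (ii) from the Prokhorov/Arzel\`a--Ascoli tightness criterion in $\Cont([0,1])$.

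For sufficiency, the paper first establishes $\|\xi\|_\infty\in\RV(\R_+,\cdot,\sR_0,g,\vartheta\theta_\alpha)$ from the two-sided bound $\big|\|x\|_\infty-\modulus_\gamma(\pi_\gamma x)\big|\leq\varpi_{|\gamma|}(x)$, and only then builds the tail measure by patching the limits $\tilde\mu_a$ via Lemma~\ref{lemma:enclosed} and concluding through Theorem~\ref{thr:polar-decomp-4}. You use only the one-sided bound to get uniform boundedness on each $D_m$, obtain relative sequential compactness via Lemma~\ref{lemma:rsc}, and identify the subsequential limit directly from the equality $\pi_\gamma\mu^\ast=\mu_\gamma$ and the fact that the cylinder $\sigma$-algebra coincides with the Borel $\sigma$-algebra on $\Cont([0,1])$; the regular variation of $\|\xi\|_\infty$ and the value of $\vartheta$ are then read off a posteriori from Corollary~\ref{cor:tau-map}. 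The paper's order pins down $\vartheta$ constructively along the way; your order keeps the compactness step cleaner but requires the (standard) monotone-class step to be carried out on an exhausting sequence $E_n=\pi_{\gamma_n}^{-1}(\{\modulus_{\#\gamma_n}>1/m\})\uparrow D_m$ rather than on $D_m$ itself, since $D_m$ is not a finite-dimensional cylinder.
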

\begin{proof}
  \textsl{Necessity.} Assume that
  $\xi\in\RV(\Cont([0,1]),\mydot,\sC_0,g,\mu)$.  Statement (i)
  follows from the continuous mapping theorem. Indeed, the map
  $x\mapsto\pi_\gamma x$ is continuous and bornologically consistent,
  since
  \begin{displaymath}
    \big\{x\in\Cont([0,1])\colons \modulus_{\#\gamma}(\pi_\gamma x)>\eps\big\}
    \subset \big\{x\in\Cont([0,1])\colons \|x\|_\infty>\eps\big\}\in\sC_0.
  \end{displaymath}
  
  The projection $\mu_\gamma=\pi_\gamma\mu$ becomes the tail measure
  of $\xi_\gamma$. The projected tail measures are non-trivial for all
  sufficiently fine $\gamma$. Indeed,
  \[
    \{x:\|x\|_\infty>1\}
    =
    \bigcup_{q\in\QQ\cap[0,1]}\{x:|x(q)|>1\},
  \]
  and the left-hand side has positive $\mu$-measure. Hence there
  exists $q\in\QQ\cap[0,1]$ such that $\mu(\{x:|x(q)|>1\})>0$.  Taking
  $\gamma_0=\{q\}$, we obtain $(\pi_\gamma\mu)(B_{\#\gamma})>0$ for
  all $\gamma\geq\gamma_0$.  Furthermore, $\mu_\gamma$ is homogeneous,
  and thus $B_{\#\gamma}$ is a $\pi_\gamma\mu$-continuity set. Hence,
  \begin{multline*}
    (\pi_\gamma\mu)(B_{\#\gamma})
    =\lim_{t\to\infty} g(t)\Prob{\|\xi_\gamma\|_\infty>t}\\
    \leq \lim_{t\to\infty} g(t)\Prob{\|\xi\|_\infty>t}
    = \mu\big(\{x\colons \|x\|_\infty>1\}\big). 
  \end{multline*}
  % \begin{align*}
  %   (\pi_\gamma\mu)(B_{\#\gamma})
  %   &=\lim_{t\to\infty} g(t)\Prob{\|\xi_\gamma\|_\infty>t}\\
  %   &\geq \lim_{t\to\infty} g(t)\Prob{\|\xi_\gamma\|_\infty>t,\;
  %     \varpi_{|\gamma|}(\xi)\leq \delta t}\\
  %   &\geq \lim_{t\to\infty} g(t)\Prob{\|\xi\|_\infty>(1+\delta)t,\;
  %     \varpi_{|\gamma|}(\xi)\leq \delta t}\\
  %   &\geq \lim_{t\to\infty} g(t)\Prob{\|\xi\|_\infty>(1+\delta)t}
  %     -\limsup_{t\to\infty} g(t)\Prob{\varpi_{|\gamma|}(\xi)>\delta
  %     t}\\
  %   &= (1+\delta)^{-\alpha} \mu\big(\{x\colons \|x\|_\infty>1\}\big)
  %     -\limsup_{t\to\infty} g(t)\Prob{\varpi_{|\gamma|}(\xi)>\delta t}. 
  % \end{align*}
  % The right-hand side is strictly positive if $|\gamma|$ is
  % sufficiently small, and consequently for all $\gamma\geq\gamma_0$ for some
  % $\gamma_0$. It is easy to see that $\mu_\gamma(B_{\#\gamma})$
  % increases in $\gamma$,
  % \begin{displaymath}
  %   \mu_\gamma(B_{\#\gamma})=\mu\big(\{x\colons \pi_\gamma x\in
  %   B_{\#\gamma}\}\big)
  %   \leq \mu\big(\{x\colons \|x\|_\infty>1\}\big),
  % \end{displaymath}
  % and
  % \begin{displaymath}
  %   \sup_\gamma \mu_\gamma(B_{\#\gamma})
  %   \geq (1+\delta)^{-\alpha} \mu(\{x\colons \|x\|_\infty>1\}).
  % \end{displaymath}
  Thus, \eqref{eq:c-lim} holds with
  $\vartheta\leq \mu(\{x\colons \|x\|_\infty>1\})$. 
  
  Fix any $\delta>0$.  Since the distributions of $t^{-1}\xi$
  conditional on $\|\xi\|_\infty>\delta t/2$ weakly converge by
  Theorem~\ref{thr:polar-decomp-4} and thus form a weakly relatively
  compact family, the Prokhorov theorem, together with the criterion
  for relative compactness in $\Cont([0,1])$ (see, e.g.,
  \cite[Theorem~7.3]{Bi99}) imply that
  \begin{displaymath}
    \lim_{\eps\downarrow 0} \limsup_{t\to\infty}
    \Prob{\varpi_\eps(\xi)>t\delta\,\mid\,\|\xi\|_\infty>\delta t/2}=0.
  \end{displaymath}
  Since $\|x\|_\infty\geq \varpi_\eps(x)/2$, we have
  \begin{align*}
    \Prob{\varpi_\eps(\xi)>t\delta\,\mid\,\|\xi\|_\infty>\delta t/2}
    &=\frac{\Prob{\varpi_\eps(\xi)>t\delta,\;\|\xi\|_\infty>\delta t/2}}
      {\Prob{\|\xi\|_\infty>\delta t/2}}\\
    &=\frac{\Prob{\varpi_\eps(\xi)>t\delta}}
      {\Prob{\|\xi\|_\infty>\delta t/2}}.
  \end{align*}
  Finally, \eqref{eq:27d} holds, since
  $g(t)\Prob{\|\xi\|_\infty>\delta t/2}$ converges to a finite
  positive limit as $t\to\infty$ by the regular variation of
  $\xi$.

  \smallskip
  \noindent
  \textsl{Sufficiency.}
  Without loss of generality, assume that the imposed conditions hold
  for all $\gamma\in\Gamma$. Note that
  \begin{equation}
    \label{eq:45}
    \big|\|x\|_\infty-\modulus_\gamma(\pi_\gamma x)\big|\leq \varpi_{|\gamma|}(x).
  \end{equation}
  We prove first that
  \begin{displaymath}
    \|\xi\|_\infty\in\RV(\R_+,\mydot,\sR_0,g,\vartheta\theta_\alpha).
  \end{displaymath}
  For each $a>0$,
  \begin{displaymath}
    g(t)\Prob{\|\xi\|_\infty>at}
    \geq g(t)\Prob{\modulus_\gamma(\xi)>at}.
  \end{displaymath}
  Hence,
  \begin{displaymath}
    \liminf_{t\to\infty}\;
    g(t)\Prob{\|\xi\|_\infty>at}
    \geq a^{-\alpha}\mu_\gamma(B_{\#\gamma}).
  \end{displaymath}
  Furthermore, for each $\delta\in(0,a)$, \eqref{eq:45} yields that, 
  \begin{align*}
    g(t)\P\big\{\|\xi\|_\infty&>at\big\}
    \leq g(t)\Prob{\modulus_\gamma(\xi)>at-\varpi_{|\gamma|}(\xi)}\\
    &\leq g(t)\Prob{\modulus_\gamma(\xi)>at-\varpi_{|\gamma|}(\xi),
      \varpi_{|\gamma|}(\xi)\leq \delta t}
      +g(t)\Prob{\varpi_{|\gamma|}(\xi)>\delta t}\\
    &\leq g(t)\Prob{\modulus_\gamma(\xi)>(a-\delta)t}
      +g(t)\Prob{\varpi_{|\gamma|}(\xi)>\delta t}.
  \end{align*}
  By the regular variation of $\modulus_\gamma(\xi)$,
  \begin{displaymath}
    \limsup_{t\to\infty} g(t)\Prob{\|\xi\|_\infty>at}
    \leq (a-\delta)^{-\alpha}\mu_\gamma(B_{\#\gamma})
    +\limsup_{t\to\infty}g(t)\Prob{\varpi_{|\gamma|}(\xi)>\delta t}.
  \end{displaymath}
  Thus,
  \begin{multline*}
    a^{-\alpha}\mu_\gamma(B_{\#\gamma})
    \leq \liminf_{t\to\infty} g(t)\Prob{\|\xi\|_\infty>at}
    \leq \limsup_{t\to\infty} g(t)\Prob{\|\xi\|_\infty>at}\\
    \leq (a-\delta)^{-\alpha}\mu_\gamma(B_{\#\gamma})
    +\limsup_{t\to\infty}g(t)\Prob{\varpi_{|\gamma|}(\xi)>\delta t}.
  \end{multline*}
  Both sides converge to $a^{-\alpha}\vartheta$, with $\vartheta$
  defined in \eqref{eq:c-lim}, by letting $|\gamma|\to0$ and then letting
  $\delta\to0$. Indeed, the quantities $\mu_\gamma(B_{\#\gamma})$ are
  increasing in $\gamma$, because the projected tail measures are
  consistent and $B_{\#\gamma}$ is enlarged under refinement of
  $\gamma$. Hence, by choosing refinements with $|\gamma|\to0$, their
  values increase to $\vartheta$. Thus,
  \begin{equation}
    \label{eq:43rv-xi}
    \lim_{t\to\infty} g(t)\Prob{\|\xi\|_\infty>at}=a^{-\alpha}\vartheta.
  \end{equation}
  
  Denote
  \begin{displaymath}
    U_a=\big\{x\in\Cont([0,1])\colons \|x\|_\infty>a\big\},\quad a>0.
  \end{displaymath}
  The sets $U_{1/n}$, $n\in\NN$, form a base for $\sC_0$.  Fix any
  $a>0$.  If $b\geq a$, then
  \begin{displaymath}
    \limsup_{t\to\infty}g(t)
    \Prob{\|\xi\|_\infty>bt,\|\xi\|_\infty> at}
    =b^{-\alpha}\vartheta
  \end{displaymath}
  by \eqref{eq:43rv-xi}.  Furthermore, for each $\delta>0$,
  \begin{multline*}
    \lim_{\eps\downarrow0}
    \limsup_{t\to\infty}
    g(t)\Prob{\varpi_\eps(t^{-1}\xi)>\delta,\|\xi\|_\infty>at}\\
    \leq \lim_{\eps\downarrow0}\limsup_{t\to\infty}
    g(t) \Prob{\varpi_\eps(\xi)>t\delta} =0.
  \end{multline*}
  By the Prokhorov theorem (see, e.g., \cite[Theorem~7.3]{Bi99}), for
  some $t_0>0$, the finite measures
  \begin{displaymath}
    g(t)\Prob{t^{-1}\xi\in\cdot, \|\xi\|_\infty> a t},\quad
    t\geq t_0,
  \end{displaymath}
  are relatively compact in the space of finite measures on $U_a$ and so
  % Lemma~\ref{lemma:rsc},
  the measures $g(t)\Prob{t^{-1}\xi\in\cdot}$ on $U_a$
  are relatively compact in the weak topology.
  Assume that they converge to a finite measure $\bar\mu_a$ along a
  subsequence $(b_n)_{n\in\NN}$, i.e.
  \begin{equation}
    \label{eq:27bg}
    g(b_n)\Prob{b_n^{-1}\xi \in \cdot}
    \wto \bar\mu_a(\cdot)\quad \text{as}\; n\to\infty. 
  \end{equation}
  For each $\gamma\in\Gamma$, the continuous mapping theorem implies
  that
  \begin{displaymath}
    g(b_n)\Prob{b_n^{-1}\xi_\gamma \in \cdot}
    \wto \pi_\gamma\bar\mu_a(\cdot)\quad \text{as}\; n\to\infty.
  \end{displaymath}
  Thus, $\mu_\gamma=\pi_\gamma\bar\mu_a$.  If $\bar\mu_a'$ is another
  limit in \eqref{eq:27bg}, then these limits have the same
  finite-dimensional distributions and thus coincide. Taking into
  account the
  regular variation of $\|\xi\|_\infty$, we have
  \begin{equation}
    \label{eq:27c}
    \Prob{t^{-1}\xi \in \cdot \mid \|\xi\|_\infty>at}
    \wto \tilde\mu_a(\cdot)\quad \text{as}\; t\to\infty,
  \end{equation}
  where $\tilde\mu_a$ is a probability measure on $U_a$.  Note that
  $\tilde\mu_{as}(sD)=\tilde\mu_a(D)$ for all $s>0$ and Borel sets
  $D\subset U_a$. Furthermore, $\tilde{\mu}_a(U_b)=(b/a)^{-\alpha}$
  for $b\geq a$, and thus $U_b$ is a $\tilde{\mu}_a$-continuity set
  for all $b\geq a$. Then
  % Since, for all but at most a
  % countable number of values of $a$, $U_a$ is a $\bar\mu_a$-continuity
  % set, we have
  \begin{align*}
    g(t)\P\{t^{-1}\xi\in D, &\|\xi\|_\infty>at\}
    =g(t)\Prob{t^{-1}\xi\in (D\cap U_a)}\\
    &=\Prob{t^{-1}\xi\in (D\cap U_a)\mid \|\xi\|_\infty>at}
      g(t)\Prob{\|\xi\|_\infty>at}\\
    &\to \vartheta a^{-\alpha} \tilde\mu_a(D\cap U_a)
    \quad \text{as}\; t\to\infty
  \end{align*}
  for each Borel set $D$ in $\Cont([0,1])$, which is a
  $\tilde\mu_a$-continuity set.  Denote
  \begin{displaymath}
    \mu'_a(D)=\vartheta a^{-\alpha} \tilde\mu_a(D\cap U_a), \quad a>0.
  \end{displaymath}
  If $0<a<b$ and $D\subset U_b$, then
  \begin{displaymath}
    g(t)\Prob{t^{-1}\xi\in D, \|\xi\|_\infty>at}
    =g(t)\Prob{t^{-1}\xi\in D, \|\xi\|_\infty>bt}.
  \end{displaymath}
  Thus, $\mu'_a(D)=\mu'_b(D)$ for all Borel sets
  $D\subset U_b$, that is, $\mu'_a(D)$ and $\mu'_b(D)$ agree
  on Borel subsets of $U_b$. By Lemma~\ref{lemma:enclosed}, there exists a
  measure $\mu$ on $\sC_0=\cup_{a>0}\{D\colons D\subset U_a\}$ that agrees
  with $\mu'_a$ on all subsets of $U_a$ for all $a>0$. In
  particular, $\mu(U_a)=\vartheta a^{-\alpha}$ for all $a>0$. Therefore,
  \begin{multline*}
    g(t)\Prob{t^{-1}\xi\in \cdot, \|\xi\|_\infty>t}\\
    =g(t)\Prob{t^{-1}\xi\in (\cdot\cap U_1), \|\xi\|_\infty>t}
    \wto
    \mu(\cdot\cap U_1) \quad \text{as}\; t\to\infty.    
  \end{multline*}
  Furthermore,
  \begin{displaymath}
    \mu(U_a)=\lim_{t\to\infty} g(t)\Prob{\|\xi\|_\infty>at}=\vartheta a^{-\alpha},
  \end{displaymath}
  and $\mu(U_1)=\vartheta$, so that we finally obtain
  \begin{displaymath}
    \Prob{t^{-1}\xi\in \cdot\,\mid\, \|\xi\|_\infty>t}
    \wto \frac{\mu(\cdot\cap\{\modulus>1\})}
    {\mu(\{\modulus>1\})} \quad \text{as}\; t\to\infty,
  \end{displaymath}
  where $\modulus(x)=\|x\|_\infty$. 
  By Theorem~\ref{thr:polar-decomp-4}, the random element $\xi$
  is regularly varying in $\Cont([0,1])$ with the ideal $\sC_0$.
  % Consider Borel $D\subset U_a$ for some $a>0$. For all $s\geq1$, we
  % have $sD\subset U_{sa}$, so that 
  % \begin{displaymath}
  %   \mu(sD)=c(sa)^{-\alpha}\bar\mu_{as}(sD\cap U_{sa})
  %   =c(sa)^{-\alpha}\bar\mu_{a}(D\cap U_{a})=s^{-\alpha}\mu(D).
  % \end{displaymath}
  % Therefore, the measure $\mu$ is homogeneous. 
\end{proof} 

The condition $\gamma\geq \gamma_0$ in Theorem~\ref{thr:C} is
explained by the fact that the values of $\xi$ at some points in
$[0,1]$ are not necessarily regularly varying, for instance, they may
be constant.

If $\xi$ is regularly varying on $\sC_0$, then the regular variation
property is transferred to smaller ideals, provided that the tail measure is
non-trivial on them, see Proposition~\ref{prop:sub-ideal}. Below we
give several examples of such subideals of $\sC_0$.  While the ideal
$\sC_0$ arises from the requirement that the supremum norm of a
function is at least a given positive number, further ideals can be
defined by considering lower bounds for the absolute value of the
function on subsets of 
$[0,1]$. Let $\sJ$ be any family of non-empty subsets of
$[0,1]$. Define the family of moduli parametrised by $J\in\sJ$ and
given by
\begin{displaymath}
  \modulus_J(x)=\inf_{u\in J} |x(u)|,\quad x\in \Cont([0,1]).
\end{displaymath}
This family generates a scaling-consistent ideal denoted by
$\sC_{\sJ}$.

For instance, if $\sJ$ consists of a single set $[0,1]$,
then this ideal (denoted by $\sC_{\inf}$) is generated by the modulus
\begin{displaymath}
  \modulus_{\inf}(x)=\inf\big\{|x(u)|\colons u\in[0,1]\big\}.
\end{displaymath}
Note that this modulus vanishes on some functions (which may be chosen
to be invariant under scaling) and
thus is not proper. The regular variation of $\xi$ on the ideal
$\sC_{\inf}$ necessarily entails that $\xi$ with positive probability
has no zeros in $[0,1]$, since otherwise $\modulus_{\inf}(\xi)=0$
almost surely.

If $\sJ$ consists of all singletons, then $\sC_{\sJ}$ is generated by
the moduli $\modulus_u(x)=|x(u)|$ for all $u\in[0,1]$. This ideal is
strictly smaller than the ideal $\sC_0$ generated by the norm. A
considerably more complicated choice is to let $\sJ$ be the family of
all segments of length at least $\delta>0$.  A different type of ideal on
$\Cont([0,1])$ is generated by the modulus
$\modulus(x)=\int_0^1 |x(u)|\diff u$. This is also a subideal of $\sC_0$.

\begin{example}[Hidden regular variation]
  \label{ex:function-hidden}
  \index{hidden regular variation!of continuous functions}
  We illustrate the hidden regular variation phenomenon on the space
  of continuous functions. Consider the ideals $\sC_0$ and
  $\sC_{\inf}$ on $\Cont([0,1])$.  The union of all sets in
  $\sC_{\inf}$ is the subcone of $\Cont([0,1])$, which consists of all
  continuous functions $x:[0,1]\to\R\setminus\{0\}$.  Consider a random
  continuous function
  \begin{displaymath}
    \xi(u)=V_1 u+V_2(1-u), \quad u\in[0,1],
  \end{displaymath}
  where
  $V_1,V_2$ are two independent Pareto(1) random variables. Note that
  $\modulus_{\inf}(\xi)=\min(V_1,V_2)$ is regularly varying with index
  $2$, and $\|\xi\|_\infty=\max(V_1,V_2)$ is regularly varying with
  index $1$. The random function $\xi$ is regularly varying on the
  ideal $\sC_0$ with normalising function $g(t)=t$ (which can be
  easily seen by applying the continuous mapping theorem) and its tail
  measure $\mu$ is supported by functions $au$ and $a(1-u)$,
  $u\in[0,1]$, where $a>0$.
  In fact, $\mu$ is the sum of the
  pushforwards of $\theta_1$ under two maps, one associating
  $z\in(0,\infty)$ with the function $zu$, $u\in[0,1]$,
  and the other one with $z(1-u)$,
  $u\in[0,1]$. This tail measure is supported by functions which are
  not strictly positive on $[0,1]$ and thus the tail measure vanishes on
  the ideal $\sC_{\inf}$.

  Choosing the normalising function $g(t)=t^2$ ensures that $\xi$ is
  regularly varying on the ideal $\sC_{\inf}$ with the tail measure
  supported by functions $au+b(1-u)$; this is the pushforward of
  $\theta_1^{\otimes 2}$ under the map from $(z_1,z_2)\in(0,\infty)^2$ to
  the function $z_1u+z_2(1-u)$. It is easy to extend this example to
  random broken lines or splines constructed from a regularly varying
  random vector.
  % This is easily seen by representing this function as a map
  % between $(V_1,V_2)\in(0,\infty)^2$ and $\XX_1$.
\end{example}

The following sufficient criterion infers regular
variation in $\sC_{\inf}$ from the hidden regular variation property
of finite-dimensional distributions, which is defined with respect to
the ideals $\sR_0^m(m)$ generated by the moduli
\begin{displaymath}
  \modulus_{\min}(x_\gamma)=\min(|x(u_1)|,\dots,|x(u_m)|)
\end{displaymath}
for $\gamma=(u_1,\dots,u_m)$.

\begin{proposition}
  \label{prop:C-varpi-hidden}
  Let $\xi$ be a random element in $\Cont([0,1])$. Then
  \begin{displaymath}
    \xi\in\RV(\Cont([0,1]),\mydot,\sC_{\inf},g,\mu)
  \end{displaymath}
  if \eqref{eq:27d}
  holds and there exists a $\gamma_0\in\Gamma$ such that
  \begin{displaymath}
    \pi_\gamma\xi
    \in\RV(\R^{\#\gamma},\mydot,\sR^{\#\gamma}_0(\#\gamma),g,\mu_\gamma)
  \end{displaymath}
  for all $\gamma\geq \gamma_0$, where
  $(\mu_\gamma)_{\gamma\geq\gamma_0}$ is a family of non-trivial tail
  measures satisfying 
  \begin{displaymath}
    \vartheta_{\inf}=\inf_{\gamma\geq\gamma_0}
    \mu_\gamma(A_{\#\gamma})>0,
    \quad
    A_m=\{y\in\R^m\colons \modulus_{\min}(y)>1\}.
  \end{displaymath}
  % \item For all $\delta>0$,
  %   \begin{displaymath}
  %     \lim_{\eps\downarrow 0} \limsup_{t\to\infty}
  %     g(t)\Prob{\varpi_\eps(\xi)>t\delta} =0. 
  %   \end{displaymath}
  % \end{enumerate}
\end{proposition}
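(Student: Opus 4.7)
The plan is to mirror the sufficiency direction of the proof of Theorem~\ref{thr:C}, systematically replacing the norm $\|x\|_\infty$ by the continuous modulus $\modulus_{\inf}(x)=\inf_{u\in[0,1]}|x(u)|$ and the discrete maximum $\modulus_\gamma(\pi_\gamma x)=\max_{u_i\in\gamma}|x(u_i)|$ by the discrete minimum $\modulus_{\min}(\pi_\gamma x)=\min_{u_i\in\gamma}|x(u_i)|$. The cornerstone is the elementary two-sided bound
\begin{equation*}
0\leq \modulus_{\min}(\pi_\gamma x)-\modulus_{\inf}(x)\leq \varpi_{|\gamma|}(x),
\end{equation*}
which holds because for every $u\in[0,1]$ there is $u_i\in\gamma$ with $|u-u_i|\leq |\gamma|$, hence $|x(u)|\geq |x(u_i)|-\varpi_{|\gamma|}(x)\geq \modulus_{\min}(\pi_\gamma x)-\varpi_{|\gamma|}(x)$. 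Applying the continuous mapping theorem to \eqref{eq:pigammaRV} yields $\modulus_{\min}(\pi_\gamma\xi)\in\RV(\R_+,\cdot,\sR_0,g,c_\gamma\theta_\alpha)$ with $c_\gamma=\mu_\gamma(\{\modulus_{\min}>1\})$, a quantity monotone nonincreasing as $\gamma$ is refined, so that $\vartheta=\lim_{|\gamma|\to 0}c_\gamma$ exists. The sandwich estimates from the proof of Theorem~\ref{thr:C} then transfer almost verbatim to produce
\begin{equation*}
\lim_{t\to\infty} g(t)\Prob{\modulus_{\inf}(\xi)>at}=a^{-\alpha}\vartheta, \quad a>0,
\end{equation*}
so that, under the implicit nontriviality requirement $\vartheta>0$, $\modulus_{\inf}(\xi)\in\RV(\R_+,\cdot,\sR_0,g,\vartheta\theta_\alpha)$.

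Next, the sets $V_a=\{\modulus_{\inf}>a\}$ with $a=1/n$, $n\in\NN$, form an open base of $\sC_{\inf}$, so by Lemma~\ref{lemma:rsc} it suffices to prove weak relative compactness of the restrictions of $g(t)\Prob{t^{-1}\xi\in\cdot}$ to each $V_a$. The finite total mass on $V_a$ comes from the previous step. For tightness in $\Cont([0,1])$ I invoke Arzel\`a--Ascoli: equicontinuity of the rescaled sample paths follows directly from \eqref{eq:27d}, while a uniform upper bound is extracted from the decomposition
\begin{equation*}
g(t)\Prob{\|\xi\|_\infty>Mt,\modulus_{\inf}(\xi)>at}\leq g(t)\Prob{\modulus_\gamma(\pi_\gamma\xi)>Mt/2,\modulus_{\min}(\pi_\gamma\xi)>at}+g(t)\Prob{\varpi_{|\gamma|}(\xi)>Mt/2},
\end{equation*}
based on $\|x\|_\infty\leq \modulus_\gamma(\pi_\gamma x)+\varpi_{|\gamma|}(x)$ and $\modulus_{\inf}(x)\leq \modulus_{\min}(\pi_\gamma x)$. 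As $t\to\infty$, the first summand converges to $\mu_\gamma\big(\{\modulus_\gamma>M/2\}\cap\{\modulus_{\min}>a\}\big)$; this intersection lies in $\sR_0^{\#\gamma}(\#\gamma)$, carries finite $\mu_\gamma$-mass, and shrinks to the empty set as $M\to\infty$, so its mass tends to zero. The second summand vanishes as $|\gamma|\to 0$ uniformly in $t$ by \eqref{eq:27d}. Taking first $|\gamma|$ small and then $M$ large produces tightness.

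Along any subsequence yielding a vague limit $\bar\mu\in\Msigma[\sC_{\inf}]$, the continuous mapping theorem applied to the projections $\pi_\gamma$ (which are $(\sC_{\inf},\sR_0^{\#\gamma}(\#\gamma))$-bornologically consistent since $\modulus_{\min}\circ\pi_\gamma\geq\modulus_{\inf}$) forces $\pi_\gamma\bar\mu=\mu_\gamma$ for every $\gamma\geq\gamma_0$. Because a $\sigma$-finite Borel measure on the Polish space $\Cont([0,1])$ is determined on each base set $V_a$ by its finite-dimensional projections, the subsequential limit $\mu=\bar\mu$ is unique, and the vague convergence holds along the whole net. A final appeal to Theorem~\ref{thr:polar-decomp-4} with the continuous modulus $\modulus_{\inf}$ upgrades this to $\xi\in\RV(\Cont([0,1]),\cdot,\sC_{\inf},g,\mu)$. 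The main obstacle is the tightness step: since $\modulus_{\inf}(\xi)>at$ only controls $|\xi|$ from below, the upper bound on $\|\xi\|_\infty$ must be extracted from the hidden finite-dimensional regular variation, and the delicate point is that although $\mu_\gamma(\{\modulus_\gamma>M/2\})$ may be infinite in isolation, its intersection with $\{\modulus_{\min}>a\}$ belongs back to $\sR_0^{\#\gamma}(\#\gamma)$ and carries finite mass that vanishes as $M\to\infty$.
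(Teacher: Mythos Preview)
Your proof is correct and follows the same overall scheme as the paper: replace $\|\cdot\|_\infty$ by $\modulus_{\inf}$, use the key inequality $0\leq \modulus_{\min}(\pi_\gamma x)-\modulus_{\inf}(x)\leq \varpi_{|\gamma|}(x)$ to deduce regular variation of $\modulus_{\inf}(\xi)$ via the sandwich argument, then run the relative compactness and subsequential identification exactly as in Theorem~\ref{thr:C} on the base sets $V_a=\{\modulus_{\inf}>a\}$.

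Where you go beyond the paper is in the tightness step, and this is genuinely useful. The paper simply says ``the proof is completed following the same steps with $U_a=\{x:\modulus_{\inf}(x)>a\}$'', but in Theorem~\ref{thr:C} the upper bound on $\|\xi\|_\infty$ needed for Arzel\`a--Ascoli came for free from the regular variation of $\|\xi\|_\infty$ itself. Here $\modulus_{\inf}(\xi)>at$ only bounds $|\xi|$ from below, so an additional argument is required, and you supply one: the decomposition via $\|x\|_\infty\leq \modulus_\gamma(\pi_\gamma x)+\varpi_{|\gamma|}(x)$ together with the observation that $\{\modulus_\gamma>M/2\}\cap\{\modulus_{\min}>a\}\in\sR_0^{\#\gamma}(\#\gamma)$ carries finite $\mu_\gamma$-mass that vanishes as $M\to\infty$. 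This fills a gap the paper leaves implicit. (A slightly lighter route would use $|x(0)|$ in place of $\|x\|_\infty$, since Billingsley's tightness criterion for $\Cont([0,1])$ only needs control at a single point; but your version works just as well.) Two minor remarks: the final appeal to Theorem~\ref{thr:polar-decomp-4} is not strictly an ``upgrade'', since vague convergence on $\sC_{\inf}$ is already the definition of regular variation, but it does no harm; and your limit order (first $|\gamma|$ small with $\delta=1$ in \eqref{eq:27d}, then $M$ large) is the correct one.
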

\begin{proof}
  The proof follows the scheme of the proof of sufficiency in 
  Theorem~\ref{thr:C}. With
  \begin{displaymath}
    0\leq \modulus_{\min}(\pi_\gamma x)-\modulus_{\inf}(x)
    \leq \varpi_{|\gamma|}(x)
  \end{displaymath}
  in place of \eqref{eq:45}, we derive that
  \begin{multline*}
    (a+\delta)^{-\alpha}\mu_\gamma(A_{\#\gamma})
    -\limsup_{t\to\infty} g(t)\Prob{\varpi_{|\gamma|}(\xi)>\delta t}
    \leq \liminf_{t\to\infty} g(t)\Prob{\modulus_{\inf}(\xi)>at}\\
    \leq \limsup_{t\to\infty} g(t)\Prob{\modulus_{\inf}(\xi)>at}
    \leq a^{-\alpha}\mu_\gamma(A_{\#\gamma}),
  \end{multline*}
  where $A_m=\{y\in\R^m\colons \modulus_{\min}(y)>1\}$.  By refining
  the finite point sets $\gamma$ and letting
  $|\gamma|\to0$ and then $\delta\to0$, we see that both sides
  converge to $a^{-\alpha}\vartheta_{\inf}$. Thus,
  $\modulus_{\inf}(\xi)\in\RV(\R_+,\mydot,\sR_0,g,\vartheta_{\inf}\theta_\alpha)$. 
  % \begin{equation}
  %   \label{eq:43}
  %   \lim_{t\to\infty} g(t)\Prob{\modulus_{\inf}(\xi)>at}=a^{-\alpha}\vartheta.
  % \end{equation}
  The proof is then concluded exactly as in the sufficiency part of
  Theorem~\ref{thr:C}, replacing the sets $U_a=\{\|x\|_\infty>a\}$ by
  \begin{displaymath}
    U_a=\big\{x\in\Cont([0,1])\colons \modulus_{\inf}(x)>a\big\},\quad a>0. \qedhere
  \end{displaymath}
\end{proof}

\begin{example}[Polynomials on the unit interval]
  \label{example:polynom}
  It is easy to construct a regularly varying random function by
  applying a homogeneous map to a regularly varying random vector. For
  instance, one may map a point $y=(y_0,\dots,y_d)\in\R^{d+1}$ to the
  polynomial
  \begin{equation}
    \label{eq:37}
    \psi(y)(u)=y_0+y_1u+\cdots+y_{d} u^{d}, \quad u\in[0,1].
  \end{equation}
  % For
  % simplicity, assume that $x\in\R_+^{d+1}$, so that the polynomials
  % are increasing.
  Then $\psi$ is a continuous morphism between $\R^{d+1}$ with 
  linear scaling and $\Cont([0,1])$, and the inverse image of the zero
  function is exactly the zero vector in $\R^{d+1}$. By
  Lemma~\ref{lemma:map-zero}, $\psi$ is bornologically consistent as a
  map from the ideal $\sR_0^{d+1}$ to $\sC_0$. The continuous mapping
  theorem implies that, if $\eta$ is a regularly varying random vector
  in $\R^{d+1}$ on the ideal $\sR_0^{d+1}$, then
  \begin{displaymath}
    \xi(u)=\eta_0+\eta_1u+\cdots+\eta_du^d, \quad u\in[0,1],
  \end{displaymath}
  is regularly varying on $\sC_0$. In particular, we deduce that
  $\|\xi\|_\infty$ is a regularly varying random variable. If $\eta$
  has independent components, then the tail measure of $\eta$ is
  supported by the axes of $\R^{d+1}$, so that the tail measure of
  $\xi$ is supported by scalar multiples of the monomials $1,u,\dots,u^d$.

  % The modulus $\modulus_0(x)=|x(0)|$ generates the ideal $\sC^{(0)}$
  % on $\Cont([0,1])$. Then $\psi$ becomes bornologically consistent if
  % $\R^{d+1}$ is equipped with the ideal generated by the modulus
  % $\modulus(y_0,\dots,y_{d+1})=|y_0|$.  An ideal $\sC^{(d+1)}$ on the
  % family of polynomials of order $d$ can be defined as the family of
  % sets $A$, which consists of polynomials with
  % \begin{displaymath}
  %   \inf_{x\in A} \min_{i=0,\dots,d} |x^{(i)}(0)| >0,
  % \end{displaymath}
  % where $x^{(i)}(0)$ is the derivative at zero of order
  % $i$. Equivalently, the smallest absolute value of coefficients for
  % all polynomials from $A$ is at least $\eps>0$. This ideal is smaller
  % than $\sC_0$, and the map $\psi$ is bornologically consistent as a
  % map between $\sR_0^{d+1}(d+1)$ and $\sC^{(d+1)}$. Thus, if $\eta$
  % exhibits a hidden regular variation on the ideal obtained by
  % excluding all axes in $\R^{d+1}$, then the random polynomial
  % $\psi(\eta)$ is regularly varying on $\sC^{(d+1)}$.

  Assume that $d=1$, i.e.\ we deal with the random affine function
  $\xi(u)=\eta_0+\eta_1u$, $u\in[0,1]$, where $\eta_0$ and $\eta_1$
  are independent Pareto(1) random variables. Then $\xi$ is regularly
  varying on $\sC_0$ with the normalising function $g(t)=t$, and its
  tail measure is the sum of the pushforwards of $\theta_1$ under the
  two maps
  associating $y>0$ with the constant function $x(u)=y$ and with the
  linear function $x(u)=yu$, respectively.

  %Note that the value of $\xi$ at zero is zero and so is not regularly
  %varying.

  If $\Cont([0,1])$ is equipped with the ideal $\sC_{\inf}$, then the
  map $(y_0,y_1)\to x(u)=y_0+y_1u$ from $\R_+^2$ to $\Cont([0,1])$ is
  bornologically consistent if $\R_+^2$ is equipped with the ideal
  generated by the modulus $\modulus(y_0,y_1)=y_0$.
  For the random vector $(\eta_0,\eta_1)$ considered above,
  % \begin{displaymath}
  %   \modulus_{\inf}(\psi(y))=\inf_{u\in[0,1]} |y_0+y_1u|=
  %   \begin{cases}
  %     0, & \text{if}\; y_0(y_0+y_1)\leq 0,\\
  %     \min(|y_0|,|y_0+y_1|), & \text{otherwise}.
  %   \end{cases}
  % \end{displaymath}
  % The map $\psi$ is bornologically consistent if $\R^2$ is equipped with
  % the ideal on $\R^2$ generated by the modulus
  % \begin{equation}
  %   \label{eq:44}
  %   \modulus(y_0,y_1)= \min(|y_0|,|y_0+y_1|)\one_{y_0(y_0+y_1)\geq 0}.
  % \end{equation}
  % For instance, assume that $(\eta_0,\eta_1)$ is regularly varying on
  % $\R_+^2$ with the modulus $\modulus(y_0,y_1)=\min(y_0,y_1)$ (which coincides
  % with the modulus given at \eqref{eq:44} on $\R_+^2$), that is,
  \begin{displaymath}
    \Prob{t^{-1}(\eta_0,\eta_1)\in \cdot\mid \eta_0>t}
    \wto \frac{\mu\big(\cdot\cap [1,\infty)\times\R_+\big)}
    {\mu\big([1,\infty)\times\R_+\big)},
  \end{displaymath}
  where the tail measure $\mu$ is $\theta_1\otimes\delta_0$.  By the
  continuous mapping theorem, $\xi$ is regularly varying on
  $\sC_{\inf}$ with the tail
  measure supported by the pushforward of $\theta_1$ under the map
  that sends $y\in(0,\infty)$ to the constant function $x(u)=y$,
  $u\in[0,1]$.
  % equivalently, $\sV$ is the metric exclusion ideal obtained by
  % excluding the cone $\cone=\{(x_0,x_1): x_0(x_0+x_1)\leq 0\}$.
  % Note that dealing with polynomials as functions on the whole line is
  % not natural, since they are not bounded and the uniform distance
  % between any two non-equal polynomials is infinite (unless $d=0$) and
  % also since the map $\psi$ is not continuous.
\end{example}

\begin{example}
  \label{ex:series}
  Let $\eta=(\eta_n)_{n\geq0}$ be a regularly varying sequence in
  $\ell_1$ with the ideal generated by the norm. Since $\eta$ is in
  $\ell_1$, the following function 
  \begin{displaymath}
    \xi(u)=\sum_{n=0}^\infty \eta_n u^n,\quad u\in[0,1],
  \end{displaymath}
  is well defined and is continuous. The regular variation of $\eta$
  implies that $\xi$ 
  is regularly varying in $\Cont([0,1])$ with the ideal
  $\sC_0$. This follows from the continuity of the map $\eta\mapsto\xi$ and
  Lemma~\ref{lemma:map-zero}, which ensures the bornological
  consistency of this map. 
\end{example}

\begin{example}
  Let $\eta=(\eta_1,\dots,\eta_k)$ be a random vector in
  $\R^k$. Furthermore, let $Z=\{z_1,\dots,z_k\}$ be a finite subset of
  $[0,1]$. Define
  \begin{displaymath}
    \xi(u)=\max_{z_i\in Z} (u-z_i)^2\eta_i. 
  \end{displaymath}
  The map $\psi$ from $(y_1,\dots,y_k)\in\R^k$ to the function
  $\psi(y)(u)=\max_i (u-z_i)^2 y_i$ is a continuous morphism, which is
  also bornologically consistent if $\Cont([0,1])$ is equipped with
  the ideal $\sC_0$, since
  \begin{displaymath}
    \Big\{(y_1,\dots,y_k)\colons \sup_{u\in[0,1]}|\psi(y)(u)|>\eps\Big\}
    \subset \Big\{(y_1,\dots,y_k)\colons a\|y\|_\infty>\eps\Big\}\in\sR_0^k,
  \end{displaymath}
  where
  \begin{displaymath}
    a=\max_{z_i\in Z} \sup_{u\in[0,1]} (u-z_i)^2
    =\max_{z_i\in Z}\max\{z_i^2,(1-z_i)^2\}.
  \end{displaymath}
  If $\eta$ is regularly varying
  on $\sR_0^k$, then the continuous mapping theorem implies that
  $\xi=\psi(\eta)$ is regularly varying in $\Cont([0,1])$ with the
  ideal $\sC_0$. Its tail measure is the pushforward of the tail
  measure $\mu$ by the map $\psi$. In
  particular, if $\eta$ consists of i.i.d.\ Pareto(1) random variables,
  then $g(t)=t$ and the tail measure of $\xi$ is the sum of 
  the pushforwards of $\theta_1$ under the maps
  $y\mapsto x(u)=(u-z_i)^2 y$, $i=1,\dots,k$.

  Assume now that $k\geq2$.
  The map $\psi$ is bornologically consistent from $\sR_0^k(2)$ to
  $\sC_{\inf}$
  Indeed, if
  \[
    \inf_{u\in[0,1]}|\psi(y)(u)|>\eps,
  \]
  then, at each point $z_l\in Z$,
  \[
    \psi(y)(z_l)=\max_{i\neq l}(z_l-z_i)^2y_i>\eps.
  \]
  Thus at least two components of $y$ exceed $\eps/b$, where
  \[
    b=\max_{i,j}(z_i-z_j)^2,
  \]
  and hence
  \[
    \big\{y\colons \inf_{u\in[0,1]}|\psi(y)(u)|>\eps\big\}
    \subset
    \bigcup_{i<j}
    \{y\colons b\min(y_i,y_j)>\eps\}\in\sR_0^k(2).
  \]
  % since
  % \begin{multline*}
  %   \big\{(y_1,\dots,y_k)\colons \inf_{u\in[0,1]}|\psi(y)(u)|>\eps\big\}\\
  %   \subset \bigcup_{i,j=1,i\neq j}^k
  %   \Big\{(y_1,\dots,y_k)\colons \inf_{u\in[0,1]}
  %   \max\big((u-z_i)^2|y_i|, (u-z_j)^2|y_j|\big)>\eps\Big\}. 
  % \end{multline*}
  % We replace the infimum over $u\in[0,1]$ by its value
  % of the function at
  % \begin{displaymath}
  %   u=\frac{\sqrt{|y_i|}z_i+\sqrt{|y_j|}z_j}{\sqrt{|y_i|}+\sqrt{|y_j|}},
  % \end{displaymath}
  % which is 
  % \begin{displaymath}
  %   \Big(\frac{\sqrt{|y_iy_j|}}{\sqrt{|y_i|}+\sqrt{|y_j|}}\Big)^2
  %   (z_i-z_j)^2
  %   \leq \min(|y_i|,|y_j|) (z_i-z_j)^2.
  % \end{displaymath}
  % Thus,
  % \begin{align*}
  %   \big\{(y_1,\dots,y_k)\colons 
  %   &\inf_{u\in[0,1]}|\psi(y)(u)|>\eps\big\}\\
  %   &\subset \bigcup_{i,j=1,i\neq j}^k
  %     \Big\{(y_1,\dots,y_k)\colons b\min\big(|y_i|,|y_j|\big)
  %     >\eps\Big\}\in\sR_0^k(2),
  % \end{align*}
  % where $b=\max_{i,j}(z_i-z_j)^2$.
  If all components of $\eta$ are i.i.d.\ Pareto(1) random variables,
  then $\eta$ is regularly varying on $\sR_0^k(2)$ with the
  normalising function $g(t)=t^2$ and the tail measure 
  \begin{displaymath}
    \mu(\bdiff y)=\sum_{1\leq i<jleq k}
    \theta_1(\bdiff y_i) \theta_1(\bdiff y_j)\prod_{l\neq i,j} \delta_0(\bdiff y_l).
  \end{displaymath}
  Thus, $\xi$ is regularly varying on $\sC_{\inf}$ with the tail
  measure equal to the pushforward of this tail measure under the map
  $\psi$. 
\end{example}

The following three examples are effectively deduced from
Theorem~\ref{mapping-theorem}, which concerns the transfer of the regular
variation property under continuous bornologically consistent
mappings.

\begin{example}[Real-valued maps on \protect{$\Cont([0,1])$}]
  Consider a map
  \begin{displaymath}
    \psi(x) = \int_0^1 x(u) \diff u
  \end{displaymath}
  from $\Cont([0,1])$
  with the ideal $\sC_0$ to $\YY=\R$ with linear scaling and the
  ideal $\sR_0$ generated by $\modulus(y)=|y|$.  Then
  \begin{align*}
    \psi^{-1}\big(\{y\colons |y|\geq \eps\}\big)
    &=\Big\{x\in\Cont([0,1])\colons \Big|\int x(u)\diff u\Big|\geq \eps\Big\}\\
    &\subset \big\{x\in\Cont([0,1])\colons \|x\|_\infty\geq \eps\big\} 
  \end{align*}
  for each $\eps>0$, so that $\psi$ is a continuous bornologically
  consistent morphism.  If $\xi$
  is regularly varying in $\Cont([0,1])$, namely,
  $\xi\in\RV(\Cont([0,1]),\mydot,\sC_0,g,\mu)$, then $\zeta=\int_0^1
  \xi(u)\diff u$ is a 
  regularly varying random variable with the same normalising function
  $g$, unless the tail measure of $\xi$
  is supported only by continuous functions that integrate to
  zero. The tail measure of $\zeta$ is the pushforward of $\mu$ under
  the integral map $x\mapsto \int_0^1 x(u)\diff u$.
  % For
  % instance, the latter happens if $\xi(u)=V\sin(2\pi (u+U))$,
  % where $V$ has a Pareto tail $\mu$ and $U$ is uniform on $[0,1]$.

  Consider also a map $\tilde\psi(x)=\sup x -\inf x$ with values in
  $\R_+$.  Since
  \begin{align*}
    \tilde\psi^{-1}\big(\{y\colons y\geq \eps\}\big)
    &=\big\{x\in\Cont([0,1])\colons \sup x-\inf x\geq \eps\big\}\\
    &\subset \big\{x\in\Cont([0,1])\colons \|x\|_\infty\geq \eps/2\big\}, 
  \end{align*}
  for all $\eps>0$, the map $\tilde\psi$ is a bornologically
  consistent morphism. Thus, if $\xi$ is regularly varying in
  $\Cont([0,1])$ with the tail measure that is not exclusively
  supported by constant 
  functions, then $\sup\xi-\inf\xi$ is a regularly varying random
  variable. Its tail measure is the pushforward of the tail measure of
  $\xi$ under the map $x\mapsto \sup x-\inf x$.
\end{example}

\begin{example}[Derivative map]
  \label{example:diff}
  \index{derivative map}
  Let $\XX$ be the space of continuously differentiable functions on
  $[0,1]$, and let $\YY=\Cont([0,1])$ be the space of continuous
  functions, both equipped with linear scaling. Equip $\XX$ with the
  norm $\|x\|=\|x\|_\infty+\|x'\|_\infty$,
  which is the sum of the uniform norms of the function and its
  derivative, and $\YY$ with the uniform norm, with the ideals
  generated by these norms chosen as moduli. Then the map $\psi(x)=x'$
  associating a function $x\in\XX$ with its derivative is a continuous
  bornologically consistent morphism. If $\xi$ is
  regularly varying on $\XX$ and its tail measure is not supported
  solely by constant functions, then $\psi(\xi)=\xi'$ is regularly
  varying on $\YY$ and its tail measure is the pushforward of the tail
  measure of $\xi$ under the map $x\mapsto x'$.
  This example can be generalised for higher-order
  differential operators.
\end{example}

\begin{example}[Quotient spaces of continuous
  functions]\label{cont-func}
  \index{quotient space!of continuous functions}
  Let $\XX = \Cont[0,1]$ be the family of all continuous functions
  $x:[0,1]\to\R$ with the linear scaling and the ideal $\sC_0$
  generated by the norm. 
  For $x, y\in \Cont[0,1]$, let $x\sim y$ if $x-y$ is a constant
  function.  Then the equivalence class of $x$ is $[x]=\{x+c\colons c\in\R\}$
  and Condition~\hyperref[condS]{(S)} holds, so that the scaling $\tilde{T}_t$ on $\XXT$
  is defined by \eqref{scaling-on-XXT}.  Since
  \begin{displaymath}
    \ttau([x])=\inf\big\{\|x+c\|_\infty\colons c\in\R\big\}
    =\frac{1}{2}\big(\sup x - \inf x\big)
  \end{displaymath}
  is continuous, Lemma~\ref{lemma:q-B} applies, that is, $q\sC_0$
  satisfies Condition~\hyperref[condB]{(B)}.  If $\xi$ is regularly
  varying on $\sC_0$ and its tail measure $\mu$ is not supported
  exclusively by constant functions, then its equivalence class
  $\tilde\xi=q\xi$ is regularly varying on $q\sC_0$ and its tail measure is
  the pushforward of $\mu$ under the quotient map.  This condition is
  violated if, for instance, $\xi$ is identically equal to a random
  constant or, more generally, if $\sup\xi-\inf\xi$ has a lighter tail
  than $\|\xi\|_\infty$. In this setting, it is not possible to use
  the ideal $\sC_{\inf}$ on the original space of continuous
  functions, since $q\sC_{\inf}$ does not contain any non-empty set.

  A more complicated equivalence relation arises by letting
  $x\sim y$ if $x-y\in\ P_k$, where $P_k$ is the family of polynomials
  of degree at most $k$ for a fixed $k\in\NN$. Then
  \begin{displaymath}
    \ttau([x])=\inf\big\{\|x+p\|_\infty\colons p\in P_k\big\}.
  \end{displaymath}
  The infimum is attained for a unique $p=p_x$, which is the best
  approximating polynomial of order at most $k$; see, e.g.,
  \cite{isa:kel94}. The regular variation property of $\xi$ is transferred
  to $\tilde\xi$ if the tail measure of $\xi$ is not exclusively supported
  by $P_k$. 
\end{example}

\paragraph{Ideals generated by improper moduli on $\Cont([0,1])$}
Consider the ideal $\sC(u_0)$ on $\Cont([0,1])$ generated by the
modulus $\modulus_{u_0}(x)=|x(u_0)|$, where $u_0$ is a fixed point in
$[0,1]$. This modulus vanishes on non-trivial functions and thus is not
proper.
Recall that $\Gamma$ is the family of all finite point sets
$\{u_1,\dots,u_m\}$ with $u_1<\cdots<u_m$ and $m\in\NN$. If
$\{u_0\}\leq \gamma$ (that is, $\gamma$ contains $u_0$), then let
$\sR^{\#\gamma}(u_0)$ denote
the ideal on $\R^{\#\gamma}$ generated by the
modulus given by the absolute value of the component of the vector of
length $\#\gamma$ corresponding to $u_0$.
\index{modulus!improper}

\begin{proposition}
  Let $\xi$ be a random element in $\Cont([0,1])$ and let
  $u_0\in[0,1]$. Then $\xi\in\RV(\Cont([0,1]),\mydot,\sC(u_0),g,\mu)$
  if and only if the following conditions are satisfied.
  \begin{enumerate}[(i)]
  \item For each $\gamma\in\Gamma$ with $\{u_0\}\leq \gamma$, we have
    $\pi_\gamma\xi\in\RV\big(\R^{\#\gamma},\mydot,\sR^{\#\gamma}(u_0),g,\mu_\gamma\big)$.
  \item For all $\delta>0$,
    \begin{displaymath}
      \lim_{\eps\downarrow 0} \limsup_{t\to\infty}
      \Prob{\varpi_\eps(\xi)>t\delta\mid |\xi(u_0)|>t} =0. 
    \end{displaymath}
  \end{enumerate}
  % If (i) and (ii) hold then there exists a $\mu\in\Mb[\sC(u_0)]$ such
  % that $\mu_\gamma=\pi_\gamma\mu$ for all $\gamma\in\Gamma$.
\end{proposition}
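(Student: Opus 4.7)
The plan is to adapt the proof scheme of Theorem~\ref{thr:C}, replacing the reference modulus $\|\cdot\|_\infty$ by the pointwise modulus $|\cdot(u_0)|$, and to invoke Theorem~\ref{thr:polar-decomp-4} to reduce regular variation on $\sC(u_0)$ to weak convergence on $\Cont([0,1])$ of the conditional distributions $\nu_t:=\Prob{t^{-1}\xi\in\cdot\mid |\xi(u_0)|>t}$, together with the existence of a tail measure $\mu\in\Msigma[\sC(u_0)]$ with $\mu(\{|x(u_0)|>1\})\in(0,\infty)$.

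For necessity, the projection $\pi_\gamma:\Cont([0,1])\to\R^{\#\gamma}$ is a continuous morphism and is $(\sC(u_0),\sR^{\#\gamma}(u_0))$-bornologically consistent since $\pi_\gamma^{-1}\{y:|y_{u_0}|>\eps\}=\{x:|x(u_0)|>\eps\}\in\sC(u_0)$; because $\{u_0\}\subset\gamma$, the pushforward $\pi_\gamma\mu$ is automatically nontrivial on $\sR^{\#\gamma}(u_0)$, so Theorem~\ref{mapping-theorem} yields (i) with $\mu_\gamma=\pi_\gamma\mu$. For (ii), weak convergence of $\nu_t$ on $\Cont([0,1])$ (delivered by Theorem~\ref{thr:polar-decomp-4}) forces tightness, and the Arzelà--Ascoli characterisation of tightness in $\Cont([0,1])$ (see \cite[Theorem~7.3]{Bi99}) then produces the modulus-of-continuity control encoded by~(ii).

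For sufficiency, condition~(i) applied at $\gamma=\{u_0\}$ gives $|\xi(u_0)|\in\RV(\R_+,\cdot,\sR_0,g,c\theta_\alpha)$ for some $c\in(0,\infty)$, so $g(t)\Prob{|\xi(u_0)|>t}\to c$. The family $\{\nu_t\}$ is tight in $\Cont([0,1])$: its marginal at $u_0$ is tight by $\nu_t(\{|x(u_0)|>K\})\to K^{-\alpha}$, while equicontinuity is supplied by~(ii); both combine through the Arzelà--Ascoli criterion. Condition~(i) along all $\gamma\supset\{u_0\}$ then yields convergence of every finite-dimensional distribution of $\nu_t$ to a projective family of probability measures, so tightness plus finite-dimensional convergence gives $\nu_t\wto\tilde\mu$ on $\Cont([0,1])$ for some probability measure $\tilde\mu$. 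Mirroring the closing argument of Theorem~\ref{thr:C}, a scaling argument combined with Lemma~\ref{lemma:enclosed} glues the vague limits on the nested sectors $\{|x(u_0)|>a\}$, $a>0$, into a single $\mu\in\Msigma[\sC(u_0)]$ with $\mu(\{|x(u_0)|>1\})=c$ and whose normalised restriction to $\{|x(u_0)|>1\}$ equals $\tilde\mu$; Theorem~\ref{thr:polar-decomp-4} then concludes.

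The principal obstacle is establishing tightness of $\nu_t$ in $\Cont([0,1])$, which couples marginal tightness at $u_0$ (immediate from regular variation of $\xi(u_0)$) with the equicontinuity content of~(ii), and which in particular requires that~(ii) propagate through the conditioning event in a form compatible with Arzelà--Ascoli. A secondary technical step is the assembly of the scaling-consistent family of finite Borel measures on the nested sectors $\{|x(u_0)|>a\}$ into a single Baire measure on $\Cont([0,1])$ that is finite on $\sC(u_0)$; this is accomplished by Lemma~\ref{lemma:enclosed} exactly as at the end of the proof of Theorem~\ref{thr:C}.
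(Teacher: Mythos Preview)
Your proposal is correct and follows essentially the same approach as the paper: both directions hinge on Theorem~\ref{thr:polar-decomp-4}, with necessity of (i) via the continuous mapping theorem and of (ii) via tightness of the weakly convergent conditional laws through Arzel\`a--Ascoli, while sufficiency combines tightness from (ii) with finite-dimensional convergence from (i) and then finishes exactly as in Theorem~\ref{thr:C}. The paper's version is terser---it invokes Lemma~\ref{lemma:rsc} for relative compactness in the vague topology on $\sC(u_0)$ and then defers to the closing argument of Theorem~\ref{thr:C}---but the substance is identical.
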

\begin{proof}
  \textsl{Necessity.}
  By Theorem~\ref{thr:polar-decomp-4},
  \begin{displaymath}
    \Prob{t^{-1}\xi\in\cdot\mid |\xi(u_0)|>t}
    \wto \frac{\mu(\cdot\cap \{x\colons |x(u_0)|>1\})}
    {\mu(\{x\colons |x(u_0)|>1\})}
    \quad \text{ as}\; t\to \infty. 
  \end{displaymath}
  In particular, $\xi(u_0)\in\RV(\R,\mydot,\sR_0,g,\pi_{u_0}\mu)$, and
  the conditional distributions of $t^{-1}\xi$ given $|\xi(u_0)|>t$, for all
  sufficiently large $t$, form a relatively compact family, so they are
  tight, which yields (ii).  If $\{u_0\}\leq \gamma$, then the map
  $x\mapsto\pi_\gamma x$ is continuous and
  $(\sC(u_0),\sR^{\#\gamma}(u_0))$-bornologically consistent. This
  immediately implies (i) with $\mu_\gamma=\pi_\gamma\mu$.
  % Since
  % $\{u_0\}\leq \gamma$, the measure $\mu_\gamma$ is non-trivial.

  \smallskip
  \noindent
  \textsl{Sufficiency.}  By (i), the finite-dimensional distributions
  of $t^{-1}\xi$ given $\modulus_{u_0}(\xi)=|\xi(u_0)|>at$ for any
  given $a>0$ converge. Together with the one-dimensional convergence at $u_0$,
  condition (ii) guarantees that the corresponding conditional
  distributions of $t^{-1}\xi$ are tight. Thus, they are relatively
  weakly compact on the set $\{x\in\Cont([0,1])\colons |x(u_0)|>a\}$. By
  Lemma~\ref{lemma:rsc}, they are relatively compact in the vague
  topology corresponding to the ideal $\sC(u_0)$. The proof is
  concluded similarly to the proof of Theorem~\ref{thr:C}.  
\end{proof}

By a similar argument, one may consider other
improper moduli, for example,
$\modulus(x)=\min\big(|x(u_0)|,|x(v_0)|\big)$,
which is
generated by the values of the function at two points or any finite fixed set
of points.

\paragraph{Topology of uniform convergence on compact sets}
Continuous real-valued functions on $\R$ (or any locally compact space
such as $\R^d$) are often equipped with the topology of uniform
convergence on compact sets (also called the locally uniform
topology),
\index{locally uniform topology}
\index{uniform convergence on compact sets}
which makes $\Cont_{\text{loc}}(\R)$ a separable metrisable
space; see \cite{warner58}.  Consider the ideal $\sC_{0,\text{loc}}$
on $\Cont_{\text{loc}}(\R)$ generated by a family of continuous moduli
\begin{displaymath}
  \modulus_{[-n,n]}(x)=\sup_{u\in [-n,n]}\big|x(u)\big|,
  \quad n\in\NN,\; x\in\Cont_{\text{loc}}(\R).
\end{displaymath}
In the case of a general parameter space, instead of $[-n,n]$, one takes
the uniform norms of a function restricted to a family of compact sets
$(K_n)_{n\in\NN}$ that increases to the whole space, for example, nested
balls of radii increasing to infinity. Note that $\sC_{0,\text{loc}}$
is strictly smaller than the ideal $\sC_0$ generated by the uniform
metric on the whole of $\R$. For instance, the set
\begin{displaymath}
  A=\big\{x(u)=(1-|u-k|)_+\colons k\in\ZZ\big\} 
\end{displaymath}
belongs to $\sC_0$, but does not belong to $\sC_{0,\text{loc}}$.
While $\sC_{0,\text{loc}}\subset \sC_0$, it is not possible to refer
directly to Proposition~\ref{prop:sub-ideal} in order to deduce the
regular variation property on $\sC_{0,\text{loc}}$ from such a
property on $\sC_0$, since the topologies of the uniform and locally
uniform convergence differ.

By \cite[Proposition~16.6]{kalle}, a sequence of random functions
converges weakly in $\Cont_{\text{loc}}(\R)$ if and only if their
restrictions to all compact sets weakly
converge. Therefore, showing the regular variation on
$\Cont_{\text{loc}}(\R)$ with the ideal $\sC_{0,\text{loc}}$ amounts
to checking that the restrictions to any compact interval are
regularly varying, for example, by applying Theorem~\ref{thr:C}.
Let $\Gamma$ denote the directed set of all finite subsets of $\R$,
ordered by inclusion.

\begin{proposition}
  \label{prop:C-varpi-local}
  Let $\xi$ be a random element in $\Cont_{\text{loc}}(\R)$. Then
  \begin{displaymath}
    \xi\in\RV(\Cont_{\text{loc}}(\R),\mydot,\sC_{0,\text{loc}},g,\mu)
  \end{displaymath}
  if and only if the following conditions are satisfied.
  \begin{enumerate}[(i)]
  \item % The random function $\xi$ is finite-dimensional regularly
    % varying with a normalising function $g\in\RV_\alpha$ and tail
    % measures $\mu_\gamma$ for each $\gamma\geq\gamma_0$.
    There exist $n_0\in\NN$ and $\gamma_0\in\Gamma$ with
    $\gamma_0\subset[-n_0,n_0]$ such that, for each $n\geq n_0$ and each finite
    set $\gamma$ satisfying $\gamma_0\leq\gamma\subset[-n,n]$, the
    finite-dimensional random vector  
    $\xi_\gamma$ is regularly varying with a common normalising
    function $g\in\RV_\alpha$ and tail measure $\mu_\gamma$, and
    \begin{displaymath}
      \vartheta_n=\sup_{\gamma_0\leq
        \gamma\subset[-n,n]} \mu_\gamma(B_{\#\gamma})\in(0,\infty),
      \quad n\geq n_0.
    \end{displaymath}
    % \item For each integer $n\geq N$,
    %   $\modulus_{[-n,n]}(\xi)\in\RV(\R_+,\mydot,\sR_0,g,\vartheta_n\theta_\alpha)$,
    %   where $(\vartheta_n)_{n\geq N}$ is a sequence of positive numbers
    %   which is necessarily non-decreasing and $\vartheta_n$ equals the
    %   supremum of $\mu_\gamma(B_{\#\gamma})$ for all $\gamma\subset[-n,n]$.
  \item For all $\delta>0$ and $n\in\NN$,
    \begin{displaymath}
      \lim_{\eps\downarrow 0} \limsup_{t\to\infty}
      g(t)\Prob{\varpi_{\eps,n}(\xi)>t\delta} =0,
    \end{displaymath}
    where
    \begin{displaymath}
      \varpi_{\eps,n}(x)=\sup_{u_1,u_2\in[-n,n], |u_1-u_2|\leq \eps}
      |x(u_1)-x(u_2)|, \quad x\in\Cont(\R). 
    \end{displaymath}
  \end{enumerate}
  % If (i)--(ii) hold then there exists a
  % $\mu\in\Mb[\sC_{0,\text{loc}}]$ such that $\mu_\gamma=\pi_\gamma\mu$
  % for all $\gamma\in\gamma$ and $\vartheta_n\theta_\alpha$ is the
  % pushforward of $\mu$ under the map $x\mapsto\modulus_{[-n,n]}(x)$,
  % $n\in\NN$.
\end{proposition}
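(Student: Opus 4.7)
The plan is to reduce the statement to Theorem~\ref{thr:C} by restricting to each compact interval $[-n,n]$. The key observation is that $\sC_{0,\text{loc}}$ admits a countable open base $\base_{n,m}=\{x:\modulus_{[-n,n]}(x)>1/m\}$, $n,m\in\NN$, and each $\base_{n,m}$ equals $r_n^{-1}(A_{n,m})$ where $r_n:\Cont_{\text{loc}}(\R)\to\Cont([-n,n])$ is the (continuous) restriction map and $A_{n,m}=\{y\in\Cont([-n,n]):\|y\|_\infty>1/m\}$. In particular, $r_n$ is a continuous bornologically consistent morphism from $(\Cont_{\text{loc}}(\R),\sC_{0,\text{loc}})$ to $(\Cont([-n,n]),\sC_0^{(n)})$, where $\sC_0^{(n)}$ denotes the ideal generated by the uniform norm on $[-n,n]$.

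For necessity, the continuous mapping theorem (Theorem~\ref{mapping-theorem}) applied to the finite-dimensional projection maps yields (i) with $\mu_\gamma=\pi_\gamma\mu$. For (ii), Theorem~\ref{thr:polar-decomp-4} shows that the conditional laws of $t^{-1}r_n\xi$ given $\|r_n\xi\|_\infty>t\delta/2$ converge weakly, hence form a tight family in $\Cont([-n,n])$; the standard compactness criterion (e.g.\ \cite[Theorem~7.3]{Bi99}) then forces the modulus of continuity condition, exactly as in the necessity part of Theorem~\ref{thr:C}.

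For sufficiency, fix any $n\in\NN$ and apply Theorem~\ref{thr:C} (trivially rescaled from $[0,1]$ to $[-n,n]$) to $r_n\xi$: hypotheses (i) and (ii) of the present proposition restricted to $\gamma\subset[-n,n]$ reproduce the hypotheses of Theorem~\ref{thr:C}, yielding
\begin{equation*}
  r_n\xi\in\RV\big(\Cont([-n,n]),\cdot,\sC_0^{(n)},g,\mu^{(n)}\big),
\end{equation*}
where $\mu^{(n)}$ has finite-dimensional projections $(\mu_\gamma)_{\gamma\subset[-n,n]}$. The measures $\mu^{(n)}$ are consistent under further restriction (their finite-dimensional projections agree), so by Theorem~\ref{thr:fidi-extension} (or directly by the extension argument based on \cite[Proposition~16.6]{kalle}) they extend uniquely to a $\tau$-additive Borel measure $\mu\in\Msigma[\sC_{0,\text{loc}}]$ with $r_n\mu=\mu^{(n)}$.

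It remains to verify vague convergence $g(t)T_{t^{-1}}\nu\vto\mu$ on $\sC_{0,\text{loc}}$. By Theorem~\ref{lemma:vVSw}(iv) applied to the base $(\base_{n,m})$, this reduces to weak convergence of the restriction of $g(t)T_{t^{-1}}\nu$ to $\base_{n,m}$ towards $\mu|_{\base_{n,m}}$ as finite measures on $\Cont_{\text{loc}}(\R)$. The main technical step, where the locally uniform topology enters non-trivially, is to invoke \cite[Proposition~16.6]{kalle}, according to which weak convergence on $\Cont_{\text{loc}}(\R)$ is characterised by weak convergence of the pushforwards under each $r_k$ to $\Cont([-k,k])$. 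For $k\geq n$ one has $r_k(\base_{n,m})\subset\{y:\|y\|_{\infty,[-k,k]}>1/m\}\in\sC_0^{(k)}$ and the required convergence in $\Cont([-k,k])$ is supplied by the already established regular variation of $r_k\xi$ on $\sC_0^{(k)}$ together with Theorem~\ref{lemma:vVSw}(ii), noting that $r_k(\base_{n,m})$ is a $\mu^{(k)}$-continuity set by continuity of $\modulus_{[-n,n]}$ and homogeneity of $\mu^{(k)}$ (Lemma~\ref{lemma:alpha}). The principal obstacle is precisely this passage from regular variation on each $\Cont([-k,k])$ back to regular variation on $\Cont_{\text{loc}}(\R)$ itself, but the Kallenberg characterisation makes it routine once consistency of the family $\mu^{(n)}$ has been verified.
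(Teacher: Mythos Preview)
Your proposal is correct and follows essentially the same route as the paper: reduce both directions to Theorem~\ref{thr:C} on each $\Cont([-n,n])$, obtain a consistent family $\mu^{(n)}$, extend to $\mu$ on $\Cont_{\text{loc}}(\R)$, and verify vague convergence on $\sC_{0,\text{loc}}$. The only noteworthy difference is in the final lifting step: the paper glues the $\mu^{(n)}$ via Lemma~\ref{lemma:enclosed} and then invokes Proposition~\ref{prop:polar-many-tau} (exploiting that $\sC_{0,\text{loc}}$ is generated by the countable family of moduli $\modulus_{[-n,n]}$), whereas you use Theorem~\ref{lemma:vVSw}(iv) together with Kallenberg's Proposition~16.6 to reduce weak convergence in $\Cont_{\text{loc}}(\R)$ to weak convergence of each $r_k$-pushforward. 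Both routes are valid and of comparable length; yours makes the passage from $\Cont([-k,k])$ back to $\Cont_{\text{loc}}(\R)$ slightly more explicit.

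One small correction: Theorem~\ref{thr:fidi-extension} is about extending measures from $\XX^n$ to $\XX^\infty$ and does not apply to the projective system $\Cont([-n',n'])\to\Cont([-n,n])$; the right tool here is either Lemma~\ref{lemma:enclosed} (after pulling the $\mu^{(n)}$ back to the sets $\base_{n,m}\subset\Cont_{\text{loc}}(\R)$, which is what the paper does) or a standard Kolmogorov-type projective-limit argument for Polish spaces. Your parenthetical fallback makes the intended argument clear, so this is a citation slip rather than a genuine gap.
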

\begin{proof}
  \textsl{Necessity} follows from the necessity part of
  Theorem~\ref{thr:C}, noticing that the projection of the tail
  measure $\mu$ is non-trivial on $\Cont([-n,n])$ with all sufficiently
  large $n$.

  \smallskip
  \noindent
  \textsl{Sufficiency.}  Denote by $\xi|_{[-n,n]}$ the restriction of
  $\xi$ onto $[-n,n]$. By Theorem~\ref{thr:C}, for each $n\geq n_0$
  there exists a measure $\mu^{(n)}$ on $\Cont([-n,n])$ such that
  \begin{displaymath}
    g(t)\Prob{t^{-1}\xi|_{[-n,n]}\in D, \modulus_{[-n,n]}(\xi)>at}
    \to \mu^{(n)}\big(D\cap\{\modulus_{[-n,n]}>a\}\big)
    \quad \text{as}\; t\to\infty
  \end{displaymath}
  for each $\mu^{(n)}$-continuity set $D\subset\Cont([-n,n])$ and $a>0$.
  Furthermore,
  \begin{displaymath}
    \mu^{(n)}(\{\modulus_{[-n,n]}>1\})
    =\lim_{t\to\infty} g(t)\Prob{\modulus_{[-n,n]}(\xi)>t}=\vartheta_n. 
  \end{displaymath}
  %The measure $\mu^{(n)}$ is defined on Borel sets $D$ in
  % $\Cont([-n,n])$. % Since the pushforward of $\mu^{(n)}$ under the
  % projection map $\pi_\gamma$, where $\gamma\subset[-n,n]$, yields the
  % finite-dimensional distributions of $\xi$ restricted to $[-n,n]$, we
  % obtain 
  % that the restriction of $\mu^{(n')}$ with $n'>n$ onto
  % $\Cont([-n,n])$ coincides with $\mu^{(n)}$.
  If $n'>n$, the restriction map
  $r_{n,n'}:\Cont([-n',n'])\to\Cont([-n,n])$ is continuous and
  bornologically consistent. By the continuous mapping theorem,
  $r_{n,n'}\mu^{(n')}$ is the tail measure of $\xi|_{[-n,n]}$. By
  uniqueness of the tail measure obtained from Theorem~\ref{thr:C}, we
  have
  \[
    r_{n,n'}\mu^{(n')}=\mu^{(n)} .
  \]
  That is, if $D$ is a
  Borel subset of $\{x\in\Cont([-n,n])\colons \modulus_{[-n,n]}>a\}$ for some
  $a>0$, and $D'$ is the set of all functions in $\Cont([-n',n'])$,
  whose restrictions to $[-n,n]$ belong to $D$, then
  $\mu^{(n)}(D)=\mu^{(n')}(D')$.
  % Then $t^{-1}\xi\in D$ if and only if $t^{-1}\xi\in D'$, so that
  % \begin{align*}
  %   \mu^{(n)}(D)=\mu^{(n)}(D\cap\{\modulus_{[-n,n]}>a\})
  %   &=\lim_{t\to\infty}
  %   g(t)\Prob{t^{-1}\xi\in D,\modulus_{[-n,n]}(\xi)>at}\\
  %   &=\lim_{t\to\infty}
  %   g(t)\Prob{t^{-1}\xi\in D',\modulus_{[-n',n']}(\xi)>at}\\
  %   &=\mu^{(n')}(D'\cap\{\modulus_{[-n',n']}>a\})=\mu^{(n')}(D')
  % \end{align*}
  Thus, the compatible measures $\mu^{(n)}$, $n\ge n_0$, can
  be merged to produce a measure $\mu\in\Mb[\sC_{0,\text{loc}}]$
  that extends them, see Lemma~\ref{lemma:enclosed}. 

  The ideal $\sC_{0,\text{loc}}$ is generated by a countable
  collection of moduli $\modulus_{[-n,n]}$, $n\ge n_0$. Note that
  \begin{displaymath}
    \max_{i=k,\dots,n}\modulus_{[-i,i]}(x)=\modulus_{[-n,n]}(x), \quad k\leq n.
  \end{displaymath}
  By Proposition~\ref{prop:polar-many-tau}, it remains to verify the
  corresponding convergence on a convergence-determining class. Fix
  $n\geq n_0$ and $a>0$. Let $m\geq n$, and let
  $D=r_m^{-1}D_m$ be a cylinder set, where
  $r_m:\Cont_{\mathrm{loc}}(\R)\to\Cont([-m,m])$ is the restriction map
  and $D_m\subset\Cont([-m,m])$ is chosen so that
  \[
    D_m\cap\{\modulus_{[-n,n]}>a\}
  \]
  is a $\mu^{(m)}$-continuity set. Then Theorem~\ref{thr:C}, applied
  on $[-m,m]$, gives
  \[
    g(t)\Prob{t^{-1}\xi\in D,\modulus_{[-n,n]}(\xi)>at}
    \to
    \mu\big(D\cap\{\modulus_{[-n,n]}>a\}\big).
  \]
  Cylinder sets determined by compact restrictions form a
  convergence-determining class for $\Cont_{\mathrm{loc}}(\R)$. Hence
  the convergence extends to all $\mu$-continuity sets in the ideal,
  and Proposition~\ref{prop:polar-many-tau} yields regular variation
  in $\Cont_{\mathrm{loc}}(\R)$.
\end{proof}

\begin{example}[Random polynomial]
  \index{random polynomial}
  A random polynomial of degree at most $d$ (see Example~\ref{example:polynom})
  is regularly varying in $\Cont_{\text{loc}}(\R)$ if its coefficients
  constitute a regularly varying random vector in $\R^{d+1}$. Indeed,
  the map $\psi$ from $\R^{d+1}$ to polynomials given in \eqref{eq:37}
  is continuous if the space of continuous functions on $\R$ is
  equipped with the topology of uniform convergence on compact
  sets. It is bornologically consistent, since
  \begin{displaymath}
    \modulus_{[-n,n]}(\psi(y))=\sup_{u\in[-n,n]}|\psi(y)(u)|
    \leq \sum_{i=0}^d |y_i|n^i\leq n^{d}\|y\|_1 
  \end{displaymath}
  for each $n\in\NN$. 
  % This fact also follows from Example~\ref{ex:polynomial-R}, given that
  % $\sC_0^{\text{loc}}\subset\sC_0$. 
\end{example}

\begin{example}
  Let $\eta$ be a Pareto(1) random variable in $\R_+$, and let
  \begin{displaymath}
    \xi(u)=\eta+(u-\eta)_+ - (u-2\eta)_+=
    \begin{cases}
      \eta, & u\leq \eta,\\
      u, & \eta<u\leq 2\eta,\\
      2\eta, & 2\eta<u,
    \end{cases}
  \end{displaymath}
  which is a bounded continuous random function.  Let
  $\gamma=(u_1,\dots,u_m)$ for $u_1<\cdots<u_m$, and let
  $A\in\sR_0^m$. If $\xi(u_m)>t$, then we must have $2\eta>t$, and
  thus $\eta>t/2$, implying that $u_m\leq \eta$ and $\xi(u)=\eta$ for
  all $u\leq u_m$ for all sufficiently large $t$. Therefore,
  \begin{align*}
    \lim_{t\to\infty}
    t & \Prob{t^{-1}\big(\xi(u_1),\dots,\xi(u_m) \big)\in A,
        \max\big(\xi(u_1),\dots,\xi(u_m)\big)>t}\\
    & = \lim_{t\to\infty}
    t \Prob{t^{-1}\big(\xi(u_1),\dots,\xi(u_m) \big)\in A,
        \xi(u_m)>t}\\
    &=\lim_{t\to\infty}
    t\Prob{t^{-1}(\eta,\dots,\eta)\in A, \eta>t}
    =\theta_1 \big (\{s>0\colons (s,\dots,s)\in A\}\big).
  \end{align*}
  For the final equality, $A$ has to be a continuity set for the
  pushforward of $\theta_1$ under the map $s\mapsto(s,\dots,s)$; this
  is the case, for instance, for finite unions of rectangles whose
  boundaries have zero measure under this pushforward.
  Therefore, the tail measure $\mu_\gamma$ on $\R^m$ is
  supported by 
  vectors with all identical components. For instance,
  $\mu_\gamma((a,\infty)^m)=1/a$, $a>0$.
  % Thus, $\xi$ is
  % finite-dimensional regularly varying and the tail measures
  % $\mu_{u_1,\dots,u_m}$ are supported by constant functions.
  Condition~(ii) follows, since $\varpi_{\eps,n}(\xi)=\eps$. The tail
  measure $\mu$ is supported by constant functions and is the
  pushforward of $\theta_1$ under the map associating $y>0$ with the
  constant function $x(u)=y$.  Note that
  \begin{multline*}
    t\Prob{\modulus_{[-n,n]}(\xi)>at}
    =t\Prob{\xi(n)>at}\\
    =t\Prob{\eta>at,n\leq \eta}+t\Prob{n>at,\eta<n\leq 2\eta}
    +t\Prob{2\eta>at,2\eta<n}.
  \end{multline*}
  The last two summands vanish for all sufficiently large $t$. Thus,
  \begin{displaymath}
    \lim_{t\to\infty} t\Prob{\modulus_{[-n,n]}(\xi)>at}
    =\theta_1((a,\infty)),
  \end{displaymath}
  and so the pushforward of the tail measure $\mu$ under the map
  $x\mapsto \modulus_{[-n,n]}(x)$ is $\vartheta_n\theta_1$ with
  $\vartheta_n=1$ for all $n$.
  
  It should be noted that the random function $\xi$ is not regularly
  varying on $\Cont(\R)$ with the ideal $\sC_0$. If this were the
  case, then we would have
  \begin{displaymath}
    t\Prob{\|\xi\|_\infty>t}=t\Prob{\eta> t/2}\to 2\quad
    \text{as}\; t\to\infty.
  \end{displaymath}
  However,
  \begin{displaymath}
    \mu_\gamma(B_{\#\gamma})=\lim_{t\to\infty}
    t\Prob{\max\big(\xi(u_1),\dots,\xi(u_m)\big)>t}
    =\lim_{t\to\infty}t\Prob{\eta>t}=1,
  \end{displaymath}
  so that the factor $\vartheta=2$ in the tail measure
  $\vartheta\theta_1$ of $\|\xi\|_\infty$ is not equal to the
  supremum of $\mu_\gamma(B_{\#\gamma})$.
\end{example}

The following example involves a non-linear scaling on $\Cont(\R_+)$.

\begin{example}
  Let $\Cont_{\text{loc}}(\R_+)$ be the space of continuous functions
  $x:\R_+\to\R$ with the topology of uniform convergence on compact
  subsets of $\R_+$ and the continuous scaling $(T_tx)(u)=t x(tu)$,
  $u\in\R_+$. Note that this scaling is not continuous in $t$ if the
  space of continuous functions is endowed with the topology of
  uniform convergence.  Let $\sC(0)$ be the ideal on
  $\Cont_{\text{loc}}(\R_+)$ generated by the modulus
  $\modulus_0(x)=|x(0)|$.
  % an ideal with the base 
  % \begin{displaymath}
  %   D_{n}=\Big\{x\in \Cont(\R_+): |x(0)|\geq
  %   1/n\Big\},\quad n\in\NN.
  % \end{displaymath}
  Define a function $\xi(u)=\eta(1-\eta u)_+$, $u\geq0$, where $\eta$
  is a Pareto(1) random variable. Since $\eta$ is regularly varying
  and the map $y\mapsto T_y x$ is continuous and bornologically
  consistent, the random function $\xi=T_\eta x$ obtained as the
  scaling of the function
  $x(u)=(1-u)_+$ is regularly varying. It should be
  noted that $\xi$ is not regularly varying under linear scaling on
  $\Cont_{\text{loc}}(\R_+)$, since in that case the tail measure
  would be supported by 
  discontinuous functions.
\end{example}

\paragraph{Stationary random continuous functions}
In the important setting of stationary random continuous functions, it
is possible to amend Proposition~\ref{prop:C-varpi-local} as follows.
\index{stationary continuous functions}

\begin{proposition}
  \label{prop:C-stationary}
  Let $\xi$ be a stationary random continuous function on $\R$. Then
  $\xi\in\RV(\Cont_{\text{loc}}(\R),\mydot,\sC_{0,\text{loc}},g,\mu)$
  if and only if
  $\xi\in\RV(\Cont_{\text{loc}}(\R),\mydot,\sC(0),g,\mu_0)$, where
  $\mu_0$ is the restriction of $\mu$ to the ideal $\sC(0)$ generated
  by $\modulus_0(x)=|x(0)|$. This is 
  also equivalent to condition (ii) of
  Proposition~\ref{prop:C-varpi-local} combined with the following
  condition.
  \begin{enumerate}[(i)]
  \item[(i')] There is a normalising function $g$ such that, for all
    $\gamma\in\Gamma$ with $\{0\}\leq\gamma$, the random vector 
    $\xi_\gamma$ is regularly varying on $\R^{\#\gamma}$ with the
    ideal generated by $\modulus_0(x)=|x(0)|$, the normalising
    function $g$, and the tail measure $\mu_\gamma$.
  % \item For each integer $n\geq N$,
  %   $\modulus_{[-n,n]}(\xi)\in\RV(\R_+,\mydot,\sR_0,g,\vartheta_n\theta_\alpha)$,
  %   where $(\vartheta_n)_{n\geq N}$ is a sequence of positive numbers
  %   which is necessarily non-decreasing and $\vartheta_n$ equals the
  %   supremum of $\mu_\gamma(B_{\#\gamma})$ for all $\gamma\in\Gamma$.
  % \item For all $\delta>0$ and $n\geq N$,
  %   \begin{displaymath}
  %     \lim_{\eps\downarrow 0} \limsup_{t\to\infty}
  %     g(t)\Prob{\varpi_{\eps,n}(\xi)>t\delta} =0,
  %   \end{displaymath}
  %   where
  %   \begin{displaymath}
  %     \varpi_{\eps,n}(x)=\sup_{u_1,u_2\in[-n,n], |u_1-u_2|\leq \eps}
  %     |x(u_1)-x(u_2)|, \quad x\in\Cont(\R). 
  %   \end{displaymath}
  \end{enumerate}
\end{proposition}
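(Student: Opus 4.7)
The plan is to prove the three conditions (A), (B), and (C) (abbreviating respectively regular variation on $\sC_{0,\text{loc}}$, regular variation on $\sC(0)$, and the pair (i$'$)+(ii)) equivalent by a cycle (A) $\Rightarrow$ (B) $\Rightarrow$ (C) $\Rightarrow$ (A). The pivotal observation is that stationarity of $\xi$ translates into shift invariance of the tail measure $\mu$: for each $s\in\R$ the shift $\phi_s x(u)=x(u-s)$ is a continuous morphism commuting with the linear scaling, and $\phi_s\xi\stackrel{d}{=}\xi$ forces $\phi_s\mu=\mu$ after passage to the vague limit. This turns Proposition~\ref{prop:extension-invariant} into a bridge between the ``single-reference-point'' ideal $\sC(0)$ and the richer ideal $\sC_{0,\text{loc}}$ of local suprema.

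For (A) $\Rightarrow$ (B), I will invoke Proposition~\ref{prop:sub-ideal}(i): since $\sC(0)\subset\sC_{0,\text{loc}}$ and $\sC(0)$ is generated by the continuous modulus $|x(0)|$, it suffices to check that $\mu$ is nontrivial on $\sC(0)$. By (A) and Proposition~\ref{prop:C-varpi-local}, there is some $n$ with $\mu(\{\modulus_{[-n,n]}>1\})>0$; writing this set as the countable union $\bigcup_{u\in\QQ\cap[-n,n]}\{|x(u)|>1\}$ (using continuity of paths) gives $\mu(\{|x(u_\ast)|>1\})>0$ for some $u_\ast$. Shift invariance of $\mu$ then yields $\mu(\{|x(0)|>1\})>0$, so (B) holds with the same $g$ and $\mu$.

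For (B) $\Rightarrow$ (C), condition (i$'$) follows from the continuous mapping theorem (Theorem~\ref{mapping-theorem}) applied to the projection $\pi_\gamma$, which is a bornologically consistent morphism from $\sC(0)$ to the ideal on $\R^{\#\gamma}$ generated by the coordinate at $0$ whenever $\{0\}\leq\gamma$, with tail measure $\mu_\gamma=\pi_\gamma\mu$ automatically nontrivial. For (ii), combine (B) with Theorem~\ref{thr:polar-decomp-4} to obtain
\[
\Prob{t^{-1}\xi\in\cdot\,\big|\,|\xi(0)|>t}
\wto \frac{\mu(\cdot\cap\{|x(0)|>1\})}{\mu(\{|x(0)|>1\})}
\]
weakly on the Polish space $\Cont_{\text{loc}}(\R)$; Prokhorov's theorem yields tightness of this conditional family, which via Arzel\`a--Ascoli on each $\Cont([-n,n])$ gives $\lim_{\eps\downarrow 0}\limsup_{t\to\infty}\Prob{\varpi_{\eps,n}(t^{-1}\xi)>\delta\mid|\xi(0)|>t}=0$. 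To remove the conditioning, cover $[-n,n]$ by an $\eps$-mesh $\{s_1,\dots,s_m\}$ with $m\le 2n/\eps+1$; on $\{\varpi_{\eps,n}(\xi)>t\delta\}$ some $|\xi(u)|>t\delta/2$ with $u$ within $\eps$ of some $s_k$, and stationarity together with the union bound reduces $g(t)\Prob{\varpi_{\eps,n}(\xi)>t\delta}$ to $m$ times an expression of the form $g(t)\Prob{\varpi_{2\eps,[-\eps,\eps]}(\xi)>t\delta/4,\,|\xi(0)|>t\delta/C}$ plus a term bounded by $g(t)\Prob{\varpi_{2\eps,[-\eps,\eps]}(\xi)>t\delta/4,\,|\xi(0)|\le t\delta/C}$; the first vanishes in the desired double limit by the conditional tightness, while the second is bounded using the elementary observation that small oscillation at $0$ plus a small value at $0$ forces a small uniform bound on $[-\eps,\eps]$, contradicting the oscillation hypothesis for $C$ large enough.

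For (C) $\Rightarrow$ (A), by Proposition~\ref{prop:C-varpi-local} it suffices to promote (i$'$) to condition (i) of that proposition, namely the finite-dimensional regular variation with respect to the max-modulus $\modulus_m$. Following the decomposition used in Proposition~\ref{prop:cumulative-moduli}, write for $\gamma=\{u_1,\dots,u_m\}$
\[
\{\modulus_m(\xi_\gamma)>t\}=\bigcup_{j=1}^m \{|\xi(u_j)|>t,\;|\xi(u_i)|\le t\;\forall i<j\};
\]
by stationarity $\xi_\gamma\stackrel{d}{=}\xi_{\gamma-u_j}$, and since $0\in\gamma-u_j$, (i$'$) applies to yield the limiting conditional distribution of each summand. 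Summing over $j$ produces the required tail measure on $\sR_0^{\#\gamma}$, and (ii) is inherited from (C). The main obstacle is the conditional-to-unconditional passage in step~(ii), where the counting of shifted mesh intervals must be balanced against the decay of the conditional-oscillation probability; this is the continuous-parameter analogue of the argument underlying Proposition~\ref{prop:stationary-sequence} and is genuinely delicate because $\sC_{0,\text{loc}}$ is strictly richer than the ideal generated by the shift orbit $\{\sC(s):s\in\R\}$, so Proposition~\ref{prop:extension-invariant} alone cannot be invoked to close the loop without the additional modulus-of-continuity control provided by (ii).
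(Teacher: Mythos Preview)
Your cycle (A)$\Rightarrow$(B)$\Rightarrow$(C)$\Rightarrow$(A) is considerably more detailed than the paper's two-sentence proof, which only argues (i)$\Leftrightarrow$(i$'$) at the finite-dimensional level via Proposition~\ref{prop:cumulative-moduli}, stationarity and Proposition~\ref{prop:extension-invariant}, and then reads off (A)$\Leftrightarrow$(i$'$)+(ii) from Proposition~\ref{prop:C-varpi-local}. Your steps (A)$\Rightarrow$(B), (B)$\Rightarrow$(i$'$) and (C)$\Rightarrow$(A) are all correct and line up with those references, and you are right to isolate (B)$\Rightarrow$(ii) as the delicate point and to observe that Proposition~\ref{prop:extension-invariant} alone cannot bridge $\sC(0)$ to $\sC_{0,\text{loc}}$, since the shift-orbit ideal is strictly smaller.

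However, your mesh argument for (B)$\Rightarrow$(ii) does not close. With an $\eps$-mesh you generate $m\asymp 2n/\eps$ summands. For the first term, Portmanteau only delivers
\[
\limsup_{t\to\infty}\Prob{\varpi_{2\eps,[-\eps,\eps]}(t^{-1}\xi)>\delta/4\,\big|\,|\xi(0)|>t\delta/C}
\le\tilde\mu\big(\{\varpi_{2\eps,[-\eps,\eps]}\ge\delta/4\}\big),
\]
which is $o(1)$ as $\eps\to 0$ with no quantitative rate, so the product $m\cdot o(1)=O(\eps^{-1})\cdot o(1)$ need not vanish. Your second-term claim is also incorrect: the event $\{\varpi_{2\eps,[-\eps,\eps]}(\xi)>t\delta/4,\ |\xi(0)|\le t\delta/C\}$ is not contradictory for any $C$ --- large local oscillation together with a small value at $0$ simply forces $\sup_{[-\eps,\eps]}|\xi|$ to be large, which is consistent, not impossible. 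If one instead fixes the mesh spacing $h$ (so $m\approx 2n/h$ stays bounded), the first term is controlled but the second term reduces to bounding $g(t)\Prob{\varpi_{h,[-h,h]}(\xi)>t\delta/4}$, which is the same type of quantity you set out to estimate, and the argument becomes circular. The conditional-to-unconditional passage therefore remains open in your write-up.
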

\begin{proof}
  The equivalence of the finite-dimensional regular variation
  properties with respect to the ideals $\sR_0^{\#\gamma}$ and
  $\sR^{\#\gamma}(0)$ corresponding to
  $\sC_\gamma$ and $\sC(0)$
  follows from Proposition~\ref{prop:cumulative-moduli}, taking into
  account the stationarity and
  Proposition~\ref{prop:extension-invariant}. This also confirms the
  asserted equivalence.
\end{proof}

\paragraph{Topology of pointwise convergence}
Let $\Cp(\R)$ be the family of continuous functions $x:\R\to\R$
\index{pointwise convergence}
\index{continuous functions!topology of pointwise convergence}
with the topology of pointwise convergence, that is, $x_n$ converges
to $x$ if $x_n(u)\to x(u)$ for all $u\in\R$. This space is extensively
studied in topology; see, e.g., \cite{tkach11}. Since the parameter 
space $\R$ is uncountable, the space $\Cp(\R)$ does not have a
countable topological base (it is not even first countable); it is not
metrisable and thus is not Polish. This space is completely regular
(see \cite[Problem~068]{tkach11}) and Souslin (see
\cite[Corollary~1.8]{ban:wan20}).  It is known that a completely
regular Souslin space is perfectly normal; see
\cite[Theorem~6.7.7]{bogachev07}, so that the Baire and Borel
$\sigma$-algebras on $\Cp(\R)$ coincide. Each Borel measure on a
completely regular Souslin space is Radon; see
\cite[Theorem~4.1.11(iii)]{bogachev18}.

Consider the ideal $\sC$ on $\Cp(\R)$ generated by the moduli
$\modulus_v(x)=|x(v)|$ for all rational $v\in\R$, that is, $A\in\sC$
means that there exists a finite set $v_1,\dots,v_n$ of rational
numbers such that
\begin{equation}
  \label{eq:ideal-Cp}
  \inf_{x\in A} \max_{i=1,\dots,n} \big|x(v_i)\big|>0. 
\end{equation}
This ideal is topologically and scaling consistent. The general
definition of regular variation applies to this ideal and makes it
possible to study regular variation on $\Cp(\R)$. 

\begin{example}
  For $a\geq 0$ and $b\in(0,1/2]$, define the function
  \begin{displaymath}
    x_{a,b}(u)=
    \begin{cases}
      a+au/b, & 0\leq u\leq b,\\
      3a-au/b, & b<u\leq 2b,\\
      a, & 2b<u\leq 1,
    \end{cases}
    \qquad u\in[0,1],
  \end{displaymath}
  and let $x_{a,0}(u)=a$ for all $u\in[0,1]$ and $a\geq0$. The map
  $(a,b)\mapsto x_{a,b}$ is continuous as a map from
  $\R_+\times[0,1/2]$ to $\Cp([0,1])$. Note that continuity fails (as
  $b\to 0$) if the target space is equipped with the topology of
  uniform convergence. Equip $\R_+\times[0,1/2]$ with the scaling
  applied to the first component and the ideal generated by the
  modulus $\modulus(a,b)=a$. Then the map $(a,b)\mapsto x_{a,b}$ is a
  morphism, which is bornologically consistent, since
  \begin{displaymath}
    \big\{(a,b)\colons x_{a,b}(u)> \eps\big\}
    \subset \big\{(a,b)\colons a>\eps/2\big\}
  \end{displaymath}
  for all $u\in[0,1]$ and $\eps>0$.  Let
  $\eta\in\RV(\R_+,\mydot,\sR_0,g,\theta_\alpha)$ with $\eta\ge2$
  almost surely, and let
  \begin{displaymath}
    \xi(u)=x_{\eta,1/\eta}(u)=
    \begin{cases}
      \eta+\eta^2 u, & 0\leq u\leq 1/\eta,\\
      3\eta-\eta^2 u, & 1/\eta<u\leq 2/\eta,\\
      \eta, & 2/\eta<u\leq 1,
    \end{cases}
    \qquad u\in[0,1].
  \end{displaymath}
  By Theorem~\ref{thr:pair}, $(\eta,1/\eta)$ is regularly varying
  under the scaling applied to the first component, so that the
  continuous mapping theorem yields that $\xi$ is regularly varying in
  $\Cp([0,1])$ with the normalising function $g$ and the tail measure
  $\mu$ supported by constant functions, namely, the pushforward of
  $\theta_\alpha$ under the map $a\mapsto x_{a,0}$.  Regular
  variation in the space $\Cont([0,1])$ of continuous functions with
  the uniform metric fails, since
  \begin{displaymath}
    g(t)\Prob{\sup\xi >t}=g(t)\Prob{2\eta>t}\to 2^\alpha
    \quad \text{as}\; t\to\infty.
  \end{displaymath}
  Indeed, if $\xi$ were regularly varying in $\Cont([0,1])$, then its
  tail measure would also be $\mu$, and then the limit of
  $g(t)\Prob{\sup\xi >t}$ would be $\mu(\{x\colons \sup
  x>1\})=\theta_\alpha((1,\infty))=1$. 
\end{example}

\section{Other functional spaces}
\label{sec:other-funct-spac}

\paragraph{C\`adl\`ag functions}
Consider now the space $\Dfun(\R^d,\R^l)$ of (possibly vector-valued)
c\`adl\`ag functions on Euclidean space equipped with linear
scaling. It is well known that this space, equipped with the Skorokhod
topology, is Polish; see \cite{Bi99} and \cite{kalle}. For $d>1$, one
uses the corresponding multiparameter Skorokhod topology. The general
definition of regular variation applies to the space
$\Dfun(\R^d,\R^l)$ once an ideal on this space is fixed.
\index{cadlag functions@c\`adl\`ag functions}

Regularly varying random functions in the space $\Dfun([0,1],\R)$
(the Skorokhod space) have been studied in \cite{hul:lin05,MR2213968},
which work with the metric exclusion ideal $\sD_0$ obtained by
excluding the zero function. This ideal is generated by the uniform
norm $\|x\|_\infty$, which is a continuous modulus on
$\Dfun([0,1],\R)$. The results from \cite{MR2213968} follow from our
general results for regular variation on ideals satisfying
Condition~\hyperref[condB]{(B)}; for instance, Theorem~4 therein
follows from 
Proposition~\ref{prop:polar-decomp} and
Theorem~\ref{thr:polar-decomp-4}, while Theorem~8 follows
from the continuous
mapping theorem adapted to the case of discontinuities.  Our
Theorem~\ref{thr:C} is a counterpart to Theorem~10 from
\cite{hul:lin05}, with the only difference being the use of the
relative compactness condition in the Skorokhod space
$\Dfun([0,1],\R)$ with the relevant continuity modulus.

Regular variation on the space of c\`adl\`ag functions on $\R$ or
$\R^d$ has been studied in detail in \cite{bladt:hash22} and
\cite{MR4376586}.  The authors of \cite{bladt:hash22} and
\cite{MR4376586} work with the ideal $\sD_{0,\text{loc}}$ on the space
$\Dfun(\R^d,\R^l)$, which is generated by the moduli
$\modulus_K(x)=\sup_{u\in K}\|x(u)\|$ for a family of compact sets $K$
(for example, $K=[-n,n]$ for functions on $\R$) which cover the entire
parameter space. This is also the metric exclusion ideal on $\Dfun(\R^d,\R^l)$
obtained by using an extension of the Skorokhod metric; see Appendix~A
from \cite{bladt:hash22}, in particular, Theorem~A.1(v) therein.

The general definition of regular variation applies to the space
$\Dfun(\R^d,\R^l)$ with the ideal $\sD_{0,\text{loc}}$, namely,
$\xi$ is regularly varying if
\begin{displaymath}
  g(t)\Prob{t^{-1}\xi\in\cdot}\vto[\sD_{0,\text{loc}}]
  \nu(\cdot) \quad\text{as}\; t\to\infty.
\end{displaymath}
It should be noted that the modulus $\modulus(x)=\|x(u)\|$ for any
given $u\in\R^d$ is not continuous in the Skorokhod topology, that is,
$x_n(u)$ does not necessarily converge to $x(u)$ if $x_n\to x$ in
$\Dfun(\R^d,\R^l)$, unless $u$ is a continuity point of the
limiting function. Therefore, it is not possible to use the continuous
mapping theorem to infer that the
finite-dimensional distributions of a regularly varying c\`adl\`ag
function are regularly varying themselves. 

To alleviate the difficulty caused by the possible discontinuity of the
above-mentioned modulus, the author of \cite{MR4376586} forces the 
normalising function $g$ in \eqref{eq:xi-RV} to be
$g(t)=1/\Prob{\|\xi(u_0)\|>t}$ for some $u_0$, which could be chosen as
the origin in view of the stationarity assumption adopted there. If
$\xi$ is a stationary process with values in $\Dfun(\R^d,\R^l)$, then
any given $u$ is almost surely a continuity point of $\xi$.  In the
non-stationary setting, choosing the reference point $u_0$ may
influence the regular variation property of the random function as the next
example shows. 

\begin{example}
  Consider the stochastic process $\xi$ given by
  \begin{displaymath}
    \xi(u)=
    \begin{cases}
      \eta_1, & u<0,\\
      \zeta+\eta_2u, & u\geq 0,
    \end{cases}
  \end{displaymath}
  where $\eta_1$ and $\eta_2$ are two independent random variables
  with regularly varying distributions, for instance, Pareto(1), and
  $\zeta$ is another regularly varying random variable with a lighter
  tail independent of $\eta_1$ and $\eta_2$. The finite-dimensional
  distributions of $\xi$ that include at least one point different
  from $0$ are regularly varying with the normalising function
  corresponding to the tails of $\eta_1$ and $\eta_2$. However,
  the conditional distribution of $t^{-1}\xi(u)=t^{-1}\eta_1$
  for $u<0$
  %and $t^{-1}\xi(u)=t^{-1}(\zeta+\eta_2u)$ for $u\geq 0$,  
  given that $\xi(0)>t$, converges to the atomic measure at zero. If
  we choose $u_0=-1$ as the reference point, then, conditionally on
  $\xi(u_0)>t$, the distribution of $t^{-1}\xi(u)$, $u<0$, has a non-trivial
  limit.
\end{example}

The authors of \cite{bladt:hash22} work in the not necessarily
stationary setting and set the normalising function to be
$g(t)=1/\Prob{\|\xi(u_0)\|>t}$, additionally assuming that
$\|\xi(u_0)\|$ is a regularly varying random variable. Then 
results akin to Proposition~\ref{prop:polar-decomp},
Theorem~\ref{thr:polar-decomp-4} and Theorem~\ref{thr:C} characterise
the regular variation of $\xi$ together with the regular
variation of $\|\xi(u_0)\|$ for some $u_0\in \R^d$.  In contrast to
\cite[Definition~4.10]{bladt:hash22}, Definition~\ref{def:RV} allows a
more flexible choice of the normalising function.

A criterion for regular variation can be formulated using the
relative compactness criterion in the Skorokhod space as in
Theorem~\ref{thr:C} or using the regular variation
criterion in metric spaces formulated in Theorem~\ref{thr:m-space}. We
refer to Theorem~4.15 of \cite{bladt:hash22} for results along these
lines. It is also possible to obtain regular variation in 
topologies other than the standard $J_1$-topology on the family of
c\`adl\`ag functions, proposed in \cite{MR84897} and discussed in
\cite{MR1876437}. 

\begin{example}[Regular variation in $M_1$-topology]
  Consider the stochastic process $\xi$ on $[0,1]$ given by
  \begin{displaymath}
    \xi(u)=
    \begin{cases}
      0, & u< \frac 12,\\
      \eta, & u\in \left[ \frac 12,\frac 12 + \frac 1{2\eta}\right),\\
      2 \eta, & u\in \left[ \frac 12 + \frac 1{2\eta},1 \right],
    \end{cases}
  \end{displaymath}
  where $\eta>1$ is a regularly varying random variable, say
  Pareto(1). All finite-dimensional distributions of $\xi$ are
  regularly varying, and the same holds for the modulus
  $\modulus(\xi)=\sup_{u\in [0,1]} |\xi(u)| = 2\eta$. However, $\xi$ is
  not regularly varying in $J_1$-topology; this follows from the
  failure of the oscillation condition in \cite[Theorem~10,
  Equation~(7)]{hul:lin05}.
  However, it is not too difficult to show that $\xi$ is
  regularly varying in the $M_1$ topology on $\Dfun([0,1],\R)$; see
  \cite{MR1876437}.
\end{example}

\paragraph{Upper semicontinuous functions}
Let $\XX=\USC([0,1])$ be the family of non-negative upper
semicontinuous functions $x:[0,1]\to[0,\infty]$ (which are not
identically equal to infinity) equipped with the hypo-topology; see
\cite{mo1} and \cite{nor87}.
\index{upper semicontinuous functions}
\index{hypograph}
Recall that the hypograph of $x$ is
defined as $\mathrm{hypo} x=\{(u,v)\colons v\leq x(u)\}$ and that the
hypo-topology is the Fell topology on the family of hypographs, see
Section~\ref{sec:regul-vari-rand-1}. With this topology,
$\USC([0,1])$ becomes a 
locally compact second-countable Hausdorff space, which is
metrisable. It should be noted that convergence in the
hypo-topology does not necessarily imply pointwise convergence;
for instance, the sequence $x_n(u)=\one_{u=v_n}$ with $v_n\to v$ as
$n\to\infty$ converges to $\one_{u=v}$ in $\USC([0,1])$, but not
pointwise at $u=v$.

It is possible to generate an ideal on $\USC([0,1])$ using the uniform
norm as the modulus. Furthermore, the family $\USC([0,1])$ can be
equipped with an exclusion ideal obtained by excluding the zero
function. Another ideal can be obtained by excluding the family of
spike functions $a\one_{u=b}$ for $a\geq 0$ and $b\in[0,1]$.

\begin{example}
  The family $\Cont([0,1])$ of continuous functions
  $x:[0,1]\to[0,\infty)$ is a cone in $\USC([0,1])$.  The topology on
  $\Cont([0,1])$ generated by the uniform metric is stronger than the
  topology induced from $\USC([0,1])$.  Equip $\USC([0,1])$ and
  $\Cont([0,1])$ with the ideal generated by the uniform norm. If a
  random element is regularly varying in $\Cont([0,1])$, then it is
  also regularly varying in $\USC([0,1])$ with the pushforward of the
  tail measure under the natural embedding of $\Cont([0,1])$ into
  $\USC([0,1])$. However, the inverse implication
  fails.  Consider a random function given by
  \begin{displaymath}
    \xi(u)=\eta \big(1-\eta|u-\zeta|\big)_+, 
    \quad u\in[0,1],
  \end{displaymath}
  where $\zeta$ is a continuous random variable on $[0,1]$ and $\eta$
  is an independent regularly varying random variable with tail index
  $\alpha$. Then $\xi$ is regularly varying in $\USC([0,1])$ with the
  tail measure $\mu$ supported by functions $x(u)=c\one_{u=a}$ for
  $c>0$ and $a\in[0,1]$, and the tail index is $\alpha$. This can be
  shown by applying Theorem~\ref{thr:polar-decomp-4} or using the
  continuous mapping from $(a,b,c)$ to
  $x_{a,b,c}(u)=a(1-b^{-1}|u-c|)_+$, $u\in[0,1]$, with $a=\eta$,
  $b=1/\eta$ and $c=\zeta$. Consequently, a variant of
  Theorem~\ref{thr:pair} applies with the to the auxiliary pair
  $\eta_x=(1/x,\zeta)\in\R^2$.  Note that the random function $\xi$ is
  not regularly varying on $\Cont([0,1])$ with the uniform topology,
  since the normalised functions develop increasingly narrow spikes
  and are not relatively compact in that topology; their hypo-limit is
  supported by spike functions, which do not belong to $\Cont([0,1])$.
\end{example}

If $\xi$ is a random upper semicontinuous function, then its hypograph
is a random closed set. Using this representation, it is possible to
use results for random closed sets from
Section~\ref{sec:regul-vari-rand-1} in the space
$[0,1]\times[0,\infty]$ equipped with linear scaling acting on the
second component. 

  %ALTERNATIVE EXAMPLE FOR LOWER SEMICONTINUOUS FUNCTIONS
  % For illustration, consider a simplified variant of the example from
  % \cite[Theorem~5.3.36]{mo1}. Let $\xi(u)=\langle u,\eta\rangle$ be
  % given as the scalar product of a point uniformly distributed in the
  % unit ball $B_1$ in $\R^d$ and a point $u$ from $K$ chosen to be the
  % unit sphere in $\R^d$.  \Teal{It follows from
  %   \cite[Theorem~5.3.36]{mo1} that the function $(1-\xi)^{-1}$ is
  %   regularly varying in $\XX$.}  However, the tail measure is
  % supported by discontinuous functions, which take a finite value at a
  % point from $K$ and otherwise are infinite. While the distribution of
  % $(1-\xi)^{-1}$ is supported by continuous functions, it is not
  % regularly varying on $\XX_1$. 

\paragraph{Integrable functions}
For $p\in[1,\infty)$, let $\XX=\Lp(\R^d)$ be the space of
$p$-integrable functions (with respect to the Lebesgue measure), so
that $\XX$ is a separable Banach space. Equip $\Lp(\R^d)$
with linear scaling and the ideal $\sS_0$ generated by the norm,
which is also the metric exclusion ideal obtained by excluding 
zero, that is, the equivalence class of functions that vanish
almost everywhere. In order to check 
regular variation, it is possible to use results from
Section~\ref{sec:regul-vari-polish}.
\index{space Lp@space $\Lp(\R^d)$}

\begin{example}
  Let $\eta=(\eta_i)_{i\in\NN}$ be a random sequence in $\ell_1$,
  which is also regularly varying in $\ell_1$ with the ideal generated
  by the norm. Consider a sequence of deterministic functions
  $f_i\in \Lp[1](\R)$, $i\in\NN$, such that
  $\sup_i\|f_i\|_1<\infty$. Then $\xi=\sum_i \eta_i f_i$ converges
  almost surely in $\Lp[1](\R)$. If the pushforward of the tail measure
  of $\eta$ under the map
  \[
    (y_i)_{i\in\NN}\mapsto \sum_i y_i f_i
  \]
  is non-trivial, then $\xi$ is regularly varying in $\Lp[1](\R)$ by
  the continuous mapping theorem.
\end{example}

The embedding map from $\Cont([0,1])$ with the ideal generated by the
uniform norm to $\Lp([0,1])$ is continuous and bornologically
consistent, so that a regularly varying function in $\Cont([0,1])$ is
also regularly varying in $\Lp([0,1])$. This fact was noted in
\cite[Section~3.2]{MR4745556} for $p=2$. The continuity and
bornological consistency properties fail if the functions are
considered on the real line. Furthermore, the inverse statement fails:
there is a sample continuous process that is regularly varying in
$\Lp[2]([0,1])$, but not in $\Cont([0,1])$; see
\cite[Proposition~3.5]{MR4745556}.

\paragraph{Sobolev space}
The following result concerns regular variation in the Sobolev space
$W^{1,p}(\R)$; see, e.g., \cite{MR2527916}. It is a generalisation of
Example~\ref{example:diff}, and, in turn, can be further generalised
to other Sobolev spaces and weak derivatives of higher orders.
\index{Sobolev space}

\begin{proposition}
  Let $\xi$ be regularly varying in the Sobolev space $W^{1,p}(\R)$
  with linear scaling and the ideal generated by the norm. Assume that
  the image of the tail measure of $\xi$ under the weak derivative
  operator is non-trivial.  Then the weak derivative of $\xi$ is
  regularly varying in 
  $\Lp(\R)$ with linear scaling and the ideal generated by the norm.
\end{proposition}
\begin{proof}
  Recall that the Sobolev norm of $\xi$ is the $p$-sum combination
  of the $p$-norms of
  $\xi$ and its weak derivative; hence, the map from $\xi$ to its
  weak derivative is a continuous bornologically consistent morphism.
\end{proof}

\section{Random  measures and point processes}
\label{sec:space-count-meas}

Assume that $\XX$ is a Polish space equipped with a continuous
scaling $T$ and a scaling-consistent ideal $\sS$ on $\XX$ with a
countable open base. Recall that $\Mb$ is the family of all Borel
measures that are finite on each set in $\sX$ and are supported
on the union of all sets in $\sS$. It was shown in
\cite[Proposition~3.1]{bas:plan19} that the family $\Mb$ of Borel
measures with the vague topology is Polish.
A metric on $\Mb$ can be constructed as
\begin{equation}
  \label{eq:31b}
  \dmet_{\mathrm{Pr}}(\me_1,\me_2)=\int_1^\infty
  e^{-t}\min\big\{1,\dmet_{\mathrm{Pr}}^{\mathrm{s}}
  \big(\me_1|_{G_t},\me_2|_{G_t}\big)\big\} \diff t,
\end{equation}
where $\{G_t,t\geq1\}$ is the system of sets constructed from a
countable open base $(\base_n)_{n\in\NN}$ of $\sS$ as in
Lemma~\ref{lemma:cont-base}. Note that it is possible to set
$G_t= T_t \VV$, $t \geq 1$, if Condition~\hyperref[condS]{(S)} holds. Furthermore,
$\dmet_{\mathrm{Pr}}^{\mathrm{s}} (\me_1|_{G_t},\me_2|_{G_t})$
is the symmetrised version of the \emph{Prokhorov
  distance}
\index{Prokhorov distance}
between finite measures obtained as restrictions of
$\me_1,\me_2\in\Mb$ onto $G_t$; see \cite{schuhmacher08},
\cite{kalle17} and \cite{dtw22}. Namely, 
\begin{displaymath}
  \dmet_{\mathrm{Pr}}^{\mathrm{s}} (\nu,\nu_1)
  =\inf\big\{r>0\colons \nu(A)\leq \nu_1(A^r)+r,
  \nu_1(A)\leq \nu(A^r)+r, \; A\in\sB(\XX)\big\},
\end{displaymath}
where $\nu$ and $\nu_1$ are finite measures and
$A^r=\{x\colons \dmet(x,A)\leq r\}$ stands for the $r$-neighbourhood
($r$-envelope) of $A$ 
in the metric $\dmet$ on $\XX$.
The symmetrisation is essential, since the total masses of the
restrictions of $\me_1$ and $\me_2$ may differ.

\begin{remark}
  All results from this section hold, assuming that $\XX$ is a
  completely regular metrisable space with an ideal having a countable
  open base. Then the family of tau-additive measures is
  metrisable. If $\XX$ is separable, then the family $\Mb$ is also
  separable; see \cite[Theorem~5.1.3]{bogachev18} for the case of
  finite measures, which is easy to extend to infinite measures with
  the help of Theorem~\ref{lemma:vVSw}.
\end{remark}

The vague topology on $\Mb$ (equivalently, the above defined metric on
$\Mb$) generates the Borel $\sigma$-algebra, which is also the
smallest $\sigma$-algebra that makes the maps $\me\mapsto\me(A)$ measurable
for all Borel $A\in\sS$. A random element in $\Mb$ with this
$\sigma$-algebra is said to be a \emph{random measure}.
\index{random measure}

\index{scaling!of measures}
A possible scaling of measures is the linear scaling
applied to their values, that
is, $(T_t\nu)(A)=t\nu(A)$ for all Borel $A$. Since this scaling amounts
to the linear scaling of functions, this time specialised to functions
$\sB(\XX)\to[0,\infty]$ with arguments are Borel sets, regular
variation can be handled as it was before for spaces of
functions. Linear scaling is not useful for normalised measures
(e.g., probability measures) or for integer-value measures.

We use a scaling of measures that is lifted from a scaling
$T$ on $\XX$ as in \eqref{eq:23}, so that
\begin{displaymath}
  T_t\nu(A)=\nu(T_{t^{-1}}A), \quad \nu\in\Mb,
\end{displaymath}
for all Borel $A$ in $\sS$ (the lifted scaling is denoted by the
same letter). In this way, the scaling is defined on measures with
restrictions on their values, such as counting measures or probability
measures. Note that the zero measure and all measures supported by the
set of scaling-invariant elements in $\XX$ are scaling invariant.

\paragraph{Ideals on counting measures}
A particularly important subfamily of measures in $\Mb$ is the
family $\Mp$ of \emph{counting measures}, that is, measures whose
values are non-negative integers.
\index{counting measure} \index{support of a counting measure}
We denote by $\supp\me$ the
\emph{support} of $\me\in\Mp$, that is, the set of all $x\in\XX$ with
$\me(\{x\})\geq 1$. Note that $(\supp\me)\cap A$ is finite for all
$A\in\sX$, so that $\supp\me$ is at most countable (if $\sX$ has
countable base) and
\begin{displaymath}
  \me=\sum_{x_i\in \supp\me} c_i\delta_{x_i}, 
\end{displaymath}
where $c_i=\me(\{x_i\})$ are called the \emph{multiplicities} of
the corresponding support points.
\index{multiplicity} 
A random element $\etap$ in $\Mp$ equipped with its Borel $\sigma$-algebra
$\sB(\Mp)$ is called a \emph{point process}.
\index{point process}

\begin{lemma}
  Let $\sX$ be a topologically and scaling consistent ideal with a
  countable open base on a Polish space $\XX$ equipped with a 
  continuous scaling. Then the lifted scaling on $\Mp$ is 
  continuous.
\end{lemma}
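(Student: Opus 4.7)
The plan is to establish joint sequential continuity of $(t,\nu)\mapsto T_t\nu$, which suffices because $\Mp$, as a closed subset of the Polish space $\Mb$, is metrisable in the vague topology. Fix $(t_n,\nu_n)\to(t,\nu)$ in $\Rpp\times\Mp$ and $f\in\fC(\XX,\sS)$ with support $B$. Using the pushforward identity $\int f\,d(T_s\mu)=\int(f\circ T_s)\,d\mu$, the task reduces to showing that
\begin{equation*}
\int f\,d(T_{t_n}\nu_n) - \int f\,d(T_t\nu)
= \int (f\circ T_{t_n}-f\circ T_t)\,d\nu_n + \int(f\circ T_t)\,d(\nu_n-\nu) =: I_n+J_n
\end{equation*}
tends to zero. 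The tail $J_n$ vanishes immediately because $f\circ T_t$ is continuous and bounded, with support in $T_{t^{-1}}B\in\sS$ by scaling consistency; hence $f\circ T_t\in\fC(\XX,\sS)$ and the assumed vague convergence $\nu_n\vto\nu$ applies.

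For $I_n$ I would exploit the counting-measure structure of $\nu_n$. Let $K\subset\Rpp$ be a compact interval with $t^{-1}$ in its interior, so $t_n^{-1}\in K$ for all $n$ large. The support of the integrand is contained in $T_K B$. Assuming one can produce a functionally closed $F\in\sS$ with $T_K B\subset F$, Theorem~\ref{lemma:vVSw}(iii) gives $\limsup_n \nu_n(F)\le \nu(F)=N<\infty$, and since $\nu\in\Mp$ this is an integer. Thus for $n$ large, $\nu_n$ has at most $N$ atoms in $F$, and vague convergence of point measures on a Polish space allows labelling these atoms $x_j^{(n)}$ so that $x_j^{(n)}\to y_j$, where $y_1,\dots,y_N$ enumerate the atoms of $\nu$ in $F$ (atoms of $\nu_n$ outside $F$ contribute nothing to $I_n$). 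Joint continuity of $(s,x)\mapsto T_s x$ together with continuity of $f$ then yields $f(T_{t_n}x_j^{(n)})-f(T_t x_j^{(n)})\to 0$ for each $j$, and as the sum has at most $N$ terms, $I_n\to 0$ follows.

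The principal obstacle is constructing $F\in\sS$ with $T_K B\subset F$, i.e., showing that the saturation of a bounded set under a compact family of scalings is itself bounded. Embedding $B\subset\base_k$ for some member of the countable open base, I would combine joint continuity of the scaling, scaling consistency (which yields $T_s\base_k\in\sS$ for each $s\in K$), topological consistency, and the continuous interpolation $\{G_t : t\ge 1\}$ of Lemma~\ref{lemma:cont-base} to obtain a local enclosure in $\sS$ about each $s_0\in K$; compactness of $K$ then allows extraction of a finite refinement whose union is functionally closed and lies in $\sS$. The delicate point is the neighborhood control in the $s$-variable, which in the absence of local compactness of $\XX$ requires exploiting the Radon property of the finite Borel measure $\nu$ restricted to an auxiliary functionally closed $\sS$-set so as to reduce the continuity argument to compact subsets of $\base_k$, where the uniform enclosure is automatic.
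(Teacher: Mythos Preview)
Your decomposition into $I_n+J_n$ is sound, and the treatment of $J_n$ is correct: $f\circ T_t$ is continuous, bounded, supported in $T_{t^{-1}}B\in\sS$ by scaling consistency, so vague convergence applies directly. The argument you sketch for $I_n$ \emph{after} the obstacle is resolved is also essentially right (atom-matching plus joint continuity of the scaling), modulo the minor point that $F$ should be chosen as a $\nu$-continuity set so that the atom count $\nu_n(F)$ stabilises at $N$ and the labelling $x_j^{(n)}\to y_j$ is well-defined.

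The gap is the obstacle itself. Your proposed resolution via the Radon property of $\nu$ does not close it: tightness of the \emph{limit} measure gives no control over where atoms of $\nu_n$ outside a fixed bounded set might lie, and those are precisely the atoms that could contribute to $I_n$ through the moving support $T_{t_n^{-1}}B$. Reducing to compact subsets of $\base_k$ only handles the atoms already known to converge; it does not rule out stray atoms of $\nu_n$ entering $T_{t_n^{-1}}B$ from far away.

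The paper avoids proving $T_K B\in\sS$ altogether. It works directly with the point-to-point characterisation of vague convergence for counting measures (Proposition~2.8 of \cite{bas:plan19}): reducing to $t_n\to 1$, it constructs \emph{two} nested $\me$-continuity bases $(\base'_l),(\base''_l)$ with $\cl\base'_l\subset\base''_l$, $\cl\base''_l\subset\base'_{l+1}$, and $\me(\base'_l)=\me(\base''_l)$. The equality forces $\me_n(\base'_l)=\me_n(\base''_l)$ for large $n$, so atoms of $\me_n$ in $\base''_l$ coincide with those in $\base'_l$; these finitely many atoms converge, and continuity of scaling is applied to each individually. The step analogous to your obstacle --- that atoms of $\me_n$ outside $\base'_{l+1}$ are not mapped by $T_{t_n}$ into $\base''_l$ --- is asserted using the strict gap $\cl\base''_l\subset\base'_{l+1}$ rather than any set-level inclusion $T_K B\subset F$. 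The nested-base construction is what lets the paper localise the problem atom-by-atom; your route would need an independent argument that $T_{[a,b]}\base_k\in\sS$, which is not supplied by the hypotheses in any obvious way.
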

\begin{proof}
  It is well known that the vague convergence of counting measures is
  equivalent to the point-to-point convergence (pointwise convergence
  of the atoms) of suitable
  enumerations of the support points restricted to a countable open
  base of $\sX$; see, e.g., \cite[Proposition~2.8]{bas:plan19}.
  Namely, $\me_n\vto \me$ as $n\to\infty$ if and only if, for all sets
  $\base_l$, $l\in\NN$, from a countable convergence-determining base
  of $\sS$ consisting of $\me$-continuity sets,
  \begin{displaymath}
    \me_n|_{\base_l}= \sum_{i=1}^k  \delta_{x_i^{(n)}},\quad
    \me|_{\base_l}=\sum_{i=1}^k \delta_{x_i},
  \end{displaymath}
  where atoms are listed with their multiplicites, 
  for all sufficiently large $n$ and $x_i^{(n)}\to x_i$ as
  $n\to\infty$ for all $i=1,\dots,k$.
  
  We need to show that $\me_n\vto \me$ and $t_n \to t >0$ imply
  $T_{t_n}\me_n \vto T_{t}\me$. Without loss of generality, assume that
  $t_n \to 1$ and $\cl \base_l\subset \base_{l+1}$ for all $l$. By
  separating closed sets $\cl\base_l$ and $\base_{l+1}^c$ with 
  continuous functions and taking their suitable level sets as a new
  base, it is possible to construct two new bases $(\base'_l)$ and
  $(\base''_l)$ such that $\cl \base'_l\subset \base''_l$,
  $\cl \base''_l\subset\base'_{l+1}$,
  $\me(\base'_l)=\me(\base''_l)$ and
  $\me(\partial\base'_l)=\me(\partial\base''_l)=0$ for all $l\in\NN$.
  Let
  \begin{displaymath}
    \me|_{\base'_l}=\sum_{i=1}^k \delta_{x_i}.
  \end{displaymath}
  Since $\me_n\vto\me$, we have
  \begin{displaymath}
    \me_n|_{\base''_l}=\me_n|_{\base'_l}= \sum_{i=1}^k  \delta_{x_i^{(n)}}
  \end{displaymath}
  with $x_i^{(n)}\to x_i$ as $n\to\infty$ for all $i=1,\dots,k$.
  Since the scaling is continuous,
  $T_{t_n} x_i^{(n)}\in\base''_l$ for all sufficiently large $n$ and
  each $i=1,\dots,k$. Furthermore, there exists an $n_1$ such that, for
  each $y$ in the support of $\me_n$ lying outside of $\base'_{l+1}$,
  we have $T_{t_n}y\notin \base''_l$ for all $n\geq n_1$. Thus, we have
  the pointwise convergence of all points in
  $T_{t_n}\me_n|_{\base'_l}$ to the points in
  $T_1\me|_{\base'_l}=\me|_{\base'_l}$. Since this holds for all
  elements of a countable convergence-determining base, we conclude
  $T_{t_n}\me_n\to T_1\me=\me$. 
\end{proof}

\begin{lemma}
  Assume that $\sX$ is an ideal that satisfies Condition~\hyperref[condB0]{(B$_0$)}. Then 
  the only scaling-invariant element of $\Mp$ is the zero 
  measure.
\end{lemma}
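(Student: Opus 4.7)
I argue by contradiction: suppose $\me\in\Mp$ is scaling invariant and not the zero measure. Since $\me\in\Mp\subset\Msigma$, the measure $\me$ is supported by $\UU=\cup_{B\in\sX}B$, so one can pick a support point $x_0\in\UU$ with multiplicity $\me(\{x_0\})\geq 1$. Scaling invariance means that for every Borel set $A$ and every $t>0$, $\me(T_{t^{-1}}A)=\me(A)$; applied to singletons, this gives $\me(\{T_{t^{-1}}x_0\})=\me(\{x_0\})\geq 1$ for all $t>0$, so every point of the orbit $T_{(0,\infty)}x_0$ is a support atom of mass at least one.

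Next I would use (B$_0$) to locate this orbit inside a set from $\sX$. By Lemma~\ref{lemma:stau} applied to each of the ideals whose union generates $\sX$, condition (B$_0$) implies $\sX$ contains $\sX_\modulus$ for some continuous modulus $\modulus$ with $\modulus(x_0)>0$; indeed, since $x_0\in\UU$, there is $B_0\in\sX$ with $x_0\in B_0$, and by the description of (B$_0$), $B_0$ is contained in a finite union $\cup_{i\in I}\{\modulus_i>t_i\}$ of sets generated by continuous moduli $\modulus_i$ belonging to ideals satisfying (B), so at least one such $\modulus_i$ satisfies $\modulus_i(x_0)>0$. Fix this $\modulus=\modulus_i$. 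By Lemma~\ref{lemma:free}, (B$_0$) forces the scaling to act free on $\UU$; alternatively, $\modulus(T_sx_0)=s\modulus(x_0)$ together with $\modulus(x_0)\in(0,\infty)$ directly yields $T_sx_0\neq x_0$ for $s\neq 1$, so the orbit of $x_0$ consists of \emph{distinct} points.

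Set $B=\{\modulus>\epsilon\}$ with any $\epsilon\in(0,\modulus(x_0))$. Then $B\in\sX_\modulus\subset\sX$, and, since $\modulus(T_tx_0)=t\modulus(x_0)$, the set $\{t>0: T_tx_0\in B\}$ is the interval $(\epsilon/\modulus(x_0),\infty)$, so $B$ contains uncountably many distinct orbit points, each carrying $\me$-mass at least one. Hence $\me(B)=\infty$, contradicting $\me\in\Mp$ whose values on sets of $\sX$ must be finite (indeed, nonnegative integers on Baire sets from $\sX$).

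The whole argument is essentially bookkeeping once one knows (B$_0$) gives a continuous modulus that is strictly positive at $x_0$; the only place where care is needed is verifying that a chosen support point lies in $\UU$ and that some modulus from the (B$_0$)-family witnesses $\modulus(x_0)>0$, which is what I expect to be the only mildly delicate step.
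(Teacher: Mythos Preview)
Your argument follows the same strategy as the paper's proof, but there is one genuine slip: in this paper, an element is \emph{scaling invariant} if $T_t$ fixes it for \emph{some} $t\neq 1$, not for all $t>0$ (see the definition of zeros in Section~\ref{sec:group-acti-invar} and Remark~\ref{rem:sets-scaling}). Accordingly, the paper's own proof begins with ``Let $T_t\me=\me$ for some $t>1$''. Your statement ``Scaling invariance means that for every Borel set $A$ and every $t>0$, $\me(T_{t^{-1}}A)=\me(A)$'' is therefore too strong a hypothesis, and as written you only rule out measures invariant under the full group action.

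The repair is immediate and brings you exactly to the paper's argument: fix a single $t>1$ with $T_t\me=\me$ and replace the continuous orbit $T_{(0,\infty)}x_0$ by the discrete orbit $\{T_{t^n}x_0:n\geq 0\}$. Iterating $T_t\me=\me$ gives $\me(\{T_{t^n}x_0\})=\me(\{x_0\})\geq 1$ for all $n\geq 0$; these points are distinct because $\modulus(T_{t^n}x_0)=t^n\modulus(x_0)$ with $\modulus(x_0)\in(0,\infty)$, and they all lie in $\{\modulus>\eps\}\in\sX$ for any $\eps<\modulus(x_0)$ since $t>1$. Hence $\me(\{\modulus>\eps\})=\infty$, the same contradiction. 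Apart from this definitional point, your choice of $x_0\in\UU$, the extraction of a continuous modulus with $\modulus(x_0)>0$ from (B$_0$), and the appeal to freeness are all handled correctly and match the paper.
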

\begin{proof}
  Let $T_t\me=\me$ for some $t>1$ and a non-trivial $\me$. Then for
  each $x\in\supp\me$ we have $T_tx\in\supp\me$.  Since $\me\in\Mp$ is
  supported by the union of all sets in the ideal,
  Condition~\hyperref[condB0]{(B$_0$)} implies that no point of
  $\supp\me$ is scaling invariant; hence $T_sx\neq x$ for all $s>1$.
  Thus, the set $\{T_{t^n}x\colons n\geq 0\}$ is countably infinite
  and is contained in the support of $\me$.  Without loss of
  generality, assume that $\modulus(x)>1$ for a modulus $\modulus$
  compatible with $\sX$, that is, $A=\{\modulus>1\}\in\sX$.  Then
  $\{T_{t^n}x\colons n\geq 0\}\subset A$, consequently,
  $\me(A)=\infty$ for some $A\in\sX$, contradicting the fact that
  $\me$ should be finite on $A\in\sX$.
\end{proof}

For $k\geq 1$, denote by $\Mpk$ the subset of $\Mp$ consisting of
all counting measures with total mass at most $k$, and let
$\Mpk[0]$ consist only of the zero measure (recall that all measures in
$\Mp$ are assumed to be supported by the union of all sets in
$\sX$). Furthermore, let $\Mpx$ be the family of all subsets
$\Me\subset \Mp$ such that there exists a Borel $A\in\sX$ with
$\me(A)\geq k$ for all $\me\in \Me$.  It is easy to see that $\Mpx$ is a
topologically and scaling consistent ideal with a countable open
base. The union of all sets in $\Mpx$ is the complement of
$\Mpk[k-1]$.

If $\sX$ satisfies Condition~\hyperref[condS]{(S)} and is thus
generated by a modulus 
$\modulus$, 
then the ideal $\Mpx$ is generated by the modulus
\begin{displaymath}
  \modulus_{\mathrm{p}}(\me)
  =\sup\big\{t\geq0\colons \me(\{\modulus> t\})\geq k\big\},
\end{displaymath}
where $\sup\emptyset=0$.  This modulus is continuous; this follows
from the point-to-point characterisation of vague convergence of
counting measures, since $\modulus_{\mathrm{p}}(\me)$ is the $k$-th
largest value of $\modulus$ among the atoms of $\me$, counted with
multiplicities.

\begin{lemma}
  \label{lemma:mpk}
  Assume that $\Mp$ is metrised by \eqref{eq:31b} and let $\sX$ be a
  topologically and scaling consistent ideal with a countable open
  base on a Polish space $\XX$. Then, for each $k\in\NN$, the ideal
  $\Mpx$ coincides with the metric exclusion ideal on $\Mp$ obtained by
  excluding the set $\Mpk[k-1]$.
\end{lemma}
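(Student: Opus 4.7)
The plan is to prove the two inclusions separately. The forward inclusion, that every $\Me\in\Mpx$ lies in the metric exclusion ideal obtained by excluding $\Mpk[k-1]$, is a quantitative estimate from the Prokhorov inequality applied to the witness set of $\Me$. The reverse inclusion is proved by contrapositive: an $\Me\notin\Mpx$ has members whose restrictions to base sets lie in $\Mpk[k-1]$ and are arbitrarily close in the metric.

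For the forward inclusion, fix $\Me\in\Mpx$ with witness Borel set $A\in\sX$ satisfying $\me(A)\geq k$ for all $\me\in\Me$. Since $(\base_n)_{n\in\NN}$ is an open base of $\sX$, one has $A\subset\base_n=G_n$ for some $n$. For any $\me\in\Me$, $\nu\in\Mpk[k-1]$ and $t\geq n$, the set $A$ is contained in $G_t$, so $\me|_{G_t}(A)=\me(A)\geq k$ while $\nu|_{G_t}(A^r)\leq\nu(\XX)\leq k-1$ for every $r>0$. Testing the Prokhorov definition with $A$ forces $k\leq(k-1)+r$, i.e.\ $r\geq 1$, and therefore $\dmet_{\mathrm{Pr}}(\me|_{G_t},\nu|_{G_t})\geq 1$ for all $t\geq n$. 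Substituting into \eqref{eq:31b} gives $\dmet_{\mathrm{Pr}}(\me,\nu)\geq\int_n^\infty e^{-t}\,dt=e^{-n}$ uniformly over $\me\in\Me$ and $\nu\in\Mpk[k-1]$, so $\Me$ is contained in $\{\me:\dmet_{\mathrm{Pr}}(\me,\Mpk[k-1])\geq e^{-n}\}$ and thus belongs to the metric exclusion ideal.

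For the reverse inclusion, I argue by contrapositive: suppose $\Me\notin\Mpx$. Then for every $m\in\NN$ one can find $\me_m\in\Me$ with $\me_m(\base_m)\leq k-1$. Set $\nu_m=\me_m|_{\base_m}$; since $\nu_m(\XX)\leq k-1$, one has $\nu_m\in\Mpk[k-1]$. For $t\leq m$, $G_t\subset G_m=\base_m$ yields $\me_m|_{G_t}=\nu_m|_{G_t}$, so the integrand in \eqref{eq:31b} vanishes on $[1,m]$. Using the boundedness of the Prokhorov integrand that makes \eqref{eq:31b} a finite metric on $\Mp$, the tail contribution is bounded by $\int_m^\infty e^{-t}\,dt=e^{-m}$. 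Hence $\dmet_{\mathrm{Pr}}(\me_m,\nu_m)\leq e^{-m}\to 0$, which contradicts the assumption that $\dmet_{\mathrm{Pr}}(\Me,\Mpk[k-1])>0$.

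The main obstacle lies in the reverse direction: the restrictions $\me_m|_{G_t}$ and $\nu_m|_{G_t}$ may have arbitrarily different total masses for $t>m$, because the counting measures $\me_m$ can carry large multiplicities on sets far out in the base, so uniform smallness of the tail integral is not automatic. This is precisely handled by the standard truncation convention (replacing the Prokhorov distance in the integrand of \eqref{eq:31b} by its minimum with $1$) that is implicit in making \eqref{eq:31b} a genuine finite metric on the full space $\Mp$ of boundedly finite counting measures.
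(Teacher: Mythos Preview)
Your proof is correct and follows essentially the same route as the paper's. Both split into the two inclusions, and for the forward one both pick a base set $\base_n=G_n$ witnessing $\me(A)\geq k$ and test the Prokhorov inequality with that set to force $r\geq 1$; your version is in fact slightly cleaner, since you integrate over all $t\geq n$ (where $A\subset G_t$ guarantees $\dmet_{\mathrm{Pr}}(\me|_{G_t},\nu|_{G_t})\geq 1$) to obtain the bound $e^{-n}$, whereas the paper's displayed chain appears to extract a single-term bound from the integral without comment. For the reverse inclusion the paper argues directly (distance $\geq\eps$ forces a uniform $t$ with $\me(G_t)\geq k$), while you run the contrapositive by constructing $\me_m\in\Me$ and $\nu_m=\me_m|_{\base_m}\in\Mpk[k-1]$ with $\dmet_{\mathrm{Pr}}(\me_m,\nu_m)\leq e^{-m}$; these are the same argument in contrapositive form. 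Your explicit remark that the tail bound requires the integrand in \eqref{eq:31b} to be truncated (so that the metric is finite on all of $\Mp$) is a genuine point that the paper leaves implicit, and it is needed in both versions of the reverse inclusion.
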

\begin{proof}
  Note that $\Mpk[k-1]$ is closed in the vague topology on
  $\Mp$. Assume that $\Me\in\Mpx$, so that there exists a set $\base_l$
  from the countable base of $\sX$ such that $\me(\base_l)\geq k$ for
  all $\me\in \Me$. The sets $G_t$ from \eqref{eq:31b} are constructed
  to satisfy $G_n=\base_n$ for all sets $\base_n$, $n\in\NN$ from the
  base of $\sX$, see
  Lemma~\ref{lemma:cont-base}. If $\me'\in\Mpk[k-1]$ and $\me\in \Me$,
  then
  \begin{displaymath}
    \dmet_{\mathrm{Pr}}(\me,\me')
    \geq e^{-l}\min\big\{1,\dmet_{\mathrm{Pr}}^{\mathrm{s}}
    \big(\me|_{\base_l},\me'|_{\base_l}\big)\}.
  \end{displaymath}
  By the definition of the Prokhorov metric,
  \begin{align*}
    \dmet_{\mathrm{Pr}}^{\mathrm{s}}
    \big(\me|_{\base_l},\me'|_{\base_l}\big)
    &\geq \inf\big\{r>0\colons \me(A\cap \base_l)\leq \me'(A^r\cap \base_l)+r, \;
      A\in\sB(\XX)\big\}\\
    &\geq \inf\big\{r>0\colons \me(A\cap \base_l)\leq \me'(\base_l)+r,\;
      A\in\sB(\XX)\big\}\\
    &\geq \inf\big\{r>0\colons \me(A\cap \base_k)\leq k-1+r,\; A\in\sB(\XX)\big\}\\
    &\geq \inf\big\{r>0\colons \me(\base_l)\leq k-1+r\big\}\geq 1.
  \end{align*}
  Therefore, $\dmet_{\mathrm{Pr}}(\me,\me')\geq e^{-l}$, 
  meaning that the distance from $\me$ to $\Mpk[k-1]$ is at
  least $e^{-l}$ for all $\me\in \Me$. This means that $\Me$ belongs
  to the metric exclusion ideal.

  Now we prove the reverse inclusion. Assume that the distance between
  all elements of $\Me$ to the set $\Mpk[k-1]$ is at least $\eps>0$,
  that is, $\dmet_{\mathrm{Pr}}(\me,\me')\geq\eps$  for all
  $\me\in\Me$ and $\me'\in \Mpk[k-1]$. Note that $\me'(G_t)\leq k-1$
  for all $t$ and the sets $G_t$ used in \eqref{eq:31b}. Then
  \begin{displaymath}
    \dmet_{\mathrm{Pr}}(\me,\me')=\int_1^\infty e^{-t}
    \min\big\{1,\dmet_{\mathrm{Pr}}^{\mathrm{s}}
    \big(\me|_{G_t},\me'|_{G_t}\big)\big\} \diff t\geq \eps.
  \end{displaymath}
  By the monotonicity of $(G_t)$, the distance $\dmet_{\mathrm{Pr}}^{\mathrm{s}}
  \big(\me|_{G_t},\me'|_{G_t}\big)$ is non-decreasing in $t$.
  If $\me(G_{t_0})\leq k-1$ for some $G_{t_0}$ (where $t_0$ may depend
  on $\me$), then letting $\me'=\me|_{G_{t_0}}$
  yields that
  \begin{displaymath}
    \dmet_{\mathrm{Pr}}^{\mathrm{s}}
    \big(\me|_{G_t},\me'|_{G_t}\big)=0,\quad t\leq t_0, 
  \end{displaymath}
  so that
  \begin{displaymath}
    \dmet_{\mathrm{Pr}}(\me,\me')
    =\int_{t_0}^\infty e^{-t}
    \min\big\{1,\dmet_{\mathrm{Pr}}^{\mathrm{s}}
    \big(\me|_{G_t},\me'|_{G_t}\big)\big\} \diff t
    \leq e^{-t_0}.
  \end{displaymath}
  Thus, $e^{-t_0}\geq\eps$, so that $t_0\leq -\log\eps$. Fix any
  $t>-\log\eps$. Then we necessarily have $\me(G_t)\geq k$ for all
  $\me\in\Me$.
\end{proof}

For $\me\in\Mp$ and $k\in\NN$, let $\me^{(k)}$ denote the $k$-th
\emph{factorial product measure} of $\me$, that is, the counting
measure on $\XX^k$ that is obtained as
\index{factorial product measure}
\begin{equation}
  \label{eq:fact-pm}
  \me^{(k)}
  =\sideset{}{^{\neq}}\sum_{i_1,\dots,i_k\in\{1,\dots,N\}}
  \delta_{(x_{i_1},\dots,x_{i_k})},
\end{equation}
where the atoms of $\me=\sum_{i=1}^N\delta_{x_i}$ with
$N\in\NN\cup\{\infty\}$ are listed with
their multiplicities, and the sum is taken over distinct indices
$i_1,\dots,i_k$; see
\cite[Section~4.2]{LastPenrose17}. Note that
\begin{displaymath}
  \me^{(k)}(A^k)=\me(A)(\me(A)-1)\cdots (\me(A)-k+1)
\end{displaymath}
is the factorial moment of $\me(A)$. Hence, $\me^{(k)}(A^k)$ is
divisible by $k!$. Furthermore, $\me^{(k)}(A^k)\geq 1$ implies
$\me^{(k)}(A^k)\geq k!$. 

Recall that  $\sX^k(k)$ denotes the product ideal on $\XX^k$
generated by Cartesian powers $A^k$ for $A\in\sX$.

\begin{lemma}
  \label{lemma:products}
  For each $k\in\NN$, the map
  \begin{displaymath}
    \me\mapsto \varphi_k(\me)=\me^{(k)}
  \end{displaymath}
  is continuous on $\Mp$ and is a bicontinuous injection on
  $\Mp\setminus\Mpk[k-1]$. This map and its inverse on its image are
  bornologically consistent maps between the ideals $\Mpx$ and
  $\Mpi[\sX^k(k)]$.
\end{lemma}
\begin{proof}
  Continuity follows from
  \cite[Proposition~2.8]{bas:plan19}.
  Assume that
  $\Me\in \Mpi[\sX^k(k)]$ and that $\Me$ is contained in the image of
  $\varphi_k$. Then there exists a set $A\in\sX$ such that
  \begin{displaymath}
    \Me\subset \big\{\nu\colons \nu(A^k)\geq1\big\}.
  \end{displaymath}
  For $\nu=\me^{(k)}$, the condition $\nu(A^k)\geq1$ is equivalent to
  $\me(A)\geq k$, and then $\nu(A^k)\geq k!$, implying that
  \begin{displaymath}
    \varphi_k^{-1}(\Me)=\big\{\me\colons \me(A)\geq k\big\}\in \Mpx.
  \end{displaymath}
  In the opposite direction, if $\Me\subset\big\{\me\colons \me(A)\geq
  k\big\}$ for some 
  $A\in\sX$, then $\me^{(k)}(A^k)\geq k!\geq1$ for all $\me\in \Me$. 
\end{proof}

\paragraph{Convergence and regular variation}
Note that $\Mb[\Mpx]$ is the family of measures $\mu$ on
$\Mp$ that are finite on $\Mpx$, namely, on all sets of the form
$\{\me\in\Mp\colons \me(A)\geq k\}$ 
for all Borel $A\in\sX$, and are
supported on the union of all sets in
$\Mpx$, that is, $\mu(\Mpk[k-1])=0$.
The following result characterises vague convergence on $\Mpx$. It was
formulated as Theorem~2.5 in \cite{dtw22} without proof; we provide
below an independent proof.

\begin{lemma}
  \label{lemma:conv-Sk}
  Let $\sX$ be a topologically consistent ideal with a countable open
  base on a Polish space $\XX$ and let $k\in\NN$. Then a sequence of measures
  $\mu_n\in\Mb[\Mpx]$, $n \geq 1$, vaguely converges on the ideal
  $\Mpx$ to the measure $\mu\in \Mb[\Mpx]$ as $n\to\infty$ if and only
  if there exists a base $(\base_i)_{i\in \NN}$ of $\sX$ such that
  \begin{equation}
    \label{eq:pp-ideal}
    \int e^{-\int f\diff \me}\one_{\me(\base_i)\geq k}\,\mu_n(\bdiff \me) 
    \to \int e^{-\int f\diff \me}\one_{\me(\base_i)\geq k}\,\mu(\bdiff
    \me)
    \quad \text{as}\; n\to\infty
  \end{equation}
  for all $i\in\NN$ and bounded Lipschitz functions $f:\XX\to\R_+$
  supported by $\base_i$.
\end{lemma}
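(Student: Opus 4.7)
Plan. The strategy combines the characterization of vague convergence via weak convergence of restrictions on a countable open base (Theorem~\ref{lemma:vVSw}(iv)) with the classical fact that Laplace functionals form a convergence-determining class for weak convergence of finite Borel measures on $\Mp$. First, I verify that $D_i := \{\me \in \Mp : \me(\base_i) \geq k\}$ constitutes a countable open base of $\Mpx$: openness in the vague topology follows from lower semicontinuity of $\me \mapsto \me(\base_i)$ for functionally open $\base_i$, combined with the integer-valuedness of $\me$ (so $D_i = \{\me(\base_i) > k-1\}$), while the base property follows from topological consistency of $\sX$, since any $\Me \in \Mpx$ satisfies $\me(A) \geq k$ on $\Me$ for some $A \in \sX$ and $A$ is contained in some $\base_i$ from the base.

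For necessity, suppose $\mu_n \vto \mu$ and fix $i$ together with bounded Lipschitz $f \geq 0$ with $\supp f \subset \base_i$; set $G(\me) := e^{-\int f d\me}$, which is bounded continuous on $\Mp$. I sandwich $\one_{D_i}$ between the Lipschitz approximants
\begin{align*}
\varphi_r^-(\me) &:= 1 \wedge r^{-1}\dmet_{\mathrm{Pr}}(\me, D_i^c),\\
\varphi_r^+(\me) &:= \big(1 - r^{-1}\dmet_{\mathrm{Pr}}(\me, D_i)\big)_+,
\end{align*}
so that $\varphi_r^- \uparrow \one_{D_i}$ and $\varphi_r^+ \downarrow \one_{D_i}$ as $r \downarrow 0$. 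Topological consistency of $\sX$ gives $\supp \varphi_r^- \subset \overline{D_i} \subset \{\me : \me(F) \geq k\} \in \Mpx$ for a functionally closed $F$ lying in some base element. For $r$ sufficiently small, the support of $\varphi_r^+$ is contained in $D_l$ for a slightly larger base element $\base_l \supset \base_i$, since integer-valuedness of $\me$ combined with Prokhorov closeness forces $\me(\base_l) \geq k$. Hence $G\varphi_r^{\pm} \in \fC(\Mp, \Mpx)$, and vague convergence yields $\int G\varphi_r^{\pm} d\mu_n \to \int G\varphi_r^{\pm} d\mu$; sending $r \downarrow 0$ using monotone and dominated convergence sandwiches $\int G\one_{D_i} d\mu_n$ between quantities converging to $\int G\one_{D_i} d\mu$.

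For sufficiency, assume the integral convergence. Taking $f \equiv 0$ first gives $\mu_n(D_i) \to \mu(D_i)$ for each $i$, so $(\mu_n|_{D_i})$ is a uniformly bounded family of finite Borel measures on $\Mp$. The hypothesis then provides convergence of the Laplace functionals $\int e^{-\int f d\me} d\mu_n|_{D_i}(\me)$ for all bounded Lipschitz $f \geq 0$ supported in $\base_i$; pooling this across $i$ (using the inclusions $D_i \subset D_j$ for $\base_i \subset \base_j$ and the fact that as $i$ varies the supports $\base_i$ exhaust any base set of $\sX$), the classical convergence-determining property of Laplace functionals for weak convergence of finite Borel measures on $\Mp$ (see Kallenberg \cite{kalle17} and \cite[Proposition~4.1]{bas:plan19}) yields $\mu_n|_{D_i} \wto \mu|_{D_i}$ for each $i$. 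Theorem~\ref{lemma:vVSw}(iv), after refining $(D_i)$ to $\mu$-continuity sets via Lemma~\ref{lemma:continuity-base} if needed, then gives $\mu_n \vto \mu$ on $\Mpx$.

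The principal technical challenge lies in sufficiency: one must bridge the Laplace functionals of the hypothesis, where both $f$ and the indicator $\one_{\me(\base_i) \geq k}$ share the same base index $i$, to the full family of Laplace functionals of $\mu_n|_{D_i}$ required for the classical characterization. Threading this information across different indices requires exploiting the consistency of the marginal restrictions $\me|_{\base_i}$ and approximating the indicators $\one_{\me(\base_i) \geq k}$ by continuous functions on $\Mp$, both facilitated by the integer-valued nature of counting measures. A secondary subtlety, in necessity, is verifying that the Prokhorov-neighborhood of $D_i$ remains inside $\Mpx$ for small radii, which uses the specific metric structure \eqref{eq:31b} together with the integer-valuedness of $\me$.
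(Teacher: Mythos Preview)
Your overall architecture matches the paper's: both use the sets $D_i=\{\me:\me(\base_i)\geq k\}$ as a countable open base of $\Mpx$, invoke Theorem~\ref{lemma:vVSw}(iv), and appeal to the Laplace functional characterisation from \cite[Proposition~4.1]{bas:plan19}. However, you leave the decisive step unexecuted in both directions, and the missing idea is the same in each case.

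\textbf{Sufficiency.} Your ``pooling'' step is not a proof. The hypothesis gives, for each index $j$, convergence of $\int e^{-\int f d\me}\one_{D_j}\,d\mu_n$ only for $f$ supported by $\base_j$. For weak convergence of $\mu_n|_{D_i}$ on $\Mp$ you need Laplace functionals at \emph{all} $f\in\fC_+(\XX,\sX)$; taking $\supp f\subset\base_j$ with $j>i$ the hypothesis produces the integral with indicator $\one_{D_j}$, not $\one_{D_i}$. The inclusion $D_i\subset D_j$ does not let you pass from one to the other. What the paper does, and what you are missing, is to \emph{first} adjust the base $(\base_i)$ so that every $D_i$ is a $\mu$-continuity set (via an explicit level-set construction, not the generic Lemma~\ref{lemma:continuity-base}, so that the new base still has the form $\{\me(\base'_i)\geq k\}$). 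With that in place, $\me\mapsto e^{-\int h d\me}\one_{D_i}(\me)$ is bounded, $\mu$-a.e.\ continuous, and---crucially---depends only on $\me|_{\base_j}$ when $\supp h\subset\base_j\supset\base_i$; the hypothesis with index $j$ then yields the desired limit via a Portmanteau argument. Your final paragraph correctly identifies this as the principal difficulty but does not resolve it.

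\textbf{Necessity.} Your Lipschitz sandwich has the same defect. As $r\downarrow 0$, $\varphi_r^+\downarrow\one_{\overline{D_i}}$, not $\one_{D_i}$, so the squeeze closes only if $\mu(\partial D_i)=0$. Again this requires the base adjustment to $\mu$-continuity sets, which you do not perform; citing Lemma~\ref{lemma:continuity-base} at the end is too late and, applied naively on $\Mp$, would destroy the specific form $D_i=\{\me(\base_i)\geq k\}$ that the rest of your argument needs. The paper instead modifies $(\base_i)$ on $\XX$ using level sets of a separating function, so that the associated $D_i$ inherit $\mu$-continuity.

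In short, the base-adjustment step is not cosmetic: it is the engine that drives both directions, and without it your sufficiency argument does not go through.
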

\begin{proof}
  \textsl{Necessity.}
  Fix any countable base $(\base_i)_{i\in\NN}$ of $\sX$. Note that
  \begin{equation}
    \label{eq:43a}
    \De_i=\big\{\me\in\Mp\colons \me(\base_i)\geq k\big\}, \quad i\in\NN,
  \end{equation}
  is an open base of the ideal $\Mpx$.
  % The topological
  % boundary of $D_i$ in $\Mp$ is
  % \begin{displaymath}
  %   \partial D_i=\{\me\in\Mp: \me(\partial\base_i)=l\geq1,
  %   \me(\base_i)\geq k-l\}.
  % \end{displaymath}
  Let $r_i:\XX\to[0,1]$ be a continuous function that exactly
  separates $\base_{i+1}^c$ and $\cl\base_i$, that is,
  $\base_{i+1}=\{r_i>0\}$ and $\cl\base_i=\{r_i=1\}$ (note that we can
  always ensure that $\cl \base_i\subset \base_{i+1}$). Such function
  exists since the carrier space is metrisable.  Consider the sets
  \begin{displaymath}
    \Me_{i,u}=\big\{\me\in\Mp\colons \me(\{r_i>u\})\geq k\big\},\quad u\in[0,1).
  \end{displaymath}
  Then $\Me_{i,u'}\subset \Me_{i,u}$ if $u\leq u'$.  The sets
  $\Me_{i,u}$ belong to the ideal $\Mpx$. They 
  are open, since the map $\me\mapsto\me(\{r_i>u\})$ is lower
  semicontinuous. Furthermore, the closure of $\Me_{i,u}$ in $\Mp$ is 
  \begin{displaymath}
    \cl \Me_{i,u}=\big\{\me\in\Mp\colons \me(\{r_i\geq u\})\geq k\big\}.
  \end{displaymath}
  Since $\mu$ is finite on $\Mpx$, we have
  \begin{displaymath}
    \Me_{i,u}\subset \Me_{i,0}=\big\{\me\in\Mp\colons \me(\base_{i+1})\geq k\big\}
  \end{displaymath}
  and $\mu(\Me_{i,0})<\infty$, we obtain that
  $\mu(\partial \Me_{i,u})=0$ for all but an at most countable family
  of $u$, and thus for at least one $u_i\in(0,1)$. Replacing $\De_i$
  with the set $\Me_{i,u_i}$, we can adjust the base of $\sX$ by
  replacing $\base_i$ with $\{r_i>u_i\}$ and redefine
  $(\De_i)_{i\in\NN}$ by \eqref{eq:43a}. Then the base
  $(\De_i)_{i\in\NN}$ of $\Mpx$ consists of $\mu$-continuity
  sets. This proves the necessity, since the functional $\me\mapsto
  \int f\diff\me$ is continuous in the vague topology on $\sX$ and so
  the integrand in
  \eqref{eq:pp-ideal} is continuous $\mu$-almost everywhere.

  \noindent
  \textsl{Sufficiency.}  We take the base $(\base_i)_{i\in\NN}$ of
  $\sX$ and the corresponding base $(\De_i)_{i\in\NN}$ of $\Mpx$
  constructed in the proof of the necessity. 
  By Theorem~\ref{lemma:vVSw}(iv), it suffices to show that
  \begin{equation}
    \label{eq:mun-to-mu}
    (\mu_n|_{\De_i})\wto (\mu|_{\De_i}) \quad \text{as}\; n\to\infty
  \end{equation}
  for all $i\in\NN$. The restrictions of $\mu_n$ and $\mu$ on $\De_i$
  are finite measures on the family $\Mp$ of counting measures finite
  on the ideal $\sX$. A criterion for the weak convergence of measures
  on $\Mp$ in terms of convergence of their Laplace functionals is
  formulated in Proposition~4.1 from \cite{bas:plan19}. While it
  involves checking \eqref{eq:pp-ideal} for the family of all bounded
  Lipschitz functions not necessarily restricted to those supported by
  sets in the involved ideal, its proof in \cite{bas:plan19}
  accommodates the case of functions supported by sets from the ideal. It
  remains to recognise the sides of \eqref{eq:pp-ideal} as the
  Laplace functionals of $\mu_n|_{\De_i}$ and $\mu|_{\De_i}$ to
  confirm \eqref{eq:mun-to-mu}.
\end{proof}

In case of $k=1$ we get a useful equivalent condition for the weak
convergence, which also appears as part of Theorem~A.1 in
\cite{dombry18:_tail}. 

\begin{corollary}
  \label{cor:conv-Sk-1}
  Let $\sX$ be a topologically consistent ideal with a countable open
  base on a Polish space $\XX$. Then a sequence of measures
  $\mu_n\in\Mb[\Mpi]$, $n \geq 1$, vaguely converges on the ideal
  $\Mpi$ to the measure $\mu\in \Mb[\Mpi]$ as $n\to\infty$ if and only
  if there exists a base $(\base_i)_{i\in \NN}$ of $\sX$ such that
  \begin{equation}
    \label{eq:mun-total}
    \mu_n(\{\me\colons \me(\base_i)\geq1\})\to
    \mu(\{\me\colons \me(\base_i)\geq1\})\quad
    \text{as}\; n\to\infty
  \end{equation}
  and 
  \begin{equation}
    \label{eq:pp-ideal-1}
    \int \Big[1-e^{-\int f\diff \me}\Big]\mu_n(\bdiff \me) 
    \to \int \Big[1-e^{-\int f\diff \me}\Big]\mu(\bdiff \me) \quad
    \text{as}\; n\to\infty
  \end{equation}
  for all $i\in\NN$ and bounded Lipschitz functions $f:\XX\to\R_+$
  supported by $\base_i$.
\end{corollary}
\begin{proof}
  If $f$ is supported by $\base_i$, then
  $\int f\diff \me=0$ whenever $\me(\base_i)=0$. Therefore,
  \begin{displaymath}
    1-e^{-\int f\diff \me}
    =\Big[1-e^{-\int f\diff \me}\Big]\one_{\me(\base_i)\geq1}
  \end{displaymath}
  and thus
  \begin{displaymath}
    e^{-\int f\diff \me}\one_{\me(\base_i)\geq1}
    =\one_{\me(\base_i)\geq1}-\Big(1-e^{-\int f\diff \me}\Big).
  \end{displaymath}
  Therefore, the convergence in \eqref{eq:pp-ideal} of
  Lemma~\ref{lemma:conv-Sk} with $k=1$ is equivalent to the
  convergence stated in \eqref{eq:pp-ideal-1} together with the
  convergence of the masses of the set $\{\me\colons \me(\base_i)\geq1\}$. 
\end{proof}
 
Lemma~\ref{lemma:conv-Sk} is usually applied on the assumption that
the set $\{\me\colons \me(\base_i)\geq k+1\}$ is negligible under $\mu_n$, and
thus the integration may be reduced to measures $\me$ with
$k$ atoms. If $k=1$ then the major advantage in working with
\eqref{eq:pp-ideal-1} is the fact that the integrand vanishes if
$\me(\base_i)=0$. However, this argument does not transfer to
$k\ge2$. Indeed, $\int f\diff\me$ is the sum of $f(x_i)$ for $x_i$
from the support of $\me$, which does not necessarily vanish if
$\me(\base_i)\leq k-1$ -- instead this only results in the elimination
of the terms $f(x_i)$ corresponding to $x_i\notin\base_i$.  The following
result provides an alternative condition to Lemma~\ref{lemma:conv-Sk},
which avoids this problem and so provides a generalisation of
Corollary~\ref{cor:conv-Sk-1} for $k\ge2$.  

\begin{lemma}
  \label{lemma:conv-Sk-product}
  Let $\sX$ be a topologically consistent ideal with a countable open
  base on a Polish space $\XX$ and let $k\in\NN$. Then a sequence of measures
  $\mu_n\in\Mb[\Mpx]$, $n\in\NN$, vaguely converges on the ideal
  $\Mpx$ to the measure $\mu\in \Mb[\Mpx]$ as $n\to\infty$ if and only
  if there exists a base $(\base_i)_{i\in \NN}$ of $\sX$ such that
  \begin{equation}
    \label{eq:mun-total-k}
    \mu_n(\{\me\colons \me(\base_i)\geq k\})\to
    \mu(\{\me\colons \me(\base_i)\geq k\})\quad
    \text{as}\; n\to\infty
  \end{equation}
  and
  \begin{equation}
    \label{eq:pp-ideal-product}
    \int \Big[1-e^{-\int f\diff \me^{(k)}}\Big]\mu_n(\bdiff \me) 
    \to \int \Big[1-e^{-\int f\diff \me^{(k)}}\Big]\mu(\bdiff \me) \quad
    \text{as}\; n\to\infty
  \end{equation}
  for all $i\in\NN$ and bounded Lipschitz functions $f:\XX^k\to\R_+$
  supported by $\base_i^k$.
\end{lemma}
\begin{proof}
  By Lemma~\ref{lemma:products}, applied on
  $\Mp\setminus\Mpk[k-1]$, $\mu_n\vto[\Mpx]\mu$ if and only if
  the pushforward of $\mu_n$ under the map $\me\mapsto\me^{(k)}$
  vaguely converges on the ideal $\Mpi[\sX^k(k)]$ to the pushforward
  of $\mu$ under the same map. By Corollary~\ref{cor:conv-Sk-1}, this
  is the case if and only if \eqref{eq:pp-ideal-product} and
  \eqref{eq:mun-total-k} hold. For the latter, note that $\me(A)\geq
  k$ if and only if $\me^{(k)}(A^k)\geq 1$ for any $A\in\sX$.
  Thus, the convergence of
  the total masses of $\mu_n$ to the total mass of $\mu$ implies that
  the masses of the above mentioned pushforwards of the sets
  $\{\me\colons \me^{(k)}(\base_i^k)\geq1\}$ converge.
\end{proof}

The general definition of regular variation specialises to define
regularly varying measures on $\sM(\Mpx)$, that is,
regularly varying point processes on the ideal $\Mpx$.
\index{point process!regularly varying}
Namely, a point process $\etap$ is $\RV(\Mp,T,\Mpx,g,\mu)$ if
\begin{displaymath}
  g(t)\Prob{T_{t^{-1}}\etap\in\cdot }\vto[\Mpx] \mu(\cdot). 
\end{displaymath}
By Lemma~\ref{lemma:conv-Sk}, this is equivalent to showing that
\begin{displaymath}
  g(t)\E\Big[e^{-\int f \diff T_{t^{-1}}\etap}
  \one_{\etap(T_t\base_i)\geq k}\Big]
  \to \int e^{-\int f\diff\me}
  \one_{\me(\base_i)\geq k}\mu(\bdiff \me)
  \quad \text{as}\; t\to\infty
\end{displaymath}
for all $i\in\NN$ and bounded Lipschitz
functions $f$ with support in $\base_i$. Alternatively, by
Lemma~\ref{lemma:conv-Sk-product}, it suffices to show that
\begin{displaymath}
  g(t)\Prob{\etap(T_t\base_i)\geq k}\to \mu(\{\me\colons \me(\base_i)\geq k\})
\end{displaymath}
and
\begin{equation}
  \label{eq:PP-conv-k}
  g(t)\E \Big[1-e^{-\int f\diff (T_{t^{-1}}\etap)^{(k)}}\Big]
  \to \int \Big[1-e^{-\int f\diff \me^{(k)}}\Big]\mu(\bdiff \me) \quad
  \text{as}\; t\to\infty
\end{equation}
for all $i\in\NN$ and bounded Lipschitz functions $f:\XX^k\to\R_+$ supported by
$\base_i^k$.

\index{Janossy measure}
The \emph{Janossy measure} of order $k\in\NN$ is defined for $B\in\sX$ as
\begin{equation}
  \label{eq:janossy-def}
  J_{k,B}(A)=\frac{1}{k!}\E \big[\one_{\etap(B)=k}
  \etap^{(k)}(A\cap B^k)\big], \quad A\in\sB(\XX^k),
\end{equation}
see \cite[Definition~4.6]{LastPenrose17}. In particular,
\begin{equation}
  \label{eq:Janossy:A=B}
  J_{k,B}(B^k)=\E \big[\one_{\etap(B)=k}\big]=\Prob{\etap(B)=k}.
\end{equation}

\begin{theorem}
  \label{thr:process-k}
  Let $\sX$ be a topologically and scaling consistent ideal with a
  countable open base $(\base_i)_{i\in\NN}$ on a Polish space $\XX$
  equipped with continuous scaling and let $k\in\NN$.
  Furthermore, let $\etap\in\Mp$
  be a point process such that $\Prob{\etap(T_t\base_i)=k}>0$ for all
  $t\geq1$ and all $i\in\NN$. Impose the following conditions.
  \begin{enumerate}[(i)]
  \item There is a non-trivial $\mu\in\Mb[\sX^k(k)]$ such that, for
    all $B\in\sX$ and all continuous bounded functions $h:\XX^k\to\R$
    supported by $B^k$,
    \begin{multline}
      \label{eq:Janossy-RV-k}
      g(t)\int_{\XX^k} h(T_{t^{-1}}(x_1,\dots,x_k))
      J_{k,T_tB}(\bdiff(x_1,\dots,x_k))\\
      \to \int_{\XX^k} h(x_1,\dots,x_k)\,\mu(\bdiff(x_1,\dots,x_k))
      \quad \text{as}\; t\to\infty.
    \end{multline}
  \item For all $i\in\NN$,
    \begin{equation}
      \label{eq:more-than-k}
      g(t)\Prob{\etap(T_t\base_i)\geq k+1}\to 0 \quad \text{as}\; t\to\infty. 
    \end{equation}
  \end{enumerate}
  Then $\etap\in\RV(\Mp,T, \Mpx[k],g,\mu^*)$, that is,
  $\etap$ is regularly varying on $\Mpx[k]$ with the
  normalising function $g$ and the tail measure
  \begin{equation}
    \label{eq:mu-star-k}
    \mu^*_{(k)}(\Be)=\int_{\XX^k}\one_{\sum_{i=1}^k \delta_{x_i}\in \Be}
    \mu(\bdiff (x_1,\dots,x_k)) 
  \end{equation}
  for Borel $\Be\in\Mp[\sX^k(k)]$. Equivalently, the tail measure
  $\mu^*_{(k)}$ is the pushforward of $\mu$ under the map
  \begin{equation}
    \label{eq:me-map-k}
    x=(x_1,\dots,x_k)\mapsto \me_x=\sum_{i=1}^k \delta_{x_i}.
  \end{equation} 
\end{theorem}
\begin{proof}
  We apply Lemma~\ref{lemma:conv-Sk-product}. Without loss of
  generality, it is possible to assume that all $\base_i^k$ are
  $\mu$-continuity sets.   By
  \eqref{eq:Janossy:A=B} and \eqref{eq:more-than-k},
  \begin{align*}
    \lim_{t\to\infty}g(t)\E \one_{\etap(T_t\base_i)\geq k}
    &=\lim_{t\to\infty}
    g(t)\E \one_{\etap(T_t\base_i)=k}\\
    &=\lim_{t\to\infty} g(t)J_{k,T_t\base_i}(T_t\base_i^k)\\
    &=\mu(\base_i^k)
      =\mu^*_{(k)}(\{\me\colons \me(\base_i)=k\})
      =\mu^*_{(k)}(\{\me\colons \me(\base_i)\geq k\}).
  \end{align*}
  This follows from \eqref{eq:Janossy-RV-k} by a standard argument,
  approximating the indicator of $\base_i^k$ with continuous
  functions.
  % Note that the Janossy measure of order $k$ is
  % non-trivial, since $\etap(T_t\base_i)=k$ with positive probability.

  Let $f:\XX^k\to\R_+$ be a bounded
  Lipschitz function supported by $\base_i^k$.
  By \eqref{eq:more-than-k}, the limit of the left-hand side of
  \eqref{eq:PP-conv-k} coincides with the limit of 
  \begin{multline*}
    g(t)\E \Big[\Big(1-e^{-\int f\diff (T_{t^{-1}}\etap)^{(k)}}\Big)
    \one_{\etap(T_t\base_i)=k}\Big]\\
    = g(t) \int_{T_t\base_i^k}
    \Big(1-e^{-\int f \diff (T_{t^{-1}}\me_x)^{(k)}}\Big)
    J_{k,T_t\base_i}(\bdiff (x_1,\dots,x_k)).
  \end{multline*}
  By (i) applied to the function
  \begin{displaymath}
    h(x_1,\dots,x_k)=1-e^{-\int f\diff\me_x^{(k)}},
  \end{displaymath}
  which is bounded and supported by $\base_i^k$, 
  we obtain
  \begin{multline*}
    g(t) \int_{T_t\base_i^k}
    \Big(1-e^{-\int f \diff (T_{t^{-1}}\me_x)^{(k)}}\Big)
    J_{k,T_t\base_i}(\bdiff (x_1,\dots,x_k))\\
    \to \int_{\XX^k} \Big(1-e^{-\int f \diff \me_x^{(k)}}\Big)
      \mu(\bdiff (x_1,\dots,x_k)) \quad \text{as}\; t\to\infty.
  \end{multline*}
  Thus,
  \begin{multline*}
    g(t)\E \Big[\Big(1-e^{-\int f\diff (T_{t^{-1}}\etap)^{(k)}}\Big)
    \one_{\etap(T_t\base_i)\geq k}\Big]\\
    \to \int_{\XX^k} \Big(1-e^{-\int f \diff \me_x^{(k)}}\Big)
    \mu(\bdiff (x_1,\dots,x_k))
    \quad \text{as}\; t\to\infty.
  \end{multline*}
  The result follows from Lemma~\ref{lemma:conv-Sk-product}.
\end{proof}

\begin{example}[Binomial point process]
  \label{ex:rv-binomial-pp}
  Let $\etap=\delta_{\xi_1}+\cdots+\delta_{\xi_m}$, where
  $\xi_1,\dots,\xi_m$ are i.i.d.\ in a Polish space $\XX$ equipped
  with a continuous scaling and an ideal $\sX$. In this case, $\etap$
  is said to be a \emph{binomial point process}.
  \index{binomial point process}
  Assume that the common
  distribution $\nu$ of the $\xi_i$'s satisfies
  $\nu\in\RV(\XX,T,\sX,g,\mu_1)$. Fix $k\in\{1,\dots,m\}$.
  Then the Janossy
  measure of $\etap$ of order $k$ of a Borel set $A$ in $\XX^k$ and
  for a Borel set $B$ in $\XX$ is  given by 
  \begin{align*}
    J_{k,B}(A)
    &=\frac{1}{k!} \E\big[\etap^{(k)}(A\cap B^k)
      \one_{\etap(B)=k}\big]\\
    &=\frac{1}{k!} \E\bigg[\sideset{}{^{\neq}}\sum_{i_1,\dots,i_m
      \in \{1,\dots,m\}} \one_{(\xi_{i_1},\dots,\xi_{i_k})\in (A\cap
      B^k)}
      \prod_{l\notin\{i_1,\dots,i_k\}}\one_{\xi_l\notin B}\bigg]\\
    &=\frac{1}{k!} \frac{m!}{(m-k)!}
      \Prob{(\xi_1,\dots,\xi_k)\in (A\cap B^k)}
      \Prob{\xi_{i_j}\notin B, j=k+1,\dots,m}\\
    &=\binom{m}{k} \nu^{\otimes k}(A\cap B^k)(1-\nu(B))^{m-k}.
  \end{align*}
  % \begin{displaymath}
  %   J_{1,B}(A)=m\nu(A\cap B)\nu(B^c)^{m-1}. 
  % \end{displaymath}
  Then \eqref{eq:Janossy-RV-k} holds with the normalising function
  $g(t)^k$ and 
  \begin{displaymath}
    \mu=\binom{m}{k} \mu_1^{\otimes k}.
  \end{displaymath}
  Furthermore, if $k\leq m-1$,
  \begin{align}
    g(t)^k\Prob{\etap(T_t\base_i)\geq k+1}
    &=g(t)^k\sum_{j=k+1}^m \binom{m}{j} \nu(T_t\base_i)^j
      (1-\nu(T_t\base_i))^{m-j}\notag \\
    \label{eq:rv-binomial-k+1}
    &\leq (g(t) \nu(T_t\base_i))^k C_m
    \sum_{j=k+1}^m \nu(T_t\base_i)^{j-k}
    \to 0 
  \end{align}
  as $t\to\infty$, since the first factor is bounded and
  $\nu(T_t\base_i)\to0$. Here $C_m$ is the maximum of the involvex
  binomial coefficients. 
  By Theorem~\ref{thr:process-k}, $\etap$
  is regularly varying on the ideal $\Mpik$ with the normalising
  function $g^k$  and the tail
  measure $\mu^*_{(k)}$ being the pushforward of $\mu$ under the map
  \eqref{eq:me-map-k}. 
\end{example}

\paragraph{Regular variation of Poisson processes}
Theorem~\ref{thr:process-k} yields the following result in the case where
$\etap$ is a Poisson process.
\index{Poisson process}
Recall that $\etap$ is a Poisson process with intensity measure
$\lambda$ if the numbers $\etap(A_1),\dots,\etap(A_k)$ are independent
for disjoint Borel sets $A_1,\dots,A_k$ from the ideal $\sX$ and
$\etap(A)$ is Poisson distributed with mean $\lambda(A)$ for each
Borel $A\in\sX$; see \cite[Section~3.1]{LastPenrose17}. 

\begin{theorem}
  \label{thr:poisson-process}
  Let $\sS$ be a topologically and scaling consistent ideal with a
  countable open base on a Polish space $\XX$ equipped with a
  continuous scaling. Furthermore, let $k\in\NN$ and let $\etap\in\Mp$
  be a Poisson point process with intensity measure
  $\lambda\in\RV(\XX,T,\sS,g,\mu)$. Then $\etap$ is regularly varying
  on $\Mp$ with the lifted scaling, the ideal $\Mpx$, the normalising
  function $g^k$ and the tail measure
  \index{Poisson process!regular variation of}
  \begin{displaymath}
    \mu^*_{(k)}(\Be)=\frac{1}{k!} \int_{\XX^k}
    \one_{\sum_{i=1}^k\delta_{x_i}\in \Be}\mu^{\otimes
      k}(\bdiff (x_1,\dots,x_k))
  \end{displaymath}
  for all Borel $\Be$ in $\Mpx$.
\end{theorem}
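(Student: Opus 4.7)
The plan is to apply Lemma~\ref{lemma:conv-Sk} to reduce the convergence $g(t)^k\Prob{T_{t^{-1}}\etap\in\cdot}\vto[\Mpx]\mu^{(k)}$ to convergence of Laplace-type functionals on an open base of $\Mpx$. I would pick a countable open base $(\base_i)_{i\in\NN}$ of $\sS$ consisting of $\mu$-continuity sets (available by Lemma~\ref{lemma:continuity-base}), giving the open base $\De_i=\{\me\in\Mp:\me(\base_i)\geq k\}$ of $\Mpx$. Writing $\etap_t=T_{t^{-1}}\etap$ and $\lambda_t=T_{t^{-1}}\lambda$, the scaling property makes $\etap_t$ a Poisson process with intensity $\lambda_t$, and by hypothesis $g(t)\lambda_t\vto\mu$ on $\sS$. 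It then suffices to show that, for every bounded Lipschitz $f:\XX\to\R_+$ supported in some $\base_i$,
\[
g(t)^k\,\E\!\left[e^{-\int f\,d\etap_t}\,\one_{\etap_t(\base_i)\geq k}\right]\longrightarrow\int e^{-\int f\,d\me}\,\one_{\me(\base_i)\geq k}\,\mu^{(k)}(d\me)
\]
as $t\to\infty$.

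For the left-hand side, the Poisson structure makes $\etap_t|_{\base_i}$ Poisson with intensity $\lambda_t|_{\base_i}$ and independent of $\etap_t|_{\base_i^c}$. Since $\supp f\subset\base_i$, conditioning on $\etap_t(\base_i)=m$ makes the $m$ points i.i.d.\ with distribution $\lambda_t|_{\base_i}/\lambda_t(\base_i)$, and a direct computation yields
\[
\E\!\left[e^{-\int f\,d\etap_t}\,\one_{\etap_t(\base_i)\geq k}\right]=e^{-\lambda_t(\base_i)}\sum_{m\geq k}\frac{1}{m!}\left(\int_{\base_i}e^{-f}\,d\lambda_t\right)^{\!m}.
\]
Since $g(t)\lambda_t(\base_i)\to\mu(\base_i)\in(0,\infty)$ and $g(t)\to\infty$, we have $\lambda_t(\base_i)\to 0$, so $e^{-\lambda_t(\base_i)}\to 1$. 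Rewriting $\int_{\base_i}e^{-f}\,d\lambda_t=\lambda_t(\base_i)-\int(1-e^{-f})\,d\lambda_t$ and using that $1-e^{-f}$ is bounded continuous with support in $\base_i\in\sS$, vague convergence of $g(t)\lambda_t$ yields $g(t)\int_{\base_i}e^{-f}\,d\lambda_t\to\int_{\base_i}e^{-f}\,d\mu$. The $m=k$ term therefore contributes $\frac{1}{k!}(\int_{\base_i}e^{-f}\,d\mu)^k$ in the limit, while the tail of the series is bounded by $g(t)^k\lambda_t(\base_i)^{k+1}\cdot O(1)=O(g(t)^{-1})$ and so vanishes.

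It remains to match this limit with the right-hand side. Since $\me=\sum_{j=1}^k\delta_{x_j}$ satisfies $\me(\base_i)\geq k$ exactly when all $x_j\in\base_i$, the definition of $\mu^{(k)}$ gives
\[
\int e^{-\int f\,d\me}\,\one_{\me(\base_i)\geq k}\,\mu^{(k)}(d\me)=\frac{1}{k!}\int_{\base_i^k}e^{-\sum_{j=1}^k f(x_j)}\,\mu^{\otimes k}(d(x_1,\dots,x_k))=\frac{1}{k!}\left(\int_{\base_i}e^{-f}\,d\mu\right)^{\!k},
\]
which coincides with the computed limit. The main delicate point is the justification of $g(t)\int_{\base_i}e^{-f}\,d\lambda_t\to\int_{\base_i}e^{-f}\,d\mu$: the function $e^{-f}$ itself does not have support in $\sS$, so the splitting via $1-e^{-f}$ together with $\mu$-continuity of $\base_i$ is essential. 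The only other point requiring a little care is verifying that $\mu^{(k)}\in\Mb[\Mpx]$, which follows since $\mu^{(k)}(\De_i)=\frac{1}{k!}\mu(\base_i)^k<\infty$ and $\mu^{(k)}$ is concentrated on measures with exactly $k$ atoms (so it annihilates $\Mpk[k-1]$).
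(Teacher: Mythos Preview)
Your proof is correct but proceeds differently from the paper. You apply Lemma~\ref{lemma:conv-Sk} directly and compute the Laplace functional $\E[e^{-\int f\,d\etap_t}\one_{\etap_t(\base_i)\geq k}]$ explicitly from the Poisson structure, isolating the dominant $m=k$ term in the Poisson expansion. The paper instead routes the argument through Theorem~\ref{thr:process-1}: for $k=1$ it verifies conditions (i) and (ii) there using the explicit first Janossy measure $J_{1,B}(A)=e^{-\lambda(B)}\lambda(A\cap B)$; for $k\geq 2$ it passes to the factorial product process $\etap^{(k)}$ on $\XX^k$ via Lemma~\ref{lemma:products}, computes its first Janossy measure, and again applies Theorem~\ref{thr:process-1}. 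Your route is more elementary and self-contained for the Poisson case, exploiting the closed-form Poisson law to handle all $k$ at once without the factorial-product detour. The paper's route is more structural: Theorem~\ref{thr:process-1} and Lemma~\ref{lemma:products} are reusable devices that also drive the proof of Theorem~\ref{thr:marked-rv} for independently marked processes, so the Poisson result becomes a quick corollary of machinery built for a broader class.
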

\begin{proof}
  Condition (i) of Theorem~\ref{thr:process-k} holds, since
  \begin{displaymath}
    J_{k,B}(A)=\frac{e^{-\lambda(B)}}{k!}
    \lambda^{\otimes k}(A\cap B^k), \quad A\in\sB(\XX^k),
  \end{displaymath}
  see \cite[Example~4.8]{LastPenrose17}. Extend the scaling to act
  componentwise on $\XX^k$ and note that $\lambda\in\RV(\XX,T,\sS,g,\mu)$
  implies $\lambda^{\otimes k}\in\RV(\XX^k,T,\sS^k(k),g^k,\mu^{\otimes
    k})$. Then,
  \begin{displaymath}
    g(t)^k J_{k,T_tB}(T_t A)= g(t)^k \frac{1}{k!}
    e^{-\lambda(T_tB)}\lambda^{\otimes k}(T_t(A\cap
    B^k))\to \frac{1}{k!} \mu^{\otimes k}(A\cap B^k)
    \quad \text{as}\; t\to\infty
  \end{displaymath}
  for all $\mu$-continuity sets $B$ and $\mu^{\otimes k}$-continuity
  sets $A$.  Condition~(ii) follows from
  \begin{displaymath}
    g(t)^k\Prob{\etap(T_tB)\geq k+1}=g(t)^k O(\lambda(T_tB)^{k+1})\to
    0 
    \quad \text{as}\; t\to\infty
  \end{displaymath}
  due to the properties of the Poisson distribution.
\end{proof}

\begin{example}
  \label{ex:inv-lin}
  Let $\etap$ be the Poisson process on $\R^d$ with intensity measure
  $\lambda$. Equip $\R^d$ with linear scaling and the ideal
  $\sR^d_0(m)$ for some $m\in\{1,\dots,d\}$. If
  $\lambda\in\RV(\R^d,\mydot,\sR^d_0(m),g,\mu)$, then $\etap$ is
  regularly varying on $\Mpik[\sR_0^d(m)]$ with with the normalising
  function $g^k$ and the tail measure
  supported on $k$-point counting measures on $\R^d$. Note that the case of a
  stationary Poisson process is excluded, since the Lebesgue measure
  is not finite on some sets in the ideal $\sR^d_0(m)$.

  A similar statement holds if $\R^d$ is equipped with the inverse
  linear scaling given in \eqref{eq:inv-linear}.
  \index{inverse linear scaling} \index{scaling!inverse linear}
  For instance, let
  $\sR$ be the Fr\'echet bornology on $\R^d$, that is, $\sR$ has the
  base $\{x\colons \|x\|<n\}$ for $n\in\NN$. If $\lambda$ is the Lebesgue
  measure, then $\lambda$ is regularly varying on $\sR$ under the
  inverse scaling with the normalising function $g(t)=t^d$, the tail
  measure being $\lambda$ itself due to its homogeneity property. Then,
  for $k=1$, the stationary Poisson process $\etap$ is regularly varying under
  the inverse scaling with the tail measure being the pushforward of
  $\lambda$ under the map $x\mapsto\delta_x$.
\end{example}

\begin{example}[Poisson process of hyperplanes]
  \label{ex:hyperplanes}
  Let $\XX=\R_+\times\SS^{d-1}$, which is the polar representation of
  the Euclidean space $\R^d$ using the Euclidean sphere
  $\SS^{d-1}$. Assume that the scaling on $\XX$ is linear, applied to
  the first component, and equip $\XX$ with the ideal $\sX$ generated
  by sets $(n,\infty)\times\SS^{d-1}$ for $n\in\NN$. Let
  $\lambda=\nu\otimes\kappa$ be the product of a measure
  $\nu\in\Mb[\sR_0]$ and a finite measure $\kappa$ on
  $\SS^{d-1}$. Each point $(r,u)\in\XX$ defines a hyperplane
  $\{x\in\R^d\colons \langle x,u\rangle=r\}$.  \index{hyperplane}
  \index{Poisson hyperplane tessellation} In this way, a Poisson point
  process $\etap$ on $\XX$ with intensity $\lambda$ is interpreted as
  a Poisson process on the family of hyperplanes, the so-called
  \emph{Poisson hyperplane tessellation}; see, e.g.,
  \cite{sch:weil08}. If
  $\nu\in\RV(\R_+,\mydot,\sR_0,g,\theta_\alpha)$, then, for each
  $k\in\NN$, $\etap$ is regularly varying on the ideal $\Mpx[k]$ with
  respect to the scaling applied to the radial component and with the
  normalising function $g^k$.  Its tail measure is the pushforward of
  $\big(\theta_\alpha\otimes\kappa\big)^{\otimes k}/k!$ under the map
  \begin{equation}
    \label{eq:hyp-transform}
    \big((t_1,u_1),\dots,(t_k,u_k)\big)\mapsto
    \delta_{(t_1,u_1)}+\cdots+\delta_{(t_k,u_k)}.
  \end{equation}
  If $\nu$ is the Lebesgue measure on $(0,\infty)$, then $\etap$
  determines the stationary Poisson hyperplane tessellation. In this
  case, we choose the inverse linear scaling on $(0,\infty)$, and then
  $\etap$ is regularly varying on $\Mpik[\sX]$ with $\sX$ generated by
  $(0,n)\times\SS^{d-1}$, $n\in\NN$, see Example~\ref{ex:inv-lin}. The
  normalising function is $t^k$ and the tail measure is the image of 
  $\big(\nu\otimes\kappa\big)^{\otimes k}/k!$ under the map
  given at \eqref{eq:hyp-transform}.
\end{example}

\begin{example}[Translation equivalence of counting measures]
  Let $\Mp[\sR_0^d]$ be the space of counting measures on $\R^d$ with
  linear scaling and the ideal $\sR_0^d$.
  Assume that two counting measures $\me$ and $\me'$ are
  equivalent if there is a $z\in\R^d$ such that $\me(B-z)=\me'(B)$ for
  all Borel sets $B$. In other words, $\me\sim \me'$ if the two
  counting measures are 
  identical up to a translation.
  Condition~\hyperref[condS]{(S)} holds in this case. The
  scaling-invariant measures are the zero measure and all those supported
  at the origin. Note that $[\zero]$, that is, the equivalence class
  of $\delta_0$, is the family $\Mpk[1]$ of
  counting measures with a support consisting of at most a single
  point.
  \index{counting measure!translation equivalence}

  The quotient space of counting measures up to translation
  equivalence is useful when considering the convergence of clusters of
  extremes for time series \cite{basrak18} and marked point processes
  \cite{basrak22}. In these works, measures were translated to move a
  specified point (called an anchor) to the origin. However, our
  setting makes it possible to avoid specifying the anchor (which
  serves the role of the selection map).
  \index{anchor}
  % Since this
  % anchoring map is not necessarily continuous, we work with an
  % alternative map, which associates any $\tilde{N}=qN$ with the
  % measure $\tilde{q}\tilde{N}=\theta_zN$ for
  % $z=\bar N=N(\R^d)^{-1}\int x N(\bdiff x)$, being the centre of mass for
  % $N$. This map is continuous and satisfies \hyperref[condS]{(S)}.

  % In order to ensure
  % the validity of \eqref{eq:16}, we endow $\XX=\sM_p(\R^d,\sS)$ with
  % the modulus given by
  % \begin{displaymath}
  %   \modulus(N)=\|\bar N\|+\Big(\int \|x-\bar N\|^2 N(\bdiff x)\Big)^{1/2},
  %   \quad N\in \sM_p(\R^d,\sS), N\neq \zero.
  % \end{displaymath}
  % Over all $N'\in[N]$, the first summand on the right-hand side is
  % minimised on $\tilde{q}q N$, while the second summand is
  % translation-invariant.

  % Recall that the ideal $\sS_{\mathrm{p}}(k)$ is obtained by excluding
  % the family $\Mpk[k-1]$ of counting measures, see
  % Lemma~\ref{lemma:mpk}.  While the family $\Mpk[k-1]$ is invariant
  % under translations, the metric given at \eqref{eq:31b} is not
  % translation invariant.
  The saturation $[\Be]$ of $\Be\subset\Mp[\sR_0^d]$
  (that is, the union of the equivalence
  classes $[\me]$ for all $\me\in\Be$) is in $\Mpik[\sR_0^d]$ if there exists
  an $A\in\sR_0^d$ such 
  that $\me(A+z)\geq k$ for all $\me\in \Be$ and all $z\in\R^d$. This
  condition means that all $\me\in \Me$ have at least $k$ points
  outside any ball of a fixed radius $\eps>0$. By centring this ball
  at a point in the support of $\me$, we see that
  $[\Me]\in\Mpik[\sR_0^d]$ is only possible if
  $\Me\in\Mpiko[\sR_0^d]$, that is,
  \begin{displaymath}
    \big[\Mpik[\sR_0^d]\big] \subset \Mpiko[\sR_0^d], \quad k\in\NN. 
  \end{displaymath}
  % If $k=1$, this means that there is an $\eps>0$ such that
  % none of $\me\in M$ has support inside a ball of radius $\eps$. The
  % pushforward of this ideal under the quotient map does not contain
  % any non-empty set.

  Let $\etap$ be the Poisson process with intensity measure
  $\lambda\in\RV(\R^d,\mydot,\sR_0^d,g,\mu)$.  By
  Theorem~\ref{thr:poisson-process}, the tail measure of $\etap$ on
  $\Mpik[\sR_0^d]$ is supported by counting measures with exactly
  $k$ points and it is obtained as the pushforward of 
  $\mu^{\otimes k}/k!$ by the map
  $(x_1,\dots,x_k)\mapsto \{x_1,\dots,x_k\}$. Then the further
  pushforward of the tail measure to the quotient space is trivial on
  $\big[\Mpik[\sR_0^d]\big]$, but is non-trivial on
  $\big[\Mpikm[\sR_0^d]\big]$ if $k\geq2$, since the support of the tail
  measure $\mu$ always contains points with distances exceeding any
  positive number. The latter is due to the fact that $\mu$ is
  homogeneous and non-trivial. Thus, $q\mu^{\otimes k}$ is non-trivial on
  $q \Mpikm[\sR_0^d]$ and so $q\etap$ is regularly varying in the
  quotient space on the ideal $q \Mpikm[\sR_0^d]$. Its tail
  measure is the image of $\mu^{\otimes k}/k!$ under the map
  $(x_1,\dots,x_k)\mapsto q\{x_1,\dots,x_k\}$.
\end{example}

The following example deals with doubly stochastic Poisson processes. 

\begin{example}[Cox process]
  Let $\etap$ be the \emph{Cox process} with the driving random measure
  \index{Cox process}
  $\Lambda\in\Mb$, that is, conditional on $\Lambda$, $\etap$ is the
  Poisson point process with intensity measure $\Lambda$. Assume that
  $\Lambda(A)$ is square-integrable for each $A\in\sX$. Then
  \begin{displaymath}
    \Prob{\etap(A)\geq
      2}=\E\Big[1-e^{-\Lambda(A)}-\Lambda(A)e^{-\Lambda(A)}\Big]
    \leq \E \min\big(1,\Lambda(A)^2\big).
  \end{displaymath}
  The first Janossy measure is given by
  \begin{displaymath}
    J_{1,B}(A)=\E\big[\etap(A\cap B)\one_{\etap(B)=1}\big]
    =\E \big[\Lambda(A\cap B)e^{-\Lambda(B)}\big].
  \end{displaymath}
  Thus, $\etap$ is regularly varying on $\Mpx[1]$ with the tail
  measure given by \eqref{eq:mu-star-k} with $k=1$ if, for all $B\in\sX$,
  \begin{displaymath}
    g(t)\E\Big[e^{-\Lambda(T_tB)} \int
    f(T_{t^{-1}}x)\Lambda(\bdiff x)\Big]
    \to \int f\diff \mu \quad \text{as}\; t\to\infty
  \end{displaymath}
  for all $f\in\Cont(\XX,\sX)$ with support contained in $B$, and
  \begin{displaymath}
    g(t) \E \min\big(1,\Lambda(T_tA)^2\big)\to 0\quad \text{as}\;
    t\to\infty. 
  \end{displaymath}
\end{example}

\paragraph{Independently marked processes with scaling applied to the
  marks. }
Consider counting measures on the product $\YY\times\XX$ of two Polish
spaces, where the first component designates the position and the
second component is the mark of a point. Let
\begin{displaymath}
  \zetap=\sum_i \delta_{y_i}
\end{displaymath}
be a finite point process on $\YY$ (that is, $\zetap(\YY)$ is
almost surely finite). We call $\zetap$ the \emph{ground process}
\index{ground process} and let
\begin{displaymath}
  \etap=\sum_i \delta_{(y_i,\xi_i)}
\end{displaymath}
be an \emph{independently marked point process},
\index{independently marked point process}
where $\xi_1,\xi_2,\ldots$ are independent copies of a random element
$\xi$ in $\XX$, called the \emph{typical mark}. 
\index{typical mark}

If, for some $k\in\NN$, the random variable $\zetap(\YY)^k$ is
integrable, then the intensity measure of the factorial process
$\zetap^{(k)}$ is finite and is denoted by $\kappa^{(k)}$. This
measure is said to be the 
$k$-th \emph{factorial moment measure} of $\zetap$. Note that
$\kappa^{(k)}$ vanishes if $\zetap(\YY)\leq k-1$ almost surely.
\index{factorial moment measure}

\begin{theorem}
  \label{thr:marked-rv}
  Assume that the Polish space $\XX$ is equipped with a continuous scaling
  and a topologically and scaling consistent ideal $\sX$ with a
  countable open base, and let $\sZ$ be the ideal on $\YY\times\XX$
  generated by $\YY\times B$, $B\in\sX$, where $\YY$ is another Polish
  space.  Furthermore, let $\etap$ be a point process with independent
  marks such that the ground process $\zetap$ satisfies
  $\E\zetap(\YY)^k<\infty$ and $\Prob{\zetap(\YY)\geq k}>0$ for some
  $k\in\NN$, and thus admits a non-trivial finite factorial
  moment measure $\kappa^{(k)}$.
  If the typical mark satisfies
  $\xi\in\RV(\XX,T,\sX,g,\mu)$, then $\etap$ is regularly varying on
  $\Mp[\sZ]$ with the lifted scaling $T_t(y,x)=(y,T_tx)$, the ideal
  $\Mpik[\sZ]$, the normalising function $g^k$, and the tail measure
  \begin{displaymath}
    \mu^*_{(k)}(\Be)=\frac{1}{k!} \int_{\YY^k} \int_{\XX^k}
    \one_{\sum_{i=1}^k\delta_{(y_i,x_i)}\in \Be}\,
    \mu^{\otimes k}(\bdiff (x_1,\dots,x_k))\,\kappa^{(k)}(\bdiff (y_1,\dots,y_k))
  \end{displaymath}
  for Borel $\Be$ from $\Mpik[\sZ]$.
\end{theorem}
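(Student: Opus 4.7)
The plan is to verify the claimed vague convergence $g(t)^k\,\Prob{T_{t^{-1}}\etap\in\cdot}\vto[{\Mpik[\sZ]}]\mu^{(k)}(\cdot)$ by means of the Laplace-functional criterion in Lemma~\ref{lemma:conv-Sk}. Since $\sZ$ is generated by the sets $\YY\times A$ with $A\in\sX$, an open base of $\Mpik[\sZ]$ is given by $\De_i=\{\me:\me(\YY\times\base_i)\geq k\}$, where $\base_i$ runs through an open base of $\sX$ which, by Lemma~\ref{lemma:continuity-base}, we arrange to consist of $\mu$-continuity sets. It therefore suffices to show, for every $i\in\NN$ and every bounded Lipschitz function $f:\YY\times\XX\to\R_+$ supported in $\YY\times\base_i$, that
\[
g(t)^k\,\E\Bigl[e^{-\int f\,d(T_{t^{-1}}\etap)}\one_{\etap(\YY\times T_t\base_i)\geq k}\Bigr]
\]
converges to $\int e^{-\int f\,d\me}\,\one_{\me(\YY\times\base_i)\geq k}\,\mu^{(k)}(d\me)$.

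Writing $\etap=\sum_{n=1}^N\delta_{(y_n,\xi_n)}$ with the marks $\xi_n$ i.i.d.\ of law $P_\xi$ and independent of $\zetap=\sum_n\delta_{y_n}$, I would condition on $\zetap$ and decompose according to the random set $S=\{n:\xi_n\in T_t\base_i\}$; since $f$ vanishes off $\YY\times\base_i$, independence of the marks gives
\[
\E\Bigl[e^{-\int f\,d(T_{t^{-1}}\etap)}\one_{|S|\geq k}\,\big|\,\zetap\Bigr]=\sum_{|S_0|\geq k}(1-p_t)^{N-|S_0|}\prod_{n\in S_0}a_t(y_n),
\]
with $p_t=P_\xi(T_t\base_i)$ and $a_t(y)=\E[e^{-f(y,T_{t^{-1}}\xi)}\one_{\xi\in T_t\base_i}]$. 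Regular variation of $\xi$, applied to the bounded function $x\mapsto e^{-f(y,x)}\one_{x\in\base_i}$ which is continuous $\mu$-a.e.\ on the $\mu$-continuity set $\base_i$, yields $g(t)p_t\to\mu(\base_i)$ and $g(t)a_t(y)\to\alpha(y):=\int_{\base_i}e^{-f(y,x)}\,\mu(dx)$ pointwise in $y$, with the uniform bound $a_t(y)\leq p_t=O(g(t)^{-1})$. The leading term $|S_0|=k$, after taking expectation over $\zetap$ and invoking the definition of the $k$-th factorial moment measure, becomes $(k!)^{-1}\int_{\YY^k}\prod_{j=1}^k a_t(y_j)\,\kappa^{(k)}(d(y_1,\ldots,y_k))$; multiplying by $g(t)^k$ and applying dominated convergence on the finite measure $\kappa^{(k)}$ (dominant $\mu(\base_i)^k$) produces $(k!)^{-1}\int\prod_j\alpha(y_j)\,\kappa^{(k)}(d\vec y)$, which by Fubini rearranges into the target integral, on recalling that a configuration $\me=\sum_j\delta_{(y_j,x_j)}$ with $k$ atoms has $\me(\YY\times\base_i)\geq k$ precisely when all $x_j\in\base_i$.

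The main obstacle is to show that the higher-order terms $|S_0|\geq k+1$, together with the deviation $(1-p_t)^{N-|S_0|}-1$, contribute negligibly, as only $\kappa^{(k)}$ (not $\kappa^{(k+1)}$) is assumed finite. The naive bound $\Prob{|S|\geq k+1\mid N}\leq\binom{N}{k+1}p_t^{k+1}$ fails to be uniformly integrable in $N$, but the sharper inequality
\[
\Prob{|S|\geq k+1\mid N}\leq\binom{N}{k}p_t^k\,\min(Np_t,1),
\]
valid because $|S|\mid N\sim\mathrm{Bin}(N,p_t)$, is adequate: after multiplying by $g(t)^k$ the prefactor $g(t)^kp_t^k\to\mu(\base_i)^k$ stays bounded, while $\E\bigl[\binom{N}{k}\min(Np_t,1)\bigr]\to 0$ by dominated convergence (integrability of $\binom{N}{k}$ follows from $\kappa^{(k)}(\YY^k)<\infty$, and $\min(Np_t,1)\to 0$ pointwise as $p_t\to 0$ since $N<\infty$ a.s.). The identical estimate $|(1-p_t)^{N-k}-1|\leq\min(Np_t,1)$ controls the deviation factor in the leading term. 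Assembling these pieces delivers the convergence required by Lemma~\ref{lemma:conv-Sk} and completes the proof.
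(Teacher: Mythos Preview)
Your proof is correct and takes a different route from the paper. The paper reduces to the case $k=1$ by passing to the factorial power $\etap^{(k)}$ on $(\YY\times\XX)^k$ via Lemma~\ref{lemma:products} and then invoking Theorem~\ref{thr:process-1} there, quoting $\kappa^{(k)}\otimes\nu^{\otimes k}/k!$ as the first Janossy measure of $\etap^{(k)}$ and verifying the order-two condition on the product base. You instead work directly with Lemma~\ref{lemma:conv-Sk} on $\Mpik[\sZ]$: conditioning on $\zetap$, you decompose according to the random set $S$ of marks landing in $T_t\base_i$ and isolate the contribution $|S|=k$ by an explicit binomial estimate. The paper's route is shorter and more conceptual, leaning on already-built machinery; your route is more elementary and self-contained, and it makes the role of the sole moment hypothesis $\kappa^{(k)}(\YY^k)<\infty$ completely transparent through the dominated-convergence bound $\E\bigl[\binom{N}{k}\min(Np_t,1)\bigr]\to 0$, which disposes simultaneously of the higher-order terms $|S|\geq k+1$ and of the factor $(1-p_t)^{N-k}$ without ever needing $\kappa^{(k+1)}$ to be finite. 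Your direct computation also sidesteps a delicate point in the factorial-product approach, namely that $\etap^{(k)}$ restricted to a symmetric base set $(\YY\times\base_i)^k$ always has mass a multiple of $k!$, so that for $k\geq2$ the event $\{\etap^{(k)}((\YY\times\base_i)^k)=1\}$ is empty and the Janossy-measure and order-two computations in the paper require more care than is displayed.
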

\begin{proof}
  We apply Theorem~\ref{thr:process-k}. Let $C$ be a Borel set in
  $\YY^k$, let $B\in\sX$, and denote by $\nu$ the distribution of the
  typical mark.  The Janossy measure of the point process $\etap$ is
  given by
  \begin{align*}
    J_{k,\YY\times B}&(C\times B^k)
    =\frac{1}{k!}\E\big[\etap^{(k)}(C\times B^k)
      \one_{\etap(\YY\times B)=k}\big]\\
    &=\sum_{n=0}^\infty
      \frac{1}{k!}\E\big[\etap^{(k)}(C\times B^k)
      \one_{\etap(\YY\times B)=k}\one_{\zetap(\YY)=k+n}\big].
  \end{align*}
  By independence of the marks,
  \begin{displaymath}
    \E\big[\etap^{(k)}(C\times B^k)
    \one_{\etap(\YY\times B)=k}\one_{\zetap(\YY)=k+n}\big]
    =\E\big[\zetap^{(k)}(C)
    \one_{\zetap(\YY)=k+n}\big]\nu(B)^k(1-\nu(B))^n.
  \end{displaymath}
  Thus,
  \begin{align*}
    J_{k,\YY\times (T_tB)}
    &(C\times (T_tB)^k)
    =\frac{1}{k!} \sum_{n=0}^\infty
      \E\big[\zetap^{(k)}(C)
      \one_{\zetap(\YY)=k+n}\big]\nu(T_tB)^k(1-\nu(T_tB))^n.
  \end{align*}
  The sum is dominated by
  \begin{displaymath}
    \sum_{n=0}^\infty
      \E\big[\zetap^{(k)}(C)
    \one_{\zetap(\YY)=k+n}\big]=\E\big[\zetap^{(k)}(C)\big]<\infty
  \end{displaymath}
  for all $t>0$. Thus, the dominated convergence theorem applies, and
  letting $t\to\infty$ yields that
  \begin{displaymath}
    g(t)^k J_{k,\YY\times (T_tB)}
    \vto \frac{1}{k!} \sum_{n=0}^\infty
      \E\big[\zetap^{(k)}(C)
      \one_{\zetap(\YY)=k+n}\big]\mu^{\otimes k}(B^k)
      =\frac{1}{k!}\kappa^{(k)}(C)\mu^{\otimes k}(B^k).
  \end{displaymath}
  % We now recall the relationship between Janossy measures and
  % factorial moment measures, which is
  % \begin{displaymath}
  %   \kappa^{(k)}(C)
  %   =\sum_{n=0}^\infty \frac{(k+n)!}{k!}J_{k+n,\XX}^\zetap(C\times\XX^n),
  % \end{displaymath}
  % see \cite[Section~5.4]{daley:verejones:2003}, where the Janossy
  % measures are defined without normalising by the factorial as done
  % in \eqref{eq:janossy-def}.

  Finally, we need to check that the probability of having at least
  $k+1$ points with large marks is negligible.
  % \begin{displaymath}
  %   g(t)^k\Prob{\etap\big(\YY\times T_tA)\geq k+1}
  %   =g(t)^k\E \binom{\zetap(\YY)}{k+1} \nu(T_tA)^{k+1}\to 0
  %   \quad\text{as}\; t\to\infty,
  % \end{displaymath}
  % given that $\E\zeta(YY)^{k+1}$ is finite.
  Denote by $N=\zetap(\YY)$ and $p_t=\nu(T_tB)$. Then
  \begin{align*}
    \Prob{\etap\big(\YY\times T_tB)\geq k+1}
    &\leq \sum_{m=k+1}^\infty \Prob{N=m} \binom{m}{k+1} p_t^{k+1}\\
    &\leq p_t^{k+1}I(n)+\Prob{N>n},
  \end{align*}
  where $n=n(t)$ grows with $t$ so that $n(t)\sim ag(t)$ for a
  constant $a>0$, and 
  \begin{displaymath}
    I(n)=\sum_{m=k+1}^n \Prob{N=m} \binom{m}{k+1}.
  \end{displaymath}
  Since $\E N^k<\infty$, we have
  \begin{equation}
    \label{eq:Nk-sum}
    \sum_{m=1}^\infty m^{k-1}\Prob{N\geq m}<\infty.
  \end{equation}
  Therefore,  $m^k\Prob{N>m}\to 0$ as
  $m\to\infty$, and thus $g(t)^k\Prob{N>n(t)}\to 0$ as $t\to\infty$.
  Furthermore,
  \begin{displaymath}
    I(n)\leq \frac{n}{(k+1)!}\sum_{m=1}^n m^k\Prob{N=m}
    \leq \frac{n}{(k+1)!}\E N^k.
  \end{displaymath}
  % Using
  % \begin{displaymath}
  %   \binom{m}{k+1}-\binom{m-1}{k+1}=\binom{m-1}{k},
  % \end{displaymath}
  % we obtain
  % \begin{align*}
  %   I(n)=\sum_{m=k+1}^n \binom{m-1}{k}\Prob{N\geq m}
  %   &\leq c_1 \sum_{m=k+1}^n m^k \Prob{N\geq m}\\
  %   &\leq c_1 n \sum_{m=1}^\infty m^{k-1}\Prob{N\geq m}
  %   \leq c_2 n
  % \end{align*}
  % for constants $c_1$ and $c_2$.
  Thus,
  \begin{displaymath}
    g(t)^k p_t^{k+1}I(n)
    \leq \frac{\E N^k}{(k+1)!} g(t)^k p_t^{k+1} n(t)
    =\frac{\E N^k}{(k+1)!} (g(t)p_t)^{k+1} \frac{n(t)}{g(t)}
    \sim \frac{\E N^k}{(k+1)!} a.
  \end{displaymath}
  Choosing $a=\lim n(t)/g(t) > 0$ sufficiently small and noticing
  that $g(t)p_t$ converges to a constant by the regular variation of
  the marks confirm that
  \begin{displaymath}
    g(t)^k \Prob{\etap\big(\YY\times T_tB)\geq k+1}\to0 \quad
    \text{as}\; t\to\infty. \qedhere
  \end{displaymath}
\end{proof}

Theorem~\ref{thr:marked-rv} replicates Theorem~3.2 of \cite{dtw22},
however, with a substantially shorter proof. It is possible to
generalise it to scalings which act jointly on the ground process and
the marks --- this is straightforward to do by checking the conditions of
Theorem~\ref{thr:process-k}. 
Lemma~\ref{lemma:conv-det-product} may be used to confirm the regular
variation of the Janossy measure in the product space.

\begin{example}[Ground process on a single-point space]
  Let $\XX$ be a Polish space with a continuous scaling and a
  topologically and scaling consistent ideal $\sX$ with a countable
  open base.  Consider the point process
  \begin{displaymath}
    \etap = \sum_{i=1}^Z \delta_{\xi_i},
  \end{displaymath}
  where $(\xi_i)_{i\geq1}$ are i.i.d.\ in $\XX$ and $Z$ is a
  non-negative integer-valued 
  random variable that is integrable of order $k$ and independent of
  all $\xi_i$. This point process can be considered as an
  independently marked point process on the product of a single-point
  space $\YY=\{0\}$ and $\XX$. The ground process $\zetap$ has a point
  at $0$ of multiplicity $Z$, so that its factorial moment measure of
  order $k$ is supported by a single point with mass
  $\E \big[Z(Z-1)\cdots(Z-k+1)\big]$.  If
  $\xi_1\in\RV(\XX,T,\sX,g,\mu)$, then $\etap$ is regularly varying on
  $\Mpx$ with the normalising function $g^k$ and the tail measure
  \begin{displaymath}
    \mu^*_{(k)}(B)=\E \binom{Z}{k}\int_{\XX^k} \one_{\sum_{i=1}^k
      \delta_{x_i}\in B}
    \mu^{\otimes k}(\bdiff (x_1,\dots,x_k))
  \end{displaymath}
  for Borel $B\subset\Mp[\sX]$. If  $k=1$, then
  $\etap$ is
  regularly varying on $\Mpx[1]$ with the normalising function
  $g$. Its tail measure $\mu^*_{(1)}$ equals $\E Z$ times
  the pushforward of $\mu$
  under the map $x\mapsto\delta_x$.
\end{example}

\begin{example}[Shot-noise process]
  Let
  \begin{displaymath}
    \etap=\sum_{i=1}^N \delta_{(y_i,\xi_i)}
  \end{displaymath}
  be an independently marked point process, where the ground process
  $\{y_i, i=1,\dots,N\}\subset\R$ is a finite point process.  Assume
  that the typical mark is positive and satisfies
  $\xi\in\RV(\R_+,\mydot,\sR_0,g,\theta_\alpha)$. Assume also that the
  ground process has almost surely bounded total mass and non-trivial
  intensity measure $\kappa$. Then, by Theorem~\ref{thr:marked-rv}
  with $k=1$, $\etap$ is regularly varying on the space $\Mp[\sZ]$ of
  point processes on $\R\times\R_+$ with the lifted scaling acting on
  the mark component, the ideal $\Mpi[\sZ]$, and the normalising
  function $g$, where $\sZ$ is generated by the sets
  $\R\times [s,\infty)$, $s>0$.
  
  Let $f:\R\to\R$ be a continuous function with bounded support which
  is not identically zero. The sum
  \begin{displaymath} 
   \zeta(u)=\sum_{i=1}^N \xi_i f(u-y_i),\quad u\in\R,
  \end{displaymath}
  is an example of a \emph{shot-noise process}.
  \index{shot-noise process} By construction, the map that associates
  $\etap$ with 
  $\zeta$ is continuous as a map from $\Mp$ to $\Cp(\R)$, the space of
  continuous functions with the topology of pointwise
  convergence. This map is also a morphism (which respects the
  scaling) if linear scaling acts only on the marks. By the continuous
  mapping theorem, provided the pushforward tail measure is
  non-trivial, $\zeta$ is regularly varying in $\Cp(\R)$ with the
  ideal $\sC$ given in \eqref{eq:ideal-Cp}.  The tail measure of
  $\zeta$ is the pushforward of $\kappa\otimes \theta_\alpha$ under
  the map that sends $(y,s)$ to the 
  function $u\mapsto s f(u-y)$, $u\in\R$.
\end{example}

\paragraph{Random probability measures}
\label{sec:rpm}
Let $\XX$ be a completely regular Sous\-lin space (in particular, a
Polish space) with a continuous scaling.  Consider the family $\Mone$
of probability measures $\nu$ on $\XX$ with the topology of weak
convergence, which makes $\Mone$ a Souslin space as well; see
\cite[Theorem~5.1.8]{bogachev18}. A \emph{random probability measure} is a
random element $\xip$ in $\Mone$ with its Borel $\sigma$-algebra.
\index{random probability measure}
We use the scaling defined at
\eqref{eq:23} as $(T_t\nu)(B)=\nu(T_{t^{-1}}B)$; this scaling is
continuous on $\Mone$.

Let $\sX$ be a topologically and scaling consistent ideal on $\XX$
with a countable base. For $r\in(0,1)$, define the ideal $\sN_r$
on $\Mone$ as the family of all sets $\Npp\subset\Mone$ such that
\begin{displaymath}
  \inf\big\{\nu(A)\colons \nu\in \Npp\big\}>r
\end{displaymath}
for some Borel set $A\in\sX$. This ideal is topologically and scaling
consistent and admits a countable open base.
Below we give several examples of using the continuous mapping theorem
to derive the regular variation property of random probability
measures.

\begin{example}[Uniform distribution on random interval]
  \label{ex:prob-m}
  Consider the family of probability measures on $\XX=\R^1$ with 
  linear scaling and the ideal $\sR_0$. 
  % One can identify $\XX$ with the family of right-continuous
  % nondecreasing functions $x:\R\to[0,1]$ such that $x(-u)=0$ and
  % $x(u)=1$ for all sufficiently large $u$. Endow $\XX$ with the
  % modulus
  % \begin{equation}
  %   \label{eq:mod-rpm}
  %   \modulus(x)=\inf\big\{u: x(v)\in\{0,1\},\; |v|>u\big\},
  % \end{equation}
  % which is the essential supremum of the absolute value of the random
  % variable with c.d.f.\ $x$.
  % % The zero element in $\XX$ is the c.d.f.\ of the random variable
  % % which a.s. equals zero.
  Let $\psi$ be the map that associates with each $(x_1,x_2)\in\R^2$
  the uniform distribution $\nu=\psi(x_1,x_2)$ on
  \begin{displaymath}
    \big[\min(x_1,x_2),\max(x_1,x_2)\big]=\conv(\{x_1,x_2\}),
  \end{displaymath}
  where $\conv(\cdot)$ denotes the convex hull. If $x_1=x_2$,
  then $\nu$ is the Dirac measure at $x_1$. This map
  is a morphism between $\R^2$ with the ideal $\sR_0^2$ and $\Mone$
  with the ideal $\sN_r$ for any $r\in(0,1)$, since each $A\in\sR_0$
  is contained in the complement of $[-a,a]$ for some $a>0$ and thus 
  \begin{displaymath}
    \big\{(x_1,x_2)\colons \psi(x_1,x_2)([-a,a]^c)> r \big\}
    \subset \big\{(x_1,x_2)\in\R^2\colons \max(|x_1|,|x_2|)>a\}
    % \subset \big\{(x_1,x_2): x_1(1-r)+x_2\geq a, x_1\geq x_2 \big\}
    % \cup \big\{(x_1,x_2): x_1+x_2(1-r)\geq a, x_1\leq x_2 \big\}
    \in\sR_0^2.
  \end{displaymath}
  Indeed, if both $x_1$ and $x_2$ belong to $[-a,a]$, then
  $\psi(x_1,x_2)([-a,a]^c)=0$, and thus a positive value in the latter
  expression means that at least one endpoint must lie outside of
  $[-a,a]$.
  
  If $(\xi_1,\xi_2)\in\RV(\R^2,\mydot,\sR_0^2,g,\mu)$ is a regularly
  varying random vector, then the random probability measure
  $\xip=\psi(\xi_1,\xi_2)$ is regularly varying on $\sN_r$ and its tail
  measure is the pushforward of $\mu$ under the map
  $(x_1,x_2)\mapsto \psi(x_1,x_2)$. For instance, if $\xi_1,\xi_2$ are
  i.i.d.\ and regularly varying on $\sR_0^1$, then $\mu$ is supported
  on the axes and so the tail measure of $\xip=\psi(\xi_1,\xi_2)$ is
  supported on uniform distributions on intervals having one endpoint at
  the origin. 
\end{example}

\begin{example}[Normal distribution with random mean and variance]
  Let $\XX=\R\times\R_+$ with linear scaling. Consider the map $\psi(x,b)$
  from $(x,b)\in\R\times\R_+$ to $\Mone$ obtained by letting $\psi(x,b)$ be the
  normal distribution with mean $x$ and variance $b^2$. This map is a
  morphism. For every $a>0$ and $b>0$, we have 
  \begin{align*}
    \big\{(x,b)\in \R\times\R_+\colons &\; \psi(x,b)([-a,a]^c)> r \big\}\\
    &= \big\{(x,b)\in \R\times\R_+\colons \Phi(-(a-x)/b)+\Phi(-(a+x)/b)>
    r \big\},
    % &\subset \big\{(x,b)\in \R\times\R_+:
    %   \big|x-b\varphi^{-1}(r/2)\big|>a\}\\
    % &  \subset \Big\{(x,b)\in \R\times\R_+: |x|+b >a/\varphi^{-1}(r/2)\Big\}
    %   \in\sR_0^2,
  \end{align*}
  where $\Phi$ is the c.d.f.\ of the standard normal distribution.
  The case $b=0$ is understood by continuity, corresponding to the
  Dirac measure at $x$.  The function $\Phi(-(a-x)/b)+\Phi(-(a+x)/b)$
  decreases for $x\leq 0$ and increases for $x\geq0$; its minimum
  is attained at $x=0$ and equals $2\Phi(-a/b)$.
  % , and its values at $x=\pm a/2$ are
  % $\Phi(-a/2b)+\Phi(-3a/2b)\leq 2\Phi(-a/2b)$.
  Therefore, the set  
  \begin{displaymath}
    \big\{(x,b)\in \R\times\R_+\colons \Phi(-(a-x)/b)+\Phi(-(a+x)/b)>
    r \big\}
  \end{displaymath}
  is a subset of the complement of $[-a/2,a/2]\times[0,b_r]$ with
  $b_r=-a/(2\Phi^{-1}(r/2))$, and thus belongs to $\sR_0^2$. Note that
  $b_r>0$, since $\Phi^{-1}(r/2)<0$. Thus, the map $\psi$ is
  $(\sR_0^2,\sN_r)$ bornologically consistent.  By the continuous
  mapping theorem, if
  $(\xi,\eta)\in\RV(\R\times\R_+,\mydot,\sR_0^2,g,\mu)$, then
  $\xip=\psi(\xi,\eta)\in\RV(\Mone,\mydot,\sN_r,g,\psi\mu)$.
\end{example}

\begin{example}[Empirical probability measures]
  Let $\XX=\R^d$ with the ideal $\sX=\sR_0^d$. 
  Let $\nu^*_n$ be the empirical probability measure constructed from
  a sample $\xi_1,\dots,\xi_n$ of i.i.d.\ realisations of
  $\xi\in\RV(\R^d,\mydot,\sR_0^d,g,\mu)$. Let $r\in(0,1)$, and denote by $m$
  the smallest integer strictly greater than $nr$, that is,
  $m=\lfloor nr\rfloor+1$. The map
  \begin{displaymath}
    \psi:(x_1,\dots,x_n)\mapsto \frac{1}{n}\sum_{i=1}^n \delta_{x_i}
  \end{displaymath}
  is a morphism that is bornologically consistent between the product
  ideal $\sX^n(m)$ (with $\sX=\sR_0^d$) and the ideal $\sN_r$. Indeed,
  for $A$ being the complement of $[-a,a]^d$, we have $\nu_n^*(A)>r$
  exactly if at least $m$ points out of $x_1,\dots,x_n$ lie in $A$.
  Since $\xi_1,\dots,\xi_n$ are independent,
  the random vector $(\xi_1,\dots,\xi_n)$ is regularly varying on
  $\sX^n(m)$ with the normalising function $g^m$ and the tail measure
  \begin{displaymath}
    \mu'\big(\bdiff (x_1,\dots,x_n)\big)=\sum_{I\subset\{1,\dots,n\},\card(I)=m}
    \prod_{i\in I}\mu(\bdiff x_i)\prod_{j\notin I}\delta_0(\bdiff x_j).
  \end{displaymath}
  The pushforward $\psi\mu'$ of this measure is non-trivial on $\sN_r$,
  since, for some $A\in\sR_0^d$ with $\mu(A)>0$,
  \begin{multline*}
    \mu'\big(\{(x_1,\dots,x_n)\colons \psi(x_1,\dots,x_n)(A)> r\}\big)\\
    =\mu'\big(\{(x_1,\dots,x_n)\colons \card(\{i:x_i\in A\})\geq
    m\}\big)
    \geq \mu(A)^m>0.
  \end{multline*}
  Thus, $\xip=\nu^*_n$ is regularly varying on $\sN_r$ with the
  normalising function $g^m$ and tail measure $\tilde{\mu}=\psi\mu'$.
  This tail measure can be
  written as the pushforward of $\binom{n}{m}\mu^{\otimes m}$ by the
  map
  \begin{displaymath}
    (x_1,\dots,x_m)\mapsto
    n^{-1}\Big(\delta_{x_1}+\cdots+\delta_{x_m}
    +(n-m)\delta_0\Big).
  \end{displaymath}
\end{example}

\section{Random closed sets}
\label{sec:regul-vari-rand-1}

\paragraph{Topology and scaling}
Let $\sF=\sF(\XX)$ be the family of closed subsets of a Polish space
$\XX$ metrised by the metric $\dmet$. Since not much is known about
topologies on closed sets if $\XX$ is not Polish (see \cite{beer93}),
the Polish assumption on $\XX$ is imposed throughout this section.
\index{space!of closed sets}

Each closed set $F$ in $\XX$ is uniquely identified by its
\emph{distance function}
\index{distance function}
\begin{displaymath}
  \dmet(x,F)=\inf\big\{\dmet(x,y)\colons y\in F\big\}, \quad x\in\XX.
\end{displaymath}
The distance function of the empty set is set to be infinite. The space
$\sF$ is endowed with the \emph{Wijsman topology}, namely, $F_n\to F$
if $\dmet(x,F_n)\to \dmet(x,F)$ as $n\to\infty$ for all $x\in\XX$.
\index{Wijsman topology}
It is known that the Wijsman topology turns $\sF$ into a Polish space;
see \cite[Theorem~2.5.4]{beer93}.  The Wijsman topology on the family
$\sF'=\sF\setminus\{\emptyset\}$ of non-empty closed sets in a ball
compact space can be
metrised by the Hausdorff--Busemann metric
\index{Hausdorff--Busemann metric}
\begin{displaymath}
  \dmet_\sF(F_1,F_2)=\sup_{x\in\XX} e^{-\dmet(x_0,x)}
  \big|\dmet(x,F_1)-\dmet(x,F_2)\big|,
\end{displaymath}
where $x_0$ is any fixed point in $\XX$.

A Polish space is said to be \emph{ball compact} (also called
\emph{Heine--Borel}) if each closed bounded ball is compact.
\index{space!ball compact} \index{Fell topology}
In this case, the Wijsman topology coincides with the Fell
topology on $\sF$; see \cite[Theorem~5.1.10]{beer93}. A sequence
$\{F_n,n\geq1\}$ converges to $F$ in the \emph{Fell topology} if, for
each open $G$ such that $F\cap G\neq\emptyset$ and each compact $K$
such that $K\cap F=\emptyset$, we have $F_n\cap G\neq\emptyset$ and
$F_n\cap K=\emptyset$ for all sufficiently large $n$.  While the Fell
topology may be defined on a general topological space $\XX$, it
fails to be Hausdorff if $\XX$ is not locally compact.
Note that each locally compact second-countable Hausdorff
space is metrisable by a complete metric and thus is Polish.

The scaling operation $T_t$ on $\XX$ is naturally extended to act on
$\sF$ by setting
\begin{displaymath}
  T_tF=\{T_tx\colon x\in F\}
\end{displaymath}
and $T_t\emptyset=\emptyset$; it will also be denoted by $T_t$. Note that the
Hausdorff--Busemann metric $\dmet_\sF$ is not homogeneous under the
scaling, even if $\dmet$ is homogeneous.

\begin{lemma}
  Assume that $\XX$ is a Polish space equipped with a 
  continuous scaling and that at least one of the following conditions holds.
  \begin{enumerate}[(a)]
  \item The metric $\dmet$ on $\XX$ is homogeneous.
  \item $\XX$ is ball compact.
  \end{enumerate}
  Then the scaling on $\sF$ is continuous in the Wijsman topology.
\end{lemma}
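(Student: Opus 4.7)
The goal is joint continuity of the map $(t,F)\mapsto T_tF$ from $\Rpp\times\sF$ to $\sF$. We fix a net $(t_\gamma,F_\gamma)\to(t,F)$ and argue by cases.

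For (a), since the Wijsman topology is precisely convergence of distance functions, we simply need to show $\dmet(x,T_{t_\gamma}F_\gamma)\to\dmet(x,T_tF)$ for every $x\in\XX$. Homogeneity of $\dmet$ gives the key identity
\begin{displaymath}
  \dmet(x,T_sG)=\inf_{y\in G}\dmet(T_s T_{s^{-1}}x,T_sy)=s\,\dmet(T_{s^{-1}}x,G)
\end{displaymath}
valid for any nonempty closed $G$ and $s>0$ (with the usual convention for $G=\emptyset$). Thus it suffices to verify
\begin{displaymath}
  t_\gamma\,\dmet(T_{t_\gamma^{-1}}x,F_\gamma)\longrightarrow t\,\dmet(T_{t^{-1}}x,F).
\end{displaymath}
Splitting the difference into $|\dmet(T_{t_\gamma^{-1}}x,F_\gamma)-\dmet(T_{t^{-1}}x,F_\gamma)|+|\dmet(T_{t^{-1}}x,F_\gamma)-\dmet(T_{t^{-1}}x,F)|$, the first summand is bounded above by $\dmet(T_{t_\gamma^{-1}}x,T_{t^{-1}}x)$, which vanishes by space continuity of the scaling, while the second vanishes by the Wijsman convergence $F_\gamma\to F$ evaluated at the fixed point $T_{t^{-1}}x$. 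Multiplying by the bounded factor $t_\gamma$ concludes this case.

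For (b), ball compactness forces $\XX$ to be locally compact, and then by \cite[Theorem~5.1.10]{beer93} the Wijsman topology on $\sF$ coincides with the Fell topology, whose subbase consists of the lower sets $\{F:F\cap G\neq\emptyset\}$ (for $G\subset\XX$ open) and upper sets $\{F:F\cap K=\emptyset\}$ (for $K\subset\XX$ compact). I would check joint continuity separately against each type. For the lower sets, if $T_tF\cap G\neq\emptyset$, pick $y\in F$ with $T_ty\in G$; by joint continuity of $(s,z)\mapsto T_sz$ there is a neighbourhood $U\ni y$ and $\eps>0$ with $T_sU\subset G$ for $|s-t|<\eps$; since $F\cap U\neq\emptyset$, Fell convergence yields $F_\gamma\cap U\neq\emptyset$ eventually, so $T_{t_\gamma}F_\gamma\cap G\neq\emptyset$ eventually.

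The main obstacle is the upper-set condition, which is the only step that genuinely uses compactness of balls rather than merely continuity. Given $K$ compact with $K\cap T_tF=\emptyset$, I would fix $\eps>0$ small enough that $[t-\eps,t+\eps]\subset\Rpp$ and define
\begin{displaymath}
  L_\eps=\bigl\{T_{s^{-1}}x:s\in[t-\eps,t+\eps],\,x\in K\bigr\},
\end{displaymath}
which is the continuous image of the compact product $[t-\eps,t+\eps]\times K$ and hence compact. The crucial claim is that $L_\eps\cap F=\emptyset$ for all sufficiently small $\eps>0$; granting this, Fell convergence $F_\gamma\to F$ gives $F_\gamma\cap L_\eps=\emptyset$ eventually, and for $|t_\gamma-t|\leq\eps$ this forces $T_{t_\gamma}F_\gamma\cap K=\emptyset$. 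The claim itself is proved by contradiction: if it failed along some $\eps_n\downarrow 0$, pick $y_n\in F$, $s_n\in[t-\eps_n,t+\eps_n]$, $x_n\in K$ with $y_n=T_{s_n^{-1}}x_n$; compactness of $K$ extracts $x_n\to x\in K$, joint continuity of the scaling gives $y_n\to T_{t^{-1}}x$, closedness of $F$ yields $T_{t^{-1}}x\in F$, and hence $x\in T_tF\cap K$, contradicting the hypothesis on $K$.
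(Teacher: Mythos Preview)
Your proof is correct and follows essentially the same approach as the paper's: the homogeneity identity $\dmet(x,T_sG)=s\,\dmet(T_{s^{-1}}x,G)$ for case~(a), and the identification of the Wijsman and Fell topologies together with compactness arguments for case~(b). The only cosmetic differences are that the paper reduces to sequences (since $\sF$ is Polish), and in the upper-set step it argues directly by contradiction on a subsequence of points $x_{n_k}\in T_{t_{n_k}}F_{n_k}\cap K$ rather than introducing your auxiliary compact set $L_\eps$; both routes are equivalent. One minor wording slip: in case~(a) you invoke ``space continuity'' for $T_{t_\gamma^{-1}}x\to T_{t^{-1}}x$, but what you actually use is continuity in the time variable, which is covered by the joint continuity hypothesis.
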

\begin{proof}
  % By \cite[Theorem~9.14]{MR1321597} and since $\sF$ is Polish, it
  % suffices to show that the scaling is separately continuous.
  Since $\sF$ is Polish, it suffices to show sequential
  continuity. Assume that $t_n\to t$ and $F_n\to F$ in the Wijsman
  topology as $n\to\infty$.

  (a) Assume that $x_n\to x$ in $\XX$ and $F_n\to F$ in the Wijsman
  topology. By the triangle inequality,
  \begin{displaymath}
    |\dmet(x_n,F_n)-\dmet(x,F)|
    \leq \dmet(x_n,x)
    +|\dmet(x,F_n)-\dmet(x,F)|,
  \end{displaymath}
  so that $\dmet(x_n,F_n)\to \dmet(x,F)$ as $n\to\infty$ if
  $F\neq\emptyset$. Applying this argument to $x_n=T_{t_n^{-1}}x$
  yields that
  \begin{displaymath}
    \dmet(x,T_{t_n}F_n)=t_n \dmet(T_{t_n^{-1}}x,F_n)\to
    t\dmet(T_{t^{-1}}x,F)=\dmet(x,T_tF)\quad \text{as}\; n\to\infty. 
  \end{displaymath}
  If $F=\emptyset$, then $\dmet(x,F_n)\to\infty$, and the estimate
  $|\dmet(x_n,F_n)-\dmet(x,F_n)|\leq \dmet(x_n,x)$ yields
  $\dmet(x_n,F_n)\to\infty$.

  (b) In this case, the Wijsman topology coincides with the Fell
  topology. Assume that $(T_tF\cap G)\neq\emptyset$ for an open set
  $G$, so that there exists $x\in F$ such that $T_tx\in G$. Since
  $F_n\to F$, there exists a sequence $x_n\in F_n$ such that
  $x_n\to x$. Then $T_{t_n}x_n\to T_tx$, hence,
  $T_{t_n}x_n\in G$ and 
  $T_{t_n}F_n\cap G\neq\emptyset$ for all sufficiently large $n$.
  
  Assume that $T_tF\cap K=\emptyset$ for a compact set $K$ and
  $t_n\to t$ as $n\to\infty$. If $(T_{t_{n_k}}F_{n_k}\cap
  K)\neq\emptyset$ for a 
  subsequence $(n_k)_{k\in\NN}$, take $x_{n_k}\in F_{n_k}$ such that
  $T_{t_{n_k}}x_{n_k}\in K$. Passing to a further subsequence, assume
  that $T_{t_{n_k}}x_{n_k}\to y\in K$.  Applying $T_{t_{n_k}^{-1}}$ to
  this convergence and using the continuity of the scaling, we see
  that $x_{n_k}\to T_{t^{-1}}y$. Since 
  $F_n\to F$ we have $T_{t^{-1}}y\in F$, so that $y$ belongs to $T_t F\cap K$
  and the latter intersection is non-empty, which is a contradiction.
\end{proof}

\begin{remark}
  \label{rem:sets-scaling}
  Recall that $\zero$ stands for the family of all scaling-invariant
  elements of $\XX$.  The family of scaling-invariant elements in
  $\sF$ is substantially larger than the family of subsets of $\zero$;
  for instance, all closed cones in $\XX$ (and $\XX$ itself)
  are scaling invariant under $T_t$
  for all $t>0$. The empty set is also scaling invariant. In
  particular, the space $\sF$ is not star-shaped with respect to the
  scaling operation $T_t$.  In the space of
  closed sets, one can easily identify members that are
  scaling invariant under $T_t$ only for some $t\neq1$. For this, fix
  $x\notin\zero$ and $t\neq1$, and let $F=\cl\{T_{t^k}x\colons k\in\ZZ\}$,
  where $\ZZ$ denotes the set of all integers. Then $F$ is
  scaling invariant in the sense that $T_tF=F$ for this particular
  $t\neq1$ and all its powers.
\end{remark}

For a given set $A\subset\XX$, denote
\begin{align*}
  \sF_A&=\{F\in\sF\colons F\cap A\neq\emptyset\},\\
  \sF^A&=\{F\in\sF\colons F\cap A=\emptyset\},\\
  \sF_{\supset A}&=\{F\in\sF\colons F\supset A\}.
\end{align*}
In the following, we consider random closed sets that almost
surely take values in a subfamily of $\sF$ that is a cone in the
chosen scaling. 
For instance, such a subfamily can be $\sF_{\supset \cone}$ for a
given cone $\cone$ in $\XX$, e.g., the family of all closed sets
containing the origin in $\XX=\R^d$ with $\cone=\{0\}$. Other examples
are the family $\sF^\cone$ of all sets that do not intersect $\cone$
or the family of all singletons.  Passing to a subfamily is often
motivated by the fact that a continuous modulus can be explicitly
defined on subfamilies of $\sF$ rather than on the entire family $\sF$.

An important subfamily of $\sF$ is the family $\sK=\sK(\XX)$ of
\emph{compact} sets in $\XX$. This family is metrised by the
\emph{Hausdorff metric}
\index{Hausdorff metric}
\begin{displaymath}
  \dmet_{\mathrm{H}}(K_1,K_2)=\inf\big\{r>0\colons
  K_1\subset K_2^r,K_2\subset
  K_1^r\big\},\quad K_1,K_2\in\sK,
\end{displaymath}
where $K^r$ stands for the $r$-envelope (or $r$-neighbourhood) of $K$
in the metric 
$\dmet$, see \eqref{eq:neighbourhood-r}. The distance from a non-empty
$K$ to the empty set is defined as infinite. It is well known that
\begin{displaymath}
  \dmet_{\mathrm{H}}(K_1,K_2)=\sup_{x\in\XX} \big|\dmet(x,K_1)-\dmet(x,K_2)\big|
\end{displaymath}
for non-empty $K_1$ and $K_2$. 
If $\XX$ is Polish, then $\sK$ with the Hausdorff metric is also
Polish; see, e.g., \cite[Theorem~D.9]{mo1}.

In $\XX=\R^d$ we often consider the family $\sK_c^d$ of
non-empty \emph{compact convex} sets.  The trace of the Fell topology on
$\sK_c^d$ coincides with the Hausdorff metric topology; see
\cite[Theorem~12.3.4]{sch:weil08}.  Unlike the family $\sF$ of all
closed sets in $\R^d$, the space $\sK_c^d$ is a star-shaped metric
space with the only scaling-invariant element being the singleton $\{0\}$.

\paragraph{Ideals on closed sets}
A generic ideal on $\sF$ is denoted by $\fF$ and on $\sK$ by $\fK$.
Below we consider a special (but still very general construction)
that derives an ideal on $\sF$ %or on any its subcone $\sD$
from an ideal $\sS$ on the carrier space $\XX$. Namely, the ideal
$\sS$ gives rise to the ideal $\fF^\sS$ on $\sF$, which is the
smallest ideal containing the families of sets
\index{ideal!on closed sets}
\begin{displaymath}
  \sF^{A^c}=\{F\in\sF\colons F\subset A\}
\end{displaymath}
for all $A\in\sS$.  If $\sS=\sS_\cone$ is the metric exclusion ideal
obtained by excluding a cone $\cone$, then $\fF^\sS$ is the smallest
ideal that contains the families
\begin{displaymath}
  \sF^{\cone^r}=\{F\colons F\cap\cone^r=\emptyset\}, \quad r>0,
\end{displaymath}
where $\cone^r=\{x\colons \dmet(x,\cone)\leq r\}$ is the $r$-envelope of
$\cone$. In this case,  we 
write $\fF^\cone$ instead of $\fF^{\sS_\cone}$. It is easy to see that the
ideal $\fF^\cone$ is scaling consistent if the underlying metric on
$\XX$ is homogeneous.

Recall that $\sF_{\supset \cone}$ denotes the family of all closed
sets that contain $\cone$. The ideal $\fF_{\supset \cone}$ is the
smallest ideal that 
contains the families $\{F\in\sF_{\supset \cone}\colons
F\not\subset\cone^r\}$ for $r>0$.

\begin{example}
  Let $\XX=\R^d$ with the Euclidean metric and the ideal
  $\sR_0^d(k)$, see Example~\ref{example:product}. If $k=1$, then
  $\fF^{\sR_0^d(1)}=\fF^0$, that is, 
  $\fF^\cone$ with $\cone=\{0\}$. The ideal $\fF^0$ consists of
  families of closed sets such that the union of all sets in the
  family, after taking closure, is
  disjoint from the origin. The ideal $\fF^{\sR_0^d(d)}$ consists of
  all families of closed sets such that their union does not intersect
  a metric neighbourhood of the union of all coordinate hyperplanes. 

  If $\cone=\{0\}$, then $\sF_{\supset \cone}$ is the family of all
  closed sets that contain the origin, and the ideal
  $\fF_{\supset \cone}$ (denoted by $\fF_0$) consists of families of
  closed sets that all contain the origin and all are not contained in
  $B_r(0)$ for some $r>0$. 
  % For comparison, $\fF^0$ consists of families of closed
  % sets with union bounded away from the origin.
\end{example}

In the following, we identify two continuous moduli on subfamilies of
$\sF$ and relate them to ideals.

\begin{proposition}
  \label{ex:F-zero}
  Let $\modulus$ be an inf-compact continuous modulus on a ball
  compact Polish space $\XX$. Then the function
  $\ttau(F)=\inf\{\modulus(x)\colons x\in F\}$ (see also \eqref{eq:12}) is a
  continuous modulus on the family $\sF'$ of non-empty closed sets. The
  ideal generated by $\ttau$ on $\sF'$ coincides with the ideal $\fF^\cone$,
  where $\cone=\{\modulus=0\}$.
\end{proposition}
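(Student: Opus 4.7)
\textbf{Proof plan for Proposition~\ref{ex:F-zero}.}

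The plan is to verify three things in sequence: (i) that $\ttau$ qualifies as a modulus on $\sF'$, (ii) that $\ttau$ is continuous for the Wijsman topology (which equals the Fell topology since $\XX$ is ball compact, by Theorem~5.1.10 of \cite{beer93}), and (iii) that the ideal $\sS_\ttau$ it generates matches $\fF^\cone$. For (i), the homogeneity $\ttau(T_tF)=\inf\{\modulus(T_tx):x\in F\}=t\,\ttau(F)$ is immediate from homogeneity of $\modulus$, and continuity (shown in step (ii)) automatically gives Baire measurability.

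For the continuity in step (ii), I will establish upper and lower semicontinuity separately using the Fell topology. For upper semicontinuity, if $\ttau(F)<t$ pick $x\in F$ with $\modulus(x)<t$; by continuity of $\modulus$ there is an open neighbourhood $U\ni x$ on which $\modulus<t$, and since $F\cap U\neq\emptyset$, Fell convergence gives $F_\gamma\cap U\neq\emptyset$ eventually, so $\ttau(F_\gamma)<t$. For lower semicontinuity, if $\ttau(F)>t$ then $F\cap\{\modulus\leq t\}=\emptyset$; since $\{\modulus\leq t\}$ is compact by inf-compactness, Fell convergence yields $F_\gamma\cap\{\modulus\leq t\}=\emptyset$ eventually, so $\ttau(F_\gamma)>t$.

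The main work is step (iii), the two-sided inclusion. For $\sS_\ttau\subset\fF^\cone$, assume $\ttau(F)\geq\eps$ for every $F\in\Bfam$, so $F\subset\{\modulus\geq\eps\}$; it suffices to produce $r>0$ with $\{\modulus\geq\eps\}\cap\cone^r=\emptyset$, since then $\Bfam\subset\sF^{\cone^r}\in\fF^\cone$. If no such $r$ existed there would be $x_n$ with $\modulus(x_n)\geq\eps$ and $\dmet(x_n,\cone)<1/n$; because $\cone=\{\modulus=0\}$ is compact (being a closed subset of $\{\modulus\leq 1\}$), its closed $1$-neighbourhood is bounded and hence (by ball compactness) contained in a compact set, so $(x_n)$ has a cluster point $x$ with $\dmet(x,\cone)=0$, forcing $x\in\cone$ but $\modulus(x)\geq\eps$, a contradiction. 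Conversely, to see $\sF^{\cone^r}\in\sS_\ttau$ for each $r>0$, I need $\inf\{\modulus(x):\dmet(x,\cone)\geq r\}>0$; otherwise there exist $x_n$ with $\dmet(x_n,\cone)\geq r$ and $\modulus(x_n)\to0$, eventually $x_n\in\{\modulus\leq 1\}$ which is compact, so a subsequential limit $x$ has $\modulus(x)=0$ hence $x\in\cone$, yet $\dmet(x,\cone)\geq r$ by passing to the limit, another contradiction.

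The step I expect to be most delicate is the first inclusion in (iii), because it requires converting the \emph{pointwise} lower bound $\modulus\geq\eps$ into a \emph{uniform} separation of $\{\modulus\geq\eps\}$ from $\cone$ in the metric $\dmet$. This is precisely where the inf-compactness of $\modulus$ together with ball compactness of $\XX$ are both used: the former gives compactness of $\cone$ and of sublevel sets, and the latter supplies compactness of bounded closed $r$-neighbourhoods, letting the sequential compactness argument close.
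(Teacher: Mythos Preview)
Your proposal is correct, and parts (i) and (ii) match the paper's proof exactly: homogeneity is immediate, and continuity is argued via the Fell topology by testing $F$ against the open set $\{\modulus<t\}$ (upper semicontinuity) and the compact set $\{\modulus\leq t\}$ (lower semicontinuity, using inf-compactness).

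Where you differ is in part (iii). The paper only proves the \emph{pointwise} equivalence $\ttau(F)=0\Leftrightarrow F\cap\cone\neq\emptyset$: it observes that if $F\cap\cone=\emptyset$ yet $\ttau(F)=0$, then a sequence $x_n\in F$ with $\modulus(x_n)\to0$ lies eventually in the compact set $\{\modulus\leq\eps\}$ and subconverges to a point of $F\cap\cone$, a contradiction. The paper stops there and does not explicitly verify the two ideal inclusions. Your argument is more complete: you upgrade the pointwise statement to the \emph{uniform} separations $\{\modulus\geq\eps\}\cap\cone^r=\emptyset$ and $\inf\{\modulus(x):\dmet(x,\cone)\geq r\}>0$, which is exactly what is needed for $\sS_\ttau=\fF^\cone$. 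Your second inclusion uses the same compactness-of-sublevel-sets trick as the paper's pointwise argument; your first inclusion additionally uses ball compactness to get compactness of a neighbourhood of $\cone$, which the paper's proof never invokes. So your route is the same in spirit but fills a gap the paper glosses over.
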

\begin{proof}  
  The function $\ttau$ is evidently homogeneous and finite.  Assume
  that $F_n\to F$ in the Fell topology and $\ttau(F)<a$ for some
  $a>0$. Since $F$ intersects the open set $\{\modulus<a\}$, Fell
  convergence implies that 
  $F_n$ intersects this set for all sufficiently large $n$, so that
  $\ttau(F_n)<a$. If $\ttau(F)>a>0$, then $F$ does not intersect the 
  compact set $\{\modulus\leq a\}$, so that $F_n$ does not intersect
  this set and $\ttau(F_n)>a$ for all sufficiently large $n$.

  If $F$ intersects $\{\modulus=0\}$, then $\ttau(F)=0$. Assume now
  that $F$ does not intersect $\{\modulus=0\}$ and $\ttau(F)=0$, so that
  there exists a sequence $x_n\in F$ with $\modulus(x_n)\to0$.
  % Since
  % in metrisable spaces, compactness implies sequential compactness
  % (see, e.g., \cite[Theorem~28.2]{munkres}),
  Since $x_n\in\{\modulus\leq\eps\}$ for all sufficiently large $n$
  and $\modulus$ is inf-compact, there is a subsequence $(x_{n_k})_{k\in\NN}$
  that converges to $x\in F$. By the continuity of the modulus,
  $\modulus(x)=0$, which contradicts the fact that $F$
  does not intersect the set $\{\modulus=0\}$. This argument also
  confirms that $\ttau(F)>0$, implying that $F$ is not scaling
  invariant.

  Since $\ttau(F)>a$ is equivalent to $F\subset \{\modulus>a\}$, the
  ideal generated by the families $\{F\in\sF'\colons \ttau(F)>a\}$ for all
  $a>0$ coincides with the ideal $\fF^{\sS_\modulus}$. By
  Lemma~\ref{lemma:inf-compact-excluded}, the latter ideal coincides
  with the metric exclusion ideal generated by excluding
  $\cone=\{\modulus=0\}$.
\end{proof}

A set $F$ is said to be \emph{star-shaped} if $T_tF\subset F$ for all
$t\in(0,1]$. Denote by $\sF_s(\cone)$ the family of all star-shaped
sets $F$ in $\sF_{\supset \cone}$ with $\cone=\{\modulus=0\}$, and
let
\index{star-shaped set}
\begin{equation}
  \label{eq:suptau}
  \suptau(F)=\sup\big\{\modulus(x)\colons x\in F\big\},\quad F\in\sF.
\end{equation}

\begin{proposition}
  \label{ex:tau-bounded}
  Let $\modulus$ be an inf-compact continuous modulus on a ball
  compact Polish space $\XX$ that generates the metric exclusion
  ideal obtained by excluding a cone $\cone$. Then the function
  $\suptau$ is a continuous modulus on $\sF_s(\cone)$, which is
  positive on all sets $F\neq \cone$ and generates the ideal
  $\fF_{\supset \cone}$.
\end{proposition}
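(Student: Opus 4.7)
The plan is to verify in turn the four claimed properties: that $\suptau$ is a homogeneous (possibly $\infty$-valued) function on $\sF_s(\cone)$, that it is positive away from $\cone$, that it is continuous, and that it generates $\fF_{\supset\cone}$. Homogeneity is immediate from that of $\modulus$: for $t>0$ and $F\in\sF_s(\cone)$,
$\suptau(T_tF)=\sup_{x\in F}\modulus(T_tx)=t\sup_{x\in F}\modulus(x)=t\suptau(F)$,
and the value $\infty$ is admissible by Definition~\ref{def:modulus}. For positivity, note that $\suptau(\cone)=0$ since $\cone=\{\modulus=0\}$, while if $F\in\sF_s(\cone)$ with $F\neq\cone$, then $F\supset\cone$ forces some $x\in F\setminus\cone$ to satisfy $\modulus(x)>0$, whence $\suptau(F)\geq\modulus(x)>0$.

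Second, I would prove continuity of $\suptau$ on $\sF_s(\cone)$ in the Wijsman topology, which coincides with the Fell topology because $\XX$ is ball compact. Lower semicontinuity is routine: if $\suptau(F)>a$, then $F$ intersects the open set $\{\modulus>a\}$, so by Fell convergence $F_n\cap\{\modulus>a\}\neq\emptyset$ eventually and $\suptau(F_n)>a$. For upper semicontinuity, suppose for contradiction that $\suptau(F)<a$ while $\suptau(F_{n_k})\geq a$ along a subsequence, and choose $x_k\in F_{n_k}$ with $\modulus(x_k)\geq a$. If $(\modulus(x_k))$ is bounded by some $M$, inf-compactness places the $x_k$'s in the compact set $\{a\leq\modulus\leq M\}$; a subsequential limit $x$ lies in $F$ by Fell convergence and has $\modulus(x)\geq a$, contradicting $\suptau(F)<a$. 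If instead $\modulus(x_k)\to\infty$, set $s_k=a/\modulus(x_k)\in(0,1]$ and invoke star-shape to get $y_k=T_{s_k}x_k\in F_{n_k}$ with $\modulus(y_k)=a$; these points sit in the compact set $\{\modulus\leq a\}$, and a Fell-subsequential limit $y\in F$ again violates $\suptau(F)<a$. This two-case argument is the technical heart of the proof.

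Third, I would identify the ideal generated by $\suptau$ with $\fF_{\supset\cone}$, in direct parallel to the treatment of $\ttau$ in Proposition~\ref{ex:F-zero}. The previous lemma showing that an inf-compact continuous modulus generates the metric exclusion ideal of $\cone=\{\modulus=0\}$ supplies the cofinality of the two systems of neighbourhoods of $\cone$, namely $\{\cone^r\}_{r>0}$ and $\{\{\modulus\leq t\}\}_{t>0}$: one has $\{\modulus\leq t\}\subset\cone^{R_t}$ with $R_t\downarrow 0$ as $t\downarrow 0$, and the reverse inclusion for suitable $r=r(t)$. Passing from points to closed sets, this shows that the defining families $\{F\in\sF_s(\cone):\cone\subset F\subset\cone^r\}$ of $\fF_{\supset\cone}$ and the sub-level families $\{F\in\sF_s(\cone):\suptau(F)\leq t\}$ are cofinal, so they generate the same ideal, yielding the required identification.

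The main obstacle is the upper semicontinuity in the continuity argument: without star-shape, a Fell-convergent sequence $F_n\to F$ could have points $x_n\in F_n$ with $\modulus(x_n)\to\infty$ that fail to admit a convergent subsequence in $\XX$, so $\suptau(F_n)$ could explode while $\suptau(F)$ remains bounded. Star-shape is exactly what allows one to contract any such escaping point back along its orbit to the compact level set $\{\modulus=a\}$, where inf-compactness supplies a convergent subsequence and Fell closure forces the limit into $F$, producing the contradiction.
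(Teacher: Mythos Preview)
Your proof is correct in content, but the upper-semicontinuity step is more laboured than the paper's. The paper argues directly from the Fell topology: if $\suptau(F)<t$ then $F$ misses the compact level set $\{\modulus=t\}$ (compact as a closed subset of the inf-compact $\{\modulus\leq t\}$), so $F_n\cap\{\modulus=t\}=\emptyset$ for all large $n$; star-shape then forces $\suptau(F_n)<t$, since any $x\in F_n$ with $\modulus(x)\geq t$ would yield $T_{t/\modulus(x)}x\in F_n\cap\{\modulus=t\}$. This single observation replaces your two-case subsequence extraction, and in fact your second case (scaling down to $\{\modulus=a\}$) already contains the idea and renders the first case redundant. Note also that ``bounded versus $\to\infty$'' is not an exhaustive dichotomy for $(\modulus(x_k))$; this is harmless because one may always pass to a subsequence, but it is tidier to avoid the split entirely.

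On the ideal identification there is a slip in how you conclude. You show that the \emph{sublevel} families $\{\suptau\leq t\}$ and the families $\{F:\cone\subset F\subset\cone^r\}$ are cofinal. But the ideal generated by the modulus $\suptau$ in the sense of Definition~\ref{def:modulus} is $\sS_{\suptau}=\{\mathcal{A}:\inf_{F\in\mathcal{A}}\suptau(F)>0\}$, whose base consists of the \emph{super}-level sets $\{\suptau>t\}$. The paper accordingly sandwiches $\{F:F\not\subset\cone^r\}$ between $\{\suptau>\delta\}$ and $\{\suptau>\eps\}$. Your cofinality is equivalent to this by taking complements, so the computation is salvageable, but as written the final sentence does not deliver the identification $\sS_{\suptau}=\fF_{\supset\cone}$ you claim.
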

\begin{proof}
  The function $\suptau$ is homogeneous and strictly positive on sets
  in $\sF_s(\cone)$ that do not coincide with $\cone$. We show that
  it is also continuous. Assume that $F_n\to F$ in the Fell
  topology. Let $\suptau(F)>t$ for some $t>0$.  Then
  $F\cap\{\modulus>t\}\neq\emptyset$, so that
  $F_n\cap\{\modulus>t\}\neq\emptyset$ and $\suptau(F_n)>t$ for all
  sufficiently large $n$.  Assume that $\suptau(F)<t$. Then
  $F\cap\{\modulus=t\}=\emptyset$. Since $\{\modulus=t\}$ is a compact
  set (as a closed subset of $\{\modulus\leq t\}$),
  $F_n \cap\{\modulus=t\}=\emptyset$ for all sufficiently large
  $n$.  By the star-shapedness assumption, this implies $\suptau(F_n)<
  t$.  If 
  $\suptau(F)=\infty$, then for each $c>0$ there exists $x\in F$ with
  $\modulus(x)>c$. Assume that $\limsup \suptau(F_{n_k})<c<\infty$ for a
  subsequence $(n_k)$. Since $\{\modulus>c\}$ is open, Fell
  convergence $F_n\to F$  implies that $F_{n_k}\cap
  \{\modulus>c\}\neq\emptyset$ 
  eventually, meaning that $\suptau(F_{n_{k_l}})>c$, which is a
  contradiction. Thus, $\suptau(F_n)\to\infty$. 
  
  Finally,
  \begin{displaymath}
    \big\{F\in\sF_s(\cone)\colons \suptau(F)>\delta\big\}\subset 
    \big\{F\in\sF_s(\cone)\colons F\not\subset \{\modulus\leq r\}\big\}
    \subset \big\{F\in\sF_s(\cone)\colons \suptau(F)>\eps\big\}
  \end{displaymath}
  for $0<\eps<r<\delta$. The proof is finished by referring to
  Lemma~\ref{lemma:inf-compact-excluded} which establishes that the
  ideal generated by $\modulus$ coincides with the metric exclusion
  ideal obtained by excluding $\cone$.
\end{proof}

Note that $\suptau$ is not continuous if we drop the star-shapedness
assumption.  Indeed, fix a compact set $F$ in $\R^d$ and consider a
sequence $(x_n)$ with $\|x_n\|\to\infty$ so that $x_n$ eventually
escapes any compact set. Then, while $F_n=F\cup\{x_n\}\to F$ in the
Fell topology, $\suptau(F_n)\geq \modulus(x_n)$ and thus does not converge to
$\suptau(F)$.

The following result implies that the moduli $\ttau$ and $\suptau$ are
continuous on the family of non-empty compact sets without the need to
impose additional assumptions.

\begin{proposition}
  Assume that $\modulus$ is a continuous modulus on a Polish space $\XX$. Then
  the moduli $\ttau$ and $\suptau$ are continuous
  on $\sK$ with the Hausdorff metric.
\end{proposition}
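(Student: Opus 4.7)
The plan is to exploit the two defining properties of Hausdorff convergence $K_n \to K$: (a) every $x \in K$ is the limit of some sequence $x_n \in K_n$, and (b) every convergent subsequence $x_{n_k} \to x^*$ with $x_{n_k} \in K_{n_k}$ satisfies $x^* \in K$. Combined with continuity of $\modulus$ and the fact that suprema and infima of a continuous function over a compact set are attained, this should give continuity of both $\ttau$ and $\suptau$ by a standard two-sided liminf/limsup argument.

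First I would establish the following enabling fact: if $K_n \to K$ in the Hausdorff metric, then $L = \cl(K \cup \bigcup_n K_n)$ is compact. In a Polish space this reduces to total boundedness. Fix $\eps > 0$; choose $N$ with $\dmet_{\mathrm{H}}(K_n,K) < \eps$ for $n \geq N$, take a finite $\eps$-net $\{y_1,\dots,y_m\}$ of $K$, and observe that any $x \in K_n$ ($n \geq N$) is within $\eps$ of $K$ and hence within $2\eps$ of some $y_i$. The finitely many sets $K_1,\dots,K_{N-1}$ contribute finite $\eps$-nets individually, so $L$ is totally bounded and, being closed in a Polish space, compact.

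Next, for $\suptau$: since $\modulus$ is continuous and $K_n$ is compact, pick $x_n \in K_n$ with $\modulus(x_n) = \suptau(K_n)$ (with the usual convention if the value is $+\infty$). By the enabling fact, $(x_n)$ has a convergent subsequence $x_{n_k} \to x^* \in L$, and property (b) gives $x^* \in K$. Continuity of $\modulus$ then yields
\begin{displaymath}
\limsup_n \suptau(K_n) = \lim_k \modulus(x_{n_k}) = \modulus(x^*) \leq \suptau(K).
\end{displaymath}
Conversely, pick $x^* \in K$ attaining $\suptau(K)$; by property (a) there exist $x_n \in K_n$ with $x_n \to x^*$, and then $\suptau(K_n) \geq \modulus(x_n) \to \modulus(x^*) = \suptau(K)$, yielding $\liminf_n \suptau(K_n) \geq \suptau(K)$. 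The argument for $\ttau$ is identical with suprema replaced by infima (attained because $K$ is compact and $\modulus$ continuous), using the same selection of minimisers and the same two convergence properties. Continuity of $\modulus$ with the standard topology on $[0,\infty]$ makes the possibility $\modulus \equiv \infty$ harmless, since convergence in $[0,\infty]$ handles it uniformly.

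The only real obstacle is the compactness of $\bigcup K_n \cup K$, since $\XX$ is merely Polish and need not be locally compact; without this, extracting a convergent subsequence from $(x_n)$ to bound $\limsup \suptau(K_n)$ would fail. Once this is in place, everything else is the standard attainment-plus-selection routine, and no further structural assumption on $\XX$ is required.
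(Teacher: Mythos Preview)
Your proof is correct, but it follows a different route from the paper's. The paper argues directly at the level of sublevel and superlevel sets: for instance, to show continuity of $\ttau$, it notes that $\ttau(K)>t$ means $K$ lies in the open set $\{\modulus>t\}$, and then uses compactness of $K$ to find $r>0$ with $K^r\subset\{\modulus>t\}$, so that $K_n\subset K^r$ forces $\ttau(K_n)>t$ for large $n$; the other three inequalities (for $\ttau<t$, $\suptau<t$, $\suptau>t$) are handled analogously. No auxiliary compactness of $\bigcup_n K_n$ is needed, and the argument works in any metric space, complete or not.

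Your approach instead extracts maximisers/minimisers and passes to convergent subsequences, which requires your enabling fact that $L=\cl(K\cup\bigcup_n K_n)$ is compact. This is where you implicitly use completeness of the metric $\dmet$ (totally bounded plus closed gives compact only in complete spaces); the paper's argument avoids this dependence. One small presentational point: when you write $\limsup_n\suptau(K_n)=\lim_k\modulus(x_{n_k})$, you should first pass to a subsequence realising the $\limsup$ and then extract a further subsequence converging in $L$, rather than extracting an arbitrary convergent subsequence. Also, the paper briefly disposes of the empty-set case by noting $\emptyset$ is isolated in $\sK$; you might add a word on this. Otherwise both arguments are standard and sound; the paper's is marginally cleaner in that it sidesteps the compactness-of-the-union step.
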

\begin{proof}
  Assume that $K_n\to K$ with $K\neq\emptyset$. If $\ttau(K)>t$, then $K$ does
  not intersect the $\{\modulus\leq t\}$, hence also does not
  intersect a neighbourhood of this set in view of compactness of $K$. By
  compactness, there is an $r>0$, such that $K^r$ does not intersect
  $\{\modulus\leq t\}$. Then $K_n\cap\{\modulus\leq t\}=\emptyset$,
  since $K_n\subset K^r$ for all sufficiently large $n$. Hence,
  $\ttau(K_n)>t$ for all sufficiently large $n$. If $\ttau(K)< t$,
  then there is a point $x\in K$ with $\modulus(x)<t$. Then
  $\modulus(y)<t$ for all $y\in B_r(x)$ for some $r>0$. Since $x\in K$
  and $K\subset K_n^r$ for all sufficiently large $n$, we have
  $B_r(x)\cap K_n\neq\emptyset$, meaning that $\ttau(K_n)<t$.

  If $\suptau(K)<t$, then
  $K\cap \{\modulus\geq t\}=\emptyset$. By the compactness of $K$, we have
  that $K^r\cap \{\modulus\geq t\}=\emptyset$ for some $r>0$, so that
  $K_n\cap \{\modulus\geq t\}=\emptyset$ and $\suptau(K_n)<t$ for all
  sufficiently large $n$. If $\suptau(K)>t$, then
  $K\cap\{\modulus>t\}\neq\emptyset$, so that the open set
  $\{\modulus>t\}$ contains a ball of radius $r>0$ centred at
  $x\in K$. This ball intersects $K_n$ for all sufficiently large $n$,
  so that $\suptau(K_n)>t$ for all sufficiently large $n$.
  % If $\modulus_{\inf}(K)>t$, then $B_t(0)\subset K$. Thus,
  % $B_t(0)\subset K_n^{t/2}$ for all sufficiently large $n$, so that
  % $B_{t/2}(0)\subset K_n$. If  $\modulus_{\inf}(K)<t$, then there is
  % $x\not\in K$ with $\|x\|\geq t$. Then $x\notin K^r$ for some $r>0$ and
  % so $x\notin K_n$ with $K_n\subset K^r$ for all sufficiently large
  % $n$. 
\end{proof}

Given that $\sF$ is Polish, it is possible to equip it with metric
exclusion ideals. For instance, it is possible to exclude a family
of closed sets, such as a point, the family of all singletons, or the
family of lower-dimensional sets for closed sets in $\R^d$.

\paragraph{Ideals on compact sets in $\R^d$}
\label{sec:ideals-compact-sets}
Consider the family $\sK^d$ of compact sets in $\R^d$ equipped with
linear scaling and the modulus $\modulus(x)=\|x\|$ given by the
Euclidean norm. The ideal $\fK^0$ is generated by the modulus $\ttau$,
which equals the distance from the origin to the nearest point of $K$
and is infinite if $K=\emptyset$. Hence, $\fK^0$ is the trace of
$\fF^0$ on $\sK^d$.  Furthermore, let $\fK_0$ be the Hausdorff metric
exclusion ideal on non-empty sets in $\sK^d$ obtained by excluding
$\cone=\{0\}$. This ideal is generated by the modulus
\begin{equation}
  \label{eq:14}
  \suptau(K)%=\dmet_{\mathrm{H}}(\{0\},K)
  =\sup\big\{\|x\|\colons x\in K\big\}=\|K\|, \quad K\in\sK^d, \; K\neq\emptyset,
\end{equation}
with the convention $\suptau(\emptyset)=0$. Note that $\{\emptyset\}$ belongs to
$\fK^0$, but does not belong to $\fK_0$. The value of $\suptau(K)$ is
the distance from the origin to the farthest point of $K$, while
$\ttau(K)$ is the distance from the origin to the nearest point of
$K$.
\index{ideal!on compact sets}

On suitable subfamilies, for instance on compact convex sets containing
the origin, define the modulus 
\begin{equation}
  \label{eq:inscribed}
  \modulus_{\inf}(K)=\ttau(K^c)=\sup\big\{r>0\colons B_r(0)\subset K\big\},
\end{equation}
which equals the maximal radius of the ball centred at the origin and
inscribed in $K$. The corresponding ideal is denoted by
$\fK_{\inf}$. Let $\fK_{(\inf)}$ be the ideal generated by the modulus
that is the radius of the largest inscribed ball in $K$, that is, this
modulus is the supremum of $r>0$ such that $B_r(x)\subset K$ for some
$x$.

For $k=0,\dots,d-1$, let $\fK_{(k)}$ be the metric exclusion ideal on
the family $\sK^d$ of compact sets obtained by excluding the
family of all compact sets with affine dimension at most $k$. The ideal
$\fK_{(k)}$ is generated by the modulus 
\begin{displaymath}
  \modulus_{(k)}(K)=
  \inf_{H\in A(d,k)} \inf_{L\in\sK^d,L\subset H}
  \dmet_{\mathrm{H}}(K,L),\quad K\in\sK^d,
\end{displaymath}
where $A(d,k)$ is the affine Grassmannian,
\index{Grassmannian!affine}
that is, the family of all
affine subspaces of dimension $k$. Note that
\begin{displaymath}
  \inf_{L\in\sK^d,L\subset H}\dmet_{\mathrm{H}}(K,L)
  =\inf\{\eps>0\colons K\subset H+B_\eps(0)\}.
\end{displaymath}
The ideals $\fK_{(k)}$ are invariant under translations, namely, with
each family of sets, they contain also one obtained by applying any
translation to all of its members.  For $k=0$, the ideal
$\fK_{(0)}$ consists of all families $\sA$ of compact sets such that
\begin{displaymath}
  \inf_{K\in\sA} \inf_{x\in\R^d} \dmet_{\mathrm{H}}(\{x\},K)
  =\inf_{K\in\sA}\inf_{x\in\R^d} \|K-x\|>0,
\end{displaymath}
where $\|K-x\|=\suptau(K-x)=\sup_{y\in K}\|y-x\|$. 
Equivalently, $\sA$ is in $\fK_{(0)}$ if and only if
\begin{displaymath}
  \inf_{K\in\sA} \diam(K)>0,
\end{displaymath}
where $\diam(K)$ is the supremum of the distance between two points in
$K$. The modulus $\modulus_{(d-1)}(K)$ equals one half of the smallest width of
$K$, and thus the ideal $\fK_{(d-1)}$ consists of families $\sA$ of
compact sets such that, for some $\eps>0$, none of the sets $K\in\sA$
has width in any direction smaller than a positive constant.

An alternative family of ideals $\fK_{[k]}$
is obtained by excluding the family of
all compact subsets of linear subspaces of dimension
$k\in\{0,\dots,d-1\}$. The ideal $\fK_{[k]}$ is generated by the
modulus
\begin{equation}
  \label{eq:modulus-[k]}
  \modulus_{[k]}(K)=
  \inf_{H\in G(d,k)} \inf_{L\in\sK^d,L\subset H}
  \dmet_{\mathrm{H}}(K,L),
  \quad K\in\sK^d,
\end{equation}
where $G(d,k)$ is the linear Grassmannian,
\index{Grassmannian!linear}
that is, the family of all linear
subspaces of dimension $k$. In other words, 
a family $\sA$ of
compact sets belongs to $\fK_{[k]}$ if there is an $\eps>0$ such that
none of the sets $K\in\sA$ is contained in the $\eps$-neighbourhood of
any linear subspace $H$ of dimension $k$. If $k=0$, we recover the ideal $\fK_0$.

For each $k=0,\dots,d-2$, we have $\fK_{[k+1]}\subset \fK_{[k]}$ and 
$\fK_{(k+1)}\subset \fK_{(k)}$. Furthermore, for each $k=0,\dots,d-1$,
\begin{displaymath}
  \fK_{\inf}\subset \fK_{(\inf)}\subset
  \fK_{(k)}\subset \fK_{[k]}\subset \fK_0. 
\end{displaymath}
Some relations between various ideals are shown in Figure~\ref{fig:2}.

\begin{figure}
  \centering
  \begin{tikzpicture}[
  node distance=10mm and 18mm,
  every node/.style={draw, rounded corners, align=center},
  >=stealth
]

\node (K0) {$\fK_0$};

\node (Kbr0)  [below=of K0] {$\fK_{[0]}=\fK_0$};
\node (Kbr1)  [below=of Kbr0] {$\fK_{[1]}$};
\node (Kbr2)  [below=of Kbr1] {$\vdots$};
\node (Kbrd1) [below=of Kbr2] {$\fK_{[d-1]}$};

\node (Kpar0)  [left=of Kbr0] {$\fK_{(0)}$};
\node (Kpar1)  [below=of Kpar0] {$\fK_{(1)}$};
\node (Kpar2)  [below=of Kpar1] {$\vdots$};
\node (Kpard1) [below=of Kpar2] {$\fK_{(d-1)}$};

\node (Kinf) [below=of Kpard1] {$\fK_{\inf}$};

\node (Kzero) [right=35mm of K0] {$\fK^0$};

\draw[->] (Kbrd1) -- (Kbr2);
\draw[->] (Kbr2)  -- (Kbr1);
\draw[->] (Kbr1)  -- (Kbr0);

\draw[->] (Kpard1) -- (Kpar2);
\draw[->] (Kpar2)  -- (Kpar1);
\draw[->] (Kpar1)  -- (Kpar0);

\draw[->] (Kpar0) -- (Kbr0);
\draw[->] (Kpar1) -- (Kbr1);
\draw[->] (Kpar2) -- (Kbr2);
\draw[->] (Kpard1) -- (Kbrd1);

\draw[->] (Kbr0) -- (K0);

\draw[->] (Kinf) -- (Kpard1);

\draw[dashed] (Kzero) -- (K0);

\end{tikzpicture}
  \caption{Relations between ideals on the family of compact sets}
  \label{fig:2}
\end{figure}

Further translation-invariant ideals on $\sK_c^d$ are generated by the
moduli $\modulus(K)=V_i(K)^{1/i}$, where $V_i$ with $i=1,\dots,d$ is
one of the intrinsic volumes; see \cite[Chapter~4]{schn2}.
\index{intrinsic volumes}
All these functions
are continuous in the Hausdorff metric. Particularly important
examples are $V_d$ which is the Lebesgue measure (or $d$-dimensional
volume) and $V_1$ which is
proportional to the mean width.

\paragraph{Vague convergence of measures on closed sets and regular
  variation}
A random element $X$ in $\sF$ with the Borel $\sigma$-algebra
generated by the Wijsman topology
is said to be a \emph{random closed set}.
\index{random closed set}
It is well known that $X$ is
a random closed set if and only if $\{X\cap G\neq\emptyset\}$ is
measurable for all open sets $G$; see \cite[Theorem~1.3.3]{mo1}.
If $\XX$ is locally
compact second-countable Hausdorff, $G$ above may be replaced by a
compact set $K$ and then the distribution of a random closed set is
uniquely determined by the probabilities
\begin{displaymath}
  \Prob{X\in\sF_K}=\Prob{X\cap K\neq\emptyset}
\end{displaymath}
for all compact sets $K\in\sK$. Moreover, if $\mu$ is a (possibly
infinite) measure on $\sF$, which is finite on $\sF_K$ for all
$K\in\sK$ (or is finite on $\sF^K$ for all non-empty $K\in\sK$) then
$\mu$ is uniquely determined by its values on $\sF_K$ (or $\sF^K$) for
all non-empty compact sets $K$; see \cite[Theorem~1.1.33]{mo1}.

\begin{remark}
  \label{rem:FK}
  The vague convergence of measures on the family of non-empty closed
  sets of a locally compact second-countable Hausdorff space is usually
  defined by working with the values of measures on the families
  $\sF_K=\{F\in \sF\colons F\cap K\neq\emptyset\}$ for all non-empty compact
  sets $K$ in $\XX$; see \cite[Page~130]{mo1}. These families build a 
  topologically and scaling consistent ideal on $\sF$, which is also
  the Hadamard bornology on the family $\sF'$ of non-empty closed
  sets. This choice is motivated by the fact that these families are
  compact in the Fell topology on $\sF'$ and form a convergence
  determining class. Note that this ideal contains all non-empty
  scaling-invariant closed sets, and thus does not satisfy Condition~\hyperref[condB0]{(B$_0$)}.
\end{remark}

The regular variation concept for random closed sets appears in
\cite[Definition~4.2.4]{mo1}, implicitly referring to the bornology
from Remark~\ref{rem:FK}, and in
\cite{mikosch:pawlas:samorodnitsky_2011} for random compact sets,
again implicitly using the modulus $\suptau$ defined in 
\eqref{eq:suptau}.  It should be noted that, if a random closed set
with positive probability takes scaling-invariant values that 
belong to a member of an ideal, then it is not regularly varying,
since the tail measure becomes infinite.

In all our examples, the regular variation property of random closed
sets is derived by applying the continuous mapping theorem.
Alternatively, it is possible to argue from the definition of 
vague convergence, as shown below.

If $\XX$ is a ball compact Polish space, the families $\sF_K$ form a
family that determines the weak convergence of probability measures
(or finite measures) on $\sF$; see \cite[Theorem~1.7.7]{mo1}. If $\mu$
is a finite Borel measure on $\sF$, then $\sF_K$ is a $\mu$-continuity
set if
\begin{equation}
  \label{eq:mu-K}
  \mu(\sF_K)=\mu\big(\sF_{\Int K}\big);
\end{equation}
see \cite[Lemma~1.7.4]{mo1}. This is
justified by the fact that $\sF_K\setminus\sF_{\Int K}$ is the
boundary of $\sF_K$ in the Fell topology.

The following result provides a criterion for the regular variation of a
random closed set on the ideal $\fF^0$ in $\R^d$ equipped with 
linear scaling. Recall that $\sF^K=\{F\colons F\cap K=\emptyset\}$. 

\begin{proposition}
  \label{prop:FK}
  A random closed set $X$ in $\R^d$ is regularly varying on the ideal
  $\fF^0$ if and only if there exists a measure $\mu\in\Mb[\fF^0]$
  such that 
  $\ttau(X)\in\RV(\R_+,\mydot,\sR_0,g,c\theta_\alpha)$ and
  \index{random closed set!regular variation of}
  \begin{equation}
    \label{eq:sets-FK}
    \Prob{X\cap T_tK=\emptyset \mid X\cap B_t(0)=\emptyset}
    \to \frac{\mu\big(\sF^K\cap\{\ttau>1\}\big)}{\mu\big(\{\ttau>1\}\big)}
    \quad \text{as}\; t\to\infty
  \end{equation}
  for all compact sets $K$ such that
  $\mu\big((\sF_K\setminus\sF_{\Int K})\cap\{\ttau>1\}\big)=0$, and
  then $c=\mu(\{F\in\sF\colons F\cap B_1(0)=\emptyset\})$.
\end{proposition}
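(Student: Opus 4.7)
The plan is to reduce the statement to a direct application of Theorem~\ref{thr:polar-decomp-4} via the modulus machinery developed in Section~\ref{sec:regul-vari-rand-1}. The key observation is that, by Proposition~\ref{ex:F-zero} applied to the inf-compact continuous modulus $\modulus(x)=\|x\|$ on $\R^d$ (which is ball compact), the function $\ttau(F)=\dmet(0,F)$ is a continuous modulus on the family $\sF'$ of nonempty closed sets, and the ideal it generates is precisely $\fF^0$. Furthermore, $\{\ttau>1\}=\{F:F\cap\overline{B_1(0)}=\emptyset\}$, which agrees with $\{F:F\cap B_1(0)=\emptyset\}$ up to the set $\{\ttau=1\}$; the latter has zero $\mu$-measure for any $\alpha$-homogeneous $\mu$ by \eqref{eq:24} in Lemma~\ref{lemma:alpha}.

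For necessity, assume $X\in\RV(\sF,T,\fF^0,g,\mu)$. Since $\ttau:\sF'\to\R_+$ is a continuous bornologically consistent morphism from $(\sF',\fF^0)$ to $(\R_+,\sR_0)$, Corollary~\ref{cor:tau-map} yields $\ttau(X)\in\RV(\R_+,\cdot,\sR_0,g,c\theta_\alpha)$ with $c=\mu(\{\ttau>1\})=\mu(\{F:F\cap B_1(0)=\emptyset\})$. Theorem~\ref{thr:polar-decomp-4} then gives
\begin{displaymath}
  \Prob{t^{-1}X\in\cdot\mid\ttau(X)>t}\wto
  \frac{\mu(\,\cdot\,\cap\{\ttau>1\})}{\mu(\{\ttau>1\})}
  \quad\text{as } t\to\infty
\end{displaymath}
in the sense of weak convergence on $\{\ttau>1\}$, valid on every $\mu$-continuity set. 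Condition \eqref{eq:mu-K} on $K$ ensures that $\sF^K\cap\{\ttau>1\}$ is such a continuity set, since in the Fell topology the boundary of $\sF^K$ is contained in $\sF_K\setminus\sF_{\Int K}$. Rewriting $\{\ttau(X)>t\}=\{X\cap B_t(0)=\emptyset\}$ (up to a $\Prob{}$-null event from $\{\ttau(X)=t\}$, where $\ttau(X)$ has a power-law distribution by regular variation) and $\{t^{-1}X\in\sF^K\}=\{X\cap T_tK=\emptyset\}$ yields \eqref{eq:sets-FK}.

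For sufficiency, assume $\ttau(X)\in\RV(\R_+,\cdot,\sR_0,g,c\theta_\alpha)$ together with \eqref{eq:sets-FK} for all compact $K$ satisfying \eqref{eq:mu-K}. The candidate limiting probability measure $\nu$ on $\{\ttau>1\}$ is prescribed by its values on sets $\sF^K$, and the plan is to verify that the conditional distributions $\Prob{t^{-1}X\in\cdot\mid\ttau(X)>t}$ converge weakly to $\nu$; Theorem~\ref{thr:polar-decomp-4} then packages this convergence, together with regular variation of $\ttau(X)$, into the regular variation of $X$ on $\fF^0$, with the tail measure recovered via the polar formula \eqref{eq:mu-sector} applied to the modulus $\ttau$ (this simultaneously yields a $\mu\in\Msigma[\fF^0]$ whose normalised restriction to $\{\ttau>1\}$ coincides with $\nu$ and whose total mass on $\{\ttau>1\}$ equals $c$). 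The main obstacle is precisely the weak-convergence step: one must argue that a $\pi$-system of $\mu$-continuity sets of the form $\sF^K$ (closed under finite unions as $\sF^K\cap\sF^{K'}=\sF^{K\cup K'}$) is rich enough to determine weak convergence on $\{\ttau>1\}\subset\sF$. This follows from the Choquet-type fact \cite[Theorem~1.7.7]{mo1}, exploiting ball compactness of $\R^d$, that $\{\sF_K:K\in\sK\}$ (equivalently $\{\sF^K:K\in\sK\}$, by complementation for probability measures) is a convergence determining class for Borel measures on $\sF$; restricting to compact sets disjoint from a fixed neighbourhood of $0$ keeps everything inside $\{\ttau>1\}$, and the continuity condition \eqref{eq:mu-K} ensures all such sets are continuity sets for the candidate limit.
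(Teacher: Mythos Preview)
Your approach is correct and the necessity argument matches the paper's, both invoking the modulus $\ttau$ (via Proposition~\ref{ex:F-zero}) and then Corollary~\ref{cor:tau-map} / Theorem~\ref{thr:polar-decomp-4}. For sufficiency the two proofs diverge in packaging: the paper argues that the measures $g(t)\Prob{T_{t^{-1}}X\in\cdot,\;\ttau(X)>t}$ are uniformly bounded, invokes Helly's theorem \cite[Theorem~1.7.3]{mo1} to extract a weak subsequential limit on $\sF$, and then uses that this limit is \emph{uniquely determined} by its values on $\sF^K\cap\{\ttau>1\}$; you instead appeal directly to the convergence-determining property \cite[Theorem~1.7.7]{mo1} and feed the resulting weak convergence into the ``if'' direction of Theorem~\ref{thr:polar-decomp-4}. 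These are the two standard routes (compactness\,+\,uniqueness versus a determining class), and both rely on the same underlying Choquet-type fact, so there is no substantive difference in strength.

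One small wording issue: your remark that ``restricting to compact sets disjoint from a fixed neighbourhood of $0$ keeps everything inside $\{\ttau>1\}$'' is backwards as stated (for such $K$ the family $\sF^K$ contains sets near the origin). What you presumably mean is that $\{\ttau>1\}$ can be identified with the space of closed subsets of $\R^d\setminus B_1(0)$, on which the determining class consists of $\sF^K$ for compact $K\subset\R^d\setminus B_1(0)$; phrase it that way. Also note that since $B_t(0)$ is the closed ball in the paper's notation, $\{X\cap B_t(0)=\emptyset\}=\{\ttau(X)>t\}$ exactly (a closed set realises its distance to $0$), so your null-set caveat is unnecessary.
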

\begin{proof}
  \textsl{Necessity.}
  The set
  \begin{displaymath}
    \sF^{B_r(0)}=\{F\in\sF\colons F\cap B_r(0)=\emptyset\}
    =\{\ttau>r\}
  \end{displaymath}
  belongs to $\fF^0$. By homogeneity and finiteness of $\mu$ on
  $\{\ttau>r\}$, its boundary $\{\ttau=r\}$ has $\mu$-measure zero,
  so it is a $\mu$-continuity set. Then
  \begin{displaymath}
    g(t)\Prob{\ttau(X)>rt}
    =g(t)\Prob{T_{t^{-1}}X\in \sF^{B_r(0)}}
    \to \mu\big(\sF^{B_r(0)}\big)\quad \text{as}\; t\to\infty,
  \end{displaymath}
  so that $\ttau(X)$ is regularly varying and the expression for $c$
  follows. Furthermore, \eqref{eq:sets-FK} immediately follows from
  the regular variation of $X$.
  
  \smallskip
  \noindent
  \textsl{Sufficiency.}
  It suffices to prove that
  \begin{equation}
    \label{eq:sets-FK-1}
    g(t)\Prob{T_{t^{-1}}X\in \cdot,\; X\cap B_t(0)=\emptyset}
    \to \mu\big(\cdot\cap\{\ttau>1\}\big)
    \quad \text{as}\; t\to\infty.
  \end{equation}
  Since the left-hand side is bounded by
  \begin{displaymath}
    g(t)\Prob{X\cap B_t(0)=\emptyset}
    =g(t)\Prob{T_{t^{-1}}X\cap B_1(0)=\emptyset}
    \to \mu\big(\{\ttau>1\}\big) \quad \text{as}\; t\to\infty,
  \end{displaymath}
  it is possible to apply Helly's theorem (see
  \cite[Theorem~1.7.3]{mo1}) to the uniformly bounded sequence of
  measures
  \begin{equation}
    \label{eq:g-seq}
    g(t)\Prob{T_{t^{-1}}X\in \cdot,T_{t^{-1}}X\cap
      B_1(0)=\emptyset},\quad 
    t\geq 1. 
  \end{equation}
  The uniform bound holds, since
  \begin{displaymath}
    g(t)\Prob{T_{t^{-1}}X\in \cdot,T_{t^{-1}}X\cap
      B_1(0)=\emptyset}
    \leq g(t)\Prob{\ttau(X)>t} \to c\quad \text{as}\;
    t\to\infty. 
  \end{displaymath}
  The sequence of measures given in \eqref{eq:g-seq} has a weakly
  convergent subsequence, and the limit is uniquely identified by its
  values on the sets $\sF^K\cap\{\ttau>1\}$, for compact $K$
  satisfying the above continuity condition. Thus, the convergence in
  \eqref{eq:sets-FK-1} holds.
\end{proof}

\begin{example}[Random half-space]
  \index{random half-space}
  Let $\xi\in\RV(\R^d,\mydot,\sR_0^d,g,\mu)$, and let
  \begin{displaymath}
    X=\big\{u\in\R^d\colons \langle u,\xi\rangle\geq \|\xi\|^2\big\}
  \end{displaymath}
  be the half-space with $\xi$ on its boundary and the inner normal
  $\xi$. The regular variation property of $X$ on the ideal $\fF^0$
  follows from the continuous mapping theorem. Alternatively, we can
  apply Proposition~\ref{prop:FK}. Let $K$ be a compact set. Then
  \begin{multline*}
    \Prob{X\cap T_tK=\emptyset\mid X\cap B_t(0)=\emptyset}\\
    =\Prob{X\cap T_t(K\cup B_1(0))=\emptyset\mid X\cap B_t(0)=\emptyset},
  \end{multline*}
  so we assume without loss of generality that $K$ contains the origin.
  Next, $\ttau(X)=\|\xi\|$, and
  Theorem~\ref{thr:polar-decomp-4} implies that, as $t\to\infty$,
  \begin{multline*}
    \Prob{X\cap T_tK=\emptyset\mid X\cap B_t(0)=\emptyset}
    =\Prob{\|\xi\|> t h_K(\xi/\|\xi\|)\mid \|\xi\|>t}\\
    \to \frac{1}{\mu\big(\{u\colons \|u\|>1\}\big)}
    \int_{\R^d} \one_{\|u\|>h_K(u/\|u\|)}\mu(\bdiff u)
    =\frac{1}{\sigma(\SS^{d-1})}
    \int_{\SS^{d-1}} h_K(v)^{-\alpha} \sigma(\bdiff v),
  \end{multline*}
  where $\mu=\sigma\otimes\theta_\alpha$ with the spectral measure
  $\sigma$ supported on the unit Euclidean sphere $\SS^{d-1}$ and
  $h_K$ is the support function of $K$, see \eqref{eq:21}. Note that
  $h_K$ is non-negative. The
  tail measure of $X$ is the pushforward of $\mu$ under
  the map $x\mapsto \big\{u\in\R^d\colons \langle u,x\rangle\geq \|x\|^2\big\}$.
\end{example}

The following result relates regular variation on the ideals
$\fF_\cone$ and $\fF^\cone$.

\begin{proposition}
  Let $\cone=\{0\}$ in $\XX=\R^d$.  Let $X$ be a random closed convex
  set with values in the family $\sF_0$ of closed sets that contain the
  origin, and assume that $X$ is regularly varying on the ideal
  $\fF_0$. Furthermore, assume
  that $X$ is almost surely regular closed (coincides with the closure
  of its interior). Then the closed complement $Y=\cl(X^c)$ is
  regularly varying on the ideal $\fF^0$.
  % The inverse implication holds if $Y$ is a.s.\ regular closed,
  % $\cl(Y^c)$ is star-shaped and tau-bounded.
\end{proposition}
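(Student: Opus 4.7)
The plan is to apply the continuous mapping theorem (Theorem~\ref{mapping-theorem}, together with Remark~\ref{rem:cont-maps}) to the map $\phi:\sF_0\to\sF$ defined by $\phi(F)=\cl(F^c)$. First, observe that $\phi$ is a morphism under the linear scaling: since scaling by $t>0$ in $\R^d$ is a homeomorphism, $(tF)^c=tF^c$ and closure commutes with scaling, so $\phi(tF)=\cl(tF^c)=t\cl(F^c)=t\phi(F)$.

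Next, I would verify bornological consistency from $\fF_0$ to $\fF^0$. A generating family for $\fF^0$ is $\mathcal{B}_\eps=\{G\in\sF: G\cap B_\eps(0)=\emptyset\}$ for $\eps>0$. Compute
\begin{displaymath}
  \phi^{-1}(\mathcal{B}_\eps)
  =\{F\in\sF_0:\cl(F^c)\cap B_\eps(0)=\emptyset\}
  =\{F\in\sF_0: B_\eps(0)\subset\Int F\}.
\end{displaymath}
Any convex $F\in\sF_0$ satisfying $B_\eps(0)\subset\Int F$ has $\suptau(F)=\|F\|\ge\eps$, and $\{F:\suptau(F)\ge\eps\}$ belongs to the ideal $\fF_0$ generated (in the sense of Proposition~\ref{ex:tau-bounded}) by $\suptau$. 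Hence $\phi^{-1}(\mathcal{B}_\eps)\in\fF_0$, giving bornological consistency.

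The core step is continuity of $\phi$ on an appropriate subset. For $F$ closed convex with $0\in\Int F$, if $F_n\to F$ in the Wijsman topology (equivalently, the Fell topology, since $\R^d$ is ball compact), then by standard convex-analysis results on Kuratowski convergence of convex sets with nonempty interior, one has $\Int F_n\to\Int F$ and consequently $\cl(F_n^c)\to\cl(F^c)$ in the Fell topology. Thus $\phi$ is continuous at every regular closed convex set $F$ with $0\in\Int F$. The set of closed convex sets $F\in\sF_0$ with $0\in\partial F$ is the potential discontinuity locus for $\phi$; since $X$ is almost surely regular closed (hence has nonempty interior), and since the tail measure $\mu$ arises as the vague limit of rescaled distributions of $X$, one can show that $\mu$ assigns zero mass to this degenerate set, by a homogeneity argument and the fact that scaling $t^{-1}X$ preserves the regular-closed property.

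Finally, $\phi\mu$ is nontrivial on $\fF^0$: the regular closed assumption together with convexity forces $\Prob{0\in\Int X}>0$, which propagates through the rescaling to give $\mu(\{F:B_\eps(0)\subset\Int F\})>0$ for some $\eps>0$, i.e., $\phi\mu(\mathcal{B}_\eps)>0$. Combining these ingredients with the continuous mapping theorem (in the $\mu$-a.e.\ continuous form of Remark~\ref{rem:cont-maps}) yields $Y=\phi(X)\in\RV(\R^d,\cdot,\fF^0,g,\phi\mu)$. The main obstacle is the rigorous justification that $\mu$ charges only regular closed convex sets with $0\in\Int F$, so that the discontinuities of $\phi$ are indeed $\mu$-null; this requires using the regular-closed structural hypothesis on $X$ together with the homogeneity of $\mu$ to rule out degeneracies in the limit.
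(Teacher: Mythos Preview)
Your overall strategy matches the paper's: apply the continuous mapping theorem to the morphism $\phi(F)=\cl(F^c)$, with bornological consistency checked via $\{F:\ttau(\cl(F^c))\geq\eps\}\subset\{F:\suptau(F)\geq\eps\}$. Where the arguments diverge is in the treatment of continuity. The paper does not argue $\mu$-a.e.\ continuity; it invokes \cite[Lemma~7.4]{kab:mar:mol}, which asserts that $\phi$ is bicontinuous on the family of \emph{all} nonempty regular closed convex sets containing the origin (including those with $0$ on the boundary), so the continuous mapping theorem applies directly on that subfamily, where $X$ a.s.\ lives.

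Your attempt to bypass this by restricting continuity to $\{F:0\in\Int F\}$ and then claiming $\mu(\{F:0\in\partial F\})=0$ does not work. Take $X=\{x\in\R^d:x_1\geq0,\ \|x\|\leq\eta\}$ with $\eta$ Pareto-$\alpha$. Each realisation is convex, regular closed, contains $0$, and has $0\in\partial X$; moreover $\suptau(X)=\eta$ is regularly varying, so $X\in\RV(\sF_0,\cdot,\fF_0,g,\mu)$ with $\mu$ supported \emph{entirely} on half-balls having $0$ on their boundary. Thus your ``main obstacle'' is not resolvable by a homogeneity argument---it must be avoided by using the stronger continuity statement from the cited lemma.

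The same counterexample shows that your nontriviality claim ``regular closed plus convex forces $\Prob{0\in\Int X}>0$'' is false: here $\Prob{0\in\Int X}=0$, and $Y=\cl(X^c)$ always contains $0$, so $\phi\mu$ is the zero measure on $\fF^0$ and $Y$ is \emph{not} regularly varying there. The paper's proof does not address this point either; as in other applications of Theorem~\ref{mapping-theorem} (e.g., Example~\ref{ex:Steiner-point}), a proviso that $\phi\mu$ be nontrivial on $\fF^0$ is implicitly needed but missing from the statement.
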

\begin{proof}
  It is shown in \cite[Lemma~7.4]{kab:mar:mol} that the map
  $F\mapsto \cl(F^c)$ is bicontinuous between the family of non-empty
  convex regular closed sets containing the origin and their
  complements in $\R^d$. This map is a morphism. Furthermore, it is
  bornologically consistent, since
  \begin{displaymath}
    \{F\colons \ttau(\cl(F^c))> \eps\}
    \subset \{F\colons \suptau(F)\geq \eps\}. 
  \end{displaymath}
  Indeed if $\ttau(\cl(F^c))> \eps$, then $B_\eps(0)\cap
  F^c=\emptyset$, and thus $B_\eps(0)\subset F$ implying
  $\suptau(F)\geq \eps$. 
  The result follows from the continuous mapping theorem. 
\end{proof}

\paragraph{Set-valued maps}
Let $\YY$ be a topological space with scaling and with a topologically and
scaling consistent ideal $\sY$. If
$\varphi:\YY\mapsto\sF(\XX)$ is a set-valued morphism, which is
continuous and bornologically consistent, then the image of any
regularly varying random element in $\YY$ is a regularly varying
random closed set in $\XX$, provided that the pushforward of the tail
measure is non-trivial on the ideal imposed on $\sF$. In this way, it is
possible to obtain numerous random closed sets. Below we consider only
the case $\XX=\R^d$ with linear scaling. It can easily be generalised
for point processes on star-shaped metric spaces.

An important family of random closed sets arises as the supports of point
processes, which can be realised by mapping a counting
measure to its closed support. Consider the ideal $\sR_0^d$ on $\R^d$ and
let $\etap$ be a point process with realisations in $\Mp[\sR_0^d]$,
meaning that at most a finite number of points of $\etap$ lie outside
any non-trivial ball centred at the origin, so that $\supp\etap$ is
almost surely bounded.
\index{point process!support set}
If the support $\supp\etap$ is not closed, then this may be
only because the origin is an accumulation point of
$\supp\etap$, and then
$\cl(\supp\etap)=\{0\}\cup\supp\etap$. Furthermore,
$X=\cl(\supp\etap)$ is a random compact set, where the measurability
condition holds since $\{X\cap G\neq\emptyset\}=\{\etap(G)\geq1\}$ for
each open set $G$. If $\etap=0$, then $X=\emptyset$. 

The major difficulty in passing from regular variation of a point
process to the regular variation of its support as a random closed set
lies in the fact that a neighbourhood of the origin is excluded to
construct ideals on counting measures, while it is naturally present
in random closed sets. In other words, under downscaling many points of
the point process converge to the origin and are not part of the
tail measure, whereas their limit, the origin, may be an essential
point of the random closed set.

In the sequel, we use the random variable $\dmet(0,\etap)$, which is
the infimum of the distances between the origin and the points in
$\etap$. Its measurability follows from the fact that
$\{\dmet(0,\etap)>t\}=\{\etap(B_t(0)\setminus\{0\})=0\}$.
Note that, for a counting
measure $\me\in\Mp[\sR_0^d]$, we have  
$\dmet(0,\me) \in [0,\infty)$ if $\me$ is non-zero and we set
$\dmet(0,\me)=\infty$ if $\me=0$.

Next, we consider counting measures together with the distance between
the origin and their supports. For this, we work in
the product space $\Mp[\sR_0^d] \times [0,\infty]$ with the
product topology and the ideal $\Mpi[\sR_0^d]\times[0,\infty]$
generated by sets $\Me\times[0,\infty]$, where $\Me\in \Mpi[\sR_0^d]$.
In other words, a set in $\Mp[\sR_0^d] \times [0,\infty]$ is considered
bounded if it is bounded in the first component. 
By construction, if $\Me\in \Mpi[\sR_0^d]$ then
each counting measure $\me\in\Me$ is non-zero. The scaling on the
product space is given by $T_s(\me,r)=(T_s\me,sr)$ 
with the convention $T_s\infty=\infty$.

Denote 
\begin{displaymath}
  \Me'=\big\{ (\me,r)\in\Mp[\sR_0^d]\times[0,\infty)\colons 
  r= \dmet(0,\me) \mbox{ or } r = 0 \big\}\cup\{(0,\infty)\}.
\end{displaymath}
Consider the map $\psi: \Me' \to \sK^d$ given by
\begin{equation}
  \label{eq:psi-me}
  \psi(\me,r) = \cl \Big(\supp \big(\me + \delta_0\one_{r = 0}\big)\Big)
  =
  \begin{cases}
    \cl(\supp\me), & r>0,\\
    \cl(\supp\me) \cup\{0\}, & r=0,
  \end{cases}
\end{equation}
if $\me\ne0$. 
If $\me=0$ and $r=0$, we let $\psi(0,0)=\{0\}$, and
$\psi(0,\infty)=\emptyset$.
Note that $\psi(0,r)$ with $r\in(0,\infty)0$ is not defined since
$(0,r)$ does not belong to $\Me'$.

Now we apply the map $\psi$ to a point process $\etap$. 
Since $\etap$ is almost surely finite on every set from $\sR_0^d$,
$\dmet(0,\etap)\in(0,\infty)$ implies the existence of a point
$x \in \etap$ such that $\dmet(0,x) = \dmet(0,\etap)$. If
$\dmet(0,\etap)=0$, then $0$ is either a point of $\etap$ or its the
accumulation point. 
Therefore,
\begin{displaymath}
  \big(\etap, \dmet(0,\etap)\big) \in \Me',
\end{displaymath}
and $\psi(\etap,\dmet(0,\etap))$ equals the closed support
$\cl(\supp\etap)$. 
The set $\cl(\supp\etap)$ is either empty or almost surely
bounded, since $\etap$ contains at most a finite number of points
outside any neighbourhood of the origin. If $\etap=0$, then the closed
support is empty and so coincides with $\psi(0,\infty)$. 

Note that the space $\sK^d$ of
compact sets in $\R^d$ is equipped with linear scaling $\mydot$ and
the space of counting measures is equipped with linear scaling applied
to their atoms.

\begin{theorem}
  \label{th:supp-etap}
  Let $\etap$ be a point process in $\Mp[\sR_0^d]$ such that
  \begin{equation}
    \label{eq:RV-etap-c}
    \big(\etap, \dmet(0,\etap)\big) \in\RV\big(\Mp[\sR_0^d] \times [0,\infty),
    \mydot,\Mpi[\sR_0^d]\times [0,\infty),g,\mu^*\big).
  \end{equation}
  Then $\cl(\supp\etap)\in \RV(\sK^d,\mydot,\fK_0,g,\tilde\mu)$, where
  the tail measure $\tilde\mu$ is the pushforward of $\mu^*$ under the
  map $\psi$ from \eqref{eq:psi-me}.
\end{theorem}
\begin{proof}
  We first prove that $\Me'$ is a closed subset of
  $\Mp[\sR_0^d] \times [0,\infty]$.  Indeed, assume that
  $(\me_n,r_n)\to(\me,r)$. If $\me\neq 0$, then $\me(B_t(0)^c)\geq1$
  for some $t>0$, that is, $\me$ has at least one atom outside the
  closed ball $B_t(0)$.  Then the atoms of $\me_n$ and $\me$ in 
  the open set $B_t(0)^c$ can be enumerated to ensure their pointwise
  convergence. Hence, $\limsup \dmet(0,\me_n)\leq
  \dmet(0,\me)$. Furthermore, if $\me(B_t(0))=0$, then also
  $\me_n(B_{t-\eps}(0))=0$ for any $\eps>0$ and all sufficiently large
  $n$, so that $\liminf \dmet(0,\me_n) \geq \dmet(0,\me)$. Thus,
  $\dmet(0,\me_n)\to \dmet(0,\me)$.  If $\me=0$, then, for every
  $t>0$, the support of $\me_n$ is contained in the ball $B_t(0)$ for
  all sufficiently large $n$, so that $r_n\to0$. In both cases, the
  limit satisfies $r=\dmet(0,\me)$ or $r=0$.  Furthermore,
  $(0,\infty)$ is an isolated point in $\Me'$. Therefore, $\Me'$ is
  closed and Theorem~\ref{lemma:vVSw} implies that the tail measure
  $\mu^*$ of $\big(\etap,\dmet(0,\etap)\big)$ is supported on the set
  $\Me'$. 

  Since $\cl(\supp\etap) = \psi\big(\etap,\dmet(0,\etap)\big)$, it
  remains to show that $\psi$ satisfies the assumptions of the
  continuous mapping theorem.
  The map $\psi$ is clearly a morphism. It is also bornologically
  consistent, since
  \begin{displaymath}
    \big\{\me\colons \|\cl(\supp\me)\|\geq\eps \big\}
    =\big\{\me\colons \me(B_\eps(0)^c)\geq 1 \big\}\in
    \Mpi[\sR_0^d].
  \end{displaymath}
  We show that $\psi$ is continuous as the map from $\Me'$ endowed
  with the product of the vague topology and the Euclidean topology on
  $[0,\infty)$ to $\sK^d$ with the Hausdorff metric.  Because the
  spaces involved are sequential, it suffices to show that
  $(\me_n,r_n) \to (\me,r)$ implies $\psi(\me_n,r_n) \to \psi(\me,r)$
  for any sequence $(\me_n,r_n)_{n\in\NN}\subset\Me'$ and $(\me,r) \in
  \Me'$. Denote $K_n=\psi(\me_n,r_n)$ and $K=\psi(\me,r)$. 

  % In particular, note that $\me\neq 0$.
  Since $(0,\infty)$ is an isolated point, there are no sequences
  converging to it. 
  Assume that $(\me,r)=(0,0)$, so that $K=\psi(0,0)=\{0\}$.
  Since $\me_n\to\me$, for each $\eps>0$
  we have $\supp\me_n\subset B_\eps(0)$ for all sufficiently large
  $n$. Hence $K_n\subset B_\eps(0)$ and so
  $\dmet_{\mathrm{H}}(K_n,K)\leq\eps$. 

  Next, assume that $\me\neq 0$ and $r=0$. Assume first that
  $\dmet(0,\me)>t$, that is, 
  $\me(B_t(0))=0$ for some $t>0$. Without loss of generality, assume that
  $\me(\partial B_t(0))=0$. Then $r_n=0$ for all
  sufficiently large $n$, since $\me_n(B_t(0))=0$ and so
  $\dmet(0,\me_n)\geq t$. For any $\eps>0$, there exist $k$ and $n_1$ such
  that for all $n\geq n_1$,
  \begin{displaymath}
    \me_n|_{B_t^c}= \sum_{i=1}^k  \delta_{x_i^{(n)}},\quad
    \me|_{B_t^c}=\sum_{i=1}^k \delta_{x_i}
  \end{displaymath}
  and $\|{x_i^{(n)}}-x_i\| < \eps$ for all $i=1,\ldots, k$ and
  sufficiently large $n$; see
  \cite[Proposition~2.8]{bas:plan19}. Since $r_n=0$ and $r=0$, we have
  $K\cap B_t(0)=K_n\cap B_t(0)=\{0\}$, and thus
  \begin{equation}
    \label{eq:outside-t}
    \dmet_{\mathrm{H}}(K_n,K)=
    \dmet_{\mathrm{H}}(K_n\cap B_t(0)^c,K\cap B_t(0)^c)
    \leq \eps. 
  \end{equation}

  If $\dmet(0,\me)=0$, then we apply the above argument to the
  counting measures restricted to the complement of $B_\eps(0)$ for
  any small $\eps>0$. Then
  \begin{displaymath}
    \dmet_{\mathrm{H}}(K_n,K)\leq 
    \dmet_{\mathrm{H}}(K_n\cap B_\eps(0)^c,K\cap B_\eps(0)^c)
    +\eps\leq 2\eps. 
  \end{displaymath}

  Finally, consider the case $\me\ne0$ and $r>0$. Then take any
  $t\in(0,r)$ such that $\me(\partial B_t(0))=0$ and note that $r_n>t$
  for all sufficiently large $n$. By restricting the counting measures
  $\me_n$ and $\me$ onto the complement of $B_t(0)$ and using
  point-to-point convergence, we obtain that \eqref{eq:outside-t}
  holds in this case.
\end{proof}

We now discuss a variant of Theorem~\ref{th:supp-etap} for point
processes that are regularly varying on the ideal $\Mpik[\sR_0^d]$
with $k\geq2$. In this case, tail measures of point processes are
supported by counting measures with at least
$k$ atoms.  However, by examining
the supports viewed as compact sets, it is not directly
possible to infer how many points lie outside any ball, so that there
is apparently no direct analogue of the ideal $\Mpik[\sR_0^d]$ in the
family of compact sets. This is also due to the fact that the cardinality
map $K\mapsto\card(K)$ is not continuous on the space of compact sets.

Below we explain how to handle this for the cases when $k\leq d$. For
this, we use ideals $\fK_{[j]}$ defined in 
Section~\ref{sec:ideals-compact-sets}. The restriction on $k$ being at
most $d$ results
from the fact that lower-dimensional closed sets in $\R^d$ are easier to
discern and so it becomes possible to phrase the event that a point
process has at least $k$ points outside a small ball in terms of
ideals on random compact sets. Recall that $\fK_{[0]}=\fK_0$. 

\begin{theorem}
  \label{th:supp-etap-k}
  Let $\etap$ be a point process in $\Mp[\sR_0^d]$ such that, for some
  $k\in\{2,\dots,d\}$, 
  \begin{equation}
    \label{eq:RV-etap-c-k}
    \big(\etap, \dmet(0,\etap)\big) \in\RV\big(\Mp[\sR_0^d] \times [0,\infty),
    \mydot,\Mpik[\sR_0^d]\times [0,\infty),g,\mu^*\big).
  \end{equation}
  Then $\cl(\supp\etap)\in
  \RV(\sK^d,\mydot,\fK_{[k-1]},g,\tilde\mu)$, 
  and the tail measure $\tilde\mu$ is the pushforward of $\mu^*$ under the
  map $\psi$ from \eqref{eq:psi-me}. 
\end{theorem}
\begin{proof}
  Repeating the argument from Lemma~\ref{lemma:products}, we see that
  the map $(\me,\dmet(0,\me))\mapsto (\me^{(k)},\dmet(0,\me))$ is
  continuous and bornologically consistent between
  $\Mpik[\sR_0^d]\times [0,\infty)$ and
  $\Mpi[(\sR_0^d)^k(k)]\times [0,\infty)$. This yields the regular
  variation of the pair $(\etap^{(k)},\dmet(0,\etap))$.

  Next, we adjust the map $\psi$ by letting
  % $\psi(\me^{(k)},r)$ be the
  % image of the closed support of $\me^{(k)}$, namely
  \begin{displaymath}
    \psi(\me^{(k)},r)
    =\cl\Big(
    \bigcup_{(x_1,\dots,x_k)\in\supp \me^{(k)}}\{x_1,\dots,x_k\}
    \Big)
  \end{displaymath}
  if $r>0$
  and the union of this set and the origin if $r=0$. Note that the
  values of $\psi$ are 
  compact subsets of $\R^d$ and
  $\psi(\me^{(k)},\dmet(0,\me))=\cl(\supp\me)$. Arguing as in
  Theorem~\ref{th:supp-etap}, we see that $\psi$ is continuous. We
  need to show its bornological consistency. Recall that $\fK_{[k-1]}$ is
  generated by the modulus $\modulus_{[k-1]}$
  given in \eqref{eq:modulus-[k]}. For each
  $\eps>0$,
  \begin{align*}
    \{\me\colons \modulus_{[k-1]}(\cl(\supp\me))\geq \eps\}
    &=\bigcap_{H\in G(d,k-1)} \bigcap_{L\in\sK^d,L\subset H}
    \{\me\colons \dmet_{\mathrm{H}}(\cl(\supp\me),L)\geq \eps\}\\
    &= \bigcap_{H\in G(d,k-1)} 
    \{\me\colons \cl(\supp\me)\not\subset H+B_\eps(0)\}\\
    &\subset  
    \{\me\colons \me(B_{\eps}(0)^c)\geq k\}.
  \end{align*}
  Indeed, if at most $k-1$ atoms lie outside $B_{\eps}(0)$, 
  then these atoms, together with the origin, are contained in some 
  $(k-1)$-dimensional linear subspace $H\in G(d,k-1)$, and so
  $\cl(\supp\me)$ is a subset of $H+B_\eps(0)$. Thus, the map $\psi$
  is bornologically consistent.
  % It remains to map
  % $\psi(\etap^{(k)},\dmet(0,\etap))$ to a subset of $\R^d$ using the
  % map $(x_1,\dots,x_k)\to\{x_1,\dots,x_k\}$, and then
  The continuous mapping theorem
  yields the result.  
\end{proof}

\begin{remark}
  \label{rem:rv-etap-check}
  By an analogue of Lemma~\ref{lemma:conv-Sk} together with
  Lemma~\ref{lemma:conv-det-product}, the regular variation property
  \eqref{eq:RV-etap-c} can be confirmed by checking that
  \begin{multline}
    \label{eq:h-inf}
    g(t)\E \Big[e^{-\int f\diff T_{t^{-1}}\etap}
    \one_{\etap(T_t\base_i)\geq 1}h(\dmet(0,T_{t^{-1}}\etap))\Big]\\
    \to \int e^{-\int f\diff \me}\one_{\me(\base_i)\geq
      1}h(r)\,\mu^*(\bdiff (\me,r)) \quad
    \text{as}\; t\to\infty
  \end{multline}
  for all $\base_i$ from a countable open base of the ideal $\sR_0^d$, bounded
  Lipschitz functions $f:\R^d\to\R_+$ with support contained in $\base_i$, 
  and all bounded Lipschitz functions $h:[0,\infty)\to\R$. Note that
  $\dmet(0,T_{t^{-1}}\etap)=t^{-1}\dmet(0,\etap)$.

  Furthermore, \eqref{eq:RV-etap-c-k} can be confirmed (using the
  result of Lemma~\ref{lemma:conv-Sk-product}) by checking that
  \begin{displaymath}
    g(t)\Prob{\etap(T_t\base_i)\geq k+1}\to 0,
  \end{displaymath}
  and that 
  \begin{multline}
    \label{eq:PP-conv-k-with-d}
    g(t)\E \Big[\Big(1-e^{-\int f\diff (T_{t^{-1}}\etap)^{(k)}}\Big)
    h(\dmet(0,T_{t^{-1}}\etap))\one_{\etap(T_t\base_i)=k}\Big]\\
    \to \int \Big(1-e^{-\int f\diff \me^{(k)}}\Big)h(r)\one_{\me(\base_i)=k}
    \mu^*(\bdiff (\me,r)) \quad
    \text{as}\; t\to\infty
  \end{multline}
  for all $i\in\NN$, bounded Lipschitz functions $f:(\R^d)^k\to\R_+$
  with cupport contained in $\base_i^k$, and all bounded Lipschitz functions
  $h:[0,\infty)\to\R$.   
\end{remark}

\begin{example}[Counterexample to regular variation of the support]
  The regular variation of a point process $\etap$ on the ideal
  $\Mpi[\sR_0]$ (without pairing it with $\dmet(0,\etap)$)
  does not necessarily imply the regular variation of
  $X=\cl(\supp\etap)$.  Let $\xi$ be a random variable which is not
  regularly varying on $\R$ with the ideal $\sR_0$, but is such that
  $|\xi|$ is regularly varying on the same space, namely,
  $|\xi|\in\RV(\R_+,\mydot,\sR_0,g,\mu')$. Such a distribution of $\xi$
  may be constructed using oscillating tails, for example, letting
  $\xi=RS$, where $R$ is a Pareto($\alpha$) random variable and
  \begin{align*}
    \Prob{S=1\mid R=r}&=\frac{1}{2}+\frac{1}{4}\sin (\log r),\\
    \Prob{S=-1\mid R=r}&=\frac{1}{2}-\frac{1}{4}\sin (\log r).
  \end{align*}
  Then the ratio $\Prob{\xi>t}/\Prob{|\xi|>t}$ oscillates and has no
  limit. The tail measure $\mu'$ of $|\xi|$ is $\theta_\alpha$.

  Define the point process on the positive half-line by letting
  \begin{displaymath}
    \etap = \delta_{|\xi|} +\delta_{|\xi|^{-1}}\one_{\xi>0}.
  \end{displaymath}
  By Lemma~\ref{lemma:conv-Sk} with
  $k=1$ and the sets $\base_i=(i^{-1},\infty)$ forming an open base for
  the ideal $\sR_0$ on $\R_+$, $\etap$ is regularly varying on
  $\Mpi[\sR_0]$ with the tail 
  measure $\mu$ being the pushforward of $\mu'$ under the map
  $x\mapsto\delta_x$ and the normalising function
  $g(t)=t^\alpha$. Indeed,
  \begin{displaymath}
    t^\alpha \Prob{\etap((t/i,\infty))\geq 1}
    =t^\alpha\Prob{|\xi|> t/i}\to i^\alpha=\theta_\alpha(\base_i)
  \end{displaymath}
  and
  \begin{multline*}
    t^\alpha \E \Big[e^{-f(t^{-1}|\xi|)-f(t^{-1}/|\xi|)}
    \one_{\etap((t/i,\infty))\geq1}\one_{\xi>0} \Big]
    +t^\alpha \E \Big[e^{-f(t^{-1}|\xi|)}
    \one_{\etap((t/i,\infty))\geq1}\one_{\xi\leq 0} \Big]\\
    =t^\alpha \E \Big[e^{-f(t^{-1}|\xi|)}
    \one_{|\xi|> t/i}\Big]
    \to \int_0^\infty  e^{-f(x)}\one_{x>i^{-1}}
    \theta_\alpha(\bdiff x)\quad \text{as}\; t\to\infty,
  \end{multline*}
  where, to combine two summands, we have used the fact that $f$ is
  supported by $\base_i$ and so $f(t^{-1}/|\xi|)=0$ if $|\xi|>t/i$ and
  $t$ is sufficiently large.

  The random compact set
  $X=\supp\etap$ takes value $\{|\xi|\}$ if $\xi\leq 0$ and
  $\{|\xi|,|\xi|^{-1}\}$ if $\xi>0$. Let $\sA_s=\{\{x\}\colons x\geq s\}$
  be the family of
  singletons $\{x\}$ with $x\geq s$ for $s>1$. This is a closed subset
  of $\sK^1$ that is bounded away from the origin in the Hausdorff
  metric and thus belongs to the ideal $\fK_0$. By homogeneity,
  $\sA_s$ would be a continuity set of the tail measure for all but
  at most countably many values of $s$. However,
  \begin{displaymath}
    t^{\alpha}\Prob{X \in T_t\sA_s}
    =t^{\alpha}\Prob{|\xi|>ts,\xi\leq 0}
    =t^{\alpha}\Prob{\xi<-ts}
  \end{displaymath}
  does not converge as $t\to\infty$. Choosing an $s$ among the
  continuity values of the putative tail measure shows that $X$ is not
  regularly varying on $\fK_0$. Checking the condition of
  Theorem~\ref{th:supp-etap}, note that the left-hand side of
  \eqref{eq:h-inf} becomes
  \begin{multline*}
    t^\alpha \E \Big[e^{-f(t^{-1}|\xi|)-f(t^{-1}/|\xi|)}
    \one_{\etap((t/i,\infty))\geq1}\one_{\xi>0}h(t^{-1}|\xi|^{-1}) \Big]\\
    +t^\alpha \E \Big[e^{-f(t^{-1}|\xi|)}
    \one_{\etap((t/i,\infty))\geq1}\one_{\xi\leq 0}h(t^{-1}|\xi|) \Big].
  \end{multline*}
  For suitable choices of $f$ and non-constant $h$, the values of $h$ at
  $t^{-1}|\xi|^{-1}$ and at $t^{-1}|\xi|$ prevent the two summands
  from being combined. The oscillations of the positive and negative
  tails of $\xi$ then remain visible, and the limit in
  \eqref{eq:h-inf} does not exist. Thus, the additional assumption in
  Theorem~\ref{th:supp-etap} indeed fails.
\end{example}

\begin{example}[Set of i.i.d.\ points]
  \label{ex:etap-n-points}
  Let $\etap=\delta_{\xi_1}+\cdots+\delta_{\xi_m}$ be a binomial point
  process in $\R^d$, where $m$ is fixed
  and $\xi_1,\dots,\xi_m$ are independent copies of
  $\xi$ with distribution $\nu\in\RV(\R^d,\mydot,\sR_0^d,g,\mu)$. It
  is shown in Example~\ref{ex:rv-binomial-pp} that,  for
  $k\in\{1,\dots,m\}$, 
  \begin{displaymath}
    \etap\in\RV\big(\Mp[\sR_0^d],\mydot,\Mpik[\sR_0^d],g^k,\mu^*_{(k)}\big), 
  \end{displaymath}
  where $\mu^*_{(k)}$ is the image of $\binom{m}{k}\mu^{\otimes k}$ under the map
  $x\mapsto\me_x$, see \eqref{eq:me-map-k} for the definition of $\me_x$. 

  The support of $\etap$ is the 
  random compact set $X=\{\xi_1,\dots,\xi_m\}$.
  In order to show that $X$ is regularly varying,
  we confirm the joint regular variation of $\etap$ and
  $\dmet(0,\etap)$ on the ideal $\Mpik[\sR_0^d]\times\R_+$. First, it
  is easy to see that, for a constant $c_{m,k}$,
  \begin{displaymath}
    g(t)^k\Prob{\etap(T_t\base_i)\geq k+1}
    \leq g(t)^k c_{m,k} \nu(T_t\base_i)^{k+1}\to 0\quad
    \text{as}\; t\to\infty. 
  \end{displaymath}
  Let $f$ be a
  continuous bounded function with support contained in $\base_i^k$, where
  $\base_i=B_{1/i}(0)^c$ and let $h:[0,\infty)\to\R$ be a bounded continuous
  function. Taking into account \eqref{eq:rv-binomial-k+1}, by
  \eqref{eq:PP-conv-k-with-d}, we need to find the limit as
  $t\to\infty$ of
  \begin{align*}
    g(t)^k&\E
     \Big[\Big(1-e^{-\int f\diff (T_{t^{-1}}\etap)^{(k)}}\Big)
      h(\dmet(0,T_{t^{-1}}\etap))\one_{\etap(T_t\base_i)=k}\Big]\\
    &=\binom{m}{k}g(t)^k\int_{T_t\base_i^k}\int_{T_t B_{1/i}(0)^{m-k}}
      \Big(1-e^{-\int f\diff (T_{t^{-1}}\me_{(x_1,\dots,x_k)}^{(k)})}\Big)\\
    &\times  
      h(t^{-1}\min\{\|x_1\|,\dots,\|x_m\|\})
    \nu^{\otimes k}(\bdiff(x_1,\dots,x_k))
      \nu^{\otimes (m-k)}(\bdiff(x_{k+1},\dots,x_m)).
  \end{align*}
  If $m=k$, then the limit is
  \begin{align*}
    \int_{\base_i^k}\Big(1-e^{-\int f\diff\me_x^{(k)}}\Big)
    h(\dmet(0,\me_x)) \binom{m}{k} \mu^{\otimes k}(\bdiff(x_1,\dots,x_k)),
  \end{align*}
  and then the tail measure $\mu^*$ in \eqref{eq:RV-etap-c-k} is the
  pushforward of $\binom{m}{k}\mu^{\otimes k}$ under the map
  $x\mapsto (\me_x,\dmet(0,\me_x))$.
  If $k\leq m-1$, then the limit is 
  \begin{align*}
    \int_{\base_i^k}\Big(1-e^{-\int f\diff\me_x^{(k)}}\Big)
    h(0) \binom{m}{k} \mu^{\otimes k}(\bdiff(x_1,\dots,x_k)), 
  \end{align*}
  and then $\mu^*$ is the
  pushforward of $\binom{m}{k}\mu^{\otimes k}$ under the map
  $x\mapsto (\me_x,0)$.
  
  We now apply Theorem~\ref{th:supp-etap-k} (or
  Theorem~\ref{th:supp-etap} if $k=1$), noticing the requirement
  that $k\leq d$.  Thus, for $k\leq \min\{m-1,d\}$ and $m\geq 2$, the
  random compact set $X=\{\xi_1,\dots,\xi_m\}$ is regularly varying on
  the ideal $\fK_{[k-1]}$ with the normalising function $g^k$. Its
  tail measure $\tilde\mu$ is the pushforward of
  $\binom{m}{k}\mu^{\otimes k}$ (with $\mu^*$ being the intermediate
  object) under the map $x\mapsto \{0,x_1,\dots,x_k\}$, while if
  $k=m\leq d$, it is the pushforward of the same measure under the map
  $x\mapsto \{x_1,\dots,x_k\}$ for $x=(x_1,\dots,x_k)\in(\R^d)^k$.
\end{example}

\begin{example}[Support of a Poisson process]
  \label{ex:etap-supp}
  Let $\etap$ be the Poisson process in $\R^d$ with intensity measure
  \begin{displaymath}
    \lambda\in\RV\big(\R^d,\mydot,\sR_0^d,g,\mu\big).
  \end{displaymath}
  We explore the regular variation of $X=\cl(\supp\etap)$ on the ideal
  $\fK_{[k-1]}$ for $k\in\{1,\dots,d\}$ using
  Theorem~\ref{th:supp-etap} for $k=1$ and
  Theorem~\ref{th:supp-etap-k} for $k\geq2$.
  
  If the total mass of $\lambda$ is infinite, then $\dmet(0,\etap)=0$
  almost surely, and thus Theorem~\ref{thr:poisson-process} applies. Indeed,
  since $\lambda$ is finite on the complement of any ball, it is
  infinite in every neighbourhood of the origin, so that there are
  infinitely many points of $\etap$ in any ball around the origin. The
  tail measure $\mu^*$ of $\big(\etap,\dmet(0,\etap)\big)$ on the
  ideal $\Mpik[\sR_0^d]\times [0,\infty)$ is the
  pushforward of $\frac{1}{k!}\mu^{\otimes k}$ under the map
  $x\mapsto (\me_x,0)$ for $x=(x_1,\dots,x_k)\in(\R^d)^k$.
  Then $X=\cl(\supp\etap)$ is
  regularly varying with tail measure $\tilde{\mu}$ being the pushforward of
  $\frac{1}{k!}\mu^{\otimes k}$ by the map $x\mapsto
  \{0,x_1,\dots,x_k\}$.  

  Assume now that $\lambda(\R^d)<\infty$. The key observation below is
  that, if the point process consists of $k$ points, then the tail
  measure of the random compact set $X$ is supported on finite sets of
  cardinality $k$, while 
  if the point process has at least $(k+1)$ points, then the tail
  measure of $X$ is concentrated on sets of $(k+1)$ points, one of
  them being the origin.

  First, note that  
  \begin{displaymath}
    g(t)^k\Prob{\etap(T_t\base_i)\geq k+1}
    \leq g(t)^k 2\frac{\lambda(T_t\base_i)^{k+1}}{(k+1)!}
    \to 0 \quad \text{as}\; t\to\infty.
  \end{displaymath}
  where the inequality holds for sufficiently large $t$ such that
  $\lambda(T_t\base_i)\leq (k+1)/2$.
  
  Fix an $i\in\NN$ and let $\base_i$ be an element of an open base of
  $\sR_0^d$. Let $f:(\R^d)^k\to\R_+$ be a bounded Lipschitz function
  supported by $\base_i^k$, and let $h:\R_+\to\R$ be a bounded
  Lipschitz function. Then the left-hand side of \eqref{eq:PP-conv-k-with-d}
  becomes
  \begin{align*}
    I(t)
    &=g(t)^k\E \Big[\Big(1-e^{-\int f\diff (T_{t^{-1}}\etap)^{(k)}}\Big)
      h(\dmet(0,T_{t^{-1}}\etap))\one_{\etap(T_t\base_i)=k}\Big]\\
    &=\frac{1}{k!}e^{-\lambda(T_t\base_i)} g(t)^k \int_{(T_t\base_i)^k}
      \Big(1-e^{-\int f\diff (T_{t^{-1}}\me_x)^{(k)}}\Big)\\
    &\qquad\qquad\times 
      \E \big[h(\min\{\dmet(0,T_{t^{-1}}\me_x),
      \dmet(0,T_{t^{-1}}\etap_t)\})\big]
      \lambda^{\otimes k}(\bdiff(x_1,\dots,x_k)),
  \end{align*}
  where $\etap_t$ is $\etap$ restricted to the complement of
  $T_t\base_i$. Note that
  \begin{displaymath}
    p_t=\Prob{\etap_t=0}=\Prob{\etap((T_t\base_i)^c)=0}
    =e^{-\lambda((T_t\base_i)^c)}\to e^{-\lambda(\R^d)}=p \quad
    \text{as}\; t\to\infty. 
  \end{displaymath}
  We the expression for $I(t)$ in two parts $I_1(t)+I_2(t)$
  depending on whether
  $\etap_t$ is non-trivial. If $\etap_t=0$, then $p_t
  e^{-\lambda(T_t\base_i)}=p$ and we obtain
  \begin{align*}
    I_1(t)&=\frac{p}{k!} g(t)^k \int
    \Big(1-e^{-\int f\diff (T_{t^{-1}}\me_x)^{(k)}}\Big)
    h(\dmet(0,T_{t^{-1}}\me_x))
    \lambda^{\otimes k}(\bdiff(x_1,\dots,x_k))\\
    &\to \frac{p}{k!}  \int
    \Big(1-e^{-\int f\diff (\me_x)^{(k)}}\Big)
    h(\dmet(0,\me_x))
    \mu^{\otimes k}(\bdiff(x_1,\dots,x_k))
  \end{align*}
  by the regular variation of $\lambda^{\otimes k}$. If $\etap_t$ is
  not zero, then
  \begin{displaymath}
    \dmet(0,T_{t^{-1}}\etap_t)\leq \dmet(0,T_{t^{-1}}\me_x)
  \end{displaymath}
  for all $x\notin (\base_i)^k$. Thus,
  \begin{align*}
    I_2(t)
    &=\frac{1}{k!} e^{-\lambda(T_t\base_i)} g(t)^k \int
      \Big(1-e^{-\int f\diff (T_{t^{-1}}\me_x)^{(k)}}\Big)
      \E \big[h(\dmet(0,T_{t^{-1}}\etap_t))\mid \etap_t\neq0\big]\\
    &\qquad\qquad\qquad\qquad\qquad\qquad \times 
      \Prob{\etap_t\neq 0}
      \lambda^{\otimes k}(\bdiff(x_1,\dots,x_k)).
  \end{align*}
  By the Lipschitz property of $h$, the difference between $I_2(t)$
  and the same expression with $h(0)$ in place of
  $h(\dmet(0,T_{t^{-1}}\etap_t))$ is bounded by a constant times
  \[
    g(t)^k \int_{(T_t\base_i)^k}
    \Big(1-e^{-\int f\diff (T_{t^{-1}}\me_x)^{(k)}}\Big)
    \E\big[\dmet(0,T_{t^{-1}}\etap_t)\one_{\{\etap_t\neq0\}}\big]
    \lambda^{\otimes k}(\bdiff x).
  \]
  Since $\lambda(\R^d)<\infty$, the point process $\etap$ has finitely
  many atoms almost surely. Hence
  $\dmet(0,T_{t^{-1}}\etap_t)\one_{\{\etap_t\neq0\}}\to0$ almost
  surely, and it is bounded by $1/i$ for $\base_i=B_{1/i}(0)^c$.
  Dominated convergence therefore shows that this error tends to zero.
  % By the Lipschitz property of $h$, we have
  % \begin{align*}
  %   \Big|I_2(t)
  %   &-\frac{1}{k!} g(t)^k \int
  %     \Big(1-e^{-\int f\diff (T_{t^{-1}}\me_x)^{(k)}}\Big)
  %     h(0)
  %     \Prob{\etap_t\neq 0}
  %     \lambda^{\otimes k}(\bdiff(x_1,\dots,x_k))\Big|\\
  %   &\leq \frac{1}{k!} g(t)^k \int
  %     \Big(1-e^{-\int f\diff (T_{t^{-1}}\me_x)^{(k)}}\Big)
  %     \E \big[\dmet(0,T_{t^{-1}}\etap_t)\mid \etap_t\neq0\big]\\
  %   &\qquad\qquad\qquad\qquad\qquad\qquad \times
  %     \Prob{\etap_t\neq 0}
  %     \lambda^{\otimes k}(\bdiff(x_1,\dots,x_k))\\
  %   &\leq \frac{1}{k!} g(t)^k \int
  %     \Big(1-e^{-\int f\diff (T_{t^{-1}}\me_x)^{(k)}}\Big)
  %     \dmet(0,\base_i)
  %     \Prob{\etap_t\neq 0}
  %     \lambda^{\otimes k}(\bdiff(x_1,\dots,x_k))\\
  %   &\to \frac{1-p}{k!} \dmet(0,\base_i) \int
  %     \Big(1-e^{-\int f\diff (\me_x)^{(k)}}\Big)
  %     \mu^{\otimes k}(\bdiff(x_1,\dots,x_k)).
  % \end{align*}
  % Here we used the fact that $\etap_t$ is a subset of $T_t\base_i^c$ and
  % so $\dmet(0,\etap_t)\leq \dmet(0,\base_i)$.
  % Letting $i$ grow we see that the right-hand side can be made
  % arbitrarily small.
  As $t\to\infty$, we have
  $e^{-\lambda(T_t\base_i)}\Prob{\etap_t\neq0} \to 1-p$, and thus
  \begin{displaymath}
    I_2(t)\to \frac{1-p}{k!}\int
      \Big(1-e^{-\int f\diff (\me_x)^{(k)}}\Big)
      h(0)\mu^{\otimes k}(\bdiff(x_1,\dots,x_k))
  \end{displaymath}  
  Thus, \eqref{eq:RV-etap-c-k} holds with 
  % \begin{align*}
  %   \lim_{t\to\infty}g(t)\E \Big[e^{-\int f\diff T_{t^{-1}}\etap}
  %   &\one_{\etap(T_t\base_i)\geq 1}h(\dmet(0,T_{t^{-1}}\etap))\Big]\\
  %   &= \int_{\base_i} e^{-f(x)} \Big[ph\big(\|x\|\big)
  %     +(1-p)h(0)\Big]\mu(\bdiff x)\\
  %   &= \int e^{-\int f \diff\me}\one_{\me(\base_i)\geq1}
  %     h(r)\, \mu^*(\bdiff(\me,r)),
  % \end{align*}
  % where 
  $\mu^*$ equal to the sum of two measures: one obtained as the
  pushforward of $\frac{p}{k!}\mu^{\otimes k}$ under the map
  $x\mapsto(\me_x,\dmet(0,\me_x))$ and the other of the measure
  $\frac{1-p}{k!}\mu^{\otimes k}$ 
  under the map $x\mapsto(\me_x,0)$. 
  Consequently, $X=\cl(\supp\etap)$ is regularly varying on $\fK_{[k-1]}$
  with the normalising function $g^k$ and the tail measure $\tilde\mu$
  obtained as the sum of the pushforward of $\frac{p}{k!}\mu^{\otimes
    k}$ under the map 
  $x\mapsto \{x_1,\dots,x_k\}$ and the pushforward of the measure
  $\frac{1-p}{k!}\mu^{\otimes k}$ 
  under the map $x\mapsto \{0,x_1,\dots,x_k\}$.
\end{example}

\begin{example}[Ranges of random functions]
  Regularly varying random closed sets naturally appear as ranges of
  regularly varying random functions. Let $\xi_t$, $t\in[0,1]$, be
  random continuous functions which is regularly varying on the space
  $\Cont([0,1])$ with the uniform metric and the ideal $\sC_0$
  generated by the uniform norm taken as the modulus. Then the range
  of $x\in\Cont([0,1])$ is given by $\psi(x)=[\inf x,\sup x]$. The
  map $\psi$ is continuous from $\Cont([0,1])$ to the
  family $\sK^1$ of compact sets on the line with the Hausdorff
  metric. It is also $(\sC_0,\fK_0)$-bornologically
  consistent. Therefore, if $\xi$ is a regularly varying stochastic
  process, then its range $X=\psi(\xi)$ is a regularly varying random
  compact set. This can be generalised to continuous random functions
  with values in $\R^d$. 
\end{example}

\paragraph{Convex hull maps}
Now consider set-valued maps with convex compact values.
Note that the convex hull mapping is continuous in the
Hausdorff metric.
\index{convex hull}
If $X$ is a random compact set in $\R^d$ that is regularly varying on
$\fK_{[k]}$ for some $k\in\{0,\dots,d-1\}$, then its \emph{convex
  hull} $\conv(X)$ is also regularly varying on $\fK_{[k]}$ by the
continuous mapping theorem. This applies to random compact sets
in Examples~\ref{ex:etap-n-points} and~\ref{ex:etap-supp}
obtained as supports of point processes. However, a particularly
interesting ideal for random convex sets is $\fK_{\inf}$ and we now
focus on this latter ideal. 

\begin{lemma}
  \label{lemma:conv-hull-map}
  The map $(\me,r)\mapsto \conv(\psi(\me,r))$, where $\psi$ is defined
  in \eqref{eq:psi-me}, is continuous from $\Me'$ to $\sK_c^d$ and also
  bornologically consistent from the ideal
  $\sM_{\mathrm{p}}^{(d+1)}\big(\sR_0^d\big)\times [0,\infty)$ 
  to the ideal $\fK_{\inf}$.  
\end{lemma}
\begin{proof}
  The continuity follows from the continuity of the convex hull
  map. To show its bornological consistency, let $\eps>0$. Then
  \begin{displaymath}
    \big\{(\me,r)\colons \conv(\psi(\me,r))\supset B_\eps(0)\big\}
    \subset \{\me\colons \me(B_{\eps/2}(0^c))\geq d+1\}\times [0,\infty).
  \end{displaymath}
  This is based on the fact that if the convex hull of a finite set of
  points contains $B_\eps(0)$, then there are at least $d+1$ points in
  this set lying outside of the ball of radius $\eps/2$.  Indeed, if
  only at most $d$ points lie outside of $B_{\eps/2}(0)$, then these
  points lie in a half-space whose boundary passes through the
  origin.  In this case, the convex hull of the whole set of points is
  contained in the convex hull of the union of a half-space
  and $B_{\eps/2}(0)$, and thus cannot contain $B_\eps(0)$.
\end{proof}

\begin{example}[Convex hull of i.i.d.\ points]
  \label{ex:interior-origin}
  Let $\xi_1,\dots,\xi_m$ with $m\geq2$ be i.i.d.\ random vectors in
  $\R^d$ such that $\xi_1\in\RV\big(\R^d,\mydot,\sR_0^d,g,\mu\big)$. By
  the continuous mapping theorem applied to the convex hull of
  $X=\supp\etap$ for the point process
  $\etap=\delta_{\xi_1}+\cdots+\delta_{\xi_m}$ from 
  Example~\ref{ex:etap-n-points} and letting $k=1$, we see that
  $\conv(X)=\conv(\{\xi_1,\dots,\xi_m\})$ is regularly varying on
  $\fK_0$ with the normalising function $g$ and the tail measure
  $\tilde\mu$ being the image of $m\mu$ under the map
  $x\mapsto\conv(\{0,x\})$. This means that the tail measure is supported on 
  line segments with one endpoint at the origin. Since the segments
  $\conv(\{0,x\})$ have empty interiors, the tail measure
  vanishes on the ideal $\fK_{\inf}$, and thus $\conv(X)$ is not regularly
  varying on $\fK_{\inf}$ if the normalisation is done by the function
  $g$.

  If $m\geq d+1$ and $k\in\{2,\dots,d\}$, then $\conv(X)$
  is regularly varying on $\fK_{[k-1]}$ with the tail measure being
  the pushforward of $\binom{m}{k}\mu^{\otimes k}$ under the map
  \begin{displaymath}
    x=(x_1,\dots,x_k)\mapsto\conv(\{0,x_1,\dots,x_k\}).
  \end{displaymath}
  In this case, the tail
  measure also vanishes on the ideal $\fK_{\inf}$. 
  
  Next we consider the case $k\geq d+1$, assuming that $m\geq d+1$. Then
  $(\etap,\dmet(0,\etap))$ satisfies satisfies the analogue of
  \eqref{eq:RV-etap-c-k} with $k=d+1$ and the
  tail measure $\mu^*$ being the pushforward of
  $\binom{m}{k}\mu^{\otimes k}$ under the map
  $x=(x_1,\dots,x_k)\mapsto (\me_x,\dmet(0,\me_x))$ if $k=m$ and
  under the map $x=(x_1,\dots,x_k)\mapsto (\me_x,0)$ if $k\leq m-1$. 

  % In order to unveil the hidden regular variation phenomenon in this
  % context, equip $\sK_c^d$ with the ideal $\fK_{\inf}$ and assume that
  % $m\geq d+1$. The convex hull map is bornologically consistent from
  % $(\sR_0^d)^m(d+1)$ (which is the product ideal $\sX^m(d+1)$ for
  % $\sX=\sR_0^d$) to $\fK_{\inf}$, since, for all $\eps>0$,
  %  \begin{multline*}
  %   \big\{(x_1,\dots,x_m)\in (\R^d)^m: \conv(\{x_1,\dots,x_m\})
  %   \supset B_\eps(0)\big\}\\
  %   \subset \bigcup_{I\subset \{1,\dots,m\},\card(I)=d+1}
  %   \big\{(x_1,\dots,x_m)\in (\R^d)^m: \|x_i\|>\eps/2, i\in I\big\}
  %   \in(\sR_0^d)^m(d+1).
  % \end{multline*}
  % % This is based on the fact that if the convex hull of a finite set of
  % % points contains $B_\eps(0)$, then there are at least $d+1$ points in
  % % this set lying outside of the ball of radius $\eps/2$.  Indeed, if
  % % only at most $d$ points lie outside of $B_{\eps/2}(0)$, then these
  % % points lie in an affine subspace of dimension at most $d-1$. Therefore, 
  % % the convex hull of the whole set of points is
  % % at most the convex hull of the union of a $(d-1)$-dimensional hyperplane and
  % % $B_{\eps/2}(0)$ and so cannot contain $B_\eps(0)$. 
  % In view of
  % Example~\ref{ex:vectors-HRV}, 
  % \begin{displaymath}
  %   (\xi_1,\dots,\xi_m)\in\RV\big((\R^d)^m,\mydot,
  %   (\sR_0^d)^m(d+1),g^{d+1},\mu'\big)
  % \end{displaymath}
  % with
  % \begin{displaymath}
  %   \mu'\big(\bdiff (x_1,\dots,x_m)\big)=\sum_{I\subset\{1,\dots,m\},\card(I)=d+1}
  %   \;\prod_{i\in I}\mu(\bdiff x_i)\prod_{j\notin I}\delta_0(\bdiff x_j).
  % \end{displaymath}

  By Lemma~\ref{lemma:conv-hull-map}, $\conv(X)$ is regularly varying
  on $\sK^d_c$ with the ideal $\fK_{\inf}$ and normalising function
  $g^{d+1}$. Its tail measure
  $\tilde{\mu}$ is the pushforward of $\binom{m}{d+1}\mu^{\otimes
    (d+1)}$ by the map $x=(x_1,\dots,x_{d+1})\mapsto
  \conv(\{0,x_1,\dots,x_{d+1}\})$ if $m\geq d+2$ and by the map to
  $\conv(\{x_1,\dots,x_{d+1}\})$ if $m=d+1$. This holds if $\tilde{\mu}$
  gives positive mass to convex compact sets which contain the origin
  in their interior. For instance, this is the case if $\mu$ is not
  supported by any closed half-space whose boundary contains the origin.
\end{example}

\begin{example}[Convex hull of Poisson process]
  \label{ex:chull-K-null}
  Let $\etap\in\Mp[\sR_0^d]$ be a Poisson point process on $\R^d$ with
  intensity measure $\lambda\in\RV(\R^d,\mydot,\sR_0^d,g,\mu)$. Since
  $\lambda(B_\eps(0)^c)$ is finite for any $\eps>0$, the point process
  $\etap$ has only a finite number of points outside any ball centred
  at the origin, so that $X=\cl(\supp\etap)$ is almost surely
  bounded, hence, $X$ is a random compact set.  Recall that either
  $\supp\etap$ is closed or $\cl(\supp\etap)=\{0\}\cup\supp\etap$, see
  Example~\ref{ex:etap-supp}.

  Then the analogue of \eqref{eq:RV-etap-c-k} holds for $k=d+1$ with the
  tail measure $\mu^*$ being the pushforward of
  $\frac{1}{(d+1)!}\mu^{\otimes (d+1)}$ under the map
  $x=(x_1,\dots,x_{d+1})\mapsto (\me_x,0)$ if
  $\lambda(\R^d)=\infty$. If $\lambda$ is a finite measure with
  $p=\exp\{-\lambda(\R^d)\}\in(0,1)$, then 
  $\mu^*$ is the sum of the pushforward of $p\frac{1}{(d+1)!}\mu^{\otimes(d+1)}$
  under the map $x\mapsto (\me_x,\dmet(0,\me_x))$,
  where here and below $x=(x_1,\dots,x_{d+1})$,
  and the pushforward of $(1-p)\frac{1}{(d+1)!}\mu^{\otimes (d+1)}$
  under the map $x=(x_1,\dots,x_{d+1})\mapsto (\me_x,0)$. 

  By the continuous mapping theorem and
  Lemma~\ref{lemma:conv-hull-map} applied to the closed convex hull
  map, we see that $\conv(X)\in\RV(\sK_c^d,\mydot,\fK_{\inf},g^{d+1},\tilde\mu)$,
  where $\tilde\mu$ is the pushforward of
  $\frac{1}{(d+1)!}\mu^{\otimes (d+1)}$ under the map
  $x\mapsto \conv(\{0,x_1,\dots,x_{d+1}\})$ if $\lambda$ is
  infinite. If $\lambda$ is finite, then $\tilde{\mu}$ is the sum of
  the pushforward of $p\frac{1}{(d+1)!}\mu^{\otimes (d+1)}$
  under the map $x\mapsto \conv(\{x_1,\dots,x_{d+1}\})$ and the
  pushforward of $(1-p)\frac{1}{(d+1)!}\mu^{\otimes (d+1)}$
  under the map $x\mapsto \conv(\{0,x_1,\dots,x_{d+1}\})$.
\end{example}

\begin{example}[Random ball]
  \label{ex:r-ball}
  Let $(\eta,\xi)$ be a random vector in $\YY=\R_+\times\R^d$ with
  linear scaling applied to both components. Define a random ball
  \begin{displaymath}
    X=\varphi\big((\eta,\xi)\big)=B_{\eta}(\xi).
  \end{displaymath}
  Then $\varphi$ is a continuous morphism between $\R_+\times\R^d$ and
  $\sK_c^d$. Equip the space $\R_+\times\R^d$ with the ideal
  $\sR_0^{d+1}$ defined as the product $\sR_0\otimes\sR_0^d$ of the ideal
  $\sR_0$ on $\R_+$ and $\sR_0^d$ on $\R^d$. Recall that the ideal
  $\fK_0$ is generated by the modulus $\suptau(K)=\|K\|$. For all
  $\eps>0$,
  \begin{displaymath}
    \big\{(r,x)\colons \suptau(\varphi(r,x))>\eps\big\}
    =\big\{(r,x)\colons r+\|x\|>\eps\big\}.
  \end{displaymath}
  Thus, $\varphi$ is a bornologically consistent map between the ideal
  $\sR_0^{d+1}$ and $\fK_0$.
  % If $\sK_c^d$ is equipped with the ideal $\fK_{\inf}$, then
  % \begin{displaymath}
  %   \{(r,x): B_\eps(0)\subset B_r(x)\}
  %   =\{(r,x): r>\big|\|x\|-\eps\big|\}. 
  % \end{displaymath}
  If $(\eta,\xi)$ is regularly varying on $\sR_0^{d+1}$, then
  $X=B_\eta(\xi)$ is regularly varying by the continuous mapping
  theorem. For instance, if $\eta$ and $\xi$ are independent with the
  same normalising functions, then the tail measure of $X$ is supported
  either on singletons or on balls centred at the origin.

  We can endow $\R_+\times\R^d$ with the product
  ideal $\sR_0\times\sR_0^d$, so that bounded sets are products of two
  bounded sets. This ideal is smaller than $\sR_0^{d+1}$, and thus the
  map $\varphi$ is not bornologically consistent on the ideal
  $\fK_0$. To handle this, assume $d\geq2$ and endow $\sK_c^d$ with the ideal
  $\fK_{[1]}\cap\fK^0$, which is generated by the minimum of
  $\modulus{[1]}(K)$ and $\ttau(K)$. 
  Then $\modulus{[1]}(B_r(x))>\eps$ implies that
  $B_\eta(\xi)$ is not a subset of $H+B_\eps$ for any line $H$ passing
  through the origin, so that $r>\eps$. Since also 
  $\ttau(B_r(x))=\|x\|-r>\eps$, we obtain that 
  $\|x\|>\eps$. This means that the map defining $X$ is bornologically
  consistent. Assume that $\eta$ and $\xi$ are independent,
  $\eta\in\RV(\R_+,\mydot,\sR_0,g_1,\mu_1)$
  and $\xi\in\RV(\R^d,\mydot,\sR_0^d,g_2,\mu_2)$. Then the pair
  $(\eta,\xi)$ is regularly varying on $\sR_0\times\sR_0^d$ with the
  normalising function $g_1g_2$ and the tail measure
  $\mu_1\otimes\mu_2$. In this case, $X$ is regularly varying with the
  tail measure being the pushforward of $\mu_1\otimes\mu_2$ under the
  map $(r,x)\mapsto B_r(x)$.   
\end{example}

\paragraph{Inverse functions}
Another rather general scheme relies on defining random sets from
inverse functions. Let $q:\XX\to\YY$ be a continuous map from a
Polish space $\XX$ to a Hausdorff space $\YY$.
The continuity of $q$ and the fact that singletons are closed
in $\YY$ ensure that
\index{inverse map}
\begin{equation}
  \label{eq:9}
  \Psi(x) =q^{-1}(q(x))=\big\{z\in\XX\colons q(z)=q(x) \big\} 
\end{equation}
defines a set-valued map from $\XX$ to the space $\sF=\sF(\XX)$ of closed
subsets of $\XX$. Examples of such set-valued inverses arise in the
context of quotient spaces in Section~\ref{sec:quotient-spaces-under},
where $q$ is the quotient map
\index{quotient map}
and $\Psi(x)=q^{-1}(q(x))=[x]$ is the equivalence class of $x$.

Assume that $q(z)=q(x)$ for any $x,z\in\XX$ implies
$q(T_tz)=q(T_tx)$ for all $t>0$, which is
Condition~\hyperref[condS]{(S)} in the setting 
of quotient spaces. Then the set-valued map $\Psi$ defined in
\eqref{eq:9} satisfies
\begin{equation}
  \label{eq:11}
  \Psi(T_tx)=T_t\Psi(x), \quad t>0,x\in\XX,
\end{equation}
meaning that $\Psi$ is a morphism from $\XX$ to $\sF$.

The following result does not impose any topological assumptions on $\XX$.
Consequently, it applies also in cases where the Fell topology on
$\XX$ is not Hausdorff. 

\begin{lemma}
  \label{lemma:Psi-continuous}
  Assume that $\Psi$ is defined by \eqref{eq:9} with a
  continuous open map $q$. Then $\Psi$ is continuous as a map from
  $\XX$ to $\sF$ endowed with the Fell topology. 
\end{lemma}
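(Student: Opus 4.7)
The plan is to verify continuity by checking that $\Psi^{-1}$ maps a subbase of the Fell topology on $\sF$ into open sets of $\XX$. Recall that the Fell topology on $\sF$ is generated by the subbase consisting of $\sF_G = \{F : F \cap G \neq \emptyset\}$ for open $G \subset \XX$ and $\sF^K = \{F : F \cap K = \emptyset\}$ for compact $K \subset \XX$.

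For an open set $G \subset \XX$, I would compute
\begin{displaymath}
  \Psi^{-1}(\sF_G) = \{x \in \XX : \psi^{-1}(\psi(x)) \cap G \neq \emptyset\}
  = \{x \in \XX : \psi(x) \in \psi(G)\} = \psi^{-1}(\psi(G)).
\end{displaymath}
Since $\psi$ is an open map, $\psi(G)$ is open in $\YY$, and since $\psi$ is continuous, $\psi^{-1}(\psi(G))$ is open in $\XX$.

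For a compact set $K \subset \XX$, the identity
\begin{displaymath}
  \Psi^{-1}(\sF^K) = \{x \in \XX : \psi^{-1}(\psi(x)) \cap K = \emptyset\}
  = \{x \in \XX : \psi(x) \notin \psi(K)\} = \psi^{-1}\bigl(\YY \setminus \psi(K)\bigr)
\end{displaymath}
reduces the problem to showing that $\YY \setminus \psi(K)$ is open. Since $\psi$ is continuous, $\psi(K)$ is compact in $\YY$; since $\YY$ is Hausdorff, compact sets are closed, so $\psi(K)$ is closed and its complement is open. Continuity of $\psi$ then yields that $\psi^{-1}(\YY \setminus \psi(K))$ is open in $\XX$.

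Combining the two cases, $\Psi^{-1}$ sends subbasic open sets in $\sF$ to open sets in $\XX$, hence $\Psi$ is continuous. No step here is a serious obstacle; the only subtlety worth flagging explicitly is that the compact-set case relies on $\YY$ being Hausdorff (which is given), so that the image of a compact set is automatically closed, making the openness assumption on $\psi$ needed only for the first case.
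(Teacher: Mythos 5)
Your proof is correct and rests on the same two facts as the paper's: $\Psi(x)\cap G\neq\emptyset$ iff $\psi(x)\in\psi(G)$ (open by openness of $\psi$), and $\Psi(x)\cap K=\emptyset$ iff $\psi(x)\notin\psi(K)$ (with $\psi(K)$ compact, hence closed in the Hausdorff space $\YY$). The only cosmetic difference is that you verify openness of preimages of the Fell subbase directly, whereas the paper phrases the same argument via convergence of nets; the content is identical.
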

\begin{proof} 
  Let $x_\gamma\to x$ for a net $(x_\gamma)_{\gamma\in\Gamma}$.  If
  $\Psi(x)\cap G\neq\emptyset$ for an open set $G$, then
  $q(x)\in q(G)$. Since $q(x_\gamma)\to q(x)$ and $q(G)$
  is open, there exists $\gamma_0$ such that
  $q(x_\gamma)\in q(G)$ for all $\gamma\geq\gamma_0$. Hence,
  $\Psi(x_\gamma)\cap G\neq\emptyset$ for all $\gamma\geq\gamma_0$. If
  $\Psi(x)\cap K=\emptyset$ for a compact set $K$, then
  $q(x)\notin q(K)$. Since $q(K)$ is compact (being continuous
  image of a compact set) and closed in the Hausdorff space $\YY$,
  we have that $q(x_\gamma)\notin q(K)$ for all
  $\gamma\geq\gamma_0$. 
\end{proof}

\begin{lemma}
  \label{lemma:cones}
  Assume that \eqref{eq:11} holds.  If
  \;$\zero \cap\Psi(x)\neq\emptyset$ for some $x\in\XX$, then
  $T_t\Psi(x)=\Psi(x)$ for some $t\neq1$.
  % The reverse implication holds
  % if the sets $\{x:\modulus(x)\leq \eps\}$ is compact for $\eps>0$. 
\end{lemma}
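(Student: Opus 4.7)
The plan is quite short since the statement follows almost directly from the morphism property \eqref{eq:11} and the definition of scaling invariant elements. I would pick any $z \in \zero \cap \Psi(x)$ (nonempty by hypothesis). Being a zero element of $\XX$, $z$ satisfies $T_s z = z$ for some $s \neq 1$; by the definition of $\zero$ recorded just before Lemma~\ref{lemma:invariant}, such an $s$ exists (and any of its powers works too).

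Next I would observe that $z \in \Psi(x)$ means $\psi(z) = \psi(x)$, which forces $\Psi(z) = \Psi(x)$ since both sets equal $\psi^{-1}(\psi(x))$. Combining this with the morphism identity \eqref{eq:11} applied at the point $z$, I get
\begin{equation*}
T_s \Psi(x) = T_s \Psi(z) = \Psi(T_s z) = \Psi(z) = \Psi(x),
\end{equation*}
and so $t = s \neq 1$ does the job.

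There is no genuine obstacle here; the only subtle point is the logical chain $z \in \Psi(x) \Rightarrow \Psi(z) = \Psi(x)$, which is immediate from the definition of $\Psi$ as a fibre of $\psi$. The proof uses nothing beyond \eqref{eq:11} and the definition of zero elements, so no continuity of $\psi$ or topological structure of $\XX$ is required.
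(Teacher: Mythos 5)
Your proof is correct and follows essentially the same route as the paper: both arguments pick a scaling-invariant element $z$ of $\Psi(x)$ with $T_sz=z$ for some $s\neq1$, use that $\Psi(z)=\Psi(x)$ (the fibres of $\psi$ through $z$ and $x$ coincide), and conclude $T_s\Psi(x)=\Psi(x)$. The only cosmetic difference is that you invoke the equivariance identity \eqref{eq:11} directly at $z$, while the paper unpacks the same computation at the level of the sets $\{z:\psi(z)=\psi(y)\}$ using the underlying assumption on $\psi$; your version is slightly tidier since it uses exactly the stated hypothesis.
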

\begin{proof}
  Assume that $y\in \Psi(x)\cap\zero$, which implies that $T_ty=y$ for some
  $t\neq1$ and $q(x)=q(y)$. Then, by the equivariance property
  \eqref{eq:11}, 
  \begin{align*}
    T_t\Psi(x)&=\big\{T_tz\colons q(z)=q(y) \big\}\\
    &=\big\{z\colons q(T_{t^{-1}} z)=q(T_{t^{-1}}y) \big\}
    =\big\{z\colons q(z)=q(y) \big\}=\Psi(x). \qedhere 
  \end{align*}
  % The reverse implication holds, since then $T_{t_n}x\in\Psi(x)$ for
  % a sequence $t_n\downarrow0$.
\end{proof}

Note that $\zero\cap\Psi(x)\neq\emptyset$ is equivalent to 
$q(x)\in q(\zero)$.
Therefore, every set $\Psi(x)$ either intersects $\zero$, in which case
it is invariant under at least one non-trivial scaling, or is a closed
set disjoint from $\zero$.

\begin{theorem}
  \label{thr:Psi}
  Assume that $\XX$ is a ball compact Polish space equipped with a
  continuous scaling action such that the set $\zero$ of
  scaling-invariant elements in $\XX$ is compact. Consider the metric exclusion ideal
  $\sX_\zero$ obtained by excluding $\zero$. Assume that either the metric
  $\dmet$ is homogeneous, or that $\sX_\zero$ is generated by an
  inf-compact continuous modulus. Furthermore, assume that $\Psi$ is
  defined by \eqref{eq:9} with a continuous open map $q$ and that
  \eqref{eq:11} holds. If $\xi\in\RV(\XX,T,\sS_\zero,g,\mu)$ and the
  tail measure $\mu$ is non-trivial on the set 
  $\{x\in\XX\colons \Psi(x)\cap\zero=\emptyset\}$, then $\Psi(\xi)$ is a
  random closed set that is regularly varying on the ideal
  $\fF^\zero$. % generated by $\ttau$.
\end{theorem}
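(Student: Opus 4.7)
The plan is to apply the continuous mapping theorem (Theorem~\ref{mapping-theorem}) to the set-valued map $\Psi$. Since $\psi$ is continuous into a Hausdorff space, each $\Psi(x)=\psi^{-1}(\psi(x))$ is closed, so $\Psi$ takes values in $\sF$ and $\Psi(\xi)$ is a well-defined random closed set. The proof reduces to verifying four hypotheses of Theorem~\ref{mapping-theorem}: continuity of $\Psi$ into $\sF$ with the Wijsman topology, the morphism property, bornological consistency of $\Psi$ between $\sS_\zero$ and $\fF^\zero$, and nontriviality of the pushforward $\Psi\mu$ on $\fF^\zero$.

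The first two items are immediate: continuity into $(\sF,\text{Fell})$ is the content of Lemma~\ref{lemma:Psi-continuous}, and on the ball compact Polish space $\XX$ the Fell and Wijsman topologies coincide on $\sF$, while the morphism property is precisely assumption \eqref{eq:11}. Bornological consistency uses the trivial but essential observation that $x\in\Psi(x)$ for every $x\in\XX$ (because $\psi(x)=\psi(x)$). Since $\fF^\zero$ is generated by the sets $\sF^{\zero^r}=\{F:F\cap\zero^r=\emptyset\}$ for $r>0$, and $\Psi(x)\cap\zero^r=\emptyset$ forces $x\notin\zero^r$, one obtains
\[
\Psi^{-1}(\sF^{\zero^r})\subset\{x:\dmet(x,\zero)\geq r\}\in\sS_\zero,
\]
giving $(\sS_\zero,\fF^\zero)$-bornological consistency of $\Psi$.

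For nontriviality of $\Psi\mu$, compactness of $\zero$ is the key ingredient. If $\Psi(x)\cap\zero=\emptyset$ then the distance from the closed set $\Psi(x)$ to the compact set $\zero$ is strictly positive, so $\Psi(x)\cap\zero^{1/n}=\emptyset$ for some $n\in\NN$. Consequently,
\[
\{x:\Psi(x)\cap\zero=\emptyset\}=\bigcup_{n\in\NN}\{x:\Psi(x)\cap\zero^{1/n}=\emptyset\}=\bigcup_{n\in\NN}\Psi^{-1}(\sF^{\zero^{1/n}}),
\]
and the sets on the right are monotonically increasing in $n$. By countable additivity of $\mu$ and the hypothesis that the left-hand side has positive $\mu$-measure, $\Psi\mu(\sF^{\zero^{1/n}})=\mu(\Psi^{-1}(\sF^{\zero^{1/n}}))>0$ for some $n$, establishing nontriviality of $\Psi\mu$ on $\fF^\zero$.

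Finally, I would briefly verify that $\fF^\zero$ itself is scaling and topologically consistent, as required by Theorem~\ref{mapping-theorem}: in case (a), these follow from the discussion after Example~\ref{example:exclusion} and homogeneity of $\dmet$, while in case (b) Proposition~\ref{ex:F-zero} identifies $\fF^\zero$ with the ideal generated by the continuous modulus $\ttau$ on $\sF'$, after which both consistency properties are automatic by Lemma~\ref{lemma:stau}. The main obstacle is the careful use of compactness of $\zero$ in establishing nontriviality of $\Psi\mu$; once this is in place, the remaining steps are essentially bookkeeping and direct references to the lemmas already established.
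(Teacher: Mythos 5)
Your proposal is correct and takes essentially the same route as the paper's proof: continuity of $\Psi$ via Lemma~\ref{lemma:Psi-continuous}, the morphism property from \eqref{eq:11}, bornological consistency from the observation $x\in\Psi(x)$, compactness of $\zero$ to connect the nontriviality hypothesis with the ideal $\fF^\zero$, and then the continuous mapping theorem (Theorem~\ref{mapping-theorem}). Your explicit increasing-union argument for nontriviality of $\Psi\mu$ merely spells out what the paper compresses into the remark that, by compactness of $\zero$, the union of all sets from $\fF^\zero$ is exactly the family of closed sets disjoint from $\zero$.
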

\begin{proof}
  Note that $\Psi(x)$ is a closed set for every $x\in\XX$.  By
  Lemma~\ref{lemma:Psi-continuous}, $\Psi$ is continuous with respect
  to the Fell topology.  Thus, $\Psi(\xi)$ is a random closed set and
  $\Psi$ is a 
  continuous morphism. Recall that $\fF^\zero$ is the ideal 
  consisting of all families of sets which do not intersect
  $\zero^r=\{x\in\XX\colons \dmet(x,\zero)\leq r\}$ for some $r>0$. Since
  $\zero$ is compact, the sets in $\fF^\zero$ are precisely the closed
  sets disjoint from $\zero$ and their union is the complement of
  $\zero$. Furthermore, since $x\in\Psi(x)$, 
  \begin{displaymath}
    \big\{x\colons \Psi(x)\cap \zero^r=\emptyset\big\}
    \subset \big\{x\colons x\notin \zero^r\big\}\in\sX_\zero, \quad r>0. 
  \end{displaymath}
  Thus $\Psi$ is bornologically consistent, and the result follows from
  the continuous mapping theorem (Theorem~\ref{mapping-theorem}). Note
  that under the imposed conditions $\sX_\zero$ is scaling consistent
  either by the homogeneity of the metric or by 
  Proposition~\ref{ex:F-zero}.
\end{proof}

\begin{example}
  Let $q=(q_1,\dots,q_m):\R^d\to\R^m$ be a continuous
  homogeneous function which is also an open map. By
  Lemma~\ref{lemma:map-zero}, the map $q$ is bornologically consistent if the
  spaces are equipped with the ideals $\sR_0^d$ and $\sR_0^m$,
  respectively. If $\xi\in\RV(\R^d,\mydot,\sR_0^d,g,\mu)$, then
  \begin{displaymath}
    \Psi(\xi)=\bigcap_{i=1}^m \big\{x\in\R^d\colons 
    q_i(x)=q_i(\xi)\big\}.
  \end{displaymath}
  If $\mu$ is non-trivial on $\{x\in\R^d\colons 0\notin \Psi(x)\}$, then
  Theorem~\ref{thr:Psi} implies that the random closed set
  $X=\Psi(\xi)$ is regularly varying on $\fF^\zero$.
\end{example}

We now adapt the setting of quotient spaces from
Section~\ref{sec:quotient-spaces-under} to study the regular
variation of equivalence classes as random closed sets in
$\XX$. Recall that $[x]$ denotes the equivalence class of $x\in\XX$
under the quotient map $q$. 

\begin{proposition}
  \label{prop:equiv-sets}
  Assume that $\XX$ is a ball compact Polish space
  equipped with a continuous scaling and an ideal $\sX$
  generated by an inf-compact, continuous modulus $\modulus$.  Let
  $\xi\in\RV(\XX,T,\sS,g,\mu)$. If Condition~\hyperref[condS]{(S)} holds, $q$ is an open
  map, and $\mu$ is non-trivial on $[\sS]$, then $[\xi]$ is a regularly
  varying random closed set on the ideal $\sF^\zero(\XX)$ generated by the modulus
  $\ttau$.
\end{proposition}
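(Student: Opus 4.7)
The plan is to apply the continuous mapping theorem (Theorem~\ref{mapping-theorem}) to the set-valued map $\Psi:\XX\to\sF$ defined by $\Psi(x)=q^{-1}(q(x))=[x]$, with source ideal $\sS$ and target the ideal on $\sF$ generated by $\ttau$. Since $\XX$ is ball compact Polish, the Wijsman topology on $\sF$ coincides with the Fell topology, making $\sF$ Polish; moreover, by Proposition~\ref{ex:F-zero}, $\ttau$ is a continuous modulus on the family $\sF'$ of nonempty closed sets, and the ideal $\fF^\cone$ it generates is the metric exclusion ideal obtained by excluding $\cone=\{\modulus=0\}$, which is scaling and topologically consistent.

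First I would verify that $\Psi$ is a morphism: condition (S) says $x\sim y\Rightarrow T_tx\sim T_ty$, so $[T_tx]=T_t[x]$, which is exactly the equivariance property. Next I would establish continuity of $\Psi$ by applying Lemma~\ref{lemma:Psi-continuous}, whose hypotheses (continuity and openness of $q$) are assumed; note that the lemma treats the target $\sF$ with the Fell topology, which in our ball compact setting agrees with the Wijsman topology underlying $\sF$. Thus $\Psi(\xi)=[\xi]$ is a random closed set and $\Psi$ is a continuous morphism.

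For bornological consistency, the key observation is the pointwise inequality
\begin{displaymath}
  \ttau([x])=\inf\{\modulus(y):y\in[x]\}\leq \modulus(x),
\end{displaymath}
which immediately gives $\Psi^{-1}\big(\{F\in\sF':\ttau(F)>\eps\}\big)\subset\{x:\modulus(x)>\eps\}\in\sS$ for every $\eps>0$. Since sets of the form $\{\ttau>\eps\}$ form a base for the ideal generated by $\ttau$, $\Psi$ is $(\sS,\fF^\cone)$-bornologically consistent. To check that the pushforward $\Psi\mu$ is nontrivial on the target ideal, I would use the assumption that $\mu$ is nontrivial on $[\sS]$: pick $B\in[\sS]$ with $\mu(B)>0$. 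Then $[B]\in\sS$, so $\ttau([B])\geq\delta$ for some $\delta>0$, and hence $\ttau([x])\geq\ttau([B])\geq\delta$ for every $x\in B$. Therefore $B\subset\Psi^{-1}\big(\{F:\ttau(F)\geq\delta\}\big)$, which yields $(\Psi\mu)\big(\{F:\ttau(F)\geq\delta\}\big)\geq\mu(B)>0$.

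With all four ingredients in place (morphism, continuity, bornological consistency, nontriviality of the pushforward), Theorem~\ref{mapping-theorem} immediately gives $[\xi]=\Psi(\xi)\in\RV(\sF,T,\fF^\cone,g,\Psi\mu)$, which is the claim. The main obstacle I expect is essentially bookkeeping rather than substance: one must carefully justify that the Fell/Wijsman continuity supplied by Lemma~\ref{lemma:Psi-continuous} is adequate for invoking the continuous mapping theorem in its Baire-measurable formulation, but this is automatic once the Polish structure of $\sF$ is noted and the Baire and Borel $\sigma$-algebras are identified via perfect normality.
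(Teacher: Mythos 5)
Your proof is correct and essentially reproduces the paper's argument: the paper simply cites Theorem~\ref{thr:Psi} (with $\psi=q$, plus Lemma~\ref{lemma:cones}), whose proof consists of exactly your steps — equivariance from (S), Fell-continuity of $\Psi$ via Lemma~\ref{lemma:Psi-continuous}, bornological consistency from $x\in[x]$, and the continuous mapping theorem (Theorem~\ref{mapping-theorem}). Your only addition is to spell out, via Proposition~\ref{ex:F-zero} and the inequality $\ttau([x])\geq\ttau([B])$ for $x\in B\in[\sS]$, why nontriviality of $\mu$ on $[\sS]$ gives nontriviality of the pushforward on the ideal generated by $\ttau$ — a step the paper leaves implicit in its citation.
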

\begin{proof}
  The map $x\mapsto[x]=\Psi(x)$, which associates each $x\in\XX$ with its
  equivalence class, is covered by
  Theorem~\ref{thr:Psi} with $\Psi(x)=q^{-1}(q(x))$, see
  \eqref{eq:9}, where $q$ is the quotient map. 
  % If $\xi$ is regularly varying, then the random set $[\xi]$
  % obviously
  % contains a regularly varying selection being $\xi$ itself. This
  % random
  % set may contain further regularly varying selections.
  If Condition~\hyperref[condS]{(S)} holds, then $\Psi$ satisfies \eqref{eq:11}.  By
  Lemma~\ref{lemma:cones}, $\Psi(x)=[x]$ is either a cone or has an
  empty intersection with $\zero$. In the latter case, $[x]$ is a
  closed set disjoint from $\zero$, hence $[x]$ belongs to the
  family $\sF^\zero(\XX)$. The statement now follows from 
  Theorem~\ref{thr:Psi}.
\end{proof}

\paragraph{Examples: regularly varying random convex compact sets}
Below we present several examples involving probability distributions
on the family $\sK_c^d$ of convex compact sets in $\R^d$ with 
linear scaling.

\begin{example}[Support functions]
  \label{mps-example} 
  Define a map $\psi$ from $\sK_c^d$ to the space $\Cont(\SS^{d-1})$
  of continuous functions on the unit sphere $\SS^{d-1}$ in $\R^d$ by
  associating each compact convex set $K$ with its \emph{support function},
  defined as
  \index{random convex compact set}
  \index{support function}
  \begin{equation}
    \label{eq:21}
    h_K(u)= \sup\{\langle u,a\rangle\colons a\in K\},\quad u\in \SS^{d-1};
  \end{equation}
  see \cite[Section~1.7.1]{schn2}.  The space $\Cont(\SS^{d-1})$ is
  equipped with the uniform metric, linear scaling and the modulus
  given by the uniform norm.  Since the Hausdorff distance between
  compact convex sets equals the uniform distance between their
  support functions,
  \begin{displaymath}
    \suptau(K)=\|K\|=\sup\big\{h_K(u)\colons
    u\in\SS^{d-1}\big\}=\|h_K\|_\infty.
  \end{displaymath}
  It follows that $\psi$ is an isometry. This map $\psi$ is
  bornologically consistent by 
  Lemma~\ref{lemma:map-zero} and therefore satisfies the conditions of
  Theorem~\ref{mapping-theorem}.  A random compact convex set $X$ is
  regularly varying on $\fK_0$ if and only if its
  support function $h_X$ is regularly varying on the family of
  continuous functions on the unit sphere with the ideal $\sC_0$
  generated by the norm. In this way we recover an observation made
  in \cite{mikosch:pawlas:samorodnitsky_2011}.

  Furthermore, if $0\in X$ almost surely, then $X$ is regularly varying on
  $\fK_{\inf}$ if and only if $h_X$ is regularly varying on
  $\Cont(\SS^{d-1})$ with the ideal $\sC_{\inf}$ generated by
  \begin{displaymath}
    \modulus_{\inf}(h_K)=\inf_{u\in\SS^{d-1}} h_K(u).
  \end{displaymath}
  
  The mean width of $K$ is defined as
  \begin{equation}
    \label{eq:mean-width}
    \bar{b}(K)=\frac{1}{d\kappa_d}\int_{\SS^{d-1}} h_K(u)\diff u,
  \end{equation}
  where $(d\kappa_d)$ is the surface area of $\SS^{d-1}$. The regular
  variation of $X$ then corresponds to the regular
  variation of the support function on the ideal generated
  by the integral functional. 
\end{example}

\begin{example}[Regularly varying selection]
  \label{ex:Steiner-point}
  The \emph{Steiner point} of a convex compact set $K$ is defined by
  \begin{equation}
    \label{eq:20}
    \steiner(K) = \frac{1}{\kappa_d} \int_{\SS^{d-1}} h_K(u)\,
    u \diff u, \quad K\in \sK_c^d,
  \end{equation} 
  where $\kappa_d$ is the volume of the unit Euclidean ball in $\R^d$,
  and the integration is with respect to the $(d-1)$-dimensional
  Hausdorff measure on the unit sphere $\SS^{d-1}$; see
  \cite[Eq.~(1.31)]{schn2}. By \cite[Eq.~(1.34)]{schn2}, $\steiner(K)$
  always belongs to $K$. Since the Hausdorff metric equals the uniform
  distance between support functions, $\steiner: \sK_c^d\to\R^d$ is a
  continuous morphism. Furthermore, 
  \begin{displaymath}
    \big\{K\in\sK_c^d\colons \|\steiner(K)\|\geq\eps\big\}\subset
    \big\{K\in\sK_c^d\colons \|K\|\geq\eps\big\}
  \end{displaymath}
  shows that the map $K\mapsto\steiner(K)$ is bornologically
  consistent between $\fK_0$ and the ideal $\sR_0^d$.  If $X$ is
  regularly varying on $\fK_0$, then $\steiner(X)$ is regularly
  varying in $\R^d$ provided that the tail measure of $X$ assigns
  positive mass to the family of sets whose Steiner point is not the
  origin. For instance, consider the random set $X$ from
  Example~\ref{ex:interior-origin} obtained as the convex hull of $m$
  points $\xi_1,\dots,\xi_m$, $m\geq2$, which are independent copies
  of $\xi\in\RV(\R^d,\mydot,\sR_0^d,g,\mu)$. Then, with the
  normalising function $m^{-1}g$, the tail measure of $X$ is the
  pushforward of $\mu$ under the map $x\mapsto[0,x]$ and so
  $\steiner(X)\in\RV(\R^d,\mydot, \sR_0^d,m^{-1}g,\mu_1)$, where
  $\mu_1$ is the pushforward of $\mu$ under the map $x\mapsto x/2$.

  The Steiner point $\steiner(X)$ is an example of a \emph{selection} of $X$,
  that is, a random vector that belongs to $X$ almost surely.
  \index{selection}
  The above argument shows that a regularly varying convex compact
  random set $X$ admits a regularly varying selection if the tail
  measure of $X$ is not entirely supported by convex compact sets whose 
  Steiner point is at the origin.  It is natural to conjecture that a
  much more general 
  variant of this statement holds, namely, every regularly varying
  random closed set admits a selection that is itself regularly varying on a
  suitably chosen non-trivial ideal.
\end{example}

The following example concerns passing to a subcone in the family of
compact convex sets. 

\begin{example}[Random ellipsoid]
  \label{ex:sets-hrv}
  For a random vector $\xi=(\xi_1,\dots,\xi_d)$ in $[0,\infty)^d$, let
  \begin{displaymath}
    X=\psi(\xi)=\Big\{u\in\R^d\colons \sum_{i=1}^d \xi_i^{-2}u_i^2 \leq 1\Big\}
  \end{displaymath}
  be the ellipsoid with semiaxes given by the components of $\xi$
  with the convention that if $\xi_i=0$ then the ellipsoid is
  degenerate in the $i$-th direction. Since 
  \begin{displaymath}
    \big\{x\in[0,\infty)^d\colons \|\psi(x)\|\geq\eps\big\}
    =\big\{x\in [0,\infty)^d\colons \max x_i\geq\eps\big\},
  \end{displaymath}
  the map $\psi$ is bornologically consistent between $\sR_0^d$
  (restricted to $[0,\infty)^d$) and $\fK_0$.  If
  $\xi\in\RV(\R_+^d,\mydot,\sR_0^d,g,\mu)$, then
  $X\in\RV(\sK_c^d,\mydot,\fK_0,g,\psi\mu)$ by the continuous mapping
  theorem. If the components of $\xi$ are i.i.d., then the tail
  measure $\mu$ is supported by the axes. Consequently, its pushforward
  $\psi\mu$ is supported by the family of segments $[-ue_i,ue_i]$
  passing through the origin, where $u>0$ and $e_1,\dots,e_d$ are the 
  basis vectors in $\R^d$.

  Note that $X$ belongs to the family $\sK_{c,0}^d$ of convex
  compact sets containing the origin.  Since $\psi\mu$ is supported by
  segments, its support does not contain any set containing the
  origin in its interior, hence, $X$ is not regularly varying on the ideal
  $\fK_{\inf}$ with the normalising function $g$.  However,
  \begin{displaymath}
    \modulus_{\inf}(\psi(\xi))=\min(\xi_1,\dots,\xi_d)=\modulus_{\min}(\xi),
  \end{displaymath}
  and $X=\psi(\xi)$ becomes regularly varying if $\xi$ is regularly
  varying on the ideal $\sR_0^d(d)$.

  It is possible to connect various orders of the hidden regular
  variation of $X$ and $\xi$ by considering ideals $\fK_{[k]}$
  obtained by excluding the family of linear subspaces of dimension at most
  $k$. Then, for $k=1,\dots,d-1$, the map $\psi$ is bornologically
  consistent from the ideal $\sR_0^d(k+1)$ to the ideal
  $\fK_{[k]}$, since the condition that at
  least  $d-k$ semiaxes exceed a threshold prevents the image
  ellipsoid the image ellipsoid from being approximable by a set of
  linear dimension at most $k$.
  % If $\sK_{c,\ni 0}^d(0)$ is equipped with the modulus
  % $\modulus_V(K)=V_d(K)^{1/d}$, where $V_d$ denotes the
  % $d$-dimensional volume, then $X$ is regularly varying in
  % $\sK_{c,\ni 0}^d$ with the ideal generated by this modulus if and
  % only if the product $\xi_1\cdots\xi_d$ is regularly varying.
\end{example}

\begin{example}[Volume map]
  \label{ex:volumes}
  Let $\psi(K)=V_d(K)^{1/d}$ be the $d$-th root of the Lebesgue
  measure of $K\in \sK_c^d$. Then $\psi$ is a continuous
  bornologically consistent morphism between $\sK_c^d$ with the ideal
  $\fK_0$ and $\R_+$ with the ideal $\sR_0$. Indeed, it is impossible
  for $\|K_n\|\to0$ to hold for a sequence of compact sets
  $(K_n)_{n\in\NN}$, satisfying $V_d(K_n)\geq \eps>0$ for all $n$. If
  $X$ is a regularly varying random convex compact set on the ideal
  $\fK_0$ and its tail measure is not entirely supported by sets of
  volume zero, then $V_d(X)^{1/d}$ is regularly varying. Consequently,
  $V_d(X)$ is regularly varying after applying the power map
  $x\mapsto x^d$, with the corresponding induced scaling and
  normalising function.
\end{example}

\begin{example}[Intrinsic volumes]
  \label{ex:intr-volumes}
  Consider the map from $\sK_c^d$ to $\R_+^d$ given by
  \begin{displaymath}
    \psi(K)=\big(V_1(K),\dots,V_d(K)\big),
  \end{displaymath}
  where $V_1,\dots,V_d$ are the 
  intrinsic volumes; see \cite[Section~4.1]{schn2}. In particular,
  $V_d$ is the volume.
  \index{intrinsic volumes}
  % Since $\psi$ is invariant under
  % rigid motions, it makes sense to equip $\sK_c^d$ with an invariant
  % ideal, say,  $\fK_{(0)}$.
  Then $\psi$ is a bornologically consistent morphism
  between $\sK_c^d$ with the ideal $\fK_0$ and $\R_+^d$ with the scaling
  \begin{equation}
    \label{eq:3s}
    T_tx=(tx_1,\dots,t^dx_d)
  \end{equation}
  and the ideal $\sR_0^d$. Indeed,
  \begin{displaymath}
    \big\{K\in\sK_c^d\colons \max_{i=1,\dots,d} V_i(K)>\eps\big\}
    \subset \big\{K\colons \|K\|>c(\eps)\big\}\in\fK_0
  \end{displaymath}
  for some $c(\eps)>0$ chosen so that
  $\max_i V_i\big(B_{c(\eps)}(0)\big)\leq \eps$. Note that $\psi(K)=0$ if and
  only if $K$ is a singleton. If $X$ is a regularly
  varying random compact convex set and its tail measure is not
  supported exclusively by singletons, then $\psi(X)$ is regularly
  varying in $\R_+^d$ with the scaling given in \eqref{eq:3s}. For
  instance, consider the random set $X$ from
  Example~\ref{ex:interior-origin} obtained as the convex hull of $m$
  points $\eta_1,\dots,\eta_m$, which are independent copies of
  $\eta\in\RV(\R^d,\mydot,\sR_0^d,g,\mu)$. It is shown in
  Example~\ref{ex:interior-origin} that
  $X\in\RV(\sK_c^d,\mydot,\fK_0,m^{-1}g,\tilde{\mu})$, where
  $\tilde{\mu}$ is the pushforward of $\mu$ under the map
  $x\mapsto\conv(\{0,x\})$.
  % Since the ideal $\fK_{0)}$ is a sub-ideal
  % of $\fK_0$ and $\mu$ coincides with its restriction on $\fK_{(0)}$, the random
  % convex closed set $X$ is also regularly varying on $\fK_{(0)}$ with
  % teh same tail measure $\mu$.
  Since the value of $V_1$ on a unit segment
  in $\R^d$ is easily seen to be one, 
  \begin{displaymath}
    \psi\big(\conv(\{0,x\})\big)=\big(\|x\|,0,\dots,0\big).
  \end{displaymath}
  Then $\psi(X)$ is regularly varying on $\sR_0^d$ with the
  tail measure given by the pushforward of $\mu$ under the map
  $x\mapsto (\|x\|,0,\dots,0)$.
\end{example}

\begin{example}[Uniform distribution on a random set]
  Let $X$ be a random compact convex set in $\R^d$. Consider the map $\psi$ that
  associates $X$ with the uniform probability distribution on $X$; in
  case of a lower-dimensional $X$ we consider the distribution
  proportional to the Lebesgue measure in the affine hull of $X$. This
  map $\psi$ is clearly homogeneous and is also bornologically
  consistent as the map from the ideal $\fK_0$ to the ideal $\sN_r$
  introduced in Section~\ref{sec:rpm}. Indeed,
  \begin{displaymath}
    \{K\in\sK^d\colons \psi(K)(B_a(0)^c)>r\}\subset
    \{K\in\sK^d\colons \|K\|>a\}\in\fK_0
  \end{displaymath}
  for all $a>0$.  Thus, the map is bornologically consistent. If, in
  addition, this map is continuous at the relevant points of the tail
  measure, then the continuous mapping theorem yields regular
  variation of the associated random probability measure.
  \index{random probability measure}
\end{example}

\paragraph{Examples: equivalence classes of compact convex sets} The
following examples deal with quotient spaces of the space $\sK_c^d$ of
convex compact sets in $\R^d$ metrised by the Hausdorff metric and
equipped with linear scaling. We follow the setting of
Section~\ref{sec:quotient-spaces-under}.

\begin{example}[Equivalence up to translations]
  \label{ex:circumscribed}
  For $K,L\in\sK_c^d$, let $K\sim L$ if $K+a=L$ for some $a\in \R^d$,
  meaning that some translation of $K$ coincides with $L$. Then
  $[K]=\{K+a\colons a\in\R^d\}$, in particular, $[\zero]$ is the family of
  all singletons. Condition~\hyperref[condS]{(S)} holds, so that the scaling on the
  quotient space $\XXT=\tilde{\sK}_c^d$ of $\sK_c^d$ is well defined.

  Using the correspondence between compact convex sets and their
  support functions, we see that the translation equivalence of sets
  corresponds to the equivalence of support functions up to an
  additive term of the form $\langle a,u\rangle$, $u\in\SS^{d-1}$, for
  some $a\in\R^d$.

  Recall the ideal $\fK_0$ generated by the modulus
  $\suptau(K)=\|K\|$. Then $[\fK_0]$ consists of families of convex
  compact sets with diameters bounded below by some $\eps>0$. This
  ideal is actually the Hausdorff metric exclusion ideal $\fK_{(0)}$
  obtained by excluding from $\sK_c^d$ the family of all singletons.
  We have that
  \begin{displaymath}
    \ttau([K])=\inf\{\|K+a\|\colons a\in\R^d\}
  \end{displaymath}
  is the radius of the smallest circumscribed ball around $K$. This
  ball is unique by \cite[Lemma~3.1.5]{schn2} and
  \cite[Proposition~9.2.24]{burago2001}, and the map from $K$ to 
  $\ttau([K])$ is continuous (even Lipschitz) by the definition
  of the Hausdorff metric; see also
  \cite[Exercise~9.2.25]{burago2001}. Hence, $\ttau([x])$ is
  continuous in $x$ and generates the ideal $q\fK_0$.
  
  By Theorem~\ref{rv-XXT}, if a random set $X$ is regularly varying on
  $\fK_0$ with the tail measure not exclusively supported by
  singletons, then the equivalence class $qX$ is a regularly varying
  element in $\tilde{\sK}_c^d$ with the ideal $q\fK_0$.  For
  instance, let $X=B_\eta(\xi)$ be the closed ball in $\R^d$ of radius
  $\eta$ centred at $\xi$, see Example~\ref{ex:r-ball}. The regular
  variation property of $\tilde{X}=qX$ depends only on the
  distribution of $\eta$, namely, $\tilde{X}$ is regularly varying if
  and only if $\eta$ is a regularly varying random variable. If $\xi$
  is regularly varying with a tail heavier than that of $\eta$, then
  $X$ is regularly varying, however, the pushforward of the tail
  measure is concentrated on singletons, meaning that $qX$ is not regularly
  varying in the quotient space.
\end{example}

\begin{example}[Equivalence up to linear transforms]
  Consider the family $\sK_{c,0}^d$ of convex compact sets with
  non-empty interior (also called convex bodies).
  \index{convex body}
  % Note that the linear
  % scaling acts free on this family.
  Assume that $K\sim L$ if $L=AK+a$, where $A\in\mathrm{SL}_d$ is a positive definite
  matrix with determinant one and $a\in\R^d$. Let $\fK_V$ be the ideal
  generated by the modulus
  \begin{displaymath}
    \modulus_V(K)=V_d(K)^{1/d},
  \end{displaymath}
  where $V_d$ denotes the volume. Since this ideal is invariant under
  the transformation $K\mapsto AK+a$, we have that
  $[\fK_V]=\fK_V$. Consequently, 
  the regular variation property of a random convex body $X$ implies
  the same property for its equivalence class $\tilde{X}$.

  Now consider another ideal $\fK$ on $\sK_{c,0}^d$ generated by the modulus
  \begin{displaymath}
    \modulus(K)=V_d(E_K)^{1/d}+\|K\|,
  \end{displaymath}
  where $E_K$ is the L\"owner ellipsoid of $K$, which is the uniquely
  defined ellipsoid of minimal volume containing $K$; see
  \cite[Section~10.12]{schn2}. The map $K\mapsto E_K$ is continuous in
  the Hausdorff metric. Indeed, if $K_n\to K$, then $(E_{K_n})_{n\in\NN}$ is a
  bounded sequence of convex bodies that has a convergent
  subsequence; see \cite[Theorem~1.8.4]{schn2}. Assuming that
  $E_{K_n}\to E$, we see that $E$ is an ellipsoid of minimal volume
  containing $K$ and therefore coincides with $E_K$ by 
  uniqueness. Indeed, assume for contradiction that
  $V_d(E_K)<V_d(E)$. Then the
  $\eps$-envelope of $E_K$ contains $K_n$ for all sufficiently
  large $n$. Hence, for each $\delta>0$ we have
  $V_d(E_{K_n})\leq V_d(E_K)+\delta$, implying
  $V_d(E)\leq V_d(E_K)+\delta$, which is a contradiction.  Then
  \begin{displaymath}
    \ttau([K])=\inf\big\{V_d(E_{AK})^{1/d}+\|AK+a\|\colons 
    A\in\mathrm{SL}_d,a\in\R^d\big\}
  \end{displaymath}
  is continuous in $K$ and thus it determines the ideal $q\fK$, see
  Lemma~\ref{lemma:q-B}. By Theorem~\ref{rv-XXT}, if the tail measure
  of a random convex body $X$ is not exclusively supported by the
  origin, then the equivalence class $qX$ is regularly varying on the
  ideal $q\fK$.
  % A selection map $\tilde{q}qK=L$ can be constructed by choosing
  % $L=AK+a$ with $A$ and $a$ such that $AE+a$ is the unit Euclidean
  % ball centred at the origin. The continuity property of the
  % L\"owner
  % ellipsoid implies that $\tilde{q}$ is continuous. Since
  % $E_{AK+a}=AE_K+a$, we have that $\modulus$ is constant on
  % equivalence
  % classes, so that \eqref{eq:16} holds.
\end{example}

\begin{example}[Pairs of sets]
  Let $\XX=\sK_c^d\times\sK_c^d$ be the family of pairs of convex
  compact sets $(K_1,K_2)$ in $\R^d$ with linear scaling applied
  componentwise, so that the only zero element is $(\{0\},\{0\})$.
  Define an ideal $\fK$ generated by the modulus $\modulus(K_1,K_2)$ being the
  maximum of the mean widths $\bar{b}(K_1)$ and $\bar{b}(K_2)$, see
  \eqref{eq:mean-width}. We say that $(K_1,K_2)\sim (L_1,L_2)$
  if $K_1+L_2=K_2+L_1$.  This is the standard construction that turns
  the semigroup into a group via taking the quotient space build from
  pairs of its elements. 
  Then the
  equivalence class $q(K_1,K_2)$ may then be considered as the difference
  $K_1-K_2$. In particular, $(K_1,K_2)\sim (\{0\},\{0\})$ if and only
  if $K_1=K_2$, implying that $\tilde\zero$ is the equivalence class of
  $(\{0\},\{0\})$, which corresponds to pairs with identical
  components. Condition~\hyperref[condS]{(S)} obviously holds.
  Then
  \begin{displaymath}
    \ttau\big([(K_1,K_2)]\big)
    =\inf\big\{\max\big(\bar{b}(L_1),\bar{b}(L_2)\big)\colons 
    (L_1,L_2)\sim (K_1,K_2)\big\}
    \geq \big|\bar{b}(K_1)-\bar{b}(K_2)\big|. 
  \end{displaymath}
  where we use the fact that the mean width is additive under
  Minkowski addition.
  If a pair $(X_1,X_2)$ is regularly varying on $\fK$ and its tail
  measure assigns positive mass to the family of pairs $(K_1,K_2)$
  with $|\bar{b}(K_1)-\bar{b}(K_2)|>0$, then its equivalence
  class is regularly varying in the quotient space with the ideal
  $[\fK]$.
\end{example}

\chapter*{Acknowledgements}

This work was supported by the Swiss Enlargement Contribution in the
framework of the Croatian-Swiss Research Programme (project number
IZHRZ0\_180549). The work of Bojan Basrak was partially supported by
the Croatian Science Foundation (project number IP-2022-10-2277) and
the European Union institutional grant of the University of Zagreb,
Faculty of Science (project number IK IA 1.1.3. Impact4Math).

The authors are grateful to Fabien Baeriswyl, 
Andrii Ilienko, Hrvoje
Planini\'c, Stilian Stoev and Olivier Wintenberger for helpful
discussions and encouragement and to Nick Bingham and Daniela
Ivankovi\'c for 
providing remarks on the manuscript.

Furthermore, the authors are grateful to the anonymous
referees for their positive assessment of this work, careful reading, and
numerous constructive remarks.

%\bibliography{NMIM}
%\bibliographystyle{abbrv}

\printindex

\cleardoublepage

\addcontentsline{toc}{chapter}{List of Notation}

\begin{center}
  {\sc List of notation}
\end{center}

\noindent

\begin{tabular}{ll}
  \multicolumn{2}{l}{\textsf{Topological spaces:}}\\
  $\XX$  & a topological space,\\
  $B^c$ & the complement of set $B$,\\
  $\cl B$, $\Int B$ & closure and interior of $B$,\\
  $\dmet$ & a metric on $\XX$,\\
  $\sB(\XX)$, $\sBA(\XX)$ & Borel and Baire $\sigma$-algebras,\\
  $\gamma\in\Gamma$ & parameter of a net. \\
  \multicolumn{2}{l}{\textsf{Scaling and cones:}}\\
  $\R_+$ & the positive half-line $[0,\infty)$,\\
  $T_tx$ & generic scaling of $x\in\XX$ by $t>0$,\\
  $T_tA$ & scaling of a sset $A$,\\
  $tx$ and $\mydot$ & linear scaling,\\
  $\zero$ & set of scaling-invariant elements,\\
  $\cone$ & a cone in $\XX$, \\
  $\VV$ & a semicone in $\XX$.\\
  \multicolumn{2}{l}{\textsf{Moduli and ideals:}}\\
  $\modulus$ & a generic modulus on $\XX$,\\
  $\ttau(A)$ & infimum of $\modulus(x)$ over $x\in A$,\\
  $\suptau(A)$ & supremum of $\modulus(x)$ over $x\in A$,\\
  $\rho$ & polar decomposition,\\
  $\sX$, $\sR$, $\sC$ & generic ideals on $\XX$, $\R^d$ and
                               $\Cont$,  respectively, \\
  $\sS_\cone$, $\sS_0$ & the ideal obtained by excluding $\cone$ or $\zero$,\\
  $\sS_\modulus$ & ideal generated by $\modulus$, see
                   Definition~\ref{def:modulus},\\
  $\sS_\cone$ & metric exclusion ideal obtained by excluding cone $\cone$,\\
  $\SS_\modulus$ & the set of $x\in\XX$ with $\modulus(x)=1$,\\
  $\sX^m(k)$ & product of ideals,\\
  $\sR_0^d$, $\sR_0$ & metric exclusion ideals on $\R^d$ or $\R$
                       obtained by \\
         &\qquad excluding the origin,\\
  $\sR_0^d(k)$ & metric exclusion ideal on $\R^d$ obtained by excluding\\
         & \qquad  coordinate subspaces of dimension at
           most $k-1$,\\ 
  $\psi$ & bornologically consistent morphism.\\
  \multicolumn{2}{l}{\textsf{Vague convergence:}}\\
  $\Msigma$ & Baire measures which are finite of the ideal $\sS$,\\
  $\Mb$ & Borel measures which are finite of the ideal $\sS$,\\
  $\vto$ & vague convergence on the ideal $\sS$,\\
  $\wto$, $\dto$ & weak convergence / convergence in distribution,\\
  $\fC(\XX,\sS)$ & continuous bounded functions with support in
                   $\sS$,\\
  $\psi\mu$ & pushforward of measure $\mu$ under $\psi$.

\end{tabular}

\noindent

\begin{tabular}{ll}
  \multicolumn{2}{l}{\textsf{Regular variation:}}\\
  $\RV_\alpha$ & functions on the real line, which are \\
               &\qquad regularly
                 varying of order $\alpha$,\\
  $\RV(\XX,T,\sS,g,\mu)$ & regularly varying measure (or random element),\\
  $\alpha$ & tail index,\\
  $\theta_\alpha$ & measure on $(0,\infty)$ with
                    $\theta_\alpha((t,\infty))=t^{-\alpha}$,\\ 
  $\mu$, $\mu^*$, $\tilde{\mu}$ & tail measure,\\  
  $g$ & normalising function, usually from $\RV_\alpha$.\\
  \multicolumn{2}{l}{\textsf{Quotient spaces:}}\\
  $\sim$ & equivalence relation,\\
  $\XXT$, $\tilde{x}$ & quotient space and its elements,\\
  $q$ & quotient map,\\
  $[x]$, $[B]$  & equivalence class of $x$ and saturation of $B$,\\
  $\tilde{T}$ & scaling on the quotient space.\\
  \multicolumn{2}{l}{\textsf{Spaces of sequences:}}\\
  $\R^\infty$ & space of infinite real-valued sequences $(x_1,x_2,\ldots)$,\\
  $\ell_p$ & space of $p$-summable sequences, $p\in[1,\infty)$,\\
  $\ell_\infty$ & space of bounded sequences.\\
  $\sR_{\sup}$, $\sR_{\inf}$, $\sR^\infty(k)$ & ideals on the space of
                                              sequences.\\
  $\psi_m(x)$ & sequence $(x_1,\dots,x_m,0,0,\ldots)$ obtained from
                $x\in\R^\infty$,\\ 
  $\pi_m x$ & projection of $x\in\R^\infty$ to $(x_1,\dots,x_m)$.\\
  \multicolumn{2}{l}{\textsf{Spaces of functions:}}\\
  $\Cont([0,1])$, $\Cont(\R)$ & space of continuous functions on
                                $[0,1]$ or on $\R$,\\
  $\Cp$ & space of continuous functions with the topology,\\
    &\qquad of pointwise convergence,\\
  $\Cont_{\text{loc}}(\R)$ & continuous functions with locally
                             uniform topology,\\
  $\Dfun([0,1],\R)$ & real-valued c\`adl\`ag functions on $[0,1]$,\\
  $\USC([0,1])$ & family of upper semicontinuous functions on $[0,1]$,\\
  $\sC_0$ & ideal generated by the uniform norm on $\Cont([0,1])$,\\
  $\sC_{\inf}$ & ideal generated by $\modulus(x)=\inf x$,\\
  $\sD_0$ & metric exclusion ideal on $\Dfun([0,1],\R)$,\\
  $\pi_\gamma x$ & values $(x(u_1),\dots,x(u_m))$ of $x$
                  at $\gamma=(u_1,\dots,u_m)$.\\ 
  \multicolumn{2}{l}{\textsf{Space of measures:}}\\
  $\Mp$ & counting measures which are finite on the ideal $\sX$,\\
  $\delta_x$ & Dirac measure at $x\in\XX$,\\
  $\Mpk$ & the ideal on $\Mp$ obtained by excluding measures\\
  &\qquad  with mass at most $k-1$,\\
  $\me$, $\me^{(k)}$ & counting measure and its factorial product,\\
  $\etap$, $\etap^{(k)}$ & point process and its factorial product,\\
  $\supp\me$, $\supp\etap$ & support of a counting measure or of a
                             point process,\\
  $\lambda$ & intensity measure of a point process,\\
  $J_{k,B}(A)$ & the Janossy measure of order $k$.%\\
 %  \multicolumn{2}{l}{\textsf{Space of closed sets:}}\\
 %  $\sF=\sF(\XX)$ & family of closed subsets of $\XX$,\\
 %  $\sK$, $\sK^d$ & family of compact sets in $\XX$ / in $\R^d$,\\
 %  $\dmet_{\mathrm{H}}$ & the Hausdorff metric,\\
 %  $\dmet(x,F)$ & distance from $x$ to $F$,\\
 %  $\sK_c^d$ & family of convex compact sets in $\R^d$,\\
 %  $\conv(K)$ & the closed convex hull of $K\subset\R^d$,\\
 %  $\fF$, $\fK$ & generic ideals on $\sF$ and $\sK$,\\
 %  $\fK_0$ & ideal on $\sK$ generated by the modulus $\suptau(K)=\|K\|$,\\
 %  $\fK_{[k]}$ & ideal on $\sK^d$ obtained by excluding linear
 %                subspaces\\
 %              & \qquad of dimension $k$,\\
 %  $V_j(K)$, $V_d(K)$ & $j$-th intrinsic volume / if $j=d$ the volume,\\
 %  $B_{r}(x)$ & the closed ball of radius $r>0$ centred at $x\in\R^d$,\\
 % \multicolumn{2}{l}{\textsf{Probability and measures:}}\\
 %  $\P$, $\E$ & probability and expectation corresponding to the
 %               \\
 %         & \qquad choice of a probability space $(\Omega,\mathfrak{F},\P)$.
\end{tabular}

\begin{tabular}{ll}
  \multicolumn{2}{l}{\textsf{Space of closed sets:}}\\
  $\sF=\sF(\XX)$ & family of closed subsets of $\XX$,\\
  $\sK$, $\sK^d$ & family of compact sets in $\XX$ / in $\R^d$,\\
  $\dmet_{\mathrm{H}}$ & the Hausdorff metric,\\
  $\dmet(x,F)$ & distance from $x$ to $F$,\\
  $\sK_c^d$ & family of convex compact sets in $\R^d$,\\
  $\conv(K)$ & the closed convex hull of $K\subset\R^d$,\\
  $\fF$, $\fK$ & generic ideals on $\sF$ and $\sK$,\\
  $\fK_0$ & ideal on $\sK$ generated by the modulus $\suptau(K)=\|K\|$,\\
  $\fK_{[k]}$ & ideal on $\sK^d$ obtained by excluding linear
                subspaces\\
              & \qquad of dimension $k$,\\
  $V_j(K)$, $V_d(K)$ & $j$-th intrinsic volume / if $j=d$ the volume,\\
  $B_{r}(x)$ & the closed ball of radius $r>0$ centred at $x\in\R^d$,\\
 \multicolumn{2}{l}{\textsf{Probability and measures:}}\\
  $\P$, $\E$ & probability and expectation corresponding to the
               \\
         & \qquad choice of a probability space
           $(\Omega,\mathfrak{F},\P)$.
\end{tabular}

\end{document}